\newtheoremstyle{theorem}
{10pt} 
{10pt} 
{\sl} 
{\parindent} 
{\bf} 
{. } 
{ } 
{} 
\newtheorem{prop}{Proposition}
\newtheorem{lem}{Lemma}
\newtheorem{theorem}{Theorem}
\newtheorem{cor}{Corollary}
\newtheorem{defn}{Definition}
\newtheorem{assp}{Assumption}
\theoremstyle{remark}
\newtheorem{rk}{Remark}
\def\d{{\rm d}}
\def\tr{{\rm tr}}
\def\intr{\mathrm{int}}
\def\E{\mathbb{E}}
\def\P{\mathbb{P}}
\def\V{\mathbb{V}}
\def\Prm{\mathrm{P}}
\def\Qrm{\mathrm{Q}}
\def\ldsb{[\![}
\def\rdsb{]\!]}
\def\T{\mathbf{\Theta}}
\def\d{\mathrm{d}}
\def\e{\mathrm{e}}
\def\sp{\mathrm{sp}}\def\sp{\mathrm{sp}}
\def\R{\mathbf{R}}
\def\N{\mathbf{N}}
\def\Omegabf{\mathbf{\Omega}}
\def\Gammabf{\mathbf{\Gamma}}
\def\Sbf{\mathbf{S}}
\def\Tbf{\mathbf{T}}
\def\Gbf{\mathbf{G}}
\def\balltheta0{\overline{B_{\epsilon_{\theta_0}}(\theta_0)}}
\def\oBL{\overline{B_L}}
\newlength{\dhatheight}
\title[ESP estimator]{The Empirical Saddlepoint Estimator}
\author{Benjamin Holcblat*}
\address{University of Luxembourg, LSF\\
6 Rue Richard Coudenhove-Kalergi, L-1359, Luxembourg.}
\email{Benjamin.Holcblat@uni.lu}
\author{Fallaw Sowell$^{\#}$}
\address{Carnegie Mellon University \\
Tepper School of Business\\
5000 Forbes Ave. Pittsburgh, PA, USA}
\email{fs0v@andrew.cmu.edu}
\begin{document}

\begin{center} Version with online appendix included.\end{center}

\medskip

\begin{abstract}
We define a moment-based estimator that maximizes the empirical saddlepoint (ESP) approximation of the distribution of solutions to empirical moment conditions. We call it the ESP estimator. We prove its existence, consistency and asymptotic normality, and we propose novel test statistics. We also show that the ESP estimator corresponds to the MM (method of moments) estimator shrunk toward parameter values with lower estimated variance, so it reduces the documented instability of existing moment-based estimators. In the case of just-identified moment conditions, which is the case we focus on, the ESP estimator is different from the MM estimator, unlike the recently proposed alternatives, such as the empirical-likelihood-type estimators.

\bigskip

{\noindent \textit{Keywords}: Empirical Saddlepoint Approximation; Method of Moments;  Kullback-Leibler Divergence Criterion; Maximum-probability Estimator; Variance Penalization.
}

\medskip


\end{abstract}

\date{\today}

\maketitle
\setcounter{tocdepth}{3}


\section{Introduction }\label{sec1}
The saddlepoint (SP)  approximation has been developed to  approximate distributions. Because of its accuracy it is regularly used in several fields, such as  numerical analysis  (e.g., \cite{2000Loader}'s algorithm to approximate  binomial distributions, and which is notably used in the statistical software R) and actuarial sciences (e.g., \cite{1932Ess}'s approximation for distributions tails). In statistics, the SP approximation and its empirical version ---the empirical saddlepoint (ESP) approximation---  have been used to approximate  finite-sample distributions  \citep[e.g.,][]{1954Dan,1988DavisonHinkley}.\footnote{Standard monographs and introductions about the ESP and the SP approximation for statistics include \cite{1990FieRon}, \cite{1994Kolassa},  \cite{1995Jen},   \cite{1999GouCas}  and is \cite{2007Butler}.\\
$^*$University of Luxembourg, 6 rue Coudenhove-Kalergi, L-1359 Luxembourg\\
${ }^{\#}$Carnegie Mellon University, 5000 Forbes Ave, Pittsburgh, PA 15213, USA. 
}

In the present paper, we propose to use the ESP  approximation to define a \textit{point} estimator $\hat{\theta}_T$. 
We call it the ESP estimator. It  maximizes the   \cite{1994RonWel}'s ESP approximation, i.e., 
\begin{eqnarray}
\hat{\theta}_T \in \arg \max_{\theta \in \T}  \hat{f}_{\theta^{*}_T}(\theta)
\end{eqnarray}
where  $ \hat{f}_{\theta^{*}_T}(.)$  is the ESP approximation of  the distribution of  solutions to  empirical  moment conditions  \begin{eqnarray}
\frac{1}{T}\sum_{t=1}^T \psi\left(X_t,\theta\right) =0  
\label{Eq:EstimatingEquation}
\end{eqnarray}
and where $\psi(.,.)$ denotes the moment function s.t. $\E [ \psi(X_1, \theta_0) ]=0_{m \times 1}$  an $m$-dimensional vector of zeros,   $(X_t)_{t=1}^T$  i.i.d. data, $\theta_{0}\negmedspace\in\negmedspace \T\negthickspace\subset \negthickspace\R^m$ the  unknown parameter of interest, and $T$ the sample size. The exact formula for  $\hat{f}_{\theta^{*}_T}(.)$ is reminded below in equation \eqref{Eq:ESPApproximationDefnMText} on p. \pageref{Eq:ESPApproximationDefnMText}. 

The ESP estimator  is a moment-based estimator.  Since \cite{1894Pea,1902Pearson}'s  method of moment (MM),   moment-based estimators   have been found useful in a variety of applications (e.g., covariance structure analysis in psychology, and   asset pricing in economics). Their two main advantages are  (i) they do not require a parametric family of probability distributions for the data so they are less prone to model misspecification, and (ii) they allow complex models for which the likelihood function is  intractable.

Nevertheless, the  increase use of the MM and its extensions has     revealed that they  can be unstable   and  perform poorly    in  finite samples (e.g., July 1996 special issue of JBES). The idea of the ESP estimator to improve on the MM estimator is the following. By definition, the MM estimate $\theta^*_T(\omega)$  solves  a realization of the empirical moment conditions \eqref{Eq:EstimatingEquation}, but it typically does not solve  the empirical moment condition for another realization $\omega$ of the data. Thus, we might want an estimate that does only take into account the realized empirical moment conditions, but also their  other potential realizations. More precisely, we want an estimate that accounts for all the potential realizations of the empirical moment conditions according to  their probability weight of occurrence.   This leads to the ESP estimate, which is  a \textit{maximum-probability} estimate. The ESP estimate maximizes the estimated probability weights of solving the  empirical moment conditions.\footnote{This is in contrast to the traditional motivation for ML estimators, which  maximize the  probability weights  of obtaining a sample equal to the observed sample. In other words, the support of the ESP distribution is the parameter space, while the support of the distribution associated with a likelihood is the data space. Thus, if we are looking for relevant  parameter values instead of data values ---as it is typically the case---, a maximum-probability motivation appears more appealing than the traditional ML motivation. }   If the empirical  moment conditions \eqref{Eq:EstimatingEquation}  have a unique solution with a continuous distribution,  the ESP estimator maximizes the ESP approximation of a probability density function of the solution $\theta^*_T$.\footnote{Another motivation for maximum probability estimators is decision theoretic. Maximum probability estimators follows from the minimization of the expectation of   a loss ``function'' that equals zero  when $\theta$ solves the empirical moment conditions and one otherwise by normalization. This motivation is similar to the  decision-theoretic justification for the  Bayesian maximum a posteriori estimator \citep[e.g.,][sec. 4.1.2]{1994Robert}.  As in Bayesian analysis, the choice of other loss functions is possible. It is left for future research.} We rely on the ESP approximation  because  simulation and theoretical evidence shows the  ESP approximation can be  very accurate in small sample \citep[e.g.,][]{1988DavisonHinkley,1994RonWel}.

Besides the maximum-probability motivation, we show that the ESP estimator corresponds to an MM estimator shrunk toward parameter values with lower estimated variance. More precisely, we decompose the logarithm of the ESP approximation as the sum of a term, which is maximized at the  MM  estimator, and a variance penalty, which discounts  parameter values with high estimated variance.   Under   assumptions adapted from the entropy literature, we establish  the ESP estimator has the same good asymptotic properties as the MM estimator, so the variance penalization is a  finite-sample correction. We also    derive the ESP counterparts of the Wald, Lagrange multiplier (LM),  analogue likelihood-ratio (ALR) test statistics, as well as another test statistic. Then, we investigate  the ESP estimator through Monte-Carlo simulations.
We compare its performance with the exponential tilting (ET) estimator, which is equal to the MM estimator  in the just-identified case (i.e., when the number of parameters is the same as the number of moment conditions).   Results show that the variance penalization of the ESP estimator reduces the finite-sample instability of the ET estimator (or equivalently, of the MM estimator). An empirical application  illustrates the gain from this greater stability in terms of inference. 

The ESP estimator is not the first proposal   to improve on the MM and its extensions. Alternative   moment-based approaches have been proposed    such as the empirical likelihood approach of Owen  \citep[][]{1994QinLawless}, the continuously updating approach \citep{1996HanHeaYar}, the already-mentioned exponential tilting (ET) approach \citep{1997KitStu,1998ImbSpaJoh}, and combinations of the aforementioned approaches \cite[e.g.,][]{2007Schennach}. All these approaches yield an estimator  closely related to the empirical likelihood estimator, so we call them empirical-likelihood-type estimators.  In  the just-identified case, when well-defined, all of these empirical-likelihood-type estimators are numerically equal to the original Pearson's  MM estimator  $\theta^*_T$.
  Because we focus on  the  just-identified case,  it thus is sufficient for us to compare the ESP estimator with the MM estimator, or with one of any   of these more recent estimators.

In addition to the already cited papers, the present paper, which supersedes the unpublished manuscript \cite{2009Sow}, is related to many other ones. We clarify these relations in Section \ref{Sec:Literature} (p. \pageref{Sec:Literature}). To the best of our knowledge, none of the prior papers use the SP or the ESP to propose a novel moment-based point estimator. Overall,  the present paper brings together the literature on the saddlepoint approximation and the literature on moment-based estimation.

\section{Finite-sample analysis}
In the present section, we remind the formula for the ESP approximation, and   analyze its finite-sample structure. Then, we decompose the log-ESP into two terms and show that  the ESP estimator is a MM estimator shrunk toward parameter values with lower estimated variance. 
\subsection{The ESP approximation}\label{Sec:InsightsAboutESP}

  Formalizing and generalizing  prior works \citep{1988DavisonHinkley,1989Feu,1990Wang,1990YoungDaniels}, \cite{1994RonWel} propose the following ESP approximation to estimate the distribution of a solution to  the empirical moment conditions \eqref{Eq:EstimatingEquation}
\begin{eqnarray}
\hat{f}_{\theta^{*}_T}(\theta) :=
\exp\left\{T\ln\left[ \frac{1}{T}\sum_{t=1}^T\e^{\tau_T(\theta) ' \psi_t(\theta)}\right]\right\}\left(\frac{T}{2\pi}\right)^{m/2}\left|\Sigma_T(\theta) \right|_{\det}^{-\frac{1}{2}}
   \label{Eq:ESPApproximationDefnMText}
\end{eqnarray}
where $|.|_{\det}$ denotes the determinant function, $\theta^*_T$ a solution to  \eqref{Eq:EstimatingEquation},  $\psi_t(.):=\psi (X_t,.)$, and
\begin{eqnarray}
\Sigma_T(\theta) & := & \left[ \sum_{t=1}^T w_{t,\theta}\frac{\partial \psi_{t} (\theta)}{\partial \theta'}\right]^{-1}\left[\sum_{t=1}^T w_{t,\theta}\psi_{t}(\theta)\psi_{t}(\theta)'\right]\left[ \sum_{t=1}^T w_{t,\theta}\frac{\partial \psi_{t} (\theta)'}{\partial \theta}\right]^{-1}, \label{Eq:TiltedSigma_TMText}\\
w_{t,\theta} & := & \frac{\exp\left[\tau_T(\theta) ' \psi_t(\theta)\right]}{\sum_{i=1}^T\exp\left[\tau_T(\theta) ' \psi_i(\theta)\right]}  \text{ , }\label{Eq:TiltedWeightMText} \\
\tau_T(\theta) &\text{such that }&\sum_{t=1}^T \psi_{t}(\theta)\frac{\exp\left[\tau_T(\theta) ' \psi_t(\theta)\right]}{\sum_{i=1}^T\exp\left[\tau_T(\theta) ' \psi_i(\theta)\right]}\times \frac{1}{T}=0 \text{.  }\label{Eq:ESPTiltingEquationMText}
\end{eqnarray}

The ESP approximation \eqref{Eq:ESPApproximationDefnMText} is the empirical counterpart of the SP approximation  of \cite{1982Fie}.  From  a computational point of view, the ESP approximation \eqref{Eq:ESPApproximationDefnMText} is not  complicated.\footnote{We do not claim that the ESP estimator is as easy to compute as the MM estimator, but that its additional complexity is similar to the recently proposed empirical-likelihood-type estimators (e.g., ET estimator), and that it is worthwhile in several applications (e.g., Section \ref{Sec:Examples}). Moreover, it seems to make sense to develop novel estimation methods that take advantage of  the increasingly available computational power.} The only implicit quantity is $\tau_T(\theta)$, which solves the tilting equation \eqref{Eq:ESPTiltingEquationMText}, which, in turn,  is just the FOC (first-order condition) of the unconstrained convex problem  $\min_{\tau\in \R^m}\sum_{t=1}^T\e^{\tau ' \psi_t(\theta)}  $. A full understanding of the ESP approximation \eqref{Eq:ESPApproximationDefnMText} arguably requires to work  through higher-order asymptotic expansions along the lines of \cite{1982Fie}. However, direct inspection of the ESP approximation \eqref{Eq:ESPApproximationDefnMText}  also provides  insight for how it incorporates  information from the data through   two channels.

The first channel is the   \textit{ET (exponential tilting) term} $\exp\left\{T\ln\left[ \frac{1}{T}\sum_{t=1}^T\e^{\tau_T(\theta) ' \psi_t(\theta)}\right]\right\}  $.  In equation  \eqref{Eq:ESPTiltingEquationMText}, for  any $\theta\in \T $, the terms $\frac{\exp\left[\tau_T(\theta) ' \psi_t(\theta)\right]}{\sum_{i=1}^T\exp\left[\tau_T(\theta) ' \psi_i(\theta)\right]} $ tilt (i.e., reweight) the empirical weights $ 1/T$, so the finite-sample moment conditions \eqref{Eq:ESPTiltingEquationMText} holds.  This tilting determines, through equation \eqref{Eq:TiltedWeightMText}, the multinomial distribution $(w_{t,\theta})_{t=1}^T$ that is the closest to the empirical distribution ---in the sense of the Kullback-Leibler divergence criterion--- s.t. the finite-sample moment conditions \eqref{Eq:ESPTiltingEquationMText} holds: The tilting equation \eqref{Eq:ESPTiltingEquationMText} is the FOC w.r.t. (with respect to) $\tau$ of the Lagrangian dual problem of  the minimization problem
\begin{eqnarray}
\min_{(w_{1,\theta},w_{2,\theta} ,\cdots,w_{T,\theta}) \in ]0,1]^T} \sum_{t=1}^T w_{t,\theta}\log\left(\frac{w_{t,\theta}}{1/T}\right) \nonumber
 \\\text{ s.t.} \sum_{t=1}^Tw_{t,\theta} \psi_t(\theta)=0 \text{ and } \sum_{t=1}^T w_{t,\theta}=1, \label{Eq:KLEmpMomentCond}
\end{eqnarray}
where $\sum_{t=1}^T w_{t,\theta}\log[w_{t,\theta}/(1/T)]$ is the Kullback-Leibler divergence criterion
between
the empirical distribution and the multinomial distribution $(w_{t,\theta})_{t=1}^T$ with the same support \cite[e.g.,][]{1981Efron,1997KitStu}. Then,
    for  the given $\theta\in \T $, in the ESP approximation \eqref{Eq:ESPApproximationDefnMText}, the   ET  term $\exp\left\{T\ln\left[ \frac{1}{T}\sum_{t=1}^T\e^{\tau_T(\theta) ' \psi_t(\theta)}\right]\right\}  $ indicates the extent  of the tilting   needed to set the  finite-sample moment conditions \eqref{Eq:KLEmpMomentCond} (or equivalently,  equation \eqref{Eq:ESPTiltingEquationMText}) to zero.  The bigger  is the tilting of the empirical distribution, the less compatible are the data   with $\theta$ solving  the empirical moment conditions,  and the smaller should be the ET term $\exp\left\{T\ln\left[ \frac{1}{T}\sum_{t=1}^T\e^{\tau_T(\theta) ' \psi_t(\theta)}\right]\right\}  $. It can be easily seen that $\frac{1}{T}\sum_{t=1}^T\e^{\tau_T(\theta) ' \psi_t(\theta)}$ reaches its maximum when $\theta$ is a solution $\theta^*_T$ of  the empirical moment conditions \eqref{Eq:EstimatingEquation}, i.e., when      $\tau_T(\theta^*_T)=0_{m \times 1} $ and no tilting is needed.\footnote{For a complete proof, one can follow the same reasoning as in the proof of Lemma \ref{Lem:AsTiltingFct} in \citet[p. \pageref{Lem:AsTiltingFct}]{2019HolcblatSowell}  with the empirical distribution in lieu of $\P$.}

In the ESP approximation on equation \eqref{Eq:ESPApproximationDefnMText}, the second term $\left(\frac{T}{2\pi}\right)^{m/2} $ comes from the multivariate Gaussian distribution that  is the leading term of the Edgeworth's asymptotic expansions underlying ESP approximations. However,  because it is constant w.r.t. $\theta$, it does not affect the maximization of the ESP approximation, so it  is \textit{not} an information channel for the ESP estimator. The  remaining term $\left|\Sigma_T(\theta) \right|_{\det}^{-\frac{1}{2}}$   , which we call the \textit{variance term}, is the second channel through which the ESP approximation incorporates information from data. The variance term discounts the ET term according to the tilted estimated variance of the solution to the finite-sample moment conditions. Under standard assumptions, a consistent estimator of the asymptotic variance of   $\sqrt{T}(\theta^*_T-\theta_0)$  is $\left[ \frac{1}{T}\sum_{t=1}^T \frac{\partial \psi_{t} (\theta^*_T)}{\partial \theta}\right]^{-1}\left[\frac{1}{T}\sum_{t=1}^T \psi_{t}(\theta^*_T)\psi_{t}(\theta^*_T)'\right]\left[ \frac{1}{T}\sum_{t=1}^T \frac{\partial \psi_{t} (\theta^*_T)'}{\partial \theta}\right]^{-1} $.  The bigger the variance term is, the less plausible a solution takes exactly this value, and the smaller is $\left|\Sigma_T(\theta) \right|_{\det}^{-\frac{1}{2}}$  ---note the negative power. Therefore, overall, for a given  $\theta\in \T $, the bigger the tilting  or  the estimated variance,  the smaller the ESP approximation, i.e., the estimated probability weight that $\theta$ solves the empirical moment conditions \eqref{Eq:EstimatingEquation}.

\subsection{The ESP estimator as a shrinkage estimator}\label{Sec:ESPvsMMandOthers} As explained in the introduction, the recently proposed moment-based estimators  are numerically equal to the Pearson's MM estimator in the just-identified case. Thus, it is sufficient to compare the ESP estimator with  one of them in order to understand the difference between the former and the other proposed moment-based estimators. The ET estimator of \cite{1997KitStu} and \cite{1998ImbSpaJoh} is particularly convenient for this purpose.
Taking the logarithm of the ESP approximation \eqref{Eq:ESPApproximationDefnMText}, and removing the terms constant w.r.t. $\theta$, it can be seen that,   $\P$-a.s. for $T$ big enough, the ESP estimator $\hat{\theta}_T$ maximizes the  objective function
\begin{eqnarray} \ln  \left[\frac{1}{T}\sum_{t=1}^T \e^{\tau_T(\theta)'\psi_t(\theta)}\right]- \frac{1}{2T} \ln \vert \Sigma_T(\theta)\vert_{\det}, \label{Eq:LogESP}
\end{eqnarray}
 where $\ln  \left[\frac{1}{T}\sum_{t=1}^T \e^{\tau_T(\theta)'\psi_t(\theta)}\right]$ is an increasing transformation of the objective function of the ET estimator. Thus, the difference between the ESP estimator and  the ET estimators  comes only from the log-variance term $-\frac{1}{2T} \ln \vert \Sigma_T(\theta)\vert_{\det}$. The latter does not only incorporates additional information from data as explained in Section \ref{Sec:InsightsAboutESP}, but  it also  penalizes parameter values with higher estimated variance. Thus, the ESP estimator is an ET estimator ---or equivalently, a MM estimator--- shrunk toward parameter values with lower estimated variance. Now, as the factor $\frac{1}{2T} $ suggests and the proofs of Section \ref{Sec:Asymptotic} show, the log-variance term vanishes asymptotically, so the shrinkage is a finite-sample correction. 
\section{Asymptotic properties}\label{Sec:Asymptotic}

In the present section, we investigate the asymptotic properties of the ESP estimator.  Good asymptotic properties can be regarded as a
minimal requirement for the
ESP estimator, which is based on a small-sample asymptotic approximation.
 All the proofs and assumptions are in the online Appendix \cite{2019HolcblatSowell}.
\subsection{Existence, consistency and asymptotic normality }
  Under  assumptions adapted from the entropy literature,  the following theorem establishes the existence,  the strong consistency, and the asymptotic normality of the ESP estimator $\hat{\theta}_T$.

\begin{theorem}[Existence, consistency and asymptotic normality]\label{theorem:ConsistencyAsymptoticNormality} Under Assumption \ref{Assp:ExistenceConsistency}, $\P$-a.s. for $T$ big enough, there exists $\hat{\theta}_T$ s.t.   
\begin{enumerate}
\item[(i)] $\P$-a.s. as $T \rightarrow \infty$, $\hat{\theta}_T \rightarrow \theta_0$ ; and
\item[(ii)] under the additional Assumption \ref{Assp:AsymptoticNormality}, as $T \rightarrow \infty$,  $\sqrt{T} ( \hat{\theta}_T  - \theta_0 )
 \underset{}{\stackrel{D}{\longrightarrow}}   \mathcal{N}\left(0, \Sigma(\theta_0)\right)$.

\end{enumerate}
where $\Sigma(\theta_0):=\negthickspace  \left[\E  \frac{\partial \psi(X_{1},\theta_{0})}{\partial \theta' }\right]^{-1}\negthickspace\negthickspace \E\left[\psi(X_{1},\theta_{0}) \psi(X_{1},\theta_{0})' \right] \left[ \E  \frac{\partial \psi(X_{1},\theta_{0})'}{\partial \theta }\right]^{-1} $, $\stackrel{D}{\rightarrow} $ denotes the convergence in distribution.
\end{theorem}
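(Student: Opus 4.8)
\medskip

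\noindent\emph{Sketch of the approach.} The plan is to treat $\hat{\theta}_T$ as an extremum estimator maximizing the objective in \eqref{Eq:LogESP}, namely $Q_T(\theta):=\ln\big[\tfrac{1}{T}\sum_{t=1}^T\e^{\tau_T(\theta)'\psi_t(\theta)}\big]-\tfrac{1}{2T}\ln|\Sigma_T(\theta)|_{\det}$, and to exploit its decomposition into the ET term $Q^{ET}_T$ and a log-variance penalty that is $O(1/T)$. The preliminary work is to control the implicitly defined tilting function $\tau_T(\cdot)$. On the (random) event that $0$ lies in the interior of the convex hull of $\{\psi_t(\theta)\}_{t=1}^T$, the strict convexity of $\tau\mapsto\tfrac{1}{T}\sum_t\e^{\tau'\psi_t(\theta)}$ guarantees that the $\tau_T(\theta)$ solving \eqref{Eq:ESPTiltingEquationMText} exists, is unique, and ---by the implicit function theorem--- is as smooth in $\theta$ as $\psi$ is. I would then show, along the lines of Lemma \ref{Lem:AsTiltingFct}, that $\tau_T(\cdot)\to\tau(\cdot)$ $\P$-a.s. and uniformly on a neighborhood of $\theta_0$, where $\tau(\cdot)$ is the population tilting function and $\tau(\theta_0)=0$ because $\E[\psi(X_1,\theta_0)]=0$.

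Given these facts and a strong (SLLN-based) uniform law of large numbers, $Q^{ET}_T$ converges $\P$-a.s. and uniformly to $Q^{ET}(\theta):=\ln\E[\e^{\tau(\theta)'\psi(X_1,\theta)}]$, while the penalty vanishes uniformly since $\tfrac{1}{T}\ln|\Sigma_T(\theta)|_{\det}=O(1/T)$. For part (i), identification follows from observing that $Q^{ET}(\theta)=\min_{\tau}\ln\E[\e^{\tau'\psi(X_1,\theta)}]$ is the infimum of a convex function that equals $0$ at $\tau=0$ with gradient $\E[\psi(X_1,\theta)]$; hence $Q^{ET}(\theta)\le 0$, with equality if and only if $\E[\psi(X_1,\theta)]=0$, i.e.\ (under the identification part of Assumption \ref{Assp:ExistenceConsistency}) if and only if $\theta=\theta_0$. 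Existence of a maximizer $\hat{\theta}_T$ on the effective random domain, and its strong consistency, then follow from a Wald-type argument: continuity of $Q_T$, compactness of a neighborhood of $\theta_0$ that eventually lies in the domain a.s., uniform a.s.\ convergence, and the unique maximizer of the limit.

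For part (ii), the cleanest route exploits that the ET term is maximized \emph{exactly} at the MM estimate $\theta^*_T$: since $\tfrac{1}{T}\sum_t\e^{\tau_T(\theta)'\psi_t(\theta)}$ attains its maximum (value $1$, so $Q^{ET}_T=0$) precisely when $\tau_T(\theta)=0$, i.e.\ when $\tfrac{1}{T}\sum_t\psi_t(\theta)=0$. A quadratic expansion of the tilting structure then gives $Q^{ET}_T(\theta)=-\tfrac12(\theta-\theta^*_T)'A_T(\theta-\theta^*_T)+\cdots$ with $A_T=O(1)$ positive definite under Assumption \ref{Assp:AsymptoticNormality}. Because the penalty has gradient of order $O_p(1/T)$, the maximizer of the sum is displaced from $\theta^*_T$ by only $O_p(1/T)$, whence $\sqrt{T}(\hat{\theta}_T-\theta^*_T)=o_p(1)$. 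It then suffices to invoke the standard $\sqrt{T}$-asymptotics of the just-identified MM estimator ---a first-order expansion of $\tfrac{1}{T}\sum_t\psi_t(\theta^*_T)=0$ yielding $\sqrt{T}(\theta^*_T-\theta_0)=-[\tfrac{1}{T}\sum_t\partial_\theta\psi_t]^{-1}\tfrac{1}{\sqrt{T}}\sum_t\psi_t(\theta_0)+o_p(1)$, plus a CLT--- to conclude $\sqrt{T}(\hat{\theta}_T-\theta_0)\stackrel{D}{\to}\mathcal{N}(0,\Sigma(\theta_0))$, the MM sandwich variance. This makes precise that the variance penalization is an asymptotically negligible finite-sample correction.

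The main obstacle is the analytic machinery behind the implicitly and only locally defined tilting function $\tau_T(\cdot)$ on a random domain: establishing its existence, smoothness, and ---crucially--- its \emph{uniform}, almost-sure convergence, together with that of its derivatives (needed to justify the expansion of $Q^{ET}_T$ and the nondegeneracy of $A_T$). This is where the entropy-type regularity of Assumption \ref{Assp:ExistenceConsistency} is genuinely used and where most of the effort sits. A secondary difficulty is guaranteeing existence of $\hat{\theta}_T$ on this random domain for $T$ large, and verifying that the $O(1/T)$ penalty, although it shifts the finite-sample maximizer, neither affects the a.s.\ limit nor enters the $\sqrt{T}$-asymptotics.
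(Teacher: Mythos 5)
Your part (i) follows the paper's own Wald skeleton (uniform a.s.\ convergence of the ET term, vanishing penalty, identification via $\min_\tau \ln\E[\e^{\tau'\psi(X_1,\theta)}]\leqslant 0$ with equality iff $\E[\psi(X_1,\theta)]=0$, exactly Lemma \ref{Lem:AsTiltingFct}iv), but as written it has two concrete gaps. First, you localize: $\tau_T(\cdot)\to\tau(\cdot)$ ``uniformly on a neighborhood of $\theta_0$'' and a Wald argument on ``a compact neighborhood of $\theta_0$ that eventually lies in the domain.'' That cannot deliver consistency of a \emph{global} maximizer: without uniform control of the objective on all of $\T$, maximizers far from $\theta_0$ on your random domain are not excluded (and your domain, defined by $0$ lying in the convex hull of $\{\psi_t(\theta)\}_{t=1}^T$, is not shown to eventually cover $\T$). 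The paper instead uses Assumption \ref{Assp:ExistenceConsistency}(d)--(e) to get the population tilt on all of $\T$ and proves uniform convergence over the graph $\Sbf=\{(\theta,\tau):\theta\in\T\ \&\ \tau\in\Tbf(\theta)\}$, which requires showing $\Sbf$ is compact --- nontrivial since $\Sbf$ is the graph of a correspondence, not a product (Lemma \ref{Lem:TauCorrespondence}). Second, your claim that the penalty is uniformly $O(1/T)$ presupposes $\sup_{\theta\in\T}\vert\ln\vert\Sigma_T(\theta)\vert_{\det}\vert$ is bounded, which needs the a.s.\ uniform lower bound $\inf_{\theta\in\T}\vert\Sigma_T(\theta)\vert_{\det}>0$ for $T$ large (Lemma \ref{Lem:AsVarianceTermBehaviour}iv); without it the term $-\frac{1}{2T}\ln\vert\Sigma_T(\theta)\vert_{\det}$ can diverge to $+\infty$ on parts of $\T$ and create spurious maximizers. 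The paper explicitly flags both points as the main departures from the entropy-literature consistency proofs, so neither can be waved through.

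Your part (ii) takes a genuinely different route from the paper, and it is conceptually sound, but it rests on an unproven input: the existence, a.s.\ for $T$ large, of an \emph{exact} solution $\theta^*_T$ to the empirical moment conditions together with its standard $\sqrt{T}$-asymptotics. The paper never establishes or uses this; its Proposition \ref{Prop:AsExpansionEstimator} expands the joint approximate FOCs in the $2m$ variables $(\theta,\tau)$ around $(\theta_0,\tau(\theta_0))$, including the tilting equation $S_T(\theta,\tau)=0$ as an exact equation, precisely so that $\theta^*_T$ is bypassed. In the just-identified smooth case with $M:=\E[\partial\psi(X_1,\theta_0)/\partial\theta']$ invertible, existence of an exact zero near $\theta_0$ can be supplied (an inverse-function or Brouwer-type argument plus uniform convergence), so your route is repairable, and it buys something: the displacement bound $\hat{\theta}_T-\theta^*_T=O_{\P}(T^{-1})$ is sharper than the $o_{\P}(T^{-1/2})$ equivalence needed for the theorem, and it quantifies the shrinkage as a genuine $1/T$ correction; it is also close in spirit to the paper's own remark that the approximate FOC $\partial M_{1,T}/\partial\theta=O(T^{-1})$ could replace $\partial L_T/\partial\theta=O(T^{-1})$. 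What it does not buy is economy: making the quadratic expansion of the profiled ET term rigorous (nondegeneracy of $A_T\to M'V^{-1}M=\Sigma(\theta_0)^{-1}$, control of $\partial\tau_T/\partial\theta'$ as in Lemma \ref{Lem:DerivativeImplicitFunctionImplicit}) and bounding the penalty gradient uniformly (the derivative formulas and domination lemmas of Appendix \ref{Sec:LTAndDerivatives} and Lemma \ref{Lem:Bound2ndPartialLPartialTheta}) requires essentially the same heavy machinery the paper deploys, so the ``cleaner'' route is not lighter --- it relocates, rather than removes, where the effort sits.
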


Theorem \ref{theorem:ConsistencyAsymptoticNormality} shows that the ESP estimator has the same first-order asymptotic properties  as the MM and hence the recently proposed moment-based estimators. Although the asymptotic properties of the ESP estimator are standard, the  proof of Theorem \ref{theorem:ConsistencyAsymptoticNormality} is quite involved. The crux of the proof is to show that the variance penalization  $-\frac{1}{2T} \ln \vert \Sigma_T(\theta)\vert_{\det}$  vanishes sufficiently quickly asymptotically, so it does not distort the first-order asymptotic.

\subsection{More on inference\,: The trinity$+1$}

The ESP estimator provides different ways to test  parameter restrictions
\begin{eqnarray}
\mathrm{H}_0: r(\theta_{0})=0_{q \times 1} \label{Eq:HypParameterRestriction}
\end{eqnarray}
where $r: \T \rightarrow \R^q$ with $q \in \ldsb 1, \infty\ldsb$.
More precisely, within the ESP framework,  there exist the usual trinity of  Wald, LM and ALR tests statistics, plus  another test statistic, which we call the exponential tilting (ET)  test statistic.  Our ET test has a structure similar to a test for over-identifyied moment conditions in \cite{1998ImbSpaJoh}.

Under a mild  standard additional assumption, the following theorem shows that the Wald, LM  ALR, and ET statistics asymptotically follow a chi-squared distribution with $q$ degrees of freedom.

\begin{theorem}[The trinity$+1$: Wald, LM, ALR  and ET tests] \label{theorem:TrinityPlus1} Define $R(\theta):=\frac{\partial r(\theta)}{\partial \theta'}$, and the following Wald, LM,  ALR and ET test statistics
\begin{eqnarray*}
\mathrm{Wald}_T& :=& Tr(\hat{\theta}_T)'[R(\hat{\theta}_T)\widehat{\Sigma(\theta_0)}_TR(\hat{\theta}_T)']^{-1}r(\hat{\theta}_T)\\
\mathrm{LM}_T &:=& T \check{\gamma}_T'[R(\check{\theta}_T)  \widehat{\Sigma(\theta_0)}_TR(\check{\theta}_T)']\check{\gamma}_T=\frac{\partial\ln[\hat{f}_{\theta^{*}_T}(\check{\theta}_T)]}{\partial \theta' }\widehat{\Sigma(\theta_0)}_T^{-1}\frac{\partial\ln[\hat{f}_{\theta^{*}_T}(\check{\theta}_T)]}{\partial \theta }\\
\mathrm{ALR}_T & := & 2\{\ln[\hat{f}_{\theta^{*}_T}(\hat{\theta}_T)]-\ln[ \hat{f}_{\theta^{*}_T}(\check{\theta}_T)]\}\\
\mathrm{ET}_T&:=& T \tau_T(\check{\theta}_T)'\widehat{V}_T\tau_T(\check{\theta}_T)
\end{eqnarray*}
where $\widehat{\Sigma(\theta_0)}_T$ and $\widehat{V}_T$ are symmetric matrices that converge in probability to  $\Sigma(\theta_0)$ and  $ \E[ \psi(X_1, \theta_0)\psi(X_1, \theta_0)'] $, respectively; and where $\check{\gamma}_T$ and $\check{\theta}_T$ respectively denote the Lagrange multiplier and a solution to the maximization of  $\hat{f}_{\theta^{*}_T}(\theta)$ w.r.t. $\theta\in \T$ under the constraint that  $r(\theta)=0_{q \times 1}$.\footnote{In mathematical terms, $\check{\theta}_T\in \arg \max_{\theta \in \check{\Theta}} \hat{f}_{\theta^{*}_T}(\theta)$ where $\check{\Theta}:=\{\theta \in \Theta: r(\theta)=0_{q \times 1}\}$ and $\check{\gamma}_T$ is the Lagrangian multiplier s.t.  $\frac{1}{T} \frac{\partial \ln[\hat{f}_{\theta^{*}_T}(\check{\theta}_T)]}{\partial \theta} + \frac{\partial r(\check{\theta}_T)'}{\partial \theta}\check{\gamma}_T=0_{m \times 1}$.}
Under Assumptions \ref{Assp:ExistenceConsistency}, \ref{Assp:AsymptoticNormality} and \ref{Assp:Trinity}, if the test hypothesis \eqref{Eq:HypParameterRestriction} holds,  as $T \rightarrow \infty$,
\begin{eqnarray*}
\mathrm{Wald}_T, \mathrm{LM}_T, \mathrm{ALR}_T, \mathrm{ET}_T \stackrel{D}{\rightarrow } \chi^2_{q}.
\end{eqnarray*}

\end{theorem}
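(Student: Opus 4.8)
The plan is to reduce all four statistics to the classical trinity argument by establishing a local quadratic (LAN-type) expansion of the log-ESP objective around $\theta_0$ and then reading off each statistic as a quadratic form in a common asymptotically Gaussian vector. Write $g_T(\theta):=\ln[\frac{1}{T}\sum_{t=1}^T\e^{\tau_T(\theta)'\psi_t(\theta)}]$, so that, up to an additive constant, $\ln\hat f_{\theta^*_T}(\theta)=T g_T(\theta)-\frac12\ln|\Sigma_T(\theta)|_{\det}$. Two structural facts drive everything. First, because $\tau_T(\theta)$ satisfies the tilting equation \eqref{Eq:ESPTiltingEquationMText}, the terms in $\partial\tau_T/\partial\theta$ cancel by an envelope argument and $\partial g_T(\theta)/\partial\theta=\bar D_T(\theta)'\tau_T(\theta)$, where $\bar D_T(\theta):=\sum_t w_{t,\theta}\partial\psi_t(\theta)/\partial\theta'$. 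Second, Taylor-expanding \eqref{Eq:ESPTiltingEquationMText} in $\tau$ about $0$ yields $\tau_T(\theta)=-\widehat\Omega_T(\theta)^{-1}\bar\psi_T(\theta)+o_P(\|\bar\psi_T(\theta)\|)$, with $\bar\psi_T(\theta):=\frac1T\sum_t\psi_t(\theta)$ and $\widehat\Omega_T(\theta)$ the tilted second-moment matrix. Combined with Theorem \ref{theorem:ConsistencyAsymptoticNormality} and the fact (its crux) that the variance term $\frac1{2T}\ln|\Sigma_T(\theta)|_{\det}$ and its first two derivatives are negligible after the appropriate scaling, these give, near $\theta_0$,
\begin{equation*}
\tfrac1{\sqrt T}\tfrac{\partial\ln\hat f_{\theta^*_T}(\theta_0)}{\partial\theta}=-D_0'\Omega_0^{-1}\sqrt T\,\bar\psi_T(\theta_0)+o_P(1)\stackrel{D}{\to}\mathcal N(0,A),\qquad \tfrac1T\tfrac{\partial^2\ln\hat f_{\theta^*_T}(\theta_0)}{\partial\theta\partial\theta'}\stackrel{P}{\to}-A,
\end{equation*}
where $D_0:=\E[\partial\psi(X_1,\theta_0)/\partial\theta']$, $\Omega_0:=\E[\psi(X_1,\theta_0)\psi(X_1,\theta_0)']$ and $A:=\Sigma(\theta_0)^{-1}=D_0'\Omega_0^{-1}D_0$ (using that $D_0$ is square and invertible in the just-identified case).

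Next I would record the joint limit theory for the constrained problem. Since $\check\theta_T$ maximizes $\ln\hat f_{\theta^*_T}$ under $r(\theta)=0$, an argument parallel to Theorem \ref{theorem:ConsistencyAsymptoticNormality} --- using Assumption \ref{Assp:Trinity} for the smoothness of $r$ and the full row rank $q$ of $R_0:=R(\theta_0)$ --- gives $\check\theta_T\to\theta_0$ $\P$-a.s. and, writing $Z_T:=\frac1{\sqrt T}\partial\ln\hat f_{\theta^*_T}(\theta_0)/\partial\theta$, the standard constrained-extremum expansions
\begin{equation*}
\sqrt T(\hat\theta_T-\theta_0)=A^{-1}Z_T+o_P(1),\quad \sqrt T(\check\theta_T-\theta_0)=\big[A^{-1}-A^{-1}R_0'(R_0A^{-1}R_0')^{-1}R_0A^{-1}\big]Z_T+o_P(1),
\end{equation*}
together with $\sqrt T\,\check\gamma_T=-(R_0A^{-1}R_0')^{-1}R_0A^{-1}Z_T+o_P(1)$, obtained by linearizing the constraint as $r(\theta)\approx R_0(\theta-\theta_0)$ and using the Lagrangian first-order conditions of the footnote. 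Since $A^{-1}=\Sigma(\theta_0)$ and $Z_T\stackrel{D}{\to}\mathcal N(0,A)$, these yield $\sqrt T(\hat\theta_T-\theta_0)\stackrel{D}{\to}\mathcal N(0,\Sigma(\theta_0))$, $\sqrt T\,r(\hat\theta_T)\stackrel{D}{\to}\mathcal N(0,R_0\Sigma(\theta_0)R_0')$ by the delta method, and $\sqrt T\,\check\gamma_T\stackrel{D}{\to}\mathcal N(0,(R_0\Sigma(\theta_0)R_0')^{-1})$.

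With these in hand the three classical statistics follow by continuous mapping and Slutsky, using the consistency of $\widehat{\Sigma(\theta_0)}_T$ and of $R(\cdot)$ from Assumption \ref{Assp:Trinity}. The Wald statistic is a quadratic form in $\sqrt T\,r(\hat\theta_T)$ with weight converging to the inverse of its own limiting covariance, hence $\stackrel{D}{\to}\chi^2_q$. The LM statistic is a quadratic form in $\sqrt T\,\check\gamma_T$ with weight $R(\check\theta_T)\widehat{\Sigma(\theta_0)}_TR(\check\theta_T)'\to R_0\Sigma(\theta_0)R_0'$, again the inverse limiting covariance, giving $\chi^2_q$; its second representation follows from the constrained Lagrangian first-order condition, which ties the score at $\check\theta_T$ to $R(\check\theta_T)'\check\gamma_T$. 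For the ALR statistic, the variance-term difference $\ln|\Sigma_T(\hat\theta_T)|_{\det}-\ln|\Sigma_T(\check\theta_T)|_{\det}$ is $o_P(1)$ (both estimators converge to $\theta_0$ and the term is continuous), so $\mathrm{ALR}_T=2T[g_T(\hat\theta_T)-g_T(\check\theta_T)]+o_P(1)$; substituting the quadratic expansion, this equals the gap between the unconstrained and constrained maxima of $Z_T'h-\frac12 h'Ah$, namely $Z_T'A^{-1}R_0'(R_0A^{-1}R_0')^{-1}R_0A^{-1}Z_T+o_P(1)$, a rank-$q$ idempotent quadratic form in $A^{-1/2}Z_T\stackrel{D}{\to}\mathcal N(0,I)$, hence $\chi^2_q$. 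This also renders the asymptotic equivalence of Wald, LM and ALR transparent.

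Finally, for the ET statistic I would use $\tau_T(\check\theta_T)=-\Omega_0^{-1}\bar\psi_T(\check\theta_T)+o_P(T^{-1/2})$ together with $\sqrt T\,\bar\psi_T(\check\theta_T)=\sqrt T\,\bar\psi_T(\theta_0)+D_0\sqrt T(\check\theta_T-\theta_0)+o_P(1)$. Writing $s:=\sqrt T\,\bar\psi_T(\theta_0)\stackrel{D}{\to}\mathcal N(0,\Omega_0)$ and inserting the constrained expansion, the identity $D_0\Sigma(\theta_0)D_0'\Omega_0^{-1}=I$ (valid because $D_0$ is invertible) collapses the expression to $\sqrt T\,\tau_T(\check\theta_T)=-\Omega_0^{-1}D_0\Sigma(\theta_0)R_0'(R_0\Sigma(\theta_0)R_0')^{-1}R_0\Sigma(\theta_0)D_0'\Omega_0^{-1}s+o_P(1)=:-Bs+o_P(1)$. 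A direct computation using $D_0'\Omega_0^{-1}D_0=\Sigma(\theta_0)^{-1}$ shows $B\Omega_0B=B$, so $\mathrm{ET}_T=s'B\Omega_0Bs+o_P(1)=s'Bs+o_P(1)$ with $\Omega_0^{1/2}B\Omega_0^{1/2}$ symmetric idempotent of trace $q$; hence $\mathrm{ET}_T\stackrel{D}{\to}\chi^2_q$, and it is in fact asymptotically equivalent to the trinity. The main obstacle throughout is not the algebra but the analytic groundwork: establishing the constrained analogue of Theorem \ref{theorem:ConsistencyAsymptoticNormality} for $(\check\theta_T,\check\gamma_T)$ and controlling the implicit tilting map $\tau_T(\cdot)$ and the variance term $\ln|\Sigma_T(\cdot)|_{\det}$ uniformly enough to justify the quadratic expansion with $o_P(1)$ remainders --- precisely the delicate part already flagged for Theorem \ref{theorem:ConsistencyAsymptoticNormality}.
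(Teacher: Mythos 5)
Your proposal is correct, and its final reductions (delta method for Wald, the Lagrangian FOC link for LM, the max-gap quadratic form for ALR, the idempotent projection plus Cochran for ET) coincide with the paper's; indeed your matrix $B=\Omega_0^{-1}D_0\Sigma R_0'(R_0\Sigma R_0')^{-1}R_0\Sigma D_0'\Omega_0^{-1}$ collapses, via $\Omega_0^{-1}D_0\Sigma=(D_0')^{-1}$, to exactly the expression $(M')^{-1}R'(R\Sigma R')^{-1}RM^{-1}=V^{-1/2}P_{\Sigma^{1/2}R'}V^{-1/2'}$ that the paper obtains in its Proposition \ref{Prop:ConstrainedEstAsNormality}, and $B\Omega_0B=B$ with $\tr(B\Omega_0)=q$ is the paper's idempotency step. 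Where you genuinely diverge is in the expansion machinery. You profile out $\tau$: the envelope identity $\partial g_T/\partial\theta=\bar D_T(\theta)'\tau_T(\theta)$ together with the linearization $\tau_T(\theta)=-\widehat\Omega_T(\theta)^{-1}\bar\psi_T(\theta)+o_P(\|\bar\psi_T(\theta)\|)$ yields a scalar LAN-type expansion of the profile objective with effective information $A=\Sigma(\theta_0)^{-1}$, after which everything is classical constrained $M$-estimation. The paper deliberately does the opposite: it remarks that its approximate FOCs are written in the $2m$ variables $(\theta,\tau)$, keeps the tilting equation $S_T(\theta,\tau)=0$ as an estimating equation alongside the $\theta$-score, and inverts the limiting bordered Jacobian $\bigl[\begin{smallmatrix}0&M'\\M&V\end{smallmatrix}\bigr]$ (bordered further by $R$ in Proposition \ref{Prop:ConstrainedEstAsNormality} and Lemma \ref{Lem:PartialLAndSAndR} for the constrained case), thereby obtaining the joint law of $(\check\theta_T,\tau_T(\check\theta_T),\check\gamma_T)$ in one stroke. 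Your route is legitimate and not circular --- thanks to the envelope form, the profile Hessian requires only $\partial\tau_T/\partial\theta\to-V^{-1}M$ (the paper's Lemma \ref{Lem:DerivativeImplicitFunctionImplicit}), not second derivatives of the implicit map --- but one point deserves emphasis: the fixed-$\tau$ Hessian $\partial^2L_T/\partial\theta\partial\theta'$ actually tends to \emph{zero} (the paper's Lemma \ref{Lem:PartialLTheta0Tau0}ii); the curvature $-A$ in your display is the \emph{profile} Hessian, generated entirely through the implicit dependence $\tau_T(\theta)$, so the two statements are consistent but must not be conflated. Your treatment of the variance term is also coarser but sufficient for the statistics themselves: for ALR you only need the $o_P(1)$ difference $\ln|\Sigma_T(\hat\theta_T)|_{\det}-\ln|\Sigma_T(\check\theta_T)|_{\det}$ at two consistent estimators plus the approximate FOC after $\sqrt T$-scaling, whereas the paper controls all first and second derivatives of $M_{2,T},M_{3,T}$; note, however, that to justify your quadratic expansion with uniform $o_P(1)$ remainders in a neighborhood of $\theta_0$ you still need exactly that second-derivative control (your own closing caveat), so the lengthy groundwork is displaced rather than eliminated. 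What each approach buys: yours makes the asymptotic equivalence of the four statistics transparent and keeps the algebra in the familiar $m$-dimensional form; the paper's joint-system route avoids repeated differentiation through $\tau_T(\cdot)$ in the main expansions, handles the approximate (rather than exact) FOCs cleanly, and delivers the full joint asymptotic normality of the constrained triple as a reusable by-product.
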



  Theorem \ref{theorem:TrinityPlus1} can also be used to obtain valid confidence regions by  the inversion of the  test statistics with  $\check{\theta}_T=\theta_0$.   Our  Wald, LM  and ALR test statistics share some similarity with the test statistics proposed in \cite{1997KitStu}, \cite{1998ImbSpaJoh} and \cite{2003RobRonYou}. The main difference is that the latter are built around (possibly constrained) maximizers of the ET term, while our tests statistics are based on the (possibly constrained) ESP estimator, which maximizes the whole ESP approximation including the variance term.

\section{Examples}\label{Sec:Examples}

In the present section, we  further investigate and illustrate the finite-sample properties of the ESP estimator.\footnote{In addition to our finite-sample analysis of the ESP objective function (Section \ref{sec1}), our derivation of the first-order asymptotic properties (Section \ref{Sec:Asymptotic}),    our Monte-Carlo simulations and  empirical application (present section),  another way  to shed light on the finite-sample properties of the ESP estimator would be to derive its higher-order asymptotic properties such as its second-order bias  \citep[e.g.,][]{1996RilstoneSrivastavaUllah}.
In the present paper, we do not follow this way because it would add several dozens of pages of proofs without much insight: Our preliminary derivations yield a long and complicated structure for the second-order bias, from which we struggle to gain insight. The length and the complexity of the second-order bias mainly comes from (i) the derivatives of the variance $\left|\Sigma_T(\theta) \right|_{\det}^{-1/2}$; and (ii) the reliance on the exact FOCs instead of approximate FOCs.   A mild preview of this complexity can be seen in \citet[Appendix \ref{Sec:LTAndDerivatives}]{2019HolcblatSowell}.}  We focus on the comparison  with the   ET estimator, as previously noted, (i) in the just-identified case, which is the case addressed in the present paper, the MM estimator and the recently proposed moment-based estimators are  equal to the ET estimator so  there is no loss of generality in terms of point estimation, and (ii) the ESP objective function nests the ET objective function, so that the source of the difference between the two is easily understood ---it necessarily comes from the variance term (see Section \ref{Sec:ESPvsMMandOthers}). For brevity,  we  present the main results for a numerical and an empirical example that are known to be challenging for moment-based estimation.

\subsection{Numerical example\,: Monte-Carlo simulations}\label{Sec:NumericalExp}

We simulate the just-identified version of the \cite{1996HalHor}  model, which has become  a standard benchmark to compare  the performance of moment-based estimators in statistics \citep[e.g.,][]{2007Schennach,2012LoRonchetti}   and econometrics \citep[e.g.,][]{1998ImbSpaJoh,2001Kitamura}. This model can be interpreted as
a simplified consumption-based asset pricing model where $\beta $ is the relative risk aversion (RRA) parameter \citep{2002GreLamSmi}.
  In the simulations, we estimate the two parameters $(\mu, \beta)$ with the  moment function  
\begin{eqnarray*}
\psi_t(\beta , \mu ) = \left[ \begin{array}{c}
                        \exp\left\{\mu- \beta \left( X_{t} + Y_{t} \right) + 3 Y_{t}  \right\} - 1 \\
    Y_t  \left( \frac{}{} \exp\left\{\mu  - \beta \left( X_t + Y_t \right) + 3 Y_t  \right\} - 1  \right)
                      \end{array}
  \right]
\end{eqnarray*}
where  $\mu_0 = -.72 $, $ \beta_0 = 3$,
and 
$X_{t}$ and $Y_{t}$ are jointly i.i.d. random variables with distribution $\mathcal{N}(0, .16)$. 

\begin{table}[htp]\caption{\textbf{ESP vs. ET  estimator  for the just-identified Hall and Horowitz model.}     }\label{table:HH}
\begin{center}

\begin{tabular}{c r r r r r}
\hline
\hline
$T$ & & \multicolumn{2}{c}{$\beta$} & \multicolumn{2}{c}{$\mu$} \\
\hline
& & \multicolumn{1}{c}{ET} & \multicolumn{1}{c}{ESP}  & \multicolumn{1}{c}{ET} & \multicolumn{1}{c}{ESP}  \\
\cline{2-6}
 & MSE   & 3.6228  &  0.7065  &  1.5391  &  0.2319    \\ 
25 & Bias   & 0.4782  &  -0.0048  &  -0.1855  &   0.1089      \\ 
 & Var.   &  3.3941  &  0.7065  &  1.5047  &  0.2200      \\ 
\cline{2-6}
 & MSE    &   1.7024  &  0.3344  &  0.9959  &  0.1292     \\ 
50 & Bias   & 0.2670  &  -0.0160  &  -0.1330  &   0.0619       \\ 
 & Var.   &  1.6311  &  0.3342  &  0.9782  &  0.1254     \\ 
\cline{2-6}
 & MSE    & 0.6812  &  0.1742  &  0.4780  &  0.0645    \\ 
100 & Bias   &   0.1429  &  -0.0119  &  -0.0735  &   0.0388      \\ 
 & Var.   &  0.6608  &  0.1741  &  0.4726  &  0.0630      \\ 
\cline{2-6}
 & MSE    &  0.2162  &  0.0830  &  0.1457  &  0.0324    \\ 
200 & Bias   &  0.0684  &  -0.0113  &  -0.0340  &   0.0223     \\ 
 & Var.   & 0.2115  &  0.0829  &  0.1445  &  0.0319    \\ \hline
\end{tabular}

\end{center}
\raggedright \begin{flushleft}\begin{tiny} Note: The reported statistics are based on 10,000 simulated samples of sample size equal to the indicated $T$. For ET, the parameter space is restricted to $\beta <15$ in order to limit  the erratic behaviour of the estimator  at sample sizes $T=25$ and $50$. No  such parameter restriction is imposed for ESP.      \end{tiny}\end{flushleft}\begin{center}\begin{tiny}     \end{tiny} \end{center}
\end{table}

Table \ref{table:HH} reports the mean-squarred error (MSE), bias and variance of the ESP and ET estimators for different sample sizes.  The   MSE, the variance and the bias of the ESP estimator are always smaller than for the ET   estimator, and the differences are notable, especially for small sample sizes. In fact,
Table \ref{table:HH} understates the improvement delivered by the variance penalization of the ESP objective function. We help the ET estimator (or equivalently,  the MM estimator),\footnote{We numerically check that they deliver the same estimates even for the small sample sizes $T=25$ and $50$.  } by restricting its parameter space to $\beta< 15$. Without this parameter restriction, the behaviour of the ET estimator is  very unstable.
  An analysis of the typical shape of the objective functions for small sample size explains this phenomenon.    The typical ET objective function has a ridge that follows from around the population parameter values ($\beta_{0}=3$, $\mu_{0}=-.72$) towards $(1000, -600)$.  The  ridgeline is not totally flat, and it often   has a gentle downward slope as we move away from the area near the population parameter values.  However, regularly, for some simulated samples, the very top  of the ridge   is  extremely far from the population parameter values, so that  ET estimates  are very far from the population parameter values. This does not happen for the ESP estimator. The variance term of the ESP objective function  ensures that the ridge drops sufficiently as we move away from the maximum that is near the population parameter value. Thus, in line with our finite-sample analysis of the ESP objective (Section \ref{Sec:ESPvsMMandOthers}), the ESP estimator is much more stable.

\subsection{Empirical example} \label{Sec:EmpExample}

In this section, we present an empirical example from   asset pricing. Since \cite{1982HanSin}, moment-based estimation is standard in consumption-based asset pricing.  For brevity, we focus on the key features of the example.  See \citet[Appendix \ref{Ap:EmpiricalExample} on p. \pageref{Ap:EmpiricalExample}]{2019HolcblatSowell}  for  additional information and comparisons.    
\begin{table}[ht!] \caption{\textbf{ET  vs. ESP inference (1890--2009)}} \label{Tab:ETvsESP1890Short}
 \medskip
 \begin{tabular}{ll}
\hline\multicolumn{2}{l}{Empirical moment condition: $\frac{1}{2009-1889}\sum_{t=1890}^{2009}\left[  \left(\frac{C_{t}}{C_{t-1}}\right)^{-\theta}(R_{m,t}-R_{f,t})\right]=0 $, where} \\
\multicolumn{2}{l}{$R_{m,t}:=$ gross market  return,\, $R_{f,t}:=$risk-free asset gross return,\, $C_t:=$ consumption, }\\
\multicolumn{2}{l}{ and $\theta:=$relative risk aversion;}\\
\multicolumn{2}{l}{\text{Normalized ET:=}$\exp\negthickspace\left\{T\ln\left[ \frac{1}{T}\sum_{t=1}^T\e^{\tau_T(.) ' \psi_t(.)}\right]\right\}\negthickspace/\negthickspace\int_{\Theta} \exp\negthickspace\left\{T\ln\left[ \frac{1}{T}\sum_{t=1}^T\e^{\tau_T(\theta) ' \psi_t(\theta)}\right]\right\}\d \theta$;    }\\
\multicolumn{2}{l}{\text{Normalized ESP:=}$ \hat{f}_{\theta^*_T}(.)/\negthickspace\int_{\Theta} \hat{f}_{\theta^*_T}(\theta)\d \theta$;    }\\
\multicolumn{2}{l}{$\hat{\theta}_{\mathrm{ET},T}=\hat{\theta}_{\mathrm{MM},T}=50.3$ (bullet) and $\hat{\theta}_{\mathrm{ESP},T}=32.21$ (bullet); }\\
\multicolumn{2}{l}{  ET and ESP  support $=[-218.2,289.0]$; 95\% ET ALR conf. region=$[18.3, 289.0] $ (stripe);  }\\
\multicolumn{2}{l}{   95\% ESP ALR conf. region=$[15.0,112.7]$ (stripe). }
  \\ 
\hline
\includegraphics[scale=.351]{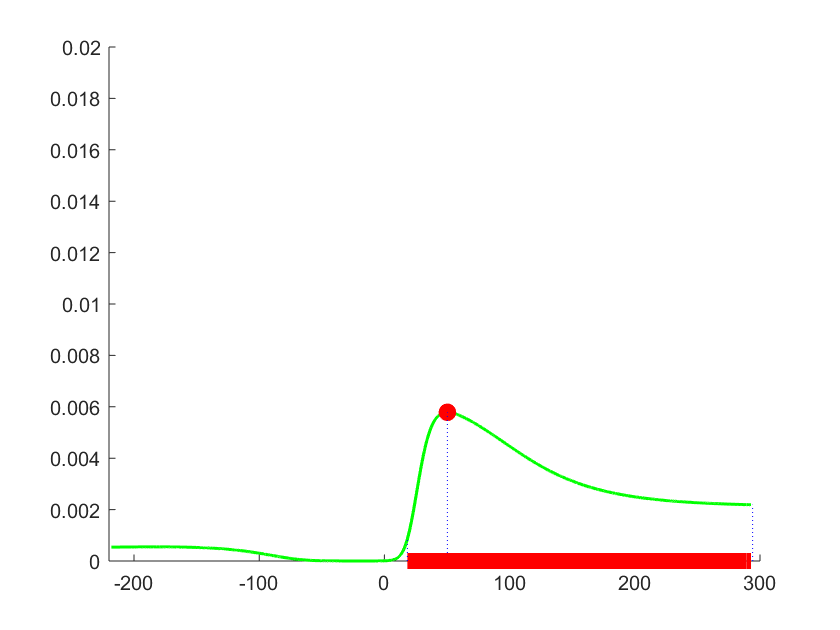} & \includegraphics[scale=.351]{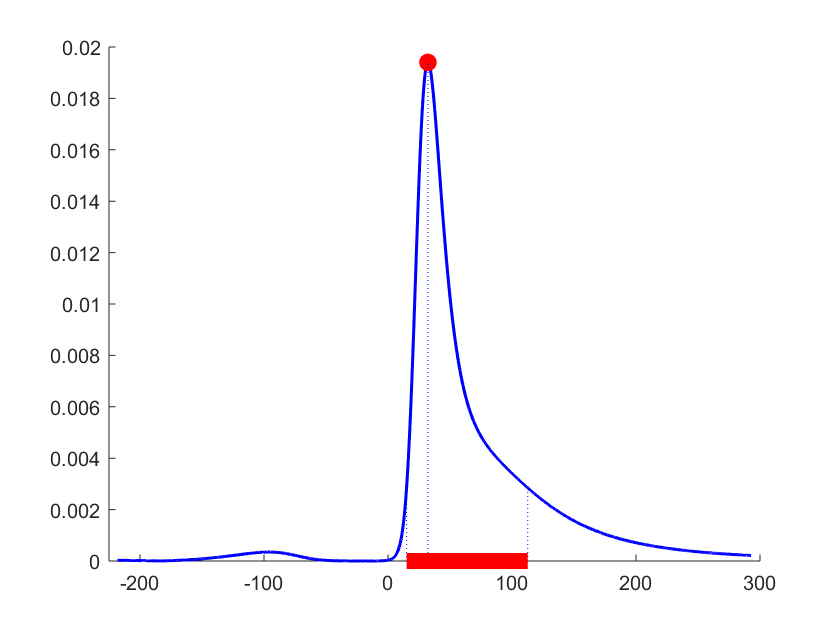} \\ 
  \begin{small}(A) ET est. and ALR conf. region.\end{small} &  \begin{small}(B) ESP est. and ALR conf. region.\end{small} \\ 
\hline
\hline
\end{tabular}
\end{table} 

We  estimate the relative risk aversion  (RRA) $\theta$ of a representative agent of the US economy.   Previous studies have shown that existing moment-based estimation approaches often produce   unstable  RRA  parameter estimates. We rely on the following   moment condition
\begin{eqnarray}
\E\left[\left(\frac{C_{t}}{C_{t-1}}\right)^{-\theta}(R_{m,t}-R_{f,t})\right]=0,\label{Eq:KeyEmpMomCond}
\end{eqnarray}
where $\frac{C_{t}}{C_{t-1}}$ is the growth consumption and $(R_{m,t}-R_{f,t})$ the market return in excess of the risk-free rate.
 The moment condition, which is common to many consumption-based asset pricing models, and the data are similar to \cite{2012JulGho} corresponding to standard US data at yearly frequency from Shiller's website spanning from 1890 to 2009. We report ET and ESP estimates as well as confidence regions based on the inversion of the ALR test statistics of Theorem \ref{theorem:TrinityPlus1} (p. \pageref{theorem:TrinityPlus1}) with $\check{\theta}_T=\theta_0$.  The latter have the advantage to take into account the whole shape of the objective function unlike $t$-statistics-based confidence regions, which only account
for the shape of the objective function in a neighborhood of the estimate  through its standard errors.

In Table \ref{Tab:ETvsESP1890Short}, Figures (A) and (B)  respectively display the ET term and the ESP approximation. For ease of comparison, the scale is the same, and we normalize both of them so they integrate to one. The normalized ET term is much   flatter around its maximum than the normalized ESP approximation. Flatness of the objective function  around the estimate has  been documented for other existing moment-based estimators, and it has often been regarded as one of the main sources of the instability of the RRA estimates  \citep[e.g.,][]{2000StoWri,2001NeeRoyWhi}. Figure (B)  shows that the normalized ESP is  sharp around the ESP estimator. The relative sharpness of the ESP yields  sharper confidence regions\,: The ESP confidence region is less than half its ET counterpart.  In light of the variance penalization term in the ESP objective function (Section \ref{Sec:ESPvsMMandOthers} on p. \pageref{Sec:ESPvsMMandOthers}) and the shrinkage-like behavior of the ESP estimator in  the Monte-Carlo simulations (Section \ref{Sec:NumericalExp}), the relative sharpness of the ESP inference is not surprising. In \cite{2019HolcblatSowell}, additional empirical evidences  corroborate  the increased stability and precision of the ESP estimator w.r.t. the ET estimator (or equivalently, MM estimator).

\section{Connection to the literature and further research directions}\label{Sec:Literature}

 The present paper demonstrates a previously unknown connection between  the SP approximation and   moment-based estimation, and hence it is related to many papers in addition to the ones already cited.
   Following  \cite{1954Dan},  the literature in
statistics
\citep[e.g.,][]{1986EastonRonchetti,1991Spady,1992Jen,2012LaVecchiaRonchettiTrojani, 2015BrodaKan,2018FasioloEtAl}   and econometrics \citep[e.g.,][]{1978Phi,1979HolPhi,1982Phi,1994Lieberman,2006AitSahYu}   has 
used  the SP (saddlepoint) and ESP approximations  to obtain  accurate  approximations  of  distributions, especially in the tails.  The strand of the SP literature that is closest to our paper derives SP approximations to   the distribution of statistics that correspond to solutions of nonlinear estimating equations. The latter strand of literature started with \cite{1982Fie} and continued with \cite{1990Sko,1993MontiRonchetti,1997Imb,1998JenWoo,2000AlmFieRob,2003RobRonYou},  and
\cite{2003RonTro}, among others.
More recently, \cite{2010CzeRon}, \cite{2011MaRon}, and  \cite{2012LoRonchetti,2013KunRil,2015KundhiRilstone}   propose more accurate  tests for indirect inference, functional measurement error models, moment condition models, nonlinear estimators and GEL (generalized empirical likelihood) estimators, respectively.
To the best of our knowledge, unlike the present paper, none of the prior papers use the SP or the ESP to develop an estimation method that yields a novel moment-based estimator.
In ongoing work, we  generalize the ESP approximation  to the over-identified case, and  establish further good mathematical properties.

\bibliographystyle{kluwer}
\bibliography{general}

\section*{Notes and acknowledgements}
Parts of the present paper have previously circulated under the title ``The Empirical Saddlepoint Likelihood Estimator Applied to Two-Step GMM" \citep{2009Sow}. Some proofs of the present paper also borrow technical results from \cite{2012Hol}. Helpful comments were provided by Philipp Ketz (discussant), Eric Renault, Aman Ullah and seminar/conference participants at Carnegie Mellon University, CFE-CMStatistics 2017,  Swiss Finance Institute (EPFL and the University of Lausanne), 10th French Econometrics Conference (Paris School of Economics),  at the Econometric Society European Winter Meeting 2018 (University of Naples Federico II), and at the University of Luxembourg.
\newpage
\appendix

\begin{center}\begin{LARGE}\textbf{ ONLINE APPENDIX:\\
\smallskip
The ESP estimator}\end{LARGE}\\
\medskip
Benjamin Holcblat and Fallaw Sowell\end{center}
\begin{small}
\bigskip

 This appendix mainly consists of the proofs of Theorem \ref{theorem:ConsistencyAsymptoticNormality}, existence and consistency and asymptotic normality of the ESP estimator, and Theorem \ref{theorem:TrinityPlus1}, asymptotic distributions of the Trintiy+1 test statistics.   The proof of Theorem  \ref{theorem:ConsistencyAsymptoticNormality}  builds on the traditional uniform convergence proof technique of  \citet{1949Wal}.   The proof of Theorem \ref{theorem:TrinityPlus1} adapts the  usual  way of deriving the trinity tests.
The length of the proofs is mainly due to the variance term $\vert\Sigma_T(\theta)\vert_{\det}^{-\frac{1}{2}} $  and the high-level of details.   The latter should make the proofs more transparent, and should ease the use of the intermediary results in further research.

In addition to the proofs, this appendix contains a table of contents, some formal definitions, the precise assumptions of the paper, a discussion thereof, and additional information regarding the examples.

\tableofcontents
\section{Definitions and assumptions}

\begin{defn}[ESP approximation; \citealp{1994RonWel}]\label{Def:ESPApproximation}
The ESP approximation of the distribution of the solution to the empirical moment conditions \eqref{Eq:EstimatingEquation} is
\begin{eqnarray}
\hat{f}_{\theta^{*}_T}(\theta):=\exp\left\{T\ln\left[ \frac{1}{T}\sum_{t=1}^T\e^{\tau_T(\theta) ' \psi_t(\theta)}\right]\right\}  \left(\frac{T}{2\pi}\right)^{m/2}\left|\Sigma_T(\theta) \right|_{\det}^{-\frac{1}{2}}
\label{Eq:ESPApproximationDefn}
\end{eqnarray}
where $|.|_{\det}$ denotes the determinant function, $\theta^*_T$ a solution to the empirical moment conditions \eqref{Eq:EstimatingEquation},  $\psi_t(.):=\psi (X_t,.) $,  and
\begin{eqnarray}
\Sigma_T(\theta) & := & \left[ \sum_{t=1}^T \hat{w}_{t,\theta}\frac{\partial \psi_{t} (\theta)}{\partial \theta'}\right]^{-1}\left[\sum_{t=1}^T \hat{w}_{t,\theta}\psi_{t}(\theta)\psi_{t}(\theta)'\right]\left[ \sum_{t=1}^T \hat{w}_{t,\theta}\frac{\partial \psi_{t} (\theta)'}{\partial \theta}\right]^{-1}, \label{Eq:TiltedSigma_T}\\
\hat{w}_{t,\theta} & := & \frac{\exp\left[\tau_T(\theta) ' \psi_t(\theta)\right]}{\sum_{i=1}^T\exp\left[\tau_T(\theta) ' \psi_i(\theta)\right]}  \text{ , }\label{Eq:TiltedWeight} \\
\tau_T(\theta) &\text{such that\ (s.t.)}&\sum_{t=1}^T \psi_{t}(\theta)\exp\left[\tau_T(\theta) ' \psi_t(\theta)\right]=0_{m\times 1} \text{.  }\label{Eq:ESPTiltingEquation}
\end{eqnarray}

\end{defn}

\begin{defn}[ESP estimator]\label{Defn:ESPEstimator}
The  ESP estimator $\hat{\theta}_T$ is a maximizer of the ESP approximation \eqref{Eq:ESPApproximationDefn}, i.e.,
\begin{eqnarray}
\hat{\theta}_T \in \arg \max_{\theta \in \T}  \hat{f}_{\theta^{*}_T}(\theta).
\end{eqnarray}
\end{defn}

We require the following assumption to prove the existence and the consistency of the ESP estimator.

\begin{assp}\label{Assp:ExistenceConsistency}
\textbf{\emph{(a)}} The data $(X_t)_{t=1}^{\infty}$ are a sequence of  i.i.d. random  vectors of dimension p  on the complete probability sample space $(\Omegabf, \mathcal{E}, \P)$.     \emph{\textbf{ (b)}} Let the moment function $\psi: \R^p \times \T^{\epsilon} \mapsto \R^m$  be  s.t.  $\theta \mapsto \psi(X_{1},\theta)$ is continuously differentiable $\P$-a.s., and $\forall \theta\in \T^{\epsilon} $, $x \mapsto \psi(x,\theta) $ is $\mathcal{B}(\R^p) /\mathcal{B}(\R^m)$-measurable, where, for  $\epsilon>0$,  $\T^{\epsilon} $ denotes the $\epsilon$-neighborhood of $\T$. \emph{\textbf{ (c)}}  In the parameter space $\T$, there exists a unique $\theta_0 \in \intr(\T)$ s.t. $\E\left[\psi(X_{1},\theta_0)\right]=0_{m\times 1}$ where $\E$ denotes the expectation under $\P$. \emph{\textbf{ (d)}} Let the parameter space $\T\subset\R^{m}$  be a compact set, s.t.,  for all $\theta \in \T$,  there exists   $\tau(\theta)\in \R^m$    that solves the equation    $\E\left[ \e^{\tau'\psi(X_1,\theta)}\psi(X_1,\theta)\right]=0 $ for $\tau$.  \emph{\textbf{ (e)}} $\E \left[\sup_{(\theta, \tau) \in \Sbf^\epsilon}  \e^{2\tau'\psi(X_{1}, \theta)}\right]< \infty$ where $\Sbf:=\{(\theta, \tau):\theta \in \T \& \tau \in \Tbf(\theta)\}$ and $\Tbf(\theta):= \overline{B_{\epsilon_{\Tbf}}(\tau(\theta))}$ with $\overline{B_{\epsilon_{\Tbf}}(\tau(\theta))} $ the closed ball of radius $\epsilon_{\Tbf}>0$ and center $\tau(\theta)$.   \emph{\textbf{ (f)}} $\E \left[ \sup_{\theta \in \T} \vert \frac{\partial \psi(X_{1}, \theta)}{\partial \theta'}\vert^2\right]            < \infty  $, where $\vert .\vert$ denotes the Euclidean norm. \emph{\textbf{ (g)}} $\E \left[ \sup_{\theta \in \T^{\epsilon}}\vert \psi(X_1, \theta)\psi(X_1, \theta)'\vert^2\right]< \infty$. \emph{\textbf{ (h)}}
For all $\theta \in \T$, the matrices $\left[\E\e^{\tau(\theta)' \psi(X_{1},\theta)}  \frac{\partial \psi(X_{1},\theta)}{\partial \theta' }\right]$ and $\E\left[\e^{\tau(\theta)' \psi(X_{1},\theta)}\psi(X_{1},\theta) \psi(X_{1},\theta)' \right] $ are invertible, so  $\Sigma(\theta):=\negthickspace  \left[\E\e^{\tau(\theta)' \psi(X_{1},\theta)}  \frac{\partial \psi(X_{1},\theta)}{\partial \theta' }\right]^{-1}\negthickspace\negthickspace\E\left[\e^{\tau(\theta)' \psi(X_{1},\theta)}\psi(X_{1},\theta) \psi(X_{1},\theta)' \right]\negthickspace \left[\E\e^{\tau(\theta)' \psi(X_{1},\theta)}  \frac{\partial \psi(X_{1},\theta)'}{\partial \theta }\right]^{-1}$ is also invertible.
\end{assp}

We require the following additional assumption to prove the asymptotic normality  of the ESP estimator.

\begin{assp}\label{Assp:AsymptoticNormality} \textbf{\emph{(a)}} The function
$\theta \mapsto \psi(X_{1},\theta)$ is  three times continuously differentiable in  a neighborhood $\mathcal{N} $ of  $\theta_0$ in $\T$ $\P$-a.s.  \textbf{\emph{(b)}} There exists a  $\mathcal{B}(\R^p) /\mathcal{B}(\R)$-measurable function $b(.)$ satisfying $\E\left[ \sup_{\theta\in \mathcal{N} } \sup_{\tau \in \Tbf(\theta)}\e^{k_1 \tau'\psi(X_1, \theta)} b(X_1)^{k_2}\right]< \infty$ for $k_1\in \ldsb 1 ,2\rdsb$ and $k_2 \in \ldsb 1,4\rdsb$ s.t., for all $j \in\ldsb 0,3\rdsb $, $\sup_{ \theta\in \mathcal{N} }\vert \nabla^j \psi(X_1, \theta)\vert \leqslant   b(X_1)$ where $\nabla^j \psi(X_1, \theta)$ denotes a vector of all  partial derivatives of $\theta \mapsto \psi(X_1, \theta)$ of order $j$.

\end{assp}
Assumptions \ref{Assp:ExistenceConsistency} and \ref{Assp:AsymptoticNormality} are stronger than the usual assumptions in the MM literatureè, but are similar to assumptions used
in the entropy  literature and related literatures. Assumptions \ref{Assp:ExistenceConsistency} and  \ref{Assp:AsymptoticNormality} are essentially adapted from  \cite{1984Haberman,1997KitStu}, and \citet[Assumption 3]{2007Schennach}. See  also  \cite{2018ChibShinSimoni} for similar assumptions.
The Appendix \ref{Sec:DiscussionAsspSchennach} (p. \pageref{Sec:DiscussionAsspSchennach}) contains a detailed discussion of Assumptions \ref{Assp:ExistenceConsistency} and  \ref{Assp:AsymptoticNormality}.

In addition to Assumptions \ref{Assp:ExistenceConsistency} and \ref{Assp:AsymptoticNormality}, we require the following  standard and mild assumption to establish the asymptotic distribution of the Wald, LM,  ALR, and ET statistics.
\begin{assp}[For the trinity$+1$]\label{Assp:Trinity}\textbf{\emph{(a)}} The function $r: \T \rightarrow \R^q$ in the null hypothesis \eqref{Eq:HypParameterRestriction}
is continuously differentiable. \textbf{\emph{(b)}} The derivative $R(\theta):=\frac{\partial r(\theta)}{\partial \theta'}$ is full  rank at $\theta_0$.
\end{assp}

\section{Proofs}

\subsection{Proof of Theorem \ref{theorem:ConsistencyAsymptoticNormality}(i)\,: Existence and consistency}\label{Ap:PfExistenceConsistency}
 The  proof of Theorem \ref{theorem:ConsistencyAsymptoticNormality}(i) (i.e., consistency) adapts the Wald's approach to consistency \citep{1949Wal} along the lines of  \cite{1997KitStu}, \citet{2007Schennach},   \cite{2018ChibShinSimoni} and others. More precisely, standardizing
the logarithm of the ESP approximation, we show that,  $\P$-a.s. for $T$ big enough, the ESP estimator maximizes the LogESP function  \eqref{Eq:LogESP} on p. \pageref{Eq:LogESP},
 where 
 
\noindent
$\sup_{\theta \in \T}\left\vert\ln  \left[\frac{1}{T}\sum_{t=1}^T \e^{\tau_T(\theta)'\psi_t(\theta)}\right]-\ln \E[ \e^{\tau(\theta)'\psi(X_1, \theta)}] \right\vert=o(1)$ and

\noindent
$\sup_{\theta \in \T}\left\vert \frac{1}{2T} \ln \vert \Sigma_T(\theta)\vert_{\det}\right\vert=O(T^{-1}) $. The two main differences between our proof of Theorem \ref{theorem:ConsistencyAsymptoticNormality}(i)  and the proofs  available in  the entropy  literature are the following. Firstly, we need to ensure that, for $T$ big enough, for all $\theta \in \T$, $\vert \Sigma_T(\theta)\vert_{\det}$ is bounded away from zero, so that the LogESP function \eqref{Eq:LogESP} on p. \pageref{Eq:LogESP} does not diverge to $\infty$ on parts of the parameter space. Secondly, we prove that the joint parameter space for $\theta$ and $\tau$ (i.e., $\Sbf$) is a compact set. 
 \begin{proof}[Core of the proof of Theorem \ref{theorem:ConsistencyAsymptoticNormality}i]
 Under Assumption \ref{Assp:ExistenceConsistency}(a)(b) and (d)-(h), by Lemma \ref{Lem:ESPExistence} (p. \pageref{Lem:ESPExistence}), $\P$-a.s. for $T$ big enough, the ESP approximation and the ESP estimator exist. Moreover,  under Assumption \ref{Assp:ExistenceConsistency}(a)-(b) and (d)-(h),  by Lemma \ref{Lem:AsVarianceTermBehaviour}iv (p. \pageref{Lem:AsVarianceTermBehaviour}), $\P$-a.s. for $T$ big enough, $\vert \Sigma_T(\theta)\vert_{\det}>0$, for all $\theta \in \T$. Thus, we can apply the strictly increasing transformation $x \mapsto \frac{1}{T}[ \ln(x)-\frac{m}{2}\ln(\frac{T}{2\pi})] $ to   the ESP approximation in equation \eqref{Eq:ESPApproximationDefn} on p. \pageref{Eq:ESPApproximationDefn},  so that, $\P$-a.s. for $T$ big enough,
\begin{eqnarray}
& & \hat{\theta}_T \in \arg \max_{\theta \in \T} \hat{f}_{\theta^{*}_T}(\theta) \nonumber\\
& \Leftrightarrow  & \hat{\theta}_T \in \arg \max_{\theta \in \T}\left\{  \ln  \left[\frac{1}{T}\sum_{t=1}^T \e^{\tau_T(\theta)'\psi_t(\theta)}\right]- \frac{1}{2T} \ln \vert \Sigma_T(\theta)\vert_{\det}  \right\}.\label{Eq:ArgMaxLogESP}
\end{eqnarray}

Now, by the triangle inequality,
\begin{eqnarray}
& & \sup_{\theta \in \T}\left\vert\ln  \left[\frac{1}{T}\sum_{t=1}^T \e^{\tau_T(\theta)'\psi_t(\theta)}\right]- \frac{1}{2T} \ln \vert \Sigma_T(\theta)\vert_{\det}-\ln \E[ \e^{\tau(\theta)'\psi(X_1, \theta)}]\right\vert  \nonumber\\
& \leqslant & \sup_{\theta \in \T}\left\vert\ln  \left[\frac{1}{T}\sum_{t=1}^T \e^{\tau_T(\theta)'\psi_t(\theta)}\right]-\ln \E[ \e^{\tau(\theta)'\psi(X_1, \theta)}] \right\vert+\sup_{\theta \in \T}\left\vert \frac{1}{2T} \ln \vert \Sigma_T(\theta)\vert_{\det}\right\vert \nonumber\\
& = & o(1) \text{ $\P$-a.s. as $T \rightarrow \infty$} \label{Eq:UniformCVOfESPObjFct}
\end{eqnarray}
where the last equality follow from Lemma \ref{Lem:Schennachtheorem10PfFirstSteps}iv (p. \pageref{Lem:Schennachtheorem10PfFirstSteps}) and Lemma  \ref{Lem:AsVarianceTermBehaviour}v (p. \pageref{Lem:AsVarianceTermBehaviour}) under Assumption \ref{Assp:ExistenceConsistency}(a)-(b) and (d)-(h).
Thus, regarding $\hat\theta_T$, it is now sufficient to check the assumptions of the standard consistency theorem \citep[e.g.][pp. 2121-2122 Theorem 2.1, which is also valid in an almost-sure sense]{1994NewMcF}. Firstly,  under Assumption \ref{Assp:ExistenceConsistency} (a)-(e) and (g)-(h), by Lemma \ref{Lem:AsTiltingFct}iv (p. \pageref{Lem:AsTiltingFct}), $\theta \mapsto \ln \E[ \e^{\tau(\theta)'\psi(X_1, \theta)}] \vert$ is uniquely maximized at $\theta_0$, i.e.,  for all $\theta \in \T \setminus\{ \theta_0\} $, $\ln \E[ \e^{\tau(\theta)'\psi(X_1, \theta)}]<\ln \E[ \e^{\tau(\theta_0)'\psi(X_1, \theta_{0})}]=0$. Secondly, under Assumptions \ref{Assp:ExistenceConsistency} (a)(b)(d)(e)(g) and (h), by Lemma \ref{Lem:ExpTauPsiStrictlyPositive} (p. \pageref{Lem:ExpTauPsiStrictlyPositive}),    $\theta \mapsto \ln \E[ \e^{\tau(\theta)'\psi(X_1, \theta)}] $  is continuous in $\T$. Finally,  by Assumption \ref{Assp:ExistenceConsistency}(d), the parameter space $\T$ is compact.
\end{proof}

\begin{lem}[Existence  of the ESP approximation and estimator]\label{Lem:ESPExistence} Under Assumption \ref{Assp:ExistenceConsistency}(a)(b) and (d)-(h), $\P$-a.s. for $T$ big enough,
\begin{enumerate}
\item[(i)] the ESP approximation $ \hat{f}_{\theta^{*}_T}(.) $  exists;
\item[(ii)]  $\theta \mapsto \tau_T(\theta)$ is unique and continuously differentiable in $\T$, so that the ESP approximation $\theta \mapsto \hat{f}_{\theta^{*}_T}(\theta) $ is also unique and continuous in $\T$;

\item[(iii)] for all $\theta \in \T$, the ESP approximation $\omega \mapsto \hat{f}_{\theta^{*}_T}(\theta) $ is $\mathcal{E}/\mathcal{B}(\R)$-measurable; and
\item[(iv)] there exists an  ESP estimator $\hat{\theta}_T \in \arg \max_{\theta \in \T} \hat{f}_{\theta^{*}_T}(\theta)$ that
is $\mathcal{E}/\mathcal{B}(\R^m)$-measurable.
\end{enumerate}
\end{lem}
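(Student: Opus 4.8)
The plan is to treat the four claims in the order (i), (ii), (iii), (iv), with the existence and regularity of the empirical tilting map $\theta\mapsto\tau_T(\theta)$ doing most of the work. The starting observation is that, for each fixed $\theta$, the tilting equation \eqref{Eq:ESPTiltingEquation} is exactly the first-order condition of the unconstrained problem $\min_{\tau\in\R^m}\frac1T\sum_{t=1}^T\e^{\tau'\psi_t(\theta)}$, whose objective is convex and is coercive precisely when $0$ lies in the interior of the convex hull of $\{\psi_t(\theta)\}_{t=1}^T$, so that a finite minimizer exists. For \textbf{(i)} I would transfer the population-level solvability granted by Assumption \ref{Assp:ExistenceConsistency}(d) to the empirical level: a uniform strong law of large numbers over $\Sbf^\epsilon$ --- justified by the dominating envelope in Assumption \ref{Assp:ExistenceConsistency}(e) --- shows that the empirical tilting objective and its gradient converge $\P$-a.s.\ uniformly to their population counterparts, so that $\P$-a.s.\ for $T$ big enough a finite empirical minimizer $\tau_T(\theta)$ exists for every $\theta\in\T$. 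The invertibility of the matrices constituting $\Sigma_T(\theta)$ in \eqref{Eq:TiltedSigma_T} then follows from the population invertibility in Assumption \ref{Assp:ExistenceConsistency}(h) together with the same uniform convergence (using the envelopes in Assumption \ref{Assp:ExistenceConsistency}(f),(g)); hence $\Sigma_T(\theta)$ is well defined, its determinant is nonzero, and the whole formula \eqref{Eq:ESPApproximationDefn} exists.

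For \textbf{(ii)}, uniqueness of $\tau_T(\theta)$ comes from strict convexity: the Hessian of the tilting objective is $\frac1T\sum_t\psi_t(\theta)\psi_t(\theta)'\e^{\tau'\psi_t(\theta)}$, which is positive definite once $\{\psi_t(\theta)\}_{t=1}^T$ spans $\R^m$, and this holds $\P$-a.s.\ for $T$ big enough by the non-degeneracy implied by Assumption \ref{Assp:ExistenceConsistency}(h). Continuous differentiability of $\theta\mapsto\tau_T(\theta)$ I would get from the implicit function theorem applied to $G_T(\theta,\tau):=\sum_t\psi_t(\theta)\e^{\tau'\psi_t(\theta)}$: its Jacobian in $\tau$ is the positive-definite Hessian above (hence invertible), and $G_T$ is $C^1$ in $(\theta,\tau)$ because $\theta\mapsto\psi(X_1,\theta)$ is $C^1$ by Assumption \ref{Assp:ExistenceConsistency}(b). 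Continuity of $\hat f_{\theta^*_T}$ then follows by composition, since it is assembled from $\tau_T(\theta)$, the $\psi_t(\theta)$ and their Jacobians through operations ($\exp$, $\ln$, matrix inversion, determinant) that are continuous on the region where the relevant matrices stay invertible.

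For \textbf{(iii)} and \textbf{(iv)} the issue is measurability in $\omega$. For fixed $\theta$, $\omega\mapsto\psi_t(\theta)=\psi(X_t(\omega),\theta)$ is measurable by Assumptions \ref{Assp:ExistenceConsistency}(a),(b); since $\tau_T(\theta)$ is the \emph{unique} minimizer of a convex Carath\'eodory integrand with coefficients measurable in $\omega$, a measurable-selection (measurable maximum) argument makes $\omega\mapsto\tau_T(\theta)$ measurable, and then $\hat f_{\theta^*_T}(\theta)$, being a Borel transform of measurable quantities, is $\mathcal E/\mathcal B(\R)$-measurable, giving (iii). For (iv), continuity of $\theta\mapsto\hat f_{\theta^*_T}(\theta)$ on the compact set $\T$ (Assumption \ref{Assp:ExistenceConsistency}(d)) yields existence of a maximizer by Weierstrass's theorem; and because $(\omega,\theta)\mapsto\hat f_{\theta^*_T}(\theta)$ is a Carath\'eodory function on the compact $\T$, the measurable maximum theorem provides a measurable selection $\hat\theta_T$ of the $\arg\max$ correspondence.

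The main obstacle I anticipate is the uniform-in-$\theta$ transfer from the population to the empirical level: I must show that, $\P$-a.s., one eventual $T$ makes existence and uniqueness of $\tau_T(\theta)$ and invertibility of $\Sigma_T(\theta)$ hold \emph{simultaneously} for all $\theta\in\T$, not just pointwise. This rests on a uniform strong law over the joint index set $\Sbf^\epsilon$, which first requires establishing that $\Sbf$ is compact and that the envelopes of Assumptions \ref{Assp:ExistenceConsistency}(e)--(g) dominate the relevant tilted quantities. The measurable selection needed in (iv) is the second delicate point, since it cannot be carried out by elementary means and relies on the measurable maximum theorem for Carath\'eodory objectives.
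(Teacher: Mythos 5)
Your proposal matches the paper's proof essentially step for step: the paper likewise obtains the empirical tilting map $\tau_T(\cdot)$ from a uniform LLN over the compact graph $\Sbf$ (whose compactness it proves in Lemma \ref{Lem:TauCorrespondence}, exactly the point you flag), gets existence and nonvanishing of the variance term from Assumption \ref{Assp:ExistenceConsistency}(h) plus uniform convergence (Lemma \ref{Lem:AsVarianceTermBehaviour}iv), proves (ii) via the implicit function theorem applied to the strictly convex tilting FOC, and settles (iii)--(iv) with joint measurability of $\tau_T(\cdot)$ and the (generalized) Schmetterer--Jennrich measurable maximum theorem. The only ingredient left implicit in your sketch is the uniform separation $\inf_{\theta\in\T}\inf_{\tau\in\Tbf(\theta):\vert\tau-\tau(\theta)\vert\geqslant\eta}\vert\E[\e^{\tau'\psi(X_1,\theta)}]-\E[\e^{\tau(\theta)'\psi(X_1,\theta)}]\vert>0$ (the paper's Lemma \ref{Lem:StrictPositivityOfEpsilon}, obtained via Berge's maximum theorem, since strict convexity and compactness alone do not yield it), which is what upgrades your uniform LLN into the conclusion that the constrained minimizers over $\Tbf(\theta)$ lie in $\intr[\Tbf(\theta)]$ for all $\theta$ simultaneously and hence, by convexity, are the global minimizers $\tau_T(\theta)$.
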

\begin{proof} The result follows from Lemmas \ref{Lem:Schennachtheorem10PfFirstSteps} (p. \pageref{Lem:Schennachtheorem10PfFirstSteps}), \ref{Lem:ExpTauPsiStrictlyPositive} (p. \pageref{Lem:ExpTauPsiStrictlyPositive}) and \ref{Lem:AsVarianceTermBehaviour} (p. \pageref{Lem:AsVarianceTermBehaviour}) and standard arguments. For completeness, a detailed proof is provided.

\textit{(i)} Under  Assumption \ref{Assp:ExistenceConsistency}(a)(b), (d)-(e)(g) and (h), by Lemma \ref{Lem:Schennachtheorem10PfFirstSteps}ii (p. \pageref{Lem:Schennachtheorem10PfFirstSteps}), $\P$-a.s. there exists a $\mathcal{B}(\T)\otimes \mathcal{E}/\mathcal{B}(\R^m)$-measurable function $\tau_T(.)$ s.t., for $T$ big enough,  for all $\theta \in \T$,  $\frac{1}{T}\sum_{t=1}^T \e^{\tau_T(\theta)'\psi_t(\theta)}\psi_t(\theta)=0_{m \times 1}$ and $\tau_T(\theta)\in \intr[\Tbf(\theta)]$. Moreover, under Assumption \ref{Assp:ExistenceConsistency} (a)(b)(d) (e)(g) and (h), by Lemma \ref{Lem:ExpTauPsiStrictlyPositive} (p. \pageref{Lem:ExpTauPsiStrictlyPositive}) with $\Prm=\frac{1}{T}\sum_{t=1}^T\delta_{X_t}$, for all $T \in \ldsb 1, \infty \ldsb$, for all $(\theta, \tau) \in \Sbf$, $0<\frac{1}{T}\sum_{t=1}^T \e^{\tau'\psi_t(\theta)} $, so that, for all $\theta \in \T$,  $0<\frac{1}{T}\sum_{t=1}^T \e^{\tau_T(\theta)'\psi_t(\theta)} $. Thus, the ET term exists. Now, under Assumption  \ref{Assp:ExistenceConsistency}(a)-(b) and (d)-(h), by Lemma \ref{Lem:AsVarianceTermBehaviour}iv (p. \pageref{Lem:AsVarianceTermBehaviour}), $\P$-a.s.  for $T$ big enough, $ \inf_{\theta \in \T} \vert \Sigma_T(\theta)\vert_{\det} >0$, so that the variance term of the ESP approximation exists. Thus,  the ESP approximation exists.

\textit{(ii)} By Assumption \ref{Assp:ExistenceConsistency}(b),   $\theta \mapsto \psi(X_{1},\theta)$ is continuously differentiable in $\T^\epsilon$ $\P$-a.s., so that it is sufficient to show that $\tau_T(.)$ is unique and continuous, which we prove at once with the standard implicit function theorem. Check its assumptions.    Firstly,  under  Assumption \ref{Assp:ExistenceConsistency}(a)(b), (d)-(e)(g) and (h), by Lemma \ref{Lem:Schennachtheorem10PfFirstSteps}ii (p. \pageref{Lem:Schennachtheorem10PfFirstSteps}), $\P$-a.s. there exists a  function $\tau_T(.)$ s.t., for $T$ big enough,  for all $\theta \in \T$,  $\frac{1}{T}\sum_{t=1}^T \e^{\tau_T(\theta)'\psi_t(\theta)}\psi_t(\theta)=0_{m \times 1}$ and $\tau_T(\theta)\in \intr[\Tbf(\theta)]$.  Secondly, for all $\dot{\theta} \in \T$,   $\left.\frac{\partial\left[ \frac{1}{T}\sum_{t=1}^T\e^{\tau' \psi_t(\theta)}\psi_t(\theta)\right]}{\partial \tau' }\right\vert_{(\theta, \tau)=(\dot{\theta},\tau_T(\dot{\theta}))}= \frac{1}{T}\sum_{t=1}^T\e^{\tau' \psi_t(\dot\theta)}\psi_t(\dot\theta)\psi_t(\dot\theta)'$, which is full rank $\P$-a.s. for $T$ big enough for all $\theta \in \T$,  because under Assumption  \ref{Assp:ExistenceConsistency}(a)-(b) and (d)-(h), by Lemma \ref{Lem:AsVarianceTermBehaviour}iv (p. \pageref{Lem:AsVarianceTermBehaviour}), $\P$-a.s.  for $T$ big enough, $ \inf_{\theta \in \T} \vert \Sigma_T(\theta)\vert_{\det} >0$.  Finally, by Assumption \ref{Assp:ExistenceConsistency}(b), $(\theta, \tau) \mapsto \frac{1}{T}\sum_{t=1}^T\e^{\tau' \psi_t(\theta)}\psi_t(\theta)$ is continuously differentiable in  $\Sbf^\epsilon$.

\textit{(iii)}    By Assumption \ref{Assp:ExistenceConsistency}(b), for all $\theta \in \T$,  $x \mapsto \psi(x,\theta) $ is $\mathcal{B}(\R^p) /\mathcal{B}(\R^m)$-measurable. Moreover, under  Assumption \ref{Assp:ExistenceConsistency}(a)(b), (d)-(e)(g) and (h), by Lemma \ref{Lem:Schennachtheorem10PfFirstSteps}ii (p. \pageref{Lem:Schennachtheorem10PfFirstSteps}), $\P$-a.s. $\tau_T(.)$ is a $\mathcal{B}(\T)\otimes \mathcal{E}/\mathcal{B}(\R^m)$-measurable function. Thus, the result follows.

\textit{(iv)} By Assumption \ref{Assp:ExistenceConsistency}(d), $\T$ is compact, so that, by the statements (i)-(iii) of the present lemma, the result follows from the Schmetterer-Jennrich lemma (\citealt{1966Sch} Chap. 5 Lemma 3.3;   \citealt{1969Jen} Lemma 2).
\end{proof}

\begin{lem}[Asymptotic limit of the ET term] \label{Lem:Schennachtheorem10PfFirstSteps} Under  Assumption \ref{Assp:ExistenceConsistency}(a)(b), (d)-(e)(g) and (h),
\begin{enumerate}
\item[(i)] $\P\text{-a.s.}$ as $T \rightarrow \infty$,
$ \sup_{(\theta, \tau) \in\Sbf}\left\vert\frac{1}{T}\sum_{t=1}^T \e^{\tau'\psi_t(\theta)}- \E[ \e^{\tau'\psi(X_1, \theta)}] \right\vert =o(1)$, which implies that $\P\text{-a.s.}$ as $T \rightarrow \infty$,   $ \sup_{(\theta, \tau) \in\Sbf}\left\vert\ln  \left[\frac{1}{T}\sum_{t=1}^T \e^{\tau'\psi_t(\theta)}\right]-\ln \E[ \e^{\tau'\psi(X_1, \theta)}] \right\vert =o(1) \ $;

\item[(ii)]  $\P\text{-a.s.}$  there exists a $\mathcal{B}(\T)\otimes \mathcal{E}/\mathcal{B}(\R^m)$-measurable function $\tau_T(.)$ s.t., for $T$ big enough,  for all $\theta \in \T$, $\tau_T(\theta) \in \arg \min_{\tau \in \R^m} \frac{1}{T}\sum_{t=1}^T \e^{\tau'\psi_t(\theta)}$,  $\frac{1}{T}\sum_{t=1}^T \e^{\tau_T(\theta)'\psi_t(\theta)}\psi_t(\theta)=0_{m \times 1}$ and $\tau_T(\theta)\in \intr[\Tbf(\theta)]$;

\item[(iii)]   $\P\text{-a.s.}$ as $T \rightarrow \infty$,  $ \sup_{\theta \in \T}\left\vert \tau_T(\theta)- \tau(\theta) \right\vert =o(1)$;
\item[(iv)]  $\P\text{-a.s.}$ as $T \rightarrow \infty$,
$ \sup_{ \theta \in\T}\left\vert\frac{1}{T}\sum_{t=1}^T \e^{\tau_{T}(\theta)'\psi_t(\theta)}- \E[ \e^{\tau(\theta)'\psi(X_1, \theta)}] \right\vert =o(1)$, which implies that $\P\text{-a.s.}$ as $T \rightarrow \infty$, $ \sup_{\theta \in \T}\left\vert\ln  \left[\frac{1}{T}\sum_{t=1}^T \e^{\tau_T(\theta)'\psi_t(\theta)}\right]-\ln \E[ \e^{\tau(\theta)'\psi(X_1, \theta)}] \right\vert =o(1). \ $
\end{enumerate}
\end{lem}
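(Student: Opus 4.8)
The plan is to prove the four statements in the natural order (i)--(iv), since each one feeds the next. The backbone is a uniform law of large numbers (ULLN) over the joint parameter set $\Sbf$, from which everything else follows by now-standard arguments.

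First I would establish \emph{(i)}. The idea is to apply a ULLN to the family of functions $(\theta, \tau) \mapsto \e^{\tau'\psi(X_1, \theta)}$ indexed by $(\theta, \tau) \in \Sbf$. The required ingredients are: (a) $\Sbf$ is compact --- this is one of the two novel points flagged in the proof sketch of Theorem \ref{theorem:ConsistencyAsymptoticNormality}(i), and it follows from $\T$ being compact (Assumption \ref{Assp:ExistenceConsistency}(d)), the closed-ball definition of $\Tbf(\theta)$, and continuity of $\theta \mapsto \tau(\theta)$; (b) continuity of $(\theta,\tau)\mapsto \e^{\tau'\psi(X_1,\theta)}$ $\P$-a.s., which comes from Assumption \ref{Assp:ExistenceConsistency}(b); and (c) an integrable envelope, which is exactly the role of the moment bound $\E[\sup_{\Sbf^\epsilon} \e^{2\tau'\psi(X_1,\theta)}] < \infty$ in Assumption \ref{Assp:ExistenceConsistency}(e) (the square giving room for a uniform SLLN in the sense of \cite{1969Jen} or \cite{1949Wal}). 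This yields the uniform convergence of the average to $\E[\e^{\tau'\psi(X_1,\theta)}]$. The passage to the logarithmic statement follows because, by Lemma \ref{Lem:ExpTauPsiStrictlyPositive}, both the empirical average and its limit are bounded away from $0$ uniformly on the compact $\Sbf$, so $\ln(\cdot)$ is uniformly continuous there and preserves the $o(1)$ rate.

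Next, for \emph{(ii)}, I would exploit that for each fixed $\theta$ the map $\tau \mapsto \frac{1}{T}\sum_{t=1}^T \e^{\tau'\psi_t(\theta)}$ is the objective of the convex program mentioned in the main text, whose gradient is $\frac{1}{T}\sum_{t=1}^T \e^{\tau'\psi_t(\theta)}\psi_t(\theta)$ and whose Hessian $\frac{1}{T}\sum_{t=1}^T \e^{\tau'\psi_t(\theta)}\psi_t(\theta)\psi_t(\theta)'$ is positive definite $\P$-a.s. for $T$ large (this positive-definiteness is precisely what Lemma \ref{Lem:AsVarianceTermBehaviour}iv guarantees through $\inf_{\theta\in\T}\vert\Sigma_T(\theta)\vert_{\det}>0$). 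Strict convexity gives uniqueness of the minimizer $\tau_T(\theta)$; the uniform convergence from (i) together with the fact that the population minimizer $\tau(\theta)$ lies in the interior of $\Tbf(\theta)$ forces, for $T$ big enough, the empirical minimizer into $\intr[\Tbf(\theta)]$ as well, so it is an interior critical point and the FOC $\frac{1}{T}\sum_t \e^{\tau_T(\theta)'\psi_t(\theta)}\psi_t(\theta)=0$ holds. Joint measurability of $\tau_T(\cdot)$ is then delivered by a measurable-selection / implicit-function argument (Schmetterer--Jennrich, as used in Lemma \ref{Lem:ESPExistence}).

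Statement \emph{(iii)}, the uniform convergence $\sup_{\theta}\vert\tau_T(\theta)-\tau(\theta)\vert = o(1)$, is a standard argmax-consistency consequence: the population objective $\tau \mapsto \E[\e^{\tau'\psi(X_1,\theta)}]$ is strictly convex with unique interior minimizer $\tau(\theta)$, the empirical objectives converge to it uniformly over $\Sbf$ by (i), and minimizers of uniformly-convergent strictly-convex functions converge uniformly; compactness of $\Sbf$ keeps this uniform in $\theta$. Finally \emph{(iv)} follows by combining (i) and (iii): I would bound $\vert\frac{1}{T}\sum_t \e^{\tau_T(\theta)'\psi_t(\theta)} - \E[\e^{\tau(\theta)'\psi(X_1,\theta)}]\vert$ by the uniform-in-$(\theta,\tau)$ error from (i) evaluated at the random argument $\tau_T(\theta)$ (legitimate precisely because (i) is uniform over all of $\Sbf$), plus a term controlled by the continuity of $\tau\mapsto\E[\e^{\tau'\psi(X_1,\theta)}]$ applied to $\tau_T(\theta)\to\tau(\theta)$ from (iii). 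The logarithmic version again transfers via uniform boundedness away from zero. I expect the main obstacle to be a clean, self-contained proof of the compactness of $\Sbf$ and the uniform interiority of $\tau_T(\theta)$ in $\Tbf(\theta)$ for large $T$; these are the two places where the argument genuinely departs from the textbook entropy-estimation proofs and where the envelope and invertibility assumptions must be invoked with care.
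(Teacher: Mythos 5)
Your parts (i) and (iv) track the paper's own argument (compactness of $\Sbf$, ULLN \`a la Wald, and the add-and-subtract step at the random argument $\tau_T(\theta)$, legitimate because $(\theta,\tau_T(\theta))\in\Sbf$), but your treatment of (ii)--(iii) has two genuine gaps. First, your appeal to Lemma \ref{Lem:AsVarianceTermBehaviour}iv is circular: that lemma rests on Lemmas \ref{Lem:TiltedDerivative} and \ref{Lem:TiltedVariance}, whose parts (ii)--(iii) explicitly invoke Lemma \ref{Lem:Schennachtheorem10PfFirstSteps}(ii)--(iii) --- precisely the statements you are proving. (A non-circular route to positive definiteness of the empirical Hessian uniformly over $\Sbf$ does exist --- the ULLN-over-$\Sbf$ half of Lemma \ref{Lem:TiltedVariance}ii together with Corollary \ref{Cor:ChgOfMeasureInvertibilityPDP}, as packaged in Lemma \ref{Lem:LogESPExistenceInS}iv --- but it is not the lemma you cited.) Second, and more fundamentally, you start from ``the'' minimizer $\tau_T(\theta)$ over $\R^m$, yet the existence of an unconstrained minimizer is not guaranteed in finite samples: $\tau \mapsto \frac{1}{T}\sum_{t=1}^T \e^{\tau'\psi_t(\theta)}$ may fail to attain its infimum (when $0_{m\times 1}$ lies outside the interior of the convex hull of $\{\psi_t(\theta)\}_{t=1}^T$, the infimum is approached as $\vert\tau\vert\to\infty$). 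The paper's proof avoids this by first constructing a \emph{constrained}, measurably selected minimizer $\tilde{\tau}_T(\theta)\in\arg\min_{\tau\in\Tbf(\theta)}\frac{1}{T}\sum_{t=1}^T\e^{\tau'\psi_t(\theta)}$ (existence by Weierstrass on the compact $\Tbf(\theta)$, measurability via a generalized Schmetterer--Jennrich selection theorem for the measurable correspondence $\theta\mapsto\Tbf(\theta)$ from Lemma \ref{Lem:TauCorrespondence}), showing it converges uniformly to $\tau(\theta)$ hence is eventually interior, and only then upgrading it to an unconstrained global minimizer by mere convexity (local minimum of a convex function is global; only a positive semi-definite Hessian, via Lemma \ref{Lem:ChgOfMeasureInvertibilityPDP}i with the empirical measure, is needed --- strict convexity plays no role at this stage).

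Your statement (iii) also conceals the key quantitative ingredient. To convert uniform convergence of the objectives into uniform convergence of the minimizers one needs the separation constant $\varepsilon := \inf_{\theta \in \T}\inf_{\tau \in \Tbf(\theta):\vert \tau -\tau(\theta)\vert \geqslant \eta}\vert \E [\e^{\tau'\psi(X_1, \theta)}]-  \E[ \e^{\tau(\theta)'\psi(X_1, \theta)}] \vert$ to be strictly positive, and --- as the paper warns in a footnote --- pointwise strict convexity of $\tau\mapsto\E[\e^{\tau'\psi(X_1,\theta)}]$ plus compactness of $\T$ (or of $\Sbf$) is \emph{not} sufficient for this: the infimum over $\theta$ of pointwise-positive quantities can be zero. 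The paper proves $\varepsilon>0$ in Lemma \ref{Lem:StrictPositivityOfEpsilon} via Berge's maximum theorem, which requires verifying that the constraint correspondence $\varphi(\theta)=\Tbf(\theta)\cap\{\tau:\vert\tau-\tau(\theta)\vert\geqslant\eta\}$ is nonempty compact-valued and both upper and lower hemicontinuous, so that the value function $v(\theta)$ is continuous and attains a strictly positive minimum on the compact $\T$. Your phrase ``compactness of $\Sbf$ keeps this uniform in $\theta$'' skips exactly this step, which the paper flags as one of its two genuine departures from the textbook entropy-literature proofs. With the constrained-then-relax construction and the Berge separation argument inserted, your outline would match the paper's proof.
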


\begin{proof} \textit{(i)} Under Assumptions \ref{Assp:ExistenceConsistency} (a)-(b)(d)(e)(g) and (h), by Lemma \ref{Lem:TauCorrespondence}iii (p. \pageref{Lem:TauCorrespondence}), $\Sbf:=\{(\theta, \tau):\theta \in \T \wedge \tau \in \Tbf(\theta)\}$ is a compact set.\footnote{Note that, unlike what has been sometimes  suggested in the entropy literature, if $\Tbf(\theta)$ is an  unspecified  compact set, $\{(\theta, \tau): \theta \in \T \wedge \tau \in \Tbf(\theta)\}$ does not need to be a compact set\,: $\{(\theta, \tau):\theta \in \T \wedge \tau \in \Tbf(\theta)\}$ is not a Cartesian product, but the graph of a correspondence. See  Lemma \ref{Lem:TauCorrespondence}  (p. \pageref{Lem:TauCorrespondence}) for more details. } Thus, under Assumption \ref{Assp:ExistenceConsistency}(a)-(b), (d)  (e) and (h),   the  ULLN (uniform law of large numbers) \`a la  Wald  \citep[e.g.,][pp. 24-25, Theorem 1.3.3]{2003GhoRam} yields the first part of the result. Now, under Assumptions \ref{Assp:ExistenceConsistency} (a)(b)(d)(e)(g) and (h),  by Lemma \ref{Lem:ExpTauPsiStrictlyPositive} (p. \pageref{Lem:ExpTauPsiStrictlyPositive}), $(\theta, \tau)\mapsto \E[ \e^{\tau'\psi(X_1, \theta)}] $ is continuous, so that $\{\E[ \e^{\tau'\psi(X_1, \theta)}]:(\theta, \tau)\in \Sbf  \}$ is a compact set by Assumption  \ref{Assp:ExistenceConsistency}(d) ---continuous mappings preserve  compactness \citep[e.g.,][Theorem 4.14]{1953Rudin}. Moreover,    $x \mapsto \ln x$ is continuous, and, under Assumptions \ref{Assp:ExistenceConsistency} (a)(b)(d)(e)(g) and (h),  again by Lemma \ref{Lem:ExpTauPsiStrictlyPositive} (p. \pageref{Lem:ExpTauPsiStrictlyPositive}),
$0<\inf_{(\theta, \tau)\in \Sbf }\E[ \e^{\tau'\psi(X_1, \theta)}]$. Thus, we can choose an $\eta \in \left]0,  \inf_{(\theta, \tau)\in \Sbf }\E[ \e^{\tau'\psi(X_1, \theta)}]\right[$  s.t. $x \mapsto \ln x$ is uniformly continuous on the closed $\eta$-neighborhood of $\{\E[ \e^{\tau'\psi(X_1, \theta)}]:(\theta, \tau)\in \Sbf  \}$ ---continuous mappings on a compact set  are uniformly continuous \citep[e.g.,][Theorem 4.19]{1953Rudin}. Then, the second part follows from the first part of the result: By the first part,  $\P$-a.s. there exists a $\dot T\in \N $ s.t., $\forall T \in \ldsb \dot{T}, \infty \ldsb$,   $\sup_{(\theta, \tau) \in \Sbf }\vert\frac{1}{T}\sum_{t=1}^T \e^{\tau'\psi_t(\theta)}- \E[ \e^{\tau'\psi(X_1, \theta)}] \vert< \eta/2$.

\textit{(ii)-(iii)} The proof follows the overall strategy of \citet[Step 1 in the proof of Theorem 10]{2007Schennach}. For completeness and in order to justify our different assumptions, we provide  a detailed  proof. In particular, note that we formally prove that $0<\inf_{\theta \in \T}\inf_{\tau \in \Tbf(\theta):\vert \tau -\tau(\theta)\vert \geqslant \eta}\vert \E [\e^{\tau'\psi(X_1, \theta)}]-  \E[ \e^{\tau(\theta)'\psi(X_1, \theta)}] \vert$:  See  Lemma \ref{Lem:StrictPositivityOfEpsilon} (p. \pageref{Lem:StrictPositivityOfEpsilon}).  Let $\eta\in ]0,\epsilon_{\Tbf} ]$ be a fixed constant. By Assumption \ref{Assp:ExistenceConsistency}(a)(b), $(\theta, \omega )\mapsto \frac{1}{T}\sum_{t=1}^T \e^{\tau' \psi_t(\theta)}$ is continuous w.r.t $\theta$ and $\mathcal{E}/\mathcal{B}(\R)$-measurable w.r.t to $\omega$, so that it is  $\mathcal{B}(\T)\otimes \mathcal{E}/\mathcal{B}(\R)$-measurable \citep[e.g.,][Lemma 4.51]{1999AliprantisBorder}. Moreover, under Assumptions \ref{Assp:ExistenceConsistency} (a)-(b)(d)(e)(g) and (h), by Lemma \ref{Lem:TauCorrespondence}ii (p. \pageref{Lem:TauCorrespondence}),  $\theta \mapsto \Tbf(\theta)$ is a nonempty compact valued measurable correspondence.  Then, by a generalization of the Schmetterer-Jennrich lemma  \citep[e.g.,][Theorem 18.19]{1999AliprantisBorder}, we can define a $\mathcal{B}(\T)\otimes \mathcal{E}/\mathcal{B}(\R)$-measurable function $\tilde{\tau}_T(\theta)$ s.t., for all $\theta \in \T$, $\tilde{\tau}_T(\theta)\in \arg \min_{\tau  \in \Tbf(\theta)}\frac{1}{T}\sum_{t=1}^T \e^{\tau' \psi_t(\theta)}$.
For the present proof, put $\varepsilon := \inf_{\theta \in \T}\inf_{\tau \in \Tbf(\theta):\vert \tau -\tau(\theta)\vert \geqslant \eta}\vert \E [\e^{\tau'\psi(X_1, \theta)}]-  \E[ \e^{\tau(\theta)'\psi(X_1, \theta)}] \vert$, which is strictly positive\footnote{The argument requires $\varepsilon>0 $.  If $\varepsilon =0$, then the upcoming inequality \eqref{Eq:UniformCVOfTildeTau} is not sufficient to show that  $\sup_{\theta \in \T} \vert\E [\e^{\tilde{\tau}_{T}(\theta)'\psi(X_1, \theta)}]-  \E[ \e^{\tau(\theta)'\psi(X_1, \theta)}]\vert < \varepsilon$.   } by Lemma \ref{Lem:StrictPositivityOfEpsilon} (p. \pageref{Lem:StrictPositivityOfEpsilon}) under  Assumptions \ref{Assp:ExistenceConsistency} (a)(b)(d)(e) and (h).\footnote{Strict convexity of $\tau \mapsto \E[\e^{\tau'\psi(X_1, \theta)}]$  and compactness of $\T$ are not sufficient to ensure that $\varepsilon>0$: We also need the continuity of the value function of the first infimum, which we obtain through Berge's maximum theorem. See  Lemma \ref{Lem:StrictPositivityOfEpsilon} (p. \pageref{Lem:StrictPositivityOfEpsilon}).} Then,  by the definition of $\varepsilon $, whenever $\sup_{\theta \in \T} \vert\E [\e^{\tilde{\tau}_{T}(\theta)'\psi(X_1, \theta)}]-  \E[ \e^{\tau(\theta)'\psi(X_1, \theta)}]\vert < \varepsilon$, then  $\sup_{\theta \in \T}\vert \tilde{\tau}_{T}(\theta)- \tau(\theta)  \vert\leqslant \eta $. We now show that it is happening $\P$-a.s. as $T \rightarrow \infty$.  Under Assumptions \ref{Assp:ExistenceConsistency} (a)(b)(d)(e)(g) and (h), by Lemma \ref{Lem:AsTiltingFct} (p. \pageref{Lem:AsTiltingFct}), $\tau(\theta)=\arg \min_{\tau \in \R^m} \E[ \e^{\tau'\psi(X_1,\theta)}]$, so that
\begin{eqnarray}
& &\sup_{\theta \in \T}\left\vert  \E[ \e^{\tilde{\tau}_T(\theta)'\psi(X_1,\theta)}]-\E[\e^{\tau(\theta)'\psi(X_1,\theta)}]\right\vert \notag\\
& = & \sup_{\theta \in \T}  \left\{\E[ \e^{\tilde{\tau}_T(\theta)'\psi(X_1,\theta)}]-\E[\e^{\tau(\theta)'\psi(X_1,\theta)}]\right\} \notag\\
& \stackrel{(a)}{=} &  \sup_{\theta \in \T}  \left\{\E[ \e^{\tilde{\tau}_T(\theta)'\psi(X_1,\theta)}]-\frac{1}{T}\sum_{t=1}^T\e^{\tilde{\tau}_T(\theta)' \psi_t(\theta)}+\frac{1}{T}\sum_{t=1}^T\e^{\tilde{\tau}_T(\theta)' \psi_t(\theta)}-\frac{1}{T}\sum_{t=1}^T\e^{\tau(\theta)' \psi_t(\theta)}\right. \notag \\
&  &\left.  +\frac{1}{T}\sum_{t=1}^T\e^{\tau(\theta)' \psi_t(\theta)}-\E[\e^{\tau(\theta)'\psi(X_1,\theta)}]\right\}\notag\\
& \stackrel{(b)}{\leqslant} & \sup_{\theta \in \T}  \left\{\E[ \e^{\tilde{\tau}_T(\theta)'\psi(X_1,\theta)}]-\frac{1}{T}\sum_{t=1}^T\e^{\tilde{\tau}_T(\theta)' \psi_t(\theta)} +\frac{1}{T}\sum_{t=1}^T\e^{\tau(\theta)' \psi_t(\theta)}-\E[\e^{\tau(\theta)'\psi(X_1,\theta)}]\right\} \notag\\
& \stackrel{(c)}{\leqslant} & \sup_{\theta \in \T}  \left\vert\E[ \e^{\tilde{\tau}_T(\theta)'\psi(X_1,\theta)}]-\frac{1}{T}\sum_{t=1}^T\e^{\tilde{\tau}_T(\theta)' \psi_t(\theta)}\right\vert +\sup_{\theta \in \T}  \left\vert\frac{1}{T}\sum_{t=1}^T\e^{\tau(\theta)' \psi_t(\theta)}-\E[\e^{\tau(\theta)'\psi(X_1,\theta)}]\right\vert \notag\\
& \stackrel{(d)}{=} & o(1) \text{ $\P$-a.s. as $T \rightarrow \infty$.} \label{Eq:UniformCVOfTildeTau}
\end{eqnarray}
\textit{(a)} Add and subtract $\frac{1}{T}\sum_{t=1}^T\e^{\tilde{\tau}_T(\theta)' \psi_t(\theta)} $ and $\frac{1}{T}\sum_{t=1}^T\e^{\tau(\theta)' \psi_t(\theta)} $. \textit{(b)} Note that, under Assumption \ref{Assp:ExistenceConsistency}(d) and (e), by definition, $\tau(\theta)\in \Tbf(\theta)$ and   $\tilde{\tau}_T(\theta)\in \arg \min_{\tau  \in \Tbf(\theta)}\frac{1}{T}\sum_{t=1}^T \e^{\tau' \psi_t(\theta)}$ so that $\frac{1}{T}\sum_{t=1}^T\e^{\tilde{\tau}_T(\theta)' \psi_t(\theta)}-\frac{1}{T}\sum_{t=1}^T\e^{\tau(\theta)' \psi_t(\theta)} \leqslant0$. \textit{(c)} Triangle inequality w.r.t. the uniform norm. \textit{(d)} Under Assumption \ref{Assp:ExistenceConsistency}(d)(e), by definition, for all $\theta \in \T $, $\tau(\theta)\in \Tbf(\theta)$ and $\tilde{\tau}_T(\theta)\in \Tbf(\theta)$ so that the conclusion follows from statement (i).

Inequality \eqref{Eq:UniformCVOfTildeTau} implies that $\sup_{\theta \in \T}\vert \tilde{\tau}_{T}(\theta)- \tau(\theta)  \vert=o(1) $ $\P$-a.s. as $T \rightarrow \infty$. Moreover, by Assumption \ref{Assp:ExistenceConsistency}(e), for all $\theta \in \T$, $\Tbf(\theta)=\overline{B_{\epsilon_{\Tbf}}(\tau(\theta))}$ where $\epsilon_\Tbf>0$. Thus, $\P$-a.s., for $T$ big enough, for all $\theta \in \T$, $\tilde{\tau}_{T}(\theta)\in\intr[\Tbf(\theta)] $.  Now,  \ for all $\theta \in \T$, $\tau \mapsto \frac{1}{T}\sum_{t=1}^T\e^{\tau'\psi_t(\theta)}$ is a convex function (Lemma \ref{Lem:ChgOfMeasureInvertibilityPDP}i on p. \pageref{Lem:ChgOfMeasureInvertibilityPDP} with $\Prm=\frac{1}{T}\sum_{t=1}^T\delta_{X_t}$ ensures that $\frac{\partial^2[\frac{1}{T}\sum_{t=1}^T\e^{\tau'\psi_t(\theta)}]}{\partial \tau \partial \tau'}=\frac{1}{T}\sum_{t=1}^T\e^{\tau'\psi_t(\theta)}\psi_t(\theta)\psi_t(\theta)'\geqslant0$), and the local minimum of a convex function is a global minimum \citep[e.g.,][p. 253]{1993Hir-UrLem}. Therefore, $\P$-a.s. for $T$ big enough, for all $\theta \in \T$, $\tilde{\tau}_T(\theta)$ minimizes $\frac{1}{T}\sum_{t=1}^T \e^{\tau' \psi_t(\theta)}$  not only over $\Tbf(\theta)$, but also over $\R^m$, which means that we can put $\tilde{\tau}_T(\theta)=\tau_T(\theta)$.

\textit{(iv)} Addition  and subtraction of $  \E[ \e^{\tau_{T}(\theta)'\psi(X_1, \theta)}]$, and   the triangle inequality yield $\P$-a.s. for $T$ big enough
\begin{eqnarray*}
& & \sup_{ \theta \in\T}\left\vert\frac{1}{T}\sum_{t=1}^T \e^{\tau_{T}(\theta)'\psi_t(\theta)}- \E[ \e^{\tau(\theta)'\psi(X_1, \theta)}] \right\vert\\
& \leqslant & \sup_{ \theta \in\T}\left\vert\frac{1}{T}\sum_{t=1}^T \e^{\tau_{T}(\theta)'\psi_t(\theta)}-  \E[ \e^{\tau_{T}(\theta)'\psi(X_1, \theta)}]\right\vert + \sup_{ \theta \in\T}\left\vert\E[ \e^{\tau_{T}(\theta)'\psi(X_1, \theta)}] - \E[ \e^{\tau(\theta)'\psi(X_1, \theta)}] \right\vert\\
& = & o(1) \text{ , as $T \rightarrow \infty$, }
\end{eqnarray*}
where the explanations for the last equality are as follows. By the statement (i) of the present lemma, $\P\text{-a.s.}$ as $T \rightarrow \infty$,
$ \sup_{(\theta, \tau) \in\Sbf}\left\vert\frac{1}{T}\sum_{t=1}^T \e^{\tau'\psi_t(\theta)}- \E[ \e^{\tau'\psi(X_1, \theta)}] \right\vert =o(1)$. Moreover, by the statement (ii) of the present lemma,  $\P\text{-a.s.}$ for $T$ big enough, $\tau_T(\theta)\in \intr[\Tbf(\theta)]$, so that, for all $\theta \in \T$, $( \theta, \tau_T(\theta))\in \Sbf$.   Thus, the first supremum is $o(1)$  as $T \rightarrow \infty$. Regarding the second supremum, under Assumption \ref{Assp:ExistenceConsistency} (a)(b)(d)(e)(g) and (h), by Lemma \ref{Lem:ExpTauPsiStrictlyPositive} (p. \pageref{Lem:ExpTauPsiStrictlyPositive}),     $(\theta, \tau) \mapsto \E[ \e^{\tau ' \psi(X_1, \theta)}]$ is  continuous in $\Sbf$. Now, under Assumptions \ref{Assp:ExistenceConsistency} (a)(b)(d)(e)(g) and (h), by Lemma \ref{Lem:TauCorrespondence}iii (p. \pageref{Lem:TauCorrespondence}), $\Sbf$ is compact, so that      $(\theta, \tau) \mapsto \E[ \e^{\tau ' \psi(X_1, \theta)}]$  is also uniformly continuous in $\Sbf$ ---continuous functions on compact sets are uniformly continuous \citep[e.g.,][Theorem 4.19]{1953Rudin}. Thus,   under Assumption \ref{Assp:ExistenceConsistency}(a)(b),  (d)-(e), (g) and (h), by the statement (iii) of the present lemma, which states  that $\sup_{\theta \in \T}\left\vert \tau_T(\theta)- \tau(\theta) \right\vert =o(1)$ $\P$-a.s. as $T \rightarrow \infty$, the second supremum is also $o(1)$  $\P$-a.s. as $T \rightarrow \infty$.

The second part of the result follows from the first part as in the proof of the statement (i) of the present lemma.
 \end{proof}

\begin{lem}\label{Lem:ExpTauPsiStrictlyPositive} Let $\Prm$ be any probability measure, and $\E_{\Prm}$ denote the expectation under $\Prm$. Under Assumption \ref{Assp:ExistenceConsistency} (a)(b)(d)(e)(g) and (h), if $\E_{\Prm} [\sup_{(\theta, \tau)\in \Sbf } \e^{\tau'\psi(X_1, \theta)}]< \infty$, then

\noindent
$0<\inf_{(\theta, \tau)\in \Sbf }\E_{\Prm}[ \e^{\tau'\psi(X_1, \theta)}]$, so that $0<\inf_{ \theta\in \T }\E_{\Prm}[ \e^{\tau(\theta)'\psi(X_1, \theta)}]$. Moreover, $(\theta, \tau)\mapsto \E_{\Prm}[ \e^{\tau'\psi(X_1, \theta)}]$ and $\theta \mapsto\E_{\Prm}[ \e^{\tau(\theta)'\psi(X_1, \theta)}]  $ are continuous in $\Sbf$ and $\T$, respectively. All of these results hold for $\Prm=\P$ under the aforementioned assumptions.
\end{lem}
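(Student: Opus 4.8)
The plan is to split the statement into its four distinct claims---joint continuity of $(\theta,\tau)\mapsto\E_{\Prm}[\e^{\tau'\psi(X_1,\theta)}]$ on $\Sbf$, strict positivity of the two infima, continuity of the profiled map $\theta\mapsto\E_{\Prm}[\e^{\tau(\theta)'\psi(X_1,\theta)}]$ on $\T$---and to dispatch the case $\Prm=\P$ at the very end by checking its single hypothesis. The common starting observation is that, for $\P$-almost every $\omega$, the integrand $(\theta,\tau)\mapsto\e^{\tau'\psi(X_1,\theta)}$ is continuous (Assumption \ref{Assp:ExistenceConsistency}(b) gives continuity of $\theta\mapsto\psi(X_1,\theta)$, and $(\tau,v)\mapsto\e^{\tau'v}$ is continuous), and that on $\Sbf$ it is dominated by $\sup_{(\theta,\tau)\in\Sbf}\e^{\tau'\psi(X_1,\theta)}$, which is $\Prm$-integrable by hypothesis. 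First I would prove the joint continuity on $\Sbf$ by dominated convergence: along any sequence $(\theta_n,\tau_n)\to(\theta,\tau)$ in $\Sbf$ the integrands converge pointwise a.e.\ and remain below the integrable dominating function, so the expectations converge.

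Next, for strict positivity I would note that every value $\E_{\Prm}[\e^{\tau'\psi(X_1,\theta)}]$ is strictly positive, being the integral of an everywhere-positive function against a probability measure. Since $\Sbf$ is compact (Lemma \ref{Lem:TauCorrespondence}iii) and the map just shown is continuous, the extreme value theorem gives that the infimum over $\Sbf$ is attained at some $(\theta^*,\tau^*)\in\Sbf$, whence $\inf_{(\theta,\tau)\in\Sbf}\E_{\Prm}[\e^{\tau'\psi}]=\E_{\Prm}[\e^{(\tau^*)'\psi(X_1,\theta^*)}]>0$. Because $\tau(\theta)$ is the center of the ball $\Tbf(\theta)$, we have $(\theta,\tau(\theta))\in\Sbf$ for every $\theta\in\T$, so the second infimum dominates the first and $0<\inf_{\theta\in\T}\E_{\Prm}[\e^{\tau(\theta)'\psi(X_1,\theta)}]$ follows immediately.

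For the continuity of $\theta\mapsto\E_{\Prm}[\e^{\tau(\theta)'\psi(X_1,\theta)}]$ I would write it as the composition $\theta\mapsto(\theta,\tau(\theta))\mapsto\E_{\Prm}[\e^{\tau'\psi}]$; the outer map is continuous on $\Sbf$ by the first step, so it suffices to show $\theta\mapsto\tau(\theta)$ is continuous. Here $\tau(\theta)$ is the unique minimizer of the strictly convex function $\tau\mapsto\E_\P[\e^{\tau'\psi(X_1,\theta)}]$, uniqueness coming from positive definiteness of its Hessian $\E[\e^{\tau(\theta)'\psi}\psi\psi']$ (positive semidefinite and, by Assumption \ref{Assp:ExistenceConsistency}(h), invertible). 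By Berge's maximum theorem---the objective being jointly continuous by the first step with $\Prm=\P$, the minimizer being unique---the argmin correspondence reduces to a continuous function $\tau(\cdot)$; alternatively this continuity is available from Lemma \ref{Lem:TauCorrespondence}. Composition then yields continuity on $\T$.

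Finally, the case $\Prm=\P$ is obtained by verifying its only hypothesis: since $\Sbf\subset\Sbf^\epsilon$ and $\e^{\tau'\psi}\le 1+\e^{2\tau'\psi}$, we get $\E_\P[\sup_{\Sbf}\e^{\tau'\psi}]\le 1+\E_\P[\sup_{\Sbf^\epsilon}\e^{2\tau'\psi}]<\infty$ by Assumption \ref{Assp:ExistenceConsistency}(e), so all preceding conclusions apply verbatim with $\Prm=\P$. I expect the \emph{main obstacle} to be the continuity of $\tau(\cdot)$: it requires both uniqueness of the population tilt (strict convexity via Assumption \ref{Assp:ExistenceConsistency}(h)) and a maximum-theorem argument, and one must take care that invoking Lemma \ref{Lem:TauCorrespondence} for compactness of $\Sbf$ is not circular---the joint-continuity step, resting only on Assumptions \ref{Assp:ExistenceConsistency}(b) and (e) together with dominated convergence, is self-contained and serves as the common foundation on which both the compactness result and the positivity arguments can be built.
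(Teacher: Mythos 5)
Most of your proposal tracks the paper's proof: the joint continuity on $\Sbf$ via dominated convergence with the hypothesized dominating function, strict positivity via compactness of $\Sbf$ (Lemma \ref{Lem:TauCorrespondence}iii) plus attainment of the infimum by a continuous function (the paper phrases positivity as a contradiction --- if some expectation were zero then $\e^{\dot\tau'\psi(X_1,\dot\theta)}=0$ $\Prm$-a.s., which is impossible --- but that is equivalent to your direct observation), the reduction of the second infimum to the first via $(\theta,\tau(\theta))\in\Sbf$, and the verification for $\Prm=\P$ (the paper uses Cauchy--Schwarz, $\E[\sup_{\Sbf}\e^{\tau'\psi}]\leqslant\E[\sup_{\Sbf}\e^{2\tau'\psi}]^{1/2}$, where you use $\e^{x}\leqslant 1+\e^{2x}$; both work under Assumption \ref{Assp:ExistenceConsistency}(e)).

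The genuine gap is your treatment of the continuity of $\tau(\cdot)$, which you correctly flag as the main obstacle but do not resolve. Your Berge's-maximum-theorem route needs a continuous, compact-valued constraint correspondence, and no legitimate one is available at this stage: minimizing over all of $\R^m$ is outside Berge's scope, restricting to $\Tbf(\theta)=\overline{B_{\epsilon_{\Tbf}}(\tau(\theta))}$ is circular since that ball is defined through $\tau(\theta)$ itself (and the continuity/uhc of $\theta\mapsto\Tbf(\theta)$ in Lemma \ref{Lem:TauCorrespondence}ii is \emph{derived from} continuity of $\tau$), and replacing it by a fixed large compact set requires an a priori bound on $\tau(\T)$, which in the paper is also a consequence of continuity of $\tau$ (Lemma \ref{Lem:TauCorrespondence}i). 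Your fallback citation is likewise circular: Lemma \ref{Lem:TauCorrespondence} does not prove continuity of $\tau(\cdot)$; it presupposes it by invoking Lemma \ref{Lem:AsTiltingFct}iii. The paper's actual route --- and the repair your proof needs --- is Lemma \ref{Lem:AsTiltingFct}iii, which obtains continuity of $\tau(\cdot)$ from the first-order condition $\E[\e^{\tau'\psi(X_1,\theta)}\psi(X_1,\theta)]=0$ via Kumagai's implicit function theorem (together with the inverse function theorem for local injectivity, using invertibility of the Hessian $\E[\e^{\tau'\psi}\psi\psi']$). The standard implicit function theorem is unavailable here because $(\theta,\tau)\mapsto\E[\e^{\tau'\psi(X_1,\theta)}\psi(X_1,\theta)]$ is only continuous, not continuously differentiable, in $\theta$ --- which is exactly why the paper reaches for Kumagai's version rather than a maximum-theorem or classical-IFT argument.
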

\begin{proof}Under Assumption \ref{Assp:ExistenceConsistency} (a) and (b), the Lebesgue dominated convergence theorem and the lemma's assumption $\E_{\Prm} [\sup_{(\theta, \tau)\in \Sbf } \e^{\tau'\psi(X_1, \theta)}]< \infty$ imply that $(\theta, \tau)\mapsto\E_{\Prm}[ \e^{\tau'\psi(X_1, \theta)}] $ is continuous. Moreover, under Assumptions \ref{Assp:ExistenceConsistency} (a)(b)(d)(e)(g) and (h), by Lemma \ref{Lem:TauCorrespondence} (p. \pageref{Lem:TauCorrespondence}), $\Sbf$ is compact, and continuous functions over compact sets reach a minimum \citep[e.g., ][Theorem 4.16]{1953Rudin}. Now,   if there exist $(\dot\tau, \dot\theta)\in \Sbf  $ s.t.  $0=\E_{\Prm}[ \e^{\dot\tau'\psi(X_1, \dot\theta)}] $, then $ \e^{\dot\tau'\psi(X_1, \dot\theta)}=0$ $\Prm$-a.s.  \citep[e.g.,][Lemma 1.24]{2002Kal}, which  is impossible by definition of the exponential function. Thus,  $0<\inf_{(\theta, \tau)\in \Sbf }\E_{\Prm}[ \e^{\tau'\psi(X_1, \theta)}]$, so that $0<\inf_{ \theta\in \T }\E_{\Prm}[ \e^{\tau(\theta)'\psi(X_1, \theta)}]$ because by the  definition of $\Sbf$ in Assumption \ref{Assp:ExistenceConsistency}(e), for all $\theta \in \T$, $( \theta, \tau(\theta))\in \Sbf$. Regarding the second part of the result, it immediately follows from the Lebesgue dominated convergence theorem, the lemma's assumption that $\E_{\Prm} [\sup_{(\theta, \tau)\in \Sbf } \e^{\tau'\psi(X_1, \theta)}]< \infty$, and the continuity of $\tau: \T \rightarrow \R^m$ by Lemma \ref{Lem:AsTiltingFct}iii (p. \pageref{Lem:AsTiltingFct}) under Assumptions \ref{Assp:ExistenceConsistency} (a)(b)(d)(e)(g) and (h). Regarding the third part of the result, it is sufficient to note that, under Assumption \ref{Assp:ExistenceConsistency} (a)(b), by the Cauchy-Schwarz inequality, $ \E[\sup_{(\theta, \tau)\in \Sbf} \e^{\tau'\psi(X_1, \theta)}]\leqslant \E[\sup_{(\theta, \tau)\in \Sbf} \e^{2\tau'\psi(X_1, \theta)}]^{1/2}< \infty$, where the last inequality follows from Assumption \ref{Assp:ExistenceConsistency}(e).
\end{proof}
\begin{lem}[Compactness of $\Sbf$]\label{Lem:TauCorrespondence}
Under Assumptions \ref{Assp:ExistenceConsistency} (a)(b)(d)(e)(g) and (h),
\begin{itemize}
\item[(i)] The closure of the $\epsilon_{\Tbf}$-neighborhood of $\tau(\T)$ (i.e., $ \overline{\tau(\T)^{\epsilon_{\Tbf}}}$) is compact
\item[(ii)] For all $\theta \in \T$, the correspondence $\theta \mapsto \Tbf(\theta)$ is nonempty compact-valued and uhc (upper hemi-continuous), and thus measurable;
\item[(iii)] The set $\Sbf:=\{(\theta, \tau):\theta \in \T \wedge \tau \in \Tbf(\theta)\}$ is compact.
\end{itemize}
\end{lem}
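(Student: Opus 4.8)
The entire argument rests on the continuity of the tilting map $\theta \mapsto \tau(\theta)$ on the compact parameter space $\T$, which is supplied by Lemma \ref{Lem:AsTiltingFct}iii under Assumption \ref{Assp:ExistenceConsistency}(a)(b)(d)(e)(g) and (h); before writing anything I would verify that Lemma \ref{Lem:AsTiltingFct} does not itself invoke the present lemma, so that no circularity creeps in. Granting this continuity, part (i) is immediate: $\tau(\T)$ is the continuous image of a compact set, hence compact \citep[e.g.,][Theorem 4.14]{1953Rudin}, so it is bounded; its $\epsilon_{\Tbf}$-neighborhood is then bounded as well, and the closure $\overline{\tau(\T)^{\epsilon_{\Tbf}}}$ is a closed and bounded subset of $\R^m$, hence compact by Heine--Borel.

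For part (ii), for each fixed $\theta$ the set $\Tbf(\theta)=\overline{B_{\epsilon_{\Tbf}}(\tau(\theta))}$ is a closed ball of fixed positive radius, so it is nonempty and compact. To obtain upper hemicontinuity I would observe that the map sending a center $x$ to the closed ball $\overline{B_{\epsilon_{\Tbf}}(x)}$ is continuous for the Hausdorff metric---the Hausdorff distance between two such balls equals the distance between their centers---so composing it with the continuous map $\tau(\cdot)$ yields a correspondence that is continuous, and a fortiori uhc, in the Hausdorff sense; since the values are compact this is the usual uhc. Measurability of a compact-valued uhc correspondence on a metrizable domain is then standard \citep[e.g.,][]{1999AliprantisBorder}.

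Part (iii) is the crux, and the footnote makes precise why: the graph of a correspondence whose values are merely compact need not be compact, so I cannot simply assemble (i) and (ii). The plan is to show instead that $\Sbf$ is a \emph{closed} subset of the \emph{compact} product $\T \times \overline{\tau(\T)^{\epsilon_{\Tbf}}}$ and to conclude via the fact that a closed subset of a compact set is compact. Containment is clear, since $\tau \in \Tbf(\theta)$ forces $|\tau - \tau(\theta)| \leqslant \epsilon_{\Tbf}$ with $\tau(\theta) \in \tau(\T)$. For closedness I would take any sequence $(\theta_n, \tau_n) \in \Sbf$ with $(\theta_n, \tau_n) \to (\theta, \tau)$: compactness of $\T$ gives $\theta \in \T$, and from $|\tau_n - \tau(\theta_n)| \leqslant \epsilon_{\Tbf}$ together with $\tau(\theta_n) \to \tau(\theta)$ (continuity of $\tau$) and $\tau_n \to \tau$, passing to the limit yields $|\tau - \tau(\theta)| \leqslant \epsilon_{\Tbf}$, i.e. $\tau \in \overline{B_{\epsilon_{\Tbf}}(\tau(\theta))} = \Tbf(\theta)$, so $(\theta, \tau) \in \Sbf$.

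The only genuine obstacle is this last passage to the limit: everything hinges on being able to carry $\tau(\theta_n) \to \tau(\theta)$ through, which is exactly where the continuity of the tilting map---rather than mere compactness of each $\Tbf(\theta)$---does the work and rescues compactness of the graph. I would therefore make certain that the continuity input is legitimately available at this stage of the logical development before leaning on it.
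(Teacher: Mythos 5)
Your proof is correct, and it reaches the same three conclusions by a partly different route. Part (i) coincides with the paper's argument (continuous image of a compact set, then Heine--Borel). For the upper hemicontinuity in part (ii), the paper uses the sequential characterization \citep[Theorem 17.20]{1999AliprantisBorder}: it extracts a convergent subsequence of the $\tau_n$'s inside the compact set $\overline{\tau(\T)^{\epsilon_{\Tbf}}}$ from part (i) and passes $\vert \tau_{\alpha(n)}-\tau(\theta_{\alpha(n)})\vert\leqslant \epsilon_{\Tbf}$ to the limit, whereas you compose $\tau(\cdot)$ with the map $x \mapsto \overline{B_{\epsilon_{\Tbf}}(x)}$, which is an isometry for the Hausdorff metric on equal-radius balls in $\R^m$; for compact-valued correspondences, Hausdorff-metric continuity is equivalent to continuity in the usual sense \citep[Theorem 17.15]{1999AliprantisBorder}, so you in fact obtain full continuity (uhc \emph{and} lhc), slightly more than the paper proves here --- lower hemicontinuity of a related correspondence is only needed later, in Lemma \ref{Lem:StrictPositivityOfEpsilon}. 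The paper also spells out measurability from uhc via upper/lower inverses, where you cite the standard result; both are fine. In part (iii) the structural skeleton is identical (closed subset of the compact product $\T \times \overline{\tau(\T)^{\epsilon_{\Tbf}}}$), but the paper obtains closedness of $\Sbf$ from part (ii) through the closed-graph theorem for uhc closed-valued correspondences \citep[Theorem 17.10]{1999AliprantisBorder}, while you prove it directly by a sequential argument that uses only the continuity of $\tau$ and never invokes (ii) at all; your version is more elementary and makes the logical dependence leaner, at the cost of not exhibiting the graph-theoretic mechanism the paper leans on. Finally, your circularity caveat is well taken but resolves favorably: the paper's own proof of this lemma rests on exactly the same input, Lemma \ref{Lem:AsTiltingFct}(iii), whose proof goes through Kumagai's implicit function theorem with ingredients (dominated convergence, Cauchy--Schwarz bounds from Assumption \ref{Assp:ExistenceConsistency}(e)(g), and pointwise positivity and invertibility arguments) that do not substantively require the compactness of $\Sbf$, so the continuity of $\tau$ is legitimately available here.
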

\begin{proof}\textit{(i)} Under Assumptions \ref{Assp:ExistenceConsistency} (a)(b)(d)(e)(g) and (h),
 by Lemma \ref{Lem:AsTiltingFct}iii (p. \pageref{Lem:AsTiltingFct}), $\tau: \T\rightarrow \R^m $ is continuous. Moreover, by Assumption \ref{Assp:ExistenceConsistency}(d),  $\T$ is compact. Thus,  $\tau(\T)$ is bounded ---continuous mappings preserve  compactness \citep[e.g.,][Theorem 4.14]{1953Rudin}. Consequently, $ \tau(\T)^{\epsilon_{\Tbf}}=:\{\tau \in \R^m:\inf_{\tilde{\tau} \in \tau(\T)}\vert \tau- \tilde{\tau}\vert< \epsilon_{\Tbf}  \}$ is bounded, which means that its closure $ \overline{\tau(\T)^{\epsilon_{\Tbf}}}$ is closed and bounded, i.e., compact.

\textit{(ii)} \textit{Proof that $\Tbf$ is nonempty and compact valued.} By Assumption \ref{Assp:ExistenceConsistency}(d), for all $\theta \in \T$, there exists $\tau(\theta)$ s.t. $\E[ \e^{\tau(\theta)'\psi(X_1, \theta)}\psi(X_1, \theta)]=0$. Thus, for all $\theta \in \T$, $\Tbf(\theta)=\overline{B_{\epsilon_{\Tbf}}(\tau(\theta))}$ is nonempty. Moreover, by construction,  $\overline{B_{\epsilon_{\Tbf}}(\tau(\theta))}$ is compact, so that it is nonempty compact valued.

\textit{Proof that $\Tbf$ is uhc.}  Because $\Tbf$ is compact valued, we can use the sequential characterization of upper hemicontinuity \cite[e.g.,][Theorem 17.20]{1999AliprantisBorder}. Let $((\theta_n, \tau_n))_{n \in \N}\in (\Sbf)^\N $ be a sequence s.t., for all $n \in \N$, $\tau_n \in \Tbf(\theta_n)$ and  $\theta_n \rightarrow \bar{\theta}\in \T$ as $n \rightarrow \infty$. By construction, for all $n \in \N$, $\tau_n\in \overline{B_{\epsilon_{\Tbf}}(\tau(\theta_{n}))} \subset \overline{\tau(\T)^{\epsilon_{\Tbf}}}$. Moreover, by statement (i),  $ \overline{\tau(\T)^{\epsilon_{\Tbf}}}$ is compact, so that  there exists a subsequence $(\tau_{\alpha(n)})_{n \in \N}$  s.t.   $\tau_{\alpha(n)}\rightarrow \bar{\tau}\in \overline{\tau(\T)^{\epsilon_{\Tbf}}}$, as $n \rightarrow \infty$. Again, by construction, for all $n \in \N$, $(\theta_n, \tau_n)\in \Sbf $, so that $ \vert \tau_{\alpha(n)}- \tau(\theta_{\alpha(n)})\vert\leqslant\epsilon_{\Tbf} $. Now,  under Assumptions \ref{Assp:ExistenceConsistency} (a)(b)(d)(e)(g) and (h), by Lemma \ref{Lem:AsTiltingFct}iii,
$\tau:\T \rightarrow \R^m $ is continuous. Thus,   $\vert \tau_{\alpha(n)}- \tau(\theta_{\alpha(n)})\vert \rightarrow \vert\bar{\tau}- \tau(\bar{\theta})\vert $ as $n \rightarrow \infty$. Thus,  $ \vert\bar{\tau}- \tau(\bar{\theta})\vert\leqslant\epsilon_{\Tbf}$, which means that $\bar{\tau} \in \overline{B_{\epsilon_{\Tbf}}(\tau(\bar{\theta}))}=\Tbf(\bar{\theta})$.

\textit{Proof that $\Tbf$ is measurable.}  Let $F$ be a closed subset of $\R^m$. Then, its complement $F^c$ is an open subset of $\R^m$.
Now, a correspondence is uhc iff  the upper inverse image of a open set is an open set \citep[e.g.,][Lemma 17.4]{1999AliprantisBorder}.
Thus, by the previous paragraph, $\Tbf^u(F^c)\in \mathcal{B}(\T) $, where $\Tbf^u$ denotes the upper inverse of $\Tbf$. Now, denoting the lower inverse of $\Tbf$ with $\Tbf^l$, notice that $\Tbf^u(F^c)=[\Tbf^l(F)]^c $ \citep[e.g.,][p. 557]{1999AliprantisBorder}, so that $[\Tbf^l(F)]^c \in \mathcal{B}(\T)$, which, in turn implies that $\Tbf^l(F) \in \mathcal{B}(\T) $ because of the stability of $\sigma$-algebras under complementation.

\textit{(iii)} Note that the compactness of $\T$ and $\Tbf(\theta)$ are not sufficient to ensure the compactness of  $\Sbf$ because $\Sbf$ is not a Cartesian product. By the statement (ii) of the present lemma,  $\Tbf$ is uhc and closed valued, so that it has a closed graph \citep[e.g.,][Theorem 17.10]{1999AliprantisBorder}, i.e., $\Sbf$ is closed. Now, by construction, $\Sbf$ is a subset $[\overline{\tau(\T)^{\epsilon_{\Tbf}}} \times \T ]$, which is compact by statement (i) and Assumption \ref{Assp:ExistenceConsistency}(d). Thus, $\Sbf$ is also compact ---in metric spaces, closed subsets of compact sets are compact \citep[e.g.,][Theorem 2.35]{1953Rudin}.
\end{proof}

\begin{lem} \label{Lem:StrictPositivityOfEpsilon}Under Assumptions \ref{Assp:ExistenceConsistency} (a)(b)(d)(e) and (h),
\begin{enumerate}
\item[(i)]  for any constant $\eta\in ]0,\epsilon_\Tbf ] $, there exists a continuous value function   $v: \T \rightarrow \R_+ $ s.t., for all $\theta \in \T$, $v(\theta)= \inf_{\tau \in \Tbf(\theta):\vert \tau -\tau(\theta)\vert \geqslant \eta}\vert \E [\e^{\tau'\psi(X_1, \theta)}]-  \E[ \e^{\tau(\theta)'\psi(X_1, \theta)}] \vert$;
\item[(ii)] for any constant $\eta\in ]0,\epsilon_\Tbf ] $, $0 <\inf_{\theta \in \T}\inf_{\tau \in \Tbf(\theta):\vert \tau -\tau(\theta)\vert \geqslant \eta}\vert \E [\e^{\tau'\psi(X_1, \theta)}]-  \E[ \e^{\tau(\theta)'\psi(X_1, \theta)}] \vert$.
\end{enumerate}

\end{lem}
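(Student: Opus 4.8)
The plan is to view the inner infimum in (i) and (ii) as the value of a parametric minimization problem, obtain its continuity from Berge's maximum theorem, and then deduce (ii) by compactness. Throughout, write $g(\theta,\tau):=\E[\e^{\tau'\psi(X_1,\theta)}]$. Three facts drive the argument. First, $g$ is jointly continuous on $\Sbf$: under Assumption \ref{Assp:ExistenceConsistency}(a)(b) the integrand $(\theta,\tau)\mapsto\e^{\tau'\psi(X_1,\theta)}$ is continuous, and Assumption \ref{Assp:ExistenceConsistency}(e) supplies the domination $\E[\sup_{(\theta,\tau)\in\Sbf}\e^{\tau'\psi(X_1,\theta)}]\leqslant\E[\sup_{(\theta,\tau)\in\Sbf}\e^{2\tau'\psi(X_1,\theta)}]^{1/2}<\infty$ by Cauchy--Schwarz, so the dominated convergence theorem applies (this is the continuity argument of Lemma \ref{Lem:ExpTauPsiStrictlyPositive}, which uses only (a)(b)(e)). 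Second, $\tau(\cdot)$ is continuous and $\tau(\theta)=\arg\min_{\tau}g(\theta,\tau)$ by Lemma \ref{Lem:AsTiltingFct}. Third, $\tau\mapsto g(\theta,\tau)$ is strictly convex, since its Hessian $\E[\e^{\tau'\psi(X_1,\theta)}\psi(X_1,\theta)\psi(X_1,\theta)']$ is positive definite by Lemma \ref{Lem:ChgOfMeasureInvertibilityPDP}. Because $\tau(\theta)$ minimizes $g(\theta,\cdot)$, one always has $g(\theta,\tau)\geqslant g(\theta,\tau(\theta))$, so the absolute value in the statement may be dropped and the integrand equals $g(\theta,\tau)-g(\theta,\tau(\theta))$.

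For part (i), I would reparametrize the constraint set to expose its continuity. Set $K:=\{s\in\R^m:\eta\leqslant|s|\leqslant\epsilon_\Tbf\}$, a fixed nonempty compact annulus (nonempty because $0<\eta\leqslant\epsilon_\Tbf$), so that the constraint correspondence is $\Gamma(\theta):=\{\tau\in\Tbf(\theta):|\tau-\tau(\theta)|\geqslant\eta\}=\tau(\theta)+K$. Being the translate of a fixed compact set by the continuous map $\tau(\cdot)$, $\Gamma$ is compact-valued and continuous (both upper and lower hemicontinuous). The objective $(\theta,\tau)\mapsto g(\theta,\tau)-g(\theta,\tau(\theta))$ is continuous on the graph of $\Gamma$ by the first two facts. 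Berge's maximum theorem \citep[e.g.,][Theorem 17.31]{1999AliprantisBorder}, applied to the minimization, then yields that $v(\theta):=\min_{\tau\in\Gamma(\theta)}[g(\theta,\tau)-g(\theta,\tau(\theta))]$ is continuous on $\T$; moreover $v\geqslant0$ by the third fact, so $v:\T\rightarrow\R_+$, which proves (i).

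For part (ii), fix $\theta$. The set $\Gamma(\theta)$ is compact and does not contain $\tau(\theta)$, so the continuous map $g(\theta,\cdot)$ attains its minimum over $\Gamma(\theta)$ at some $\tau^{*}\neq\tau(\theta)$; strict convexity forces $g(\theta,\tau^{*})>g(\theta,\tau(\theta))$, whence $v(\theta)>0$. Thus $v$ is continuous and strictly positive on the compact set $\T$ (Assumption \ref{Assp:ExistenceConsistency}(d)), so it attains its infimum and $\inf_{\theta\in\T}v(\theta)=\min_{\theta\in\T}v(\theta)>0$, which is exactly the asserted bound.

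The main obstacle is checking the hypotheses of Berge's theorem cleanly, in particular the lower hemicontinuity of $\Gamma$, which (as the surrounding footnotes emphasize) pointwise strict convexity alone does not supply. The translation identity $\Gamma(\theta)=\tau(\theta)+K$ is what makes both hemicontinuities transparent and reduces the entire difficulty to the already-established continuity of $\tau(\cdot)$. A secondary point, worth stating explicitly because the present assumption set omits Assumption \ref{Assp:ExistenceConsistency}(g), is that the joint continuity of $g$ is needed only through the dominated convergence argument above, whose domination is furnished by Assumption \ref{Assp:ExistenceConsistency}(e) alone; hence no appeal to (g) or to compactness of $\Sbf$ is required.
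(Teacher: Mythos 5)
Your proof is correct and follows the same overall architecture as the paper's: cast the inner infimum as the value function of a parametric minimization, invoke Berge's maximum theorem for (i), and combine strict positivity pointwise (from strict convexity and uniqueness of $\tau(\theta)$) with continuity of $v$ and compactness of $\T$ for (ii). Where you genuinely differ is in how the hemicontinuity of the constraint correspondence is obtained. The paper verifies upper and lower hemicontinuity of $\varphi(\theta)=\{\tau \in \Tbf(\theta): \vert \tau-\tau(\theta)\vert \geqslant \eta\}$ separately via the sequential characterizations, with the lhc step built around the translate sequence $\tau_n=\tau(\theta_n)+\bar{\tau}-\tau(\bar{\theta})$. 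Your identity $\Gamma(\theta)=\tau(\theta)+K$ with $K:=\{s:\eta\leqslant \vert s\vert\leqslant \epsilon_{\Tbf}\}$ makes that trick structural: since $\Tbf(\theta)=\overline{B_{\epsilon_{\Tbf}}(\tau(\theta))}$ is itself a translate of a fixed ball, $\Gamma$ is the continuous map $\tau(\cdot)$ plus a fixed nonempty compact set, and (e.g., via Hausdorff-distance continuity, $d_H(\tau(\theta_n)+K,\tau(\theta)+K)\leqslant\vert\tau(\theta_n)-\tau(\theta)\vert$) both hemicontinuities are immediate. This is shorter and more transparent than the paper's two sequential arguments, and it isolates exactly what the lhc footnote in the paper warns about: the difficulty is not strict convexity but the continuity of the correspondence, which here reduces entirely to continuity of $\tau(\cdot)$. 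Dropping the absolute value via the global-minimizer property of $\tau(\theta)$ is also fine and matches what the paper does implicitly.

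One claim in your closing paragraph overreaches: you assert that no appeal to Assumption \ref{Assp:ExistenceConsistency}(g) is required. That is true for the joint continuity of $g(\theta,\tau)=\E[\e^{\tau'\psi(X_1,\theta)}]$, which indeed needs only (a)(b) plus the Cauchy--Schwarz domination from (e). But your argument also uses (i) the continuity of $\tau(\cdot)$ and its characterization as the unique minimizer, which the paper establishes in Lemma \ref{Lem:AsTiltingFct} under (a)(b)(d)(e)(g)(h) --- the proof of continuity there runs through Kumagai's implicit function theorem and Lemma \ref{Lem:BoundAsTiltingEquation}, both of which draw on (g) for the integrability of $\e^{\tau'\psi}\psi$ --- and (ii) strict convexity of $\tau\mapsto g(\theta,\tau)$, i.e., positive \emph{definiteness} (not just semi-definiteness) of $\E[\e^{\tau'\psi(X_1,\theta)}\psi(X_1,\theta)\psi(X_1,\theta)']$, which comes from Assumption (h) via the change-of-measure argument of Corollary \ref{Cor:ChgOfMeasureInvertibilityPDP}, again invoked under (g); Lemma \ref{Lem:ChgOfMeasureInvertibilityPDP}(i) alone only yields semi-definiteness, which would not force $v(\theta)>0$. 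So unless you reprove the continuity of $\tau(\cdot)$ and the strict convexity without (g), your proof inherits the same (g)-dependence as the paper's (whose own statement of the lemma also omits (g) while its proof cites (g)-dependent lemmas --- a shared looseness, but your explicit assertion that (g) is dispensable is stronger than what you have shown).
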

\begin{proof}

\textit{(i)} It is a consequence  of Berge's  maximum theorem \citep[e.g.,][Theorem 17.31]{1999AliprantisBorder}. Thus, it remains to check its assumptions. For the present proof, define the correspondence $\varphi: \T \twoheadrightarrow \R^m  $ s.t. $\varphi(\theta)=\{ \tau \in \Tbf(\theta): \vert \tau- \tau(\theta) \vert\geqslant \eta \}$, and the function $f: \Sbf  \rightarrow \R_+$ s.t. $f(\theta, \tau)=\vert \E [\e^{\tau'\psi(X_1, \theta)}]-  \E[ \e^{\tau(\theta)'\psi(X_1, \theta)}] \vert$.

\textit{Proof of the continuity of $f$.} Under Assumption \ref{Assp:ExistenceConsistency} (a)(b)(d)(e)(g) and (h), by Lemma \ref{Lem:ExpTauPsiStrictlyPositive} (p. \pageref{Lem:ExpTauPsiStrictlyPositive}),  $(\theta, \tau)\mapsto \E[ \e^{\tau'\psi(X_1, \theta)}] $ and $ \theta \mapsto \E[ \e^{\tau(\theta)'\psi(X_1, \theta)}]$ are  continuous in $\Sbf$ and $\T$, respectively, so that  the continuity of $f$ follows immediately.

\textit{ Proof that $\varphi$ is nonempty compact valued.}  By the definition of $\Tbf$ in Assumption \ref{Assp:ExistenceConsistency}(e), for all $\theta \in \T$, $\Tbf(\theta)=\overline{B_{\epsilon_\Tbf}(\tau(\theta))}$, so that, for any $\eta\in ]0,\epsilon_\Tbf ] $, $\varphi(\theta)=\overline{B_{\epsilon_\Tbf}(\tau(\theta))}\cap \{\tau \in \R^m:\eta \leqslant \vert \tau - \tau(\theta) \vert \}\neq \emptyset$, i.e., $\varphi$ is nonempty valued.  Moreover, for all $\theta \in \T$, $\overline{B_{\epsilon_\Tbf}(\tau(\theta))}$ is a compact set and  $\{\tau \in \R^m:\eta \leqslant \vert \tau - \tau(\theta) \vert \}$ is a closed set, so that $\varphi(\theta)$, which is their intersection,    is compact  \citep[e.g.,][Theorem 2.35 and the following Corollary]{1953Rudin}.

\textit{Proof of the upper hemicontinuity of $\varphi$.} Because $\varphi$ is compact valued, we can use the sequential characterization of upper hemicontinuity \cite[e.g.,][Theorem 17.20]{1999AliprantisBorder}. Let $((\theta_n, \tau_n))_{n \in \N}\in \Sbf^\N $ be a sequence s.t., for all $n \in \N$, $\tau_n \in \varphi(\theta_n)$ and  $\theta_n \rightarrow \bar{\theta}\in \T$ as $n \rightarrow \infty$. Now, under Assumptions \ref{Assp:ExistenceConsistency} (a)(b)(d)(e)(g) and (h), Lemma \ref{Lem:TauCorrespondence}iii (p. \pageref{Lem:TauCorrespondence}),   $\Sbf$ is a compact set, so that there exists a subsequence $((\theta_{\alpha(n)},\tau_{\alpha(n)}))_{n \in \N}$  s.t.   $(\theta_{\alpha(n)},\tau_{\alpha(n)})\rightarrow (\bar{\theta}, \bar{\tau})\in \Sbf$, as $n \rightarrow \infty$. The definition of $\Sbf$ implies that $ \bar{\tau}\in \Tbf(\bar{\theta})$. Thus, it remains to show that  $\eta \leqslant \vert\bar{\tau}- \tau(\bar{\theta})\vert$ in order to conclude that  $\bar{\tau} \in \varphi(\bar{\theta})$. By construction, for all $n \in \N$, $\eta \leqslant \vert \tau_{\alpha(n)}- \tau(\theta_{\alpha(n)})\vert$. Moreover,  under Assumptions \ref{Assp:ExistenceConsistency} (a)(b)(d)(e)(g) and (h), by Lemma \ref{Lem:AsTiltingFct}iii (p. \pageref{Lem:AsTiltingFct}),
$\tau:\T \rightarrow \R^m $ is continuous, so that  $\vert \tau_{\alpha(n)}- \tau(\theta_{\alpha(n)})\vert \rightarrow \vert\bar{\tau}- \tau(\bar{\theta})\vert $ as $n \rightarrow \infty$, which means that   $\eta \leqslant \vert\bar{\tau}- \tau(\bar{\theta})\vert$.

\textit{Proof of the lower hemicontinuity of $\varphi$.}  Use the sequential characterization of the lower hemicontinuity \cite[e.g.,][Theorem 17.21]{1999AliprantisBorder}. Let $(\theta_n)_{n \in \N}\in \T^{\N}$ be a sequence s.t. $\theta_n \rightarrow \bar{\theta}\in \T$ and $\bar{\tau} \in \varphi(\bar{\theta}) $. Define the sequence  $(\tau_{n})_{n \in \N}$   s.t., for all $n \in \N$, $\tau_n=\tau(\theta_n)+\bar{\tau}-\tau(\bar{\theta})$.   By definition of the correspondence $\varphi$, for all $n \in \N$, $\vert \tau_n- \tau(\theta_n)\vert=\vert\bar{\tau}-\tau(\bar{\theta}) \vert \in [ \eta, \epsilon_{\Tbf}] $, which implies that  $\tau_n \in \varphi(\theta_n) $.  Moreover, under Assumptions \ref{Assp:ExistenceConsistency} (a)(b)(d)(e)(g) and (h), by Lemma \ref{Lem:AsTiltingFct}iii (p. \pageref{Lem:AsTiltingFct}),
$\tau:\T \rightarrow \R^m $ is continuous, so that $\lim_{n \rightarrow \infty} \tau_n=\lim_{n \rightarrow \infty}\tau(\theta_n)+\bar{\tau}-\tau(\bar{\theta})=\tau(\bar{\theta})+\bar{\tau}-\tau(\bar{\theta})=\bar{\tau} $.

\textit{(ii)} Under Assumptions \ref{Assp:ExistenceConsistency} (a)(b)(d)(e)(g) and (h), by Lemma \ref{Lem:AsTiltingFct} (p. \pageref{Lem:AsTiltingFct}), for all $\theta \in \T$, $\tau(\theta)$ is the unique minimum of the strictly convex minimization problem $\inf_{\tau \in \R^m}\E[ \e^{\tau'\psi(X_1, \theta)}] $. Thus, for all $\theta \in \T$, $v(\theta)>0$. Moreover, by Assumption \ref{Assp:ExistenceConsistency}(d), $\T$ is compact, and by statement (i) of the present lemma, $v(.)$ is continuous. Thus, there exists $\varepsilon_v>0$ s.t. $\min_{\theta \in \T}v(\theta )> \varepsilon_v$ because a continuous function over a compact set reaches a minimum \citep[e.g., ][Theorem 4.16]{1953Rudin}.
\end{proof}

\begin{lem}[Asymptotic limit of the variance term] \label{Lem:AsVarianceTermBehaviour}
Under Assumption \ref{Assp:ExistenceConsistency}(a)-(b) and (d)-(h),
\begin{enumerate}
\item[(i)] $\P$-a.s. for $T$ big enough,  $0<\inf_{\theta \in \T}\left\vert\left\vert \left[\sum_{t=1}^T \hat{w}_{t,\theta}\frac{\partial \psi_{t} (\theta)'}{\partial \theta}\right]\right\vert_{\det}\right\vert$;
\item[(ii)]     $\P$-a.s. as $T \rightarrow \infty$, $\sup_{\theta \in \T}\left\vert \Sigma_T(\theta) -  \E[ \e^{\tau(\theta)'\psi(X_1, \theta)}]\Sigma(\theta) \right\vert=o(1)$
\item[(iii)] $\theta \mapsto \Sigma(\theta)  $ and  $\theta \mapsto\E[ \e^{\tau(\theta)'\psi(X_1, \theta)}]\Sigma(\theta) $ are continuous in $\T$

\item[(iv)]  $\P$-a.s. for $T$ big enough, $ \inf_{\theta \in \T} \vert \Sigma_T(\theta)\vert_{\det} >0$;
\item[(v)] $\P$-a.s. as $T \rightarrow \infty$, $\sup_{\theta \in \T}\left\vert \ln \vert \Sigma_T(\theta)\vert_{\det}-\ln \left\vert  \E[ \e^{\tau(\theta)'\psi(X_1, \theta)}]\Sigma(\theta)\right\vert_{\det}\right\vert=o(1)$, so that, for all $\eta>0$, $\P$-a.s. as $T \rightarrow \infty$, $\sup_{\theta \in \T}\vert \frac{1}{2T^\eta} \ln \vert \Sigma_T(\theta)\vert_{\det}\vert=o(1)$.
\end{enumerate}

\end{lem}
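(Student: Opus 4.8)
The plan is to reduce everything to the uniform convergence of the three $\hat{w}_{t,\theta}$-weighted sample averages out of which $\Sigma_T(\theta)$ is built, and then to push that convergence through matrix inversion, the determinant, and the logarithm. Writing $c_T(\theta):=\frac{1}{T}\sum_{t=1}^T\e^{\tau_T(\theta)'\psi_t(\theta)}$, the three weighted averages equal $\tilde{A}_T(\theta)/c_T(\theta)$, $\tilde{B}_T(\theta)/c_T(\theta)$, $\tilde{C}_T(\theta)/c_T(\theta)$, where $\tilde{A}_T,\tilde{B}_T,\tilde{C}_T$ are the unnormalized averages $\frac{1}{T}\sum_t\e^{\tau_T(\theta)'\psi_t(\theta)}(\cdot)$ with $(\cdot)$ equal to $\frac{\partial\psi_t}{\partial\theta'}$, $\psi_t\psi_t'$, $\frac{\partial\psi_t'}{\partial\theta}$ respectively. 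Hence $\Sigma_T(\theta)=c_T(\theta)\,\tilde{A}_T(\theta)^{-1}\tilde{B}_T(\theta)\tilde{C}_T(\theta)^{-1}$, whose natural limit is $c(\theta)\Sigma(\theta)$ with $c(\theta):=\E[\e^{\tau(\theta)'\psi(X_1,\theta)}]$ and $\Sigma(\theta)=A(\theta)^{-1}B(\theta)C(\theta)^{-1}$ as in Assumption \ref{Assp:ExistenceConsistency}(h), where $A=\E[\e^{\tau'\psi}\frac{\partial\psi}{\partial\theta'}]$, $B=\E[\e^{\tau'\psi}\psi\psi']$, $C=\E[\e^{\tau'\psi}\frac{\partial\psi'}{\partial\theta}]=A'$. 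So the first task is to show $\sup_\theta|\tilde{A}_T-A|$, $\sup_\theta|\tilde{B}_T-B|$, $\sup_\theta|\tilde{C}_T-C|$ are all $o(1)$ $\P$-a.s.; uniform convergence of $c_T$ to $c$ is already Lemma \ref{Lem:Schennachtheorem10PfFirstSteps}iv.

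For each average I would rerun the argument behind Lemma \ref{Lem:Schennachtheorem10PfFirstSteps}iv. Cauchy--Schwarz together with Assumption \ref{Assp:ExistenceConsistency}(e) and (f) gives $\E[\sup_{(\theta,\tau)\in\Sbf^\epsilon}\e^{\tau'\psi}|\frac{\partial\psi}{\partial\theta'}|]\leqslant(\E\sup\e^{2\tau'\psi})^{1/2}(\E\sup|\frac{\partial\psi}{\partial\theta'}|^2)^{1/2}<\infty$, and likewise with (e) and (g) for $|\psi\psi'|$. Since $\Sbf$ is compact (Lemma \ref{Lem:TauCorrespondence}iii), the Wald-type ULLN \citep[e.g.,][Theorem 1.3.3]{2003GhoRam} then gives uniform convergence of $\frac{1}{T}\sum_t\e^{\tau'\psi_t(\theta)}(\cdot)$ to $\E[\e^{\tau'\psi}(\cdot)]$ over all $(\theta,\tau)\in\Sbf$; the same domination makes $(\theta,\tau)\mapsto\E[\e^{\tau'\psi}(\cdot)]$ continuous, hence uniformly continuous on the compact $\Sbf$. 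Composing this uniform continuity with $\sup_\theta|\tau_T(\theta)-\tau(\theta)|=o(1)$ (Lemma \ref{Lem:Schennachtheorem10PfFirstSteps}iii) and the fact that $(\theta,\tau_T(\theta))\in\Sbf$ eventually (Lemma \ref{Lem:Schennachtheorem10PfFirstSteps}ii) yields the claimed uniform convergence of $\tilde{A}_T,\tilde{B}_T,\tilde{C}_T$. This composition is the main obstacle: it is exactly where the sampling randomness (ULLN) and the estimated tilting $\tau_T(\cdot)$ must be reconciled, and where the integrability bookkeeping from Assumptions \ref{Assp:ExistenceConsistency}(e)--(g) is indispensable.

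With the building blocks uniformly convergent, the remainder is analytic. For (i), the limit of $\tilde{C}_T/c_T$ is $C/c$; here $C=A'$ is invertible by Assumption \ref{Assp:ExistenceConsistency}(h) and $c>0$ by Lemma \ref{Lem:ExpTauPsiStrictlyPositive}, so $\theta\mapsto|C(\theta)/c(\theta)|_{\det}=|C(\theta)|_{\det}/c(\theta)^m$ is continuous (dominated convergence plus continuity of $\tau(\cdot)$ from Lemma \ref{Lem:AsTiltingFct}iii, as in Lemma \ref{Lem:ExpTauPsiStrictlyPositive}) and nonvanishing on the compact $\T$, hence bounded away from zero; uniform convergence and continuity of the determinant then give $0<\inf_\theta|\sum_t\hat{w}_{t,\theta}\frac{\partial\psi_t'}{\partial\theta}|_{\det}$ for $T$ big enough, which in particular makes $\tilde{A}_T,\tilde{C}_T$ uniformly invertible. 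For (ii), on the region where these Jacobian factors are bounded away from singularity the maps $M\mapsto M^{-1}$ and matrix multiplication are uniformly continuous on the relevant compacts, so $\Sigma_T=c_T\tilde{A}_T^{-1}\tilde{B}_T\tilde{C}_T^{-1}\to c\,A^{-1}BC^{-1}=c\Sigma$ uniformly. Statement (iii) is the deterministic shadow of the same facts: $A,B,C$ are continuous in $\theta$ by dominated convergence and continuity of $\tau(\cdot)$, all invertible by (h), inversion is continuous, and $c$ is continuous by Lemma \ref{Lem:ExpTauPsiStrictlyPositive}, so both $\Sigma$ and $c\Sigma$ are continuous on $\T$.

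Finally (iv) and (v) come from pushing (ii) through the determinant and then the logarithm. Since $\Sigma(\theta)$ is invertible (Assumption \ref{Assp:ExistenceConsistency}(h)) and $c(\theta)>0$, the continuous map $\theta\mapsto|c(\theta)\Sigma(\theta)|_{\det}$ is bounded away from zero on the compact $\T$; combined with (ii) and continuity of the determinant this gives $\inf_\theta|\Sigma_T(\theta)|_{\det}>0$ for $T$ big enough, which is (iv). For (v), $\ln$ is uniformly continuous on a closed neighborhood of the compact, bounded-away-from-zero image $\{|c(\theta)\Sigma(\theta)|_{\det}:\theta\in\T\}$, so (ii)--(iv) yield $\sup_\theta|\ln|\Sigma_T(\theta)|_{\det}-\ln|c(\theta)\Sigma(\theta)|_{\det}|=o(1)$ $\P$-a.s.; the limit is a continuous function on a compact set, hence bounded, so $\ln|\Sigma_T(\theta)|_{\det}$ is eventually uniformly bounded and $\sup_\theta|\frac{1}{2T^\eta}\ln|\Sigma_T(\theta)|_{\det}|=o(1)$ for every $\eta>0$.
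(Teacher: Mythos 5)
Your proof is correct and follows essentially the same route as the paper: the paper's Lemmas \ref{Lem:TiltedDerivative} and \ref{Lem:TiltedVariance} carry out exactly the ULLN-over-compact-$\Sbf$, Cauchy--Schwarz domination (Assumptions \ref{Assp:ExistenceConsistency}(e)--(g)), and composition-with-$\tau_T(\cdot)$ steps you rerun inline, and the remaining analytic steps (invertibility via Assumption \ref{Assp:ExistenceConsistency}(h) and compactness, continuity of inversion and the determinant, uniform continuity of $\ln$ on a compact image bounded away from zero) match the paper's proof of statements (i)--(v). The only cosmetic differences are that you make the factorization $\Sigma_T=c_T\tilde{A}_T^{-1}\tilde{B}_T\tilde{C}_T^{-1}$ explicit rather than citing the auxiliary lemmas, and in (iv) you obtain strict positivity of $\vert\Sigma_T\vert_{\det}$ directly from uniform convergence of determinants to a strictly positive limit, where the paper instead invokes positive semi-definiteness of $\Sigma_T$ together with its generic Lemma \ref{Lem:UniFiniteSampleInvertibilityFromUniAsInvertibility}.
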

\begin{proof} \textit{(i)}
Under Assumption \ref{Assp:ExistenceConsistency}(a)-(b) and (d)-(h),
by Lemma \ref{Lem:TiltedDerivative} (p. \pageref{Lem:TiltedDerivative}),  $\P$-a.s. as $T \rightarrow \infty$,
$\sup_{ \theta\in   \T}\left\vert\left[ \frac{1}{T}\sum_{t=1}^T \hat{w}_{t,\theta}\frac{\partial \psi_{t} (\theta)'}{\partial \theta}\right] -\frac{1}{\E[ \e^{\tau(\theta)'\psi(X_1, \theta)}]}\E \left[\e^{\tau(\theta)'\psi(X_1, \theta) }\frac{\partial \psi(X_1, \theta)'}{\partial \theta}\right]\right\vert=o(1)$, so that it is sufficient to check the invertibility of $\frac{1}{\E[ \e^{\tau(\theta)'\psi(X_1, \theta)}]}\E \left[\e^{\tau(\theta)'\psi(X_1, \theta) }\frac{\partial \psi(X_1, \theta)'}{\partial \theta}\right]$ for all $\theta \in \T$ and the continuity of $\theta \mapsto\frac{1}{\E[ \e^{\tau(\theta)'\psi(X_1, \theta)}]}\E \left[\e^{\tau(\theta)'\psi(X_1, \theta) }\frac{\partial \psi(X_1, \theta)'}{\partial \theta}\right]$   (Lemma \ref{Lem:UniFiniteSampleInvertibilityFromUniAsInvertibility}  on p. \pageref{Lem:UniFiniteSampleInvertibilityFromUniAsInvertibility}).  Firstly, by Assumption \ref{Assp:ExistenceConsistency}(h),
for all $\theta \in \T$, $\Sigma(\theta):=\negthickspace \left[\E\e^{\tau(\theta)' \psi(X_{1},\theta)}  \frac{\partial \psi(X_{1},\theta)}{\partial \theta' }\right]^{-1}\negthickspace\E\left[\e^{\tau(\theta)' \psi(X_{1},\theta)}\psi(X_{1},\theta) \psi(X_{1},\theta)' \right]\negthickspace \\\left[\E\e^{\tau(\theta)' \psi(X_{1},\theta)}  \frac{\partial \psi(X_{1},\theta)'}{\partial \theta }\right]^{-1}$  is a positive-definite symmetric matrix, and thus $\left[\E\e^{\tau(\theta)' \psi(X_{1},\theta)}  \frac{\partial \psi(X_{1},\theta)}{\partial \theta }'\right]$  is invertible.  Moreover, under Assumption \ref{Assp:ExistenceConsistency} (a)(b)(d)(e)(g) and (h), by Lemma \ref{Lem:ExpTauPsiStrictlyPositive} (p. \pageref{Lem:ExpTauPsiStrictlyPositive}), and Assumption \ref{Assp:ExistenceConsistency}(e), for all $\theta \in \T$, $0<\E[\e^{\tau(\theta)'\psi(X_1,\theta)}]<\infty$, so that $\frac{1}{\E[ \e^{\tau(\theta)'\psi(X_1, \theta)}]}\E \left[\e^{\tau(\theta)'\psi(X_1, \theta) }\frac{\partial \psi(X_1, \theta)'}{\partial \theta}\right]$   is  invertible for all $\theta \in \T$. Secondly, under Assumption \ref{Assp:ExistenceConsistency}(a)-(b), (e)-(f), by Lemma \ref{Lem:TiltedVariance}i (p. \pageref{Lem:TiltedVariance}), $ \E \left[\sup_{(\theta, \tau) \in \Sbf} \vert \e^{\tau'\psi(X_1, \theta) }\frac{\partial \psi(X_1, \theta)'}{\partial \theta} \vert\right]< \infty$, so that  the Lebesgue dominated convergence theorem and Assumption \ref{Assp:ExistenceConsistency}(b) imply the continuity of  $(\theta, \tau)\mapsto \E \left[ \e^{\tau'\psi(X_1, \theta) }\frac{\partial \psi(X_1, \theta)'}{\partial \theta} \right] $  in $\Sbf$. Moreover, by definition in Assumption \ref{Assp:ExistenceConsistency}(e), for all $\theta \in \T$, $(\tau(\theta), \theta)\in \Sbf$, and  under Assumption \ref{Assp:ExistenceConsistency} (a)(b)(d)(e)(g) and (h),
by Lemma \ref{Lem:AsTiltingFct}iii (p. \pageref{Lem:AsTiltingFct}),
$\tau:\T \rightarrow \R^m $ is continuous.
  Thus, $\theta \mapsto \left[\E\e^{\tau(\theta)' \psi(X_{1},\theta)}  \frac{\partial \psi(X_{1},\theta)}{\partial \theta }'\right]$  is continuous. Then, the  continuity of 
 
\noindent 
$\theta \mapsto\frac{1}{\E[ \e^{\tau(\theta)\psi(X_1, \theta)}]}\E \left[\e^{\tau(\theta)'\psi(X_1, \theta) }\frac{\partial \psi(X_1, \theta)'}{\partial \theta}\right]$ follows from Lemma \ref{Lem:ExpTauPsiStrictlyPositive} (p. \pageref{Lem:ExpTauPsiStrictlyPositive}) under Assumption \ref{Assp:ExistenceConsistency} (a)(b)(d)(e)(g) and (h).

\textit{(ii)}  On one hand, by definition, $\Sigma(\theta)\negthickspace:=\negthickspace \left[\E\e^{\tau(\theta)' \psi(X_{1},\theta)}  \frac{\partial \psi(X_{1},\theta)}{\partial \theta' }\right]^{-1}\negthickspace\negthickspace\E\left[\e^{\tau(\theta)' \psi(X_{1},\theta)}\psi(X_{1},\theta) \psi(X_{1},\theta)' \right]\negthickspace\\ \left[\E\e^{\tau(\theta)' \psi(X_{1},\theta)}  \frac{\partial \psi(X_{1},\theta)'}{\partial \theta }\right]^{-1}$, which is symmetric positive definite   by Assumption \ref{Assp:ExistenceConsistency} (h), and $\Sigma_T(\theta)  :=  \left[ \sum_{t=1}^T \hat{w}_{t,\theta}\frac{\partial \psi_{t} (\theta)}{\partial \theta'}\right]^{-1}\left[\sum_{t=1}^T \hat{w}_{t,\theta}\psi_t(\theta)\psi_t(\theta)'\right]\left[ \sum_{t=1}^T \hat{w}_{t,\theta}\frac{\partial \psi_{t} (\theta)'}{\partial \theta}\right]^{-1}$, which is well defined $\P$-a.s. for $T$ big enough by the statement (i) of the present lemma. On the other hand,
under   Assumption \ref{Assp:ExistenceConsistency}(a)-(b) and (d)-(h),
by Lemma \ref{Lem:TiltedDerivative}iii (p. \pageref{Lem:TiltedDerivative}),  $\P$-a.s. as $T \rightarrow \infty$,
$\sup_{ \theta\in   \T}\vert\left[ \frac{1}{T}\sum_{t=1}^T \hat{w}_{t,\theta}\frac{\partial \psi_{t} (\theta)'}{\partial \theta}\right] -\frac{1}{\E[ \e^{\tau(\theta)'\psi(X_1, \theta)}]}\E \left[\e^{\tau(\theta)'\psi(X_1, \theta) }\frac{\partial \psi(X_1, \theta)'}{\partial \theta}\right]\vert=o(1)$, and, under Assumptions \ref{Assp:ExistenceConsistency}(a)-(b), (d)-(e) and (g)-(h), by Lemma \ref{Lem:TiltedVariance} (p. \pageref{Lem:TiltedVariance}), $\P$-a.s. as $T \rightarrow \infty$, $\sup_{ \theta\in   \T}\vert\left[ \frac{1}{T}\sum_{t=1}^T \e^{\tau_T(\theta)'\psi_t(\theta)}\psi_t(\theta) \psi_t(\theta)'\right] -\frac{1}{\E[ \e^{\tau(\theta)'\psi(X_1, \theta)}]}\E\left[ \e^{\tau(\theta)'\psi(X_1, \theta) }\psi(X_1,\theta) \psi(X_1,\theta)'\right]\vert=o(1)$. Thus, the claim follows from the continuity of the inverse transformation \citep[e.g.,][Theorem 9.8]{1953Rudin} and the limiting functions,  and the compactness of $\T$.

\textit{(iii)} Under Assumption \ref{Assp:ExistenceConsistency}(a)-(b), (e)-(g), by Lemma \ref{Lem:TiltedDerivative}i (p. \pageref{Lem:TiltedDerivative}) and \ref{Lem:TiltedVariance} (p. \pageref{Lem:TiltedVariance}), \\$ \E \left[\sup_{(\theta, \tau) \in \Sbf} \vert \e^{\tau'\psi(X_1, \theta) }\frac{\partial \psi(X_1, \theta)'}{\partial \theta} \vert\right]< \infty$ and $\E \left[\sup_{(\theta, \tau) \in \Sbf}\vert \e^{\tau'\psi(X_1, \theta) }\psi(X_{1},\theta) \psi(X_{1},\theta)' \vert\right]< \infty$, so that, by  the Lebesgue dominated convergence theorem and Assumption \ref{Assp:ExistenceConsistency}(b), 

\noindent
 $(\theta, \tau)\mapsto \E \left[ \e^{\tau'\psi(X_1, \theta) }\frac{\partial \psi(X_1, \theta)'}{\partial \theta} \right] $ and $(\theta, \tau)\mapsto \E \left[ \e^{\tau'\psi(X_1, \theta) }\psi(X_{1},\theta) \psi(X_{1},\theta)' \right]$
are continuous in $\Sbf$. Moreover, by definition in Assumption \ref{Assp:ExistenceConsistency}(e), for all $\theta \in \T$, $( \theta,\tau(\theta))\in \Sbf$, and  under Assumption \ref{Assp:ExistenceConsistency} (a)(b)(d)(e)(g) and (h),
by Lemma \ref{Lem:AsTiltingFct}iii (p. \pageref{Lem:AsTiltingFct}),
$\tau:\T \rightarrow \R^m $ is continuous.
 Thus $\theta \mapsto \Sigma(\theta)  $  is continuous, which is the first result. Under  Assumption \ref{Assp:ExistenceConsistency} (a)(b)(d)(e)(g) and (h), the second result follows from  Lemma \ref{Lem:ExpTauPsiStrictlyPositive} (p. \pageref{Lem:ExpTauPsiStrictlyPositive}), which states  that $\theta \mapsto \E[ \e^{\tau(\theta)'\psi(X_1, \theta)}] $ is also continuous.

\textit{(iv)} By construction, $\Sigma_T(\theta)$ is a symmetric positive semi-definite matrix (Lemma \ref{Lem:ChgOfMeasureInvertibilityPDP}i  on p. \pageref{Lem:ChgOfMeasureInvertibilityPDP} with $\Prm=\frac{1}{T}\sum_{t=1}^T\delta_{X_t}$), so that $\vert\Sigma_T(\theta) \vert_{\det} \geqslant 0$.
Thus, by the statement (ii) and (iii) of present lemma, it is sufficient to check the  invertibility of $   \E[ \e^{\tau(\theta)'\psi(X_1, \theta)}]\Sigma(\theta)$ for all $\theta \in \T$  (Lemma \ref{Lem:UniFiniteSampleInvertibilityFromUniAsInvertibility}  on p. \pageref{Lem:UniFiniteSampleInvertibilityFromUniAsInvertibility}).   By Assumption \ref{Assp:ExistenceConsistency} (h),
for all $\theta \in \T$, 

\noindent
$\Sigma(\theta):=\negthickspace \left[\E\e^{\tau(\theta)' \psi(X_{1},\theta)}  \frac{\partial \psi(X_{1},\theta)}{\partial \theta' }\right]^{-1}\negthickspace\negthickspace\E\left[\e^{\tau(\theta)' \psi(X_{1},\theta)}\psi(X_{1},\theta) \psi(X_{1},\theta)' \right]  \left[\E\e^{\tau(\theta)' \psi(X_1,\theta)}  \frac{\partial \psi(X_1,\theta)'}{\partial \theta }\right]^{-1}$

\noindent
is a positive-definite symmetric matrix, and thus a fortiori  invertible. Moreover, under Assumption \ref{Assp:ExistenceConsistency} (a)(b)(d)(e)(g) and (h), by Lemma \ref{Lem:ExpTauPsiStrictlyPositive} (p. \pageref{Lem:ExpTauPsiStrictlyPositive}), and Assumption \ref{Assp:ExistenceConsistency}(e), for all $\theta \in \T$, $0<\E[\e^{\tau(\theta)'\psi(X_{1},\theta)}]<\infty$, so that it is also invertible.

\textit{(v)} Under Assumption  \ref{Assp:ExistenceConsistency} (a)(b)(d)(e)(g) and (h),   by Lemma \ref{Lem:ExpTauPsiStrictlyPositive} (p. \pageref{Lem:ExpTauPsiStrictlyPositive}) with $\Prm=\sum_{t=1}^T\delta_{X_t}$ and by the statement (iv) of the present lemma, $\P$-a.s. for $T$ big enough,  $\ln \vert \Sigma_T(\theta)\vert_{\det} $  is well-defined in  $\T$. Similarly, under Assumption  \ref{Assp:ExistenceConsistency} (a)(b)(d)(e)(g) and (h),   by Lemma \ref{Lem:ExpTauPsiStrictlyPositive} (p. \pageref{Lem:ExpTauPsiStrictlyPositive}) and  Assumption  \ref{Assp:ExistenceConsistency}  (h), $\ln \left\vert  \E[ \e^{\tau(\theta)'\psi(X_1, \theta)}]\Sigma(\theta)\right\vert_{\det}$ is well-defined in  $\T$. Then, the first part of the result follows from the statement (ii) of the present lemma. Regarding the second part, by the triangle inequality, $\P$-a.s. as $T \rightarrow \infty$,
\begin{eqnarray*}
& & \frac{1}{T^\eta}\sup_{\theta \in \T}\left\vert  \ln \vert \Sigma_T(\theta)\vert_{\det}\right\vert\\
& \leqslant & \frac{1}{T^\eta}\sup_{\theta \in \T}\left\vert  \ln\left[ \vert \Sigma_T(\theta)\vert_{\det}\right]-\ln\left[  \left\vert \E[ \e^{\tau(\theta)'\psi(X_1, \theta)}]\Sigma(\theta)\right\vert_{\det} \right] \right\vert +\frac{1}{T^\eta}\sup_{\theta \in \T}\left\vert  \ln\left[  \left\vert \E[ \e^{\tau(\theta)'\psi(X_1, \theta)}]\Sigma(\theta)\right\vert_{\det} \right] \right\vert   \\
&= & o(1)
\end{eqnarray*}
where the explanations of the last equality are as follows. Under Assumption \ref{Assp:ExistenceConsistency}(a)-(b) and (d)-(h),  by the statement (iii) of  the present lemma $\theta \mapsto   \E[ \e^{\tau(\theta)'\psi(X_1, \theta)}]\Sigma(\theta) $ is continuous in $\T$, which is a compact set by Assumption \ref{Assp:ExistenceConsistency}(d).  Now, continuous functions over compact sets are bounded \citep[e.g.,][Theorem 4.16]{1953Rudin}, so that $\sup_{\theta \in \T}\left\vert  \ln\left[  \left\vert \E[ \e^{\tau(\theta)'\psi(X_1, \theta)}]\Sigma(\theta)\right\vert_{\det} \right]\right\vert$  is bounded, which, in turn,  implies that $\frac{1}{T^\eta}\sup_{\theta \in \T}\left\vert  \ln\left[  \left\vert \E[ \e^{\tau(\theta)'\psi(X_1, \theta)}]\Sigma(\theta)\right\vert_{\det} \right] \right\vert=o(1)$, as $T \rightarrow \infty$. Now the last equality follows from the statement (iv) of the present lemma.\end{proof}

\begin{lem}\label{Lem:TiltedDerivative} Under Assumptions \ref{Assp:ExistenceConsistency}(a)-(b) and (e)-(f),
\begin{enumerate}
\item[(i)] $ \E \left[\sup_{(\theta, \tau) \in \Sbf} \vert \e^{\tau'\psi(X_1, \theta) }\frac{\partial \psi(X_1, \theta)'}{\partial \theta} \vert\right]< \infty$;
\item[(ii)] under additional Assumption  \ref{Assp:ExistenceConsistency}(d)(g) and (h),  $\P$-a.s. as $T \rightarrow \infty$, \\ $\sup_{ (\theta, \tau)\in   \Sbf}\left\vert\left[ \frac{1}{T}\sum_{t=1}^T \e^{\tau'\psi_t(\theta)}\frac{\partial \psi_{t} (\theta)'}{\partial \theta}\right] -\E\left[ \e^{\tau '\psi(X_1, \theta) }\frac{\partial \psi(X_1, \theta)'}{\partial \theta}\right]\right\vert=o(1)$, so that\\ $\sup_{ \theta\in   \T}\left\vert\left[ \frac{1}{T}\sum_{t=1}^T \e^{\tau_T(\theta)'\psi_t(\theta)}\frac{\partial \psi_{t} (\theta)'}{\partial \theta}\right] -\E\left[ \e^{\tau(\theta)'\psi(X_1, \theta) }\frac{\partial \psi(X_1, \theta)'}{\partial \theta}\right]\right\vert=o(1)$; and
\item[(iii)] under additional Assumption  \ref{Assp:ExistenceConsistency}(d)(g) and (h),  $\P$-a.s. as $T \rightarrow \infty$,  \\$\sup_{ \theta\in   \T}\left\vert\left[ \sum_{t=1}^T \hat{w}_{t,\theta}\frac{\partial \psi_{t} (\theta)'}{\partial \theta}\right] -\frac{1}{\E[ \e^{\tau(\theta)'\psi(X_1, \theta)}]}\E \left[\e^{\tau(\theta)'\psi(X_1, \theta) }\frac{\partial \psi(X_1, \theta)'}{\partial \theta}\right]\right\vert=o(1)$
\end{enumerate}
\end{lem}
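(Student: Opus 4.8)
The plan is to dispatch the three statements in sequence, reducing each to the Wald-type uniform law of large numbers (ULLN) already deployed for the ET term in Lemma \ref{Lem:Schennachtheorem10PfFirstSteps}; the only genuinely new input is the integrable envelope, which is exactly what statement (i) supplies.

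For statement (i) I would factor the supremum and then apply Cauchy--Schwarz in $L^2(\P)$. Pointwise in $\omega$,
$\sup_{(\theta,\tau)\in\Sbf}\left\vert \e^{\tau'\psi(X_1,\theta)}\frac{\partial\psi(X_1,\theta)'}{\partial\theta}\right\vert \leqslant \left(\sup_{(\theta,\tau)\in\Sbf}\e^{\tau'\psi(X_1,\theta)}\right)\left(\sup_{\theta\in\T}\left\vert\frac{\partial\psi(X_1,\theta)'}{\partial\theta}\right\vert\right)$,
so that $\E\left[\sup_{(\theta,\tau)\in\Sbf}\left\vert\e^{\tau'\psi}\frac{\partial\psi'}{\partial\theta}\right\vert\right]\leqslant \E\left[\sup_{(\theta,\tau)\in\Sbf}\e^{2\tau'\psi}\right]^{1/2}\E\left[\sup_{\theta\in\T}\left\vert\frac{\partial\psi'}{\partial\theta}\right\vert^2\right]^{1/2}$. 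The first factor is finite by Assumption \ref{Assp:ExistenceConsistency}(e) and the second by Assumption \ref{Assp:ExistenceConsistency}(f), which settles (i).

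For statement (ii) I would invoke the ULLN over the joint parameter space: the integrand $(\theta,\tau)\mapsto\e^{\tau'\psi(X_1,\theta)}\frac{\partial\psi(X_1,\theta)'}{\partial\theta}$ is continuous by Assumption \ref{Assp:ExistenceConsistency}(b), $\Sbf$ is compact by Lemma \ref{Lem:TauCorrespondence}iii, and the envelope is integrable by (i); hence the Wald ULLN (\`a la \cite{2003GhoRam}) delivers the uniform-over-$\Sbf$ convergence. To pass to the second display I would add and subtract $\E[\e^{\tau_T(\theta)'\psi}\frac{\partial\psi'}{\partial\theta}]$ and split by the triangle inequality: the stochastic part is $o(1)$ because $(\theta,\tau_T(\theta))\in\Sbf$ for $T$ big enough (Lemma \ref{Lem:Schennachtheorem10PfFirstSteps}ii), and the deterministic part is $o(1)$ by uniform continuity of $(\theta,\tau)\mapsto\E[\e^{\tau'\psi}\frac{\partial\psi'}{\partial\theta}]$ on the compact $\Sbf$ combined with $\sup_\theta|\tau_T(\theta)-\tau(\theta)|=o(1)$ (Lemma \ref{Lem:Schennachtheorem10PfFirstSteps}iii). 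This is the exact template used for the ET term in Lemma \ref{Lem:Schennachtheorem10PfFirstSteps}iv.

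For statement (iii) I would write the weighted average as a ratio, $\sum_{t=1}^T\hat{w}_{t,\theta}\frac{\partial\psi_t(\theta)'}{\partial\theta}=\frac{\frac{1}{T}\sum_{t=1}^T\e^{\tau_T(\theta)'\psi_t(\theta)}\frac{\partial\psi_t(\theta)'}{\partial\theta}}{\frac{1}{T}\sum_{i=1}^T\e^{\tau_T(\theta)'\psi_i(\theta)}}$, whose numerator converges uniformly to $\E[\e^{\tau(\theta)'\psi}\frac{\partial\psi'}{\partial\theta}]$ by (ii) and whose denominator converges uniformly to $\E[\e^{\tau(\theta)'\psi}]$ by Lemma \ref{Lem:Schennachtheorem10PfFirstSteps}iv. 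Since $0<\inf_{\theta\in\T}\E[\e^{\tau(\theta)'\psi}]$ by Lemma \ref{Lem:ExpTauPsiStrictlyPositive}, the denominator limit is bounded away from zero, so the division map is (uniformly) continuous on the relevant range and the ratio converges uniformly to $\frac{1}{\E[\e^{\tau(\theta)'\psi}]}\E[\e^{\tau(\theta)'\psi}\frac{\partial\psi'}{\partial\theta}]$. The step demanding the most care is the transition in (ii) from the free variable $\tau$ ranging over $\Sbf$ to the data-dependent plug-in $\tau_T(\theta)$; everything else is Cauchy--Schwarz or continuous-mapping bookkeeping.
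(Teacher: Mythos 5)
Your proposal is correct and follows essentially the same route as the paper's own proof: the same Cauchy--Schwarz factorization with Assumptions \ref{Assp:ExistenceConsistency}(e)--(f) for the envelope in (i), the same Wald ULLN over the compact $\Sbf$ followed by the add-and-subtract plug-in argument via Lemma \ref{Lem:Schennachtheorem10PfFirstSteps}ii--iii and uniform continuity of the limiting expectation for (ii), and the same ratio representation with the denominator bounded away from zero via Lemma \ref{Lem:ExpTauPsiStrictlyPositive} for (iii). The only cosmetic omission is that the uniform continuity of $(\theta,\tau)\mapsto\E\bigl[\e^{\tau'\psi(X_1,\theta)}\frac{\partial\psi(X_1,\theta)'}{\partial\theta}\bigr]$ should be justified explicitly by the Lebesgue dominated convergence theorem using the envelope from (i), which the paper spells out but your sketch leaves implicit.
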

\begin{proof} \textit{(i)} The supremum of the absolute value of the product is smaller than the product of the suprema of the absolute values. Thus,
\begin{eqnarray}
& & \E \left[\sup_{(\theta, \tau) \in \Sbf}\vert \e^{\tau'\psi(X_1, \theta) }\frac{\partial \psi(X_1, \theta)'}{\partial \theta} \vert\right] \notag\\
& \leqslant & \E \left[\sup_{(\theta, \tau) \in \Sbf}\vert \e^{\tau'\psi(X_1, \theta) }\vert\sup_{(\theta, \tau) \in \Sbf}\left\vert\frac{\partial \psi(X_1, \theta)'}{\partial \theta} \right\vert\right] \notag\\
& \stackrel{(a)}{\leqslant} & \E \left[\sup_{(\theta, \tau) \in \Sbf}\vert \e^{\tau'\psi(X_1, \theta) }\vert^2\right]^{1/2} \E\left[\sup_{\theta \in \T}\left\vert\frac{\partial \psi(X_1, \theta)'}{\partial \theta} \right\vert^2\right]^{1/2}\notag\\
& \stackrel{(b)}{<} & \infty \label{Eq:BoundedDerivativeExpectationTilted}
\end{eqnarray}
\textit{(a)} Firstly,  note that the expression in the second supremum does not depend on $\tau$,  so that $\sup_{(\theta, \tau) \in \Sbf}\left\vert\frac{\partial \psi(X_1, \theta)'}{\partial \theta} \right\vert=\sup_{\theta \in \T}\left\vert\frac{\partial \psi(X_1, \theta)'}{\partial \theta} \right\vert $. Secondly apply the Cauchy-Schwarz inequality. Finally, note that $[\sup_{(\theta, \tau) \in \Sbf}\vert \e^{\tau(\theta)'\psi(X_1, \theta) }\vert]^2 =\sup_{(\theta, \tau) \in \Sbf}\vert \e^{\tau(\theta)'\psi(X_1, \theta) }\vert^{2}$ and $[\sup_{\theta \in \T}\left\vert\frac{\partial \psi(X_1, \theta)'}{\partial \theta} \right\vert]^2=\sup_{\theta \in \T}\left\vert\frac{\partial \psi(X_1, \theta)'}{\partial \theta} \right\vert^2$ because $x \mapsto x^2$ is increasing on $\R_{+}$.
\textit{(b)} Note that $\vert \e^{\tau(\theta)'\psi(X_1, \theta) }\vert^2= \e^{2\tau(\theta)'\psi(X_1, \theta) }$, and then apply Assumption \ref{Assp:ExistenceConsistency}(e) to the first term. Then, application of Assumption \ref{Assp:ExistenceConsistency}(f) to the second term yields the result.

\textit{(ii)} By the triangle inequality, as $T \rightarrow \infty $ $\P$-a.s.,
\begin{eqnarray*}
& & \sup_{ \theta\in   \T}\left\vert\left[ \frac{1}{T}\sum_{t=1}^T \e^{\tau_T(\theta)'\psi_t(\theta)}\frac{\partial \psi_{t} (\theta)'}{\partial \theta}\right] -\E\left[ \e^{\tau(\theta)'\psi(X_1, \theta) }\frac{\partial \psi(X_1, \theta)'}{\partial \theta}\right]\right\vert \\
& \leqslant & \sup_{ \theta\in   \T}\left\vert\left[ \frac{1}{T}\sum_{t=1}^T \e^{\tau_T(\theta)'\psi_t(\theta)}\frac{\partial \psi_{t} (\theta)'}{\partial \theta}\right] -\E \left[ \e^{\tau_{T}(\theta)'\psi(X_1, \theta) }\frac{\partial \psi(X_1, \theta)'}{\partial \theta}\right]\right\vert \\
& & \qquad +\sup_{ \theta\in   \T}\left\vert \E\left[ \e^{\tau_{T}(\theta)'\psi(X_1, \theta) }\frac{\partial \psi(X_1, \theta)'}{\partial \theta}\right]-\E  \left[\e^{\tau(\theta)'\psi(X_1, \theta) }\frac{\partial \psi(X_1, \theta)'}{\partial \theta}\right]\right\vert \\
& = & o(1)
\end{eqnarray*}
where the explanations for the last equality are as follows. Regarding the first supremum,  under Assumptions \ref{Assp:ExistenceConsistency} (a)-(b)(d)(e)(g) and (h), by Lemma \ref{Lem:TauCorrespondence}iii (p. \pageref{Lem:TauCorrespondence}), $\Sbf:=\{(\theta, \tau):\theta \in \T \wedge \tau \in \Tbf(\theta)\}$ is a compact set, so that Assumptions \ref{Assp:ExistenceConsistency}(a)-(b),  the statement   (i) of the present lemma and  the  ULLN (uniform law of large numbers) \`a la Wald  \citep[e.g.,][pp. 24-25, Theorem 1.3.3]{2003GhoRam} imply that,  $\P$-a.s.  as $T \rightarrow \infty$,
\begin{eqnarray*}
& & \sup_{(\theta, \tau) \in \Sbf}\left\vert\left[ \frac{1}{T}\sum_{t=1}^T \e^{\tau'\psi_t(\theta)}\frac{\partial \psi_{t} (\theta)'}{\partial \theta}\right] -\E \left[ \e^{\tau '\psi(X_1, \theta) }\frac{\partial \psi(X_1, \theta)'}{\partial \theta}\right]\right\vert=o(1).
\end{eqnarray*}
Now,  by Assumption \ref{Assp:ExistenceConsistency}(e), for all $\theta \in \T$, $ \tau(\theta) \in  \Tbf(\theta)$, and under Assumption \ref{Assp:ExistenceConsistency}(a)(b),  (d)-(e), (g) and (h), by Lemma \ref{Lem:Schennachtheorem10PfFirstSteps}ii (p. \pageref{Lem:Schennachtheorem10PfFirstSteps}), $\P$-a.s. for $T$ big enough,  for all $\theta \in \T$, $\tau_T(\theta) \in \Tbf(\theta)$. Moreover, under Assumption \ref{Assp:ExistenceConsistency}(a)(b),  (d)-(e), (g) and (h),  by Lemma \ref{Lem:Schennachtheorem10PfFirstSteps}iii (p. \pageref{Lem:Schennachtheorem10PfFirstSteps}), $\sup_{\theta \in \T}\left\vert \tau_T(\theta)- \tau(\theta) \right\vert =o(1)$ $\P$-a.s. as $T \rightarrow \infty$. Thus,   the first supremum is $o(1)$, i.e.,  $ \sup_{ \theta\in   \T}\vert \frac{1}{T}\sum_{t=1}^T \e^{\tau_T(\theta)'\psi_t(\theta)}\frac{\partial \psi_{t} (\theta)'}{\partial \theta} -\E \e^{\tau_{T}(\theta)'\psi(X_1, \theta) }\frac{\partial \psi(X_1, \theta)'}{\partial \theta}\vert=o(1)$, as $T \rightarrow \infty$ $\P$-a.s. Regarding the second supremum, by Assumption \ref{Assp:ExistenceConsistency}(b), $(\theta, \tau) \mapsto  \e^{\tau ' \psi(X_1, \theta)}\frac{\partial \psi(X_1, \theta)'}{\partial \theta}$ is continuous. Moreover under Assumptions \ref{Assp:ExistenceConsistency}(a)-(b), and (e)-(f), by the statement (i) of the present lemma, $ \E \left[\sup_{(\theta, \tau)\in \Sbf }\vert \e^{\tau'\psi(X_1, \theta) }\frac{\partial \psi(X_1, \theta)'}{\partial \theta} \vert\right]< \infty$. Thus, by the Lebesgue dominated convergence theorem and Assumption \ref{Assp:ExistenceConsistency}(b),   $(\theta, \tau) \mapsto \E\left[ \e^{\tau ' \psi(X_1, \theta)}\frac{\partial \psi(X_1, \theta)'}{\partial \theta}\right]$ is also continuous in $\Sbf$. Now, under Assumptions \ref{Assp:ExistenceConsistency} (a)(b)(d)(e)(g) and (h), by Lemma \ref{Lem:TauCorrespondence}iii (p. \pageref{Lem:TauCorrespondence}), $\Sbf$ is compact, so that  $(\theta, \tau) \mapsto \E\left[ \e^{\tau ' \psi(X_1, \theta)}\frac{\partial \psi(X_1, \theta)'}{\partial \theta}\right]$ is uniformly continuous in $\Sbf$ ---continuous functions on compact sets are uniformly continuous \citep[e.g.,][Theorem 4.19]{1953Rudin}. Thus,   under Assumption \ref{Assp:ExistenceConsistency}(a)(b),  (d)-(e), (g) and (h), by Lemma \ref{Lem:Schennachtheorem10PfFirstSteps}iii (p. \pageref{Lem:Schennachtheorem10PfFirstSteps}), which states  that $\sup_{\theta \in \T}\left\vert \tau_T(\theta)- \tau(\theta) \right\vert =o(1)$ $\P$-a.s. as $T \rightarrow \infty$, the second supremum is also $o(1)$  $\P$-a.s. as $T \rightarrow \infty$.

\textit{(iii)} Under Assumptions \ref{Assp:ExistenceConsistency} (a)(b)(d)(e)(g) and (h),  Lemma \ref{Lem:ExpTauPsiStrictlyPositive} (p. \pageref{Lem:ExpTauPsiStrictlyPositive}) yields

\noindent
$0<\inf_{(\theta, \tau)\in \Sbf }\frac{1}{T}\sum_{t=1}^T \e^{\tau'\psi_t(\theta)}$ with $\Prm=\frac{1}{T}\sum_{t=1}^T\delta_{X_t}$, and
$0<\inf_{(\theta, \tau)\in \Sbf }\E[ \e^{\tau'\psi(X_1, \theta)}]$ with $\Prm=\P$.  Consequently, under  Assumption \ref{Assp:ExistenceConsistency}(a)(b),  (d)-(f), (g) and (h), by Lemma \ref{Lem:Schennachtheorem10PfFirstSteps}iii and iv  (p. \pageref{Lem:Schennachtheorem10PfFirstSteps}) and the statement (ii) of the present lemma,  as $T \rightarrow \infty$, $\P$-a.s., uniformly w.r.t. $\theta$
\begin{eqnarray*}
\sum_{t=1}^T \hat{w}_{t,\theta}\frac{\partial \psi_{t} (\theta)'}{\partial \theta} &= &\frac{1}{\frac{1}{T} \sum_{i=1}^T\e^{ \tau_T(\theta) ' \psi_i(\theta)}}\frac{1}{T}\sum_{t=1}^T \frac{}{}\e^{\tau_T(\theta) ' \psi_t(\theta)}\frac{\partial \psi_{t} (\theta)'}{\partial \theta}\\
&\rightarrow  & \frac{1}{\E[ \e^{\tau(\theta)'\psi(X_1, \theta)}]}\E\left[\e^{\tau(\theta)' \psi(X_{1},\theta)}  \frac{\partial \psi(X_{1},\theta)}{\partial \theta }'\right].
\end{eqnarray*}
\end{proof}

\begin{lem}\label{Lem:TiltedVariance} Under Assumptions \ref{Assp:ExistenceConsistency}(a)-(b), (e) and (g),
\begin{enumerate}
\item[(i)] $ \E \left[\sup_{(\theta, \tau) \in \Sbf^{\epsilon}}\vert \e^{\tau'\psi(X_1, \theta) }\psi(X_{1},\theta) \psi(X_{1},\theta)' \vert\right]< \infty$
\item[(ii)] under additional Assumption  \ref{Assp:ExistenceConsistency}(d) and (h), $\P$-a.s. as $T \rightarrow \infty$,\\$\sup_{ (\theta, \tau)\in   \Sbf}\left\vert \frac{1}{T}\sum_{t=1}^T \e^{\tau'\psi_t(\theta)}\psi_t(\theta) \psi_t(\theta)' -\E \e^{\tau'\psi(X_1, \theta) }\psi(X_1,\theta) \psi(X_1,\theta)'\right\vert=o(1)$, so that \\$\sup_{ \theta\in   \T}\left\vert \frac{1}{T}\sum_{t=1}^T \e^{\tau_T(\theta)'\psi_t(\theta)}\psi_t(\theta) \psi_t(\theta)' -\E \e^{\tau(\theta)'\psi(X_1, \theta) }\psi(X_1,\theta) \psi(X_1,\theta)'\right\vert=o(1)$
\item[(iii)] under additional Assumption  \ref{Assp:ExistenceConsistency}(d)(f) and (h),  $\P$-a.s. as $T \rightarrow \infty$, \\$\sup_{ \theta\in   \T}\left\vert \sum_{t=1}^T \hat{w}_{t,\theta}\psi_t(\theta) \psi_t(\theta)' -\frac{1}{\E[ \e^{\tau(\theta)'\psi(X_1, \theta)}]}\E \e^{\tau(\theta)'\psi(X_1, \theta) }\psi(X_1,\theta) \psi(X_1,\theta)'\right\vert=o(1)$.
\end{enumerate}
\end{lem}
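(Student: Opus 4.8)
The plan is to follow exactly the three-step template already used in the proof of Lemma \ref{Lem:TiltedDerivative}, replacing the Jacobian $\frac{\partial \psi(X_1,\theta)'}{\partial \theta}$ by the outer product $\psi(X_1,\theta)\psi(X_1,\theta)'$ throughout, and invoking Assumption \ref{Assp:ExistenceConsistency}(g) in place of (f) wherever a square-integrable envelope for the non-exponential factor is needed. For statement (i), I would bound the supremum of the product by the product of the suprema and then apply the Cauchy--Schwarz inequality: since $\psi(X_1,\theta)\psi(X_1,\theta)'$ does not depend on $\tau$, one has $\sup_{(\theta,\tau)\in\Sbf^{\epsilon}}\vert\psi(X_1,\theta)\psi(X_1,\theta)'\vert=\sup_{\theta\in\T^{\epsilon}}\vert\psi(X_1,\theta)\psi(X_1,\theta)'\vert$, so that $\E[\sup_{\Sbf^{\epsilon}}\vert\e^{\tau'\psi(X_1,\theta)}\psi(X_1,\theta)\psi(X_1,\theta)'\vert]\leqslant \E[\sup_{\Sbf^{\epsilon}}\e^{2\tau'\psi(X_1,\theta)}]^{1/2}\,\E[\sup_{\T^{\epsilon}}\vert\psi(X_1,\theta)\psi(X_1,\theta)'\vert^2]^{1/2}$, using $\vert\e^{\tau'\psi}\vert^2=\e^{2\tau'\psi}$. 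This is finite by Assumptions \ref{Assp:ExistenceConsistency}(e) and (g).

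For statement (ii), I would first establish the uniform law over the full joint set $\Sbf$. Under Assumptions \ref{Assp:ExistenceConsistency}(a)(b)(d)(e)(g)(h), Lemma \ref{Lem:TauCorrespondence}iii gives compactness of $\Sbf$, and statement (i) of the present lemma supplies an integrable envelope, so the ULLN \`a la Wald \citep[e.g.,][pp. 24--25, Theorem 1.3.3]{2003GhoRam} yields $\sup_{(\theta,\tau)\in\Sbf}\vert\frac{1}{T}\sum_{t=1}^T\e^{\tau'\psi_t(\theta)}\psi_t(\theta)\psi_t(\theta)'-\E[\e^{\tau'\psi(X_1,\theta)}\psi(X_1,\theta)\psi(X_1,\theta)']\vert=o(1)$ $\P$-a.s. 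To pass to the version with $\tau_T(\theta)$ and $\tau(\theta)$, I would add and subtract $\E[\e^{\tau_T(\theta)'\psi(X_1,\theta)}\psi(X_1,\theta)\psi(X_1,\theta)']$ and apply the triangle inequality as in Lemma \ref{Lem:TiltedDerivative}ii: the first supremum is controlled by the $\Sbf$-uniform law once one notes that $(\theta,\tau_T(\theta))\in\Sbf$ for $T$ big enough (Lemma \ref{Lem:Schennachtheorem10PfFirstSteps}ii); the second supremum is controlled by the uniform continuity on the compact $\Sbf$ \citep[e.g.,][Theorem 4.19]{1953Rudin} of the map $(\theta,\tau)\mapsto\E[\e^{\tau'\psi(X_1,\theta)}\psi(X_1,\theta)\psi(X_1,\theta)']$, which is continuous by the Lebesgue dominated convergence theorem and the envelope from statement (i), combined with the uniform consistency $\sup_{\theta\in\T}\vert\tau_T(\theta)-\tau(\theta)\vert=o(1)$ from Lemma \ref{Lem:Schennachtheorem10PfFirstSteps}iii.

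For statement (iii), I would write the weighted sum as the ratio $\sum_{t=1}^T\hat{w}_{t,\theta}\psi_t(\theta)\psi_t(\theta)'=\big[\frac{1}{T}\sum_{t=1}^T\e^{\tau_T(\theta)'\psi_t(\theta)}\psi_t(\theta)\psi_t(\theta)'\big]\big/\big[\frac{1}{T}\sum_{i=1}^T\e^{\tau_T(\theta)'\psi_i(\theta)}\big]$. The numerator converges uniformly to $\E[\e^{\tau(\theta)'\psi(X_1,\theta)}\psi(X_1,\theta)\psi(X_1,\theta)']$ by statement (ii), while the denominator converges uniformly to $\E[\e^{\tau(\theta)'\psi(X_1,\theta)}]$ by Lemma \ref{Lem:Schennachtheorem10PfFirstSteps}iv and is bounded away from zero uniformly in $\theta$ by Lemma \ref{Lem:ExpTauPsiStrictlyPositive} (applied with $\Prm=\frac{1}{T}\sum_{t=1}^T\delta_{X_t}$ and with $\Prm=\P$). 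Hence the ratio converges uniformly to $\frac{1}{\E[\e^{\tau(\theta)'\psi(X_1,\theta)}]}\E[\e^{\tau(\theta)'\psi(X_1,\theta)}\psi(X_1,\theta)\psi(X_1,\theta)']$, as claimed.

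The main obstacle is the descent from the $\Sbf$-uniform law to the $\T$-uniform statement in part (ii): one must verify that the data-dependent tilting $\tau_T(\theta)$ stays inside $\Tbf(\theta)$ (so that the envelope bound and the ULLN apply to it) and simultaneously control $\E[\e^{\tau_T(\theta)'\psi(X_1,\theta)}\psi(X_1,\theta)\psi(X_1,\theta)']-\E[\e^{\tau(\theta)'\psi(X_1,\theta)}\psi(X_1,\theta)\psi(X_1,\theta)']$ through uniform continuity, both uniformly over $\theta$. Everything else is a routine transcription of Lemma \ref{Lem:TiltedDerivative}, with the integrability of the envelope in part (i) being the only place where Assumption \ref{Assp:ExistenceConsistency}(g), rather than (f), does the work.
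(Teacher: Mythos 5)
Your proposal is correct and follows essentially the same route as the paper's own proof, which itself presents Lemma \ref{Lem:TiltedVariance} as Lemma \ref{Lem:TiltedDerivative} with $\psi(X_1,\theta)\psi(X_1,\theta)'$ in place of the Jacobian and Assumption \ref{Assp:ExistenceConsistency}(g) in place of (f): the same Cauchy--Schwarz envelope bound for (i), the same compactness-of-$\Sbf$ ULLN with the add-and-subtract/uniform-continuity argument for (ii), and the same ratio representation with the denominator bounded away from zero (via Lemma \ref{Lem:ExpTauPsiStrictlyPositive} under both the empirical measure and $\P$) for (iii). The only nit is in (i): the supremum of $\vert\psi(X_1,\theta)\psi(X_1,\theta)'\vert$ over $\Sbf^{\epsilon}$ equals the supremum over the $\theta$-projection of $\Sbf^{\epsilon}$, which is contained in, but not necessarily equal to, $\T^{\epsilon}$, so the correct relation is ``$\leqslant$'' rather than ``$=$''---immaterial here, since only the upper bound is used.
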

\begin{proof} The proof is the same as for Lemma \ref{Lem:TiltedDerivative} with $\psi(X_1,\theta) \psi(X_1,\theta)'  $ and $\psi_t(\theta) \psi_t(\theta)'$ in lieu of $\frac{\partial \psi(X_1, \theta)'}{\partial \theta}$ and $\frac{\partial \psi_{t} (\theta)'}{\partial \theta}$, respectively. For completeness, we provide a proof.

\textit{(i)} The supremum of the absolute value of the product is smaller than the product of the suprema of the absolute values. Thus,
\begin{eqnarray}
& & \E \left[\sup_{(\theta, \tau) \in \Sbf^{\epsilon}}\vert \e^{\tau'\psi(X_1, \theta) }\psi(X_1,\theta) \psi(X_1,\theta)' \vert\right] \notag\\
& \leqslant & \E \left[\sup_{(\theta, \tau) \in \Sbf^{\epsilon}}\vert \e^{\tau'\psi(X_1, \theta) }\vert\sup_{(\theta, \tau) \in \Sbf^{\epsilon}}\left\vert\psi(X_1,\theta) \psi(X_1,\theta)' \right\vert\right] \notag\\
& \stackrel{(a)}{\leqslant} & \E \left[\sup_{(\theta, \tau) \in \Sbf^{\epsilon}}\vert \e^{\tau'\psi(X_1, \theta) }\vert^2\right]^{1/2} \E\left[\sup_{\theta \in \T^{\epsilon}}\left\vert\psi(X_1,\theta) \psi(X_1,\theta)' \right\vert^2\right]^{1/2}\notag\\
& \stackrel{(b)}{<} & \infty. \notag
\end{eqnarray}
\textit{(a)} Firstly,  for any $(\theta, \tau) \in \Sbf^\epsilon$, $\theta \in \T^\epsilon$ because, for all  $(\tilde{\tau}, \tilde{\theta})\in  \Sbf$, $\vert \theta- \tilde{\theta} \vert=\sqrt{\sum_{k=1}^m(\theta_k -\tilde{\theta}_k)^2}\leqslant\sqrt{\sum_{k=1}^m(\theta_k -\tilde{\theta}_k)^2+\sum_{k=1}^m(\tau_k -\tilde{\tau}_k)^2}=\vert (\theta, \tau)-(\tilde{\tau}, \tilde{\theta})\vert< \epsilon$. Thus, as  the second supremum does not depend on $\tau$,  $\sup_{(\theta, \tau) \in \Sbf^{\epsilon}}\left\vert\psi(X_1,\theta) \psi(X_1,\theta)' \right\vert \leqslant \sup_{\theta \in \T^{\epsilon}}\left\vert\psi(X_1,\theta) \psi(X_1,\theta)' \right\vert $. Secondly apply the Cauchy-Schwarz inequality. Finally, $[\sup_{(\theta, \tau) \in \Sbf^{\epsilon}}\vert \e^{\tau(\theta)'\psi(X_1, \theta) }\vert]^2 =\sup_{(\theta, \tau) \in \Sbf^{\epsilon}}\vert \e^{\tau(\theta)'\psi(X_1, \theta) }\vert^{2}$ and $[\sup_{\theta \in \T^{\epsilon}}\left\vert\psi(X_1,\theta) \psi(X_1,\theta)' \right\vert]^2=\sup_{\theta \in \T^{\epsilon}}\left\vert\psi(X_1,\theta) \psi(X_1,\theta)' \right\vert^2$ because $x \mapsto x^2$ is increasing on $\R_{+}$.
\textit{(b)} Note that $\vert \e^{\tau(\theta)'\psi(X_1, \theta) }\vert^2= \e^{2\tau(\theta)'\psi(X_1, \theta) }$, and then apply Assumption \ref{Assp:ExistenceConsistency}(e) to the first term. Then, application of Assumption \ref{Assp:ExistenceConsistency} (g) to the second term yields the result.

\textit{(ii)} By the triangle inequality,  $\P$-a.s. as $T \rightarrow \infty $,
\begin{eqnarray*}
& & \sup_{ \theta\in   \T}\left\vert \left[\frac{1}{T}\sum_{t=1}^T \e^{\tau_T(\theta)'\psi_t(\theta)}\psi_t(\theta) \psi_t(\theta)' \right]-\E\left[ \e^{\tau(\theta)'\psi(X_1, \theta) }\psi(X_1,\theta) \psi(X_1,\theta)'\right]\right\vert \\
& \leqslant & \sup_{ \theta\in   \T}\left\vert \left[ \frac{1}{T}\sum_{t=1}^T \e^{\tau_T(\theta)'\psi_t(\theta)}\psi_t(\theta) \psi_t(\theta)' \right]-\E\left[ \e^{\tau_{T}(\theta)'\psi(X_1, \theta) }\psi(X_1,\theta) \psi(X_1,\theta)'\right]\right\vert \\
& & \qquad +\sup_{ \theta\in   \T}\left\vert \E \left[\e^{\tau_{T}(\theta)'\psi(X_1, \theta) }\psi(X_1,\theta) \psi(X_1,\theta)'\right]-\E\left[ \e^{\tau(\theta)'\psi(X_1, \theta) }\psi(X_1,\theta) \psi(X_1,\theta)'\right]\right\vert \\
& = & o(1)
\end{eqnarray*}
where the explanations for the last equality are as follows. Regarding the first supremum, under Assumptions \ref{Assp:ExistenceConsistency} (a)-(b)(d)(e)(g) and (h), by Lemma \ref{Lem:TauCorrespondence}iii (p. \pageref{Lem:TauCorrespondence}), $\Sbf:=\{(\theta, \tau):\theta \in \T \wedge \tau \in \Tbf(\theta)\}$ is a compact set, so that  Assumption \ref{Assp:ExistenceConsistency}(a)-(b), the statement  (i) of the present lemma and  the  ULLN (uniform law of large numbers) \`a la Wald yields that   \citep[e.g.,][pp. 24-25, Theorem 1.3.3]{2003GhoRam},  $\P$-a.s. as $T \rightarrow \infty$,
\begin{eqnarray*}
& & \sup_{(\theta, \tau) \in \Sbf}\left\vert \left[\frac{1}{T}\sum_{t=1}^T \e^{\tau'\psi_t(\theta)}\psi_t(\theta) \psi_t(\theta)' \right]-\E\left[ \e^{\tau '\psi(X_1, \theta) }\psi(X_1,\theta) \psi(X_1,\theta)'\right]\right\vert=o(1).
\end{eqnarray*}
Now, by Assumption \ref{Assp:ExistenceConsistency}(e), for all $\theta \in \T$, $ \tau(\theta) \in  \Tbf(\theta)$, and under Assumption \ref{Assp:ExistenceConsistency}(a)(b),  (d)-(e), (g) and (h), by Lemma \ref{Lem:Schennachtheorem10PfFirstSteps}ii (p. \pageref{Lem:Schennachtheorem10PfFirstSteps}), $\P$-a.s. for $T$ big enough,  for all $\theta \in \T$, $\tau_T(\theta) \in \Tbf(\theta)$. Moreover, under Assumption \ref{Assp:ExistenceConsistency}(a)(b),  (d)-(e), (g) and (h),  by Lemma \ref{Lem:Schennachtheorem10PfFirstSteps}iii (p. \pageref{Lem:Schennachtheorem10PfFirstSteps}), $\sup_{\theta \in \T}\left\vert \tau_T(\theta)- \tau(\theta) \right\vert =o(1)$ $\P$-a.s. as $T \rightarrow \infty$. Thus, the first supremum is $o(1)$, i.e.,  $ \sup_{ \theta\in   \T}\vert \frac{1}{T}\sum_{t=1}^T \e^{\tau_T(\theta)'\psi_t(\theta)}\psi_t(\theta) \psi_t(\theta)' -\E \e^{\tau_{T}(\theta)'\psi(X_1, \theta) }\psi(X_1,\theta) \psi(X_1,\theta)'\vert=o(1)$, as $T \rightarrow \infty$ $\P$-a.s. Regarding the second supremum, by Assumption \ref{Assp:ExistenceConsistency}(b), $(\theta, \tau) \mapsto  \e^{\tau ' \psi(X_1, \theta)}\psi(X_1,\theta) \psi(X_1,\theta)'$ is continuous in $\Sbf$. Moreover under Assumptions \ref{Assp:ExistenceConsistency}(a)-(b), (e) and (g), by the statement (i) of the present lemma, $ \E \left[\sup_{(\theta, \tau)\in \Sbf }\vert \e^{\tau'\psi(X_1, \theta) }\psi(X_1,\theta) \psi(X_1,\theta)' \vert\right]< \infty$. Thus, by the Lebesgue dominated convergence theorem and Assumption \ref{Assp:ExistenceConsistency}(b),   $(\theta, \tau) \mapsto \E\left[ \e^{\tau ' \psi(X_1, \theta)}\psi(X_1,\theta) \psi(X_1,\theta)'\right]$ is also continuous. Now, under Assumptions \ref{Assp:ExistenceConsistency} (a)(b)(d)(e)(g) and (h), by Lemma \ref{Lem:TauCorrespondence}iii (p. \pageref{Lem:TauCorrespondence}), $\Sbf$ is compact, so that   $(\theta, \tau) \mapsto \E\left[ \e^{\tau ' \psi(X_1, \theta)}\psi(X_1,\theta) \psi(X_1,\theta)'\right]$ is uniformly continuous ---continuous functions on compact sets are uniformly continuous \citep[e.g.,][Theorem 4.19]{1953Rudin}. Thus,   under Assumption \ref{Assp:ExistenceConsistency}(a)(b),  (d)-(e), (g) and (h), by Lemma \ref{Lem:Schennachtheorem10PfFirstSteps}iii (p. \pageref{Lem:Schennachtheorem10PfFirstSteps}), which states  that $\sup_{\theta \in \T}\left\vert \tau_T(\theta)- \tau(\theta) \right\vert =o(1)$ $\P$-a.s. as $T \rightarrow \infty$, the second supremum is also $o(1)$  $\P$-a.s. as $T \rightarrow \infty$.

\textit{(iii)} Under Assumptions \ref{Assp:ExistenceConsistency} (a)(b)(d)(e)(g) and (h),  Lemma \ref{Lem:ExpTauPsiStrictlyPositive} (p. \pageref{Lem:ExpTauPsiStrictlyPositive}) yields

\noindent
$0<\inf_{(\theta, \tau)\in \Sbf }\frac{1}{T}\sum_{t=1}^T \e^{\tau'\psi_t(\theta)}$ with $\Prm=\frac{1}{T}\sum_{t=1}^T\delta_{X_t}$, and
$0<\inf_{(\theta, \tau)\in \Sbf }\E[ \e^{\tau'\psi(X_1, \theta)}]$ with $\Prm=\P$.  Consequently, under  Assumption \ref{Assp:ExistenceConsistency}(a)(b),  (d)-(e), (g) and (h), by Lemma \ref{Lem:Schennachtheorem10PfFirstSteps}iii and iv (p. \pageref{Lem:Schennachtheorem10PfFirstSteps}) and the statement (ii) of the present lemma,  as $T \rightarrow \infty$, $\P$-a.s., uniformly w.r.t. $\theta$,
\begin{eqnarray*}
\sum_{t=1}^T \hat{w}_{t,\theta}\psi_t(\theta)\psi_t(\theta)'& = &  \frac{1}{\frac{1}{T} \sum_{i=1}^T\e^{ \tau_T(\theta) ' \psi_i(\theta)}}\frac{1}{T}\sum_{t=1}^T \frac{}{}\e^{\tau_T(\theta) ' \psi_t(\theta)}\psi_t(\theta)\psi_t(\theta)'\\
& \rightarrow & \frac{1}{\E[ \e^{\tau(\theta)'\psi(X_1, \theta)}]} \E \left[ \e^{\tau(\theta)'\psi(X_1, \theta) }\psi(X_1,\theta) \psi(X_1,\theta)'\right].
\end{eqnarray*}

\end{proof}

\begin{lem} \label{Lem:BoundAsTiltingEquation} Under Assumptions \ref{Assp:ExistenceConsistency}(a)(b)(g),
\begin{itemize}
\item[(i)] $\E\left[\sup_{\theta \in \T^{\epsilon}}\left\vert\psi(X_1,\theta)\right\vert^4\right]< \infty$, so that $\E\left[\sup_{\theta \in \T^{\epsilon}}\left\vert\psi(X_1,\theta)\right\vert^2\right]< \infty$; and
\item[(ii)] under additional Assumption \ref{Assp:ExistenceConsistency}(e), $ \E \left[\sup_{(\theta, \tau) \in \Sbf^{\epsilon}} \vert \e^{\tau'\psi(X_1, \theta) }\psi(X_{1},\theta) \vert\right]< \infty$
\end{itemize}
\end{lem}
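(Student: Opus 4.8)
The plan is to derive both statements as direct moment-domination estimates, essentially replicating the structure already used in Lemma~\ref{Lem:TiltedDerivative}(i) and Lemma~\ref{Lem:TiltedVariance}(i), and exploiting one elementary identity relating the Euclidean norm of an outer product to the norm of the underlying vector.

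First I would handle (i). The crucial observation is that, because $|\cdot|$ denotes the Euclidean (hence Frobenius) norm, for the outer product $\psi(X_1,\theta)\psi(X_1,\theta)'$ one has the pointwise identity
\[
\left\vert \psi(X_1,\theta)\psi(X_1,\theta)'\right\vert=\sqrt{\textstyle\sum_{i,j}\big(\psi_i(X_1,\theta)\psi_j(X_1,\theta)\big)^2}=\sqrt{\Big(\sum_i\psi_i(X_1,\theta)^2\Big)\Big(\sum_j\psi_j(X_1,\theta)^2\Big)}=\left\vert\psi(X_1,\theta)\right\vert^{2}.
\]
Squaring and taking the supremum over $\theta\in\T^\epsilon$ yields $\sup_{\theta\in\T^\epsilon}|\psi(X_1,\theta)|^4=\sup_{\theta\in\T^\epsilon}|\psi(X_1,\theta)\psi(X_1,\theta)'|^2$, so Assumption~\ref{Assp:ExistenceConsistency}(g) gives the first bound immediately. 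For the second bound I would use the elementary pointwise inequality $a\leqslant 1+a^2$ for $a\geqslant 0$ with $a=|\psi(X_1,\theta)|^2$, whence $\sup_{\theta\in\T^\epsilon}|\psi(X_1,\theta)|^2\leqslant 1+\sup_{\theta\in\T^\epsilon}|\psi(X_1,\theta)|^4$, and finiteness of the expectation follows from the first bound.

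For (ii) I would first factor the integrand, using that $\e^{\tau'\psi(X_1,\theta)}$ is a strictly positive scalar, so $|\e^{\tau'\psi(X_1,\theta)}\psi(X_1,\theta)|=\e^{\tau'\psi(X_1,\theta)}|\psi(X_1,\theta)|$. Bounding the supremum of a product by the product of the suprema, and noting (exactly as in the proof of Lemma~\ref{Lem:TiltedVariance}(i)) that every $(\theta,\tau)\in\Sbf^\epsilon$ has $\theta\in\T^\epsilon$, I would get
\[
\sup_{(\theta,\tau)\in\Sbf^\epsilon}\e^{\tau'\psi(X_1,\theta)}|\psi(X_1,\theta)|\leqslant\Big(\sup_{(\theta,\tau)\in\Sbf^\epsilon}\e^{\tau'\psi(X_1,\theta)}\Big)\Big(\sup_{\theta\in\T^\epsilon}|\psi(X_1,\theta)|\Big).
\]
Applying the Cauchy--Schwarz inequality and then the monotonicity of $x\mapsto x^2$ on $\R_+$ (so that $[\sup\e^{\tau'\psi}]^2=\sup\e^{2\tau'\psi}$) gives
\[
\E\Big[\sup_{(\theta,\tau)\in\Sbf^\epsilon}|\e^{\tau'\psi(X_1,\theta)}\psi(X_1,\theta)|\Big]\leqslant\E\Big[\sup_{(\theta,\tau)\in\Sbf^\epsilon}\e^{2\tau'\psi(X_1,\theta)}\Big]^{1/2}\E\Big[\sup_{\theta\in\T^\epsilon}|\psi(X_1,\theta)|^2\Big]^{1/2},
\]
which is finite: the first factor by Assumption~\ref{Assp:ExistenceConsistency}(e) and the second by part~(i).

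The argument is routine moment domination, so there is no genuine obstacle; the only two points requiring a moment's care are the norm identity $|\psi(X_1,\theta)\psi(X_1,\theta)'|=|\psi(X_1,\theta)|^2$ that converts Assumption~\ref{Assp:ExistenceConsistency}(g) into a fourth-moment bound in (i), and the projection fact $(\theta,\tau)\in\Sbf^\epsilon\Rightarrow\theta\in\T^\epsilon$ that lets one replace the supremum over $\Sbf^\epsilon$ by a supremum over $\T^\epsilon$ for the $\psi$-factor in (ii) before invoking Cauchy--Schwarz.
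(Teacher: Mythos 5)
Your proposal is correct, and part (ii) coincides step-for-step with the paper's proof: the same factorization of the positive scalar $\e^{\tau'\psi(X_1,\theta)}$, the same projection fact that $(\theta,\tau)\in\Sbf^{\epsilon}$ forces $\theta\in\T^{\epsilon}$, and the same Cauchy--Schwarz split into $\E[\sup\e^{2\tau'\psi}]^{1/2}$ (finite by Assumption \ref{Assp:ExistenceConsistency}(e)) times $\E[\sup_{\T^\epsilon}|\psi|^2]^{1/2}$ (finite by part (i)). Where you genuinely depart from the paper is in part (i). The paper does not use the exact Frobenius identity $\vert\psi\psi'\vert=\vert\psi\vert^2$; instead it bounds $\vert\psi\vert^4=\bigl(\sum_k\psi_k^2\bigr)^2\leqslant m\sum_k\psi_k^4$ via Jensen's inequality, expands $\vert\psi\psi'\vert^2=\sum_k\psi_k^4+\sum_{i\neq j}(\psi_i\psi_j)^2$ to conclude $\sum_k\psi_k^4\leqslant\vert\psi\psi'\vert^2$, and thus obtains $\E[\sup\vert\psi\vert^4]\leqslant m\,\E[\sup\vert\psi\psi'\vert^2]$ with a dimensional constant $m$; it then gets the second-moment bound from the fourth via Cauchy--Schwarz, $\E[\sup\vert\psi\vert^2]\leqslant\E[\sup\vert\psi\vert^4]^{1/2}$. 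Your identity $\sum_{i,j}(\psi_i\psi_j)^2=\bigl(\sum_i\psi_i^2\bigr)\bigl(\sum_j\psi_j^2\bigr)$ is valid for the norm the paper actually computes with (its own expansion of $\vert\psi\psi'\vert^2$ contains it, but the paper only exploits the one-sided inequality), and it collapses the argument to the equality $\E[\sup_{\T^\epsilon}\vert\psi\vert^4]=\E[\sup_{\T^\epsilon}\vert\psi\psi'\vert^2]$, which is both shorter and sharper (no factor $m$); your elementary bound $a\leqslant 1+a^2$ for the lower moment is likewise valid, merely replacing the paper's Lyapunov-type step. Both routes deliver exactly the same lemma under the same assumptions, so this is a simplification rather than a substantive divergence.
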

\begin{proof} \textit{(i)} Put $\psi(X_1, \theta)=:(\psi_{1}(X_1, \theta)\; \psi_{2}(X_1, \theta) \; \cdots \; \psi_{m}(X_1, \theta))'  $. Note that   $\sup_{\theta \in \T^{\epsilon}}\left\vert\psi(X_1,\theta)\right\vert^{4}=[\sup_{\theta \in \T^{\epsilon}}\left\vert\psi(X_1,\theta)\vert^2\right]^{2}$ because $x \mapsto x^2$ is an increasing function. Thus, by the Cauchy-Schwarz inequality, $\E\left[\sup_{\theta \in \T^{\epsilon}}\left\vert\psi(X_1,\theta)\right\vert^2\right] \leqslant \sqrt{ \E\left\{\left[\sup_{\theta \in \T^{\epsilon}}\left\vert\psi(X_1,\theta)\right\vert^2\right]^2\right\}}  \stackrel{}{=}  \sqrt{ \E\left\{\sup_{\theta \in \T^{\epsilon}}\left\vert\psi(X_1,\theta)\right\vert^4\right\}}$, so that it remains to show the first part of the statement. On one hand,
by the definition of the Euclidean norm,
\begin{eqnarray}
\sqrt{ \E\left\{\sup_{\theta \in \T^{\epsilon}}\left\vert\psi(X_1,\theta)\right\vert^4\right\}} %
& \stackrel{}{=} & \sqrt{ \E\left\{\sup_{\theta \in \T^{\epsilon}}\left[\left(\sum_{k=1}^m\psi_k(X_1,\theta)^2\right)^2\right]\right\}} \notag\\
& \stackrel{}{\leqslant} &  \sqrt{m\E\left\{\sup_{\theta \in \T^{\epsilon}}\left[\sum_{k=1}^m\psi_k(X_1,\theta)^4\right]\right\}} \label{Eq:PsiSquaredBound}
\end{eqnarray}
where the explanation for the last inequality is as follows. By the Jensen's inequality, 

\noindent
$\left(\frac{1}{m}\sum_{k=1}^m a_k\right)^2\leqslant\frac{1}{m}\sum_{k=1}^m a_k^2 $, so that $\left(\sum_{k=1}^m a_k\right)^2\leqslant m\sum_{k=1}^m a_k^2 $. Apply the later inequality with $\psi_k(X_1, \theta)^{2}=a_{k}$.


On the other hand,
\begin{eqnarray*}
& & \E\left[\sup_{\theta \in \T^{\epsilon}}\left\vert\psi(X_1,\theta) \psi(X_1,\theta)' \right\vert^2\right] \\
& = &\E\left[\sup_{\theta \in \T^{\epsilon}}\left\vert\begin{pmatrix}\psi_{1}(X_1,\theta)^{2} & \psi_{1}(X_1,\theta)\psi_{2}(X_1,\theta) & \cdots & \psi_{1}(X_1,\theta)\psi_{m}(X_1,\theta) \\
\psi_{2}(X_1,\theta)\psi_{1}(X_1,\theta) & \psi_{2}(X_1,\theta)^{2}  & \cdots & \psi_{2}(X_1,\theta)\psi_{m}(X_1,\theta) \\
\vdots & \vdots & \ddots  & \vdots \\
\psi_{m}(X_1,\theta)\psi_{1}(X_1,\theta) & \psi_{m}(X_1,\theta)\psi_{2}(X_1,\theta) & \cdots & \psi_{m}(X_1,\theta)^2 \\
\end{pmatrix} \right\vert^2\right]\\
& = & \E\left\{\sup_{\theta \in \T^{\epsilon}}\left[\sum_{(i,j)\in \ldsb 1,m\rdsb^2}\left[\psi_i(X_1,\theta)\psi_j(X_1,\theta)\right]^2\right]\right\}\\
& = & \E\left\{\sup_{\theta \in \T^{\epsilon}}\left[\sum_{k=1}^m\psi_k(X_1,\theta)^4+\sum_{(i,j)\in \ldsb 1,m\rdsb^2:i\neq j}\left[\psi_i(X_1,\theta)\psi_j(X_1,\theta)\right]^2\right]\right\}
\end{eqnarray*}

Therefore, $\sum_{k=1}^m\psi_k(X_1,\theta)^4\leqslant \sum_{k=1}^m\psi_k(X_1,\theta)^4+\sum_{(i,j)\in \ldsb 1,m\rdsb^2:i\neq j}\left[\psi_i(X_1,\theta)\psi_j(X_1,\theta)\right]^2$, 

the later equality and inequality \eqref{Eq:PsiSquaredBound} yield
\begin{eqnarray*}
\E\left[\sup_{\theta \in \T^{\epsilon}}\left\vert\psi(X_1,\theta)\right\vert^2\right] & \leqslant & \sqrt{m\E\left[\sup_{\theta \in \T^{\epsilon}}\left\vert\psi(X_1,\theta) \psi(X_1,\theta)' \right\vert^2\right]}\\
& < &  \infty
\end{eqnarray*}
where the last inequality follows from Assumption \ref{Assp:ExistenceConsistency}(g).

\textit{(ii)} The supremum of the absolute value of the product is smaller than the product of the suprema of the absolute values. Thus,
\begin{eqnarray}
& & \E \left[\sup_{(\theta, \tau) \in \Sbf^{\epsilon}} \vert \e^{\tau'\psi(X_1, \theta) }\psi(X_1,\theta) \vert\right] \notag\\
& \leqslant & \E \left[\sup_{(\theta, \tau) \in \Sbf^{\epsilon}} \vert \e^{\tau'\psi(X_1, \theta) }\vert\sup_{(\theta, \tau) \in \Sbf^{\epsilon}} \left\vert\psi(X_1,\theta) \right\vert\right] \notag\\
& \stackrel{(a)}{\leqslant} & \E \left[\sup_{(\theta, \tau) \in \Sbf^{\epsilon}} \vert \e^{\tau'\psi(X_1, \theta) }\vert^2\right]^{1/2} \E\left[\sup_{\theta \in \T^{\epsilon}}\vert\psi(X_1,\theta)\vert^2\right]^{1/2}\notag\\
& \stackrel{(b)}{<} & \infty 
\end{eqnarray}
\textit{(a)} Firstly, for any $(\theta, \tau) \in \Sbf^\epsilon$, $\theta \in \T^\epsilon$ because, for all  $(\tilde{\tau}, \tilde{\theta})\in  \Sbf$, $\vert \theta- \tilde{\theta} \vert=\sqrt{\sum_{k=1}^m(\theta_k -\tilde{\theta}_k)^2}\leqslant\sqrt{\sum_{k=1}^m(\theta_k -\tilde{\theta}_k)^2+\sum_{k=1}^m(\tau_k -\tilde{\tau}_k)^2}=\vert (\theta, \tau)-(\tilde{\tau}, \tilde{\theta})\vert< \epsilon$. Thus,  as the expression in the second supremum does not depend on $\tau$, $\sup_{(\theta, \tau) \in \Sbf^{\epsilon}} \left\vert\psi(X_1,\theta) \psi(X_1,\theta)' \right\vert \leqslant \sup_{\theta \in \T^{\epsilon}}\left\vert\psi(X_1,\theta) \psi(X_1,\theta)' \right\vert $. Secondly apply the Cauchy-Schwarz inequality. Finally, note that $[\sup_{(\theta, \tau) \in \Sbf^{\epsilon}} \vert \e^{\tau(\theta)'\psi(X_1, \theta) }\vert]^2 =\sup_{(\theta, \tau) \in \Sbf^{\epsilon}} \vert \e^{\tau(\theta)'\psi(X_1, \theta) }\vert^{2}$ and $[\sup_{\theta \in \T^{\epsilon}}\left\vert\psi(X_1,\theta)\right\vert]^2=\sup_{\theta \in \T^{\epsilon}}\left\vert\psi(X_1,\theta)\right\vert^2$ because $x \mapsto x^2$ is increasing on $\R_{+}$.
\textit{(b)} Note that $\vert \e^{\tau(\theta)'\psi(X_1, \theta) }\vert^2= \e^{2\tau(\theta)'\psi(X_1, \theta) }$, and then apply Assumption \ref{Assp:ExistenceConsistency}(e) to the first term. Then, application of the statement (i) of the present lemma to the second term yields the result.
\end{proof}
\begin{rk}

The first step of the proof shows that even the fourth moment is uniformly bounded. \hfill $\diamond$ \end{rk}

\begin{lem}[Implicit function $\tau(.)$] \label{Lem:AsTiltingFct}
 Under Assumption \ref{Assp:ExistenceConsistency} (a)(b)(e)(g) and (h),
\begin{itemize}
\item[(i)] for all $\theta\in  \T $, $\tau \mapsto \E\left[\e^{\tau'\psi(X_{1},\theta)} \right]$ is a strictly convex function s.t. $\frac{\partial \E \left[\e^{\tau'\psi(X_{1},\theta)} \right ]}{\partial \tau}=\E \left[\e^{\tau'\psi(X_{1},\theta)}\psi(X_{1},\theta) \right ]$;
\item[(ii)] under additional Assumption \ref{Assp:ExistenceConsistency}(d), for all $\theta\in  \T $, there exists a unique $\tau(\theta) $ such that $\E\left[\e^{\tau(\theta)'\psi(X_{1},\theta)}\psi(X_{1},\theta) \right]=0 $; and
\item[(iii)]  under additional Assumption \ref{Assp:ExistenceConsistency}(d),
$\tau:\T \rightarrow \R^m $ is continuous; and
\item[(iv)]  under additional Assumption \ref{Assp:ExistenceConsistency}(c) and (d), for all $ \theta \in \T\setminus\{\theta_0\}$, $\E\left[\e^{\tau(\theta)'\psi(X_{1},\theta)} \right]<\E\left[\e^{\tau(\theta_0)'\psi(X_{1},\theta_{0})} \right]=1 $ where $\tau(\theta_0)=0_{m \times 1}$.
\end{itemize}
\end{lem}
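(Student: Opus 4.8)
The plan is to establish the four statements in sequence, with the strict convexity in part (i) serving as the engine for everything else. For (i) I would first justify passing two $\tau$-derivatives through the expectation defining $g_\theta(\tau):=\E[\e^{\tau'\psi(X_1,\theta)}]$. Lemma \ref{Lem:BoundAsTiltingEquation}(ii) gives $\E[\sup_{(\theta,\tau)\in\Sbf^\epsilon}|\e^{\tau'\psi(X_1,\theta)}\psi(X_1,\theta)|]<\infty$ and Lemma \ref{Lem:TiltedVariance}(i) gives $\E[\sup_{(\theta,\tau)\in\Sbf^\epsilon}|\e^{\tau'\psi(X_1,\theta)}\psi(X_1,\theta)\psi(X_1,\theta)'|]<\infty$, so by the Lebesgue dominated convergence theorem and Assumption \ref{Assp:ExistenceConsistency}(b) I obtain $\partial_\tau g_\theta(\tau)=\E[\e^{\tau'\psi}\psi]$ and $\partial^2_{\tau\tau'}g_\theta(\tau)=\E[\e^{\tau'\psi}\psi\psi']$. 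The Hessian is positive semidefinite as an expectation of the rank-one positive-semidefinite matrices $\e^{\tau'\psi}\psi\psi'$; to upgrade to positive definiteness I would note that for $c\neq 0$, $c'\E[\e^{\tau'\psi}\psi\psi']c=\E[\e^{\tau'\psi}(c'\psi)^2]$ vanishes only when $c'\psi(X_1,\theta)=0$ $\P$-a.s., a condition free of $\tau$ that would make $\E[\e^{\tau(\theta)'\psi}\psi\psi']$ singular and so contradict Assumption \ref{Assp:ExistenceConsistency}(h). Hence $g_\theta$ is strictly convex.

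Part (ii) is then immediate: Assumption \ref{Assp:ExistenceConsistency}(d) provides a root $\tau(\theta)$ of $\E[\e^{\tau'\psi}\psi]=0$, which by (i) is a critical point of the strictly convex $g_\theta$ and therefore its unique global minimizer, giving uniqueness. For (iii) I would apply the implicit function theorem to $F(\theta,\tau):=\E[\e^{\tau'\psi(X_1,\theta)}\psi(X_1,\theta)]$: by Assumption \ref{Assp:ExistenceConsistency}(b) and the dominating bound of Lemma \ref{Lem:BoundAsTiltingEquation}(ii), dominated convergence makes $F$ jointly continuous on $\Sbf$, while $\partial F/\partial\tau'=\E[\e^{\tau'\psi}\psi\psi']$ is continuous and invertible at each $(\theta,\tau(\theta))$ by the positive definiteness from (i) together with Assumption \ref{Assp:ExistenceConsistency}(h). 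Since the root is unique by (ii), the implicit function theorem yields continuity of $\theta\mapsto\tau(\theta)$ on $\T$; equivalently, one may read $\tau(\theta)=\arg\min_{\tau}g_\theta(\tau)$ and invoke Berge's maximum theorem with the single-valued argmin.

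For (iv), at $\theta_0$ Assumption \ref{Assp:ExistenceConsistency}(c) gives $\E[\psi(X_1,\theta_0)]=0$, so $\tau=0_{m\times 1}$ solves the tilting equation; the uniqueness from (ii) forces $\tau(\theta_0)=0_{m\times 1}$, and then $\E[\e^{\tau(\theta_0)'\psi(X_1,\theta_0)}]=\E[1]=1$. For $\theta\in\T\setminus\{\theta_0\}$, minimality of $\tau(\theta)$ yields $g_\theta(\tau(\theta))\leqslant g_\theta(0)=1$. Were equality to hold, $0$ would itself minimize the strictly convex $g_\theta$, so uniqueness would give $\tau(\theta)=0$ and hence $\E[\psi(X_1,\theta)]=0$; but Assumption \ref{Assp:ExistenceConsistency}(c) makes $\theta_0$ the unique zero of $\theta\mapsto\E[\psi(X_1,\theta)]$, contradicting $\theta\neq\theta_0$. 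Therefore $\E[\e^{\tau(\theta)'\psi(X_1,\theta)}]<1$.

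The main obstacle is part (i), specifically the upgrade from positive semidefiniteness to positive definiteness of the Hessian, since strict convexity is exactly what delivers uniqueness in (ii), invertibility for the implicit function theorem in (iii), and the strict inequality in (iv). The delicate point is that Assumption \ref{Assp:ExistenceConsistency}(h) is posited only at the particular tilt $\tau(\theta)$, so the argument must exploit that the degeneracy direction $c'\psi(X_1,\theta)=0$ $\P$-a.s. does not depend on $\tau$ in order to transfer definiteness to every $\tau\in\R^m$; the differentiation under the expectation sign also requires the uniform integrable envelopes cited above rather than pointwise bounds.
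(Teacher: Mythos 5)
Your parts (i), (ii) and (iv) are correct and follow essentially the paper's own route. In (i) you reprove inline what the paper delegates to Lemma \ref{Lem:ChgOfMeasureInvertibilityPDP} and Corollary \ref{Cor:ChgOfMeasureInvertibilityPDP}: the observation that $c'\E[\e^{\tau'\psi}\psi\psi']c=\E[\e^{\tau'\psi}(c'\psi)^2]$ vanishes only on the $\tau$-free degeneracy event $\{c'\psi(X_1,\theta)=0\}$ is exactly the equivalent-change-of-measure argument by which the paper transfers the invertibility posited in Assumption \ref{Assp:ExistenceConsistency}(h) at the single tilt $\tau(\theta)$ to every $\tau$; you correctly identified this as the crux. (The paper obtains the derivative formulas from a standard Laplace-transform smoothness result rather than your direct dominated-convergence computation, but both are localized to a neighborhood of $\tau(\theta)$ where the envelopes of Assumption \ref{Assp:ExistenceConsistency}(e) apply, so the difference is cosmetic.) Your (ii) and (iv) coincide with the paper's: critical point of a strictly convex problem, hence unique global minimizer; and $\tau(\theta)=0_{m\times 1}$ would force $\E[\psi(X_1,\theta)]=0_{m\times 1}$, contradicting Assumption \ref{Assp:ExistenceConsistency}(c) for $\theta\neq\theta_0$.

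Part (iii) contains the one genuine gap. You invoke ``the implicit function theorem'' with hypotheses consisting only of joint continuity of $F(\theta,\tau):=\E[\e^{\tau'\psi(X_1,\theta)}\psi(X_1,\theta)]$ and continuity plus invertibility of $\partial F/\partial\tau'$. The standard implicit function theorem \citep[e.g.,][Theorem 9.28]{1953Rudin} additionally requires $F$ to be continuously differentiable in $(\theta,\tau)$ jointly, and differentiability in $\theta$ is \emph{not} available here: it would require an integrable envelope for $\e^{\tau'\psi}\bigl(I+\psi\tau'\bigr)\frac{\partial\psi}{\partial\theta'}$ uniformly over $\Sbf^{\epsilon}$, which the assumptions of the lemma do not supply (Assumption \ref{Assp:AsymptoticNormality}(b) gives such bounds only near $\theta_0$, and is not in force here). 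This is precisely the obstruction the paper flags, and why it instead applies the sufficiency part of Kumagai's implicit function theorem \citep{1980Kum}, which needs only continuity of $F$ together with local injectivity of $\tau\mapsto F(\theta,\tau)$ --- the latter obtained from the inverse function theorem in $\tau$ alone, using the positive definiteness from your part (i). Your hypotheses are in fact exactly Kumagai's, so the step is repaired simply by citing that theorem rather than the classical one. Your proposed fallback via Berge's maximum theorem does not work as stated either: the minimization is over the non-compact $\R^m$, and any attempt to restrict to a compact set such as $\Tbf(\theta)$ or $\overline{\tau(\T)^{\epsilon_\Tbf}}$ is circular at this stage, since the paper derives boundedness of $\tau(\T)$ and compactness of $\Sbf$ (Lemma \ref{Lem:TauCorrespondence}) \emph{from} the continuity of $\tau(\cdot)$ you are trying to prove.
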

\begin{proof} \textit{(i)}  Under Assumption \ref{Assp:ExistenceConsistency}(a) and (b), by  the Cauchy-Schwarz inequality,

\noindent
 $\E \left[\sup_{(\theta, \tau) \in \Sbf^{\epsilon}} \e^{\tau'\psi(X_1, \theta) } \right] \leqslant \E \left[\sup_{(\theta, \tau) \in \Sbf^{\epsilon}}\e^{2\tau'\psi(X_1, \theta) } \right]^{1/2}$, which is finite  by Assumption \ref{Assp:ExistenceConsistency}(e). Now, by Assumption \ref{Assp:ExistenceConsistency}(e), for all $\dot{\theta} \in \T$, $\tau(\dot \theta)\in \intr[\Tbf(\dot{\theta})]$. Then,  by a standard result on Laplace's transform \citep[e.g.,][Theorems  3 on p. 183]{1996Mon},   $\tau \mapsto \E \left[\e^{\tau'\psi(X_{1},\dot\theta)} \right ]$ is $C^{\infty} $ in a neighborhood of $\tau(\dot\theta)$,  and   $\tau \mapsto \frac{\partial \E \left[\e^{\tau'\psi(X_{1},\dot\theta)} \right ]}{\partial \tau}=\E \left[\e^{\tau'\psi(X_{1},\dot\theta)}\psi(X_{1},\theta) \right ]$ and $\tau \mapsto \frac{\partial^2 \E \left[\e^{\tau'\psi(X_{1},\dot\theta)} \right ]}{\partial \tau\partial \tau'}=\E\left[\e^{\tau' \psi(X_{1},\dot\theta)}\psi(X_{1},\dot\theta) \psi(X_{1},\dot\theta)' \right]$.  Moreover, under Assumptions \ref{Assp:ExistenceConsistency}(a)-(b), (e) and (g), Assumption \ref{Assp:ExistenceConsistency}(h) implies that,

\noindent
$\E\left[\e^{\tau' \psi(X_{1},\dot\theta)}\psi(X_{1},\dot\theta) \psi(X_{1},\dot\theta)' \right] $  is a symmetric positive-definite matrix because   a well-defined covariance matrix is invertible iff it is invertible under an equivalent probability measure (Lemma \ref{Lem:ChgOfMeasureInvertibilityPDP} and Corollary \ref{Cor:ChgOfMeasureInvertibilityPDP}i on p. \pageref{Cor:ChgOfMeasureInvertibilityPDP}).

\textit{(ii)}  Assumption \ref{Assp:ExistenceConsistency}(d) ensures existence, while the statement (i) of the present lemma ensures that $\tau(\theta)$ is the solution of a strictly convex problem, so that it is unique.

\textit{(iii)} Note that, under our assumptions,  an application  of the standard implicit function \citep[e.g.,][Theorem 9.28]{1953Rudin} is not directly possible as it  requires $(\theta, \tau)\mapsto \E\left[\e^{\tau'\psi(X_{1},\theta)}\psi(X_{1},\theta) \right]$ to be continuously differentiable in $\Sbf^\epsilon$, which, in turn, typically requires to uniformly bound the  derivative of the latter in $\Sbf^\epsilon$   \citep[e.g.,][Theorem 9.31]{1994Dav}.
Thus, we  apply  the sufficiency part of Kumagai's implicit function theorem \citep{1980Kum}.  Check its assumptions.  Firstly, under  Assumptions \ref{Assp:ExistenceConsistency}(a)(b)(e) and (g), by Lemma \ref{Lem:BoundAsTiltingEquation}ii (p. \pageref{Lem:BoundAsTiltingEquation}) and the  Lebesgue dominated convergence theorem,     $(\theta , \tau) \mapsto \E \left[\e^{\tau'\psi(X_{1},\theta)}\psi(X_{1},\theta) \right] $ is continuous in $\Sbf^\epsilon$, i.e., in an open neighborhood of every $(\theta, \tau) \in \Sbf$.  Secondly, by the inverse function theorem applied to $\tau \mapsto\E \left[\e^{\tau'\psi(X_{1},\theta)}\psi(X_{1},\theta) \right]$ \citep[e.g.,][Theorem 9.24]{1953Rudin},  for all $\theta \in \T^\epsilon $, $\tau \mapsto\E \left[\e^{\tau'\psi(X_{1},\theta)}\psi(X_{1},\theta) \right]$ is locally one-to-one\,:\footnote{Here it is necessary to work in an $\epsilon$-neighborhood of $\T$ in order to satisfy the assumption of Kumagai's implicit function theorem \citep{1980Kum}. The standard implicit function theorem would also require the existence of open neighborhoods around the parameter values at which the function is zero.} As explained in the proof of (i),
  under Assumption \ref{Assp:ExistenceConsistency}(a)(b)(e) and (h), $\tau \mapsto\E \left[\e^{\tau'\psi(X_{1},\theta)}\psi(X_{1},\theta) \right]$ is continuously differentiable and, under Assumption \ref{Assp:ExistenceConsistency}(a)(b)(e)(g) and (h), for all $\theta \in \T $,  $\frac{ \partial \E \left[\e^{\tau'\psi(X_{1},\theta)}\psi(X_{1},\theta) \right ]}{\partial \tau'}=\E\left[\e^{\tau' \psi(X_{1},\theta)}\psi(X_{1},\theta) \psi(X_{1},\theta)' \right] $  is invertible, so that the assumptions of the inverse function theorem are valid.

\textit{(iv)} By the statements (i) and (ii) of the present lemma, for all $\theta\in \T$,  for all $\tau\neq \tau(\theta)$, $\E [\e^{\tau(\theta)'\psi(X_1,\theta)}]< \E[\e^{\tau'\psi(X_1,\theta)}]$. Now, for all $ \theta \in \T\setminus\{\theta_0\}$, $\tau(\theta)\neq 0_{m \times 1} $: If there existed  $ \dot{\theta} \in \T\setminus\{\theta_0\}$  s.t. $\tau(\dot{\theta})=0_{m \times 1}$, then $0=\E [\e^{\tau(\dot{\theta})'\psi(X_1,\dot{\theta})}\psi(X_1,\dot{\theta})]=\E [\psi(X_1,\dot{\theta})] $, which would contradict Assumption \ref{Assp:ExistenceConsistency}(c). Thus, for all $ \theta \in \T\setminus\{\theta_0\}$, $\E [\e^{\tau(\theta)'\psi(X_1,\theta)}]< \E[\e^{0_{1 \times m}\psi(X_1,\theta)}]=1$. Then, the result follows by the statement (ii) of the present lemma because $0_{m \times 1}=\E[\psi(X_1, \theta_0)]=\E[\e^{0_{1 \times m}\psi(X_1,\theta)}\psi(X_1, \theta_0)]$.
\end{proof}

\subsection{Decomposition and derivatives of the log-ESP $L_T(., .)$}\label{Sec:LTAndDerivatives}

In this section, we simplify $L_T(\theta, \tau)$ and  study its derivatives. Such results are needed for the proof of Theorem \ref{theorem:ConsistencyAsymptoticNormality}ii and other results afterwards.

\begin{lem}\label{Lem:LogESPExistenceInS} Under Assumption \ref{Assp:ExistenceConsistency}(a)-(e) and (g)(h), by Lemma \ref{Lem:AsTiltingFct} (p. \pageref{Lem:AsTiltingFct}), define $\tau(\theta_0)=\tau_0=0_{m\times 1}$.  Under Assumption \ref{Assp:ExistenceConsistency}(a)-(b),   (e) and (h),
\begin{enumerate}
\item[(i)] under additional Assumption  \ref{Assp:ExistenceConsistency}(d) and (g), there exist $(\underline{M}_{\e}, \overline{M}_{\e})\in \R_+\setminus \{ 0\}$ s.t.   $\P$-a.s. for $T$ big enough, $\underline{M}_{\e}<\inf_{(\theta, \tau)\in \Sbf}\frac{1}{T}\sum_{t=1}^T\e^{\tau'\psi_{t}(\theta)}$ and $\sup_{(\theta, \tau)\in \Sbf}\frac{1}{T}\sum_{t=1}^T\e^{\tau'\psi_{t}(\theta)} < \overline{M}_{\e} $;
\item[(ii)] under additional Assumption \ref{Assp:ExistenceConsistency} (c)(d) and (g), there exists an open ball $B_{r}(\theta_0, \tau_0)$ centered at $(\theta_0, \tau_0)$ of radius $r>0$, which is a subset of $\Sbf$;

\item[(iii)] under additional Assumption  \ref{Assp:ExistenceConsistency}(c)(d)(f) and (g), for all $(\theta, \tau)$ in  a closed ball $\overline{B_{r_\partial}(\theta_0, \tau_0)}\subset \Sbf$ centered at $(\theta_0, \tau_0)$ with  radius $r_\partial >0$,  $\vert \E \e^{\tau'\psi(X_1, \theta)}\frac{\partial \psi(X_1, \theta)}{\partial \theta'}\vert_{\det}^2 > 0$, so that, $\P$-a.s. for $T$ big enough, $\vert\frac{1}{T}\sum_{t=1}^T \e^{\tau'\psi_t(\theta)}\frac{\partial \psi_t( \theta)}{\partial \theta'} \vert_{\det}^2>0$;
\item[(iv)]  under additional Assumption  \ref{Assp:ExistenceConsistency}(g),  $\P$-a.s. for $T$ big enough, 

$\inf_{(\theta, \tau)\in \Sbf}\negthickspace\vert \frac{1}{T}\negthickspace\sum_{t=1}^T\negthickspace\e^{\tau'\psi_t(\theta)}\psi_t(\theta)\psi_t(\theta)' \vert_{\det} >0$.
\end{enumerate}
\end{lem}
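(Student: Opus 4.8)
The plan is to treat the four statements in sequence, in each case first establishing the claim for the population quantity---which is continuous on the compact set $\Sbf$ (Lemma~\ref{Lem:TauCorrespondence}iii)---and then transferring it to the empirical quantity through the uniform laws of large numbers already proved. For statement (i), I would combine the ULLN of Lemma~\ref{Lem:Schennachtheorem10PfFirstSteps}i, which gives $\sup_{(\theta,\tau)\in\Sbf}\vert\frac{1}{T}\sum_{t=1}^T\e^{\tau'\psi_t(\theta)}-\E[\e^{\tau'\psi(X_1,\theta)}]\vert=o(1)$ $\P$-a.s., with the fact that $(\theta,\tau)\mapsto\E[\e^{\tau'\psi(X_1,\theta)}]$ is continuous and strictly positive on $\Sbf$ (Lemma~\ref{Lem:ExpTauPsiStrictlyPositive}). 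Continuity on a compact set gives $0<\underline{m}:=\inf_{\Sbf}\E[\e^{\tau'\psi}]\leqslant\sup_{\Sbf}\E[\e^{\tau'\psi}]=:\overline{m}<\infty$, finiteness of $\overline{m}$ following from Cauchy--Schwarz and Assumption~\ref{Assp:ExistenceConsistency}(e). Choosing $T$ large enough that the uniform deviation is below $\underline{m}/2$ then delivers $\underline{M}_{\e}:=\underline{m}/2$ and $\overline{M}_{\e}:=\overline{m}+\underline{m}/2$.

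For statement (ii), recall that $\tau_0=\tau(\theta_0)=0_{m\times1}$ (Lemma~\ref{Lem:AsTiltingFct}iv), that $\theta_0\in\intr(\T)$ (Assumption~\ref{Assp:ExistenceConsistency}(c)), and that $\Tbf(\theta)=\overline{B_{\epsilon_\Tbf}(\tau(\theta))}$ with $\epsilon_\Tbf>0$. Since $\tau(\cdot)$ is continuous (Lemma~\ref{Lem:AsTiltingFct}iii), there is $\delta>0$ with $\vert\tau(\theta)\vert<\epsilon_\Tbf/2$ for $\vert\theta-\theta_0\vert<\delta$, and a $\rho>0$ with $B_\rho(\theta_0)\subset\T$; then $r:=\min(\rho,\delta,\epsilon_\Tbf/2)$ works, since any $(\theta,\tau)$ with $\vert(\theta,\tau)-(\theta_0,\tau_0)\vert<r$ has $\theta\in\T$ and $\vert\tau-\tau(\theta)\vert\leqslant\vert\tau\vert+\vert\tau(\theta)\vert<\epsilon_\Tbf$, i.e.\ $(\theta,\tau)\in\Sbf$. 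For statement (iii), at the center the population matrix is $\E[\frac{\partial\psi(X_1,\theta_0)}{\partial\theta'}]$ (as $\tau_0=0$), which is invertible by Assumption~\ref{Assp:ExistenceConsistency}(h), so its squared determinant is strictly positive. Because $(\theta,\tau)\mapsto\E[\e^{\tau'\psi}\frac{\partial\psi}{\partial\theta'}]$ is continuous (dominated convergence with the bound of Lemma~\ref{Lem:TiltedDerivative}i), so is its squared determinant, and by (ii) I can select a closed ball $\overline{B_{r_\partial}(\theta_0,\tau_0)}\subset\Sbf$ on which it is bounded below by a positive constant; the empirical counterpart then follows from the uniform convergence of Lemma~\ref{Lem:TiltedDerivative}ii together with continuity of the determinant, so that the empirical squared determinant stays positive there for $T$ big enough.

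For statement (iv), the crux is to show that $\E[\e^{\tau'\psi(X_1,\theta)}\psi(X_1,\theta)\psi(X_1,\theta)']$ is positive definite for \emph{every} $(\theta,\tau)\in\Sbf$, whereas Assumption~\ref{Assp:ExistenceConsistency}(h) asserts invertibility only at $\tau=\tau(\theta)$. This matrix is positive semidefinite, and for any $v$, $v'\E[\e^{\tau'\psi}\psi\psi']v=\E[\e^{\tau'\psi}(v'\psi)^2]=0$ holds iff $v'\psi=0$ $\P$-a.s.; since $\e^{\tau'\psi}>0$, this condition is independent of $\tau$, i.e.\ invertibility is preserved under the exponential tilting (the change-of-measure invertibility of Lemma~\ref{Lem:ChgOfMeasureInvertibilityPDP}). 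Hence invertibility at $\tau(\theta)$ forces invertibility, and thus strict positivity of the determinant, for all $(\theta,\tau)\in\Sbf$. Continuity of $(\theta,\tau)\mapsto\E[\e^{\tau'\psi}\psi\psi']$ (Lemma~\ref{Lem:TiltedVariance}i and dominated convergence) together with compactness of $\Sbf$ then yields a uniform lower bound $c>0$ on $\vert\E[\e^{\tau'\psi}\psi\psi']\vert_{\det}$, and the uniform convergence of Lemma~\ref{Lem:TiltedVariance}ii, combined with the uniform continuity of the determinant on the compact image, transfers this to $\inf_{(\theta,\tau)\in\Sbf}\vert\frac{1}{T}\sum_{t=1}^T\e^{\tau'\psi_t(\theta)}\psi_t(\theta)\psi_t(\theta)'\vert_{\det}\geqslant c/2>0$ $\P$-a.s.\ for $T$ big enough. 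I expect this uniform-in-$\tau$ positive definiteness to be the main obstacle: the hypothesis is available only at the tilting solution, so extending it across the whole fibre $\Tbf(\theta)$ relies on the change-of-measure argument rather than on continuity alone.
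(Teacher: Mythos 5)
Your proposal is correct and follows essentially the same route as the paper's proof: (i) via Lemma \ref{Lem:Schennachtheorem10PfFirstSteps}i together with the strict positivity and continuity from Lemma \ref{Lem:ExpTauPsiStrictlyPositive}; (ii) via continuity of $\tau(\cdot)$, interiority of $\theta_0$, and the triangle inequality on the non-Cartesian set $\Sbf$; (iii) via Assumption \ref{Assp:ExistenceConsistency}(h) at $(\theta_0,\tau_0)$, dominated-convergence continuity, statement (ii), and the uniform convergence of Lemma \ref{Lem:TiltedDerivative}; and (iv) via the change-of-measure positive-definiteness argument, which is exactly the content of Corollary \ref{Cor:ChgOfMeasureInvertibilityPDP} and Lemma \ref{Lem:ChgOfMeasureInvertibilityPDP}, combined with compactness of $\Sbf$ and Lemma \ref{Lem:TiltedVariance} (the paper packages this last transfer step in Lemma \ref{Lem:UniFiniteSampleInvertibilityFromUniAsInvertibility}, which you reproduce inline). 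Your closing observation that the uniform-in-$\tau$ positive definiteness in (iv) cannot come from continuity alone, but from tilting-invariance of the null space, is precisely the point the paper's corollary is designed to handle.
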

\begin{proof} \textit{(i)} Under Assumption \ref{Assp:ExistenceConsistency}(a)(b)(d)(e)(g) and (h), by Lemma \ref{Lem:Schennachtheorem10PfFirstSteps}i (p. \pageref{Lem:Schennachtheorem10PfFirstSteps}), which states that, $\P\text{-a.s.}$ as $T \rightarrow \infty$, $ \sup_{(\theta, \tau) \in\Sbf}\left\vert\frac{1}{T}\sum_{t=1}^T \e^{\tau'\psi_t(\theta)}- \E[ \e^{\tau'\psi(X_1, \theta)}] \right\vert =o(1)$, and Lemma \ref{Lem:ExpTauPsiStrictlyPositive} (p. \pageref{Lem:ExpTauPsiStrictlyPositive}) with $\Prm=\P$, which states that $0<\inf_{(\theta, \tau)\in \Sbf }\E[ \e^{\tau'\psi(X_1, \theta)}]$,   the result follows.

\textit{(ii)} First of all, note that the result is not completely immediate, as  $\Sbf:=\{(\theta, \tau):\theta \in \T \wedge \tau \in \overline{B_{\epsilon_{\Tbf}}(\tau(\theta))}\}$   is not a Cartesian product. Under Assumption \ref{Assp:ExistenceConsistency} (a)(b)(d)(e)(g) and (h), by Lemma \ref{Lem:AsTiltingFct}iii (p. \pageref{Lem:AsTiltingFct}), $\tau: \T \rightarrow \R^m $ is continuous. Thus, by the topological definition of continuity, $\tau^{-1}[B_{\epsilon_\Tbf/2}(\tau_0)]$ is an open set of $\T$. Moreover, by the definition of $\tau_0$, $\theta_0 \in\tau^{-1}[B_{\epsilon_\Tbf/2}(\tau_0)] $, and, by Assumption \ref{Assp:ExistenceConsistency}(c), $\theta_0 \in \intr(\T) $,\footnote{This assumption forbids  $\theta_0$ to be on the boundary of  $\tau^{-1}[B_{\epsilon_\Tbf/2}(\tau_0)]$, which is an open set of $\T$, but not necessarily of $\R^m$.} so that there exists $r_0>0$ s.t. $B_{r_0}(\theta_0)\subset \tau^{-1}[B_{\epsilon_\Tbf/2}(\tau_0)]$ and $B_{r_0}(\theta_0)\subset \T$.
 Now, for this proof, put $r=\min\{ r_{0}, \epsilon_\Tbf/2\}$. Then, it remains to show that $B_{r}(\theta_0, \tau_0)\subset \Sbf $, i.e., for all $(\dot{\theta}, \dot{\tau})\in B_{r}(\theta_0, \tau_0) $, $\vert \dot{\tau}-\tau(\dot{\theta})\vert \leqslant\epsilon_{\Tbf}$.  By the triangle inequality, for  any $(\dot{\theta}, \dot{\tau})\in B_{r}(\theta_0, \tau_0)$,
\begin{eqnarray*}
\vert \dot{\tau}-\tau(\dot{\theta})\vert &\leqslant& \vert \dot{\tau}-\tau_0 \vert + \vert \tau_0-\tau(\dot{\theta}) \vert\\
& \leqslant & \frac{\epsilon_{\Tbf}}{2}+\frac{\epsilon_{\Tbf}}{2}=\epsilon_{\Tbf}
\end{eqnarray*}
 where the explanations for the last inequality are as follows. Firstly, $\vert \dot{\tau}-\tau_0 \vert<\sqrt{ \sum_{k=1}^m (\dot{\tau}_k-\tau_{0,k})^2}\leqslant \sqrt{\sum_{k=1}^m (\dot{\theta}_k-\theta_{0,k})^2+\sum_{k=1}^m (\dot{\tau}_k-\tau_{0,k})^2}<r\leqslant\frac{\epsilon_{\Tbf}}{2}$  by definition of $r$. Secondly, and similarly,$\vert \dot{\theta}-\theta_0 \vert<\sqrt{ \sum_{k=1}^m (\dot{\theta}_k-\theta_{0,k})^2}\leqslant \sqrt{\sum_{k=1}^m (\dot{\theta}_k-\theta_{0,k})^2+\sum_{k=1}^m (\dot{\tau}_k-\tau_{0,k})^2}< r \leqslant r_0$, so that $\vert\tau_0-  \tau(\dot{\theta})\vert< \frac{\epsilon_{\Tbf}}{2}$ because $B_{r_0}(\theta_0)\subset \tau^{-1}[B_{\epsilon_\Tbf/2}(\tau_0)]$.

\textit{(iii)}
Under Assumption  \ref{Assp:ExistenceConsistency} (a)-(b) and (e)-(f), by Lemma \ref{Lem:TiltedDerivative}i (p. \pageref{Lem:TiltedDerivative}), Assumption \ref{Assp:ExistenceConsistency}(b)   and the Lebesgue dominated convergence theorem, $(\theta, \tau)\mapsto \E \left[ \e^{\tau'\psi(X_1, \theta)}\frac{\partial \psi(X_1, \theta)}{\partial \theta'}\right]$ is continuous in $\Sbf$, and thus in a neighborhood of $(\theta_0, \tau_0)$ in $\Sbf$ by Assumption \ref{Assp:ExistenceConsistency}(c) and (e). Then,  $(\theta, \tau)\mapsto \vert \E \left[ \e^{\tau'\psi(X_1, \theta)}\frac{\partial \psi(X_1, \theta)}{\partial \theta'}\right]\vert_{\det}^2$ is also continuous. Now, by  Assumption \ref{Assp:ExistenceConsistency}(h),

\noindent
$\vert \E[ \e^{\tau(\theta_0)'\psi(X_1, \theta_0)}\frac{\partial \psi(X_1, \theta_{0})}{\partial \theta'}]\vert_{\det}^2>0$, so that, under Assumption \ref{Assp:ExistenceConsistency}(a)-(e) and (g)-(h), by the statement (ii) of the present lemma, there exists a closed ball $\overline{B_{r_\partial}(\theta_0, \tau_0)}\subset \Sbf$ centered at $(\theta_0, \tau_0)$ with  radius $r_\partial >0$,   s.t., for all $(\theta, \tau )\in  \overline{B_{r_\partial}(\theta_0, \tau_0)}$, $0 < \vert \E \left[ \e^{\tau'\psi(X_1, \theta)}\frac{\partial \psi(X_1, \theta)}{\partial \theta'}\right]\vert_{\det}^2 $, which is the first part of the result. By Lemma \ref{Lem:UniFiniteSampleInvertibilityFromUniAsInvertibility} (p. \pageref{Lem:UniFiniteSampleInvertibilityFromUniAsInvertibility}), the second part of the result follows from     the continuity of $(\theta, \tau)\mapsto \E \left[ \e^{\tau'\psi(X_1, \theta)}\frac{\partial \psi(X_1, \theta)}{\partial \theta'}\right]$, the invertibility of $\E \left[ \e^{\tau'\psi(X_1, \theta)}\frac{\partial \psi(X_1, \theta)}{\partial \theta'}\right]$ for all $(\theta, \tau )\in  \overline{B_{r_\partial}(\theta_0, \tau_0)}$, and Lemma \ref{Lem:TiltedDerivative}ii (p. \pageref{Lem:TiltedDerivative}), which, under Assumption \ref{Assp:ExistenceConsistency}(a)-(b) and (e)-(f), implies that \\$\sup_{ (\theta, \tau)\in\overline{B_{r_\partial}(\theta_0, \tau_0)}}\vert\left[ \frac{1}{T}\sum_{t=1}^T \e^{\tau'\psi_t(\theta)}\frac{\partial \psi_{t} (\theta)'}{\partial \theta}\right] -\E\left[ \e^{\tau '\psi(X_1, \theta) }\frac{\partial \psi(X_1, \theta)'}{\partial \theta}\right]\vert=o(1)$, $\P$-a.s. as $T \rightarrow \infty$.

\textit{(iv)} It follows from   Lemma \ref{Lem:UniFiniteSampleInvertibilityFromUniAsInvertibility} (p. \pageref{Lem:UniFiniteSampleInvertibilityFromUniAsInvertibility}), so that it is sufficient to check its assumptions.  Firstly, under Assumptions \ref{Assp:ExistenceConsistency}(a)-(b), (e), (g) and (h), by Corollary \ref{Cor:ChgOfMeasureInvertibilityPDP} (p. \pageref{Cor:ChgOfMeasureInvertibilityPDP}), for all $(\theta, \tau)\in \Sbf $, $\E\left[\e^{\tau' \psi(X_1, \theta)}\psi(X_1, \theta) \psi(X_1, \theta)' \right] $  is a positive definite symmetric matrix, and thus it is invertible. Secondly, under Assumption \ref{Assp:ExistenceConsistency}(a)-(b), (e) and (g), by Lemma \ref{Lem:TiltedVariance}i (p. \pageref{Lem:TiltedVariance}),

\noindent
$\E\left[\sup_{(\theta, \tau)\in \Sbf}\vert\e^{\tau' \psi(X_1, \theta)}\psi(X_1, \theta) \psi(X_1, \theta)' \vert\right]<\infty $, so that by the Lebesgue dominated convergence theorem and Assumption \ref{Assp:ExistenceConsistency}(b), $(\theta,\tau)\mapsto \E\left[\e^{\tau' \psi(X_1, \theta)}\psi(X_1, \theta) \psi(X_1, \theta)' \right] $ is continuous in $\Sbf$. Finally, under Assumptions \ref{Assp:ExistenceConsistency}(a)-(b), (d), (e),  (g) and (h), $\P$-a.s. as $T \rightarrow \infty$, 

\noindent
$\sup_{ (\theta, \tau)\in   \Sbf}\left\vert \frac{1}{T}\sum_{t=1}^T \e^{\tau'\psi_t(\theta)}\psi_t(\theta) \psi_t(\theta)' -\E [\e^{\tau'\psi(X_1, \theta) }\psi(X_1,\theta) \psi(X_1,\theta)']\right\vert=o(1)$.

\end{proof}

In order to simplify the analysis of the asymptotic properties of the ESP estimator, we decompose the  LogESP into three terms.

\begin{lem}[LogESP decomposition]\label{Lem:LogESPDecomposition} Under Assumption \ref{Assp:ExistenceConsistency}, $\P$-a.s. for $T$ big enough, define, for all $(\theta, \tau)\in\overline{B_{r_\partial}(\theta_0, \tau_0)}$,
$L_T(\theta, \tau):= \ln  \left[\frac{1}{T}\sum_{t=1}^T \e^{\tau'\psi_t(\theta)}\right]\\-\frac{1}{2T}\ln\negthickspace \left\{ \left\vert\left[ \sum_{t=1}^T \frac{        \e^{\tau'\psi_t(\theta)}}
     { \sum_{i=1}^T \e^{\tau'\psi_i(\theta)}}\frac{\partial \psi_{t} (\theta)}{\partial \theta'}\right]^{-1}\negthickspace\left[\sum_{t=1}^T \frac{        \e^{\tau'\psi_t(\theta)}}
     { \sum_{i=1}^T \e^{\tau'\psi_i(\theta)}}\psi_{t}(\theta)\psi_{t}(\theta)'\right]\negthickspace\left[ \sum_{t=1}^T \frac{        \e^{\tau'\psi_t(\theta)}}
     { \sum_{i=1}^T \e^{\tau'\psi_i(\theta)}}\frac{\partial \psi_{t} (\theta)'}{\partial \theta}\right]^{-1}\right\vert_{\det}\negthickspace\right\}$, which exists by Lemma \ref{Lem:LogESPExistenceInS} (p. \pageref{Lem:LogESPExistenceInS}), and where $\overline{B_{r_\partial}(\theta_0, \tau_0)}$ is defined as in the aforementioned lemma. Then, under Assumption \ref{Assp:ExistenceConsistency}(a)(b) and (d)-(h), $\P$-a.s. for $T$ big enough, for all $(\theta, \tau)\in\overline{B_{r_\partial}(\theta_0, \tau_0)}$,
\begin{eqnarray*}
L_T(\theta, \tau)=M_{1,T}(\theta, \tau)+M_{2,T}(\theta, \tau)+M_{3,T}(\theta, \tau)\text{ where}
\end{eqnarray*}
 $M_{1,T}(\theta, \tau)\negthickspace:=\negthickspace\left(1 \negthickspace-\negthickspace \frac{m}{2T}  \right)\ln\negthickspace \left[    \frac{1}{T} \sum_{t=1}^T \e^{\tau'\psi_t(\theta)}   \right]$, $M_{2,T}(\theta, \tau)\negthickspace:=\negthickspace\frac{1}{2T} \ln\negthickspace \left[   \left|  \frac{1}{T} \sum_{t=1}^T                \e^{\tau'\psi_t(\theta)}
      \frac{\partial \psi_t(\theta)}{\partial \theta' }  \right|_{\det}^2          \right]$, and 
         
         \noindent
         $M_{3,T}(\theta, \tau)\negthickspace:=\negthickspace-\frac{1}{2T } \ln\negthickspace \left[ \left|  \frac{1}{T} \sum_{t=1}^T                  \e^{\tau'\psi_t(\theta)}
    \psi_t(\theta)  \psi_t(\theta)  ' \right|_{\det}         \right] $.
\end{lem}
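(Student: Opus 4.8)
The plan is to treat the decomposition as a pointwise algebraic identity on $\overline{B_{r_\partial}(\theta_0, \tau_0)}$, obtained by factoring the common normalizing denominator out of the tilted weights and then applying the multiplicativity of the determinant. Write $S:=\frac{1}{T}\sum_{t=1}^T\e^{\tau'\psi_t(\theta)}$, so the weights appearing in $L_T$ are $\e^{\tau'\psi_t(\theta)}/(TS)$, and introduce the unnormalized tilted averages $\bar{D}:=\frac{1}{T}\sum_{t=1}^T\e^{\tau'\psi_t(\theta)}\frac{\partial \psi_t(\theta)}{\partial \theta'}$ and $\bar{V}:=\frac{1}{T}\sum_{t=1}^T\e^{\tau'\psi_t(\theta)}\psi_t(\theta)\psi_t(\theta)'$. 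With this notation the two outer factors of the matrix product inside the determinant in $L_T$, call it $\Sigma$, equal $S^{-1}\bar{D}$ and its transpose $S^{-1}\bar{D}'$, while the inner factor equals $S^{-1}\bar{V}$; thus $\Sigma=(S^{-1}\bar{D})^{-1}(S^{-1}\bar{V})(S^{-1}\bar{D}')^{-1}$.

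Before manipulating, I would record that every quantity is well-defined $\P$-a.s. for $T$ big enough on the ball, invoking Lemma \ref{Lem:LogESPExistenceInS}. Part (i) yields $0<\underline{M}_{\e}<S$, so $\ln S$ exists; part (iii) yields $\vert \bar{D}\vert_{\det}^2>0$ on $\overline{B_{r_\partial}(\theta_0,\tau_0)}$, so $\bar{D}$ (hence $S^{-1}\bar{D}$) is invertible and $\ln\vert\bar{D}\vert_{\det}^2$ is defined; and part (iv) yields $\vert\bar{V}\vert_{\det}>0$, so $\ln\vert\bar{V}\vert_{\det}$ is defined. These are precisely the positivity facts that the three terms $M_{1,T},M_{2,T},M_{3,T}$ require, and they are exactly what Lemma \ref{Lem:LogESPExistenceInS} was arranged to supply on this ball.

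The computation itself is then routine. Using $\vert S^{-1}M\vert_{\det}=S^{-m}\vert M\vert_{\det}$ for $m\times m$ matrices, $\vert A^{-1}\vert_{\det}=\vert A\vert_{\det}^{-1}$, and $\vert A'\vert_{\det}=\vert A\vert_{\det}$, I would obtain $\vert \Sigma\vert_{\det}=\vert S^{-1}\bar{D}\vert_{\det}^{-1}\,\vert S^{-1}\bar{V}\vert_{\det}\,\vert S^{-1}\bar{D}'\vert_{\det}^{-1}=S^{m}\,\vert\bar{V}\vert_{\det}/\vert\bar{D}\vert_{\det}^2$, the two inverse outer factors each contributing $S^{+m}$ and the inner factor contributing $S^{-m}$, for a net $S^{+m}$. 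Taking logarithms (legitimate by the previous paragraph, and with $\ln\vert\bar{D}\vert_{\det}^2$ written in place of $2\ln\vert\bar{D}\vert_{\det}$ to sidestep any sign issue in $\vert\bar{D}\vert_{\det}$) gives $\ln\vert\Sigma\vert_{\det}=m\ln S+\ln\vert\bar{V}\vert_{\det}-\ln\vert\bar{D}\vert_{\det}^2$. Substituting into $L_T=\ln S-\frac{1}{2T}\ln\vert\Sigma\vert_{\det}$ and collecting the $\ln S$ terms produces $\big(1-\tfrac{m}{2T}\big)\ln S+\tfrac{1}{2T}\ln\vert\bar{D}\vert_{\det}^2-\tfrac{1}{2T}\ln\vert\bar{V}\vert_{\det}$, which matches $M_{1,T}+M_{2,T}+M_{3,T}$ term by term.

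I expect no real analytic difficulty here: the statement is an identity, not an estimate. The only point requiring genuine care—and the sole place the hypotheses truly enter—is the bookkeeping of the powers of $S$ (ensuring the two outer inverses and the inner factor combine to leave exactly $S^{+m}$, rather than $S^{-m}$ or $S^{+3m}$) together with the requirement that each logarithm act on a strictly positive argument. Both are guaranteed by Lemma \ref{Lem:LogESPExistenceInS} on $\overline{B_{r_\partial}(\theta_0,\tau_0)}$, so the argument reduces to the substitution just described.
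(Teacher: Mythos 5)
Your proof is correct and follows essentially the same route as the paper's: verify well-definedness via Lemma \ref{Lem:LogESPExistenceInS} (strict positivity of $\frac{1}{T}\sum_{t}\e^{\tau'\psi_t(\theta)}$, invertibility of the tilted Jacobian and second-moment matrices on $\overline{B_{r_\partial}(\theta_0,\tau_0)}$), then use multiplicativity of the determinant, $\vert A^{-1}\vert_{\det}=\vert A\vert_{\det}^{-1}$, and multilinearity to factor out the normalizing constant, yielding the net coefficient $1-\frac{m}{2T}$ on the log-ET term. Your bookkeeping of the powers of $S$ (net $S^{+m}$) and your device of keeping $\ln\vert\bar{D}\vert_{\det}^2$ rather than $2\ln\vert\bar{D}\vert_{\det}$ to guard against a negative determinant both match the paper's argument exactly.
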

\begin{proof} First of all, note that, under Assumption \ref{Assp:ExistenceConsistency}, by Lemma \ref{Lem:LogESPExistenceInS} (p. \pageref{Lem:LogESPExistenceInS}), $L_T(.)$ is well-defined $\P$-a.s. for $T$ big enough, for all  $(\theta, \tau)\in\overline{B_{r_\partial}(\theta_0, \tau_0)}$. Thus, under Assumption \ref{Assp:ExistenceConsistency},    $\P$-a.s. for $T$ big enough,  for all  $(\theta, \tau)\in\overline{B_{r_\partial}(\theta_0, \tau_0)}$.
\begin{eqnarray}
& & L_T( \theta, \tau) \nonumber\\
& = & \ln \left[    \frac{1}{T} \sum_{t=1}^T \e^{\tau'\psi_t(\theta)}   \right]\nonumber\\
& &\negthickspace-\frac{1}{2T}\ln\negthickspace \left\{ \left\vert\left[ \sum_{t=1}^T \frac{        \e^{\tau'\psi_t(\theta)}}
     { \sum_{i=1}^T \e^{\tau'\psi_i(\theta)}}\frac{\partial \psi_{t} (\theta)}{\partial \theta'}\right]^{-1}\negthickspace\left[\sum_{t=1}^T \frac{        \e^{\tau'\psi_t(\theta)}}
     { \sum_{i=1}^T \e^{\tau'\psi_i(\theta)}}\psi_{t}(\theta)\psi_{t}(\theta)'\right]
\right. \right.     
\\     
& & \hspace{2.2in} \left. \left.  \times   \left[ \sum_{t=1}^T \frac{        \e^{\tau'\psi_t(\theta)}}
     { \sum_{i=1}^T \e^{\tau'\psi_i(\theta)}}\frac{\partial \psi_{t} (\theta)'}{\partial \theta}\right]^{-1}\right\vert_{\det}\negthickspace\right\}\nonumber \\
 & \stackrel{(a)}{=} &  \ln \left[    \frac{1}{T} \sum_{t=1}^T \e^{\tau'\psi_t(\theta)}   \right]  +  \frac{1}{2T} \ln \left[ \left|  \sum_{t=1}^T         \frac{        \e^{\tau'\psi_t(\theta)}}
     { \sum_{i=1}^T \e^{\tau'\psi_i(\theta)}}  \frac{\partial \psi_t(\theta)}{\partial \theta' }  \right|_{\det}^2          \right] \nonumber  \\
& &\
        -  \frac{1}{2T } \ln \left[  \left|  \frac{1}{T} \sum_{t=1}^T                  \frac{        \e^{\tau'\psi_t(\theta)}}
     { \sum_{i=1}^T \e^{\tau'\psi_i(\theta)}}
    \psi_t(\theta)  \psi_t(\theta)  ' \right|_{\det}         \right]
         \nonumber \\
& \stackrel{(b)}{=} &    \ln \left[    \frac{1}{T} \sum_{t=1}^T \e^{\tau'\psi_t(\theta)}   \right]       \nonumber \\
 & &  +  \frac{1}{2T} \ln \left[  \left( \frac{ 1 }
     {  \frac{1}{T} \sum_{i=1}^T \e^{\tau'\psi_i(\theta)}} \right)^{2m} \left|  \frac{1}{T} \sum_{t=1}^T                \e^{\tau'\psi_t(\theta)}
      \frac{\partial \psi_t(\theta)}{\partial \theta' }  \right|_{\det}^2 \right] \nonumber\\
& &    -  \frac{1}{2T } \ln \left[  \left( \frac{ 1 }
     {  \frac{1}{T} \sum_{i=1}^T \e^{\tau'\psi_i(\theta)}} \right)^m\left|  \sum_{t=1}^T          \e^{\tau'\psi_t(\theta)}\psi_t(\theta)  \psi_t(\theta)  ' \right|_{\det}         \right] \    \nonumber \\
& \stackrel{(c)}{=} &    \left(1 - \frac{m}{2T}  \right)\  \ln \left[    \frac{1}{T} \sum_{t=1}^T \e^{\tau'\psi_t(\theta)}   \right]
   +  \frac{1}{2T} \ln \left[   \left|  \frac{1}{T} \sum_{t=1}^T                \e^{\tau'\psi_t(\theta)}
      \frac{\partial \psi_t(\theta)}{\partial \theta' }  \right|_{\det}^2          \right]  \nonumber \\
& &     -  \frac{1}{2T } \ln \left[ \left|  \frac{1}{T} \sum_{t=1}^T                  \e^{\tau'\psi_t(\theta)}
    \psi_t(\theta)  \psi_t(\theta)  ' \right|_{\det}         \right]. \label{proof_spd}
\end{eqnarray}
\textit{(a)} Firstly,  use that the determinant of the product is the product of the determinants \citep[e.g.][Theorem 9.35]{1953Rudin}. Secondly, the determinant of an inverse is the inverse of the determinant \citep[e.g.][p. 233]{1953Rudin}. Finally, use basic properties of the logarithm, and note that we keep the square in the second logarithm in order to ensure the  positivity of the argument (then the strict positivity is ensured by Lemma \ref{Lem:LogESPExistenceInS} on p. \pageref{Lem:LogESPExistenceInS}). \textit{(b)} Use multilinearity of determinant.  \textit{(c)} Note that $1+\frac{-2m}{2T}-\frac{-m}{2T}=1-\frac{m}{2T} $.
\end{proof}

\subsubsection{Derivatives of $M_{1,T}(\theta, \tau):=\left(1 - \frac{m}{2T}  \right)\ln \left[    \frac{1}{T} \sum_{t=1}^T \e^{\tau'\psi_t(\theta)}   \right]$}

\textit{First derivative $\frac{\partial M_{1,T}(\theta, \tau)}{\partial \theta_j} $.}
 By  Assumption \ref{Assp:ExistenceConsistency}(b),  $\theta \mapsto \psi(X_1,\theta)$ is differentiable in $\T$  $\P$-a.s.  Thus,   for all $(\theta, \tau)\in \Sbf$, for all $j \in \ldsb 1,m\rdsb$,
 \begin{eqnarray}
\frac{\partial M_{1,T}(\theta, \tau)}{\partial \theta_j}=\left(1 - \frac{m}{2T}  \right)
 \frac{
        \frac{1}{T}\sum_{t=1}^T \e^{\tau'\psi_t(\theta)} \tau'
                \frac{\partial \psi_t(\theta)}{\partial\theta_j}  }{\frac{1}{T}\sum_{t=1}^T \e^{\tau'\psi_t(\theta)}}\label{Eq:LogETTerm1stDerivative}.
\end{eqnarray}

\textit{Second derivative $\frac{\partial^2 M_{1,T}(\theta, \tau) }{\partial \theta_\ell \partial\theta_{j} }$.}
By  Assumption \ref{Assp:AsymptoticNormality}(a),  $\theta \mapsto \psi(X_1,\theta)$ are three times  continuously differentiable in  a neighborhood of $\theta_0$ $\P$-a.s. Thus, by equation \eqref{Eq:LogETTerm1stDerivative} on p. \pageref{Eq:LogETTerm1stDerivative}, under Assumptions \ref{Assp:ExistenceConsistency}(a)-(e), (g)-(h) and  \ref{Assp:AsymptoticNormality}(a), by Lemma \ref{Lem:LogESPExistenceInS}ii (p. \pageref{Lem:LogESPExistenceInS}),  $\P$-a.s.,    for all $(\theta, \tau)$ in a neighborhood of $(\theta_0, \tau_0)$, for all $( \ell, j)\in \ldsb 1,m\rdsb^2$,
\begin{eqnarray}
& & \frac{\partial^2 M_{1,T}(\theta, \tau) }{\partial\theta_{\ell} \partial \theta_j}\nonumber \\
 & = & \frac{(1 - \frac{m}{2T} ) }{\left[ \frac{1}{T}\sum_{t=1}^T \e^{\tau'\psi_t(\theta)}\right]^2}
        \left\{
                \left\{\frac{1}{T}\sum_{t=1}^T \e^{\tau'\psi_t(\theta)} \left[\tau' \frac{\partial \psi_t(\theta)}{\partial \theta_\ell}\right] \left[\tau' \frac{\partial \psi_t(\theta)}{\partial\theta_j}\right]+\e^{\tau'\psi_t(\theta)}\left[\tau'\frac{\partial^2 \psi_t(\theta)}{\partial \theta_j \partial\theta_\ell}\right] \right\} \right.  \nonumber \\
& &\times\left\{ \frac{1}{T}\sum_{t=1}^T \e^{\tau'\psi_t(\theta)}\right\} - \left\{\frac{1}{T}\sum_{t=1}^T \e^{\tau'\psi_t(\theta)}\left[\tau'\frac{\partial \psi_t(\theta)}{\partial\theta_j } \right]\right\}\nonumber  \\
& & \left. \times \left\{\frac{1}{T}\sum_{t=1}^T \e^{\tau'\psi_t(\theta)}\left[\tau'\frac{\partial \psi_t(\theta)}{\partial \theta_\ell } \right]\right\}\right\}\nonumber  \\
 & = & \frac{(1 - \frac{m}{2T} ) }{\left[ \frac{1}{T}\sum_{t=1}^T \e^{\tau'\psi_t(\theta)}\right]}\left\{\frac{1}{T}\sum_{t=1}^T \e^{\tau'\psi_t(\theta)} \left[\tau' \frac{\partial \psi_t(\theta)}{\partial \theta_\ell}\right] \left[\tau' \frac{\partial \psi_t(\theta)}{\partial\theta_j}\right] +\e^{\tau'\psi_t(\theta)}\left[\tau'\frac{\partial^2 \psi_t(\theta)}{\partial \theta_j \partial \theta_\ell}\right] \right\} \nonumber   \\
& & -\frac{(1 - \frac{m}{2T} ) }{\left[ \frac{1}{T}\sum_{t=1}^T \e^{\tau'\psi_t(\theta)}\right]^2} \left\{\frac{1}{T}\sum_{t=1}^T \e^{\tau'\psi_t(\theta)}\tau'\frac{\partial \psi_t(\theta)}{\partial\theta_j } \right\}  \times \left\{\frac{1}{T}\sum_{t=1}^T \e^{\tau'\psi_t(\theta)}\tau'\frac{\partial \psi_t(\theta)}{\partial \theta_\ell } \right\}.\label{Eq:LogETTerm2ndDerivative}
\end{eqnarray}

\textit{Second derivative $\frac{\partial^2 M_{1,T}(\theta, \tau)}{ \partial \tau_{k} \partial \theta_{j}}$.} Under Assumption \ref{Assp:ExistenceConsistency}(a)-(b), by equation \eqref{Eq:LogETTerm1stDerivative} on p. \pageref{Eq:LogETTerm1stDerivative}, $\P$-a.s., for all $(\theta, \tau)\in \Sbf$, for all $(k, j)\in \ldsb 1,m\rdsb^2$,
\begin{eqnarray}
& &
\frac{\partial^2 M_{1,T}(\theta, \tau)}{ \partial \tau_{k} \partial \theta_{j}}\nonumber \\
& = &
\left(1 - \frac{m}{2T}  \right) \frac{1}{ \left[ \frac{1}{T} \sum_{i=1}^T \e^{\tau'\psi_i(\theta)}  \right]^2 }\nonumber
\\
& & \times
\left\{
  \left[\frac{1}{T} \sum_{i=1}^T \e^{\tau'\psi_i(\theta)} \right]\frac{1}{T} \sum_{t=1}^T \left\{\e^{\tau'\psi_t(\theta)} \tau' \frac{\partial \psi_t(\theta)}{\partial \theta_{j}}
 \psi_{t,k} (\theta) \nonumber
+
\e^{\tau'\psi_t(\theta)}  \frac{\partial \psi_{t,k} (\theta)}{\partial \theta_{j}} \right\}
\right.\nonumber
\\
& &  \left.
\hspace{.5in}
 - \left[\frac{1}{T} \sum_{t=1}^T \e^{\tau'\psi_t(\theta)} \tau' \frac{\partial \psi_t(\theta)}{\partial \theta_{j}}\right]\left[ \frac{1}{T} \sum_{i=1}^T \e^{\tau'\psi_i(\theta)}   \psi_{i,k}(\theta) \right]
\right\}.\label{Eq:LogETTerm2ndDerivativeThetaTau}
\end{eqnarray}

\textit{First derivative $\frac{\partial M_{1,T}(\theta, \tau)}{\partial \tau_{k}} $.} By definition of $M_{1,T}(\theta, \tau)$ in Lemma \ref{Lem:LogESPDecomposition} (p. \pageref{Lem:LogESPDecomposition}), for all $(\theta, \tau)\in \Sbf$, for all $k \in \ldsb 1,m\rdsb$,\begin{eqnarray}
\frac{\partial M_{1,T}(\theta, \tau)}{\partial \tau_{k}}
& = &
\left(1 - \frac{m}{2T}  \right) \frac{ \frac{1}{T} \sum_{t=1}^T \e^{\tau'\psi_t(\theta)}  \psi_{t, k} (\theta) }
{\frac{1}{T} \sum_{i=1}^T \e^{\tau'\psi_i(\theta)} }.\label{Eq:LogETTermDTau}
\end{eqnarray}

\textit{Second derivative $ \frac{\partial^2 M_{1,T}(\theta, \tau)}{ \partial \tau_{h}\partial \tau_{k} } $.} By the above equation \eqref{Eq:LogETTermDTau}, for all $(\theta, \tau)\in \Sbf$, for all $(h,k) \in \ldsb 1,m\rdsb^2$,
\begin{eqnarray}
& &
\frac{\partial^2 M_{1,T}(\theta, \tau)}{  \partial \tau_{h} \partial \tau_{k}}\nonumber \\
& = &
\left(1 - \frac{m}{2T}  \right) \frac{1}{ \left[ \frac{1}{T} \sum_{i=1}^T \e^{\tau'\psi_i(\theta)}  \right]^2 }\times
\left\{
 \left[ \frac{1}{T} \sum_{i=1}^T \e^{\tau'\psi_i(\theta)}\right] \left[ \frac{1}{T} \sum_{t=1}^T \e^{\tau'\psi_t(\theta)}
 \psi_{t,h} (\theta)
 \psi_{t,k} (\theta) \right]\right.\nonumber
\\
& & \left.
 - \left[\frac{1}{T} \sum_{t=1}^T \e^{\tau'\psi_t(\theta)} \psi_{t,h} (\theta)\right]\left[ \frac{1}{T} \sum_{i=1}^T \e^{\tau'\psi_i(\theta)}   \psi_{i,k}(\theta)\right]
\right\}.\label{Eq:LogETTermDTauDTau}
\end{eqnarray}

\subsubsection{Derivatives of $M_{2,T}(\theta, \tau):=\frac{1}{2T} \ln \left[   \left|  \frac{1}{T} \sum_{t=1}^T                \e^{\tau'\psi_t(\theta)}
      \frac{\partial \psi_t(\theta)}{\partial \theta' }  \right|_{\det}^2          \right]$ }

\textit{First derivative $\frac{\partial M_{2,T}(\theta ,\tau)}{\partial \theta_j}$.}
If $F(.)$ is a differentiable matrix function s.t. $\vert F(x) \vert_{\det}\neq 0$, then
$D \ln[\vert F(x)\vert_{\det}^2] =2\tr[ F(x)^{-1}DF(x)]$ (Lemma \ref{Lem:DeterminantDifferential}ii on p. \pageref{Lem:DeterminantDifferential}) where $DF(x)$ denotes the derivative of $F(.)$ at $x$.
Now, under Assumption \ref{Assp:ExistenceConsistency}, by Lemma \ref{Lem:LogESPExistenceInS}iii (p. \pageref{Lem:LogESPExistenceInS}), $\P$-a.s. for $T$ big enough, for all $(\theta, \tau)$ in a neighborhood of $(\theta_0, \tau_0)$,  $\frac{1}{T} \sum_{t=1}^T                \e^{\tau'\psi_t(\theta)}
      \frac{\partial \psi_t(\theta)}{\partial \theta' }$ is invertible. In addition, under Assumption \ref{Assp:ExistenceConsistency}(a), by Assumption \ref{Assp:AsymptoticNormality}(a),  $\theta \mapsto \psi(X_1, \theta)$ is twice differentiable in a neighborhood of $\theta_0$ $\P$-a.s., so that, under Assumption  \ref{Assp:ExistenceConsistency}(a)-(e) and (g)-(h), by Lemma \ref{Lem:LogESPExistenceInS}ii (p. \pageref{Lem:LogESPExistenceInS}),   $(\theta, \tau) \mapsto\frac{1}{T} \sum_{t=1}^T                \e^{\tau'\psi_t(\theta)}
      \frac{\partial \psi_t(\theta)}{\partial \theta' } $ is also differentiable  in a neighborhood of $(\theta_0, \tau_0)$ $\P$-a.s.  Thus, under Assumptions \ref{Assp:ExistenceConsistency}  and  \ref{Assp:AsymptoticNormality}(a), $\P$-a.s. for $T$ big enough,  for all $(\theta, \tau)$ in a neighborhood of $(\theta_0, \tau_0)$, for all $j \in \ldsb 1,m\rdsb$,
\begin{eqnarray}
& & \frac{\partial M_{2,T}(\theta ,\tau)}{\partial \theta_j}
 =   \frac{1}{T} {\rm tr}\left\{ \left[   \frac{1}{T} \sum_{t=1}^T \e^{\tau'\psi_t(\theta)}
\frac{\partial \psi_t(\theta)}{\partial \theta' }    \right]^{-1}  \right. \nonumber
\\
& &   \negthickspace\times\negthickspace   \left[  \frac{1}{T} \sum_{t=1}^T \e^{\tau'\psi_t(\theta)}
\frac{\partial^2 \psi_t(\theta)}{\partial \theta_j \partial\theta'}    
\left.   \negthickspace+  \frac{1}{T} \sum_{t=1}^T \e^{\tau'\psi_t(\theta)}\negthickspace  \left(
\tau' \frac{\partial \psi_t(\theta)}{\partial \theta_j } \right)
\frac{\partial \psi_t(\theta)}{\partial \theta' }   \right]   \right\} \label{Eq:DerivativeTerm1stDerivative}
\end{eqnarray}

\textit{Second derivative $\frac{\partial^2 M_{2,T}(\theta, \tau)}{\partial \theta_\ell \partial \theta_j}$.} The trace of a derivative is the derivative of the trace because both the trace and derivative operators are linear \cite[e.g.,][chap. 9 sec. 9]{1988MagnusNeudecker}. Moreover, if $F(.)$ is a differentiable matrix function s.t., for all $x$ in a neighborhood of $\dot{x}$, $\vert F(x) \vert_{\det}\neq 0$, then
$D \left[F(\dot{x})^{-1}\right] =-F(\dot{x})^{-1} [DF(\dot{x})]F(\dot{x})^{-1}$ \citep[e.g.,][ chap. 8 sec. 4]{1988MagnusNeudecker}. Now, as explained for the first derivative, under Assumption \ref{Assp:ExistenceConsistency}, by Lemma \ref{Lem:LogESPExistenceInS}iii (p. \pageref{Lem:LogESPExistenceInS}), $\P$-a.s. for $T$ big enough, for all $(\theta, \tau)$ in a neighborhood of $(\theta_0, \tau_0)$,   $\frac{1}{T} \sum_{t=1}^T                \e^{\tau'\psi_t(\theta)}
      \frac{\partial \psi_t(\theta)}{\partial \theta' }$ is invertible.  In addition, by Assumption \ref{Assp:AsymptoticNormality}(a), $\P$-a.s. $\theta \mapsto \psi(X_1, \theta)$ is three times continuously differentiable in a neighborhood of $\theta_0$, so that, under Assumption  \ref{Assp:ExistenceConsistency} and  \ref{Assp:AsymptoticNormality}(a),   $\theta\mapsto \frac{\partial M_{2,T}(\theta ,\tau)}{\partial \theta_j} $ is differentiable in a neighborhood of $(\theta_0, \tau_0)$.  Thus, under Assumptions \ref{Assp:ExistenceConsistency} and   \ref{Assp:AsymptoticNormality}(a), by the above equation \eqref{Eq:DerivativeTerm1stDerivative}, $\P$-a.s. for $T$ big enough, for all $(\theta, \tau)$ in a neighborhood of $(\theta_0, \tau_0)$, for all $(\ell, j)\in \ldsb 1,m \rdsb^2 $,
\begin{eqnarray}
& & \frac{\partial^2 M_{2,T}(\theta, \tau)}{\partial \theta_\ell \partial \theta_j}  =   \frac{1}{T} {\rm tr}\left\{ -\left[   \frac{1}{T} \sum_{t=1}^T \e^{\tau'\psi_t(\theta)}
\frac{\partial \psi_t(\theta)}{\partial \theta' }    \right]^{-1}  \right. \nonumber\\
& & \hspace{.1in} \left\{   \frac{1}{T} \sum_{t=1}^T \e^{\tau'\psi_t(\theta)}
\frac{\partial^2 \psi_t(\theta)'}{\partial \theta_{\ell} \partial \theta'}
%
+  \frac{1}{T } \sum_{t=1}^T \e^{\tau'\psi_t(\theta)}   \left(  \tau' \frac{\partial \psi_t(\theta)}{\partial \theta_\ell } \right)
\frac{\partial \psi_t(\theta)}{\partial \theta' }    \right\}\nonumber \\
& & \hspace{.1in} \left[   \frac{1}{T} \sum_{t=1}^T \e^{\tau'\psi_t(\theta)}
\frac{\partial \psi_t(\theta)}{\partial \theta' }    \right]^{-1}   \nonumber\\
& & \hspace{.2in}   \times   \left[  \frac{1}{T} \sum_{t=1}^T \e^{\tau'\psi_t(\theta)}
\frac{\partial^2 \psi_t(\theta)}{\partial \theta_j \partial \theta'}
+  \frac{1}{T} \sum_{t=1}^T \e^{\tau'\psi_t(\theta)}  \left(
\tau' \frac{\partial \psi_t(\theta)}{\partial \theta_j } \right)
\frac{\partial \psi_t(\theta)}{\partial \theta' }   \right]  \nonumber\\
 & & \hspace{.02in} + \left[   \frac{1}{T} \sum_{t=1}^T \e^{\tau'\psi_t(\theta)}
 \frac{\partial \psi_t(\theta)}{\partial \theta' }    \right]^{-1} \nonumber  \\
 & & \hspace{.2in}   \times   \left[  \frac{1}{T} \sum_{t=1}^T \e^{\tau'\psi_t(\theta)}
 \frac{\partial^3 \psi_t(\theta)}{\partial \theta_\ell \partial \theta_j \partial \theta'}
\right.
 +  \frac{1}{T} \sum_{t=1}^T \e^{\tau'\psi_t(\theta)}   \left(
\tau' \frac{\partial \psi_t(\theta)}{\partial \theta_\ell } \right)
 \frac{\partial^2 \psi_t(\theta)}{\partial \theta_j \partial \theta'}    \nonumber  \\
& & \hspace{.5in}  +  \frac{1}{T} \sum_{t=1}^T \e^{\tau'\psi_t(\theta)}  \left(
\tau' \frac{\partial \psi_t(\theta)}{\partial \theta_j } \right)
\frac{\partial^2 \psi_t(\theta)}{\partial \theta_{\ell} \partial \theta'} \nonumber \\
& & \hspace{.5in} +  \frac{1}{T} \sum_{t=1}^T \e^{\tau'\psi_t(\theta)}
\left(
\tau' \frac{\partial^2 \psi_t(\theta)}{\partial \theta_{\ell} \partial \theta_j}
\right)
\frac{\partial \psi_t(\theta)}{\partial \theta' }  \nonumber \\
& & \hspace{.5in} \left. \left. +  \frac{1}{T} \sum_{t=1}^T \e^{\tau'\psi_t(\theta)}\negthickspace
 \left(
\tau' \frac{\partial \psi_t(\theta)}{\partial \theta_\ell } \right)
\negthickspace \left(  \tau' \frac{\partial \psi_t(\theta)}{\partial \theta_j } \right)
\negthickspace\frac{\partial \psi_t(\theta)}{\partial \theta' }   \right]   \right\}\label{Eq:DerivativeTerm2ndDerivative}
\end{eqnarray}

\textit{Second derivative $ \frac{\partial^{2} M_{2,T}(\theta, \tau)}{ \partial \tau_{k} \partial \theta_{j}}  $.} By a reasoning similar to the one for the derivative $\frac{\partial^2 M_{2,T}(\theta, \tau)}{\partial \theta_\ell \partial \theta_j}$, under Assumptions \ref{Assp:ExistenceConsistency} and   \ref{Assp:AsymptoticNormality}(a), by the above equation \eqref{Eq:DerivativeTerm1stDerivative}, $\P$-a.s. for $T$ big enough, for all $(\theta, \tau)$ in a neighborhood of $(\theta_0, \tau_0)$, for all $(k,j)\in \ldsb 1,m \rdsb^2$,

\begin{eqnarray}
& & \frac{\partial^{2} M_{2,T}(\theta, \tau)}{ \partial \tau_{k} \partial \theta_{j}}
\\
& &  = \frac{1}{T}
{\rm tr}\left\{ - \left[ \frac{1}{T} \sum_{t=1}^T                \e^{\tau'\psi_t(\theta)}
      \frac{\partial \psi_t(\theta)}{\partial \theta' }  \right]^{-1}
      \left[ \frac{1}{T} \sum_{t=1}^T                \e^{\tau'\psi_t(\theta)}
      \psi_{t,k}(\theta)
      \frac{\partial \psi_t(\theta)}{\partial \theta' }  \right]
      \left[ \frac{1}{T} \sum_{t=1}^T                \e^{\tau'\psi_t(\theta)}
      \frac{\partial \psi_t(\theta)}{\partial \theta' }  \right]^{-1}
      \right.
       \nonumber \\
& & \hspace{.5in}
      \left.
      \times
      \left[
      \frac{1}{T} \sum_{t=1}^T                \e^{\tau'\psi_t(\theta)} \left(
      \frac{\partial \psi_t(\theta)}{\partial \theta' }
\tau' \frac{\partial \psi_t(\theta)}{\partial \theta_{j} } +
\frac{\partial^2 \psi_t(\theta)}{ \partial \theta_{j} \partial \theta' }
      \right)
      \right]
\right\} \nonumber
\\
& &  + \frac{1}{T}
{\rm tr}\left\{  \left[ \frac{1}{T} \sum_{t=1}^T                \e^{\tau'\psi_t(\theta)}
      \frac{\partial \psi_t(\theta)}{\partial \theta' }  \right]^{-1}
      \left[
      \frac{1}{T} \sum_{t=1}^T                \e^{\tau'\psi_t(\theta)} \psi_{t,k}(\theta) \left(
      \frac{\partial \psi_t(\theta)}{\partial \theta' }
\tau' \frac{\partial \psi_t(\theta)}{\partial \theta_{j} } +
\frac{\partial^2 \psi_t(\theta)}{ \partial \theta_{j} \partial \theta' }
      \right)
      \right]
\right\}\nonumber
\\
& &  + \frac{1}{T}
{\rm tr}\left\{  \left[ \frac{1}{T} \sum_{t=1}^T                \e^{\tau'\psi_t(\theta)}
      \frac{\partial \psi_t(\theta)}{\partial \theta' }  \right]^{-1}
      \left[
      \frac{1}{T} \sum_{t=1}^T                \e^{\tau'\psi_t(\theta)} \left(
      \frac{\partial \psi_t(\theta)}{\partial \theta' }
 \frac{\partial \psi_{t,k}(\theta)}{\partial \theta_{j} }
      \right)
      \right]
\right\}. \label{Eq:DerivativeTerm2ndDerivativeTauTheta}
\end{eqnarray}

\textit{First derivative $ \frac{\partial M_{2,T}(\theta, \tau)}{\partial \tau_{k}} $.} If $F(.)$ is a differentiable matrix function s.t. $\vert F(x) \vert_{\det}\neq 0$, then
$D \ln[\vert F(x)\vert_{\det}^2] =2\tr[ F(x)^{-1}DF(x)]$ (Lemma \ref{Lem:DeterminantDifferential}ii on p. \pageref{Lem:DeterminantDifferential}) where $DF(x)$ denotes the derivative of $F(.)$ at $x$.
Now, under Assumption \ref{Assp:ExistenceConsistency}, by Lemma \ref{Lem:LogESPExistenceInS}iii (p. \pageref{Lem:LogESPExistenceInS}), $\P$-a.s. for $T$ big enough, for all $(\theta, \tau)$ in a neighborhood of $(\theta_0, \tau_0)$,  $\frac{1}{T} \sum_{t=1}^T                \e^{\tau'\psi_t(\theta)}
      \frac{\partial \psi_t(\theta)}{\partial \theta' }$ is invertible.  Thus, under Assumption \ref{Assp:ExistenceConsistency}, by definition of $M_{2,T}(\theta, \tau)$ in Lemma \ref{Lem:LogESPDecomposition} (p. \pageref{Lem:LogESPDecomposition}),   $\P$-a.s. for $T$ big enough,  for all $(\theta, \tau)$ in a neighborhood of $(\theta_0, \tau_0)$, for all $k \in \ldsb 1,m\rdsb$,
\begin{eqnarray}
\frac{\partial M_{2,T}(\theta, \tau)}{\partial \tau_{k}} & = & \frac{1}{T}
{\rm tr}\left\{  \left[ \frac{1}{T} \sum_{t=1}^T                \e^{\tau'\psi_t(\theta)}
      \frac{\partial \psi_t(\theta)}{\partial \theta' }  \right]^{-1}
      \left[
      \frac{1}{T} \sum_{t=1}^T                \e^{\tau'\psi_t(\theta)}
    \psi_{t,k}(\theta)  \frac{\partial \psi_t(\theta)}{\partial \theta' }
      \right]
\right\} . \label{Eq:DerivativeTermDTau}
\end{eqnarray}

\textit{Second derivative $ \frac{\partial^{2} M_{2,T}(\theta, \tau)}{\partial \tau_{h}\partial \tau_{k}} $.} The trace of a derivative is the derivative of the trace because both the trace and derivative operators are linear \cite[e.g.,][chap. 9 sec. 9]{1988MagnusNeudecker}. Moreover, if $F(.)$ is a differentiable matrix function s.t., for all $x$ in a neighborhood of $\dot{x}$, $\vert F(x) \vert_{\det}\neq 0$, then
$D \left[F(\dot{x})^{-1}\right] =-F(\dot{x})^{-1} [DF(\dot{x})]F(\dot{x})^{-1}$ \citep[e.g.,][ chap. 8 sec. 4]{1988MagnusNeudecker}. Now, as explained for the first derivative, under Assumption \ref{Assp:ExistenceConsistency}, by Lemma \ref{Lem:LogESPExistenceInS}iii (p. \pageref{Lem:LogESPExistenceInS}), $\P$-a.s. for $T$ big enough, for all $(\theta, \tau)$ in a neighborhood of $(\theta_0, \tau_0)$,   $\frac{1}{T} \sum_{t=1}^T                \e^{\tau'\psi_t(\theta)}
      \frac{\partial \psi_t(\theta)}{\partial \theta' }$ is invertible.   Thus, under Assumption \ref{Assp:ExistenceConsistency}, by the above equation \eqref{Eq:DerivativeTermDTau}, $\P$-a.s. for $T$ big enough, for all $(\theta, \tau)$ in a neighborhood of $(\theta_0, \tau_0)$, for all $( h, k)\in \ldsb 1,m \rdsb^2 $,\begin{eqnarray}
& & \frac{\partial^2 M_{2,T}(\theta, \tau)}{ \partial \tau_{h} \partial \tau_k }\nonumber \\
& &  =- \frac{1}{T}
\tr\left\{  \left[ \frac{1}{T} \sum_{t=1}^T                \e^{\tau'\psi_t(\theta)}
      \frac{\partial \psi_t(\theta)}{\partial \theta' }  \right]^{-1}
 \left[ \frac{1}{T} \sum_{t=1}^T                \e^{\tau'\psi_t(\theta)}
 \psi_{t,h}(\theta)
      \frac{\partial \psi_t(\theta)}{\partial \theta' }  \right]^{}
 \left[ \frac{1}{T} \sum_{t=1}^T                \e^{\tau'\psi_t(\theta)}
      \frac{\partial \psi_t(\theta)}{\partial \theta' }  \right]^{-1}
     \right.\nonumber
     \\
     &  &
     \hspace{.5in} \times
     \left.
      \left[
      \frac{1}{T} \sum_{t=1}^T                \e^{\tau'\psi_t(\theta)}
    \psi_{t,k}(\theta)  \frac{\partial \psi_t(\theta)}{\partial \theta' }
      \right]
\right\} \nonumber\\
& &  + \frac{1}{T}
\tr\left\{  \left[ \frac{1}{T} \sum_{t=1}^T                \e^{\tau'\psi_t(\theta)}
      \frac{\partial \psi_t(\theta)}{\partial \theta' }  \right]^{-1}
      \left[
      \frac{1}{T} \sum_{t=1}^T                \e^{\tau'\psi_t(\theta)}
      \psi_{t,k}(\theta)
    \psi_{t,h}(\theta)  \frac{\partial \psi_t(\theta)}{\partial \theta' }
      \right]
\right\}.\label{Eq:DerivativeTermDTauDTau}
\end{eqnarray}

\subsubsection{Derivatives of $
M_{3,T}(\theta, \tau)=-\frac{1}{2T } \ln \left[ \left|  \frac{1}{T} \sum_{t=1}^T                  \e^{\tau'\psi_t(\theta)}
    \psi_t(\theta)  \psi_t(\theta)  ' \right|_{\det}   \right]
$}

{\it First derivative

\noindent
 $\frac{\partial M_{3,T}(\theta, \tau)}{\partial \theta_j}$.}
If $F(.)$ is a differentiable matrix function s.t. $\vert F(x) \vert_{\det}> 0$, then
$D \ln[\vert F(x)\vert_{\det}] =\tr[ F(x)^{-1}DF(x)]$ \citep[e.g.,][ chap. 8 sec. 3]{1988MagnusNeudecker}.
Now, under Assumption \ref{Assp:ExistenceConsistency}(a)-(b),   (e) and (g)(h), by Lemma \ref{Lem:LogESPExistenceInS}iv (p. \pageref{Lem:LogESPExistenceInS}), $\P$-a.s. for $T$ big enough, for all $(\theta, \tau) \in \Sbf$,  $|  \frac{1}{T} \sum_{t=1}^T                  \e^{\tau'\psi_t(\theta)}
    \psi_t(\theta)  \psi_t(\theta)  ' |_{\det}>0$. In addition,  by  Assumption \ref{Assp:ExistenceConsistency}(b), $\P$-a.s.  $\theta \mapsto \psi(X_1, \theta)$ is continuously differentiable in $\T$, so that,  $\P$-a.s. for $T$ big enough,  $\theta\mapsto M_{3,T}(\theta, \tau) $ is differentiable in  $\T $, for all $(\theta, \tau) \in \Sbf$. Thus, under Assumption \ref{Assp:ExistenceConsistency}(a)-(b) and (e)(g)(h),   $\P$-a.s. for $T$ big enough,  for all $(\theta, \tau) \in \Sbf$, for all $j \in \ldsb 1,m \rdsb$,
\begin{eqnarray}
\frac{\partial M_{3,T}(\theta, \tau)}{\partial \theta_j} & = &
-\frac{1}{2T} \tr  \left\{
 \left[  \frac{1}{T} \sum_{t=1}^T \e^{\tau'\psi_t(\theta)}  \psi_t(\theta)  \psi_t(\theta)' \right]^{-1}  \right.\nonumber \\
 & & \hspace{.2in} \times
\left[  \frac{1}{T} \sum_{t=1}^T \e^{\tau'\psi_t(\theta)}
 \left\{ \frac{\partial \psi_t(\theta)}{\partial \theta_j }  \psi_t(\theta)'
 +  \psi_t(\theta)   \frac{\partial \psi_t(\theta)'}{\partial \theta_j }     \right\}  \right. \nonumber  \\
& &  \hspace{.25 in} \left . \left. +   \frac{1}{T} \sum_{t=1}^T \e^{\tau'\psi_t(\theta)}\negthickspace
\left(  \tau' \frac{\partial \psi_t(\theta)}{\partial \theta_j } \right) \negthickspace\psi_t(\theta)  \psi_t(\theta)'  \right] \right\} \label{Eq:VarianceTerm1stDerivative}
\end{eqnarray}

\textit{Second derivative $\frac{\partial^2 M_{3,T}(\theta, \tau)}{\partial \theta_\ell \partial \theta_j}$.} The trace of a derivative is the derivative of the trace because both the trace and differentiation operators are linear \cite[e.g.,][chap. 9 sec. 9]{1988MagnusNeudecker}. Moreover, if $F(.)$ is a differentiable matrix function s.t., for all $x$ in a neighborhood of $\dot{x}$, $\vert F(x) \vert_{\det}\neq 0$, then
$D \left[F(\dot{x})^{-1}\right] =-F(\dot{x})^{-1} [DF(\dot{x})]F(\dot{x})^{-1}$ \citep[e.g.,][ chap. 8 sec. 4]{1988MagnusNeudecker}. Now,  under Assumption \ref{Assp:ExistenceConsistency}(a)-(b), (e) and (g)-(h), by Lemma \ref{Lem:LogESPExistenceInS}iv (p. \pageref{Lem:LogESPExistenceInS}), $\P$-a.s. for $T$ big enough, for all $(\theta, \tau) \in \Sbf$,   $\frac{1}{T} \sum_{t=1}^T                \e^{\tau'\psi_t(\theta)}
     \psi_t(\theta)  \psi_t(\theta)' $ is invertible. In addition, by Assumption \ref{Assp:AsymptoticNormality}(a), $\P$-a.s. $\theta \mapsto \psi(X_1, \theta)$ is three times continuously differentiable in a neighborhood of $\theta_0$, so that, under Assumption \ref{Assp:ExistenceConsistency}(a)-(e) and (g)(h), by Lemma \ref{Lem:LogESPExistenceInS}ii (p. \pageref{Lem:LogESPExistenceInS}), $\theta \mapsto \frac{\partial M_{3,T}(\theta, \tau)}{\partial \theta_j} $ is differentiable in a neighborhood of $(\theta_0, \tau_0)$.  Thus, under Assumptions \ref{Assp:ExistenceConsistency}(a)(b), (e) and (g)(h), and   \ref{Assp:AsymptoticNormality}(a), by the above equation \eqref{Eq:VarianceTerm1stDerivative},   $\P$-a.s. for $T$ big enough, for all $(\theta, \tau)$ in a neighborhood of $(\theta_0, \tau_0)$, for all $(\ell, j)\in \ldsb 1,m\rdsb^2$,
\begin{eqnarray}
& & \frac{\partial^2 M_{3,T}(\theta, \tau)}{\partial \theta_\ell \partial \theta_j}  =   -\frac{1}{2T} \tr  \left\{
- \left[  \frac{1}{T} \sum_{t=1}^T \e^{\tau'\psi_t(\theta)}\psi_t(\theta)  \psi_t(\theta)' \right]^{-1}  \right. \nonumber\\
& & \times\negthickspace
 \left[  \frac{1}{T} \sum_{t=1}^T \e^{\tau'\psi_t(\theta)} \negthickspace \left( \frac{\partial \psi_t(\theta)}{\partial \theta_\ell }  \psi_t(\theta)'
  \negthickspace+   \negthickspace\psi_t(\theta)   \frac{\partial \psi_t(\theta)'}{\partial \theta_\ell }  \right)\nonumber 
\right. \\
& &  \hspace{.5 in} \left.
 + \frac{1}{T} \sum_{t=1}^T \e^{\tau'\psi_t(\theta)} \negthickspace\left(  \tau' \frac{\partial \psi_t(\theta)}{\partial \theta_\ell } \right) \psi_t(\theta)  \psi_t(\theta)'
 \right] \nonumber\\
& & \hspace{.1in}
\times  \left[  \frac{1}{T} \sum_{t=1}^T \e^{\tau'\psi_t(\theta)}\psi_t(\theta)  \psi_t(\theta)' \right]^{-1} \nonumber \\
 & & \hspace{.2in} \times
\left[  \frac{1}{T} \sum_{t=1}^T \e^{\tau'\psi_t(\theta)} \left( \frac{\partial \psi_t(\theta)}{\partial \theta_j }  \psi_t(\theta)'
 +  \psi_t(\theta)   \frac{\partial \psi_t(\theta)'}{\partial \theta_j }     \right)     \right. \nonumber\\
& &  \hspace{.5 in} \left.
+   \frac{1}{T} \sum_{t=1}^T \e^{\tau'\psi_t(\theta)}
\left(
\tau' \frac{\partial \psi_t(\theta)}{\partial \theta_j } \right) \psi_t(\theta)  \psi_t(\theta)'  \right]\nonumber \\
%
%
& &
+  \left[  \frac{1}{T} \sum_{t=1}^T \e^{\tau'\psi_t(\theta)} \psi_t(\theta)  \psi_t(\theta)' \right]^{-1}\nonumber  \\
 & & \hspace{.2in} \times
\left[  \frac{1}{T} \sum_{t=1}^T \e^{\tau'\psi_t(\theta)} \left(
  \frac{\partial^2 \psi_t(\theta)}{\partial \theta_{\ell} \partial \theta_j}  \psi_t(\theta)'
 + \frac{\partial \psi_t(\theta)}{\partial \theta_j }  \frac{\partial \psi_t(\theta)'}{\partial \theta_\ell}
    \right.
%
%
%
 +   \frac{\partial \psi_t(\theta)}{\partial \theta_\ell }  \frac{\partial \psi_t(\theta)'}{\partial \theta_j }
 +  \psi_t(\theta)  \frac{\partial^2 \psi_t(\theta)'}{\partial \theta_{\ell} \partial \theta_j}     \right)   \nonumber \\
 & & \hspace{.5in} + \frac{1}{T} \sum_{t=1}^T \e^{\tau'\psi_t(\theta)}
\left(
\tau' \frac{\partial \psi_t(\theta)}{\partial \theta_\ell } \right)
 \left(
  \frac{\partial \psi_t(\theta)}{\partial \theta_j }  \psi_t(\theta)'
  +
   \psi_t(\theta)  \frac{\partial \psi_t(\theta)'}{\partial \theta_j }
  \right)  \nonumber \\
%
%
& &  \hspace{.5 in}   +   \frac{1}{T} \sum_{t=1}^T \e^{\tau'\psi_t(\theta)}
\left(
\tau' \frac{\partial \psi_t(\theta)}{\partial \theta_j } \right)
\left( \frac{\partial \psi_t(\theta)}{\partial \theta_\ell }  \psi_t(\theta)' +
 \psi_t(\theta) \frac{\partial \psi_t(\theta)'}{\partial \theta_\ell }
\right)  \nonumber\\
& &  \hspace{.5 in}  +   \frac{1}{T} \sum_{t=1}^T \e^{\tau'\psi_t(\theta)}\negthickspace
\left(
 \tau' \frac{\partial^2 \psi_t(\theta)'}{\partial \theta_{\ell} \partial \theta_j}
\right)
\negthickspace\psi_t(\theta)  \psi_t(\theta)' \nonumber \\
& &  \hspace{.5 in}  +   \frac{1}{T} \sum_{t=1}^T \e^{\tau'\psi_t(\theta)}\negthickspace
\left(
\tau' \frac{\partial \psi_t(\theta)}{\partial \theta_\ell } \right)\negthickspace \negthickspace
\left. \left.
\left(  \tau' \frac{\partial \psi_t(\theta)}{\partial \theta_j } \right) \negthickspace\psi_t(\theta)  \psi_t(\theta)'  \right]
 \right\}\label{Eq:VarianceTerm2ndDerivative}
\end{eqnarray}

\textit{Second derivative $\frac{\partial^2 M_{3,T}(\theta, \tau)}{\partial \tau_k \partial \theta_j}$.} Follow a reasoning similar to the one for the derivative $\frac{\partial^2 M_{2,T}(\theta, \tau)}{\partial \theta_\ell \partial \theta_j}$.
 The trace of a derivative is the derivative of the trace because both the trace and differentiation operators are linear \cite[e.g.,][chap. 9 sec. 9]{1988MagnusNeudecker}. Moreover, if $F(.)$ is a differentiable matrix function s.t., for all $x$ in a neighborhood of $\dot{x}$, $\vert F(x) \vert_{\det}\neq 0$, then
$D \left[F(\dot{x})^{-1}\right] =-F(\dot{x})^{-1} [DF(\dot{x})]F(\dot{x})^{-1}$ \citep[e.g.,][ chap. 8 sec. 4]{1988MagnusNeudecker}. Now,  under Assumption \ref{Assp:ExistenceConsistency}(a)-(b), (e) and (g)-(h), by Lemma \ref{Lem:LogESPExistenceInS}iv (p. \pageref{Lem:LogESPExistenceInS}), $\P$-a.s. for $T$ big enough, for all $(\theta, \tau) \in \Sbf$,   $\frac{1}{T} \sum_{t=1}^T                \e^{\tau'\psi_t(\theta)}
     \psi_t(\theta)  \psi_t(\theta)' $ is invertible.  Thus, under Assumptions \ref{Assp:ExistenceConsistency} (a)-(b), (e), (g)(h), by the above equation \eqref{Eq:VarianceTerm1stDerivative},   $\P$-a.s. for $T$ big enough, for all $(\theta, \tau) \in\Sbf$, for all $(k, j)\in \ldsb 1,m\rdsb^2$
\begin{eqnarray}
& & \frac{\partial^{2} M_{3,T}(\theta, \tau)}{ \partial \tau_k \partial \theta_{j} } \nonumber
\\
& = & \frac{1}{2T }  \tr \left\{
\left[  \frac{1}{T} \sum_{t=1}^T                  \e^{\tau'\psi_t(\theta)}
    \psi_t(\theta)  \psi_t(\theta)' \right]^{-1}
    \left[  \frac{1}{T} \sum_{t=1}^T                  \e^{\tau'\psi_t(\theta)}
 \psi_{t,k}(\theta)
    \psi_t(\theta)  \psi_t(\theta)' \right]
    \left[  \frac{1}{T} \sum_{t=1}^T                  \e^{\tau'\psi_t(\theta)}
    \psi_t(\theta)  \psi_t(\theta)' \right]^{-1}
    \right.\nonumber
    \\
    & & \hspace{.5in}
    \left.
    \times \left[ \frac{1}{T} \sum_{t=1}^T                  \e^{\tau'\psi_t(\theta)}
   \left( \tau' \frac{\partial \psi_{t}(\theta)}{\partial \theta_{j}}  \psi_t(\theta)  \psi_t(\theta)' + \frac{\partial \psi_{t}(\theta)}{\partial \theta_{j}}   \psi_t(\theta)' + \psi_t(\theta)\frac{\partial \psi_{t}(\theta)'}{\partial \theta_{j}}    \right) \right]
\right\}\nonumber
\\
&  & -\frac{1}{2T }  \tr \left\{
\left[  \frac{1}{T} \sum_{t=1}^T                  \e^{\tau'\psi_t(\theta)}
    \psi_t(\theta)  \psi_t(\theta)' \right]^{-1}
    \right.\nonumber
    \\
    & & \hspace{.5in}
    \left.
    \times \left[ \frac{1}{T} \sum_{t=1}^T                  \e^{\tau'\psi_t(\theta)}
\psi_{t,k}(\theta)
   \left( \tau' \frac{\partial \psi_{t}(\theta)}{\partial \theta_{j}}  \psi_t(\theta)  \psi_t(\theta)' + \frac{\partial \psi_{t}(\theta)}{\partial \theta_{j}}   \psi_t(\theta)' + \psi_t(\theta)\frac{\partial \psi_{t}(\theta)'}{\partial \theta_{j}}    \right) \right]
\right\}\nonumber
\\
&  & -\frac{1}{2T }  \tr \left\{
\left[  \frac{1}{T} \sum_{t=1}^T                  \e^{\tau'\psi_t(\theta)}
    \psi_t(\theta)  \psi_t(\theta)' \right]^{-1}
    \left[ \frac{1}{T} \sum_{t=1}^T                  \e^{\tau'\psi_t(\theta)}
    \frac{\partial \psi_{t,k}(\theta)}{\partial \theta_{j}}  \psi_t(\theta)  \psi_t(\theta)'    \right]
\right\}.\label{Eq:VarianceTerm2ndDerivativeTau}
\end{eqnarray}

\textit{First derivative $\frac{\partial M_{3,T}(\theta, \tau)}{\partial \tau_{k}} $.} If $F(.)$ is a differentiable matrix function s.t. $\vert F(x) \vert_{\det}> 0$, then
$D \ln[\vert F(x)\vert_{\det}] =\tr[ F(x)^{-1}DF(x)]$ \citep[e.g.,][ chap. 8 sec. 3]{1988MagnusNeudecker}.
Now, under Assumption \ref{Assp:ExistenceConsistency}(a)-(b)(e)(g)(h), by Lemma \ref{Lem:LogESPExistenceInS}iv (p. \pageref{Lem:LogESPExistenceInS}), $\P$-a.s. for $T$ big enough, for all $(\theta, \tau) \in \Sbf$,  $|  \frac{1}{T} \sum_{t=1}^T                  \e^{\tau'\psi_t(\theta)}
    \psi_t(\theta)  \psi_t(\theta)  ' |_{\det}>0$. Thus, under Assumption \ref{Assp:ExistenceConsistency}(a)-(b)(e)(g)(h), by definition of $M_{3,T}(\theta, \tau)$ in Lemma \ref{Lem:LogESPDecomposition} (p. \pageref{Lem:LogESPDecomposition}),   $\P$-a.s. for $T$ big enough,  for all $(\theta, \tau) \in \Sbf$, for all $k\in \ldsb 1,m \rdsb$,
\begin{eqnarray}
& & \frac{\partial M_{3,T}(\theta, \tau)}{\partial \tau_{k} }  =  -\frac{1}{2T } \frac{1}{ \left|  \frac{1}{T} \sum_{i=1}^T                  \e^{\tau'\psi_i(\theta)}
    \psi_i(\theta)  \psi_i(\theta)  ' \right|_{\det} }\nonumber
\\
& & \times
\left|  \frac{1}{T} \sum_{t=1}^T                  \e^{\tau'\psi_t(\theta)}
    \psi_t(\theta)  \psi_t(\theta)  ' \right|_{\det} \nonumber
\\
& & \times \tr \left\{
\left[  \frac{1}{T} \sum_{t=1}^T                  \e^{\tau'\psi_t(\theta)}
    \psi_t(\theta)  \psi_t(\theta)' \right]^{-1}
     \left[ \frac{1}{T} \sum_{t=1}^T                  \e^{\tau'\psi_t(\theta)}
     \psi_{t, k} (\theta) \psi_t(\theta)  \psi_t(\theta)'     \right]
\right\} \nonumber
\\
& = & -  \frac{1}{2T} \tr \left\{
\left[  \frac{1}{T} \sum_{t=1}^T                  \e^{\tau'\psi_t(\theta)}
    \psi_t(\theta)  \psi_t(\theta)' \right]^{-1}
     \left[ \frac{1}{T} \sum_{t=1}^T                  \e^{\tau'\psi_t(\theta)}
     \psi_{t, k} (\theta) \psi_t(\theta)  \psi_t(\theta)'     \right]
\right\} \label{Eq:VarianceTermDTau}
\end{eqnarray}

\textit{Second derivative $ \frac{\partial^2 M_{3,T}(\theta, \tau)}{ \partial \tau_{h}\partial \tau_{k} } $.} The trace of a derivative is the derivative of the trace because both the trace and differentiation operators are linear \cite[e.g.,][chap. 9 sec. 9]{1988MagnusNeudecker}. Moreover, if $F(.)$ is a differentiable matrix function s.t., for all $x$ in a neighborhood of $\dot{x}$, $\vert F(x) \vert_{\det}\neq 0$, then
$D \left[F(\dot{x})^{-1}\right] =-F(\dot{x})^{-1} [DF(\dot{x})]F(\dot{x})^{-1}$ \citep[e.g.,][ chap. 8 sec. 4]{1988MagnusNeudecker}. Now,  under Assumption \ref{Assp:ExistenceConsistency}(a)-(b)(e)(g)-(h), by Lemma \ref{Lem:LogESPExistenceInS}iv (p. \pageref{Lem:LogESPExistenceInS}), $\P$-a.s. for $T$ big enough, for all $(\theta, \tau) \in \Sbf$,   $\frac{1}{T} \sum_{t=1}^T                \e^{\tau'\psi_t(\theta)}
     \psi_t(\theta)  \psi_t(\theta)' $ is invertible.  Thus, under Assumptions \ref{Assp:ExistenceConsistency}(a)-(b)(e)(g)(h), by the above equation \eqref{Eq:VarianceTermDTau},   $\P$-a.s. for $T$ big enough, for all $(\theta, \tau)$ in a neighborhood of $(\theta_0, \tau_0)$, for all $(h, k)\in \ldsb 1,m\rdsb^2$,
 \begin{eqnarray}
& & \frac{\partial^{2}M_{3,T}(\theta, \tau)}{\partial \tau_{h}\partial \tau_{k}   } \nonumber
\\
& = &  \frac{1}{2T } \tr \left\{
\left[  \frac{1}{T} \sum_{t=1}^T                  \e^{\tau'\psi_t(\theta)}
    \psi_t(\theta)  \psi_t(\theta)' \right]^{-1}
  \left[ \frac{1}{T} \sum_{t=1}^T                  \e^{\tau'\psi_t(\theta)}
     \psi_{t, k} (\theta) \psi_t(\theta)  \psi_t(\theta)'     \right]\nonumber
    \right.
    \\
    & & \hspace{.5in} \left.
    \times \left[  \frac{1}{T} \sum_{t=1}^T                  \e^{\tau'\psi_t(\theta)}
    \psi_t(\theta)  \psi_t(\theta)' \right]^{-1}
     \left[ \frac{1}{T} \sum_{t=1}^T                  \e^{\tau'\psi_t(\theta)}
     \psi_{t, h} (\theta) \psi_t(\theta)  \psi_t(\theta)'     \right]
\right\}\nonumber
\\
&  &   -\frac{1}{2T } \tr \left\{
\left[  \frac{1}{T} \sum_{t=1}^T                  \e^{\tau'\psi_t(\theta)}
    \psi_t(\theta)  \psi_t(\theta)' \right]^{-1}
     \left[ \frac{1}{T} \sum_{t=1}^T                  \e^{\tau'\psi_t(\theta)}
    \psi_{t,k} (\theta)  \psi_{t, h} (\theta) \psi_t(\theta)  \psi_t(\theta)'     \right]
\right\}.\label{Eq:VarianceTermDTauDtau}
\end{eqnarray}

\subsubsection{Derivatives of $\theta \mapsto L_T(\theta, \tau)$ } \textit{First derivative.}
Under Assumption    \ref{Assp:ExistenceConsistency}(a)-(e) and  (g)-(h) and  \ref{Assp:AsymptoticNormality}(a), by Lemma \ref{Lem:LogESPExistenceInS}ii (p. \pageref{Lem:LogESPExistenceInS}), $\Sbf$ contains an open neighborhood of $(\theta_0, \tau_0)$, so that the derivatives derived in $\Sbf$ also hold in a neighborhood of $(\theta_0, \tau_0)$. Thus,   by equations \eqref{Eq:LogETTerm1stDerivative}, \eqref{Eq:DerivativeTerm1stDerivative} and \eqref{Eq:VarianceTerm1stDerivative}
on pp. \pageref{Eq:LogETTerm1stDerivative}-\pageref{Eq:VarianceTerm1stDerivative}. Therefore,  under Assumptions \ref{Assp:ExistenceConsistency} and  \ref{Assp:AsymptoticNormality}(a),  $\P$-a.s. for $T$ big enough, for all $(\theta, \tau)$ in a neighborhood of $(\theta_0, \tau_0)$,

\begin{eqnarray*}
& & \frac{\partial  L_{T}(\theta, \tau ) }{\partial \theta_j} \nonumber\\
& = &
\left( 1 - \frac{m}{2T} \right)
\frac{ \frac{1}{T} \sum_{t=1}^T \e^{\tau'\psi_t(\theta)}\tau' \frac{\partial \psi_t(\theta)}{\partial \theta_j}   }{    \frac{1}{T} \sum_{t=1}^T \e^{\tau'\psi_t(\theta)} }  \nonumber\\
%
%
& & \hspace{.1in} + \frac{1}{T} \tr\left\{ \left[   \frac{1}{T} \sum_{t=1}^T \e^{\tau'\psi_t(\theta)}
\frac{\partial \psi_t(\theta)}{\partial \theta' }  \right]^{-1}  \right.\nonumber \\
& & \hspace{.2in}   \times   \left[  \frac{1}{T} \sum_{t=1}^T \e^{\tau'\psi_t(\theta)}
\frac{\partial^2 \psi_t(\theta)}{\partial \theta_j \partial \theta' }
 \left.
  +  \frac{1}{T} \sum_{t=1}^T \e^{\tau'\psi_t(\theta)}
\tau' \frac{\partial \psi_t(\theta)}{\partial \theta_j}
\frac{\partial \psi_t(\theta)}{\partial \theta' } \right]   \right\} \nonumber  \\
%
%
& & \hspace{.1in} -\frac{1}{2T} \tr
\left\{
 \left[  \frac{1}{T} \sum_{t=1}^T \e^{\tau'\psi_t(\theta)}\psi_t(\theta)  \psi_t(\theta)' \right]^{-1}  \right.\nonumber \\
 & & \hspace{.2in} \negthickspace\times\negthickspace
\left[  \frac{1}{T} \sum_{t=1}^T \negthickspace\e^{\tau'\psi_t(\theta)}  \negthickspace\left\{ \frac{\partial \psi_t(\theta)}{\partial \theta_j}  \psi_t(\theta)'
 \negthickspace+\negthickspace  \psi_t(\theta)   \frac{\partial \psi_t(\theta)'}{\partial \theta_j}     \right\} 
%
%
\left. \negthickspace +  \frac{1}{T} \sum_{t=1}^T \negthickspace \e^{\tau'\psi_t(\theta)}
\tau' \frac{\partial \psi_t(\theta)}{\partial \theta_j}  \psi_t(\theta)  \psi_t(\theta)'  \right] \negthickspace\right\} 
\end{eqnarray*}
Thus,  evaluated at $(\theta_0, \tau(\theta_0))$,
\begin{eqnarray}
& & \frac{\partial  L_{T}(\theta_{0}, \tau _{0}) }{\partial \theta_j} \nonumber\\
%
%
& = &  \frac{1}{T} \tr\left\{ \left[   \frac{1}{T} \sum_{t=1}^T
\frac{\partial \psi_t(\theta_{0})}{\partial \theta' }  \right]^{-1} \left[  \frac{1}{T} \sum_{t=1}^T
\frac{\partial^2 \psi_t(\theta_{0})}{\partial \theta_j \partial \theta' }\right]   \right\}\nonumber \\
%
%
& & \hspace{.1in} -\frac{1}{2T} \tr
\left\{
 \left[  \frac{1}{T} \sum_{t=1}^T \psi_t(\theta_{0})  \psi_t(\theta_{0})' \right]^{-1}\left[  \frac{1}{T} \sum_{t=1}^T \negthickspace  \left\{ \frac{\partial \psi_t(\theta_{0})}{\partial \theta_j}  \psi_t(\theta_{0})'
 \negthickspace+\negthickspace  \psi_t(\theta_{0})   \frac{\partial \psi_t(\theta_{0})'}{\partial \theta_j}     \right\}  \right]\right\}
 \label{Eq:LogESP1stDerivativeWRTThetaAtTheta0}
\end{eqnarray}
because $\tau(\theta_0)=0_{m \times 1}$ by  Lemma \ref{Lem:AsTiltingFct}iv (p. \pageref{Lem:AsTiltingFct}).

\subsection{Proof of Theorem \ref{theorem:ConsistencyAsymptoticNormality}(ii)\,: Asymptotic normality} \label{Ap:PfAsymptoticNormality}
The proof of Theorem \ref{theorem:ConsistencyAsymptoticNormality}(ii) (i.e., asymptotic normality) adapts the traditional approach
of expanding the FOCs (first order conditions). The two main differences  w.r.t. the proofs in the entropy literature are the following. Firstly,  instead of expanding the FOC $\left. \frac{\partial L_T(\theta, \tau_{T}(\theta))}{\partial \theta}\right\vert_{\theta=\hat{\theta}_T}$, we expand the approximate FOC $\left. \frac{\partial L_T(\theta, \tau)}{\partial \theta}\right\vert_{(\theta, \tau)=(\hat{\theta}_T, \tau_T(\hat{\theta}_T))}$ combined with the FOC \eqref{Eq:ESPTiltingEquation} for $\tau$  on p. \pageref{Eq:ESPTiltingEquation}. Secondly, we need to control the asymptotic behaviour of the derivatives that come from $ \ln \vert \Sigma_T(\theta)\vert_{\det}$.

\begin{proof}[Core of the proof of Theorem \ref{theorem:ConsistencyAsymptoticNormality}(ii)]
 We prove asymptotic normality  adapting the traditional approach of expanding the FOCs (first order conditions). \ Note that our approximate FOCs  are written as a functionof the $2m$ variables $\theta$ and $\tau$.  In other words, instead of using the implicit function $\tau_{T}(\theta),$  $\tau$ is an estimated parameter and hence the ET equation \eqref{Eq:ESPTiltingEquation} on p. \pageref{Eq:ESPTiltingEquation}  is also included in the expansion.

Under Assumptions \ref{Assp:ExistenceConsistency} and \ref{Assp:AsymptoticNormality}, by Proposition \ref{Prop:AsExpansionEstimator} (p. \pageref{Prop:AsExpansionEstimator}), $\P$-a.s. as $T \rightarrow \infty$,
\begin{eqnarray}
\sqrt{T} \left[ \begin{array}{c} \left( \hat{\theta}_T  - \theta_0 \right)  \\ \tau_T(\hat{\theta}_T) \end{array} \right]&=&-\begin{bmatrix}\E\left[  \frac{\partial \psi(X_1,\theta_{0})}{\partial \theta' }\right]^{-1} \\
0_{m \times m} \\
\end{bmatrix}
\frac{1}{\sqrt{T}}\sum_{t=1}^T\psi_t(\theta_0)+o_{\P}(1) \nonumber
 \\
& \underset{(a)}{\stackrel{D}{\rightarrow }} & -\begin{bmatrix}\E\left[  \frac{\partial \psi(X_1,\theta_{0})}{\partial \theta' }\right]^{-1} \\
0_{m \times m} \\
\end{bmatrix}\mathcal{N}(0,\E\left[\psi(X_1,\theta_{0}) \psi(X_1,\theta_{0})' \right]) \nonumber\\
& \underset{(b)}{\stackrel{D}{=}} & \mathcal{N}\left(0,\begin{bmatrix}\E\left[  \frac{\partial \psi(X_1,\theta_{0})}{\partial \theta' }\right]^{-1} \\
0_{m \times m} \\
\end{bmatrix}\E\left[\psi(X_1,\theta_{0}) \psi(X_1,\theta_{0})' \right] \begin{bmatrix}\E\left[  \frac{\partial \psi(X_1,\theta_{0})'}{\partial \theta }\right]^{-1} & 0_{m\times m} \\
\end{bmatrix}\right) \nonumber \\
& \underset{}{\stackrel{D}{=}}& \mathcal{N} \left(
0, \left( \begin{array}{c  c }
\Sigma(\theta_0)  & 0_{m \times m} \\
0_{m \times m} & 0_{m \times m}
  \end{array}\right)
\right) \text{  }\label{Eq:AsDistThetaTau}
\end{eqnarray}
 where $\Sigma(\theta_0)=\left[\E  \frac{\partial \psi(X_1,\theta_{0})}{\partial \theta' }\right]^{-1}\negthickspace\E\left[\psi(X_1,\theta_{0}) \psi(X_1,\theta_{0})' \right] \left[\E  \frac{\partial \psi(X_1,\theta_{0})'}{\partial \theta }\right]^{-1}$. \textit{(a)}  Under Assumption \ref{Assp:ExistenceConsistency}(a)-(c) and (g), by the Lindeberg-L{\'e}vy CLT theorem, 

\noindent
$ \frac{1}{\sqrt{T}}\sum_{t=1}^T\psi_t(\theta_0)\stackrel{D}{\rightarrow} \mathcal{N}(0,\E\left[\psi(X_1,\theta_{0}) \psi(X_1,\theta_{0})' ]\right )$, as $T \rightarrow \infty$. \textit{(b)} Firstly,  the
minus sign can be discarded because of the symmetry of the Gaussian distribution. Secondly, if $X$ is a random vector and $F$ is a (deterministic) matrix, then $\V(FX)=F\V(X)F'$.
\end{proof}

\begin{prop}[Asymptotic expansion of $\sqrt{T}(\hat{\theta}_T- \theta_0)$] \label{Prop:AsExpansionEstimator}
Under Assumptions \ref{Assp:ExistenceConsistency} and \ref{Assp:AsymptoticNormality}, $\P$-a.s. as $T\rightarrow \infty $,
\begin{eqnarray*}
 \sqrt{T} \left[ \begin{array}{c} \left( \hat{\theta}_T  - \theta_0 \right)  \\ \tau_T(\hat{\theta}_T) \end{array} \right]=-\begin{bmatrix}\E\left[  \frac{\partial \psi(X_1,\theta_{0})}{\partial \theta' }\right]^{-1} \\
0_{m \times m} \\
\end{bmatrix}\frac{1}{\sqrt{T}}\sum_{t=1}^T\psi_t(\theta_0)+o_{\P}(1)
\end{eqnarray*}
\end{prop}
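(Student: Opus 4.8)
The plan is to characterize $\hat\theta_T$ jointly with the tilting parameter as the solution of a system of $2m$ estimating equations in $(\theta,\tau)$, and to expand this system around $(\theta_0,\tau_0)$ with $\tau_0=\tau(\theta_0)=0_{m\times 1}$ (Lemma \ref{Lem:AsTiltingFct}iv). Since Theorem \ref{theorem:ConsistencyAsymptoticNormality}(i) gives $\hat\theta_T\to\theta_0$ $\P$-a.s. with $\theta_0\in\intr(\T)$, and $\tau_T(\hat\theta_T)\to\tau(\theta_0)=0_{m\times 1}$ by Lemma \ref{Lem:Schennachtheorem10PfFirstSteps}iii, the pair $(\hat\theta_T,\tau_T(\hat\theta_T))$ eventually lies in the interior ball $\overline{B_{r_\partial}(\theta_0,\tau_0)}$ of Lemma \ref{Lem:LogESPExistenceInS}, where $L_T$ and its derivatives are well defined. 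There it satisfies
\[
G_T(\hat\theta_T,\tau_T(\hat\theta_T))=\begin{pmatrix}\dfrac{\partial L_T(\hat\theta_T,\tau_T(\hat\theta_T))}{\partial\theta}\\[4pt]\dfrac{1}{T}\sum_{t=1}^T\e^{\tau_T(\hat\theta_T)'\psi_t(\hat\theta_T)}\psi_t(\hat\theta_T)\end{pmatrix}=0_{2m\times 1},
\]
the first block being the FOC of the interior maximizer and the second being the tilting equation \eqref{Eq:ESPTiltingEquation} defining $\tau_T$. A componentwise mean-value expansion of $G_T$ about $(\theta_0,\tau_0)$ yields $0=G_T(\theta_0,\tau_0)+J_T\,[(\hat\theta_T-\theta_0)',\,\tau_T(\hat\theta_T)']'$, where $J_T$ is the Jacobian of $G_T$ at intermediate points lying between $(\hat\theta_T,\tau_T(\hat\theta_T))$ and $(\theta_0,\tau_0)$, hence converging to $(\theta_0,\tau_0)$ $\P$-a.s.

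I would next evaluate the scaled score $\sqrt T\,G_T(\theta_0,\tau_0)$. Its lower block is exactly $\tfrac{1}{\sqrt T}\sum_{t=1}^T\psi_t(\theta_0)$ (because $\tau_0=0$), which converges in distribution to $\mathcal N(0,\E[\psi(X_1,\theta_0)\psi(X_1,\theta_0)'])$ by the Lindeberg--L\'evy CLT under Assumption \ref{Assp:ExistenceConsistency}(a)(c) and (g). The crucial point is that the upper block is negligible: by \eqref{Eq:LogESP1stDerivativeWRTThetaAtTheta0}, $\partial L_T(\theta_0,\tau_0)/\partial\theta$ reduces to two traces each carrying a factor $1/T$, since the exponential-tilting contribution vanishes identically at $\tau_0=0$ (every surviving term in \eqref{Eq:LogETTerm1stDerivative} is proportional to $\tau'$). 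Hence $\sqrt T\,\partial L_T(\theta_0,\tau_0)/\partial\theta=O_\P(T^{-1/2})=o_\P(1)$, so that $\sqrt T\,G_T(\theta_0,\tau_0)=\big(o_\P(1)',\;\tfrac{1}{\sqrt T}\sum_t\psi_t(\theta_0)'\big)'$.

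It then remains to compute $J:=\lim_T J_T$ and invert it. Writing $D:=\E[\partial\psi(X_1,\theta_0)/\partial\theta']$ and $\Omega:=\E[\psi(X_1,\theta_0)\psi(X_1,\theta_0)']$, the blocks follow from the derivative formulas of Section \ref{Sec:LTAndDerivatives} together with the uniform limits of Lemmas \ref{Lem:TiltedDerivative} and \ref{Lem:TiltedVariance}: the tilting equation contributes $\Omega$ in $\tau$ and $D$ in $\theta$; the Hessian block $\partial^2 L_T/\partial\theta\partial\theta'$ vanishes at $(\theta_0,0)$ (the $M_{1,T}$ part is zero by \eqref{Eq:LogETTerm2ndDerivative} at $\tau_0=0$, the $M_{2,T},M_{3,T}$ parts are $O_\P(T^{-1})$), while the cross block $\partial^2 L_T/\partial\theta\partial\tau'$ converges to $D'$ via \eqref{Eq:LogETTerm2ndDerivativeThetaTau}. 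Thus
\[
J=\begin{pmatrix}0_{m\times m}&D'\\ D&\Omega\end{pmatrix},\qquad J^{-1}=\begin{pmatrix}-D^{-1}\Omega (D')^{-1}&D^{-1}\\ (D')^{-1}&0_{m\times m}\end{pmatrix},
\]
the explicit inverse existing as soon as $D$ is invertible, which holds by Assumption \ref{Assp:ExistenceConsistency}(h) at $\theta_0$ with $\tau(\theta_0)=0$. By Slutsky, $\sqrt T\,[\,(\hat\theta_T-\theta_0)',\tau_T(\hat\theta_T)'\,]'=-J^{-1}\sqrt T\,G_T(\theta_0,\tau_0)+o_\P(1)$; multiplying $-J^{-1}$ by the score annihilates the negligible upper component and leaves $-\begin{bmatrix}D^{-1}\\ 0_{m\times m}\end{bmatrix}\tfrac{1}{\sqrt T}\sum_{t=1}^T\psi_t(\theta_0)+o_\P(1)$, which is the asserted expansion.

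The main obstacle I anticipate is not this algebra but justifying the expansion uniformly on the shrinking neighborhood, i.e.\ controlling the derivatives of the variance penalty. One must show that the second derivatives of $M_{2,T}$ and $M_{3,T}$, whose explicit forms \eqref{Eq:DerivativeTerm2ndDerivative}--\eqref{Eq:VarianceTermDTauDtau} involve inverses of the tilted Jacobian and tilted second-moment matrices, stay $O_\P(1)$ uniformly on $\overline{B_{r_\partial}(\theta_0,\tau_0)}$, so that after the prefactor $1/T$ they, and the third-order Taylor remainder, contribute only $o_\P(1)$ once multiplied by $\sqrt T\,\|(\hat\theta_T-\theta_0,\tau_T(\hat\theta_T))\|=O_\P(1)$. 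This uniform boundedness rests on keeping $\vert\tfrac1T\sum_t\e^{\tau'\psi_t(\theta)}\tfrac{\partial\psi_t(\theta)}{\partial\theta'}\vert_{\det}$ and $\vert\tfrac1T\sum_t\e^{\tau'\psi_t(\theta)}\psi_t(\theta)\psi_t(\theta)'\vert_{\det}$ bounded away from zero (Lemma \ref{Lem:LogESPExistenceInS}(iii)(iv)) and on the integrable envelope $b(X_1)$ of Assumption \ref{Assp:AsymptoticNormality}, which dominates the first-, second- and third-order derivatives of $\psi$ uniformly on $\mathcal N$.
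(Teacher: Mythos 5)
Your strategy is the paper's own: expand the joint system in $(\theta,\tau)$ formed by the $\theta$-score of $L_T$ and the tilting equation around $(\theta_0,\tau_0)$ with $\tau_0=0$, kill the upper block of the score at $(\theta_0,\tau_0)$ because it consists of $1/T$-scaled traces (cf.\ equation \eqref{Eq:LogESP1stDerivativeWRTThetaAtTheta0} and Lemma \ref{Lem:PartialLTheta0Tau0}i), and invert the limiting Jacobian $\left[\begin{smallmatrix}0 & D'\\ D & \Omega\end{smallmatrix}\right]$, whose explicit inverse matches the paper's Lemma \ref{Lem:PartialLAndS}iii. However, there is one genuinely false step: the first block of your $G_T$ does \emph{not} vanish at $(\hat\theta_T,\tau_T(\hat\theta_T))$. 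The exact FOC of the interior maximizer concerns the profiled function $\theta\mapsto L_T(\theta,\tau_T(\theta))$, and by the chain rule $0=\left.\frac{\partial L_T(\theta,\tau)}{\partial \theta_j}\right\vert_{(\hat\theta_T,\tau_T(\hat\theta_T))}+\left.\frac{\partial L_T(\theta,\tau)}{\partial \tau'}\right\vert_{(\hat\theta_T,\tau_T(\hat\theta_T))}\frac{\partial \tau_T(\hat\theta_T)}{\partial \theta_j}$. Your implicit identification of $(\hat\theta_T,\tau_T(\hat\theta_T))$ as a joint critical point of $L_T$ would require $\tau_T(\theta)$ to be a critical point of $\tau\mapsto L_T(\theta,\tau)$; but $\tau_T(\theta)$ is only the critical point of the ET block $M_{1,T}$ (so $\partial M_{1,T}/\partial\tau_k$ vanishes there exactly), while the variance-penalty blocks contribute $\partial M_{2,T}/\partial\tau_k$ and $\partial M_{3,T}/\partial\tau_k$ of exact order $O(T^{-1})$ with generally nonzero limits after scaling by $T$ (equations \eqref{Eq:DerivativeTermDTau} and \eqref{Eq:VarianceTermDTau}, Lemma \ref{Lem:DLDTau}ii-iii). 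Hence $G_T(\hat\theta_T,\tau_T(\hat\theta_T))=(O(T^{-1})',\,0_{1\times m})'$, not $0_{2m\times 1}$; this is exactly why the paper announces that it expands an \emph{approximate} FOC rather than the FOC.

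The gap is reparable, and the repair is the content of the paper's Lemmas \ref{Lem:DLDTau}, \ref{Lem:DerivativeImplicitFunctionImplicit} and \ref{Lem:ApproximateFOC}: one must show (i) $\left.\frac{\partial L_T(\theta,\tau)}{\partial \tau'}\right\vert_{(\hat\theta_T,\tau_T(\hat\theta_T))}=O(T^{-1})$, which needs ULLN control of the tilted averages in \eqref{Eq:DerivativeTermDTau}--\eqref{Eq:VarianceTermDTau} together with the determinant lower bounds of Lemma \ref{Lem:LogESPExistenceInS}; and (ii) $\frac{\partial \tau_T(\theta)}{\partial \theta'}=O(1)$ along $\hat\theta_T$, via the implicit function theorem applied to the tilting equation. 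Together these give the approximate FOC $\frac{\partial L_T(\hat\theta_T,\tau_T(\hat\theta_T))}{\partial \theta}=O(T^{-1})$, whose $\sqrt{T}$-scaled contribution is $O(T^{-1/2})=o_{\P}(1)$ and is absorbed into the remainder, so your displayed conclusion survives unchanged. Note that you anticipated the need to control the variance penalty, but located the danger in the second derivatives of $M_{2,T},M_{3,T}$ and the Taylor remainder (where the paper's Lemmas \ref{Lem:Bound2ndPartialLPartialTheta} and \ref{Lem:Bound2ndPartialLPartialTau} indeed do that work); the first-order $\tau$-derivatives of the penalty inside the FOC itself are where the exactness of your system actually breaks.
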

\begin{proof}The function  $L_T(\theta, \tau)$ is well-defined and twice continuously differentiable in a neighborhood of $(\theta_0'\; \tau(\theta_0)')$   $\P$-a.s. for $T$ big enough by subsection \ref{Sec:LTAndDerivatives} (p. \pageref{Sec:LTAndDerivatives}),  under Assumptions \ref{Assp:ExistenceConsistency} and \ref{Assp:AsymptoticNormality}(a). Similarly, let  $S_T(\theta, \tau):= \frac{1}{T}\sum_{t=1}^T \e^{\tau'\psi_t(\theta)}\psi_t(\theta)$, which is  continuously differentiable in a neighborhood of $(\theta_0' \; \tau(\theta_0)')$ by Assumption \ref{Assp:ExistenceConsistency}(a)(b). Now, under Assumption \ref{Assp:ExistenceConsistency}, by Theorem \ref{theorem:ConsistencyAsymptoticNormality}i (p. \pageref{theorem:ConsistencyAsymptoticNormality}),  Lemma \ref{Lem:Schennachtheorem10PfFirstSteps}iii (p. \pageref{Lem:Schennachtheorem10PfFirstSteps}) and Lemma \ref{Lem:AsTiltingFct}iv (p. \pageref{Lem:AsTiltingFct}), $\P$-a.s., $\hat{\theta}_T \rightarrow \theta_0$ and $\tau_T(\hat{\theta}_T )\rightarrow \tau(\theta_0)$, where $\tau(\theta_0)=0_{m \times 1}$, so that $\P$-a.s. for $T$ big enough, $(\hat{\theta}_T ' \; \tau_T(\hat{\theta}_T)')$ is in any arbitrary small neighborhood of $(\theta_0'\; \tau(\theta_0)')$. Therefore,  under Assumption \ref{Assp:ExistenceConsistency} and \ref{Assp:AsymptoticNormality}(a), a stochastic first-order Taylor-Lagrange expansion \cite[Lemma 3]{1969Jen} around $(\theta_0, \tau(\theta_0))$ evaluated at $(\hat{\theta}_T, \tau_T(\hat{\theta}_T))$ yields, $\P$-a.s. for $T$ big enough
\begin{eqnarray}
%
%
&  & \left[ \begin{array}{c} \frac{\partial { L}_T(\hat{\theta}_T, \tau_T(\hat{\theta}_T)) }{\partial \theta}  \\ S_T(\hat{\theta}_T, \tau_T(\hat{\theta}_T)) \end{array} \right]
=
\left[ \begin{array}{c} \frac{\partial { L}_T(\theta_0, \tau(\theta_0)) }{\partial \theta} \\ S_T(\theta_0, \tau(\theta_0) ) \end{array} \right]
+
\left[ \begin{array}{c c } \frac{\partial^2 { L}_T(\bar{\theta}_T, \bar{\tau}_T) }{\partial \theta' \partial \theta}
                         & \frac{\partial^2 { L}_T(\bar{\theta}_T, \bar{\tau}_T) }{\partial \tau' \partial \theta} \\
                         \frac{\partial  S_T(\bar{\theta}_T, \bar{\tau}_T) }{\partial \theta'}   & \frac{ \partial S_T(\bar{\theta}_T, \bar{\tau}_T) }{\partial \tau'} \end{array} \right]
\left[ \begin{array}{c} \left( \hat{\theta}_T  - \theta_0 \right)  \\ \tau_T(\hat{\theta}_T) \end{array} \right]\label{Eq:ExpansionAppFOC}
\end{eqnarray}
where $\bar{\theta}_T$ and $\bar{\tau}_T$ are between $\hat{\theta}_T$ and $\theta_0$, and between  $\tau_T(\hat{\theta}_T)$ and $\tau(\theta_0)$, respectively.  Under Assumptions \ref{Assp:ExistenceConsistency} and \ref{Assp:AsymptoticNormality},  by Lemma \ref{Lem:ApproximateFOC} (p. \pageref{Lem:ApproximateFOC}) and   by definition of $\tau_T(.)$ (equation \ref{Eq:ESPTiltingEquation} on p. \pageref{Eq:ESPTiltingEquation}), $\frac{\partial { L}_T(\hat{\theta}_T, \tau_T(\hat{\theta}_T)) }{\partial \theta}=O(T^{-1}) $ and  $S_T(\hat{\theta}_T, \tau_T(\hat{\theta}_T))=0$, respectively. Moreover, under Assumptions \ref{Assp:ExistenceConsistency} and \ref{Assp:AsymptoticNormality}, by Theorem \ref{theorem:ConsistencyAsymptoticNormality}i, Lemma \ref{Lem:Schennachtheorem10PfFirstSteps}iii (p. \pageref{Lem:Schennachtheorem10PfFirstSteps}) and Lemma \ref{Lem:PartialLAndS}ii (p. \pageref{Lem:PartialLAndS}), $\P$-a.s. for $T$ big enough, $\left[ \begin{array}{c c } \frac{\partial^2 { L}_T(\bar{\theta}_T, \bar{\tau}_T) }{\partial \theta' \partial \theta}
                         & \frac{\partial^2 { L}_T(\bar{\theta}_T, \bar{\tau}_T) }{\partial \tau' \partial \theta} \\
                         \frac{\partial  S_T(\bar{\theta}_T, \bar{\tau}_T) }{\partial \theta'}   & \frac{ \partial S_T(\bar{\theta}_T, \bar{\tau}_T) }{\partial \tau'} \end{array} \right]
$ is invertible. Thus, under Assumptions \ref{Assp:ExistenceConsistency} and \ref{Assp:AsymptoticNormality},
$\P$-a.s. for $T$ big enough,
\begin{eqnarray*}
& &  \sqrt{T} \left[ \begin{array}{c} \left( \hat{\theta}_T  - \theta_0 \right)  \\ \tau_T(\hat{\theta}_T) \end{array} \right]\\
& = &
-\left[ \begin{array}{c c } \frac{\partial^2 { L}_T(\bar{\theta}_T, \bar{\tau}_T) }{\partial \theta' \partial \theta}
                         & \frac{\partial^2 { L}_T(\bar{\theta}_T, \bar{\tau}_T) }{\partial \tau' \partial \theta} \\
                         \frac{\partial  S_T(\bar{\theta}_T, \bar{\tau}_T) }{\partial \theta'}   & \frac{ \partial S_T(\bar{\theta}_T, \bar{\tau}_T) }{\partial \tau'} \end{array} \right]^{-1}\sqrt{T}
\left[ \begin{array}{c} \frac{\partial { L}_T(\theta_0, \tau(\theta_0)) }{\partial \theta}+O(T^{-1}) \\ S_T(\theta_0, \tau(\theta_0) ) \end{array} \right]
 \\
&  \stackrel{(a)}{=} &
-\left[ \begin{array}{c c } \frac{\partial^2 { L}_T(\bar{\theta}_T, \bar{\tau}_T) }{\partial \theta' \partial \theta}
                         & \frac{\partial^2 { L}_T(\bar{\theta}_T, \bar{\tau}_T) }{\partial \tau' \partial \theta} \\
                         \frac{\partial  S_T(\bar{\theta}_T, \bar{\tau}_T) }{\partial \theta'}   & \frac{ \partial S_T(\bar{\theta}_T, \bar{\tau}_T) }{\partial \tau'} \end{array} \right]^{-1}
\left[ \begin{array}{c} O(T^{-\frac{1}{2}}) \\ \sqrt{T} \frac{1}{T}\sum_{t=1}^T\psi_t(\theta_0) \end{array} \right] \\
 &  \stackrel{(b)}{=} &
-\left[ \begin{array}{c c } -\Sigma(\theta_0)
                         & \E\left[  \frac{\partial \psi(X_1,\theta_{0})}{\partial \theta' }\right]^{-1} \\
                         \E\left[  \frac{\partial \psi(X_1,\theta_{0})'}{\partial \theta }\right]^{-1}   &\ 0_{m \times m} \end{array} \right]
\left[ \begin{array}{c} O(T^{-\frac{1}{2}}) \\ \sqrt{T} \frac{1}{T}\sum_{t=1}^T\psi_t(\theta_0) \end{array} \right] \\
&  &  -\left\{ \left[ \begin{array}{c c } \frac{\partial^2 { L}_T(\bar{\theta}_T, \bar{\tau}_T) }{\partial \theta' \partial \theta}
                         & \frac{\partial^2 { L}_T(\bar{\theta}_T, \bar{\tau}_T) }{\partial \tau' \partial \theta} \\
                         \frac{\partial  S_T(\bar{\theta}_T, \bar{\tau}_T) }{\partial \theta'}   & \frac{ \partial S_T(\bar{\theta}_T, \bar{\tau}_T) }{\partial \tau'} \end{array} \right]^{-1}
-\left[ \begin{array}{c c } -\Sigma(\theta_0)
                         & \E\left[  \frac{\partial \psi(X_1,\theta_{0})}{\partial \theta' }\right]^{-1} \\
                         \E\left[  \frac{\partial \psi(X_1,\theta_{0})'}{\partial \theta }\right]^{-1}   &\ 0_{m \times m} \end{array} \right]
\right\}\left[ \begin{array}{c} O(T^{-\frac{1}{2}}) \\ \sqrt{T} \frac{1}{T}\sum_{t=1}^T\psi_t(\theta_0) \end{array} \right]\\
%
%
& \underset{}{\stackrel{(c)}{=-}}  & \begin{bmatrix}\E\left[  \frac{\partial \psi(X_1,\theta_{0})}{\partial \theta' }\right]^{-1} \\
0_{m \times m} \\
\end{bmatrix}
\frac{1}{\sqrt{T}}\sum_{t=1}^T\psi_t(\theta_0)+o_{\P}(1)
\end{eqnarray*}
 where $\Sigma(\theta_0)=\left[\E  \frac{\partial \psi(X_1,\theta_{0})}{\partial \theta' }\right]^{-1}\negthickspace\E\left[\psi(X_1,\theta_{0}) \psi(X_1,\theta_{0})' \right] \left[\E  \frac{\partial \psi(X_1,\theta_{0})'}{\partial \theta }\right]^{-1}$. \textit{(a)} Firstly, under Assumptions \ref{Assp:ExistenceConsistency} and \ref{Assp:AsymptoticNormality}, by Lemma \ref{Lem:PartialLTheta0Tau0}i (p. \pageref{Lem:PartialLTheta0Tau0}), $\P$-a.s. as $T \rightarrow \infty$,  $\frac{\partial  L_{T}(\theta_0, \tau(\theta_0) ) }{\partial \theta_j}=O(T^{-1})$, so that $\sqrt{T}\left[\frac{\partial  L_{T}(\theta_0, \tau(\theta_0) ) }{\partial \theta_j}+O(T^{-1})\right]=O(T^{-\frac{1}{2}})$. Secondly, note that $S_T(\theta_0, \tau(\theta_0) )=\frac{1}{T}\sum_{t=1}^T\psi_t(\theta_0)$. \textit{(b)} Add and subtract the matrix $\left[ \begin{array}{c c } -\Sigma(\theta_0)
                         & \E\left[  \frac{\partial \psi(X_1,\theta_{0})}{\partial \theta' }\right]^{-1} \\
                         \E\left[  \frac{\partial \psi(X_1,\theta_{0})'}{\partial \theta }\right]^{-1}   &\ 0_{m \times m} \end{array} \right] $. \textit{(c)} Firstly, the first  column of the first square matrix
cancels out because the first element  of the vector is zero.
Secondly, under Assumptions \ref{Assp:ExistenceConsistency} and \ref{Assp:AsymptoticNormality},  by Lemma \ref{Lem:PartialLAndS}iii (p. \pageref{Lem:PartialLAndS}) and Theorem \ref{theorem:ConsistencyAsymptoticNormality}i (p. \pageref{theorem:ConsistencyAsymptoticNormality}), $\P$-a.s. as $T \rightarrow \infty$, the curly bracket is $o(1)$, and, under Assumption \ref{Assp:ExistenceConsistency}(a)-(c) and (g), by the Lindeberg-Lévy CLT,  $ \frac{1}{\sqrt{T}}\sum_{t=1}^T\psi_t(\theta_0)=O_{\P}(1)$, as $T \rightarrow \infty$.  
\end{proof}

\begin{rk}[Alternative approximate FOC] In the proof of Theorem \ref{theorem:ConsistencyAsymptoticNormality}ii, it is possible to use the approximate FOC $\frac{\partial M_{1,T}(\hat{\theta}_T, \tau_T(\hat{\theta}_T))}{\partial \theta}=O(T^{-1})$ instead of the approximate FOC $\frac{\partial { L}_T(\hat{\theta}_T, \tau_T(\hat{\theta}_T)) }{\partial \theta}=O(T^{-1})$. Under Assumption \ref{Assp:ExistenceConsistency} and \ref{Assp:AsymptoticNormality} (with $k_2 \in \ldsb 1,3\rdsb$ and $j \in \ldsb 0,2\rdsb$ in its part b), by Lemma \ref{Lem:LogESPDecomposition} (p. \pageref{Lem:LogESPDecomposition}) and \ref{Lem:Bound2ndPartialLPartialTheta}v-vii,xii-xiv (p. \pageref{Lem:Bound2ndPartialLPartialTheta}) and the ULLN \`a la Wald,  $\frac{\partial { L}_T(\hat{\theta}_T, \tau_T(\hat{\theta}_T)) }{\partial \theta}=\frac{\partial M_{1,T}(\hat{\theta}_T, \tau_T(\hat{\theta}_T))}{\partial \theta}+ \frac{\partial M_{2,T}(\hat{\theta}_T, \tau_T(\hat{\theta}_T))}{\partial \theta}+\frac{\partial M_{3,T}(\hat{\theta}_T, \tau_T(\hat{\theta}_T))}{\partial \theta}=\frac{\partial M_{1,T}(\hat{\theta}_T, \tau_T(\hat{\theta}_T))}{\partial \theta}+O(T^{-1})$. The approximate FOC  $\frac{\partial M_{1,T}(\hat{\theta}_T, \tau_T(\hat{\theta}_T))}{\partial \theta}=O(T^{-1})$ would lead to replace  expansion \eqref{Eq:ExpansionAppFOC} on p. \pageref{Eq:ExpansionAppFOC}  with the following expansion 
\begin{eqnarray*}
%
%
&  & \left[ \begin{array}{c} \frac{\partial M_{1,T}(\hat{\theta}_T, \tau_T(\hat{\theta}_T)) }{\partial \theta}  \\ S_T(\hat{\theta}_T, \tau_T(\hat{\theta}_T)) \end{array} \right]
=
\left[ \begin{array}{c} \frac{\partial M_{1,T}(\theta_0, \tau(\theta_0)) }{\partial \theta} \\ S_T(\theta_0, \tau(\theta_0) ) \end{array} \right]
+
\left[ \begin{array}{c c } \frac{\partial^2 M_{1,T}(\bar{\theta}_T, \bar{\tau}_T) }{\partial \theta' \partial \theta}
                         & \frac{\partial^2 M_{1,T}(\bar{\theta}_T, \bar{\tau}_T) }{\partial \tau' \partial \theta} \\
                         \frac{\partial  S_T(\bar{\theta}_T, \bar{\tau}_T) }{\partial \theta'}   & \frac{ \partial S_T(\bar{\theta}_T, \bar{\tau}_T) }{\partial \tau'} \end{array} \right]
\left[ \begin{array}{c} \left( \hat{\theta}_T  - \theta_0 \right)  \\ \tau_T(\hat{\theta}_T) \end{array} \right]
\end{eqnarray*}
where $\frac{\partial^2 M_{1,T}(\bar{\theta}_T, \bar{\tau}_T) }{\partial \theta' \partial \theta}$ and $\frac{\partial^2 M_{1,T}(\bar{\theta}_T, \bar{\tau}_T) }{\partial \theta' \partial \theta} $ can easily be controlled by Lemma  \ref{Lem:Bound2ndPartialLPartialTheta}i-iv (p. \pageref{Lem:Bound2ndPartialLPartialTheta}),  Lemma \ref{Lem:Bound2ndPartialLPartialTau}i-v (p. \pageref{Lem:Bound2ndPartialLPartialTau}), Lemma \ref{Lem:dLdTaudTau}i-iii (p. \pageref{Lem:dLdTaudTau}) and  ULLN \`a la Wald under Assumptions \ref{Assp:ExistenceConsistency} and \ref{Assp:AsymptoticNormality} (with $k_2 \in \ldsb 1,3\rdsb$ and $j \in \ldsb 0,2\rdsb$ in its part b). The approximate FOC $\frac{\partial M_{1,T}(\hat{\theta}_T, \tau_T(\hat{\theta}_T))}{\partial \theta}=O(T^{-1})$ requires less assumptions than the approximate FOC $\frac{\partial { L}_T(\hat{\theta}_T, \tau_T(\hat{\theta}_T)) }{\partial \theta}=O(T^{-1})$ because it does not require to control the 2nd derivatives of  $M_{2,T}(\theta, \tau) $ and  $M_{3,T}(\theta, \tau) $. However, it would not save space and it would require to add one more block of assumptions because our proof of Theorem \ref{theorem:TrinityPlus1}   requires the full   Assumption  \ref{Assp:AsymptoticNormality}.
\hfill $\diamond $\end{rk}

\begin{lem}\label{Lem:PartialLAndS} Under Assumptions \ref{Assp:ExistenceConsistency} and \ref{Assp:AsymptoticNormality},  \begin{enumerate}
\item[(i)]for any sequence $(\theta_T, \tau_T)_{T \in \N}$ converging to $(\theta_0, \tau(\theta_0))$, $\P$-a.s. as $T \rightarrow \infty$, \\ $\left[ \begin{array}{c c } \frac{\partial^2 { L}_T({\theta}_T, {\tau}_T) }{\partial \theta' \partial \theta}
                         & \frac{\partial^2 { L}_T({\theta}_T, {\tau}_T) }{\partial \tau' \partial \theta} \\
                         \frac{\partial  S_T({\theta}_T, {\tau}_T) }{\partial \theta'}   & \frac{ \partial S_T({\theta}_T, {\tau}_T) }{\partial \tau'} \end{array} \right]\rightarrow \left[ \begin{array}{c c } 0_{m \times m}
                         & \E\left[  \frac{\partial \psi(X_1,\theta_{0})}{\partial \theta' }\right]' \\
                         \E\left[  \frac{\partial \psi(X_1,\theta_{0})}{\partial \theta' }\right]   &\ \E\left[\psi(X_1,\theta_{0}) \psi(X_1,\theta_{0})' \right] \end{array} \right]$;
\item[(ii)] $\left[ \begin{array}{c c } 0_{m \times m}
                         & \E\left[  \frac{\partial \psi(X_1,\theta_{0})}{\partial \theta' }\right]' \\
                         \E\left[  \frac{\partial \psi(X_1,\theta_{0})}{\partial \theta' }\right]   &\ \E\left[\psi(X_1,\theta_{0}) \psi(X_1,\theta_{0})' \right] \end{array} \right]$ is invertible,  so that, for any sequence $(\theta_T, \tau_T)_{T \in \N}$ converging to $(\theta_0, \tau(\theta_0))$, $\P$-a.s., for $T$ big enough, the matrix $\left[ \begin{array}{c c } \frac{\partial^2 { L}_T({\theta}_T, {\tau}_T) }{\partial \theta' \partial \theta}
                         & \frac{\partial^2 { L}_T({\theta}_T, {\tau}_T) }{\partial \tau' \partial \theta} \\
                         \frac{\partial  S_T({\theta}_T, {\tau}_T) }{\partial \theta'}   & \frac{ \partial S_T({\theta}_T, {\tau}_T) }{\partial \tau'} \end{array} \right]$ is invertible; and
\item[(iii)] for any sequence $(\theta_T, \tau_T)_{T \in \N}$ converging to $(\theta_0, \tau(\theta_0))$, $\P$-a.s. as $T \rightarrow \infty$,\\  $\left[ \begin{array}{c c } \frac{\partial^2 { L}_T(\theta_T, \tau_T) }{\partial \theta' \partial \theta}
                         & \frac{\partial^2 { L}_T(\theta_T, \tau_T) }{\partial \tau' \partial \theta} \\
                         \frac{\partial  S_T(\theta_T, \tau_T) }{\partial \theta'}   & \frac{ \partial S_T(\theta_T, \tau_T) }{\partial \tau'} \end{array} \right]^{-1}\rightarrow \left[ \begin{array}{c c } -\Sigma(\theta_0)
                         & \E\left[  \frac{\partial \psi(X_{1},\theta_{0})}{\partial \theta' }\right]^{-1} \\
                         \E\left[  \frac{\partial \psi(X_{1},\theta_{0})'}{\partial \theta }\right]^{-1}   &\ 0_{m \times m} \end{array} \right]$, where\\
$\displaystyle \left[ \begin{array}{c c } -\Sigma(\theta_0)
                         & \E\left[  \frac{\partial \psi(X_{1},\theta_{0})}{\partial \theta' }\right]^{-1} \\
                         \E\left[  \frac{\partial \psi(X_{1},\theta_{0})'}{\partial \theta }\right]^{-1}   &\ 0_{m \times m} \end{array} \right]=\left[ \begin{array}{c c } 0_{m \times m}
                         & \E\left[  \frac{\partial \psi(X_1,\theta_{0})}{\partial \theta' }\right]' \\
                         \E\left[  \frac{\partial \psi(X_{1},\theta_{0})}{\partial \theta' }\right]   &\ \E\left[\psi(X_1,\theta_{0}) \psi(X_1,\theta_{0})' \right] \end{array} \right]^{-1} $
.
\end{enumerate}
\end{lem}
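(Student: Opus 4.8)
The plan is to establish the three statements in order, exploiting the decomposition $L_T = M_{1,T}+M_{2,T}+M_{3,T}$ from Lemma \ref{Lem:LogESPDecomposition}. The first thing I would notice is that, by the explicit formulas Eq:DerivativeTerm2ndDerivative and Eq:DerivativeTerm2ndDerivativeTauTheta for the second derivatives of $M_{2,T}$, and Eq:VarianceTerm2ndDerivative and Eq:VarianceTerm2ndDerivativeTau for those of $M_{3,T}$, every one of these carries an explicit prefactor $1/T$ multiplying a trace of products of sample averages of the form $\frac{1}{T}\sum_t e^{\tau'\psi_t(\theta)}[\cdots]$. Under Assumptions \ref{Assp:ExistenceConsistency} and \ref{Assp:AsymptoticNormality} these sample averages converge uniformly over $\Sbf$ (ULLN \`a la Wald together with Lemmas \ref{Lem:TiltedDerivative} and \ref{Lem:TiltedVariance}), and the inverse matrices appearing inside the traces stay bounded because the relevant determinants are bounded away from zero (Lemma \ref{Lem:AsVarianceTermBehaviour} and Lemma \ref{Lem:LogESPExistenceInS}). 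Hence each bracketed trace is $O(1)$ and the whole second derivative is $O(T^{-1})=o(1)$, so only $M_{1,T}$ and $S_T$ contribute to the limit.

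Next I would read off those contributions from the explicit formulas Eq:LogETTerm2ndDerivative and Eq:LogETTerm2ndDerivativeThetaTau, using the crucial fact $\tau(\theta_0)=0_{m\times 1}$ (Lemma \ref{Lem:AsTiltingFct}iv). Along any sequence $(\theta_T,\tau_T)\to(\theta_0,0)$, pairing the uniform LLN with continuity of the limiting expectations (Lebesgue dominated convergence under Assumptions \ref{Assp:ExistenceConsistency}(e)--(g) and continuity of $\tau(\cdot)$ from Lemma \ref{Lem:AsTiltingFct}iii) yields $\frac{1}{T}\sum_t e^{\tau_T'\psi_t(\theta_T)}g(X_t,\theta_T)\to\E[g(X_1,\theta_0)]$ $\P$-a.s. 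In $\frac{\partial^2 M_{1,T}}{\partial\theta'\partial\theta}$ every summand carries a factor $\tau'(\cdot)$, so the block vanishes as $\tau_T\to 0$, giving $0_{m\times m}$; in $\frac{\partial^2 M_{1,T}}{\partial\tau'\partial\theta}$ the only surviving summand is $\frac{1}{T}\sum_t e^{\tau'\psi_t}\partial\psi_{t,k}/\partial\theta_j$, whose limit is $\E[\partial\psi_k/\partial\theta_j]$, i.e. the $(j,k)$ entry of $\E[\partial\psi/\partial\theta']'$. Differentiating $S_T(\theta,\tau)=\frac{1}{T}\sum_t e^{\tau'\psi_t(\theta)}\psi_t(\theta)$ directly gives $\partial S_T/\partial\theta'\to\E[\partial\psi/\partial\theta']$ (the $\tau$-linear term dying) and $\partial S_T/\partial\tau'\to\E[\psi\psi']$, which finishes (i).

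For (ii) I would write the limit as $A=\begin{bmatrix}0 & G'\\ G & \Omega\end{bmatrix}$ with $G:=\E[\partial\psi(X_1,\theta_0)/\partial\theta']$ and $\Omega:=\E[\psi(X_1,\theta_0)\psi(X_1,\theta_0)']$, both invertible and $\Omega$ positive definite by Assumption \ref{Assp:ExistenceConsistency}(h) evaluated at $\tau(\theta_0)=0$. Taking the Schur complement of the invertible block $\Omega$ gives $\det A=\det(\Omega)\det(-G'\Omega^{-1}G)$, which is nonzero since $G'\Omega^{-1}G$ is positive definite; hence $A$ is invertible. Because the finite-$T$ matrix converges to $A$ by (i) and the determinant is continuous, $\P$-a.s. for $T$ big enough its determinant is close to $\det A\neq 0$, so it too is invertible. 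Statement (iii) then follows from continuity of matrix inversion on the open set of invertible matrices, and the explicit form of $A^{-1}$ is checked by solving $A\,A^{-1}=I_{2m}$ blockwise: this yields $(2,1)$ block $(G')^{-1}=\E[\partial\psi'/\partial\theta]^{-1}$, $(1,2)$ block $G^{-1}=\E[\partial\psi/\partial\theta']^{-1}$, $(2,2)$ block $0_{m\times m}$, and $(1,1)$ block $-G^{-1}\Omega(G')^{-1}=-\Sigma(\theta_0)$, matching the claimed matrix.

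The main obstacle will be the bookkeeping in (i): I must verify, along an arbitrary converging sequence rather than merely at the point $(\theta_0,0)$, both the $O(T^{-1})$ decay of the $M_{2,T},M_{3,T}$ derivatives and the vanishing of the $\tau$-linear terms of $M_{1,T}$, which requires pairing each uniform LLN statement with continuity of the corresponding limit expectation instead of relying on pointwise convergence. Everything after the limit matrix is identified---the Schur-complement invertibility and the inverse formula---is routine linear algebra.
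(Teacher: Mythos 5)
Your proposal is correct and follows essentially the same route as the paper: the paper's proof of (i) likewise runs through the decomposition $L_T=M_{1,T}+M_{2,T}+M_{3,T}$ (via Lemmas \ref{Lem:PartialLTheta0Tau0} and \ref{Lem:UniformLimitLTPartialThetaPartialTheta}--\ref{Lem:UniformLimitLTPartialTauPartialTheta}, with the ULLN \`a la Wald, the envelope bounds, the $1/T$ scaling killing the $M_{2,T}$, $M_{3,T}$ derivatives, and $\tau(\theta_0)=0_{m\times 1}$ annihilating the $\tau$-linear terms, plus Lemma \ref{Lem:ETEquationFirstDerivatives} for $S_T$), and its proofs of (ii)--(iii) use the same Schur-complement invertibility (Lemma \ref{Lem:BlockMatrixInverse}ii with the invertible block $V$), the partitioned-inverse formula, and continuity of matrix inversion. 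Your blockwise verification of $A^{-1}$ and determinant-continuity step for finite-$T$ invertibility are trivially equivalent to the paper's citations of Magnus--Neudecker and Lemma \ref{Lem:UniFiniteSampleInvertibilityFromUniAsInvertibility}.
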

\begin{proof}\textit{(i)} Under Assumptions \ref{Assp:ExistenceConsistency} and \ref{Assp:AsymptoticNormality}, it follows from  Lemma \ref{Lem:PartialLTheta0Tau0}ii and iii (p. \pageref{Lem:PartialLTheta0Tau0}) and Lemma \ref{Lem:ETEquationFirstDerivatives} (p. \pageref{Lem:ETEquationFirstDerivatives}), given that $\tau(\theta_0)=0_{m \times 1}$ by  Lemma \ref{Lem:AsTiltingFct}ii (p. \pageref{Lem:AsTiltingFct}) and Assumption \ref{Assp:ExistenceConsistency}(c), under  Assumption \ref{Assp:ExistenceConsistency}(a)(b)(d)(e)(g) and (h).

\textit{(ii)}
Assumption \ref{Assp:ExistenceConsistency}(h) implies the invertibility of

\noindent
$\E\left[\e^{\tau(\theta_{0})' \psi(X_{1},\theta_{0})}\psi(X_{1},\theta_{0}) \psi(X_{1},\theta_{0})' \right]=\E\left[\psi(X_1,\theta_{0}) \psi(X_1,\theta_{0})' \right]$ and $\E\left[\e^{\tau(\theta_0)' \psi(X_{1},\theta_{0})}  \frac{\partial \psi(X_{1},\theta_{0})}{\partial \theta' }\right]=\E\left[  \frac{\partial \psi(X_1,\theta_{0})}{\partial \theta' }\right]$  because    $\tau(\theta_0)=0_{m \times 1}$ by Lemma \ref{Lem:AsTiltingFct}iv (p. \pageref{Lem:AsTiltingFct}) under  Assumption \ref{Assp:ExistenceConsistency}(a)-(e)(g)-(h).
Thus, $\E\left[  \frac{\partial \psi(X_1,\theta_{0})'}{\partial \theta }\right]\E\left[\psi(X_1,\theta_{0}) \psi(X_1,\theta_{0})' \right]^{-1}\E\left[  \frac{\partial \psi(X_1,\theta_{0})}{\partial \theta' }\right]$ is also invertible, so that the first part of the statement (ii) follows from  Lemma \ref{Lem:BlockMatrixInverse}ii (p. \pageref{Lem:BlockMatrixInverse}) with $A=0_{m \times m}$, $B=\E\left[  \frac{\partial \psi(X_1,\theta_{0})'}{\partial \theta }\right]$, $C=\E\left[  \frac{\partial \psi(X_1,\theta_{0})}{\partial \theta' }\right]$ and  $D=\E\left[\psi(X_1,\theta_{0}) \psi(X_1,\theta_{0})' \right]$ . Then, the second
part of the statement follows from a trivial case of the Lemma \ref{Lem:UniFiniteSampleInvertibilityFromUniAsInvertibility} (p. \pageref{Lem:UniFiniteSampleInvertibilityFromUniAsInvertibility}).\\
\textit{(iii)} Under under  Assumption \ref{Assp:ExistenceConsistency}(a)(b)(c)(d)(e)(g)(h), by the statement (ii) of the present lemma, the limiting matrix is invertible. Thus, by the inverse formula for partitioned matrices \citep[e.g.,][Chap. 1 Sec. 11]{1988MagnusNeudecker},
\begin{eqnarray*}
\left[ \begin{array}{c c } 0_{m \times m}
                         & \E\left[  \frac{\partial \psi(X_1,\theta_{0})}{\partial \theta' }\right]' \\
                         \E\left[  \frac{\partial \psi(X_1,\theta_{0})}{\partial \theta' }\right]   &\ \E\left[\psi(X_1,\theta_{0}) \psi(X_1,\theta_{0})' \right] \end{array} \right]^{-1}=\left[ \begin{array}{c c } -\Sigma(\theta_0)
                         & \E\left[  \frac{\partial \psi(X_1,\theta_{0})}{\partial \theta' }\right]^{-1} \\
                         \E\left[  \frac{\partial \psi(X_1,\theta_{0})'}{\partial \theta }\right]^{-1}   &\ 0_{m \times 1} \end{array} \right]
\end{eqnarray*}
because $(-M'V^{-1}M)^{-1}= -M^{-1}V (M')^{-1}:=-\Sigma(\theta_0)$. Then, the result follows from the continuity of the inverse transformation  \citep[e.g.,][Theorem 9.8]{1953Rudin}.
\end{proof}

\begin{lem}\label{Lem:PartialLTheta0Tau0} Under Assumptions \ref{Assp:ExistenceConsistency} and \ref{Assp:AsymptoticNormality},
\begin{enumerate}
\item[(i)] $\P$-a.s. as $T \rightarrow \infty$,

\noindent
 $T\frac{\partial  L_{T}(\theta_0, \tau(\theta_0) ) }{\partial \theta_j} \rightarrow \tr\left\{  \left[\E \frac{\partial \psi(X_1, \theta_0)}{\partial \theta'}\right]^{-1} \E\left[ \frac{\partial^2 \psi_t(\theta_{0})}{\partial \theta_j \partial \theta' }\right]   \right\} -\frac{1}{2} \tr
\Big\{
 \left[\E \psi(X_1, \theta_0)\psi(X_1, \theta_0)' \right]^{-1} \\
 \times \Big[  \E \left[\frac{\partial \psi(X_1, \theta_0)}{\partial \theta_j} \psi(X_1, \theta_0)'\right]+ \E \left[\psi(X_1, \theta_0)\frac{\partial \psi(X_1, \theta_0)}{\partial \theta_j} '\right] \Big]\Big\}$, so that $\frac{\partial  L_{T}(\theta_0, \tau(\theta_0) ) }{\partial \theta_j}=O(T^{-1})$;
\item[(ii)] for any sequence $(\theta_T, \tau_T)_{T \in \N}$ converging to $(\theta_0, \tau(\theta_0))$, $\displaystyle\left \vert\frac{\partial^{2}  L_{T}(\theta_{T}, \tau_{T} ) }{\partial \theta_j \partial \theta_\ell}\right\vert=o(1) $, $\P$-a.s. as $T \rightarrow \infty$;
\item[(iii)] for any sequence $(\theta_T, \tau_T)_{T \in \N}$ converging to $(\theta_0, \tau(\theta_0))$, $\displaystyle\left \vert\frac{\partial^{2}  L_{T}(\theta_{T}, \tau_{T} ) }{\partial \theta' \partial \tau}-\E\left[  \frac{\partial \psi(X_{1},\theta_{0})}{\partial \theta' }\right]\right \vert=o(1) $, $\P$-a.s. as $T \rightarrow \infty$.
\end{enumerate}

\end{lem}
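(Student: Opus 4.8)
The plan is to read all three statements directly off the closed-form derivative expressions assembled in Subsection~\ref{Sec:LTAndDerivatives}, combined with (a)~a law of large numbers for the sample averages that serve as building blocks, (b)~continuity of the corresponding limiting expectations together with the invertibility supplied by Lemma~\ref{Lem:LogESPExistenceInS} and Assumption~\ref{Assp:ExistenceConsistency}(h), and (c)~the fact that $\tau(\theta_0)=0_{m\times1}$ (Lemma~\ref{Lem:AsTiltingFct}iv). The structural difference between the parts is that (i) is evaluated at the \emph{fixed} point $(\theta_0,\tau_0)$, so a pointwise SLLN suffices, whereas (ii) and (iii) are evaluated along an arbitrary data-dependent sequence $(\theta_T,\tau_T)\to(\theta_0,\tau_0)$, which rules out a pointwise SLLN and forces a \emph{uniform} law of large numbers \`a la Wald \citep[e.g.,][Theorem 1.3.3]{2003GhoRam} over a neighborhood of $(\theta_0,\tau_0)$, followed by passage to the limit through continuity of the limiting functions.

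For part~(i) I would start from the explicit expression \eqref{Eq:LogESP1stDerivativeWRTThetaAtTheta0} for $\partial L_T(\theta_0,\tau_0)/\partial\theta_j$, in which $\tau_0=0_{m\times1}$ has already eliminated every exponential tilt. Multiplying by $T$ leaves a difference of two traces whose arguments are the sample averages $\frac1T\sum_t\frac{\partial\psi_t(\theta_0)}{\partial\theta'}$, $\frac1T\sum_t\frac{\partial^2\psi_t(\theta_0)}{\partial\theta_j\partial\theta'}$, $\frac1T\sum_t\psi_t(\theta_0)\psi_t(\theta_0)'$ and $\frac1T\sum_t[\frac{\partial\psi_t(\theta_0)}{\partial\theta_j}\psi_t(\theta_0)'+\psi_t(\theta_0)\frac{\partial\psi_t(\theta_0)'}{\partial\theta_j}]$. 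Each converges $\P$-a.s.\ by the SLLN, the required integrability coming from Assumptions~\ref{Assp:ExistenceConsistency}(f),(g) and~\ref{Assp:AsymptoticNormality}(b) (the latter bounding second derivatives by $b(X_1)$ with $\E[b(X_1)]<\infty$, and Cauchy--Schwarz handling the mixed product). Since the two limiting Jacobian/Gram matrices are invertible by Assumption~\ref{Assp:ExistenceConsistency}(h), continuity of matrix inversion and of the trace \citep[e.g.,][Theorem 9.8]{1953Rudin} yields the stated trace limit; because $T\,\partial_{\theta_j}L_T(\theta_0,\tau_0)$ tends to a finite constant, $\partial_{\theta_j}L_T(\theta_0,\tau_0)=O(T^{-1})$.

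For parts~(ii) and~(iii) I would use the decomposition $L_T=M_{1,T}+M_{2,T}+M_{3,T}$ of Lemma~\ref{Lem:LogESPDecomposition} and treat the three summands separately. The crucial bookkeeping observation is that the second derivatives of $M_{2,T}$ and $M_{3,T}$ in \eqref{Eq:DerivativeTerm2ndDerivative}, \eqref{Eq:DerivativeTerm2ndDerivativeTauTheta}, \eqref{Eq:VarianceTerm2ndDerivative} and \eqref{Eq:VarianceTerm2ndDerivativeTau} all carry an explicit prefactor $1/T$ (or $1/(2T)$): the bracketed trace expressions converge uniformly on a neighborhood of $(\theta_0,\tau_0)$ by the ULLN \`a la Wald and remain $O(1)$ because the inverted tilted Jacobian and tilted second-moment matrix stay bounded away from singularity there (Lemma~\ref{Lem:LogESPExistenceInS}iii--iv), so evaluating along $(\theta_T,\tau_T)$ gives $O(T^{-1})=o(1)$. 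The term $M_{1,T}$ has no such prefactor, so here I exploit $\tau_0=0$: in \eqref{Eq:LogETTerm2ndDerivative} every surviving summand is at least linear in $\tau$, and in \eqref{Eq:LogETTerm2ndDerivativeThetaTau} the only summand not carrying a factor $\tau'\frac{\partial\psi_t}{\partial\theta_j}$ is $\frac1T\sum_t\e^{\tau'\psi_t}\frac{\partial\psi_{t,k}(\theta)}{\partial\theta_j}$. Applying the ULLN to these averages and passing to the limit by continuity of their expectations along $(\theta_T,\tau_T)\to(\theta_0,0)$, the $\tau'$-weighted terms vanish; this gives $\partial^2_{\theta_\ell\theta_j}M_{1,T}(\theta_T,\tau_T)\to0$, hence part~(ii), and $\partial^2_{\tau_k\theta_j}M_{1,T}(\theta_T,\tau_T)\to\E[\frac{\partial\psi_{k}(X_1,\theta_0)}{\partial\theta_j}]$, the $(k,j)$ entry of $\E[\frac{\partial\psi(X_1,\theta_0)}{\partial\theta'}]$, which is exactly the limit claimed in part~(iii).

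The main obstacle will be the uniform-convergence step underlying (ii) and (iii): because the derivatives must be evaluated at the random argument $(\theta_T,\tau_T)$ rather than at $(\theta_0,\tau_0)$, each sample-average block needs an integrable envelope valid uniformly over a neighborhood, and these blocks mix $\e^{\tau'\psi}$ with products of up to the third-order derivatives of $\psi$ and with $\psi\psi'$. Supplying those envelopes is precisely the role of the layered domination in Assumption~\ref{Assp:AsymptoticNormality}(b) (with $k_1\in\ldsb1,2\rdsb$, $k_2\in\ldsb1,4\rdsb$ and $j\in\ldsb0,3\rdsb$): the worst case, the quartic term $\e^{\tau'\psi_t}(\tau'\frac{\partial\psi_t}{\partial\theta_\ell})(\tau'\frac{\partial\psi_t}{\partial\theta_j})\psi_t\psi_t'$ in \eqref{Eq:VarianceTerm2ndDerivative}, is bounded in modulus by $|\tau|^2\,\e^{\tau'\psi_t}\,b(X_1)^4$ and hence, since $|\tau|$ is bounded on the compact $\Sbf$, is dominated by the $(k_1,k_2)=(1,4)$ member of Assumption~\ref{Assp:AsymptoticNormality}(b); the lower-order blocks are handled analogously, or by a Cauchy--Schwarz splitting in the spirit of Lemmas~\ref{Lem:TiltedDerivative}i and~\ref{Lem:TiltedVariance}i. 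Keeping the inverse matrices inside the traces uniformly bounded near $(\theta_0,\tau_0)$, again via Lemma~\ref{Lem:LogESPExistenceInS}, is the remaining point demanding care.
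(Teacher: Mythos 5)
Your proposal is correct and follows essentially the same route as the paper: part (i) via the pointwise SLLN applied to the untilted averages in equation \eqref{Eq:LogESP1stDerivativeWRTThetaAtTheta0} (using $\tau(\theta_0)=0_{m\times 1}$ from Lemma \ref{Lem:AsTiltingFct}iv, Assumption \ref{Assp:ExistenceConsistency}(f)--(h) with Cauchy--Schwarz, and continuity of inversion and trace), and parts (ii)--(iii) via the decomposition of Lemma \ref{Lem:LogESPDecomposition}, the ULLN \`a la Wald with the envelopes supplied by Assumption \ref{Assp:AsymptoticNormality}(b) (as organized in Lemmas \ref{Lem:Bound2ndPartialLPartialTheta} and \ref{Lem:Bound2ndPartialLPartialTau}), the $1/T$ prefactors killing the $M_{2,T}$ and $M_{3,T}$ contributions with inverses controlled by Lemma \ref{Lem:LogESPExistenceInS}, and $\tau(\theta_0)=0_{m\times 1}$ annihilating the $\tau$-weighted terms in the $M_{1,T}$ limits --- exactly the content of Lemmas \ref{Lem:UniformLimitLTPartialThetaPartialTheta} and \ref{Lem:UniformLimitLTPartialTauPartialTheta} in the paper. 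No gaps.
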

\begin{proof}\textit{(i)} By equation \eqref{Eq:LogESP1stDerivativeWRTThetaAtTheta0} on p. \pageref{Eq:LogESP1stDerivativeWRTThetaAtTheta0},  under Assumptions \ref{Assp:ExistenceConsistency} and  \ref{Assp:AsymptoticNormality}(a), for all $j \in \ldsb 1, m\rdsb $, $\P$-a.s. for $T$ big enough,  evaluating $\frac{\partial  L_{T}(\theta, \tau) }{\partial \theta_j}$ at $(\theta_0, \tau(\theta_0))$ yields

\begin{eqnarray}
& & \frac{\partial  L_{T}(\theta_{0}, \tau(\theta_0)) }{\partial \theta_j} \nonumber\\
%
%
& = &  \frac{1}{T} \tr\left\{ \left[   \frac{1}{T} \sum_{t=1}^T
\frac{\partial \psi_t(\theta_{0})}{\partial \theta' }  \right]^{-1} \left[  \frac{1}{T} \sum_{t=1}^T
\frac{\partial^2 \psi_t(\theta_{0})}{\partial \theta_j \partial \theta' }\right]   \right\}\nonumber \\
%
%
& & \hspace{.1in} -\frac{1}{2T} \tr
\left\{
 \left[  \frac{1}{T} \sum_{t=1}^T \psi_t(\theta_{0})  \psi_t(\theta_{0})' \right]^{-1}\left[  \frac{1}{T} \sum_{t=1}^T \negthickspace  \left\{ \frac{\partial \psi_t(\theta_{0})}{\partial \theta_j}  \psi_t(\theta_{0})'
 \negthickspace+\negthickspace  \psi_t(\theta_{0})   \frac{\partial \psi_t(\theta_{0})'}{\partial \theta_j}     \right\}  \right]\right\}.
\end{eqnarray}
Now, under Assumption \ref{Assp:ExistenceConsistency}(a)(b), \begin{itemize}
\item under additional Assumption \ref{Assp:ExistenceConsistency}(h), by the LLN and Lemma \ref{Lem:UniFiniteSampleInvertibilityFromUniAsInvertibility} (p. \pageref{Lem:UniFiniteSampleInvertibilityFromUniAsInvertibility}), $\P$-a.s. for $T$ big enough, $\frac{1}{T} \sum_{t=1}^T
\frac{\partial \psi_t(\theta_{0})}{\partial \theta' }$ is invertible, so that $\left[   \frac{1}{T} \sum_{t=1}^T
\frac{\partial \psi_t(\theta_{0})}{\partial \theta' }  \right]^{-1} \rightarrow \left[\E \frac{\partial \psi(X_1, \theta_0)}{\partial \theta'}\right]^{-1}$;
\item under additional Assumption \ref{Assp:AsymptoticNormality}(b), by the LLN, $\frac{1}{T} \sum_{t=1}^T
\frac{\partial^2 \psi_t(\theta_{0})}{\partial \theta_j \partial \theta' }\rightarrow \E\left[ \frac{\partial^2 \psi_t(\theta_{0})}{\partial \theta_j \partial \theta' }\right] $;
\item under additional Assumption  \ref{Assp:ExistenceConsistency}(h), by the LLN and Lemma \ref{Lem:UniFiniteSampleInvertibilityFromUniAsInvertibility} (p. \pageref{Lem:UniFiniteSampleInvertibilityFromUniAsInvertibility}), $\P$-a.s. for $T$ big enough,  $\frac{1}{T} \sum_{t=1}^T \psi_t(\theta_{0})  \psi_t(\theta_{0})'$ is invertible,  so that   $\left[  \frac{1}{T} \sum_{t=1}^T \psi_t(\theta_{0})  \psi_t(\theta_{0})' \right]^{-1}\rightarrow \left[\E \psi(X_1, \theta_0)\psi(X_1, \theta_0)' \right]^{-1}$; and
\item under additional Assumption \ref{Assp:ExistenceConsistency}(f)(g), by the Cauchy-Schwarz inequality and the monotonicity of integration, 

\noindent
$\E \left[\frac{\partial \psi(X_1, \theta_0)}{\partial \theta_j}\psi(X_1, \theta_0)\right]\leqslant \sqrt{\E\left[\sup_{\theta\in \T}\vert \frac{\partial \psi(X_1, \theta)}{\partial \theta_j}\vert^2\right] \E\left[\sup_{\theta\in \T}\vert \psi(X_1, \theta)\vert^2\right]}< \infty$, so that, by the LLN,

\noindent
 $   \frac{1}{T} \sum_{t=1}^T \negthickspace  \left\{ \frac{\partial \psi_t(\theta_{0})}{\partial \theta_j}  \psi_t(\theta_{0})'
 \negthickspace+\negthickspace  \psi_t(\theta_{0})   \frac{\partial \psi_t(\theta_{0})'}{\partial \theta_j}     \right\}  \rightarrow \E \left[\frac{\partial \psi(X_1, \theta_0)}{\partial \theta_j} \psi(X_1, \theta_0)'\right]+ \E \left[\psi(X_1, \theta_0)\frac{\partial \psi(X_1, \theta_0)}{\partial \theta_j} '\right]$ $\P$-a.s. as $T \rightarrow \infty$.
\end{itemize}
Thus, under Assumptions \ref{Assp:ExistenceConsistency} and \ref{Assp:AsymptoticNormality}, for all $j \in \ldsb 1,m \rdsb$, $\P$-a.s. as $T \rightarrow \infty$, $T\frac{\partial  L_{T}(\theta_0, \tau(\theta_0) ) }{\partial \theta_j} \rightarrow \tr\left\{  \left[\E \frac{\partial \psi(X_1, \theta_0)}{\partial \theta'}\right]^{-1} \E\left[ \frac{\partial^2 \psi_t(\theta_{0})}{\partial \theta_j \partial \theta' }\right]   \right\} -\frac{1}{2} \tr
\Big\{
 \left[\E \psi(X_1, \theta_0)\psi(X_1, \theta_0)' \right]^{-1}\Big[  \E \left[\frac{\partial \psi(X_1, \theta_0)}{\partial \theta_j} \psi(X_1, \theta_0)'\right]\\+ \E \left[\psi(X_1, \theta_0)\frac{\partial \psi(X_1, \theta_0)}{\partial \theta_j} '\right] \Big]\Big\}$, so that $\frac{\partial  L_{T}(\theta_{0}, \tau _{0}) }{\partial \theta_j}=\frac{1}{T}O(1)=O(T^{-1})$.

\textit{(ii)}  Under Assumptions \ref{Assp:ExistenceConsistency} and \ref{Assp:AsymptoticNormality}, by  Lemma \ref{Lem:UniformLimitLTPartialThetaPartialTheta} (p. \pageref{Lem:UniformLimitLTPartialThetaPartialTheta}) and Lemma \ref{Lem:LogESPDecomposition} (p. \pageref{Lem:LogESPDecomposition}), $\P$-a.s. as $T \rightarrow \infty$, uniformly over a closed ball around $(\theta_0, \tau(\theta_0))$  with strictly positive radius, $\frac{\partial^{2}  L_{T}(\theta, \tau ) }{\partial \theta_j \partial \theta_\ell} \rightarrow \frac{1   }{\left[ \E \e^{\tau'\psi(X_1,\theta)}\right]}\E\left\{ \e^{\tau'\psi(X_1,\theta)} \left[\tau' \frac{\partial \psi(X_1,\theta)}{\partial \theta_\ell}\right] \left[\tau' \frac{\partial \psi(X_1,\theta)}{\partial\theta_j}\right] +\e^{\tau'\psi(X_1,\theta)}\left[\tau'\frac{\partial^2 \psi(X_1,\theta)}{\partial \theta_j \partial \theta_\ell}\right] \right\} \nonumber   \\
-\frac{ 1 \frac{}{}  }{\left[ \E \e^{\tau'\psi(X_1,\theta)}\right]^2} \E\left[ \e^{\tau'\psi(X_1,\theta)}\tau'\frac{\partial \psi(X_1,\theta)}{\partial\theta_j } \right]  \times\E \left[ \e^{\tau'\psi(X_1,\theta)}\tau'\frac{\partial \psi(X_1,\theta)}{\partial \theta_\ell } \right] $. Now, under Assumption \ref{Assp:ExistenceConsistency}(a)(b)(d) (e)(g) and (h), by Lemma \ref{Lem:AsTiltingFct}ii (p. \pageref{Lem:AsTiltingFct}) and Assumption \ref{Assp:ExistenceConsistency}(c), put $\tau(\theta_0)=0_{m \times 1}$, so that the result follows.

\textit{(iii)}  Under Assumptions \ref{Assp:ExistenceConsistency} and \ref{Assp:AsymptoticNormality}, by  Lemma \ref{Lem:UniformLimitLTPartialTauPartialTheta} (p. \pageref{Lem:UniformLimitLTPartialTauPartialTheta}) and Lemma \ref{Lem:LogESPDecomposition} (p. \pageref{Lem:LogESPDecomposition}), $\P$-a.s. as $T \rightarrow \infty$, uniformly over a closed ball around $(\theta_0, \tau(\theta_0))$  with strictly positive radius, $\frac{\partial^{2}  L_{T}(\theta, \tau ) }{\partial \tau_k \partial \theta_\ell} \rightarrow \frac{1}{ \left[ \E \e^{\tau'\psi(X_1,\theta)}  \right]^2 }\nonumber
 \times
\left\{
  \E\left[\e^{\tau'\psi(X_1,\theta)} \right]\E\left\{\e^{\tau'\psi(X_1,\theta)} \tau' \frac{\partial \psi(X_1,\theta)}{\partial \theta_{\ell}}
 \psi_{k} (X_{1,}\theta) \nonumber
+
\e^{\tau'\psi(X_1,\theta)}  \frac{\partial \psi_{k} (X_{1,}\theta)}{\partial \theta_{\ell}} \right\}
\right.\nonumber
\\
 \left.
\hspace{.5in}
 -\E \left[ \e^{\tau'\psi(X_1,\theta)} \tau' \frac{\partial \psi(X_1,\theta)}{\partial \theta_{\ell}}\right]\E\left[  \e^{\tau'\psi(X_1,\theta)}   \psi_{k}(X_{1,}\theta) \right]
\right\} $. Now, under Assumption \ref{Assp:ExistenceConsistency}(a)(b)(d)(e)(g) and (h), by Lemma \ref{Lem:AsTiltingFct}ii (p. \pageref{Lem:AsTiltingFct}) and Assumption \ref{Assp:ExistenceConsistency}(c),  $\tau(\theta_0)=0_{m \times 1}$, so that $\P$-a.s. as $T \rightarrow \infty$, $\frac{\partial^{2}  L_{T}(\theta_{T}, \tau_{T} ) }{\partial \tau_k \partial \theta_\ell} \rightarrow \E\left[\frac{\partial \psi_{k} (X_{1,}\theta)}{\partial \theta_{\ell}}\right] $. Stack the components together in order to obtain the result.

\end{proof}

\begin{lem}[Uniform limit of $\frac{\partial^{2}  L_{T}(\theta, \tau ) }{\partial \theta_j \partial \theta_\ell}$ in a neighborhood  of $(\theta_0, \tau(\theta_0))$]\label{Lem:UniformLimitLTPartialThetaPartialTheta} Under Assumptions \ref{Assp:ExistenceConsistency} and \ref{Assp:AsymptoticNormality}, for all $(j, \ell)\in \ldsb 1,m\rdsb^2 $, $\P$-a.s. as $T \rightarrow \infty$, uniformly over a closed ball around $(\theta_0, \tau(\theta_0))$ with strictly positive radius,
 \begin{enumerate}
\item[(i)] $\frac{\partial^{2}  M_{1,T}(\theta, \tau ) }{\partial \theta_j \partial \theta_\ell} \rightarrow \frac{1   }{\left[ \E \e^{\tau'\psi(X_1,\theta)}\right]}\E\left\{ \e^{\tau'\psi(X_1,\theta)} \left[\tau' \frac{\partial \psi(X_1,\theta)}{\partial \theta_\ell}\right] \left[\tau' \frac{\partial \psi(X_1,\theta)}{\partial\theta_j}\right] +\e^{\tau'\psi(X_1,\theta)}\left[\tau'\frac{\partial^2 \psi(X_1,\theta)}{\partial \theta_j \partial \theta_\ell}\right] \right\} \nonumber   \\
-\frac{ 1 \frac{}{}  }{\left[ \E \e^{\tau'\psi(X_1,\theta)}\right]^2} \E\left[ \e^{\tau'\psi(X_1,\theta)}\tau'\frac{\partial \psi(X_1,\theta)}{\partial\theta_j } \right]  \times\E \left[ \e^{\tau'\psi(X_1,\theta)}\tau'\frac{\partial \psi(X_1,\theta)}{\partial \theta_\ell } \right] $;
\item[(ii)] $\frac{\partial^{2}  M_{2,T}(\theta, \tau ) }{\partial \theta_j \partial \theta_\ell} \rightarrow 0$;
\item[(iii)] $\frac{\partial^{2}  M_{3,T}(\theta, \tau ) }{\partial \theta_j \partial \theta_\ell} \rightarrow 0$.
\end{enumerate}
\end{lem}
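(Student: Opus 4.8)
The plan is to read off the three closed-form second derivatives already computed in Section~\ref{Sec:LTAndDerivatives} and to push a uniform law of large numbers (ULLN) through each of them. Statement~(i) rests on equation~\eqref{Eq:LogETTerm2ndDerivative}, statement~(ii) on equation~\eqref{Eq:DerivativeTerm2ndDerivative}, and statement~(iii) on equation~\eqref{Eq:VarianceTerm2ndDerivative}. Each of these expressions is assembled from the scalar prefactor $\left(1-\frac{m}{2T}\right)$ (or an explicit factor $\frac{1}{T}$), finitely many sample averages of the generic form $\frac{1}{T}\sum_{t=1}^T \e^{\tau'\psi_t(\theta)} g_t(\theta,\tau)$ where $g_t$ is a product of derivatives of $\psi_t$ of order at most three and coordinates of $\tau$, together with the matrix inverses of $\frac{1}{T}\sum_t \e^{\tau'\psi_t(\theta)}\frac{\partial\psi_t(\theta)}{\partial\theta'}$ and $\frac{1}{T}\sum_t \e^{\tau'\psi_t(\theta)}\psi_t(\theta)\psi_t(\theta)'$. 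The whole lemma is thus a continuous-mapping statement built on top of the uniform convergence of these constituent averages.

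First I would fix the closed ball $\overline{B_{r_\partial}(\theta_0,\tau_0)}\subset\Sbf$ produced in Lemma~\ref{Lem:LogESPExistenceInS}, on which, $\P$-a.s. for $T$ big enough, both of the matrices above are invertible with determinant bounded away from zero by Lemma~\ref{Lem:LogESPExistenceInS}iii--iv. On this compact set I would verify the hypotheses of the ULLN \`a la Wald for every sample average appearing in \eqref{Eq:LogETTerm2ndDerivative}--\eqref{Eq:VarianceTerm2ndDerivative}. The integrands $(\theta,\tau)\mapsto \e^{\tau'\psi(X_1,\theta)}g(X_1,\theta,\tau)$ are continuous in $(\theta,\tau)$ by Assumption~\ref{Assp:ExistenceConsistency}(b) and Assumption~\ref{Assp:AsymptoticNormality}(a), the domain is compact (either the ball, or $\Sbf$ by Lemma~\ref{Lem:TauCorrespondence}iii), and the required integrable envelopes come from Assumption~\ref{Assp:AsymptoticNormality}(b): since $\tau$ ranges over a bounded set, each factor $\tau'\frac{\partial\psi}{\partial\theta_j}$ and $\tau'\frac{\partial^2\psi}{\partial\theta_\ell\partial\theta_j}$ is dominated by a constant times $b(X_1)$, and the heaviest product---appearing in the $M_{3,T}$ derivative, where $\psi\psi'$ is multiplied by two first-order derivative factors---is dominated by a constant times $\e^{\tau'\psi(X_1,\theta)}b(X_1)^4$, whose supremum over $\Sbf$ is integrable by the $k_1=1,\,k_2=4$ case of Assumption~\ref{Assp:AsymptoticNormality}(b). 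The Cauchy--Schwarz splitting already employed in Lemmas~\ref{Lem:TiltedDerivative}i and \ref{Lem:TiltedVariance}i handles the domination term by term. Hence every sample average converges uniformly to its expectation.

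With uniform convergence of the pieces in hand, each of the three derivatives is a continuous function---composed of finite sums, products, the trace, and matrix inversion---of these averages, evaluated on the region where the inverted matrices stay bounded away from singularity; so the continuous mapping theorem, combined with the continuity of $(\theta,\tau)\mapsto\E[\e^{\tau'\psi(X_1,\theta)}\cdots]$ and with $0<\inf_{\Sbf}\E[\e^{\tau'\psi(X_1,\theta)}]$ from Lemma~\ref{Lem:ExpTauPsiStrictlyPositive} (which keeps the denominators away from zero), yields uniform convergence of the full expressions. For~(i), letting $\left(1-\frac{m}{2T}\right)\to 1$ and collecting terms reproduces the stated limit. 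For~(ii) and~(iii), the explicit prefactor $\frac{1}{T}$ (respectively $-\frac{1}{2T}$) multiplies a trace that converges uniformly to a bounded limit and is therefore $O(1)$ uniformly; hence each derivative is $O(T^{-1})=o(1)$, which is exactly the claimed convergence to $0$.

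The main obstacle is the bookkeeping of the envelope conditions rather than any conceptual difficulty: the second derivative of $M_{3,T}$ in~\eqref{Eq:VarianceTerm2ndDerivative} contains many terms mixing $\psi$ with its first, second, and third derivatives and with the inverse matrix, so one must confirm term by term that each product admits an integrable dominating function uniform over the ball. This is precisely what the layered moment bounds of Assumption~\ref{Assp:AsymptoticNormality}(b), with $k_1\in\ldsb 1,2\rdsb$ and $k_2\in\ldsb 1,4\rdsb$, are designed to supply. Once the domination is verified, uniform convergence and the continuous mapping theorem do the rest.
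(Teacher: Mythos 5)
Your proposal is correct and follows essentially the same route as the paper's proof: reading off the explicit second derivatives in equations \eqref{Eq:LogETTerm2ndDerivative}, \eqref{Eq:DerivativeTerm2ndDerivative} and \eqref{Eq:VarianceTerm2ndDerivative}, applying the ULLN \`a la Wald with envelopes supplied by Assumption \ref{Assp:AsymptoticNormality}(b) (the paper packages this bookkeeping in Lemma \ref{Lem:Bound2ndPartialLPartialTheta}, which performs exactly the Cauchy--Schwarz splitting you describe), keeping denominators and inverted matrices under control via Lemmas \ref{Lem:LogESPExistenceInS} and \ref{Lem:ExpTauPsiStrictlyPositive}, and letting the explicit $\frac{1}{T}$ prefactors annihilate the uniformly bounded traces in (ii) and (iii). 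Your occasional direct domination (e.g., bounding the heaviest $M_{3,T}$ term by $\e^{\tau'\psi(X_1,\theta)}b(X_1)^4$ with $k_1=1$, $k_2=4$) is a harmless variant of the paper's Cauchy--Schwarz bounds and does not constitute a different method.
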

\begin{proof}\textit{(i)} Under Assumptions \ref{Assp:ExistenceConsistency} and \ref{Assp:AsymptoticNormality}, by Lemma \ref{Lem:Bound2ndPartialLPartialTheta}i-iv (p. \pageref{Lem:Bound2ndPartialLPartialTheta}), Assumption \ref{Assp:ExistenceConsistency}(a) and (b), all the averages in $\frac{\partial^{2}  M_{1,T}(\theta, \tau ) }{\partial \theta_j \partial \theta_\ell}$ (equation \eqref{Eq:LogETTerm2ndDerivative} on p. \pageref{Eq:LogETTerm2ndDerivative}) satisfy the assumptions of the ULLN \`a la Wald. Moreover, under Assumption \ref{Assp:ExistenceConsistency}(a)-(b) (d)(e)(g) and (h), by Lemma  \ref{Lem:LogESPExistenceInS}i (p. \pageref{Lem:LogESPExistenceInS}) the averages in the denominators are bounded away from zero. Thus, the  result follows  from the ULLN \`a la Wald. Note that the coefficient $\frac{m}{2T}$ vanishes as it goes to zero, as $T \rightarrow \infty$.

\textit{(ii)} Under Assumptions \ref{Assp:ExistenceConsistency} and \ref{Assp:AsymptoticNormality}, by Lemma \ref{Lem:Bound2ndPartialLPartialTheta}v-xi (p. \pageref{Lem:Bound2ndPartialLPartialTheta}), Assumption \ref{Assp:ExistenceConsistency}(a) and (b), all the averages in $\frac{\partial^{2}  M_{2,T}(\theta, \tau ) }{\partial \theta_j \partial \theta_\ell}$ (equation \eqref{Eq:DerivativeTerm2ndDerivative} on p. \pageref{Eq:DerivativeTerm2ndDerivative}) satisfy the assumptions of the ULLN \`a la Wald. Moreover, under Assumption \ref{Assp:ExistenceConsistency}, by Lemma  \ref{Lem:LogESPExistenceInS}iii (p. \pageref{Lem:LogESPExistenceInS}) the averages in the inverted matrices are invertible in a neighborhood of $(\theta_0, \tau(\theta_0))$  $\P$-a.s. for $T$ big enough. Thus, the  result follows  from the ULLN \`a la Wald, the linearity of the trace operator and the scaling by $\frac{1}{T}$.

\textit{(iii)} Under Assumptions \ref{Assp:ExistenceConsistency} and \ref{Assp:AsymptoticNormality}, by Lemma \ref{Lem:Bound2ndPartialLPartialTheta}xii-xix (p. \pageref{Lem:Bound2ndPartialLPartialTheta}), Assumption \ref{Assp:ExistenceConsistency}(a) and (b), all the averages in $\frac{\partial^{2}  M_{3,T}(\theta, \tau ) }{\partial \theta_j \partial \theta_\ell}$ (equation \eqref{Eq:VarianceTerm2ndDerivative} on p. \pageref{Eq:VarianceTerm2ndDerivative}) satisfy the assumptions of the ULLN \`a la Wald. Moreover, under Assumption \ref{Assp:ExistenceConsistency}(a)(b)(e)(g) and (h), by Lemma  \ref{Lem:LogESPExistenceInS}iv (p. \pageref{Lem:LogESPExistenceInS}) the averages in the inverted matrices are invertible in a neighborhood of $(\theta_0, \tau(\theta_0))$,  $\P$-a.s. for $T$ big enough. Thus, the  result follows  from the ULLN \`a la Wald, the linearity of the trace operator and the scaling by $\frac{1}{T}$.
\end{proof}

\begin{lem}[Uniform limit of $\frac{\partial^{2}  L_{T}(\theta, \tau ) }{ \partial \tau_k\partial \theta_l}$ in a neighborhood of $(\theta_0, \tau(\theta_0))$] \label{Lem:UniformLimitLTPartialTauPartialTheta}Under Assumptions \ref{Assp:ExistenceConsistency} and \ref{Assp:AsymptoticNormality}, for all $(k, \ell)\in \ldsb 1,m\rdsb^2 $, $\P$-a.s. as $T \rightarrow \infty$, uniformly over a closed ball around $(\theta_0, \tau(\theta_0))$ with strictly positive radius,
 \begin{enumerate}
\item[(i)] $\frac{\partial^{2}  M_{1,T}(\theta, \tau ) }{\partial \tau_k \partial \theta_\ell} \rightarrow \frac{1}{ \left[ \E \e^{\tau'\psi(X_1,\theta)}  \right]^2 }\nonumber
 \times
\left\{
  \E\left[\e^{\tau'\psi(X_1,\theta)} \right]\E\left[\e^{\tau'\psi(X_1,\theta)} \tau' \frac{\partial \psi(X_1,\theta)}{\partial \theta_{\ell}}
 \psi_{k} (X_{1,}\theta) \nonumber
+
\e^{\tau'\psi(X_1,\theta)}  \frac{\partial \psi_{k} (X_{1,}\theta)}{\partial \theta_{\ell}} \right]
\right.\nonumber
\\
 \left.
\hspace{.5in}
 -\E \left[ \e^{\tau'\psi(X_1,\theta)} \tau' \frac{\partial \psi(X_1,\theta)}{\partial \theta_{\ell}}\right]\E\left[  \e^{\tau'\psi(X_1,\theta)}   \psi_{k}(X_{1,}\theta) \right]
\right\} $;
\item[(ii)] $\frac{\partial^{2}  M_{2,T}(\theta, \tau ) }{\partial\tau_k \partial \theta_\ell} \rightarrow 0$.
\item[(iii)] $\frac{\partial^{2}  M_{3,T}(\theta, \tau ) }{\partial \tau_k\partial \theta_\ell} \rightarrow 0$.
\end{enumerate}
\end{lem}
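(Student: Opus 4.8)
The plan is to follow, \textit{mutatis mutandis}, the proof of Lemma \ref{Lem:UniformLimitLTPartialThetaPartialTheta}, exploiting the fact that the three mixed second derivatives have already been computed in closed form in equations \eqref{Eq:LogETTerm2ndDerivativeThetaTau}, \eqref{Eq:DerivativeTerm2ndDerivativeTauTheta} and \eqref{Eq:VarianceTerm2ndDerivativeTau} on pp. \pageref{Eq:LogETTerm2ndDerivativeThetaTau}--\pageref{Eq:VarianceTerm2ndDerivativeTau}. Each of these expressions is built from finitely many sums, products, ratios and matrix inverses of sample averages of the form $\frac{1}{T}\sum_{t=1}^T \e^{\tau'\psi_t(\theta)} g(X_t,\theta)$, where $g$ is a product of $\psi$ and its partial derivatives of order at most two. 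The strategy for each of the three parts is therefore the same: (i) secure an integrable envelope for every such average, (ii) invoke the ULLN à la Wald on a compact neighborhood to get uniform a.s. convergence of each average to its expectation, and (iii) recombine the pieces using the continuity of the algebraic operations involved.

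For part (i), concerning $M_{1,T}$, I would first note that the averages in the numerators of \eqref{Eq:LogETTerm2ndDerivativeThetaTau} are dominated by the integrable envelopes provided by Lemma \ref{Lem:Bound2ndPartialLPartialTau}, under Assumption \ref{Assp:AsymptoticNormality}(b). Hence the ULLN à la Wald \citep[e.g.,][pp. 24-25, Theorem 1.3.3]{2003GhoRam} yields uniform a.s. convergence of each numerator average over a closed ball around $(\theta_0,\tau(\theta_0))$ contained in $\Sbf$; such a ball exists by Lemma \ref{Lem:LogESPExistenceInS}ii. The denominators $[\frac{1}{T}\sum_{t=1}^T \e^{\tau'\psi_t(\theta)}]^2$ are bounded away from zero uniformly by Lemma \ref{Lem:LogESPExistenceInS}i, so the ratios converge uniformly to the stated limit by the algebra of uniform limits, while the scalar factor $1-\frac{m}{2T}\to 1$.

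For parts (ii) and (iii), concerning $M_{2,T}$ and $M_{3,T}$, the decisive structural feature is the explicit prefactor $\frac{1}{T}$ in \eqref{Eq:DerivativeTerm2ndDerivativeTauTheta} and \eqref{Eq:VarianceTerm2ndDerivativeTau}. Every average inside the traces converges uniformly by the same ULLN argument, and the matrices that are inverted, namely $\frac{1}{T}\sum_{t=1}^T \e^{\tau'\psi_t(\theta)}\frac{\partial \psi_t(\theta)}{\partial\theta'}$ and $\frac{1}{T}\sum_{t=1}^T \e^{\tau'\psi_t(\theta)}\psi_t(\theta)\psi_t(\theta)'$, are invertible in a neighborhood of $(\theta_0,\tau(\theta_0))$ $\P$-a.s. for $T$ big enough by Lemma \ref{Lem:LogESPExistenceInS}iii and iv respectively. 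Consequently the bracketed trace expressions are $O(1)$ uniformly, and multiplication by $\frac{1}{T}$ drives the whole quantity to $o(1)$; linearity of the trace preserves this, giving the limits $0$ in (ii) and (iii).

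The main obstacle is purely the bookkeeping in step (i): checking that every one of the averages entering \eqref{Eq:LogETTerm2ndDerivativeThetaTau}--\eqref{Eq:VarianceTerm2ndDerivativeTau} admits the appropriate integrable dominating function, which is exactly what Lemma \ref{Lem:Bound2ndPartialLPartialTau} supplies from the moment bounds of Assumption \ref{Assp:AsymptoticNormality}(b) with $k_1\in\ldsb 1,2\rdsb$ and $k_2\in\ldsb 1,4\rdsb$. Once those envelopes are in hand, the ULLN together with the invertibility and boundedness-away-from-zero statements of Lemma \ref{Lem:LogESPExistenceInS} do all the remaining work. As a final check, evaluating the part-(i) limit at $\theta_0$ and using $\tau(\theta_0)=0_{m\times 1}$ (Lemma \ref{Lem:AsTiltingFct}ii and Assumption \ref{Assp:ExistenceConsistency}(c)) collapses it to $\E[\partial \psi_k(X_1,\theta_0)/\partial\theta_\ell]$, which is precisely the form in which this limit is used in Lemma \ref{Lem:PartialLTheta0Tau0}iii.
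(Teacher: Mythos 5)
Your proposal is correct and follows essentially the same route as the paper's own proof: envelope bounds from Lemma \ref{Lem:Bound2ndPartialLPartialTau} feeding the ULLN \`a la Wald on a closed ball inside $\Sbf$ (Lemma \ref{Lem:LogESPExistenceInS}), denominators bounded away from zero for $M_{1,T}$, invertibility of the averaged matrices via Lemma \ref{Lem:LogESPExistenceInS}iii--iv, and the explicit $\frac{1}{T}$ prefactor together with trace linearity driving parts (ii) and (iii) to zero. Your closing sanity check at $(\theta_0,\tau(\theta_0))$ matches exactly how the limit is consumed in Lemma \ref{Lem:PartialLTheta0Tau0}iii.
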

\begin{proof} The proof is similar to the one of Lemma \ref{Lem:UniformLimitLTPartialThetaPartialTheta} (p. \pageref{Lem:UniformLimitLTPartialThetaPartialTheta}).
\textit{(i)} Under Assumptions \ref{Assp:ExistenceConsistency} and \ref{Assp:AsymptoticNormality}, by Lemma \ref{Lem:Bound2ndPartialLPartialTau}i-v (p. \pageref{Lem:Bound2ndPartialLPartialTau}), Assumption \ref{Assp:ExistenceConsistency}(a) and (b), all the averages in $\frac{\partial^{2}  M_{1,T}(\theta, \tau ) }{\partial \tau_k \partial \theta_\ell}$ (equation \eqref{Eq:LogETTerm2ndDerivativeThetaTau} on p. \pageref{Eq:LogETTerm2ndDerivativeThetaTau}) satisfy the assumptions of the ULLN \`a la Wald. Moreover, under Assumption \ref{Assp:ExistenceConsistency}(a)-(b) (d)(e)(g) and (h), by Lemma  \ref{Lem:LogESPExistenceInS}i (p. \pageref{Lem:LogESPExistenceInS}) the averages in the denominators are bounded away from zero. Thus, the  result follows  from the ULLN \`a la Wald. Note that the coefficient $\frac{m}{2T}$ vanishes as it goes to zero, as $T \rightarrow \infty$.

\textit{(ii)} Under Assumptions \ref{Assp:ExistenceConsistency} and \ref{Assp:AsymptoticNormality}, by Lemma \ref{Lem:Bound2ndPartialLPartialTau}vi-xii (p. \pageref{Lem:Bound2ndPartialLPartialTau}), Assumption \ref{Assp:ExistenceConsistency}(a) and (b), all the averages in $\frac{\partial^{2}  M_{2,T}(\theta, \tau ) }{\partial \tau_k \partial \theta_\ell}$ (equation \eqref{Eq:DerivativeTerm2ndDerivativeTauTheta} on p. \pageref{Eq:DerivativeTerm2ndDerivativeTauTheta}) satisfy the assumptions of the ULLN \`a la Wald. Moreover, under Assumption \ref{Assp:ExistenceConsistency}, by Lemma  \ref{Lem:LogESPExistenceInS}iii (p. \pageref{Lem:LogESPExistenceInS}) the averages in the inverted matrices are invertible in a neighborhood of $(\theta_0, \tau(\theta_0))$  $\P$-a.s. for $T$ big enough. Thus, the  result follows  from the ULLN \`a la Wald, the linearity of the trace operator and the scaling by $\frac{1}{T}$.

\textit{(iii)} Under Assumptions \ref{Assp:ExistenceConsistency} and \ref{Assp:AsymptoticNormality}, by Lemma \ref{Lem:Bound2ndPartialLPartialTau}xiii-xix (p. \pageref{Lem:Bound2ndPartialLPartialTau}), Assumption \ref{Assp:ExistenceConsistency}(a) and (b), all the averages in $\frac{\partial^{2}  M_{3,T}(\theta, \tau ) }{\partial \tau_k \partial \theta_\ell}$ (equation \eqref{Eq:DerivativeTermDTauDTau}on p. \pageref{Eq:VarianceTerm2ndDerivativeTau}) satisfy the assumptions of the ULLN \`a la Wald. Moreover, under Assumption \ref{Assp:ExistenceConsistency}(a)(b)(e)(g) and (h), by Lemma  \ref{Lem:LogESPExistenceInS}iv (p. \pageref{Lem:LogESPExistenceInS}) the averages in the inverted matrices are invertible in a neighborhood of $(\theta_0, \tau(\theta_0))$,  $\P$-a.s. for $T$ big enough. Thus, the  result follows  from the ULLN \`a la Wald, the linearity of the trace operator and the scaling by $\frac{1}{T}$.
\end{proof}

\begin{lem}\label{Lem:ETEquationFirstDerivatives}  Put $S_T(\theta,\tau):=\frac{1}{T}\sum_{t=1}^T\e^{\tau'\psi_t(\theta)} \psi_t(\theta)$. Under Assumptions  \ref{Assp:ExistenceConsistency} and \ref{Assp:AsymptoticNormality},  there exists a closed ball centered at $(\theta_0' \, \tau(\theta_0)')$ with strictly positive radius s.t., $\P$-a.s. as $T \rightarrow \infty$,
\begin{enumerate}
\item[(i)] $\sup_{(\theta, \tau)\in \overline{B_{r_L}((\theta_0, \tau_{0}))}}\left\vert\frac{\partial S_T(\theta, \tau)}{\partial \theta'} -\E\left[\e^{\tau' \psi(X_1, \theta)}\tau'\frac{\partial \psi(X_1, \theta)}{\partial \theta'} \right]-\E\left[\e^{\tau' \psi(X_1, \theta)}\frac{\partial \psi(X_1, \theta)}{\partial \theta'} \right]\right\vert =o(1)$;
\item[(ii)]  $\sup_{(\theta, \tau)\in \overline{B_{r_L}((\theta_0, \tau_{0}))}}\left\vert\frac{\partial S_T(\theta, \tau)}{\partial \tau'} -\E\left[\e^{\tau' \psi(X_1, \theta)} \psi(X_1, \theta)\psi(X_1, \theta)' \right]\right\vert =o(1)$.
\end{enumerate}
\end{lem}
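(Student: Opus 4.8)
The plan is to compute the two Jacobians of $S_T$ explicitly and then apply the uniform law of large numbers (ULLN) à la Wald to each resulting sample average, exactly along the lines of the proofs of Lemma \ref{Lem:TiltedDerivative} and Lemma \ref{Lem:TiltedVariance}. First I would fix the radius $r_L>0$. Since $\theta_0\in\intr(\T)$ by Assumption \ref{Assp:ExistenceConsistency}(c) and $\tau_0=\tau(\theta_0)=0_{m\times 1}$ is interior to $\Tbf(\theta_0)=\overline{B_{\epsilon_{\Tbf}}(0)}$, Lemma \ref{Lem:LogESPExistenceInS}ii already supplies a ball around $(\theta_0,\tau_0)$ contained in $\Sbf$; shrinking the radius if necessary, I would in addition arrange that the $\theta$-projection of $\overline{B_{r_L}(\theta_0,\tau_0)}$ lies inside the neighborhood $\mathcal{N}$ of Assumption \ref{Assp:AsymptoticNormality}, so that on this ball $\theta\mapsto\psi(X_1,\theta)$ is continuously differentiable $\P$-a.s. and the dominating function $b(\cdot)$ of Assumption \ref{Assp:AsymptoticNormality}(b) is available.

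Next I would differentiate under the (finite) average. Because $\theta\mapsto\psi(X_1,\theta)$ is $C^1$ on $\mathcal{N}$ by Assumption \ref{Assp:AsymptoticNormality}(a) and $(\theta,\tau)\mapsto\e^{\tau'\psi_t(\theta)}$ is smooth, term-by-term differentiation gives, for $(\theta,\tau)\in\overline{B_{r_L}(\theta_0,\tau_0)}$,
\[
\frac{\partial S_T(\theta,\tau)}{\partial\theta'}=\frac{1}{T}\sum_{t=1}^T\e^{\tau'\psi_t(\theta)}\psi_t(\theta)\,\tau'\frac{\partial\psi_t(\theta)}{\partial\theta'}+\frac{1}{T}\sum_{t=1}^T\e^{\tau'\psi_t(\theta)}\frac{\partial\psi_t(\theta)}{\partial\theta'},
\]
\[
\frac{\partial S_T(\theta,\tau)}{\partial\tau'}=\frac{1}{T}\sum_{t=1}^T\e^{\tau'\psi_t(\theta)}\psi_t(\theta)\psi_t(\theta)'.
\]
The target limits in parts (i) and (ii) are the corresponding expectations, so it remains to establish uniform a.s. convergence of each of these three averages over the compact ball.

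For that I would check the hypotheses of the ULLN à la Wald \citep[pp. 24--25, Theorem 1.3.3]{2003GhoRam}: each summand is continuous in $(\theta,\tau)$ by Assumption \ref{Assp:ExistenceConsistency}(b) and Assumption \ref{Assp:AsymptoticNormality}(a), the index set is compact, and an integrable envelope exists. For $\partial S_T/\partial\tau'$ the envelope is immediate from Lemma \ref{Lem:TiltedVariance}i. For the two averages in $\partial S_T/\partial\theta'$ I would use Assumption \ref{Assp:AsymptoticNormality}(b): on the ball $\tau$ is bounded, and $|\psi(X_1,\theta)|\leqslant b(X_1)$, $|\partial\psi(X_1,\theta)/\partial\theta'|\leqslant b(X_1)$, so the two summands are dominated by $C\,\e^{\tau'\psi(X_1,\theta)}b(X_1)^2$ and $C\,\e^{\tau'\psi(X_1,\theta)}b(X_1)$, both of which have finite supremum-expectation by Assumption \ref{Assp:AsymptoticNormality}(b) with $(k_1,k_2)=(1,2)$ and $(1,1)$. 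The ULLN then yields the two $o(1)$ statements, the limiting expectations being continuous by the Lebesgue dominated convergence theorem and Assumption \ref{Assp:ExistenceConsistency}(b).

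I expect the only genuine obstacle to be the envelope for the mixed term $\e^{\tau'\psi}\psi\,(\tau'\partial\psi/\partial\theta')$, a product of three random factors. Under Assumption \ref{Assp:ExistenceConsistency} alone one controls $\e^{2\tau'\psi}$ (Assumption (e)), $|\psi|^4$ (Lemma \ref{Lem:BoundAsTiltingEquation}i) and $|\partial\psi/\partial\theta'|^2$ (Assumption (f)) only separately, and a Hölder split of the triple product would demand a fourth moment of the derivative that is not assumed. This is precisely why the lemma carries the additional Assumption \ref{Assp:AsymptoticNormality}: its common dominating function $b$, together with the joint bound $\E[\sup\e^{k_1\tau'\psi}b^{k_2}]<\infty$, collapses the product into $\e^{\tau'\psi}b^2$ and removes the difficulty on the small ball around $(\theta_0,\tau_0)$.
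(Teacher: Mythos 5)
Your proposal is correct and follows essentially the same route as the paper's proof: compute the two Jacobians of $S_T$ explicitly, restrict to a closed ball inside $\Sbf$ whose $\theta$-projection lies in the neighborhood $\mathcal{N}$ of Assumption \ref{Assp:AsymptoticNormality} (via Lemma \ref{Lem:LogESPExistenceInS}ii), and apply the ULLN \`a la Wald with the envelopes the paper records elsewhere --- your bound $C\,\e^{\tau'\psi(X_1,\theta)}b(X_1)^2$ for the mixed term is exactly Lemma \ref{Lem:dLdTaudTau}x, and Lemma \ref{Lem:TiltedVariance}i handles part (ii), just as in the paper. One remark in your favor: your expression for $\partial S_T(\theta,\tau)/\partial\theta'$ correctly retains the factor $\psi_t(\theta)$ in the first average, whereas the lemma's printed statement (and the paper's own proof, which cites only Lemma \ref{Lem:Bound2ndPartialLPartialTheta}iv--v) omits $\psi(X_1,\theta)$ from the first limiting expectation --- a typo, since $\E\bigl[\e^{\tau'\psi(X_1,\theta)}\tau'\frac{\partial \psi(X_1,\theta)}{\partial\theta'}\bigr]$ is $1\times m$ while $\partial S_T/\partial\theta'$ is $m\times m$ --- so your version is the dimensionally consistent one the paper actually needs (and uses, e.g., in Lemma \ref{Lem:DerivativeImplicitFunctionImplicit}iii).
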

\begin{proof}
\textit{(i)} By definition of $S_T(\theta, \tau)$,
\begin{eqnarray*}
\frac{\partial S(\theta, \tau)}{\partial \theta'}
& = &
\frac{1}{T} \sum_{t=1}^T  \e^{\tau ' \psi_t(\theta) } \psi_{t}(\theta)
\tau'
 \frac{\partial \psi_{t}(\theta)}{
 \partial \theta'}
 + \frac{1}{T} \sum_{t=1}^T  \e^{\tau ' \psi_t(\theta) } \frac{\partial \psi_{t}(\theta)}{
 \partial \theta'}.
\end{eqnarray*}
Thus, by the triangle inequality,
\begin{eqnarray*}
& &\sup_{(\theta, \tau)\in \overline{B_{r_L}((\theta_0, \tau_{0}))}}\left\vert\frac{\partial S_T(\theta, \tau)}{\partial \theta'} -\E\left[\e^{\tau' \psi(X_1, \theta)}\tau'\frac{\partial \psi(X_1, \theta)}{\partial \theta'} \right]-\E\left[\e^{\tau' \psi(X_1, \theta)}\frac{\partial \psi(X_1, \theta)}{\partial \theta'} \right]\right\vert\\
& \leqslant & \sup_{(\theta, \tau)\in \overline{B_{r_L}((\theta_0, \tau_{0}))}}\left\vert\frac{1}{T} \sum_{t=1}^T  \e^{\tau ' \psi_t(\theta) } \psi_{t}(\theta)
\tau'
 \frac{\partial \psi_{t}(\theta)}{
 \partial \theta'}-\E\left[\e^{\tau' \psi(X_1, \theta)}\tau'\frac{\partial \psi(X_1, \theta)}{\partial \theta'} \right]\right\vert\\
& & +\sup_{(\theta, \tau)\in \overline{B_{r_L}((\theta_0, \tau_{0}))}}\left\vert\frac{1}{T} \sum_{t=1}^T  \e^{\tau ' \psi_t(\theta) } \frac{\partial \psi_{t}(\theta)}{
 \partial \theta'}-\E\left[\e^{\tau' \psi(X_1, \theta)}\frac{\partial \psi(X_1, \theta)}{\partial \theta'} \right]\right\vert\\
& = & o(1) \text{ $\P$-a.s. as $T \rightarrow \infty$}
\end{eqnarray*}
 where the last equality follows from the ULLN \`a la Wald by Assumption \ref{Assp:ExistenceConsistency}(a)(b) and   Lemma \ref{Lem:Bound2ndPartialLPartialTheta}iv-v (p. \pageref{Lem:Bound2ndPartialLPartialTheta}), under Assumptions \ref{Assp:ExistenceConsistency} and \ref{Assp:AsymptoticNormality}.

\textit{(ii)}  By definition of $S_T(\theta, \tau)$, $\frac{\partial S_T(\theta, \tau)}{\partial \tau'}=\frac{1}{T}\sum_{t=1}^T\e^{\tau' \psi_t(\theta)}\psi_t(\theta)\psi_t(\theta)'  $. Now, under Assumption \ref{Assp:ExistenceConsistency}(a)-(b)(e) and (g), by Lemma \ref{Lem:TiltedVariance}i (p. \pageref{Lem:TiltedVariance}) and Assumption \ref{Assp:ExistenceConsistency}(a)(b), the assumptions of the ULLN \`a la Wald are satisfied, so that the result follows from the latter.
\end{proof}

\begin{lem}[Finiteness of the expectations of the supremum of the terms from  $\frac{\partial ^{2}L_T(\theta, \tau)}{\partial \theta_{\ell}\partial \theta_j}$]\label{Lem:Bound2ndPartialLPartialTheta} Under Assumptions \ref{Assp:ExistenceConsistency} and \ref{Assp:AsymptoticNormality}, there exists a closed ball $\overline{B_L}\subset \Sbf$ centered at $(\theta_0, \tau(\theta_0))$ with strictly positive radius s.t., for all $(\ell, j)\in \ldsb 1,m\rdsb^2$,

\begin{enumerate}
\item[(i)] $\E\left[\sup_{(\theta, \tau)\in \overline{B_L}}  \e^{\tau'\psi(X_{1,}\theta)} \right] < \infty  $;
\item[(ii)]  $\E\left[\sup_{(\theta, \tau)\in \overline{B_L}} \vert \e^{\tau'\psi(X_1,\theta)}\tau' \frac{\partial \psi(X_1, \theta)}{\partial \theta_{\ell}}\tau' \frac{\partial \psi(X_1, \theta)}{\partial \theta_{j}}\vert \right] < \infty$;
\item[(iii)] $\E\left[\sup_{(\theta, \tau)\in \overline{B_L}} \vert \e^{\tau'\psi(X_1,\theta)}\tau' \frac{\partial ^{2}\psi(X_1, \theta)}{\partial \theta_{j}\partial \theta_{\ell}}\vert \right] < \infty$;
\item[(iv)] $\E\left[\sup_{(\theta, \tau)\in \overline{B_L}} \vert \e^{\tau'\psi(X_1,\theta)}\tau' \frac{\partial \psi(X_1, \theta)}{\partial \theta_{\ell}}\vert \right] < \infty$;
\item[(v)] $\E\left[\sup_{(\theta, \tau)\in \overline{B_L}} \vert \e^{\tau'\psi(X_1,\theta)} \frac{\partial \psi(X_1, \theta)}{\partial \theta'}\vert \right] < \infty$;
\item[(vi)] $\E\left[\sup_{(\theta, \tau)\in \overline{B_L}} \vert \e^{\tau'\psi(X_1,\theta)} \frac{\partial ^{2}\psi(X_1, \theta)}{\partial \theta_{l}\partial \theta'}\vert \right] < \infty$;
\item[(vii)] $\E\left[\sup_{(\theta, \tau)\in \overline{B_L}} \vert \e^{\tau'\psi(X_1,\theta)}\tau' \frac{\partial \psi(X_1, \theta)}{\partial \theta_{\ell}} \frac{\partial \psi(X_1, \theta)}{\partial \theta'}\vert \right] < \infty$;
\item[(viii)]$\E\left[\sup_{(\theta, \tau)\in \overline{B_L}} \vert \e^{\tau'\psi(X_1,\theta)} \frac{\partial ^{3}\psi(X_1, \theta)}{\partial \theta_{l}\partial \theta_{j}\partial \theta'}\vert \right] < \infty$;
\item[(ix)]$\E\left[\sup_{(\theta, \tau)\in \overline{B_L}} \vert \e^{\tau'\psi(X_1,\theta)}\tau' \frac{\partial \psi(X_1, \theta)}{\partial \theta_{\ell}} \frac{\partial^{2} \psi(X_1, \theta)}{\partial \theta_j\partial \theta'}\vert \right] < \infty$;
\item[(x)]$\E\left[\sup_{(\theta, \tau)\in \overline{B_L}} \vert \e^{\tau'\psi(X_1,\theta)}\tau' \frac{\partial^{2} \psi(X_1, \theta)}{\partial \theta_\ell\partial \theta_j} \frac{\partial \psi(X_1, \theta)}{\partial \theta'}\vert \right] < \infty$;
\item[(xi)]$\E\left[\sup_{(\theta, \tau)\in \overline{B_L}} \vert \e^{\tau'\psi(X_1,\theta)}\tau' \frac{\partial \psi(X_1, \theta)}{\partial \theta_\ell} \tau' \frac{\partial \psi(X_1, \theta)}{\partial \theta_j}\frac{\partial \psi(X_1, \theta)}{\partial \theta'}\vert \right] < \infty$;
\item[(xii)]$\E\left[\sup_{(\theta, \tau)\in \overline{B_L}} \vert \e^{\tau'\psi(X_1,\theta)}
\psi(X_1, \theta)\psi(X_1, \theta)'\vert \right] < \infty$;
\item[(xiii)]$\E\left[\sup_{(\theta, \tau)\in \overline{B_L}} \vert \e^{\tau'\psi(X_1,\theta)}
\frac{\partial \psi(X_1, \theta)}{\partial \theta_{\ell}}\psi(X_1, \theta)'\vert \right] < \infty$;
\item[(xiv)] $\E\left[\sup_{(\theta, \tau)\in \overline{B_L}} \vert \e^{\tau'\psi(X_1,\theta)}\tau' \frac{\partial \psi(X_1, \theta)}{\partial \theta_{\ell}}
\psi(X_1, \theta)\psi(X_1, \theta)'\vert \right] < \infty$;
\item[(xv)] $\E\left[\sup_{(\theta, \tau)\in \overline{B_L}} \vert \e^{\tau'\psi(X_1,\theta)} \frac{\partial ^{2}\psi(X_1, \theta)}{\partial \theta_{l}\partial \theta_{j}}\psi(X_1, \theta)'\vert \right] < \infty$;
\item[(xvi)]  $\E\left[\sup_{(\theta, \tau)\in \overline{B_L}} \vert \e^{\tau'\psi(X_1,\theta)} \frac{\partial \psi(X_1, \theta)}{\partial \theta_{\ell}} \frac{\partial \psi(X_1, \theta)}{\partial \theta_{j}}\vert \right] < \infty$;
\item[(xvii)] $\E\left[\sup_{(\theta, \tau)\in \overline{B_L}} \vert \e^{\tau'\psi(X_1,\theta)}(\tau'\frac{\partial \psi(X_1, \theta)}{\partial \theta_{\ell}}) \frac{\partial \psi(X_1, \theta)}{\partial \theta_{j}}\psi(X_1, \theta) '\vert \right] < \infty$;

\item[(xviii)] $\E\left[\sup_{(\theta, \tau)\in \overline{B_L}} \vert \e^{\tau'\psi(X_1,\theta)}(\tau'\frac{\partial ^{2}\psi(X_1, \theta)}{\partial \theta_{\ell}\partial \theta_{j}} )\psi(X_1, \theta)\psi(X_1, \theta) '\vert \right] < \infty$; and
\item[(xix)] $\E\left[\sup_{(\theta, \tau)\in \overline{B_L}} \vert \e^{\tau'\psi(X_1,\theta)}(\tau'\frac{\partial \psi(X_1, \theta)}{\partial \theta_{\ell}}) (\tau'\frac{\partial \psi(X_1, \theta)}{\partial \theta_{j}})\psi(X_1, \theta)\psi(X_1, \theta) '\vert \right] < \infty$.
\end{enumerate}
\end{lem}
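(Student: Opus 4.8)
The plan is to dominate every one of the nineteen terms by a single integrable envelope of the form $C\,b(X_1)^{k_2}\e^{\tau'\psi(X_1,\theta)}$ with $k_2\in\ldsb 0,4\rdsb$, and then to invoke Assumption \ref{Assp:AsymptoticNormality}(b) with $k_1=1$. The whole purpose of Assumption \ref{Assp:AsymptoticNormality}(b) is to furnish exactly such an envelope: the function $b(.)$ dominates $\vert\nabla^j\psi(X_1,\theta)\vert$ uniformly over $\theta\in\mathcal{N}$ for every $j\in\ldsb 0,3\rdsb$, and the moment bound $\E[\sup_{\theta\in\mathcal{N}}\sup_{\tau\in\Tbf(\theta)}\e^{k_1\tau'\psi(X_1,\theta)}b(X_1)^{k_2}]<\infty$ covers $k_1\in\ldsb1,2\rdsb$ and $k_2\in\ldsb1,4\rdsb$. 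Only $k_1=1$ is needed here, since $\vert\e^{\tau'\psi}\vert=\e^{\tau'\psi}$.

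First I would fix the ball $\oBL$. By Lemma \ref{Lem:LogESPExistenceInS}ii there is an open ball $B_r(\theta_0,\tau_0)\subset\Sbf$, and since $\mathcal{N}$ is a neighborhood of $\theta_0$, I can shrink the radius so that the resulting closed ball $\oBL\subset\Sbf$ additionally has its $\theta$-projection contained in $\mathcal{N}$. Two consequences follow. On one hand, $\oBL$ is compact, so $\sup_{(\theta,\tau)\in\oBL}\vert\tau\vert\leqslant M_\tau<\infty$ for some constant $M_\tau$. On the other hand, for every $(\theta,\tau)\in\oBL$ one has $\theta\in\mathcal{N}$ and $\tau\in\Tbf(\theta)$, so that
\begin{eqnarray*}
\E\left[\sup_{(\theta,\tau)\in\oBL}\e^{\tau'\psi(X_1,\theta)}b(X_1)^{k_2}\right]\leqslant\E\left[\sup_{\theta\in\mathcal{N}}\sup_{\tau\in\Tbf(\theta)}\e^{\tau'\psi(X_1,\theta)}b(X_1)^{k_2}\right]<\infty
\end{eqnarray*}
for every $k_2\in\ldsb1,4\rdsb$ by Assumption \ref{Assp:AsymptoticNormality}(b) with $k_1=1$.

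Next, for each term I would bound the factors pointwise. Since $\vert\tau'\nabla^j\psi\vert\leqslant\vert\tau\vert\,\vert\nabla^j\psi\vert\leqslant M_\tau\,b(X_1)$ on $\oBL$ and $\vert\nabla^j\psi\vert\leqslant b(X_1)$ for $j\in\ldsb0,3\rdsb$, every term is the product of $\e^{\tau'\psi}$ with a bounded power of $M_\tau$ and with $k_2$ factors each dominated by $b(X_1)$, where $k_2$ is simply the number of $\psi$-type factors in the term. Counting these factors gives $k_2=1$ in (iii)--(vi) and (viii); $k_2=2$ in (ii), (vii), (ix)--(x), (xii)--(xiii), (xv)--(xvi); $k_2=3$ in (xi), (xiv), (xvii)--(xviii); and $k_2=4$ in (xix). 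As $b(X_1)$ does not depend on $(\theta,\tau)$, one may pull it out of the supremum, so each such term is bounded above by $C\,b(X_1)^{k_2}\sup_{(\theta,\tau)\in\oBL}\e^{\tau'\psi(X_1,\theta)}=C\sup_{(\theta,\tau)\in\oBL}[\e^{\tau'\psi(X_1,\theta)}b(X_1)^{k_2}]$ for a finite constant $C$ depending only on $M_\tau$ and $m$; taking expectations and using the display above gives finiteness. The only special case is (i), where no derivative factor appears: there I would instead use the Cauchy--Schwarz inequality together with Assumption \ref{Assp:ExistenceConsistency}(e), namely $\E[\sup_{(\theta,\tau)\in\oBL}\e^{\tau'\psi}]\leqslant\E[\sup_{(\theta,\tau)\in\Sbf^{\epsilon}}\e^{2\tau'\psi}]^{1/2}<\infty$, exactly as in the proof of Lemma \ref{Lem:AsTiltingFct}i.

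There is no genuine analytic obstacle here; the difficulty is entirely organizational. The point to get right is the matching, term by term, between the combinatorial structure of each second-derivative summand (produced by equations \eqref{Eq:LogETTerm2ndDerivative}, \eqref{Eq:DerivativeTerm2ndDerivative} and \eqref{Eq:VarianceTerm2ndDerivative}) and the admissible ranges $j\in\ldsb0,3\rdsb$ and $k_2\in\ldsb1,4\rdsb$ of Assumption \ref{Assp:AsymptoticNormality}(b): one must verify that no summand contains a derivative of $\psi$ of order exceeding three, nor more than four $\psi$-type factors. Item (xix), with its four factors, is the binding case and is precisely what forces $k_2$ up to $4$ in the assumption; item (viii), with its third-order derivative, is what forces $j$ up to $3$. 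Once the envelope is in place, these are routine verifications.
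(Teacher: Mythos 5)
Your proposal is correct, and it also gets the delicate preliminary right: fixing $\oBL$ requires Lemma \ref{Lem:LogESPExistenceInS}ii to guarantee that $\Sbf$ (the graph of the correspondence $\theta \mapsto \Tbf(\theta)$, not a product set) contains an open ball around $(\theta_0,\tau(\theta_0))$, and then shrinking the radius so the $\theta$-projection lies in $\mathcal{N}$; your treatment of item (i) via Cauchy--Schwarz and Assumption \ref{Assp:ExistenceConsistency}(e) is exactly the paper's. Where you genuinely diverge is in the items containing undifferentiated $\psi$ factors, i.e.\ (xii)--(xv) and (xvii)--(xix). You dominate these through the $j=0$ case of Assumption \ref{Assp:AsymptoticNormality}(b), $\sup_{\theta\in\mathcal{N}}\vert\psi(X_1,\theta)\vert\leqslant b(X_1)$, so that every term ii--xix sits under the single envelope $C\,b(X_1)^{k_2}\e^{\tau'\psi(X_1,\theta)}$ with $k_1=1$ and $k_2\in\ldsb1,4\rdsb$. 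The paper instead splits: terms built only from derivatives of $\psi$ are handled as you do with $k_1=1$, but for the $\psi$-bearing terms it applies Cauchy--Schwarz once more, pairing $\E[\sup \e^{2\tau'\psi}b(X_1)^{k}]^{1/2}$ (so $k_1=2$) against the polynomial moment bounds $\E[\sup_{\theta\in\T^{\epsilon}}\vert\psi\vert^2]$ and $\E[\sup_{\theta\in\T^{\epsilon}}\vert\psi\psi'\vert^2]$ supplied by Assumption \ref{Assp:ExistenceConsistency}(g) through Lemma \ref{Lem:BoundAsTiltingEquation} (and, for (xii), directly Lemma \ref{Lem:TiltedVariance}i). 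Both routes are licensed by the stated assumptions (the paper itself uses $\vert\psi\vert\leqslant b$ elsewhere, e.g.\ in Lemma \ref{Lem:dLdTaudTau}), so the difference is a trade-off rather than a gap: your argument is more uniform and organizationally cleaner, with a single bookkeeping rule (count the $\psi$-type factors to get $k_2$), whereas the paper's split keeps the $\psi$ factors out of the exponential-times-$b$ moment, charging them to the weaker polynomial Assumption \ref{Assp:ExistenceConsistency}(g) and thereby never needing $b$ to dominate $\psi$ itself in this lemma --- a small gain in robustness of the proof to a weakening of Assumption \ref{Assp:AsymptoticNormality}(b) at $j=0$, at the cost of invoking the $k_1=2$ moments. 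Your identification of (xix) and (viii) as the binding cases for $k_2$ and $j$ is accurate under your envelope strategy (under the paper's, the $k_2=4$ case is consumed instead in the Cauchy--Schwarz step of (xvii)).
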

\begin{proof}
\textit{(i)} Under Assumption \ref{Assp:ExistenceConsistency}(a)-(e) and (g)-(h), by Lemma \ref{Lem:LogESPExistenceInS}ii (p. \pageref{Lem:LogESPExistenceInS}), $\Sbf$ contains an open ball centered at $(\theta_0, \tau(\theta_0))$, so that, by the Cauchy-Schwarz inequality, for $\overline{B_L}$ of sufficiently small radius, $\E\left[\sup_{(\theta, \tau)\in \overline{B_L}}  \e^{\tau'\psi(X_{1,}\theta)} \right] \leqslant \sqrt{\E\left[(\sup_{(\theta, \tau)\in \overline{B_L}}  \e^{\tau'\psi(X_{1,}\theta)})^2 \right]}=  \sqrt{\E\left[\sup_{(\theta, \tau)\in \Sbf}  \e^{2\tau'\psi(X_{1,}\theta)} \right]}<  \infty  $  where the equality follows from the fact that supremum of the square of a positive function is the square of the supremum of the function, and the last inequality  from Assumption \ref{Assp:ExistenceConsistency}(e).

\textit{(ii)}  The norm of a product of matrices is smaller than the product of the norms \citep[e.g.,][Theorem 9.7 and note that all norms are equivalent on finite dimensional spaces]{1953Rudin}. Thus,  \\for $\oBL$ of sufficiently small radius,  for all $(\ell, j)\in \ldsb 1,m\rdsb^2$,
\begin{eqnarray*}
&&\ \E\left[\sup_{(\theta, \tau)\in \overline{B_L}} \vert \e^{\tau'\psi(X_1,\theta)}\tau' \frac{\partial \psi(X_1, \theta)}{\partial \theta_{\ell}}\tau' \frac{\partial \psi(X_1, \theta)}{\partial \theta_{j}}\vert\right]\\
& \leqslant & (\sup_{(\theta,\tau) \in \overline{B_L} }\vert \tau \vert^{2})  \E\left[\sup_{(\theta, \tau)\in \overline{B_L}}  \e^{\tau'\psi(X_1,\theta)}\vert \frac{\partial \psi(X_1, \theta)}{\partial \theta_{\ell}}\vert \vert \frac{\partial \psi(X_1, \theta)}{\partial \theta_{j}}\vert\right]\\
& \stackrel{(a)}{\leqslant} & (\sup_{(\theta,\tau) \in \overline{B_L} }\vert \tau \vert^{2}) \E\left[ \sup_{\theta\in \mathcal{N} } \sup_{\tau \in \Tbf(\theta)}\e^{ \tau'\psi(X_1, \theta)} b(X_1)^{2}\right] \stackrel{(b)}{<} \infty.
\end{eqnarray*}
 \textit{(a)}   Firstly, under Assumption \ref{Assp:ExistenceConsistency}(a)-(e) and (g)-(h), by Lemma \ref{Lem:LogESPExistenceInS}ii (p. \pageref{Lem:LogESPExistenceInS}), $\Sbf$ contains an open ball centered at $(\theta_0, \tau(\theta_0))$ Thus, under Assumption \ref{Assp:ExistenceConsistency}(a)-(e) and (g)-(h),  for $\oBL$ of sufficiently small radius, by definition of $\Sbf$, $\oBL \subset\{(\theta, \tau): \theta \in \mathcal{N}\wedge \tau \in \Tbf(\theta) \}\subset \Sbf$, because $ \mathcal{N}\subset \T$  by Assumption \ref{Assp:AsymptoticNormality}(a). Secondly, by Assumption \ref{Assp:AsymptoticNormality}(b), $\sup_{\theta\in \mathcal{N} }\vert \frac{\partial \psi(X_1, \theta)}{\partial \theta_{\ell}}\vert \leqslant b(X)$ and  $\sup_{\theta\in \mathcal{N} }\vert \frac{\partial \psi(X_1, \theta)}{\partial \theta_{j}}\vert\leqslant b(X)$.   \textit{(b)}   Firstly,  $\sup_{(\theta,\tau) \in \overline{B_L} }\vert \tau \vert^{2}< \infty$ because    $ \overline{B_L}$ is bounded. Secondly, by Assumption \ref{Assp:AsymptoticNormality}(b), $\E\left[ \sup_{\theta\in \mathcal{N} } \sup_{\tau \in \Tbf(\theta)}\e^{ \tau'\psi(X_1, \theta)} b(X_1)^{2}\right]< \infty$. \\
\textit{(iii)} Similarly to the proof of statement (ii), under Assumption \ref{Assp:ExistenceConsistency}(a)-(e) and (g)-(h),  for $\oBL$ of sufficiently small radius,  for all $(\ell, j)\in \ldsb 1,m\rdsb^2$,
  $\E\left[\sup_{(\theta, \tau)\in \overline{B_L}} \vert \e^{\tau'\psi(X_1,\theta)}\tau' \frac{\partial ^{2}\psi(X_1, \theta)}{\partial \theta_{j}\partial \theta_{\ell}}\vert \right] \leqslant (\sup_{(\theta,\tau) \in \overline{B_L} }\vert \tau \vert)\\\E\left[\sup_{(\theta, \tau)\in \overline{B_L}}  \e^{\tau'\psi(X_1,\theta)} \vert\frac{\partial ^{2}\psi(X_1, \theta)}{\partial \theta_{j}\partial \theta_{\ell}}\vert \right]  \leqslant (\sup_{(\theta,\tau) \in \overline{B_L} }\vert \tau \vert) \E\left[ \sup_{\theta\in \mathcal{N} } \sup_{\tau \in \Tbf(\theta)}\e^{ \tau'\psi(X_1, \theta)} b(X_1)\right] < \infty$, where the two last inequalities follow from Assumption \ref{Assp:AsymptoticNormality}(b) and the boundedness of $ \overline{B_L}$.

\textit{(iv)} Similarly to the proof of statement (ii), under Assumption \ref{Assp:ExistenceConsistency}(a)-(e) and (g)-(h),  for $\oBL$ of sufficiently small radius,  for all $\ell\in \ldsb 1,m\rdsb$,
$\E\left[\sup_{(\theta, \tau)\in \overline{B_L}} \vert \e^{\tau'\psi(X_1,\theta)}\tau' \frac{\partial \psi(X_1, \theta)}{\partial \theta_{\ell}}\vert \right]\leqslant (\sup_{(\theta,\tau) \in \overline{B_L} }\vert \tau \vert)\\\E\left[\sup_{(\theta, \tau)\in \overline{B_L}}  \e^{\tau'\psi(X_1,\theta)}\vert \frac{\partial \psi(X_1, \theta)}{\partial \theta_{\ell}}\vert \right]  \leqslant (\sup_{(\theta,\tau) \in \overline{B_L} }\vert \tau \vert) \E\left[ \sup_{\theta\in \mathcal{N} } \sup_{\tau \in \Tbf(\theta)}\e^{ \tau'\psi(X_1, \theta)} b(X_1)\right] < \infty$, where the two last inequalities follow from Assumption \ref{Assp:AsymptoticNormality}(b) and the boundedness of $ \overline{B_L}$.

\textit{(v)} Similarly to the proof of statement (ii), under Assumption \ref{Assp:ExistenceConsistency}(a)-(e) and (g)-(h),  for $\oBL$ of sufficiently small radius, $\E\left[\sup_{(\theta, \tau)\in \overline{B_L}} \vert \e^{\tau'\psi(X_1,\theta)} \frac{\partial \psi(X_1, \theta)}{\partial \theta'}\vert \right]\leqslant  \E\left[ \sup_{\theta\in \mathcal{N} } \sup_{\tau \in \Tbf(\theta)}\e^{ \tau'\psi(X_1, \theta)} b(X_1)\right] < \infty$, where the last inequality follows from Assumption \ref{Assp:AsymptoticNormality}(b).

\textit{(vi)} Similarly to the proof of statement (ii), under Assumption \ref{Assp:ExistenceConsistency}(a)-(e) and (g)-(h),  for $\oBL$ of sufficiently small radius, for all $\ell\in \ldsb 1,m\rdsb$, $\E\left[\sup_{(\theta, \tau)\in \overline{B_L}} \vert \e^{\tau'\psi(X_1,\theta)} \frac{\partial ^{2}\psi(X_1, \theta)}{\partial \theta_{\ell}\partial \theta'}\vert \right]\\ \leqslant  \E\left[ \sup_{\theta\in \mathcal{N} } \sup_{\tau \in \Tbf(\theta)}\e^{ \tau'\psi(X_1, \theta)} b(X_1)\right] < \infty$, where the  last inequality follows from Assumption \ref{Assp:AsymptoticNormality}(b).

\textit{(vii)}   Similarly to the proof of statement (ii), under Assumption \ref{Assp:ExistenceConsistency}(a)-(e) and (g)-(h),  for $\oBL$ of sufficiently small radius, for all $\ell\in \ldsb 1,m\rdsb$, $\E\left[\sup_{(\theta, \tau)\in \overline{B_L}} \vert \e^{\tau'\psi(X_1,\theta)}\tau' \frac{\partial \psi(X_1, \theta)}{\partial \theta_{\ell}} \frac{\partial \psi(X_1, \theta)}{\partial \theta'}\vert\right]<(\sup_{(\theta,\tau) \in \overline{B_L} }\vert \tau \vert)  \E\left[\sup_{(\theta, \tau)\in \overline{B_L}}  \e^{\tau'\psi(X_1,\theta)}\vert \frac{\partial \psi(X_1, \theta)}{\partial \theta_{\ell}}\vert \vert \frac{\partial \psi(X_1, \theta)}{\partial \theta'}\vert\right] \leqslant (\sup_{(\theta,\tau) \in \overline{B_L} }\vert \tau \vert)\\ \E\left[ \sup_{\theta\in \mathcal{N} } \sup_{\tau \in \Tbf(\theta)}\e^{ \tau'\psi(X_1, \theta)} b(X_1)^{2}\right]< \infty $ where the two last inequalities follow from Assumption \ref{Assp:AsymptoticNormality}(b) and the boundedness of $ \overline{B_L}$.

\textit{(viii)}  Similarly to the proof of statement (ii), under Assumption \ref{Assp:ExistenceConsistency}(a)-(e) and (g)-(h),  for $\oBL$ of sufficiently small radius, for all $(\ell, j)\in \ldsb 1,m\rdsb^2 $, $\E\left[\sup_{(\theta, \tau)\in \overline{B_L}} \vert \e^{\tau'\psi(X_1,\theta)} \frac{\partial ^{3}\psi(X_1, \theta)}{\partial \theta_{l}\partial \theta_{j}\partial \theta'}\vert \right]\leqslant \E\left[ \sup_{\theta\in \mathcal{N} } \sup_{\tau \in \Tbf(\theta)}\e^{ \tau'\psi(X_1, \theta)} b(X_1)\right]  < \infty$, where the last inequality follows from Assumption \ref{Assp:AsymptoticNormality}(b).

\textit{(ix)} Similarly to the proof of statement (ii), under Assumption \ref{Assp:ExistenceConsistency}(a)-(e) and (g)-(h),  for $\oBL$ of sufficiently small radius,  for all $(\ell, j)\in \ldsb 1,m\rdsb^2$,
 $\E\left[\sup_{(\theta, \tau)\in \overline{B_L}} \vert \e^{\tau'\psi(X_1,\theta)}\tau' \frac{\partial \psi(X_1, \theta)}{\partial \theta_{\ell}}\tau' \frac{\partial ^{2}\psi(X_1, \theta)}{\partial \theta_{j}\partial \theta'}\vert\right]<(\sup_{(\theta,\tau) \in \overline{B_L} }\vert \tau \vert^{2})  \E\left[\sup_{(\theta, \tau)\in \overline{B_L}}  \e^{\tau'\psi(X_1,\theta)}\vert \frac{\partial \psi(X_1, \theta)}{\partial \theta_{\ell}}\vert \vert \frac{\partial^{2} \psi(X_1, \theta)}{\partial \theta_{j}\partial \theta'}\vert\right] \leqslant (\sup_{(\theta,\tau) \in \overline{B_L} }\vert \tau \vert)^{2} \\ \E\left[ \sup_{\theta\in \mathcal{N} } \sup_{\tau \in \Tbf(\theta)}\e^{ \tau'\psi(X_1, \theta)} b(X_1)^{2}\right]< \infty $ where the two last inequalities follow from Assumption \ref{Assp:AsymptoticNormality}(b) and the boundedness of $ \overline{B_L}$.

\textit{(x)} Similarly to the proof of statement (ii), under Assumption \ref{Assp:ExistenceConsistency}(a)-(e) and (g)-(h),  for $\oBL$ of sufficiently small radius,  for all $(\ell, j)\in \ldsb 1,m\rdsb^2$,
  $\E\left[\sup_{(\theta, \tau)\in \overline{B_L}} \vert \e^{\tau'\psi(X_1,\theta)}\tau' \frac{\partial^{2} \psi(X_1, \theta)}{\partial \theta_\ell\partial \theta_j} \frac{\partial \psi(X_1, \theta)}{\partial \theta'}\vert \right] <(\sup_{(\theta,\tau) \in \overline{B_L} }\vert \tau \vert)  \E\left[\sup_{(\theta, \tau)\in \overline{B_L}}  \e^{\tau'\psi(X_1,\theta)}\vert \frac{\partial^{2} \psi(X_1, \theta)}{\partial \theta_\ell\partial \theta_j}\vert \vert \frac{\partial \psi(X_1, \theta)}{\partial \theta'}\vert\right] \\\leqslant (\sup_{(\theta,\tau) \in \overline{B_L} }\vert \tau \vert) \E\left[ \sup_{\theta\in \mathcal{N} } \sup_{\tau \in \Tbf(\theta)}\e^{ \tau'\psi(X_1, \theta)} b(X_1)^{2}\right]< \infty $ where the two last inequalities follow from Assumption \ref{Assp:AsymptoticNormality}(b) and the boundedness of $ \overline{B_L}$.

\textit{(xi)} Similarly to the proof of statement (ii), under Assumption \ref{Assp:ExistenceConsistency}(a)-(e) and (g)-(h),  for $\oBL$ of sufficiently small radius,  for all $(\ell, j)\in \ldsb 1,m\rdsb^2$,
 $\E\left[\sup_{(\theta, \tau)\in \overline{B_L}} \vert \e^{\tau'\psi(X_1,\theta)}\tau' \frac{\partial \psi(X_1, \theta)}{\partial \theta_\ell} \tau' \frac{\partial \psi(X_1, \theta)}{\partial \theta_j}\frac{\partial \psi(X_1, \theta)}{\partial \theta'}\vert \right] <(\sup_{(\theta,\tau) \in \overline{B_L} }\vert \tau \vert^{2}) \E\left[\sup_{(\theta, \tau)\in \overline{B_L}}  \e^{\tau'\psi(X_1,\theta)}\vert \frac{\partial \psi(X_1, \theta)}{\partial \theta_\ell} \vert \vert \frac{\partial \psi(X_1, \theta)}{\partial \theta_j} \vert \vert \frac{\partial \psi(X_1, \theta)}{\partial \theta'}\vert\right] \leqslant (\sup_{(\theta,\tau) \in \overline{B_L} }\vert \tau \vert^{2})\\ \E\left[ \sup_{\theta\in \mathcal{N} } \sup_{\tau \in \Tbf(\theta)}\e^{ \tau'\psi(X_1, \theta)} b(X_1)^{3}\right] < \infty $ where the two last inequalities follow from Assumption \ref{Assp:AsymptoticNormality}(b) and the boundedness of $ \overline{B_L}$.

\textit{(xii)} Under Assumption \ref{Assp:ExistenceConsistency}(a)-(e) and (g)-(h), by Lemma \ref{Lem:LogESPExistenceInS}ii (p. \pageref{Lem:LogESPExistenceInS}), $\Sbf$ contains an open ball centered at $(\theta_0, \tau(\theta_0))$, so that,    for $\overline{B_L}$ of sufficiently small radius, 

\noindent
$\E\left[\sup_{(\theta, \tau)\in \overline{B_L}} \vert \e^{\tau'\psi(X_1,\theta)}
\psi(X_1, \theta)\psi(X_1, \theta)'\vert \right] \leqslant \E\left[\sup_{(\theta, \tau)\in \Sbf^{\epsilon}} \vert \e^{\tau'\psi(X_1,\theta)}
\psi(X_1, \theta)\psi(X_1, \theta)'\vert \right] < \infty $  where the last inequality  follows from Lemma \ref{Lem:TiltedVariance}i (p. \pageref{Lem:TiltedVariance}) under Assumption \ref{Assp:ExistenceConsistency}(a)-(b)(e)(g).

\textit{(xiii)}
The supremum of the absolute value of the product is smaller than the product of the
suprema of the absolute values. Thus,
 under Assumption \ref{Assp:ExistenceConsistency}(a)(b),   for $\overline{B_L}$ of sufficiently small radius, for all $\ell\in \ldsb 1,m\rdsb$,\begin{eqnarray*}
& & \E\left[\sup_{(\theta, \tau)\in \overline{B_L}} \vert \e^{\tau'\psi(X_1,\theta)}
\frac{\partial \psi(X_1, \theta)}{\partial \theta_{\ell}}\psi(X_1, \theta)'\vert \right] \\
& \leqslant & \E\left[\sup_{(\theta, \tau)\in \overline{B_L}} \vert \e^{\tau'\psi(X_1,\theta)}
\frac{\partial \psi(X_1, \theta)}{\partial \theta_{\ell}}\vert\sup_{(\theta, \tau)\in \overline{B_L}}  \vert\psi(X_1, \theta)'\vert \right] \\
& \stackrel{(a)}{\leqslant} & \sqrt{ \E\left[\sup_{(\theta, \tau)\in \overline{B_L}} \vert \e^{\tau'\psi(X_1,\theta)}
\frac{\partial \psi(X_1, \theta)}{\partial \theta_{\ell}}\vert^{2}\right] } \sqrt{\E\left[\sup_{(\theta, \tau)\in \overline{B_L}}  \vert\psi(X_1, \theta)'\vert^2 \right]} \\
& \stackrel{(b)}{\leqslant} & \sqrt{\E\left[ \sup_{\theta\in \mathcal{N} } \sup_{\tau \in \Tbf(\theta)}\e^{ 2\tau'\psi(X_1, \theta)} b(X_1)^{2}\right]} \sqrt{\E\left[\sup_{\theta \in \T^{\epsilon}}  \vert\psi(X_1, \theta)'\vert^2 \right]}  \stackrel{(c)}{<}\infty.
\end{eqnarray*}
\textit{(a)} Apply the Cauchy-Schwarz inequality, and note that the supremum of the square of a positive function is the square of the supremum of the function. \textit{(b)}  Firstly, under Assumption \ref{Assp:ExistenceConsistency}(a)-(e) and (g)-(h),  by Lemma \ref{Lem:LogESPExistenceInS}ii (p. \pageref{Lem:LogESPExistenceInS}), $\Sbf$ contains an open ball centered at $(\theta_0, \tau(\theta_0))$, so that,    for $\overline{B_L}$ of sufficiently small radius, $\oBL \subset\{(\theta, \tau): \theta \in \mathcal{N}\wedge \tau \in \Tbf(\theta) \}\subset \Sbf \subset \Sbf^\epsilon $ because $\mathcal{N}\subset \T$ and $\Sbf=\{(\theta, \tau): \theta \in \T\wedge \tau \in \Tbf(\theta) \}$. Secondly, as the second supremum does not depend on $\tau$, $\sup_{(\theta, \tau)\in \overline{B_L}}  \vert\psi(X_1, \theta)'\vert^2\leqslant \sup_{\theta\in \T^\epsilon}  \vert\psi(X_1, \theta)'\vert^2 $ because $\oBL \subset \Sbf $, for $\oBL$ of radius small enough.  \textit{(c)} By Assumption \ref{Assp:AsymptoticNormality}(b), the  first expectation is bounded. Under Assumption \ref{Assp:ExistenceConsistency}(a)(b)(g), by Lemma \ref{Lem:BoundAsTiltingEquation}i (p. \pageref{Lem:BoundAsTiltingEquation}), the second expectation is also bounded.

\textit{(xiv)}
Proof similar to the one of statement (xiii). The supremum of the absolute value of the product is smaller than the product of the
suprema of the absolute values. Thus,
 under Assumption \ref{Assp:ExistenceConsistency}(a)(b),   for $\overline{B_L}$ of sufficiently small radius, for all $\ell\in \ldsb 1,m\rdsb$,\begin{eqnarray*}
& & \E\left[\sup_{(\theta, \tau)\in \overline{B_L}} \vert \e^{\tau'\psi(X_1,\theta)}\tau' \frac{\partial \psi(X_1, \theta)}{\partial \theta_{\ell}}
\psi(X_1, \theta)\psi(X_1, \theta)'\vert \right]  \\
& \leqslant & (\sup_{(\theta,\tau) \in \overline{B_L} }\vert \tau \vert)\E\left[\sup_{(\theta, \tau)\in \overline{B_L}} \vert \e^{\tau'\psi(X_1,\theta)}
\frac{\partial \psi(X_1, \theta)}{\partial \theta_{\ell}}\vert\sup_{(\theta, \tau)\in \overline{B_L}}  \vert\psi(X_1, \theta)\psi(X_1, \theta)'\vert \right] \\
& \stackrel{(a)}{\leqslant} & (\sup_{(\theta,\tau) \in \overline{B_L} }\vert \tau \vert) \sqrt{ \E\left[\sup_{(\theta, \tau)\in \overline{B_L}} \vert \e^{\tau'\psi(X_1,\theta)}
\frac{\partial \psi(X_1, \theta)}{\partial \theta_{\ell}}\vert^{2}\right] } \sqrt{\E\left[\sup_{(\theta, \tau)\in \overline{B_L}}  \vert\psi(X_1, \theta)\psi(X_1, \theta)'\vert^2 \right]} \\
& \stackrel{(b)}{\leqslant} & (\sup_{(\theta,\tau) \in \overline{B_L} }\vert \tau \vert) \sqrt{\E\left[ \sup_{\theta\in \mathcal{N} } \sup_{\tau \in \Tbf(\theta)}\e^{ 2\tau'\psi(X_1, \theta)} b(X_1)^{2}\right]} \sqrt{\E\left[\sup_{\theta \in \T^{\epsilon}}  \vert\psi(X_1, \theta)\psi(X_1, \theta)'\vert^2 \right]}  \stackrel{(c)}{<}\infty.
\end{eqnarray*}
\textit{(a)} Apply the Cauchy-Schwarz inequality, and note that the supremum of the square of a positive function is the square of the supremum of the function. \textit{(b)}  Firstly, under Assumption \ref{Assp:ExistenceConsistency}(a)-(e) and (g)-(h),  by Lemma \ref{Lem:LogESPExistenceInS}ii (p. \pageref{Lem:LogESPExistenceInS}), $\Sbf$ contains an open ball centered at $(\theta_0, \tau(\theta_0))$, so that,    for $\overline{B_L}$ of sufficiently small radius, $\oBL \subset\{(\theta, \tau): \theta \in \mathcal{N}\wedge \tau \in \Tbf(\theta) \}\subset \Sbf \subset \Sbf^\epsilon $. Secondly, as the second supremum does not depend on $\tau$, $\sup_{(\theta, \tau)\in \overline{B_L}}  \vert \psi(X_1, \theta)\psi(X_1, \theta)'\vert^2 \leqslant \sup_{\theta\in \T^\epsilon}  \vert \psi(X_1, \theta)\psi(X_1, \theta)'\vert^2 $ because $\oBL \subset \Sbf $, for $\oBL$ of radius small enough.  \textit{(c)} Firstly, because $\oBL$ is bounded, $ (\sup_{(\theta,\tau) \in \overline{B_L} }\vert \tau \vert)< \infty$. Secondly, by Assumption \ref{Assp:AsymptoticNormality}(b), the  first expectation is bounded. Thirdly, by Assumption \ref{Assp:ExistenceConsistency}(g), the second expectation is also bounded.

\textit{(xv)}
The proof is the same as for statement (xiii) with $\frac{\partial ^{2}\psi(X_1, \theta)}{\partial \theta_{l}\partial \theta_{j}}$ instead of $\frac{\partial \psi(X_1, \theta)}{\partial \theta_{\ell}}$. The supremum of the absolute value of the product is smaller than the product of the
suprema of the absolute values. Thus,
 under Assumption \ref{Assp:ExistenceConsistency}(a)(b),   for $\overline{B_L}$ of sufficiently small radius, for all $(\ell,j)\in \ldsb 1,m\rdsb^2$,\begin{eqnarray*}
& & \E\left[\sup_{(\theta, \tau)\in \overline{B_L}} \vert \e^{\tau'\psi(X_1,\theta)} \frac{\partial ^{2}\psi(X_1, \theta)}{\partial \theta_{l}\partial \theta_{j}}\psi(X_1, \theta)'\vert \right] \\
& \leqslant & \E\left[\sup_{(\theta, \tau)\in \overline{B_L}} \vert \e^{\tau'\psi(X_1,\theta)}
\frac{\partial ^{2}\psi(X_1, \theta)}{\partial \theta_{l}\partial \theta_{j}}\vert\sup_{(\theta, \tau)\in \overline{B_L}}  \vert\psi(X_1, \theta)'\vert \right] \\
& \stackrel{(a)}{\leqslant} & \sqrt{ \E\left[\sup_{(\theta, \tau)\in \overline{B_L}} \vert \e^{\tau'\psi(X_1,\theta)}
\frac{\partial ^{2}\psi(X_1, \theta)}{\partial \theta_{l}\partial \theta_{j}}\vert^{2}\right] } \sqrt{\E\left[\sup_{(\theta, \tau)\in \overline{B_L}}  \vert\psi(X_1, \theta)'\vert^2 \right]} \\
& \stackrel{(b)}{\leqslant} & \sqrt{\E\left[ \sup_{\theta\in \mathcal{N} } \sup_{\tau \in \Tbf(\theta)}\e^{ 2\tau'\psi(X_1, \theta)} b(X_1)^{2}\right]} \sqrt{\E\left[\sup_{\theta \in \T^{\epsilon}}  \vert\psi(X_1, \theta)'\vert^2 \right]}  \stackrel{(c)}{<}\infty.
\end{eqnarray*}
\textit{(a)} Apply the Cauchy-Schwarz inequality, and note that the supremum of the square of a positive function is the square of the supremum of the function. \textit{(b)}  Firstly, under Assumption \ref{Assp:ExistenceConsistency}(a)-(e) and (g)-(h),  by Lemma \ref{Lem:LogESPExistenceInS}ii (p. \pageref{Lem:LogESPExistenceInS}), $\Sbf$ contains an open ball centered at $(\theta_0, \tau(\theta_0))$, so that,    for $\overline{B_L}$ of sufficiently small radius, $\oBL \subset\{(\theta, \tau): \theta \in \mathcal{N}\wedge \tau \in \Tbf(\theta) \}\subset \Sbf \subset \Sbf^\epsilon $. Secondly, as the second supremum does not depend on $\tau$, $\sup_{(\theta, \tau)\in \overline{B_L}}  \vert\psi(X_1, \theta)'\vert^2\leqslant\sup_{\theta\in \T^\epsilon}  \vert\psi(X_1, \theta)'\vert^2 $ because $\oBL \subset \Sbf $, for $\oBL$ of radius small enough.  \textit{(c)} By Assumption \ref{Assp:AsymptoticNormality}(b), the  first expectation is bounded. Under Assumption \ref{Assp:ExistenceConsistency}(a)(b)(g), by Lemma \ref{Lem:BoundAsTiltingEquation}i (p. \pageref{Lem:BoundAsTiltingEquation}), the second expectation is also bounded.

\textit{(xvi)}  Similarly to the proof of statement (ii), under Assumption \ref{Assp:ExistenceConsistency}(a)-(e) and (g)-(h),  for $\oBL$ of sufficiently small radius,  for all $(\ell, j)\in \ldsb 1,m\rdsb^2$,    $\E\left[\sup_{(\theta, \tau)\in \overline{B_L}} \vert \e^{\tau'\psi(X_1,\theta)} \frac{\partial \psi(X_1, \theta)}{\partial \theta_{\ell}} \frac{\partial \psi(X_1, \theta)}{\partial \theta_{j}}\vert\right]<  \E\left[\sup_{(\theta, \tau)\in \overline{B_L}}  \e^{\tau'\psi(X_1,\theta)}\vert \frac{\partial \psi(X_1, \theta)}{\partial \theta_{\ell}}\vert \vert \frac{\partial \psi(X_1, \theta)}{\partial \theta_{j}}\vert\right]\leqslant  \E\left[ \sup_{\theta\in \mathcal{N} } \sup_{\tau \in \Tbf(\theta)}\e^{ \tau'\psi(X_1, \theta)} b(X_1)^{2}\right]< \infty $ where the  \ last inequality follows from Assumption \ref{Assp:AsymptoticNormality}(b).

\textit{(xvii)} Proof similar to the one of statement (xiii). The norm of a product of matrices is smaller than the product of the norms \citep[e.g.,][Theorem 9.7 and note that all norms are equivalent on finite dimensional spaces]{1953Rudin}. Moreover, the supremum of the absolute value of the product is smaller than the product of the
suprema of the absolute values. Thus,
 under Assumption \ref{Assp:ExistenceConsistency}(a)(b),   for $\overline{B_L}$ of sufficiently small radius, for all $(\ell, j)\in \ldsb 1,m\rdsb^2$,\begin{eqnarray*}
& & \E\left[\sup_{(\theta, \tau)\in \overline{B_L}} \vert \e^{\tau'\psi(X_1,\theta)}(\tau'\frac{\partial \psi(X_1, \theta)}{\partial \theta_{\ell}}) \frac{\partial \psi(X_1, \theta)}{\partial \theta_{j}}\psi(X_1, \theta) '\vert \right] < \infty  \\
& \leqslant & (\sup_{(\theta,\tau) \in \overline{B_L} }\vert \tau \vert)\E\left[\sup_{(\theta, \tau)\in \overline{B_L}} \left( \e^{\tau'\psi(X_1,\theta)}
\vert\frac{\partial \psi(X_1, \theta)}{\partial \theta_{\ell}}\vert\vert\frac{\partial \psi(X_1, \theta)}{\partial \theta_{j}}\vert\right)\sup_{(\theta, \tau)\in \overline{B_L}}  \vert\psi(X_1, \theta)\vert \right] \\
& \stackrel{(a)}{\leqslant} & (\sup_{(\theta,\tau) \in \overline{B_L} }\vert \tau \vert) \sqrt{ \E\left[\sup_{(\theta, \tau)\in \overline{B_L}} \left( \e^{\tau'\psi(X_1,\theta)}
\vert\frac{\partial \psi(X_1, \theta)}{\partial \theta_{\ell}}\vert\vert\frac{\partial \psi(X_1, \theta)}{\partial \theta_{j}}\vert\right)^{2}\right] } \sqrt{\E\left[\sup_{(\theta, \tau)\in \overline{B_L}}  \vert\psi(X_1, \theta)\vert^2 \right]} \\
& \stackrel{(b)}{\leqslant} & (\sup_{(\theta,\tau) \in \overline{B_L} }\vert \tau \vert) \sqrt{\E\left[ \sup_{\theta\in \mathcal{N} } \sup_{\tau \in \Tbf(\theta)}\e^{ 2\tau'\psi(X_1, \theta)} b(X_1)^{4}\right]} \sqrt{\E\left[\sup_{\theta \in \T^{\epsilon}}  \vert\psi(X_1, \theta)\vert^2 \right]}  \stackrel{(c)}{<}\infty.
\end{eqnarray*}
\textit{(a)} Apply the Cauchy-Schwarz inequality, and note that the supremum of the square of a positive function is the square of the supremum of the function. \textit{(b)}  Firstly, under Assumption \ref{Assp:ExistenceConsistency}(a)-(e) and (g)-(h),  by Lemma \ref{Lem:LogESPExistenceInS}ii (p. \pageref{Lem:LogESPExistenceInS}), $\Sbf$ contains an open ball centered at $(\theta_0, \tau(\theta_0))$, so that,    for $\overline{B_L}$ of sufficiently small radius, $\oBL \subset\{(\theta, \tau): \theta \in \mathcal{N}\wedge \tau \in \Tbf(\theta) \}\subset \Sbf \subset \Sbf^\epsilon $. Secondly, as the second supremum does not depend on $\tau$, $\sup_{(\theta, \tau)\in \overline{B_L}}  \vert\psi(X_1, \theta)'\vert^2 \leqslant \sup_{\theta\in \T^\epsilon}  \vert\psi(X_1, \theta)'\vert^2 $ because $\oBL \subset \Sbf $, for $\oBL$ of radius small enough.  \textit{(c)} Firstly, because $\oBL$ is bounded, $ (\sup_{(\theta,\tau) \in \overline{B_L} }\vert \tau \vert)< \infty$. Secondly, by Assumption \ref{Assp:AsymptoticNormality}(b), the  first expectation is bounded. Thirdly, under  Assumption \ref{Assp:ExistenceConsistency}(a)(b)(g), by Lemma \ref{Lem:BoundAsTiltingEquation} (p. \pageref{Lem:BoundAsTiltingEquation}), the second expectation is also bounded.

\textit{(xviii)}
Proof similar to the one of statement (xiii). The supremum of the absolute value of the product is smaller than the product of the
suprema of the absolute values. Thus, under Assumption \ref{Assp:ExistenceConsistency}(a)(b),   for $\overline{B_L}$ of sufficiently small radius, for all $(j,\ell)\in \ldsb 1,m\rdsb^{2}$,\begin{eqnarray*}
& & \E\left[\sup_{(\theta, \tau)\in \overline{B_L}} \vert \e^{\tau'\psi(X_1,\theta)}(\tau'\frac{\partial ^{2}\psi(X_1, \theta)}{\partial \theta_{\ell}\partial \theta_{j}} )\psi(X_1, \theta)\psi(X_1, \theta) '\vert \right]  \\
& \leqslant & (\sup_{(\theta,\tau) \in \overline{B_L} }\vert \tau \vert)\E\left[\sup_{(\theta, \tau)\in \overline{B_L}} \vert \e^{\tau'\psi(X_1,\theta)}
\frac{\partial ^{2}\psi(X_1, \theta)}{\partial \theta_{\ell}\partial \theta_{j}}\vert\sup_{(\theta, \tau)\in \overline{B_L}}  \vert\psi(X_1, \theta)\psi(X_1, \theta)'\vert \right] \\
& \stackrel{(a)}{\leqslant} & (\sup_{(\theta,\tau) \in \overline{B_L} }\vert \tau \vert) \sqrt{ \E\left[\sup_{(\theta, \tau)\in \overline{B_L}} \vert \e^{\tau'\psi(X_1,\theta)}
\frac{\partial ^{2}\psi(X_1, \theta)}{\partial \theta_{\ell}\partial \theta_{j}}\vert^{2}\right] } \sqrt{\E\left[\sup_{(\theta, \tau)\in \overline{B_L}}  \vert\psi(X_1, \theta)\psi(X_1, \theta)'\vert^2 \right]} \\
& \stackrel{(b)}{\leqslant} & (\sup_{(\theta,\tau) \in \overline{B_L} }\vert \tau \vert) \sqrt{\E\left[ \sup_{\theta\in \mathcal{N} } \sup_{\tau \in \Tbf(\theta)}\e^{ 2\tau'\psi(X_1, \theta)} b(X_1)^{2}\right]} \sqrt{\E\left[\sup_{\theta \in \T^{\epsilon}}  \vert\psi(X_1, \theta)\psi(X_1, \theta)'\vert^2 \right]}  \stackrel{(c)}{<}\infty.
\end{eqnarray*}
\textit{(a)} Apply the Cauchy-Schwarz inequality, and note that the supremum of the square of a positive function is the square of the supremum of the function. \textit{(b)}  Firstly, under Assumption \ref{Assp:ExistenceConsistency}(a)-(e) and (g)-(h),  by Lemma \ref{Lem:LogESPExistenceInS}ii (p. \pageref{Lem:LogESPExistenceInS}), $\Sbf$ contains an open ball centered at $(\theta_0, \tau(\theta_0))$, so that,    for $\overline{B_L}$ of sufficiently small radius, $\oBL \subset\{(\theta, \tau): \theta \in \mathcal{N}\wedge \tau \in \Tbf(\theta) \}\subset \Sbf \subset \Sbf^\epsilon $. Secondly, as the second supremum does not depend on $\tau$, $\sup_{(\theta, \tau)\in \overline{B_L}}  \vert\psi(X_1, \theta)'\vert^2 \leqslant \sup_{\theta\in \T^\epsilon}  \vert\psi(X_1, \theta)'\vert^2 $ because $\oBL \subset \Sbf $, for $\oBL$ of radius small enough.  \textit{(c)} Firstly, because $\oBL$ is bounded, $ (\sup_{(\theta,\tau) \in \overline{B_L} }\vert \tau \vert)< \infty$. Secondly, by Assumption \ref{Assp:AsymptoticNormality}(b), the  first expectation is bounded. Thirdly, by Assumption \ref{Assp:ExistenceConsistency}(g), the second expectation is also bounded.

 \textit{(xix)} Proof similar to the one of statement (xiii). The supremum of the absolute value of the product is smaller than the product of the
suprema of the absolute values. Thus,
 under Assumption \ref{Assp:ExistenceConsistency}(a)(b),   for $\overline{B_L}$ of sufficiently small radius, for all $(j,\ell)\in \ldsb 1,m\rdsb^{2}$,\begin{eqnarray*}
& & \E\left[\sup_{(\theta, \tau)\in \overline{B_L}} \vert \e^{\tau'\psi(X_1,\theta)}(\tau'\frac{\partial \psi(X_1, \theta)}{\partial \theta_{\ell}} )(\tau'\frac{\partial \psi(X_1, \theta)}{\partial \theta_{j}} )\psi(X_1, \theta)\psi(X_1, \theta) '\vert \right]  \\
& \leqslant & (\sup_{(\theta,\tau) \in \overline{B_L} }\vert \tau \vert^{2})\E\left[\sup_{(\theta, \tau)\in \overline{B_L}} \left( \e^{\tau'\psi(X_1,\theta)}
\vert\frac{\partial \psi(X_1, \theta)}{\partial \theta_{\ell}}\vert\vert\frac{\partial \psi(X_1, \theta)}{\partial \theta_{j}}\vert\right)\sup_{(\theta, \tau)\in \overline{B_L}}  \vert\psi(X_1, \theta)\psi(X_1, \theta)'\vert \right] \\
& \stackrel{(a)}{\leqslant} & (\sup_{(\theta,\tau) \in \overline{B_L} }\vert \tau \vert^{2}) \sqrt{ \E\left[\sup_{(\theta, \tau)\in \overline{B_L}} \left( \e^{\tau'\psi(X_1,\theta)}
\vert\frac{\partial \psi(X_1, \theta)}{\partial \theta_{\ell}}\vert\vert\frac{\partial \psi(X_1, \theta)}{\partial \theta_{j}}\vert\right)^{2}\right] } \sqrt{\E\left[\sup_{(\theta, \tau)\in \overline{B_L}}  \vert\psi(X_1, \theta)\psi(X_1, \theta)'\vert^2 \right]} \\
& \stackrel{(b)}{\leqslant} & (\sup_{(\theta,\tau) \in \overline{B_L} }\vert \tau \vert^{2}) \sqrt{\E\left[ \sup_{\theta\in \mathcal{N} } \sup_{\tau \in \Tbf(\theta)}\e^{ 2\tau'\psi(X_1, \theta)} b(X_1)^{4}\right]} \sqrt{\E\left[\sup_{\theta \in \T^{\epsilon}}  \vert\psi(X_1, \theta)\psi(X_1, \theta)'\vert^2 \right]}  \stackrel{(c)}{<}\infty.
\end{eqnarray*}
\textit{(a)} Apply the Cauchy-Schwarz inequality, and note that the supremum of the square of a positive function is the square of the supremum of the function. \textit{(b)}  Firstly, under Assumption \ref{Assp:ExistenceConsistency}(a)-(e) and (g)-(h),  by Lemma \ref{Lem:LogESPExistenceInS}ii (p. \pageref{Lem:LogESPExistenceInS}), $\Sbf$ contains an open ball centered at $(\theta_0, \tau(\theta_0))$, so that,    for $\overline{B_L}$ of sufficiently small radius, $\oBL \subset\{(\theta, \tau): \theta \in \mathcal{N}\wedge \tau \in \Tbf(\theta) \}\subset \Sbf \subset \Sbf^\epsilon $. Secondly, as the second supremum does not depend on $\tau$, $\sup_{(\theta, \tau)\in \overline{B_L}}  \vert\psi(X_1, \theta)\psi(X_1, \theta)'\vert^2 \leqslant \sup_{\theta\in \T^\epsilon}  \vert\psi(X_1, \theta)\psi(X_1, \theta)'\vert^2 $ because $\oBL \subset \Sbf $, for $\oBL$ of radius small enough.  \textit{(c)} Firstly, because $\oBL$ is bounded, $ (\sup_{(\theta,\tau) \in \overline{B_L} }\vert \tau \vert^{2})< \infty$. Secondly, by Assumption \ref{Assp:AsymptoticNormality}(b), the  first expectation is bounded. Thirdly, by Assumption \ref{Assp:ExistenceConsistency}(g), the second expectation is also bounded.
 \end{proof}

\begin{lem}[Finiteness of the expectations of the supremum of the terms from $\frac{\partial ^{2}L_T(\theta, \tau)}{\partial \tau_{k}\partial \theta_j}$]\label{Lem:Bound2ndPartialLPartialTau} Under Assumptions \ref{Assp:ExistenceConsistency} and \ref{Assp:AsymptoticNormality}, there exists a closed ball $\overline{B_L}$ centered at $(\theta_0, \tau(\theta_0))$ with strictly positive radius s.t., for all $(k, j)\in \ldsb 1,m\rdsb^2$,

\begin{enumerate}
\item[(i)] $\E\left[\sup_{(\theta, \tau)\in \overline{B_L}}  \e^{\tau'\psi(X_{1,}\theta)} \right] < \infty  $;
\item[(ii)]  $\E\left[\sup_{(\theta, \tau)\in \overline{B_L}} \vert \e^{\tau'\psi(X_1,\theta)}\tau'\frac{\partial  \psi(X_1,\theta)}{\partial \theta_{\ell}}
  \psi_k(X_1,\theta)\vert \right] < \infty$;
\item[(iii)] $\E\left[\sup_{(\theta, \tau)\in \overline{B_L}} \vert \e^{\tau'\psi(X_1,\theta)}\frac{\partial  \psi_k(X_1,\theta)}{\partial \theta_{\ell}}\vert \right] < \infty$;
\item[(iv)] $\E\left[\sup_{(\theta, \tau)\in \overline{B_L}} \vert \e^{\tau'\psi(X_1,\theta)}\tau' \frac{\partial  \psi(X_1,\theta)}{\partial \theta_{\ell}}\vert \right] < \infty$;
\item[(v)] $\E\left[\sup_{(\theta, \tau)\in \overline{B_L}} \vert \e^{\tau'\psi(X_1,\theta)}  \psi_k(X_1,\theta)\vert \right] < \infty$;
\item[(vi)] $\E\left[\sup_{(\theta, \tau)\in \overline{B_L}} \vert \e^{\tau'\psi(X_1,\theta)} \frac{\partial  \psi(X_1,\theta)}{\partial \theta' }\vert \right] < \infty$;
\item[(vii)] $\E\left[\sup_{(\theta, \tau)\in \overline{B_L}} \vert \e^{\tau'\psi(X_1,\theta)} \psi_k(X_1,\theta)
      \frac{\partial  \psi(X_1,\theta)}{\partial \theta' }\vert \right] < \infty$;
\item[(viii)]$\E\left[\sup_{(\theta, \tau)\in \overline{B_L}} \vert \e^{\tau'\psi(X_1,\theta)} \frac{\partial  \psi(X_1,\theta)}{\partial \theta' }
\tau' \frac{\partial  \psi(X_1,\theta)}{\partial \theta_{j} }\vert \right] < \infty$;
\item[(ix)]$\E\left[\sup_{(\theta, \tau)\in \overline{B_L}} \vert \e^{\tau'\psi(X_1,\theta)}\frac{\partial^2  \psi(X_1,\theta)}{ \partial \theta_{j} \partial \theta' }\vert \right] < \infty$;
\item[(x)]$\E\left[\sup_{(\theta, \tau)\in \overline{B_L}} \vert \e^{\tau'\psi(X_1,\theta)} \psi_k(X_1,\theta)
      \frac{\partial  \psi(X_1,\theta)}{\partial \theta' }
\tau' \frac{\partial  \psi(X_1,\theta)}{\partial \theta_{j} } \vert \right] < \infty$;
\item[(xi)]$\E\left[\sup_{(\theta, \tau)\in \overline{B_L}} \vert \e^{\tau'\psi(X_1,\theta)} \psi_k(X_1,\theta)\frac{\partial^2  \psi(X_1,\theta)}{ \partial \theta_{j} \partial \theta' }\vert \right] < \infty$;
\item[(xii)]$\E\left[\sup_{(\theta, \tau)\in \overline{B_L}} \vert \e^{\tau'\psi(X_1,\theta)}
\frac{\partial  \psi(X_1,\theta)}{\partial \theta' }
 \frac{\partial  \psi_k(X_1,\theta)}{\partial \theta_{j} } \vert \right] < \infty$;
\item[(xiii)]$\E\left[\sup_{(\theta, \tau)\in \overline{B_L}} \vert \e^{\tau'\psi(X_1,\theta)}
 \psi(X_1,\theta)   \psi(X_1,\theta)'\vert \right] < \infty$;
\item[(xiv)] $\E\left[\sup_{(\theta, \tau)\in \overline{B_L}} \vert \e^{\tau'\psi(X_1,\theta)} \psi_k(X_1,\theta)
     \psi(X_1,\theta)   \psi(X_1,\theta)'\vert \right] < \infty$;
\item[(xv)] $\E\left[\sup_{(\theta, \tau)\in \overline{B_L}} \vert \e^{\tau'\psi(X_1,\theta)} \tau' \frac{\partial \psi(X_1,\theta)}{\partial \theta_{j}}   \psi(X_1,\theta)   \psi(X_1,\theta)'\vert \right] < \infty$;
\item[(xvi)]  $\E\left[\sup_{(\theta, \tau)\in \overline{B_L}} \vert \e^{\tau'\psi(X_1,\theta)} \frac{\partial \psi(X_1,\theta)}{\partial \theta_{j}}    \psi(X_1,\theta)'\vert \right] < \infty$;
\item[(xvii)] $\E\left[\sup_{(\theta, \tau)\in \overline{B_L}} \vert \e^{\tau'\psi(X_1,\theta)} \psi_k(X_1,\theta)\tau' \frac{\partial \psi(X_1,\theta)}{\partial \theta_{j}}   \psi(X_1,\theta)   \psi(X_1,\theta)'\vert \right] < \infty$;

\item[(xviii)] $\E\left[\sup_{(\theta, \tau)\in \overline{B_L}} \vert \e^{\tau'\psi(X_1,\theta)} \psi_k(X_1,\theta)\frac{\partial \psi(X_1,\theta)}{\partial \theta_{j}}    \psi(X_1,\theta)'\vert \right] < \infty$; and
\item[(xix)] $\E\left[\sup_{(\theta, \tau)\in \overline{B_L}} \vert \e^{\tau'\psi(X_1,\theta)} \frac{\partial  \psi_k(X_1,\theta)}{\partial \theta_{j}}   \psi(X_1,\theta)   \psi(X_1,\theta)'\vert \right] < \infty$.
\end{enumerate}
\end{lem}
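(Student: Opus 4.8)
The plan is to establish the nineteen bounds exactly as in Lemma \ref{Lem:Bound2ndPartialLPartialTheta}, since the listed integrands are precisely the summand types that appear in the cross-derivatives $\frac{\partial^2 M_{1,T}}{\partial\tau_k\partial\theta_j}$, $\frac{\partial^2 M_{2,T}}{\partial\tau_k\partial\theta_j}$ and $\frac{\partial^2 M_{3,T}}{\partial\tau_k\partial\theta_j}$ computed in equations \eqref{Eq:LogETTerm2ndDerivativeThetaTau}, \eqref{Eq:DerivativeTerm2ndDerivativeTauTheta} and \eqref{Eq:VarianceTerm2ndDerivativeTau}. First I would fix the ball: under Assumption \ref{Assp:ExistenceConsistency}(a)-(e) and (g)-(h), Lemma \ref{Lem:LogESPExistenceInS}ii gives that $\Sbf$ contains an open ball centered at $(\theta_0,\tau(\theta_0))$, so for a sufficiently small radius one has $\oBL\subset\{(\theta,\tau):\theta\in\mathcal{N}\wedge\tau\in\Tbf(\theta)\}\subset\Sbf\subset\Sbf^\epsilon$, using $\mathcal{N}\subset\T$ from Assumption \ref{Assp:AsymptoticNormality}(a). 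On $\oBL$ the multiplier is bounded, $\sup_{(\theta,\tau)\in\oBL}|\tau|<\infty$, and the domination $\sup_{\theta\in\mathcal{N}}|\nabla^j\psi(X_1,\theta)|\leqslant b(X_1)$ for $j\in\ldsb0,3\rdsb$ of Assumption \ref{Assp:AsymptoticNormality}(b) is available for every derivative factor and, through its $j=0$ instance, for every factor of $\psi$ itself.

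The two routine mechanisms are then the following. For the integrands built only from the exponential, scalar components $\psi_k$, and derivatives of $\psi$ up to order three, namely items (i)-(xii), I would bound the Euclidean norm of the product by the product of the norms, pull out $\sup_{\oBL}|\tau|$ raised to the appropriate power, replace each remaining $\psi$-factor and derivative factor by $b(X_1)$ on $\mathcal{N}$, and conclude directly from Assumption \ref{Assp:AsymptoticNormality}(b), whose hypothesis $\E[\sup_{\theta\in\mathcal{N}}\sup_{\tau\in\Tbf(\theta)}\e^{k_1\tau'\psi(X_1,\theta)}b(X_1)^{k_2}]<\infty$ covers every exponent that arises, with $k_1=1$ and $k_2$ equal to the number of $b$-factors (which never exceeds three among these items). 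For the integrands carrying a genuine outer-product block $\psi\psi'$ or a free vector factor $\psi$, namely items (xiii)-(xix), I would instead split by the Cauchy-Schwarz inequality into an exponential-times-derivative part, dominated by $\E[\sup_{\oBL}\e^{2\tau'\psi}b^{2k}]^{1/2}$ via Assumption \ref{Assp:AsymptoticNormality}(b), and a pure-$\psi$ part, dominated by $\E[\sup_{\theta\in\T^\epsilon}|\psi\psi'|^2]^{1/2}$ or $\E[\sup_{\theta\in\T^\epsilon}|\psi|^2]^{1/2}$ through Assumption \ref{Assp:ExistenceConsistency}(g) and Lemma \ref{Lem:BoundAsTiltingEquation}i, exactly as in the proof of statement (xiii) of Lemma \ref{Lem:Bound2ndPartialLPartialTheta}; the purely exponential item (xiii) uses Assumption \ref{Assp:ExistenceConsistency}(e) for its first factor.

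The main point of care, rather than any genuine difficulty, is the bookkeeping of exponents, so that every term lands inside the admissible ranges $k_1\in\ldsb1,2\rdsb$ and $k_2\in\ldsb1,4\rdsb$ of Assumption \ref{Assp:AsymptoticNormality}(b). The tightest case is item (xvii), whose integrand $\e^{\tau'\psi}\psi_k(\tau'\frac{\partial\psi}{\partial\theta_j})\psi\psi'$ carries simultaneously an outer product $\psi\psi'$, a loose scalar component $\psi_k$, and a $\tau$-scaled first derivative. A naive Cauchy-Schwarz split that keeps all three factors of $\psi$ together in the pure-$\psi$ part would require a sixth moment of $|\psi|$, which is not assumed; the remedy is to first convert the loose component $\psi_k$ into a factor $b(X_1)$ using the $j=0$ instance of Assumption \ref{Assp:AsymptoticNormality}(b), and only then split, which leaves the exponential at power two and $b$ at power four, precisely the boundary of the assumed moments (the remaining $|\psi|^4$ being controlled by Lemma \ref{Lem:BoundAsTiltingEquation}i). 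The analogous control for items (xiv), (xv), (xvi), (xviii) and (xix) is strictly lighter, so once item (xvii) is arranged the remaining verifications are mechanical and reduce to the same template.
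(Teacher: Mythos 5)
Your proposal is correct and takes essentially the same route as the paper's own proof, which likewise fixes $\oBL\subset\{(\theta,\tau):\theta\in\mathcal{N}\wedge\tau\in\Tbf(\theta)\}\subset\Sbf$ via Lemma \ref{Lem:LogESPExistenceInS}ii, dominates every $\psi$- and derivative-factor by $b(X_1)$ (using $\vert\psi_k\vert\leqslant\vert\psi\vert$ and the $j=0$ instance of Assumption \ref{Assp:AsymptoticNormality}(b)), splits the outer-product items by Cauchy--Schwarz into $\E[\sup\e^{2\tau'\psi}b^{2k}]^{1/2}$ and the $\vert\psi\vert^2$ or $\vert\psi\psi'\vert^2$ moments from Assumption \ref{Assp:ExistenceConsistency}(g) and Lemma \ref{Lem:BoundAsTiltingEquation}i, and handles the boundary case (xvii) exactly as you describe, converting the loose $\psi_k$ into a $b$-factor before splitting so as to land at $k_1=2$, $k_2=4$. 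The only trivial caveats: item (i) carries no $b$-factor, so your blanket rule (with $k_2\geqslant 1$) does not literally cover it and one uses, as in Lemma \ref{Lem:Bound2ndPartialLPartialTheta}i, Cauchy--Schwarz with Assumption \ref{Assp:ExistenceConsistency}(e); and for a few items such as (ii), (vii), (x) and (xi) your direct $k_1=1$ bound is a harmless (indeed slightly more economical) variant of the paper's $k_1=2$ Cauchy--Schwarz treatment, both being covered by the stated assumptions.
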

\begin{proof} The proofs are similar to the ones of  Lemma \ref{Lem:Bound2ndPartialLPartialTheta} (p. \pageref{Lem:Bound2ndPartialLPartialTheta}): We only use more often the  inequality  that states that the norm of a component of a vector is smaller than the norm of the vector (e.g., $\vert\psi_k(X_1, \theta)\vert\leqslant \sqrt{\sum_{l=1}^m \psi_l(X_1, \theta)^2}=\vert \psi(X_1, \theta) \vert$). Thus, we only provide proof sketches.

\textit{(i)} See Lemma \ref{Lem:Bound2ndPartialLPartialTheta}i p. \pageref{Lem:Bound2ndPartialLPartialTheta}.

\textit{(ii)}  For $\overline{B_L}$ of sufficiently small radius, for all $(k,j)\in \ldsb 1,m\rdsb^{2}$,\begin{eqnarray*}
& & \E\left[\sup_{(\theta, \tau)\in \overline{B_L}} \vert \e^{\tau'\psi(X_1,\theta)}\tau'\frac{\partial  \psi(X_1,\theta)}{\partial \theta_{j}}
  \psi_k(X_1,\theta)\vert \right]  \\
& \leqslant & (\sup_{(\theta,\tau) \in \overline{B_L} }\vert \tau \vert)\E\left[\sup_{(\theta, \tau)\in \overline{B_L}} \vert \e^{\tau'\psi(X_1,\theta)}
\frac{\partial \psi(X_1, \theta)}{\partial \theta_{j}}\vert\sup_{(\theta, \tau)\in \overline{B_L}}  \vert\psi_{k}(X_1, \theta)'\vert \right] \\
& \stackrel{}{\leqslant} & (\sup_{(\theta,\tau) \in \overline{B_L} }\vert \tau \vert)\sqrt{ \E\left[\sup_{(\theta, \tau)\in \overline{B_L}} \vert \e^{\tau'\psi(X_1,\theta)}
\frac{\partial \psi(X_1, \theta)}{\partial \theta_{j}}\vert^{2}\right] } \sqrt{\E\left[\sup_{(\theta, \tau)\in \overline{B_L}}  \vert\psi_{k}(X_1, \theta)'\vert^2 \right]} \\
& \stackrel{}{\leqslant} & (\sup_{(\theta,\tau) \in \overline{B_L} }\vert \tau \vert) \sqrt{\E\left[ \sup_{\theta\in \mathcal{N} } \sup_{\tau \in \Tbf(\theta)}\e^{ 2\tau'\psi(X_1, \theta)} b(X_1)^{2}\right]} \sqrt{\E\left[\sup_{\theta \in \T^{\epsilon}}  \vert\psi(X_1, \theta)'\vert^2 \right]}  \stackrel{}{<}\infty,
\end{eqnarray*}
where the last inequality follows from  Assumption \ref{Assp:AsymptoticNormality}(b),  and    Lemma \ref{Lem:BoundAsTiltingEquation}i (p. \pageref{Lem:BoundAsTiltingEquation}), under Assumption \ref{Assp:ExistenceConsistency}(a)(b)(g).

\textit{(iii)} For $\oBL$ of sufficiently small radius,  for all $(k,j)\in \ldsb 1,m\rdsb^{2}$, $\E\left[\sup_{(\theta, \tau)\in \overline{B_L}} \vert \e^{\tau'\psi(X_1,\theta)}\frac{\partial  \psi_k(X_1,\theta)}{\partial \theta_{j}}\vert \right]\leqslant \E\left[\sup_{(\theta, \tau)\in \overline{B_L}}  \e^{\tau'\psi(X_1,\theta)}\vert \frac{\partial \psi_{k}(X_1, \theta)}{\partial \theta'}\vert \right]  \leqslant  \E\left[ \sup_{\theta\in \mathcal{N} } \sup_{\tau \in \Tbf(\theta)}\e^{ \tau'\psi(X_1, \theta)} b(X_1)\right] < \infty$, where the last inequality follows from Assumption \ref{Assp:AsymptoticNormality}(b).

\textit{(iv)} See Lemma \ref{Lem:Bound2ndPartialLPartialTheta}iv p. \pageref{Lem:Bound2ndPartialLPartialTheta}.

\textit{(v)} For $\oBL$ of sufficiently small radius,  for all $k\in \ldsb 1,m\rdsb$, $\E\left[\sup_{(\theta, \tau)\in \overline{B_L}} \vert \e^{\tau'\psi(X_1,\theta)}  \psi_k(X_1,\theta)\vert \right] \leqslant \E\left[\sup_{(\theta, \tau)\in \overline{B_L}} \vert \e^{\tau'\psi(X_1,\theta)}  \psi(X_1,\theta)\vert \right]\leqslant \E\left[\sup_{(\theta, \tau)\in \Sbf^\epsilon} \vert \e^{\tau'\psi(X_1,\theta)}  \psi(X_1,\theta)\vert \right]< \infty$ where the last inequality follows from Lemma \ref{Lem:BoundAsTiltingEquation}ii (p. \pageref{Lem:BoundAsTiltingEquation}) under Assumption \ref{Assp:ExistenceConsistency}(a)(b)(e)(g).

\textit{(vi)} See Lemma \ref{Lem:Bound2ndPartialLPartialTheta}v p. \pageref{Lem:Bound2ndPartialLPartialTheta}.

\textit{(vii)} For $\overline{B_L}$ of sufficiently small radius, for all $k\in \ldsb 1,m\rdsb$,\begin{eqnarray*}
& & \E\left[\sup_{(\theta, \tau)\in \overline{B_L}} \vert \e^{\tau'\psi(X_1,\theta)} \psi_k(X_1,\theta)
      \frac{\partial  \psi(X_1,\theta)}{\partial \theta' }\vert \right]   \\
& \leqslant & \E\left[\sup_{(\theta, \tau)\in \overline{B_L}} \vert \e^{\tau'\psi(X_1,\theta)}
\frac{\partial  \psi(X_1,\theta)}{\partial \theta' }\vert\sup_{(\theta, \tau)\in \overline{B_L}}  \vert\psi_{k}(X_1, \theta)\vert \right] \\
& \stackrel{}{\leqslant} & \sqrt{ \E\left[\sup_{(\theta, \tau)\in \overline{B_L}} \vert \e^{\tau'\psi(X_1,\theta)}
\frac{\partial  \psi(X_1,\theta)}{\partial \theta' }\vert^{2}\right] } \sqrt{\E\left[\sup_{(\theta, \tau)\in \overline{B_L}}  \vert\psi_{k}(X_1, \theta)\vert^2 \right]} \\
& \stackrel{}{\leqslant} &  \sqrt{\E\left[ \sup_{\theta\in \mathcal{N} } \sup_{\tau \in \Tbf(\theta)}\e^{ 2\tau'\psi(X_1, \theta)} b(X_1)^{2}\right]} \sqrt{\E\left[\sup_{\theta \in \T^{\epsilon}}  \vert\psi(X_1, \theta)\vert^2 \right]}  \stackrel{}{<}\infty,
\end{eqnarray*}
where the last inequality follows from Assumption \ref{Assp:AsymptoticNormality}(b) and Lemma \ref{Lem:BoundAsTiltingEquation}i (p. \pageref{Lem:BoundAsTiltingEquation}) under Assumption \ref{Assp:ExistenceConsistency}(a)(b)(g).

\textit{(viii)} For all $j\in \ldsb 1,m\rdsb$, $\E\left[\sup_{(\theta, \tau)\in \overline{B_L}} \vert \e^{\tau'\psi(X_1,\theta)} \frac{\partial  \psi(X_1,\theta)}{\partial \theta' }
\tau' \frac{\partial  \psi(X_1,\theta)}{\partial \theta_{j} }\vert \right]<(\sup_{(\theta,\tau) \in \overline{B_L} }\vert \tau \vert)\\  \E\left[\sup_{(\theta, \tau)\in \overline{B_L}}  \e^{\tau'\psi(X_1,\theta)}\vert \frac{\partial  \psi(X_1,\theta)}{\partial \theta' }\vert \vert \frac{\partial \psi(X_1, \theta)}{\partial \theta'}\vert\right] \leqslant (\sup_{(\theta,\tau) \in \overline{B_L} }\vert \tau \vert) \E\left[ \sup_{\theta\in \mathcal{N} } \sup_{\tau \in \Tbf(\theta)}\e^{ \tau'\psi(X_1, \theta)} b(X_1)^{2}\right]< \infty $ where the two last inequalities follow from Assumption \ref{Assp:AsymptoticNormality}(b) and the boundedness of $ \overline{B_L}$.

\textit{(ix)} See Lemma  \ref{Lem:Bound2ndPartialLPartialTheta}vi p. \pageref{Lem:Bound2ndPartialLPartialTheta}.

\textit{(x)} Under Assumption \ref{Assp:ExistenceConsistency}(a)(b),   for $\overline{B_L}$ of sufficiently small radius, for all $( j,k)\in \ldsb 1,m\rdsb^2$,\begin{eqnarray*}
& & \E\left[\sup_{(\theta, \tau)\in \overline{B_L}} \vert \e^{\tau'\psi(X_1,\theta)} \psi_k(X_1,\theta)
      \frac{\partial  \psi(X_1,\theta)}{\partial \theta' }
\tau' \frac{\partial  \psi(X_1,\theta)}{\partial \theta_{j} } \vert \right] < \infty  \\
& \leqslant & (\sup_{(\theta,\tau) \in \overline{B_L} }\vert \tau \vert)\E\left[\sup_{(\theta, \tau)\in \overline{B_L}} \left( \e^{\tau'\psi(X_1,\theta)}
\vert\frac{\partial  \psi(X_1,\theta)}{\partial \theta' }\vert\vert\frac{\partial \psi(X_1, \theta)}{\partial \theta_{j}}\vert\right)\sup_{(\theta, \tau)\in \overline{B_L}}  \vert\psi_{k}(X_1, \theta)\vert \right] \\
& \stackrel{}{\leqslant} & (\sup_{(\theta,\tau) \in \overline{B_L} }\vert \tau \vert) \sqrt{ \E\left[\sup_{(\theta, \tau)\in \overline{B_L}} \left( \e^{\tau'\psi(X_1,\theta)}
\vert\frac{\partial  \psi(X_1,\theta)}{\partial \theta' }\vert\vert\frac{\partial \psi(X_1, \theta)}{\partial \theta_{j}}\vert\right)^{2}\right] } \sqrt{\E\left[\sup_{(\theta, \tau)\in \overline{B_L}}  \vert\psi_{k}(X_1, \theta)\vert^2 \right]} \\
& \stackrel{}{\leqslant} & (\sup_{(\theta,\tau) \in \overline{B_L} }\vert \tau \vert) \sqrt{\E\left[ \sup_{\theta\in \mathcal{N} } \sup_{\tau \in \Tbf(\theta)}\e^{ 2\tau'\psi(X_1, \theta)} b(X_1)^{4}\right]} \sqrt{\E\left[\sup_{\theta \in \T^{\epsilon}}  \vert\psi(X_1, \theta)\vert^2 \right]}  \stackrel{}{<}\infty,
\end{eqnarray*}
where the last inequality follows from the boundedness of $\oBL$, Assumption \ref{Assp:AsymptoticNormality}(b) and Lemma \ref{Lem:BoundAsTiltingEquation}i (p. \pageref{Lem:BoundAsTiltingEquation}) under Assumption \ref{Assp:ExistenceConsistency}(a)(b)(g) and (e).

\textit{(xi)} Under Assumption \ref{Assp:ExistenceConsistency}(a)(b),   for $\overline{B_L}$ of sufficiently small radius, for all $(j,k)\in \ldsb 1,m\rdsb^2$,\begin{eqnarray*}
& & \E\left[\sup_{(\theta, \tau)\in \overline{B_L}} \vert \e^{\tau'\psi(X_1,\theta)} \psi_k(X_1,\theta)\frac{\partial^2  \psi(X_1,\theta)}{ \partial \theta_{j} \partial \theta' }\vert \right] \\
& \leqslant & \E\left[\sup_{(\theta, \tau)\in \overline{B_L}} \vert \e^{\tau'\psi(X_1,\theta)}
\frac{\partial^2  \psi(X_1,\theta)}{ \partial \theta_{j} \partial \theta' }\vert\sup_{(\theta, \tau)\in \overline{B_L}}  \vert\psi_{k}(X_1, \theta)'\vert \right] \\
& \stackrel{}{\leqslant} & \sqrt{ \E\left[\sup_{(\theta, \tau)\in \overline{B_L}} \vert \e^{\tau'\psi(X_1,\theta)}
\frac{\partial^2  \psi(X_1,\theta)}{ \partial \theta_{j} \partial \theta' }\vert^{2}\right] } \sqrt{\E\left[\sup_{(\theta, \tau)\in \overline{B_L}}  \vert\psi_{k}(X_1, \theta)'\vert^2 \right]} \\
& \stackrel{}{\leqslant} & \sqrt{\E\left[ \sup_{\theta\in \mathcal{N} } \sup_{\tau \in \Tbf(\theta)}\e^{ 2\tau'\psi(X_1, \theta)} b(X_1)^{2}\right]} \sqrt{\E\left[\sup_{\theta \in \T^{\epsilon}}  \vert\psi(X_1, \theta)'\vert^2 \right]}  \stackrel{}{<}\infty,
\end{eqnarray*}
where the last inequality follows from Assumption \ref{Assp:AsymptoticNormality}(b) and Lemma \ref{Lem:BoundAsTiltingEquation} (p. \pageref{Lem:BoundAsTiltingEquation}), under Assumption \ref{Assp:ExistenceConsistency}(a)(b)(g) and (e).

\textit{(xii)} Under Assumption \ref{Assp:ExistenceConsistency}(a)-(e) and (g)-(h),  for $\oBL$ of sufficiently small radius,  for all $ (j,k)\in \ldsb 1,m\rdsb^{2}$,    $\E\left[\sup_{(\theta, \tau)\in \overline{B_L}} \vert \e^{\tau'\psi(X_1,\theta)}
\frac{\partial  \psi(X_1,\theta)}{\partial \theta' }
 \frac{\partial  \psi_k(X_1,\theta)}{\partial \theta_{j} } \vert \right]<  \E\left[\sup_{(\theta, \tau)\in \overline{B_L}}  \e^{\tau'\psi(X_1,\theta)}\vert \frac{\partial  \psi(X_1,\theta)}{\partial \theta' }\vert \vert \frac{\partial \psi(X_1, \theta)}{\partial \theta_{j}}\vert\right]\leqslant  \E\left[ \sup_{\theta\in \mathcal{N} } \sup_{\tau \in \Tbf(\theta)}\e^{ \tau'\psi(X_1, \theta)} b(X_1)^{2}\right]< \infty $ where the two last inequalities follow from Assumption \ref{Assp:AsymptoticNormality}(b).

\textit{(xiii)} See Lemma  \ref{Lem:Bound2ndPartialLPartialTheta}xii p. \pageref{Lem:Bound2ndPartialLPartialTheta}.

\textit{(xiv)} Under Assumption \ref{Assp:ExistenceConsistency}(a)-(e) and (g)-(h),    for $\overline{B_L}$ of sufficiently small radius, for all $k\in \ldsb 1,m\rdsb$,\begin{eqnarray*}
& & \E\left[\sup_{(\theta, \tau)\in \overline{B_L}} \vert \e^{\tau'\psi(X_1,\theta)} \psi_k(X_1,\theta)
     \psi(X_1,\theta)   \psi(X_1,\theta)'\vert \right]   \\
& \leqslant & (\sup_{(\theta,\tau) \in \overline{B_L} }\vert \tau \vert^{2})\E\left[\sup_{(\theta, \tau)\in \overline{B_L}} \left( \e^{\tau'\psi(X_1,\theta)}
\vert\psi_k(X_1,\theta)\vert \right)\sup_{(\theta, \tau)\in \overline{B_L}}  \vert\psi(X_1, \theta)\psi(X_1, \theta)'\vert \right] \\
& \stackrel{}{\leqslant} & (\sup_{(\theta,\tau) \in \overline{B_L} }\vert \tau \vert^{2}) \sqrt{ \E\left[\sup_{(\theta, \tau)\in \overline{B_L}} \left( \e^{\tau'\psi(X_1,\theta)}
\vert\psi_k(X_1,\theta)\vert\right)^{2}\right] } \sqrt{\E\left[\sup_{(\theta, \tau)\in \overline{B_L}}  \vert\psi(X_1, \theta)\psi(X_1, \theta)'\vert^2 \right]} \\
& \stackrel{}{\leqslant} & (\sup_{(\theta,\tau) \in \overline{B_L} }\vert \tau \vert^{2}) \sqrt{\E\left[ \sup_{\theta\in \mathcal{N} } \sup_{\tau \in \Tbf(\theta)}\e^{ 2\tau'\psi(X_1, \theta)} b(X_1)^{2}\right]} \sqrt{\E\left[\sup_{\theta \in \T^{\epsilon}}  \vert\psi(X_1, \theta)\psi(X_1, \theta)'\vert^2 \right]}  \stackrel{}{<}\infty,
\end{eqnarray*}
where the last inequality follows from the boundedness of  $\oBL$, Assumptions \ref{Assp:ExistenceConsistency}(g) and \ref{Assp:AsymptoticNormality}(b).

\textit{(xv)} See Lemma  \ref{Lem:Bound2ndPartialLPartialTheta}xiv p. \pageref{Lem:Bound2ndPartialLPartialTheta}.

\textit{(xvi)} See Lemma  \ref{Lem:Bound2ndPartialLPartialTheta}xiii p. \pageref{Lem:Bound2ndPartialLPartialTheta}.

\textit{(xvii)} Under Assumption \ref{Assp:ExistenceConsistency}(a)(b),   for $\overline{B_L}$ of sufficiently small radius, for all $(k,j)\in \ldsb 1,m\rdsb^{2}$,\begin{eqnarray*}
& & \E\left[\sup_{(\theta, \tau)\in \overline{B_L}} \vert \e^{\tau'\psi(X_1,\theta)} \psi_k(X_1,\theta)\tau' \frac{\partial \psi(X_1,\theta)}{\partial \theta_{j}}   \psi(X_1,\theta)   \psi(X_1,\theta)'\vert \right]   \\
& \leqslant & (\sup_{(\theta,\tau) \in \overline{B_L} }\vert \tau \vert)\E\left[\sup_{(\theta, \tau)\in \overline{B_L}} \left( \e^{\tau'\psi(X_1,\theta)}
\vert\psi_k(X_1,\theta)\vert\vert\frac{\partial \psi(X_1, \theta)}{\partial \theta_{j}}\vert\right)\sup_{(\theta, \tau)\in \overline{B_L}}  \vert\psi(X_1, \theta)\psi(X_1, \theta)'\vert \right] \\
& \stackrel{}{\leqslant} & (\sup_{(\theta,\tau) \in \overline{B_L} }\vert \tau \vert) \sqrt{ \E\left[\sup_{(\theta, \tau)\in \overline{B_L}} \left( \e^{\tau'\psi(X_1,\theta)}
\vert\psi_k(X_1,\theta)\vert\vert\frac{\partial \psi(X_1, \theta)}{\partial \theta_{j}}\vert\right)^{2}\right] } \sqrt{\E\left[\sup_{(\theta, \tau)\in \overline{B_L}}  \vert\psi(X_1, \theta)\psi(X_1, \theta)'\vert^2 \right]} \\
& \stackrel{}{\leqslant} & \negthickspace(\negthickspace\negthickspace\sup_{(\theta,\tau) \in \overline{B_L} }\negthickspace\negthickspace\negthickspace\vert \tau \vert) \sqrt{ \E\left[\negthickspace\sup_{\theta\in \mathcal{N} } \sup_{\tau \in \Tbf(\theta)}\negthickspace  \e^{2\tau'\psi(X_1,\theta)}
b(X)^{4}\right] }  \negthickspace\sqrt{\E\left[\negthickspace\sup_{(\theta, \tau)\in \overline{B_L}}  \negthickspace\vert\psi(X_1, \theta)\psi(X_1, \theta)'\vert^2 \right]} \\
& \stackrel{}{<}&\infty,
\end{eqnarray*}
where the last inequality follows from the boundedness of  $\oBL$, Assumption \ref{Assp:AsymptoticNormality}(b) and Assumption \ref{Assp:ExistenceConsistency}(g).

\textit{(xviii)} Under Assumption \ref{Assp:ExistenceConsistency}(a)(b),   for $\overline{B_L}$ of sufficiently small radius, for all $(k,j)\in \ldsb 1,m\rdsb^{2}$,\begin{eqnarray*}
& & \E\left[\sup_{(\theta, \tau)\in \overline{B_L}} \vert \e^{\tau'\psi(X_1,\theta)} \psi_k(X_1,\theta)\frac{\partial \psi(X_1,\theta)}{\partial \theta_{j}}    \psi(X_1,\theta)'\vert \right]  \\
& \leqslant & \E\left[\sup_{(\theta, \tau)\in \overline{B_L}} \vert \e^{\tau'\psi(X_1,\theta)}
\psi_k(X_1,\theta)\frac{\partial \psi(X_1, \theta)}{\partial \theta_{j}}\vert\sup_{(\theta, \tau)\in \overline{B_L}}  \vert\psi(X_1, \theta)'\vert \right] \\
& \stackrel{}{\leqslant} & \sqrt{ \E\left[\sup_{(\theta, \tau)\in \overline{B_L}} \vert \e^{\tau'\psi(X_1,\theta)}\psi_k(X_1,\theta)
\frac{\partial \psi(X_1, \theta)}{\partial \theta_{j}}\vert^{2}\right] } \sqrt{\E\left[\sup_{(\theta, \tau)\in \overline{B_L}}  \vert\psi(X_1, \theta)\vert^2 \right]} \\
& \stackrel{}{\leqslant} & \sqrt{\E\left[ \sup_{\theta\in \mathcal{N} } \sup_{\tau \in \Tbf(\theta)}\e^{ 2\tau'\psi(X_1, \theta)} b(X_1)^{4}\right]} \sqrt{\E\left[\sup_{\theta \in \T^{\epsilon}}  \vert\psi(X_1, \theta)\vert^2 \right]}  \stackrel{}{<}\infty,
\end{eqnarray*}
where the last inequality follows from  Assumption \ref{Assp:AsymptoticNormality}(b), and Lemma \ref{Lem:BoundAsTiltingEquation}i (p. \pageref{Lem:BoundAsTiltingEquation}), under Assumption \ref{Assp:ExistenceConsistency}(a)(b)(g).

\textit{(xix)}  Under Assumption \ref{Assp:ExistenceConsistency}(a)(b),   for $\overline{B_L}$ of sufficiently small radius, for all $(k,j)\in \ldsb 1,m\rdsb^{2}$,\begin{eqnarray*}
& & \E\left[\sup_{(\theta, \tau)\in \overline{B_L}} \vert \e^{\tau'\psi(X_1,\theta)} \frac{\partial  \psi_k(X_1,\theta)}{\partial \theta_{j}}   \psi(X_1,\theta)   \psi(X_1,\theta)'\vert \right] \\
& \leqslant & \E\left[\sup_{(\theta, \tau)\in \overline{B_L}} \vert \e^{\tau'\psi(X_1,\theta)}
\frac{\partial \psi_{k}(X_1, \theta)}{\partial \theta_{j}}\vert\sup_{(\theta, \tau)\in \overline{B_L}}  \vert\psi(X_1,\theta)\psi(X_1, \theta)'\vert \right] \\
& \stackrel{}{\leqslant} & \sqrt{ \E\left[\sup_{(\theta, \tau)\in \overline{B_L}} \vert \e^{\tau'\psi(X_1,\theta)}
\frac{\partial \psi_{k}(X_1, \theta)}{\partial \theta_{j}}\vert^{2}\right] } \sqrt{\E\left[\sup_{(\theta, \tau)\in \overline{B_L}}  \vert\psi(X_1,\theta)\psi(X_1, \theta)'\vert^2 \right]} \\
& \stackrel{}{\leqslant} & \sqrt{\E\left[ \sup_{\theta\in \mathcal{N} } \sup_{\tau \in \Tbf(\theta)}\e^{ 2\tau'\psi(X_1, \theta)} b(X_1)^{2}\right]} \sqrt{\E\left[\sup_{\theta \in \T^{\epsilon}}  \vert\psi(X_1,\theta)\psi(X_1, \theta)'\vert^2 \right]}  \stackrel{}{<}\infty,
\end{eqnarray*}
where the last inequality follows from Assumption \ref{Assp:AsymptoticNormality}(b) and Assumption \ref{Assp:ExistenceConsistency}(g).
\end{proof}

\begin{lem} \label{Lem:ApproximateFOC} Under Assumptions \ref{Assp:ExistenceConsistency} and \ref{Assp:AsymptoticNormality}, $\P$-a.s. as $T \rightarrow \infty$, $ \frac{\partial L_T(\hat{\theta}_T,\tau_T(\hat{\theta}_T))}{\partial \theta }=O(T^{-1}) $.

\end{lem}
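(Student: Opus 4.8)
The plan is to exploit the fact that $\hat{\theta}_T$ is an interior maximizer of the plugged-in objective $\theta \mapsto L_T(\theta, \tau_T(\theta))$, so that its \emph{total} $\theta$-derivative vanishes, and then to show that the gap between this total derivative and the \emph{partial} derivative $\frac{\partial L_T}{\partial \theta}$ is only $O(T^{-1})$. First I would invoke Theorem \ref{theorem:ConsistencyAsymptoticNormality}(i) together with Assumption \ref{Assp:ExistenceConsistency}(c): since $\hat{\theta}_T \to \theta_0 \in \intr(\T)$ $\P$-a.s., the estimator lies $\P$-a.s. for $T$ big enough in the interior of $\T$ and inside the ball $\overline{B_{r_\partial}(\theta_0, \tau_0)}$ on which $L_T(\cdot,\cdot)$ is differentiable (Section \ref{Sec:LTAndDerivatives}, under Assumptions \ref{Assp:ExistenceConsistency} and \ref{Assp:AsymptoticNormality}(a)), while $\tau_T(\cdot)$ is continuously differentiable by Lemma \ref{Lem:ESPExistence}(ii). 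Because $\hat{\theta}_T$ maximizes $\mathcal{L}_T(\theta):=L_T(\theta, \tau_T(\theta))$, the chain rule and the interior first-order condition $\nabla \mathcal{L}_T(\hat{\theta}_T)=0$ give, for each $j$,
\begin{eqnarray*}
\frac{\partial L_T(\hat{\theta}_T, \tau_T(\hat{\theta}_T))}{\partial \theta_j} = - \sum_{k=1}^m \frac{\partial L_T(\hat{\theta}_T, \tau_T(\hat{\theta}_T))}{\partial \tau_k}\,\frac{\partial \tau_{T,k}(\hat{\theta}_T)}{\partial \theta_j}.
\end{eqnarray*}

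Then I would bound the two factors on the right separately. For the $\tau$-derivative I would use the decomposition $L_T = M_{1,T} + M_{2,T} + M_{3,T}$ of Lemma \ref{Lem:LogESPDecomposition}. The key point is that the leading term vanishes \emph{exactly}: by equation \eqref{Eq:LogETTermDTau}, $\frac{\partial M_{1,T}(\theta, \tau)}{\partial \tau_k}$ is proportional to $\frac{1}{T}\sum_t \e^{\tau'\psi_t(\theta)}\psi_{t,k}(\theta)$, whose evaluation at $\tau = \tau_T(\theta)$ is the $k$-th component of the tilting equation \eqref{Eq:ESPTiltingEquation}, hence zero. The remaining derivatives $\frac{\partial M_{2,T}}{\partial \tau_k}$ and $\frac{\partial M_{3,T}}{\partial \tau_k}$ (equations \eqref{Eq:DerivativeTermDTau} and \eqref{Eq:VarianceTermDTau}) each carry an explicit $\frac{1}{T}$ prefactor multiplying a trace of products of sample averages. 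Those averages converge by the ULLN à la Wald — the needed domination conditions being supplied by Lemma \ref{Lem:Bound2ndPartialLPartialTau}, and the inverted Gram-type matrices staying nonsingular near $(\theta_0,\tau_0)$ by Lemma \ref{Lem:LogESPExistenceInS}(iii)(iv) — so the traces are $O(1)$ and both derivatives are $O(T^{-1})$. Therefore $\frac{\partial L_T(\hat{\theta}_T, \tau_T(\hat{\theta}_T))}{\partial \tau_k} = O(T^{-1})$.

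For the Jacobian factor I would differentiate the tilting identity $S_T(\theta, \tau_T(\theta)) = 0$, with $S_T(\theta, \tau):=\frac{1}{T}\sum_t \e^{\tau'\psi_t(\theta)}\psi_t(\theta)$, to obtain $\frac{\partial \tau_T}{\partial \theta'} = -\left(\frac{\partial S_T}{\partial \tau'}\right)^{-1}\frac{\partial S_T}{\partial \theta'}$. By Lemma \ref{Lem:ETEquationFirstDerivatives}, $\frac{\partial S_T}{\partial \tau'} \to \E[\psi(X_1,\theta_0)\psi(X_1,\theta_0)']$, which is invertible by Assumption \ref{Assp:ExistenceConsistency}(h) (recalling $\tau(\theta_0)=0_{m\times 1}$), while $\frac{\partial S_T}{\partial \theta'}$ converges to a finite limit; hence $\frac{\partial \tau_T(\hat{\theta}_T)}{\partial \theta}$ is bounded $\P$-a.s. for $T$ big enough. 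Substituting the $O(T^{-1})$ $\tau$-derivative and the $O(1)$ Jacobian into the displayed identity then yields $\frac{\partial L_T(\hat{\theta}_T, \tau_T(\hat{\theta}_T))}{\partial \theta} = O(T^{-1})$. I expect the main obstacle to be the bookkeeping in the second step: verifying that \emph{every} sample average appearing in the $\tau$-derivatives of $M_{2,T}$ and $M_{3,T}$ meets a Wald-type domination hypothesis so that the ULLN applies uniformly on a neighborhood of $(\theta_0,\tau_0)$, and that the inverted matrices there remain uniformly bounded away from singularity — this is precisely where the finiteness estimates of Lemma \ref{Lem:Bound2ndPartialLPartialTau} and the nondegeneracy of Lemma \ref{Lem:LogESPExistenceInS} are indispensable.
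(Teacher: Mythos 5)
Your proposal is correct and follows essentially the same route as the paper's proof: the interior first-order condition for $\theta \mapsto L_T(\theta,\tau_T(\theta))$ combined with the chain rule, then the decomposition $L_T=M_{1,T}+M_{2,T}+M_{3,T}$ in which the tilting equation \eqref{Eq:ESPTiltingEquation} makes $\partial M_{1,T}/\partial\tau$ vanish exactly while the explicit $1/T$ prefactors plus the ULLN \`a la Wald give $O(T^{-1})$ for the other two terms (the paper packages this as Lemma \ref{Lem:DLDTau}), and finally the implicit-function-theorem bound $\partial \tau_T/\partial\theta'=O(1)$ (the paper's Lemma \ref{Lem:DerivativeImplicitFunctionImplicit}i,iii, using Lemma \ref{Lem:ETEquationFirstDerivatives} exactly as you do). The only cosmetic difference is that you re-derive inline what the paper delegates to those supporting lemmas.
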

\begin{proof}
Unlike in most of the rest of the paper, for clarity, in this proof we do not use the potentially ambiguous notation that denotes $\left.\frac{\partial L_T(\theta, \tau)}{\partial \theta }\right\vert_{(\theta, \tau)=(\hat{\theta}_T, \tau_T(\hat{\theta}_T))}$ with $ \frac{\partial L_T(\hat{\theta}_T,\tau_T(\hat{\theta}_T))}{\partial \theta }$.\footnote{\label{FootN:AmbiguousNotation}This is a potentially ambiguous  notation in the sense that $\frac{\partial L_T(\hat{\theta}_T,\tau_T(\hat{\theta}_T))}{\partial \theta }$ could also denote $\left.\frac{\partial L_T(\theta, \tau_T(\theta))}{\partial \theta }\right\vert_{\theta=\hat{\theta}_T}$. Except when indicated otherwise, such an ambiguity cannot occur because we never use  derivatives of $\theta \mapsto L_T(\theta, \tau_T(\theta))$.}

Under Assumptions \ref{Assp:ExistenceConsistency} and \ref{Assp:AsymptoticNormality}(a), by subsection \ref{Sec:LTAndDerivatives} (p. \pageref{Sec:LTAndDerivatives}), the function  $L_T(\theta, \tau)$ is well-defined and twice continuously differentiable in a neighborhood of $(\theta_0'\; \tau(\theta_0)')$   $\P$-a.s. for $T$ big enough. Moreover, under Assumption \ref{Assp:ExistenceConsistency}(a)(b) and (d)-(h), by Lemma \ref{Lem:DerivativeImplicitFunctionImplicit}i (p. \pageref{Lem:DerivativeImplicitFunctionImplicit}),  $\tau_T(.)$ is continuously differentiable in $\T$. Now,  under Assumption \ref{Assp:ExistenceConsistency}, by Theorem \ref{theorem:ConsistencyAsymptoticNormality}i (p. \pageref{theorem:ConsistencyAsymptoticNormality}) and  Lemma \ref{Lem:Schennachtheorem10PfFirstSteps}iii (p. \pageref{Lem:Schennachtheorem10PfFirstSteps}), $\P$-a.s., $\hat{\theta}_T \rightarrow \theta_0$ and $\tau_T(\hat{\theta}_T )\rightarrow \tau(\theta_0)$, so that $\P$-a.s. for $T$ big enough, $(\hat{\theta}_T ' \; \tau_T(\hat{\theta}_T)')$ is in any arbitrary small neighborhood of $(\theta_0'\; \tau(\theta_0)')$. Therefore,  under Assumption \ref{Assp:ExistenceConsistency} and \ref{Assp:AsymptoticNormality}(a),  by the chain rule theorem \citep[e.g.,][Chap. 5 sec. 11]{1988MagnusNeudecker}, $\P$-a.s. for $T$ big enough, $\theta \mapsto L_T(\theta, \tau_T(\theta))$ is continuously differentiable in a neighborhood of $\hat{\theta}_T$, and, for all $j \in \ldsb 1,m\rdsb$,
\begin{eqnarray*}
& & 0  =  \left.\frac{\partial L_T(\theta, \tau_{T}(\theta))}{\partial \theta_{j} }\right\vert_{\theta=\hat{\theta}_T}\\
& \stackrel{(a)}{\Leftrightarrow} & 0= \left.\frac{\partial L_T(\theta, \tau)}{\partial \theta_{j} }\right\vert_{(\theta, \tau)=(\hat{\theta}_T, \tau_T(\hat{\theta}_T))}+ \left.\frac{\partial L_T(\theta, \tau)}{\partial \tau' }\right\vert_{(\theta, \tau)=(\hat{\theta}_T, \tau_T(\hat{\theta}_T))}\left.\frac{\partial \tau(\theta)}{\partial \theta_{j}}\right\vert_{\theta=\hat{\theta}_T}\\
& \stackrel{}{\Leftrightarrow} & \left.\frac{\partial L_T(\theta, \tau)}{\partial \theta_{j} }\right\vert_{(\theta, \tau)=(\hat{\theta}_T, \tau_T(\hat{\theta}_T))}=-\left.\frac{\partial L_T(\theta, \tau)}{\partial \tau' }\right\vert_{(\theta, \tau)=(\hat{\theta}_T, \tau_T(\hat{\theta}_T))}\left.\frac{\partial \tau(\theta)}{\partial \theta_{j}}\right\vert_{\theta =\hat{\theta}_T }\\
& \stackrel{(b)}{\Leftrightarrow} & \left.\frac{\partial L_T(\theta, \tau)}{\partial \theta_{j} }\right\vert_{(\theta, \tau)=(\hat{\theta}_T, \tau_T(\hat{\theta}_T))}= O(T^{-1})O(1)=O(T^{-1}).
\end{eqnarray*}
\textit{(a)} It is an immediate and standard implication  of the chain rule \citep[e.g.,][chap. 5, sec. 12, exercise 3]{1988MagnusNeudecker}. \textit{(b)} Firstly, under Assumptions \ref{Assp:ExistenceConsistency} and \ref{Assp:AsymptoticNormality}, by Lemma \ref{Lem:DLDTau}iv  (p. \pageref{Lem:DLDTau}), $\P$-a.s. as $T \rightarrow \infty$, $\left.\frac{\partial L_T(\theta, \tau)}{\partial \tau' }\right\vert_{(\theta, \tau)=(\hat{\theta}_T, \tau_T(\hat{\theta}_T))}=O(T^{-1})$ because $(\hat{\theta}_T, \tau_T(\hat{\theta}_T))\rightarrow (\theta_0, \tau(\theta_0))$, $\P$-a.s. as $T \rightarrow \infty$, by Theorem \ref{theorem:ConsistencyAsymptoticNormality}i (p. \pageref{theorem:ConsistencyAsymptoticNormality}) and Lemma \ref{Lem:Schennachtheorem10PfFirstSteps}iii (p. \pageref{Lem:Schennachtheorem10PfFirstSteps}). Secondly, under Assumptions \ref{Assp:ExistenceConsistency} and \ref{Assp:AsymptoticNormality}, by Theorem \ref{theorem:ConsistencyAsymptoticNormality}i (p. \pageref{theorem:ConsistencyAsymptoticNormality}) and Lemma  \ref{Lem:DerivativeImplicitFunctionImplicit}iii (p. \pageref{Lem:DerivativeImplicitFunctionImplicit}), $\P$-a.s. as $T \rightarrow \infty$,$\left.\frac{\partial \tau(\theta)}{\partial \theta_{j}}\right\vert_{(\theta, \tau)=(\hat{\theta}_T, \tau_T(\hat{\theta}_T))}=O(1)$.
\end{proof}

\begin{lem}[First Derivative of the implicit function $\tau_T(.)$]\label{Lem:DerivativeImplicitFunctionImplicit} Under Assumption \ref{Assp:ExistenceConsistency}(a)(b) and (d)-(h),
\begin{enumerate}
\item[(i)]  $\P$-a.s. for $T$ big enough, the function  $\tau_T: \T \rightarrow \R^m$ is continuously differentiable in $\T$ and its  first derivative is
\begin{eqnarray*}
 \frac{\partial \tau_T(\theta)}{\partial \theta'}
 =  \negthickspace-\negthickspace\left[
\frac{1}{T}\sum_{t=1}^T \e^{\tau_T(\theta)'\psi_t(\theta)}\psi_t(\theta)\psi_t(\theta)   '\right]^{-1}  \negthickspace
\left[
\frac{1}{T} \sum_{t=1}^T
\e^{\tau_T(\theta)'\psi_t(\theta)}\negthickspace
\left(
\frac{\partial \psi_{t}(\theta)}{\partial \theta'}
 \negthickspace+\negthickspace \psi_{t}(\theta) \tau_T(\theta)'  \frac{\partial \psi_{t}(\theta)}{\partial \theta'}
\right) \right];
\end{eqnarray*}
\item[(ii)] for any  sequence $(\theta_T)_{T \in \N}\in \T^\N$ converging to $ \theta_0$, $\P$-a.s. for $T$ big enough, there exists $\bar{\theta}_T$ between $\theta_T$ and $\theta_0$ s.t.  $\sqrt{T} [\tau_T(\theta_T)- \tau_T(\theta_{0})]=\frac{\partial \tau_T(\bar{\theta}_T)}{\partial \theta'}\sqrt{T}(\theta_T- \theta_0) $;

\item[(iii)] under additional Assumptions \ref{Assp:ExistenceConsistency}(c) and \ref{Assp:AsymptoticNormality}(b), for any  sequence $(\theta_T)_{T \in \N}\in \T^\N$ converging to $ \theta_0$, $\P$-a.s. as $T \rightarrow \infty$,   $\frac{\partial \tau_T(\theta_T)}{\partial \theta'}\rightarrow -\E[\psi(X_1, \theta_0)\psi(X_1, \theta_0)']^{-1} \E\left[\frac{\partial \psi(X_1, \theta_0)}{\partial \theta}\right]$; and

\item[(iv)] under additional Assumptions \ref{Assp:ExistenceConsistency}(c) and \ref{Assp:AsymptoticNormality}(b), for any  sequence $(\theta_T)_{T \in \N}\in \T^\N$ converging to $ \theta_0$ s.t., as $T \rightarrow \infty$, $\sqrt{T}( \theta_T- \theta_0)=O_{\P}(1)$ $\P$-a.s. as $T \rightarrow \infty$,  $\tau_T(\theta_T)- \tau_T(\theta_{0})=-V^{-1} M(\theta_T- \theta_0)+o_{\P}(T^{-1/2}) $, where $V:=\E[\psi(X_1, \theta_0)\psi(X_1, \theta_0)']$ and $M:=\E\left[\frac{\partial \psi(X_1, \theta_0)}{\partial \theta}\right]$.
\end{enumerate}

\end{lem}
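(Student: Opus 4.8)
The plan is to prove the four statements in sequence, each feeding the next. For part (i) I would apply the implicit function theorem to the sample tilting equation. Writing $S_T(\theta, \tau) := \frac{1}{T}\sum_{t=1}^T \e^{\tau'\psi_t(\theta)}\psi_t(\theta)$, the implicit function $\tau_T(\theta)$ is characterized by $S_T(\theta, \tau_T(\theta)) = 0_{m\times 1}$ (equation \eqref{Eq:ESPTiltingEquation}), and its continuous differentiability on $\T$ is already granted by Lemma \ref{Lem:ESPExistence}ii under Assumption \ref{Assp:ExistenceConsistency}(a)(b) and (d)-(h). What remains is only to compute the Jacobian: differentiating the identity $S_T(\theta, \tau_T(\theta))=0$ gives $\frac{\partial \tau_T(\theta)}{\partial \theta'} = -[\frac{\partial S_T}{\partial \tau'}]^{-1}\frac{\partial S_T}{\partial \theta'}$, where $\frac{\partial S_T}{\partial \tau'} = \frac{1}{T}\sum_{t=1}^T \e^{\tau_T(\theta)'\psi_t(\theta)}\psi_t(\theta)\psi_t(\theta)'$ and $\frac{\partial S_T}{\partial \theta'}$ are precisely the blocks computed in Lemma \ref{Lem:ETEquationFirstDerivatives}. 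The inverted matrix is invertible $\P$-a.s. for $T$ big enough by Lemma \ref{Lem:LogESPExistenceInS}iv, which assembles into the stated closed-form expression.

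For part (ii) I would use that, by part (i), each scalar component $\theta \mapsto \tau_{T,i}(\theta)$ is continuously differentiable on any small neighborhood of $\theta_0$ $\P$-a.s. for $T$ big enough, and that $\theta_T$ eventually lies in such a neighborhood since $\theta_T \to \theta_0$. A component-wise stochastic Taylor-Lagrange expansion \citep[Lemma 3]{1969Jen} along the segment joining $\theta_0$ and $\theta_T$ then produces the intermediate point $\bar\theta_T$ and the stated identity after scaling by $\sqrt{T}$. For part (iii) I would pass to the limit in the Jacobian of part (i) evaluated along $\theta_T \to \theta_0$. By Lemma \ref{Lem:Schennachtheorem10PfFirstSteps}iii together with the continuity of $\tau(.)$ (Lemma \ref{Lem:AsTiltingFct}iii) one has $\tau_T(\theta_T) \to \tau(\theta_0) = 0_{m\times 1}$, the last equality being Lemma \ref{Lem:AsTiltingFct}iv. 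The uniform convergence of tilted averages (Lemma \ref{Lem:TiltedVariance} and Lemma \ref{Lem:TiltedDerivative}) then drives the second-moment block to $\E[\psi(X_1,\theta_0)\psi(X_1,\theta_0)'] = V$ and the derivative block to $\E[\frac{\partial \psi(X_1,\theta_0)}{\partial \theta}] = M$, the linear-in-$\tau_T$ correction vanishing because $\tau_T(\theta_T)\to 0$; continuity of matrix inversion, with $V$ invertible by Assumption \ref{Assp:ExistenceConsistency}(h), yields the limit $-V^{-1}M$.

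The combination in part (iv) is then short. Substituting the expansion of part (ii), I would write $\frac{\partial \tau_T(\bar\theta_T)}{\partial \theta'} = -V^{-1}M + o_{\P}(1)$ by applying part (iii) to the sequence $\bar\theta_T$, which converges to $\theta_0$ by the squeeze (it lies between $\theta_T$ and $\theta_0$). Since $\sqrt{T}(\theta_T - \theta_0) = O_{\P}(1)$ by hypothesis, the $o_{\P}(1)$ Jacobian error contributes only $o_{\P}(1)$ to $\sqrt{T}[\tau_T(\theta_T)-\tau_T(\theta_0)]$; dividing by $\sqrt{T}$ produces the claimed expansion $\tau_T(\theta_T)- \tau_T(\theta_0)=-V^{-1} M(\theta_T- \theta_0)+o_{\P}(T^{-1/2})$.

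I expect the genuine subtlety to reside in parts (ii) and (iv) rather than in the routine differentiation of (i). The single intermediate point $\bar\theta_T$ is, strictly speaking, a component-wise artifact of the mean-value theorem for the vector-valued map $\tau_T$, and the key point is that the convergence established in (iii) must hold for \emph{arbitrary} sequences tending to $\theta_0$ so that it can legitimately be invoked at the random, sequence-dependent point $\bar\theta_T$. This is exactly why part (iii) is stated for all such sequences, and verifying that the tilted averages converge when $\tau_T(\bar\theta_T)$ enters simultaneously in the exponent and as a multiplicative factor is the one place where the uniform-in-$(\theta,\tau)$ control from the earlier lemmas is indispensable.
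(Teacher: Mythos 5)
Your proposal is correct and follows the paper's own proof essentially step for step: the standard implicit function theorem (with invertibility of $\frac{1}{T}\sum_{t=1}^T \e^{\tau'\psi_t(\theta)}\psi_t(\theta)\psi_t(\theta)'$ secured by the earlier existence lemmas) for (i), the stochastic Taylor--Lagrange expansion of \citet[Lemma 3]{1969Jen} for (ii), uniform convergence of the tilted averages plus $\sup_{\theta\in\T}\vert\tau_T(\theta)-\tau(\theta)\vert=o(1)$ and $\tau(\theta_0)=0_{m\times 1}$ for (iii), and the add-and-subtract of $V^{-1}M(\theta_T-\theta_0)$ with (iii) applied at the intermediate point $\bar{\theta}_T$ for (iv). The one cosmetic mismatch is your citation in (iii): Lemmas \ref{Lem:TiltedDerivative} and \ref{Lem:TiltedVariance} cover the blocks $\e^{\tau'\psi}\frac{\partial\psi'}{\partial\theta}$ and $\e^{\tau'\psi}\psi\psi'$ but not the cross term $\e^{\tau'\psi}\psi\,\tau'\frac{\partial\psi}{\partial\theta'}$, for which the paper instead invokes the dominance conditions of Lemma \ref{Lem:dLdTaudTau} (statements iv, vii and x) on a ball $\overline{B_L}$ before applying the ULLN \`a la Wald --- precisely the place where Assumption \ref{Assp:AsymptoticNormality}(b), which you correctly flag as indispensable, enters.
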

\begin{proof} \textit{(i)}
 Under  Assumption \ref{Assp:ExistenceConsistency}(a)(b) and (d)-(h), by  Lemma \ref{Lem:ESPExistence}ii (p. \pageref{Lem:ESPExistence}) and its  proof,  $\P$-a.s. for $T$ big enough, the assumptions of the standard implicit function theorem hold and $\tau_T(.)$ is continuously differentiable. Thus, under  Assumption \ref{Assp:ExistenceConsistency}(a)(b) and (d)-(h),  $\P$-a.s. for $T$ big enough, application of the implicit function theorem yields
\begin{eqnarray*}
& & \frac{\partial \tau_T(\theta)}{\partial \theta'} \nonumber\\
 & = &
\left.-\left[ \frac{\partial \left[ \frac{1}{T}\sum_{t=1}^T \e^{\tau'\psi_t(\theta)}\psi_t(\theta) \right]}{\partial \tau'}\right]^{-1} \left[ \frac{\partial \left[ \frac{1}{T}\sum_{t=1}^T \e^{\tau'\psi_t(\theta)}\psi_t(\theta) \right]}{\partial \theta'}\right]\right|_{\tau=\tau_T(\theta)} \nonumber\\
& = &
 \left.-\left[
\frac{1}{T}\sum_{t=1}^T \e^{\tau'\psi_t(\theta)}\psi_t(\theta)\psi_t(\theta)   '\right]^{-1}
\left[
\frac{1}{T} \sum_{t=1}^T
\e^{\tau'\psi_t(\theta)}
\left(
\frac{\partial \psi_{t}(\theta)}{\partial \theta'}
 + \psi_{t}(\theta) \tau'  \frac{\partial \psi_{t}(\theta)}{\partial \theta'}
\right) \right]\right|_{\tau=\tau_T(\theta)}\nonumber\\
& = & \negthickspace-\negthickspace\left[
\frac{1}{T}\sum_{t=1}^T \e^{\tau_T(\theta)'\psi_t(\theta)}\psi_t(\theta)\psi_t(\theta)   '\right]^{-1}  \negthickspace
\left[
\frac{1}{T} \sum_{t=1}^T
\e^{\tau_T(\theta)'\psi_t(\theta)}\negthickspace
\left(
\frac{\partial \psi_{t}(\theta)}{\partial \theta'}
 \negthickspace+\negthickspace \psi_{t}(\theta) \tau_T(\theta)'  \frac{\partial \psi_{t}(\theta)}{\partial \theta'}
\right) \right].
\end{eqnarray*}

\textit{(ii)} Again, under  Assumption \ref{Assp:ExistenceConsistency}(a)(b) and (d)-(h), by  Lemma \ref{Lem:ESPExistence}ii (p. \pageref{Lem:ESPExistence}),  $\P$-a.s. for $T$ big enough, $\tau_T(.)$ is continuously differentiable, so that the result follows from a first-order stochastic Taylor-Lagrange expansion  \citep[Lemma 3]{1969Jen}.

\textit{(iii)}
Firstly, under Assumption \ref{Assp:ExistenceConsistency}(a)(b)(d)(e)(g)(h), by Lemma \ref{Lem:Schennachtheorem10PfFirstSteps}iii (p. \pageref{Lem:Schennachtheorem10PfFirstSteps}), $\P$-a.s. as $T \rightarrow \infty$, $\sup_{\theta\in \T}\vert\tau_T(\theta)-\tau(\theta) \vert=o(1)$, so that $\tau_T(\theta_T)\rightarrow \tau(\theta_0)$.  Secondly, under Assumptions \ref{Assp:ExistenceConsistency} and \ref{Assp:AsymptoticNormality}, by Lemma \ref{Lem:dLdTaudTau}iv, vii and x (p. \pageref{Lem:dLdTaudTau}), for $\overline{B_L}$ a ball around $(\theta_0, \tau(\theta_0))$ of sufficiently small radius, $\E\left[\sup_{(\theta, \tau)\in \overline{B_L}} \vert \e^{\tau'\psi(X_1,\theta)} \frac{\partial \psi(X_1, \theta)}{\partial \theta'}\vert \right] < \infty$, $\E\left[\sup_{(\theta, \tau)\in \overline{B_L}} \vert \e^{\tau'\psi(X_1,\theta)} \psi(X_1,\theta)\psi(X_1,\theta)'
     \vert \right]< \infty$, and $\E\left[\sup_{(\theta, \tau)\in \overline{B_L}} \vert \e^{\tau'\psi(X_1,\theta)}\psi(X_1, \theta)\tau' \frac{\partial \psi(X_1, \theta)}{\partial \theta'}\vert\right]<\infty$.
Thus, by Assumptions \ref{Assp:ExistenceConsistency}(a)(b) and (d),   the ULLN (uniform law of large numbers) \`a la  Wald  \citep[e.g.,][pp. 24-25, Theorem 1.3.3]{2003GhoRam},  implies that, for all $k\in \ldsb 1,m\rdsb$, $\P$-a.s. as $T \rightarrow \infty$,
\begin{eqnarray*}
& &\ \frac{\partial \tau_T(\theta_T)}{\partial \theta'}\\
& \rightarrow & -\E[\e^{\tau(\theta_0)'\psi(X_{1,}\theta_{0})}\psi(X_1, \theta_0)\psi(X_1, \theta_0)']^{-1} \left\{ \E\left[\e^{\tau(\theta_0)'\psi(X_{1,}\theta_{0})}\frac{\partial \psi(X_1, \theta_0)}{\partial \theta}\right]\right. \\
& &\ \left.+\E\left[\e^{\tau(\theta_0)'\psi(X_{1,}\theta_{0})}\psi(X_1, \theta)\tau(\theta_0)'\frac{\partial \psi(X_1, \theta_0)}{\partial \theta}\right]\right\}\\
& = & -\E[\psi(X_1, \theta_0)\psi(X_1, \theta_0)']^{-1} \E\left[\frac{\partial \psi(X_1, \theta_0)}{\partial \theta}\right]
\end{eqnarray*}
because $\tau(\theta_0)=0_{m \times 1}$ by Lemma \ref{Lem:AsTiltingFct}iv (p. \pageref{Lem:AsTiltingFct}) under Assumption \ref{Assp:ExistenceConsistency}(a)-(e) and (g)-(h).

\textit{(iv)} Under Assumption \ref{Assp:ExistenceConsistency} and \ref{Assp:AsymptoticNormality}(b), by the statement (ii) of the present lemma, $\P$-a.s. as $T \rightarrow \infty$, there exists  $\bar{\theta}_T$ between $\theta_T$ and $\theta_0$ s.t.
\begin{eqnarray*}
 \tau_T(\theta_T)- \tau_T(\theta_{0})&=&\frac{\partial \tau_T(\bar{\theta}_T)}{\partial \theta'}(\theta_T- \theta_0)\\
& \stackrel{(a)}{=} & -V^{-1} M(\theta_T- \theta_0)+\left[  \frac{\partial \tau_T(\bar{\theta}_T)}{\partial \theta'}+V^{-1} M\right](\theta_T- \theta_0)\\
& \stackrel{(b)}{=} & -V^{-1} M(\theta_T- \theta_0)+o_{\P}(T^{-1/2})
\end{eqnarray*}
\textit{(a)} Add and subtract $ V^{-1} M(\theta_T- \theta_0)$. \textit{(b)} Under Assumption \ref{Assp:ExistenceConsistency} and \ref{Assp:AsymptoticNormality}(b), by the statement (ii) of the present lemma, $\P$-a.s. as $T \rightarrow \infty$, $\frac{\partial \tau_T(\bar{\theta}_T)}{\partial \theta'}+V^{-1} M=o(1)$. Moreover, by assumption, as $T \rightarrow \infty$, $\theta_T- \theta_0=O_{\P}(T^{-1/2})$, so that $ \left[  \frac{\partial \tau_T(\bar{\theta}_T)}{\partial \theta'}+V^{-1} M\right](\theta_T- \theta_0)=o_{\P}(T^{-1/2})$.
\end{proof}
\begin{rk}  As notation indicates, $\frac{\partial \tau_T(.)}{\partial \theta'}$ corresponds to a partial derivative as $\tau_T(.)$ is also a function of the data.\hfill $\diamond$
\end{rk}

\begin{lem}[Asymptotic limit of $\frac{\partial L_T(\theta_T, \tau_T(\theta_T)) }{\partial \tau'}$]\label{Lem:DLDTau} Under Assumptions \ref{Assp:ExistenceConsistency} and \ref{Assp:AsymptoticNormality}, for any  sequence $(\theta_T)_{T \in \N}\in \T^\N$ converging to $ \theta_0$, for all $k \in \ldsb 1,m\rdsb$, $\P$-a.s. as $T \rightarrow \infty$,
\begin{enumerate}
\item[(i)] $\frac{\partial M_{1,T}(\theta_T, \tau_T(\theta_T))}{\partial \tau_{k}}=0$;
\item[(ii)] $\frac{\partial M_{2,T}(\theta_T, \tau_T(\theta_T))}{\partial \tau_{k}}=O(T^{-1})$;
\item[(iii)] $\frac{\partial M_{3,T}(\theta_T, \tau_T(\theta_T))}{\partial \tau_{k}}=O(T^{-1})$; and
\item[(iv)] $\frac{\partial L_T(\theta_T, \tau_T(\theta_T)) }{\partial \tau'}=O(T^{-1})$.
\end{enumerate}

\end{lem}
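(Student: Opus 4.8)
The plan is to handle the four statements in turn, using the decomposition $L_T=M_{1,T}+M_{2,T}+M_{3,T}$ of Lemma \ref{Lem:LogESPDecomposition} together with the explicit $\tau$-derivatives computed in Section \ref{Sec:LTAndDerivatives}. The structural observation I would exploit is that $\frac{\partial M_{1,T}}{\partial \tau_k}$ vanishes identically at $(\theta,\tau_T(\theta))$ because of the ET tilting equation, whereas $\frac{\partial M_{2,T}}{\partial \tau_k}$ and $\frac{\partial M_{3,T}}{\partial \tau_k}$ each carry an explicit prefactor of order $T^{-1}$ multiplying a trace that stays bounded; once this is in place, the three pieces combine trivially to give (iv).

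First, for (i), I would evaluate equation \eqref{Eq:LogETTermDTau} at $(\theta_T,\tau_T(\theta_T))$. Its numerator is exactly the $k$-th component of $\frac{1}{T}\sum_{t=1}^T \e^{\tau_T(\theta_T)'\psi_t(\theta_T)}\psi_t(\theta_T)$, which equals $0_{m\times 1}$ $\P$-a.s.\ for $T$ big enough by the defining property of $\tau_T(.)$ (Lemma \ref{Lem:Schennachtheorem10PfFirstSteps}ii, i.e.\ equation \eqref{Eq:ESPTiltingEquation}). Since the denominator is bounded away from zero by Lemma \ref{Lem:LogESPExistenceInS}i, the whole expression is identically zero, so (i) holds as an exact equality rather than merely asymptotically.

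Next, for (ii) and (iii), I would argue that the trace in equation \eqref{Eq:DerivativeTermDTau} (resp.\ \eqref{Eq:VarianceTermDTau}) converges $\P$-a.s.\ to a finite constant, so that the $T^{-1}$ factor forces the quantity to be $O(T^{-1})$. Because $\theta_T\to\theta_0$ and, by Lemma \ref{Lem:Schennachtheorem10PfFirstSteps}iii, $\sup_{\theta\in\T}\vert\tau_T(\theta)-\tau(\theta)\vert=o(1)$ with $\tau(\theta_0)=0_{m\times 1}$ (Lemma \ref{Lem:AsTiltingFct}iv), the point $(\theta_T,\tau_T(\theta_T))$ eventually lies in the closed ball $\oBL$ of Lemma \ref{Lem:Bound2ndPartialLPartialTau}. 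On that ball, the envelopes of Lemma \ref{Lem:Bound2ndPartialLPartialTau} (statements vi and vii for $M_{2,T}$, and xiii and xiv for $M_{3,T}$) together with Assumption \ref{Assp:ExistenceConsistency}(a)(b) would justify the ULLN \`a la Wald, so each sample average inside the traces converges uniformly to its expectation. The inverted matrices are invertible for $T$ big enough on a neighborhood of $(\theta_0,\tau_0)$ by Lemma \ref{Lem:LogESPExistenceInS}iii (resp.\ iv), with invertible limits, so by continuity of the trace and of matrix inversion the traces converge to fixed finite constants; multiplying by $\frac{1}{T}$ (resp.\ $-\frac{1}{2T}$) then gives $O(T^{-1})$.

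Finally, (iv) would follow immediately from (i)--(iii) and Lemma \ref{Lem:LogESPDecomposition}: by linearity of differentiation, for each $k\in\ldsb 1,m\rdsb$ one has $\frac{\partial L_T(\theta_T,\tau_T(\theta_T))}{\partial \tau_k}=\frac{\partial M_{1,T}}{\partial \tau_k}+\frac{\partial M_{2,T}}{\partial \tau_k}+\frac{\partial M_{3,T}}{\partial \tau_k}=0+O(T^{-1})+O(T^{-1})=O(T^{-1})$, and stacking over $k$ yields the vector statement. The step I expect to be the main obstacle is the uniformity needed in (ii)--(iii): since the derivatives are evaluated along the \emph{moving} sequence $(\theta_T,\tau_T(\theta_T))$ rather than at a fixed point, a pointwise law of large numbers is not enough, and I must combine the uniform envelopes of Lemma \ref{Lem:Bound2ndPartialLPartialTau} with the uniform convergence $\tau_T\to\tau$ of Lemma \ref{Lem:Schennachtheorem10PfFirstSteps}iii in order to pass to the limit legitimately inside the trace.
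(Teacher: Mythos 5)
Your proposal is correct and follows essentially the same route as the paper's proof: (i) via the exact vanishing of the tilting equation \eqref{Eq:ESPTiltingEquation} in the numerator of \eqref{Eq:LogETTermDTau}; (ii)--(iii) via the explicit $T^{-1}$ prefactors in \eqref{Eq:DerivativeTermDTau} and \eqref{Eq:VarianceTermDTau}, the convergence $(\theta_T,\tau_T(\theta_T))\rightarrow(\theta_0,\tau_0)$ from Lemma \ref{Lem:Schennachtheorem10PfFirstSteps}iii and Lemma \ref{Lem:AsTiltingFct}iv, the ULLN \`a la Wald on $\oBL$, and the invertibility statements of Lemma \ref{Lem:LogESPExistenceInS}; and (iv) via the decomposition of Lemma \ref{Lem:LogESPDecomposition}. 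The only cosmetic difference is that you cite the envelope bounds directly from Lemma \ref{Lem:Bound2ndPartialLPartialTau} (vi, vii, xiii, xiv) whereas the paper routes them through Lemma \ref{Lem:dLdTaudTau} (iv, v, vii, viii), which records the same bounds.
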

\begin{proof} \textit{(i)} Under Assumption \ref{Assp:ExistenceConsistency}(a)(b) and (d)-(h), by Lemma \ref{Lem:Schennachtheorem10PfFirstSteps}ii (p. \pageref{Lem:Schennachtheorem10PfFirstSteps}), $\P$-a.s. for $T$ big enough,  $\tau_T(\theta_T)$ exists, so that, by equation \eqref{Eq:LogETTermDTau} on p. \pageref{Eq:LogETTermDTau}, $\P$-a.s. for $T$ big enough, for all $k \in \ldsb 1,m\rdsb$,
\begin{eqnarray*}
\frac{\partial M_{1,T}(\theta_T, \tau_T(\theta_T))}{\partial \tau_{k}}
& = &
\left(1 - \frac{m}{2T}  \right) \frac{ \frac{1}{T} \sum_{t=1}^T \e^{ \tau_T(\theta_T)'\psi_t(\theta_T)}  \psi_{t, k} (\theta_T) }
{\frac{1}{T} \sum_{i=1}^T \e^{ \tau_T(\theta_T)'\psi_i(\theta_T)} }\\
& = &0
\end{eqnarray*}
because, by definition of $\tau_T(\theta)$ in equation \eqref{Eq:ESPTiltingEquation} on p. \pageref{Eq:ESPTiltingEquation}, $\frac{1}{T} \sum_{t=1}^T \e^{ \tau_T(\theta_T)'\psi_t(\theta_T)}  \psi_{t, k} (\theta_T)=0$.

\textit{(ii)} Similarly, under Assumption \ref{Assp:ExistenceConsistency}, by equation \eqref{Eq:DerivativeTermDTau} on p. \pageref{Eq:DerivativeTermDTau}, $\P$-a.s. for $T$ big enough, for all $k \in \ldsb 1,m\rdsb$,
 \begin{eqnarray*}
 \frac{\partial M_{2,T}(\theta_T, \tau_T(\theta_T))}{\partial \tau_{k}}  = \frac{1}{T}
{\rm tr}\left\{  \left[ \frac{1}{T} \sum_{t=1}^T                \e^{\tau_T(\theta_T)'\psi_t(\theta_T)}
      \frac{\partial \psi_t(\theta_T)}{\partial \theta' }  \right]^{-1}
      \left[
      \frac{1}{T} \sum_{t=1}^T                \e^{\tau_T(\theta_T)'\psi_t(\theta_T)}
    \psi_{t,k}(\theta_T)  \frac{\partial \psi_t(\theta_T)}{\partial \theta' }
      \right]
\right\}
\end{eqnarray*}
where $\P$-a.s. as $T \rightarrow \infty$, $(\theta_T' \quad\tau_T(\theta_T)') \rightarrow (\theta_0' \quad \tau(\theta_0)')$ by the lemma's assumption and Lemma \ref{Lem:Schennachtheorem10PfFirstSteps}iii (p. \pageref{Lem:Schennachtheorem10PfFirstSteps}).
Now, under Assumptions \ref{Assp:ExistenceConsistency} and \ref{Assp:AsymptoticNormality}, by Lemma \ref{Lem:dLdTaudTau}iv and v (p. \pageref{Lem:dLdTaudTau}), for $\overline{B_L}$ a ball around $(\theta_0, \tau(\theta_0))$ of sufficiently small radius, $\E\left[\sup_{(\theta, \tau)\in \overline{B_L}} \vert \e^{\tau'\psi(X_1,\theta)} \frac{\partial \psi(X_1, \theta)}{\partial \theta'}\vert \right] < \infty$, and,  for all $k\in \ldsb 1,m\rdsb$, $\E\left[\sup_{(\theta, \tau)\in \overline{B_L}} \vert \e^{\tau'\psi(X_1,\theta)} \psi_{k}(X_1,\theta)
      \frac{\partial  \psi(X_1,\theta)}{\partial \theta' }\vert \right]< \infty$.
Thus, by Assumptions \ref{Assp:ExistenceConsistency}(a)(b) and (d),   the ULLN (uniform law of large numbers) \`a la  Wald  \citep[e.g.,][pp. 24-25, Theorem 1.3.3]{2003GhoRam},  implies that, for all $k\in \ldsb 1,m\rdsb$, $\P$-a.s. as $T \rightarrow \infty$,
\begin{eqnarray*}
& & T\frac{\partial M_{2,T}(\theta_T, \tau_T(\theta_T))}{\partial \tau_{k}} \\
& \rightarrow & \tr\left\{   \E \left[\e^{\tau(\theta_0)'\psi(X_{1,}\theta_{0})} \frac{\partial \psi(X_1, \theta_0)}{\partial \theta'}\right]^{-1} \E \left[\e^{\tau(\theta_0)'\psi(X_{1,}\theta_{0})} \psi_{k}(X_1,\theta_{0})\frac{\partial \psi(X_1, \theta_0)}{\partial \theta'}\right]\right\}\\
& = & \tr\left\{   \E \left[\frac{\partial \psi(X_1, \theta_0)}{\partial \theta'}\right]^{-1} \E \left[\psi_{k}(X_1,\theta_{0})\frac{\partial \psi(X_1, \theta_0)}{\partial \theta'}\right]\right\},
\end{eqnarray*}
because $\tau(\theta_0)=0_{m \times 1}$ by Lemma \ref{Lem:AsTiltingFct}iv (p. \pageref{Lem:AsTiltingFct}) under Assumption \ref{Assp:ExistenceConsistency}(a)-(e) and (g)-(h).
    Therefore, $\P$-a.s. as $T \rightarrow \infty$, $\frac{\partial M_{2,T}(\theta_T, \tau_T(\theta_T))}{\partial \tau_{k}}=O(T^{-1})$.

\textit{(iii)} Under Assumption \ref{Assp:ExistenceConsistency},  by  equation \eqref{Eq:VarianceTermDTau}  (p. \pageref{Eq:VarianceTermDTau}), for all $k \in \ldsb 1,m\rdsb$,
\begin{eqnarray*}
& & \frac{\partial M_{3,T}(\theta_T, \tau_T(\theta_T))}{\partial \tau_{k} }\\
& = & -  \frac{1}{2T} {\rm tr} \left\{
\left[  \frac{1}{T} \sum_{t=1}^T                  \e^{\tau_T(\theta_T)'\psi_t(\theta_T)}
    \psi_t(\theta_T)  \psi_t(\theta_T)' \right]^{-1}
     \left[ \frac{1}{T} \sum_{t=1}^T                  \e^{\tau_T(\theta_T)'\psi_t(\theta)}
     \psi_{t, k} (\theta_T) \psi_t(\theta_T)  \psi_t(\theta_T)'     \right]
\right\}
\end{eqnarray*}
where $\P$-a.s. as $T \rightarrow \infty$, $(\theta_T' \quad\tau_T(\theta_T)') \rightarrow (\theta_0' \quad \tau(\theta_0)')$ by Theorem \ref{theorem:ConsistencyAsymptoticNormality}i (p. \pageref{theorem:ConsistencyAsymptoticNormality}). Now,
under Assumptions \ref{Assp:ExistenceConsistency} and \ref{Assp:AsymptoticNormality}, by Lemma \ref{Lem:dLdTaudTau}vii and viii (p. \pageref{Lem:dLdTaudTau}), there exists a closed ball $\overline{B_L}\subset \Sbf$ centered at $(\theta_0, \tau(\theta_0))$ with strictly positive radius s.t.,   for all $k \in \ldsb 1,m\rdsb$, 

\noindent
$\E\left[\sup_{(\theta, \tau)\in \overline{B_L}}  \vert\e^{\tau'\psi(X_{1,}\theta)} \psi(X_1, \theta)\psi(X_1, \theta)'\vert \right] < \infty$ and

\noindent
$\E\left[\sup_{(\theta, \tau)\in \overline{B_L}}  \vert\e^{\tau'\psi(X_{1,}\theta)} \psi_{k}(X_1, \theta)\psi(X_1, \theta)\psi(X_1, \theta)'\vert \right] < \infty$.
Thus,
under Assumptions \ref{Assp:ExistenceConsistency} and \ref{Assp:AsymptoticNormality},  by ULLN (uniform law of large numbers) \`a la  Wald  \citep[e.g.,][pp. 24-25, Theorem 1.3.3]{2003GhoRam},  for all $k\in \ldsb 1,m\rdsb$, $\P$-a.s. as $T \rightarrow \infty$,
\begin{eqnarray*}
& & T \frac{\partial M_{3,T}(\theta_T, \tau_T(\theta_T))}{\partial \tau_{k} }\\
 & \rightarrow &  -  \frac{1}{2} {\rm tr} \left\{
\E \left[ \e^{\tau(\theta_0)'\psi(X_{1,}\theta_{0})} \psi(X_1, \theta_{0})\psi(X_1, \theta_{0})'\right]^{-1}
    \E \left[  \e^{\tau(\theta_0)'\psi(X_{1,}\theta_{0})} \psi_{k}(X_1, \theta_{0})\psi(X_1, \theta_{0})\psi(X_1, \theta_{0})'\right]
\right\} \\
& = &  -  \frac{1}{2} {\rm tr} \left\{
\E \left[ \psi(X_1, \theta_{0})\psi(X_1, \theta_{0})'\right]^{-1}
    \E \left[  \psi_{k}(X_1, \theta_{0})\psi(X_1, \theta_{0})\psi(X_1, \theta_{0})'\right]
\right\},
\end{eqnarray*}
because $\tau(\theta_0)=0_{m \times 1}$ by Lemma \ref{Lem:AsTiltingFct}iv (p. \pageref{Lem:AsTiltingFct}) under Assumption \ref{Assp:ExistenceConsistency}(a)-(e) and (g)-(h).
 Therefore, $\P$-a.s. as $T \rightarrow \infty$,  $\frac{\partial M_{3,T}(\theta_T, \tau_T(\theta_T))}{\partial \tau_{k}}=O(T^{-1})$.

\textit{(iv)} Under Assumption \ref{Assp:ExistenceConsistency}(a)-(b) and (d)-(h),  by Lemma \ref{Lem:LogESPDecomposition} (p. \pageref{Lem:LogESPDecomposition}), $L_T(\theta, \tau)=M_{1,T}(\theta, \tau)+M_{2,T}(\theta, \tau)+M_{3,T}(\theta, \tau)$, so that the result follows from the statement (i)-(iii) of the present lemma.
\end{proof}

\begin{rk} In the case in which $\theta_T=\hat{\theta}_T$, there exist at least one other way to prove  Lemma \ref{Lem:DLDTau} that do  not require Assumption \ref{Assp:AsymptoticNormality}. This way follows an approach \`a la  \cite{2004NewSmi}, which  relies on ULLN with $\Tbf_{T}(\theta)=\{\tau \in \R^m: \vert \tau\vert\leqslant T^{-\zeta} \}$ and $\zeta >0$.  We do not follow this ways because (i) Other parts of the proof of Theorem \ref{theorem:ConsistencyAsymptoticNormality}ii (p. \pageref{theorem:ConsistencyAsymptoticNormality}) require the asymptotic normality of $\hat{\theta}_T$ and thus Assumption \ref{Assp:AsymptoticNormality}; (ii) It would lengthen the proofs and complicate their logic; (iii) We later use   Lemma \ref{Lem:DLDTau} with $\theta_T=\check{\theta}_T$, where $\check{\theta}_T$ is a constrained estimator. \hfill $\diamond$
\end{rk}

\begin{lem}[Finiteness of the expectations of supremum of the terms from $\frac{\partial L_T(\theta, \tau)}{\partial \tau}$ and $\frac{\partial ^{2}L_T(\theta, \tau)}{\partial \tau' \partial \tau}$]\label{Lem:dLdTaudTau} Under Assumptions \ref{Assp:ExistenceConsistency} and \ref{Assp:AsymptoticNormality}, there exists a closed ball $\overline{B_L}\subset \Sbf$ centered at $(\theta_0, \tau(\theta_0))$ with strictly positive radius s.t., for all $(h,k)\in \ldsb 1,m\rdsb^2$, \begin{itemize}
\item[(i)] $\E\left[\sup_{(\theta, \tau)\in \overline{B_L}}  \e^{\tau'\psi(X_{1,}\theta)} \right] < \infty  $;
\item[(ii)]  $\E\left[\sup_{(\theta, \tau)\in \overline{B_L}}  \vert\e^{\tau'\psi(X_{1,}\theta)} \psi_{k}(X_1, \theta)\psi_{h}(X_1, \theta)\vert \right] < \infty  $;
\item[(iii)] $\E\left[\sup_{(\theta, \tau)\in \overline{B_L}}  \vert\e^{\tau'\psi(X_{1,}\theta)} \psi_{k}(X_1, \theta)\vert \right] < \infty$;
\item[(iv)] $\E\left[\sup_{(\theta, \tau)\in \overline{B_L}}  \vert\e^{\tau'\psi(X_{1,}\theta)} \frac{\partial\psi(X_1, \theta)}{\partial \theta'}\vert \right] < \infty$;
\item[(v)] $\E\left[\sup_{(\theta, \tau)\in \overline{B_L}}  \vert\e^{\tau'\psi(X_{1,}\theta)} \psi_{k}(X_1, \theta)\frac{\partial\psi(X_1, \theta)}{\partial \theta'}\vert \right]< \infty $;
\item[(vi)] $\E\left[\sup_{(\theta, \tau)\in \overline{B_L}}  \vert\e^{\tau'\psi(X_{1,}\theta)} \psi_{k}(X_1, \theta)\psi_{h}(X_1, \theta)\frac{\partial\psi(X_1, \theta)}{\partial \theta'}\vert \right]< \infty $;
\item[(vii)] $\E\left[\sup_{(\theta, \tau)\in \overline{B_L}}  \vert\e^{\tau'\psi(X_{1,}\theta)} \psi(X_1, \theta)\psi(X_1, \theta)'\vert \right] < \infty  $;
\item[(viii)] $\E\left[\sup_{(\theta, \tau)\in \overline{B_L}}  \vert\e^{\tau'\psi(X_{1,}\theta)} \psi_{k}(X_1, \theta)\psi(X_1, \theta)\psi(X_1, \theta)'\vert \right] < \infty  $;
 \item[(ix)] $\E\left[\sup_{(\theta, \tau)\in \overline{B_L}}  \vert\e^{\tau'\psi(X_{1,}\theta)} \psi_{k}(X_1, \theta)\psi_{h}(X_1, \theta)\psi(X_1, \theta)\psi(X_1, \theta)'\vert \right] < \infty  $; and
\item[(x)] $\E\left[\sup_{(\theta, \tau)\in \overline{B_L}} \vert \e^{\tau'\psi(X_1,\theta)}\psi(X_1, \theta)\tau' \frac{\partial \psi(X_1, \theta)}{\partial \theta'}\vert\right]<\infty$.
\end{itemize}

\end{lem}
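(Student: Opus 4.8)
The plan is to prove all ten bounds by the single argument template already used in the proofs of Lemma \ref{Lem:Bound2ndPartialLPartialTheta} and Lemma \ref{Lem:Bound2ndPartialLPartialTau}; indeed every integrand here is, up to a bounded constant, one of the integrands controlled there, so a large part of the work will reduce to citation. First I would fix the ball. Under Assumption \ref{Assp:ExistenceConsistency}(a)-(e) and (g)-(h), Lemma \ref{Lem:LogESPExistenceInS}ii supplies an open ball of $\Sbf$ centered at $(\theta_0,\tau(\theta_0))$; intersecting it with the requirement $\theta\in\mathcal{N}$ (possible because $\mathcal{N}$ is a neighborhood of $\theta_0$ by Assumption \ref{Assp:AsymptoticNormality}(a) and $\theta_0\in\intr(\T)$ by Assumption \ref{Assp:ExistenceConsistency}(c)) yields a closed ball $\oBL$ of strictly positive radius with $\oBL\subset\{(\theta,\tau):\theta\in\mathcal{N}\wedge\tau\in\Tbf(\theta)\}\subset\Sbf\subset\Sbf^\epsilon$. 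Taking the minimum of the finitely many radii demanded below gives one $\oBL$ valid for all of (i)-(x).

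The template rests on two elementary facts. First, the supremum of the absolute value of a product is at most the product of the suprema of the absolute values, and by Cauchy-Schwarz each resulting product factorises as $\E[\sup(\cdot)^2]^{1/2}\E[\sup(\cdot)^2]^{1/2}$; scalars such as $\sup_{(\theta,\tau)\in\oBL}|\tau|$ come out as finite constants because $\oBL$ is bounded. Second, on $\oBL$ the $\theta$-coordinate lies in $\mathcal{N}$, so the $j=0,1$ cases of Assumption \ref{Assp:AsymptoticNormality}(b) give $|\psi(X_1,\theta)|\le b(X_1)$ and $|\partial\psi(X_1,\theta)/\partial\theta'|\le b(X_1)$, while $|\e^{\tau'\psi(X_1,\theta)}|^2=\e^{2\tau'\psi(X_1,\theta)}$. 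The resulting integrals are then controlled by three sources: the exponential-times-$b$ moments $\E[\sup_{\theta\in\mathcal{N}}\sup_{\tau\in\Tbf(\theta)}\e^{k_1\tau'\psi(X_1,\theta)}b(X_1)^{k_2}]<\infty$ for $k_1\in\ldsb 1,2\rdsb$ and $k_2\in\ldsb 1,4\rdsb$ from Assumption \ref{Assp:AsymptoticNormality}(b); the pure-exponential moment $\E[\sup_{\Sbf^\epsilon}\e^{2\tau'\psi(X_1,\theta)}]<\infty$ from Assumption \ref{Assp:ExistenceConsistency}(e); and the fourth-moment bounds $\E[\sup_{\T^\epsilon}|\psi\psi'|^2]<\infty$ and $\E[\sup_{\T^\epsilon}|\psi|^4]<\infty$ from Assumption \ref{Assp:ExistenceConsistency}(g) and Lemma \ref{Lem:BoundAsTiltingEquation}. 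With these in hand, (i), (iv) and (vii) are literally Lemma \ref{Lem:Bound2ndPartialLPartialTheta}i, v, xii (equivalently Lemma \ref{Lem:Bound2ndPartialLPartialTau}i, vi, xiii), and (iii), (v), (viii) coincide with Lemma \ref{Lem:Bound2ndPartialLPartialTau}v, vii, xiv.

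The remaining items (ii), (vi), (x) I would dispatch directly: (ii) and (vi) bound every $\psi$- and $\partial\psi$-factor by $b$, yielding $\e^{\tau'\psi}b^2$ and $\e^{\tau'\psi}b^3$, integrable by Assumption \ref{Assp:AsymptoticNormality}(b) with $(k_1,k_2)=(1,2)$ and $(1,3)$; (x) extracts $\sup_{\oBL}|\tau|$ and then bounds $\e^{\tau'\psi}|\psi||\partial\psi/\partial\theta'|\le\e^{\tau'\psi}b^2$, again with $(k_1,k_2)=(1,2)$. The one term at the edge of what the hypotheses allow is (ix), $\e^{\tau'\psi}\psi_k\psi_h\psi\psi'$, which is degree four in $\psi$; here I would split by Cauchy-Schwarz into $\E[\sup(\e^{\tau'\psi}|\psi_k\psi_h|)^2]^{1/2}\E[\sup|\psi\psi'|^2]^{1/2}$, bound $(\e^{\tau'\psi}|\psi_k\psi_h|)^2\le\e^{2\tau'\psi}b^4$, and invoke Assumption \ref{Assp:AsymptoticNormality}(b) with the extreme pair $(k_1,k_2)=(2,4)$ for the first factor and Assumption \ref{Assp:ExistenceConsistency}(g) for the second. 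The main (and essentially only) delicate point is therefore the moment bookkeeping for that highest-order term: one must pair the exponential with exactly the factors that keep $k_1\le 2$ and $k_2\le 4$, which is precisely the range for which Assumption \ref{Assp:AsymptoticNormality}(b) is stated; everything else is routine.
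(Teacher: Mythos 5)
Your proposal is correct and follows essentially the same route as the paper's proof: fix $\oBL\subset\{(\theta,\tau):\theta\in\mathcal{N}\wedge\tau\in\Tbf(\theta)\}\subset\Sbf$ via Lemma \ref{Lem:LogESPExistenceInS}ii, cite Lemmas \ref{Lem:Bound2ndPartialLPartialTheta} and \ref{Lem:Bound2ndPartialLPartialTau} for the items they already cover, and handle the rest (in particular the degree-four term (ix)) by the sup-of-product plus Cauchy--Schwarz template with Assumption \ref{Assp:AsymptoticNormality}(b) at $(k_1,k_2)=(2,4)$ paired with Assumption \ref{Assp:ExistenceConsistency}(g), exactly as the paper does. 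Your only deviations are cosmetic --- dominating (ii) and (vi) directly by $\e^{\tau'\psi}b^2$ and $\e^{\tau'\psi}b^3$ instead of using the component-norm inequality $\vert\psi_k\psi_h\vert\leqslant\vert\psi\psi'\vert$, and citing Lemma \ref{Lem:Bound2ndPartialLPartialTau}xiv for (viii) where the paper re-derives it --- both of which are valid under the stated assumptions.
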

\begin{proof} \textit{(i)} Apply Lemma \ref{Lem:Bound2ndPartialLPartialTheta}i (p. \pageref{Lem:Bound2ndPartialLPartialTheta}) under Assumptions \ref{Assp:ExistenceConsistency} and \ref{Assp:AsymptoticNormality}.
Note that
it does not immediately follow from Assumption \ref{Assp:ExistenceConsistency}(e) and the Cauchy-Schwarz inequality because we need additional assumptions to ensures that there exists $\overline{B_L} \subset \Sbf$: See Lemma \ref{Lem:LogESPExistenceInS}(ii) on p. \pageref{Lem:LogESPExistenceInS}.

\textit{(ii)} For all $( h,k)\in \ldsb 1,m\rdsb^2$,  for all $(\theta, \tau)\in \overline{B_L}$, $\e^{\tau'\psi(X_{1,}\theta)}\vert \psi_{k}(X_1, \theta)\psi_{h}(X_1, \theta)\vert=\e^{\tau'\psi(X_{1,}\theta)} \\\sqrt{[\psi_{k}(X_1, \theta)\psi_{h}(X_1, \theta)]^{2}}\leqslant  \e^{\tau'\psi(X_{1,}\theta)}\sqrt{\sum_{(i,j)\in \ldsb 1,m\rdsb^2} [\psi_{i}(X_1, \theta)\psi_j(X_1, \theta)]^2}=  \  \e^{\tau'\psi(X_{1,}\theta)} \vert\psi(X_1, \theta)\psi(X_1, \theta)' \vert $, so that $ \E\left[\sup_{(\theta, \tau)\in \overline{B_L}}  \vert\e^{\tau'\psi(X_{1,}\theta)} \psi_{k}(X_1, \theta)\psi_{h}(X_1, \theta)\vert \right] \leqslant \E\left[\sup_{(\theta, \tau)\in \overline{B_L}}  \vert\e^{\tau'\psi(X_{1,}\theta)} \psi(X_1, \theta)\psi(X_1, \theta)\vert \right]< \infty $, where the last inequality follows from Lemma \ref{Lem:Bound2ndPartialLPartialTheta}xii (p. \pageref{Lem:Bound2ndPartialLPartialTheta}) under Assumptions \ref{Assp:ExistenceConsistency} and \ref{Assp:AsymptoticNormality}.

\textit{(iii)} Apply Lemma \ref{Lem:Bound2ndPartialLPartialTau}v (p. \pageref{Lem:Bound2ndPartialLPartialTau}) under Assumptions \ref{Assp:ExistenceConsistency} and \ref{Assp:AsymptoticNormality}.

\textit{(iv)} Apply Lemma \ref{Lem:Bound2ndPartialLPartialTheta}v (p. \pageref{Lem:Bound2ndPartialLPartialTheta}) under Assumptions \ref{Assp:ExistenceConsistency} and \ref{Assp:AsymptoticNormality}.

\textit{(v)} Apply Lemma  \ref{Lem:Bound2ndPartialLPartialTau}vii (p. \pageref{Lem:Bound2ndPartialLPartialTau}) under Assumptions \ref{Assp:ExistenceConsistency} and \ref{Assp:AsymptoticNormality}.

\textit{(vi)} Proof similar to the one of Lemma   \ref{Lem:Bound2ndPartialLPartialTheta}xiii (p. \pageref{Lem:Bound2ndPartialLPartialTheta}). The supremum of the absolute value of the product is smaller than the product of the
suprema of the absolute values. Thus, under Assumption \ref{Assp:ExistenceConsistency}(a)(b),   for $\overline{B_L}$ of sufficiently small radius, for all $(h,k)\in \ldsb 1,m\rdsb^{2}$,\begin{eqnarray*}
& & \E\left[\sup_{(\theta, \tau)\in \overline{B_L}} \vert \e^{\tau'\psi(X_{1,}\theta)} \psi_{k}(X_1, \theta)\psi_{h}(X_1, \theta)\frac{\partial\psi(X_1, \theta)}{\partial \theta'}\vert \right]  \\
& \leqslant & \E\left[\sup_{(\theta, \tau)\in \overline{B_L}} \vert \e^{\tau'\psi(X_1,\theta)}
\frac{\partial\psi(X_1, \theta)}{\partial \theta'}\vert\sup_{(\theta, \tau)\in \overline{B_L}}  \vert\psi_{k}(X_1, \theta)\psi_{h}(X_1, \theta)\vert \right] \\
& \stackrel{(a)}{\leqslant} & \E\left[\sup_{(\theta, \tau)\in \overline{B_L}} \vert \e^{\tau'\psi(X_1,\theta)}
\frac{\partial\psi(X_1, \theta)}{\partial \theta'}\vert\sup_{(\theta, \tau)\in \overline{B_L}}  \vert\psi(X_1, \theta)\psi(X_1, \theta)'\vert \right] \\
& \stackrel{(b)}{\leqslant} &  \sqrt{ \E\left[\sup_{(\theta, \tau)\in \overline{B_L}} \vert \e^{\tau'\psi(X_1,\theta)}
\frac{\partial\psi(X_1, \theta)}{\partial \theta'}\vert^{2}\right] } \sqrt{\E\left[\sup_{(\theta, \tau)\in \overline{B_L}}  \vert\psi(X_1, \theta)\psi(X_1, \theta)'\vert^2 \right]} \\
& \stackrel{(c)}{\leqslant} &  \sqrt{\E\left[ \sup_{\theta\in \mathcal{N} } \sup_{\tau \in \Tbf(\theta)}\e^{ 2\tau'\psi(X_1, \theta)} b(X_1)^{2}\right]} \sqrt{\E\left[\sup_{\theta \in \T^{\epsilon}}  \vert\psi(X_1, \theta)\psi(X_1, \theta)'\vert^2 \right]}  \stackrel{(d)}{<}\infty.
\end{eqnarray*}
\textit{(a)} As in the proof of statement (ii), for all $(h,k)\in \ldsb 1,m\rdsb^2$,  for all 
 
 \noindent
 $(\theta, \tau)\in \overline{B_L}$, $\vert \psi_{k}(X_1, \theta)\psi_{h}(X_1, \theta)\vert\leqslant \vert \psi(X_1, \theta)\psi(X_1, \theta)' \vert $.
\textit{(b)} Apply the Cauchy-Schwarz inequality, and note that the supremum of the square of a positive function is the square of the supremum of the function. \textit{(c)}  Firstly, under Assumption \ref{Assp:ExistenceConsistency}(a)-(e) and (g)-(h),  by Lemma \ref{Lem:LogESPExistenceInS}ii (p. \pageref{Lem:LogESPExistenceInS}), $\Sbf$ contains an open ball centered at $(\theta_0, \tau(\theta_0))$, so that,    for $\overline{B_L}$ of sufficiently small radius, $\oBL \subset\{(\theta, \tau): \theta \in \mathcal{N}\wedge \tau \in \Tbf(\theta) \}\subset \Sbf \subset \Sbf^\epsilon $. Secondly, as the second supremum does not depend on $\tau$, $\sup_{(\theta, \tau)\in \overline{B_L}}  \vert \psi(X_1, \theta)\psi(X_1, \theta)'\vert^2 \leqslant \sup_{\theta\in \T^\epsilon}  \vert\psi(X_1, \theta)\psi(X_1, \theta)'\vert^2 $ because $\oBL \subset \Sbf $, for $\oBL$ of radius small enough.  \textit{(d)} Firstly,   by Assumption \ref{Assp:AsymptoticNormality}(b), the  first expectation is bounded. Secondly, by Assumption \ref{Assp:ExistenceConsistency}(g), the second expectation is also bounded.

\textit{(vii)} Apply Lemma \ref{Lem:Bound2ndPartialLPartialTheta}xii (p. \pageref{Lem:Bound2ndPartialLPartialTheta}) under Assumptions \ref{Assp:ExistenceConsistency} and \ref{Assp:AsymptoticNormality}.

\textit{(viii)} Proof similar to the one of Lemma   \ref{Lem:Bound2ndPartialLPartialTheta}xiii (p. \pageref{Lem:Bound2ndPartialLPartialTheta}) and to the statement (vi) of the present lemma. The supremum of the absolute value of the product is smaller than the product of the
suprema of the absolute values. Thus, under Assumption \ref{Assp:ExistenceConsistency}(a)(b),   for $\overline{B_L}$ of sufficiently small radius, for all $(h,k)\in \ldsb 1,m\rdsb^{2}$,\begin{eqnarray*}
& & \E\left[\sup_{(\theta, \tau)\in \overline{B_L}} \vert\e^{\tau'\psi(X_{1,}\theta)} \psi_{k}(X_1, \theta)\psi(X_1, \theta)\psi(X_1, \theta)\vert \right]  \\
& \stackrel{}{\leqslant} & \E\left[\sup_{(\theta, \tau)\in \overline{B_L}} \vert \e^{\tau'\psi(X_1,\theta)}
\psi_{k}(X_1, \theta)\vert\sup_{(\theta, \tau)\in \overline{B_L}}  \vert\psi(X_1, \theta)\psi(X_1, \theta)'\vert \right] \\
& \stackrel{(a)}{\leqslant} &  \sqrt{ \E\left[\sup_{(\theta, \tau)\in \overline{B_L}} \vert \e^{\tau'\psi(X_1,\theta)}
\psi_{k}(X_1, \theta)\vert^{2}\right] } \sqrt{\E\left[\sup_{(\theta, \tau)\in \overline{B_L}}  \vert\psi(X_1, \theta)\psi(X_1, \theta)'\vert^2 \right]} \\
& \stackrel{(b)}{\leqslant} &  \sqrt{\E\left[ \sup_{\theta\in \mathcal{N} } \sup_{\tau \in \Tbf(\theta)}\e^{ 2\tau'\psi(X_1, \theta)} b(X_1)^{2}\right]} \sqrt{\E\left[\sup_{\theta \in \T^{\epsilon}}  \vert\psi(X_1, \theta)\psi(X_1, \theta)'\vert^2 \right]}  \stackrel{(c)}{<}\infty.
\end{eqnarray*}
\textit{(a)} Apply the Cauchy-Schwarz inequality, and note that the supremum of the square of a positive function is the square of the supremum of the function. \textit{(b)}  Firstly, under Assumption \ref{Assp:ExistenceConsistency}(a)-(e) and (g)-(h),  by Lemma \ref{Lem:LogESPExistenceInS}ii (p. \pageref{Lem:LogESPExistenceInS}), $\Sbf$ contains an open ball centered at $(\theta_0, \tau(\theta_0))$, so that,    for $\overline{B_L}$ of sufficiently small radius, $\oBL \subset\{(\theta, \tau): \theta \in \mathcal{N}\wedge \tau \in \Tbf(\theta) \}\subset \Sbf \subset \Sbf^\epsilon $. Moreover,   for all $k \in \ldsb 1,m\rdsb $, for all $\theta \in \T$, $\vert \psi_{k}(X_1, \theta)\vert \leqslant\vert \psi(X_1, \theta)\vert\leqslant b(X) $, where the last inequality follows from Assumption \ref{Assp:AsymptoticNormality}(b).  Secondly, as the second supremum does not depend on $\tau$, $\sup_{(\theta, \tau)\in \overline{B_L}}  \vert\psi(X_1, \theta)\psi(X_1, \theta)'\vert^2 \leqslant \sup_{\theta\in \T^\epsilon}  \vert\psi(X_1, \theta)\psi(X_1, \theta)'\vert^2 $ because $\oBL \subset \Sbf $, for $\oBL$ of radius small enough.  \textit{(c)} Firstly,   by Assumption \ref{Assp:AsymptoticNormality}(b), the  first expectation is bounded. Secondly, by Assumption \ref{Assp:ExistenceConsistency}(g), the second expectation is also bounded.

\textit{(ix)} Proof similar to the one of Lemma   \ref{Lem:Bound2ndPartialLPartialTheta}xiii (p. \pageref{Lem:Bound2ndPartialLPartialTheta}) and to the statement (vi) of the present lemma. The supremum of the absolute value of the product is smaller than the product of the
suprema of the absolute values. Thus, under Assumption \ref{Assp:ExistenceConsistency}(a)(b),   for $\overline{B_L}$ of sufficiently small radius, for all $(h,k)\in \ldsb 1,m\rdsb^{2}$,\begin{eqnarray*}
& & \E\left[\sup_{(\theta, \tau)\in \overline{B_L}} \vert\e^{\tau'\psi(X_{1,}\theta)} \psi_{k}(X_1, \theta)\psi(X_1, \theta)\psi(X_1, \theta)\vert \right]  \\
& \stackrel{}{\leqslant} & \E\left[\sup_{(\theta, \tau)\in \overline{B_L}} \vert \e^{\tau'\psi(X_1,\theta)}
\psi_{k}(X_1, \theta)\psi_{h}(X_1, \theta)\vert\sup_{(\theta, \tau)\in \overline{B_L}}  \vert\psi(X_1, \theta)\psi(X_1, \theta)'\vert \right] \\
& \stackrel{(a)}{\leqslant} &  \sqrt{ \E\left[\sup_{(\theta, \tau)\in \overline{B_L}} \vert \e^{\tau'\psi(X_1,\theta)}
\psi_{k}(X_1, \theta)\psi_{h}(X_1, \theta)\vert^{2}\right] } \sqrt{\E\left[\sup_{(\theta, \tau)\in \overline{B_L}}  \vert\psi(X_1, \theta)\psi(X_1, \theta)'\vert^2 \right]} \\
& \stackrel{(b)}{\leqslant} &  \sqrt{\E\left[ \sup_{\theta\in \mathcal{N} } \sup_{\tau \in \Tbf(\theta)}\e^{ 2\tau'\psi(X_1, \theta)} b(X_1)^{4}\right]} \sqrt{\E\left[\sup_{\theta \in \T^{\epsilon}}  \vert\psi(X_1, \theta)\psi(X_1, \theta)'\vert^2 \right]}  \stackrel{(c)}{<}\infty.
\end{eqnarray*}
\textit{(a)} Apply the Cauchy-Schwarz inequality, and note that the supremum of the square of a positive function is the square of the supremum of the function. \textit{(b)}  Firstly, under Assumption \ref{Assp:ExistenceConsistency}(a)-(e) and (g)-(h),  by Lemma \ref{Lem:LogESPExistenceInS}ii (p. \pageref{Lem:LogESPExistenceInS}), $\Sbf$ contains an open ball centered at $(\theta_0, \tau(\theta_0))$, so that,    for $\overline{B_L}$ of sufficiently small radius, $\oBL \subset\{(\theta, \tau): \theta \in \mathcal{N}\wedge \tau \in \Tbf(\theta) \}\subset \Sbf \subset \Sbf^\epsilon $. Moreover,  for all $k \in \ldsb 1,m\rdsb $, for all $\theta \in \T$, $\vert \psi_{k}(X_1, \theta)\vert \leqslant\vert \psi(X_1, \theta)\vert\leqslant b(X) $ where the last inequality follows from Assumption \ref{Assp:AsymptoticNormality}(b).  Secondly, as the second supremum does not depend on $\tau$, $\sup_{(\theta, \tau)\in \overline{B_L}}  \vert\psi(X_1, \theta)\psi(X_1, \theta)'\vert^2 \leqslant \sup_{\theta\in \T^\epsilon}  \vert\psi(X_1, \theta)\psi(X_1, \theta)'\vert^2 $ because $\oBL \subset \Sbf $, for $\oBL$ of radius small enough.  \textit{(c)} Firstly,   by Assumption \ref{Assp:AsymptoticNormality}(b), the  first expectation is bounded. Secondly, by Assumption \ref{Assp:ExistenceConsistency}(g), the second expectation is also bounded.

\textit{(x)} The norm of a product of matrices is smaller than the product of the norms \citep[e.g.,][Theorem 9.7 and note that all norms are equivalent on finite dimensional spaces]{1953Rudin}. Thus, for $\oBL$ of sufficiently small radius,  for all $(\ell, j)\in \ldsb 1,m\rdsb^2$,
\begin{eqnarray*}
&&\ \E\left[\sup_{(\theta, \tau)\in \overline{B_L}} \vert \e^{\tau'\psi(X_1,\theta)}\psi(X_1, \theta)\tau' \frac{\partial \psi(X_1, \theta)}{\partial \theta'}\vert\right]\\
& \leqslant & (\sup_{(\theta,\tau) \in \overline{B_L} }\vert \tau \vert)  \E\left[\sup_{(\theta, \tau)\in \overline{B_L}}  \e^{\tau'\psi(X_1,\theta)}\vert \psi(X_1, \theta)\vert \vert \frac{\partial \psi(X_1, \theta)}{\partial \theta'}\vert\right]\\
& \stackrel{(a)}{\leqslant} & (\sup_{(\theta,\tau) \in \overline{B_L} }\vert \tau \vert) \E\left[ \sup_{\theta\in \mathcal{N} } \sup_{\tau \in \Tbf(\theta)}\e^{ \tau'\psi(X_1, \theta)} b(X_1)^{2}\right] \stackrel{(b)}{<} \infty.
\end{eqnarray*}
 \textit{(a)}   Firstly, under Assumption \ref{Assp:ExistenceConsistency}(a)-(e) and (g)-(h), by Lemma \ref{Lem:LogESPExistenceInS}ii (p. \pageref{Lem:LogESPExistenceInS}), $\Sbf$ contains an open ball centered at $(\theta_0, \tau(\theta_0))$ Thus, under Assumption \ref{Assp:ExistenceConsistency}(a)-(e) and (g)-(h),  for $\oBL$ of sufficiently small radius, by definition of $\Sbf$, $\oBL \subset\{(\theta, \tau): \theta \in \mathcal{N}\wedge \tau \in \Tbf(\theta) \}\subset \Sbf$, because $ \mathcal{N}\subset \T$  by Assumption \ref{Assp:AsymptoticNormality}(a). Secondly, by Assumption \ref{Assp:AsymptoticNormality}(b), $\sup_{\theta\in \mathcal{N} }\vert \psi(X_1, \theta)\vert \leqslant b(X)$ and  $\sup_{\theta\in \mathcal{N} }\vert \frac{\partial \psi(X_1, \theta)}{\partial \theta'}\vert\leqslant b(X)$.   \textit{(b)}   Firstly,  $\sup_{(\theta,\tau) \in \overline{B_L} }\vert \tau \vert^{2}< \infty$ because    $ \overline{B_L}$ is bounded. Secondly, by Assumption \ref{Assp:AsymptoticNormality}(b), $\E\left[ \sup_{\theta\in \mathcal{N} } \sup_{\tau \in \Tbf(\theta)}\e^{ \tau'\psi(X_1, \theta)} b(X_1)^{2}\right]< \infty$.
\end{proof}

\subsection{Proof of Theorem \ref{theorem:TrinityPlus1}: Trinity$+1$} \label{Ap:PfTrinityPlus1}

The proof of Theorem \ref{theorem:TrinityPlus1} adapts the traditional way of deriving the trinity along the lines of \cite{2011Smith}. As in the proof of Theorem \ref{theorem:ConsistencyAsymptoticNormality}, the main difference comes from the complexity of the variance term $\vert \Sigma_T(\theta)\vert_{\det}$. 
\begin{proof}[Core of the proof of Theorem \ref{theorem:TrinityPlus1}]
\textit{Asymptotic distribution of $\mathrm{Wald}_T$.} By Assumption \ref{Assp:Trinity}(a), $r: \T \rightarrow \R^q$ is continuously differentiable. Thus, under Assumptions \ref{Assp:ExistenceConsistency} and \ref{Assp:AsymptoticNormality}, if  the test hypothesis \eqref{Eq:HypParameterRestriction} on p. \pageref{Eq:HypParameterRestriction} holds, a first-order Taylor-Lagrange expansion at $\theta_0$ evaluated at $\hat{\theta}_T$, $\omega$ by $\omega$, yields, $\P$-a.s. as $T \rightarrow \infty$,
\begin{eqnarray*}
r(\hat{\theta}_T) & = & r(\theta_0)+R(\bar{\theta}_T)(\hat{\theta}_T- \theta_0) \text{ ,  where $\bar{\theta}_T$ is between $\hat{\theta}_T$ and $\theta_0$; }\\
& \stackrel{(a)}{=} & R(\bar{\theta}_T)(\hat{\theta}_T- \theta_0)\\
& \underset{(b)}{\stackrel{D}{\rightarrow }} & R(\theta_0)\mathcal{N}(0, \Sigma(\theta_0))\\
&\stackrel{D}{=}& \mathcal{N} \left(0, R(\theta_0)\Sigma(\theta_0)R(\theta_0)' \right).
\end{eqnarray*}
\textit{(a)} By definition, if  the test hypothesis \eqref{Eq:HypParameterRestriction} on p. \pageref{Eq:HypParameterRestriction} holds, $r(\theta_0)=0_{q \times 1}$. \textit{(b)} Under Assumptions \ref{Assp:ExistenceConsistency} and \ref{Assp:AsymptoticNormality}, by Theorem \ref{theorem:ConsistencyAsymptoticNormality}ii (p. \pageref{theorem:ConsistencyAsymptoticNormality}), $\P$-a.s. as $T \rightarrow \infty$, $\sqrt{T}(\hat{\theta}_T- \theta_0) \stackrel{D}{\rightarrow}\mathcal{N}(0, \Sigma(\theta_0))$, which also implies that $\bar{\theta}_T \rightarrow \theta_0$.
Thus, under Assumptions \ref{Assp:ExistenceConsistency},  \ref{Assp:AsymptoticNormality} and \ref{Assp:Trinity}(a),  by continuity of $R(.)$, $\P$-a.s. as $T \rightarrow \infty$, $ R(\bar{\theta})\rightarrow R(\theta_0)$, so that the result follows by the Slutsky's theorem.

 Now, under Assumptions \ref{Assp:ExistenceConsistency}, \ref{Assp:AsymptoticNormality} and \ref{Assp:Trinity}(a), by Lemma \ref{Lem:ConstrainedEstLagrangian}i (p. \pageref{Lem:ConstrainedEstLagrangian}), $\P$-a.s. as $T \rightarrow \infty$, $\check{\theta}_T\rightarrow \theta_0$, so that $R(\check{\theta}_T)\rightarrow R(\theta_0)$ by Assumption \ref{Assp:Trinity}(a). Moreover, by the theorem's assumption, as $T \rightarrow \infty $, $\widehat{\Sigma(\theta_0)}\stackrel{\P}{\rightarrow} \Sigma(\theta_0)$. In addition, by Assumption \ref{Assp:ExistenceConsistency}(h) and \ref{Assp:Trinity}(b), $\Sigma(\theta_0)$ and $R(\theta_0)$ are full rank, so that $\widehat{\Sigma(\theta_0)} $ and $R(\check{\theta}_T)$ are full rank w.p.a.1 as $T \rightarrow \infty$ (Lemma \ref{Lem:UniFiniteSampleInvertibilityFromUniAsInvertibility} p. \pageref{Lem:UniFiniteSampleInvertibilityFromUniAsInvertibility}). Then, the result follows from the Cochran's theorem.

\textit{Asymptotic distribution of $\mathrm{LM}_T$.} Under Assumptions \ref{Assp:ExistenceConsistency}, \ref{Assp:AsymptoticNormality} and \ref{Assp:Trinity}, by Proposition \ref{Prop:ConstrainedEstAsNormality}iii (p. \pageref{Prop:ConstrainedEstAsNormality}), if the test hypothesis \eqref{Eq:HypParameterRestriction} on p. \pageref{Eq:HypParameterRestriction} holds, as $T \rightarrow \infty$, $\check{\gamma}_T\stackrel{D}{\rightarrow }\mathcal{N}(0,(R(\theta_0) \Sigma(\theta_0) R(\theta_0)')^{-1})$. Now, under Assumptions \ref{Assp:ExistenceConsistency}, \ref{Assp:AsymptoticNormality} and \ref{Assp:Trinity}(a), by Lemma \ref{Lem:ConstrainedEstLagrangian}i (p. \pageref{Lem:ConstrainedEstLagrangian}), $\P$-a.s. as $T \rightarrow \infty$, $\check{\theta}_T \rightarrow \theta_0$, so that $R(\check{\theta}_T)\rightarrow R(\theta_0)$ by Assumption \ref{Assp:Trinity}(a).
 Moreover, by the theorem's assumption, as $T \rightarrow \infty $, $\widehat{\Sigma(\theta_0)}\stackrel{\P}{\rightarrow} \Sigma(\theta_0)$. In addition, by Assumption \ref{Assp:ExistenceConsistency}(h) and \ref{Assp:Trinity}(b), $\Sigma(\theta_0)$ and $R(\theta_0)$ are full rank, so that $\widehat{\Sigma(\theta_0)} $ and $R(\check{\theta}_T)$ are full rank w.p.a.1 as $T \rightarrow \infty$ (Lemma \ref{Lem:UniFiniteSampleInvertibilityFromUniAsInvertibility} p. \pageref{Lem:UniFiniteSampleInvertibilityFromUniAsInvertibility}). Then, by the Cochran's theorem, as $T \rightarrow \infty$, $T \check{\gamma}_T'[R(\check{\theta}_T)\widehat{\Sigma(\theta_0)}R(\check{\theta}_T)']\check{\gamma}_T \stackrel{D}{\rightarrow} \chi_q^2$.
Finally, under Assumptions \ref{Assp:ExistenceConsistency}, \ref{Assp:AsymptoticNormality} and \ref{Assp:Trinity}, by
 Lemma \ref{Lem:ConstrainedEstLagrangian}iii (p. \pageref{Lem:ConstrainedEstLagrangian}), $R(\check{\theta}_T)'\check{\gamma}_T=-\left. \frac{\partial L_T(\theta, \tau_T(\theta))}{\partial \theta} \right \vert_{\theta= \check{\theta}_T} $, so $T \check{\gamma}_T'[R(\check{\theta}_T)\widehat{\Sigma(\theta_0)}R(\check{\theta}_T)']\check{\gamma}_T =T [R(\check{\theta}_T)'\check{\gamma}_T]'\widehat{\Sigma(\theta_0)}[R(\check{\theta}_T)'\check{\gamma}_T]=\negthickspace T\left. \frac{\partial L_T(\theta, \tau_T(\theta))}{\partial \theta} \right \vert_{\theta= \check{\theta}_T}'\negthickspace\widehat{\Sigma(\theta_0)}\left. \frac{\partial L_T(\theta, \tau_T(\theta))}{\partial \theta} \right \vert_{\theta= \check{\theta}_T}\negthickspace\negthickspace\negthickspace\negthickspace= \frac{\partial\ln[\hat{f}_{\theta^{*}_T}(\check{\theta}_T)]}{\partial \theta' }\widehat{\Sigma(\theta_0)}\frac{\partial\ln[\hat{f}_{\theta^{*}_T}(\check{\theta}_T)]}{\partial \theta }$, where the last equality follows from the definition of the LogESP in Lemma \ref{Lem:LogESPDecomposition} (p. \pageref{Lem:LogESPDecomposition}), i.e.,   

\noindent
$L_T(\theta, \tau_T(\theta)):=\ln  \left[\frac{1}{T}\sum_{t=1}^T \e^{\tau_T(\theta)'\psi_t(\theta)}\right]- \frac{1}{2T} \ln \vert \Sigma_T(\theta)\vert_{\det}=\frac{1}{T}[ \ln(\hat{f}_{\theta^{*}_T}(\theta))-\frac{m}{2}\ln(\frac{T}{2\pi})].$ 

\textit{Asymptotic distribution of $\mathrm{ALR}_T$.}  Under Assumptions \ref{Assp:ExistenceConsistency},\ref{Assp:AsymptoticNormality} and \ref{Assp:Trinity},   if the test hypothesis \eqref{Eq:HypParameterRestriction} on p. \pageref{Eq:HypParameterRestriction} holds, by Lemma \ref{Lem:AsStatLR_T} (p. \pageref{Lem:AsStatLR_T}), $\P$-a.s. as $T \rightarrow \infty$,
 \begin{eqnarray*}
& & 2\{\ln[\hat{f}_{\theta^{*}_T}(\hat{\theta}_T)]-\ln[\hat{f}_{\theta^{*}_T}(\check{\theta}_T)]\}\\
&=&-\left[\sqrt{T}(\hat{\theta}_T-\check{\theta}_T)\right]'\Sigma^{-1}\left[\sqrt{T}(\hat{\theta}_T-\check{\theta}_T)\right]+o_{\P}(1)\\
& \stackrel{(a)}{=} & -\left[\Sigma R'(R\Sigma R')^{-1}RM^{-1}\frac{1}{\sqrt{T}}\sum_{t=1}^T \psi_t(\theta_0)\negthickspace+\negthickspace o_{\P}(1)\right]'\negthickspace \negthickspace\Sigma^{-1}\negthickspace\left[\Sigma R'(R\Sigma R')^{-1}RM^{-1}\frac{1}{\sqrt{T}}\sum_{t=1}^T \psi_t(\theta_0)\negthickspace+\negthickspace o_{\P}(1)\right]\negthickspace+o_{\P}(1) \\
& \stackrel{(b)}{=} & -\left[\frac{1}{\sqrt{T}}\sum_{t=1}^T \psi_t(\theta_0)\right]' (M')^{-1}R'(R\Sigma R')^{-1}R\Sigma\Sigma^{-1}\Sigma R'(R\Sigma R')^{-1}RM^{-1}\left[\frac{1}{\sqrt{T}}\sum_{t=1}^T \psi_t(\theta_0)\right]+o_{\P}(1)\\
& \stackrel{(c)}{=} & -\left[\frac{1}{\sqrt{T}}\sum_{t=1}^T \psi_t(\theta_0)\right]' (M')^{-1}R'(R\Sigma R')^{-1}RM^{-1}\left[\frac{1}{\sqrt{T}}\sum_{t=1}^T \psi_t(\theta_0)\right]\negthickspace+o_{\P}(1)\\
&  \stackrel{(d)}{=}& -\left[V^{-1/2}\frac{1}{\sqrt{T}}\sum_{t=1}^T \psi_t(\theta_0)\right]' V^{1/2'}(M')^{-1}R'(R\Sigma R')^{-1}RM^{-1}V^{1/2}\left[V^{-1/2}\frac{1}{\sqrt{T}}\sum_{t=1}^T \psi_t(\theta_0)\right]\negthickspace+o_{\P}(1)\\
&  \stackrel{(e)}{=} & -\left[V^{-1/2}\frac{1}{\sqrt{T}}\sum_{t=1}^T \psi_t(\theta_0)\right]'P_{\Sigma^{1/2}R'} \left[V^{-1/2}\frac{1}{\sqrt{T}}\sum_{t=1}^T \psi_t(\theta_0)\right]\negthickspace+o_{\P}(1)\\
& \underset{(f)}{ \stackrel{D}{\rightarrow}} &\chi^2_q
 \end{eqnarray*}
 \textit{(a)} Under Assumptions \ref{Assp:ExistenceConsistency}, \ref{Assp:AsymptoticNormality} and \ref{Assp:Trinity},  if the test hypothesis \eqref{Eq:HypParameterRestriction} on p. \pageref{Eq:HypParameterRestriction} holds, by Lemma \ref{Lem:AsStatLR_T}ii (p. \pageref{Lem:AsStatLR_T}), $\P$-a.s. as $T \rightarrow \infty$,$
\sqrt{T}(\hat{\theta}_T-\check{\theta}_T )=\Sigma R'(R\Sigma R')^{-1}RM^{-1}\frac{1}{\sqrt{T}}\sum_{t=1}^T \psi_t(\theta_0)+o_{\P}(1)$. \textit{(b)} Transpose the content of the first square bracket, and then note that $\Sigma=\Sigma'$ by symmetry. \textit{(c)} Note that $R\Sigma\Sigma^{-1}\Sigma R'(R\Sigma R')^{-1}=I$. \textit{(d)} Use that $V^{1/2} V^{-1/2}=I$.
\textit{(e)} Note that $ P_{\Sigma^{1/2}R'}=V^{1/2'}(M')^{-1}R(R'\Sigma R)^{-1}R'M^{-1}V^{1/2}$. \textit{(f)} Under Assumption \ref{Assp:ExistenceConsistency}(a)-(c) and (g), by the Lindeberg-L{\'e}vy CLT theorem, as $T \rightarrow \infty$, $ \frac{1}{\sqrt{T}}\sum_{t=1}^T\psi_t(\theta_0)\stackrel{D}{\rightarrow} \mathcal{N}(0,V)$ where $V:=\E[\psi(X_1,\theta_{0}) \psi(X_1,\theta_{0})' ]$. Moreover, the orthogonal projection matrix $P_{\Sigma^{1/2}R'} $ has rank $q$  because $R$ is of rank $q$ and $\Sigma$ has full rank by Assumptions \ref{Assp:Trinity}(b) and \ref{Assp:ExistenceConsistency}(h), respectively.
Thus, the result follows from the Cochran's theorem. \textit{}

\textit{Asymptotic distribution of $\mathrm{ET}_T$.} Under Assumptions \ref{Assp:ExistenceConsistency}, \ref{Assp:AsymptoticNormality} and \ref{Assp:Trinity},  if the test hypothesis \eqref{Eq:HypParameterRestriction} on p. \pageref{Eq:HypParameterRestriction} holds, by Proposition \ref{Prop:ConstrainedEstAsNormality}ii (p. \pageref{Prop:ConstrainedEstAsNormality}),   $\P$-a.s. as $T \rightarrow \infty$,
\begin{eqnarray*}
\sqrt{T}\tau_T(\check{\theta}_T) &=& (M')^{-1}\Sigma^{-1/2}P_{\Sigma^{1/2}R'}\Sigma^{-1/2'}M^{-1}\frac{1}{\sqrt{T}}\sum_{t=1}^T\psi_t(\theta_0)+o_{\P}(1)\\
& \stackrel{(a)}{=} &  (M')^{-1}[V^{1/2}(M')^{-1}]^{-1}P_{\Sigma^{1/2}R'}[M^{-1}V^{1/2'}]^{-1}M^{-1}\frac{1}{\sqrt{T}}\sum_{t=1}^T\psi_t(\theta_0)+o_{\P}(1)\\
& \stackrel{(b)}{=} & (M')^{-1}M'V^{-1/2}P_{\Sigma^{1/2}R'}V^{-1/2'}MM^{-1}\frac{1}{\sqrt{T}}\sum_{t=1}^T\psi_t(\theta_0)+o_{\P}(1)\\
& = & V^{-1/2}P_{\Sigma^{1/2}R'}V^{-1/2'}\frac{1}{\sqrt{T}}\sum_{t=1}^T\psi_t(\theta_0)+o_{\P}(1)
\end{eqnarray*}
\textit{(a)} By definition, $M^{-1}V(M')^{-1}=:\Sigma=\Sigma^{1/2'}\Sigma^{1/2} $, so that $\Sigma^{-1/2}:=(\Sigma^{1/2})^{-1}=[V^{1/2}(M')^{-1}]^{-1}$ and $\Sigma^{-1/2'}:=(\Sigma^{1/2'})^{-1}=[M^{-1}V^{1/2'}]^{-1}$. \textit{(b)} By standard property of inverses, $[V^{1/2}(M')^{-1}]^{-1} $ and $[M^{-1}V^{1/2'}]^{-1}=V^{-1/2'}M$.

Thus, under Assumptions \ref{Assp:ExistenceConsistency}, \ref{Assp:AsymptoticNormality} and \ref{Assp:Trinity},  if the test hypothesis \eqref{Eq:HypParameterRestriction} on p. \pageref{Eq:HypParameterRestriction} holds, by Proposition \ref{Prop:ConstrainedEstAsNormality},   $\P$-a.s. as $T \rightarrow \infty$,
\begin{eqnarray*}
& &  T \tau_T(\check{\theta}_T)'\widehat{V}\tau_T(\check{\theta}_T)\\
& = & \left[ V^{-1/2}P_{\Sigma^{1/2}R'}V^{-1/2'}\frac{1}{\sqrt{T}}\sum_{t=1}^T\psi_t(\theta_0)+o_{\P}(1)\right]'\widehat{V}_T\left[V^{-1/2}P_{\Sigma^{1/2}R'}V^{-1/2'}\frac{1}{\sqrt{T}}\sum_{t=1}^T\psi_t(\theta_0)+o_{\P}(1) \right]\\
& \stackrel{(a)}{=} & \left[ V^{-1/2}P_{\Sigma^{1/2}R'}V^{-1/2'}\frac{1}{\sqrt{T}}\sum_{t=1}^T\psi_t(\theta_0)\right]'\widehat{V}_T\left[V^{-1/2}P_{\Sigma^{1/2}R'}V^{-1/2'}\frac{1}{\sqrt{T}}\sum_{t=1}^T\psi_t(\theta_0) \right]\\
& & 2  \left[ V^{-1/2}P_{\Sigma^{1/2}R'}V^{-1/2'}\frac{1}{\sqrt{T}}\sum_{t=1}^T\psi_t(\theta_0)\right]'\widehat{V}_T\left[o_{\P}(1) \right ]+ \left[ o_{\P}(1)\right]'\widehat{V}\left[o_{\P}(1) \right ]\\
& \stackrel{(b)}{=} &  \left[ P_{\Sigma^{1/2}R'}V^{-1/2'}\frac{1}{\sqrt{T}}\sum_{t=1}^T\psi_t(\theta_0)\right]'V^{-1/2'}\widehat{V}_TV^{-1/2}\left[P_{\Sigma^{1/2}R'}V^{-1/2'}\frac{1}{\sqrt{T}}\sum_{t=1}^T\psi_t(\theta_0) \right]+o_{\P}(1)\\
& \stackrel{(c)}{=} & \left[ P_{\Sigma^{1/2}R'}V^{-1/2'}\frac{1}{\sqrt{T}}\sum_{t=1}^T\psi_t(\theta_0)\right]'\left[P_{\Sigma^{1/2}R'}V^{-1/2'}\frac{1}{\sqrt{T}}\sum_{t=1}^T\psi_t(\theta_0) \right]\\
& & \left[ P_{\Sigma^{1/2}R'}V^{-1/2'}\frac{1}{\sqrt{T}}\sum_{t=1}^T\psi_t(\theta_0)\right]'\left(V^{-1/2'}\widehat{V}_TV^{-1/2}-I\right)\left[P_{\Sigma^{1/2}R'}V^{-1/2'}\frac{1}{\sqrt{T}}\sum_{t=1}^T\psi_t(\theta_0) \right]+o_{\P}(1)\\
& \stackrel{(d)}{=} & \left[ P_{\Sigma^{1/2}R'}V^{-1/2'}\frac{1}{\sqrt{T}}\sum_{t=1}^T\psi_t(\theta_0)\right]'\left[P_{\Sigma^{1/2}R'}V^{-1/2'}\frac{1}{\sqrt{T}}\sum_{t=1}^T\psi_t(\theta_0) \right]+o_{\P}(1)\\
& \underset{(d)}{\stackrel{D}{\rightarrow}} & \chi^2_q
\end{eqnarray*}
\textit{(a)} Use the bilinearity and symmetry of the quadratic form defined by the matrix $\widehat{V}_T$, which is symmetric by the theorem's assumption. \textit{(b)} Firstly, by theorem's assumption, as $T \rightarrow \infty$,  $\widehat{V}_T \stackrel{\P}{\rightarrow} V$, so that $\widehat{V}_T=O_{\P}(1)$. Secondly, under Assumption \ref{Assp:ExistenceConsistency}(a)-(c) and (g), by the Lindeberg-L{\'e}vy CLT theorem, as $T \rightarrow \infty$, $ \frac{1}{\sqrt{T}}\sum_{t=1}^T\psi_t(\theta_0)\stackrel{D}{\rightarrow} \mathcal{N}(0,V)$ where $V:=\E[\psi(X_1,\theta_{0}) \psi(X_1,\theta_{0})' ]$, so that, as $T \rightarrow \infty$,
 $\frac{1}{\sqrt{T}}\sum_{t=1}^T\psi_t(\theta_0)=O_{\P}(1)$. Thus, the second and third terms are $o_{\P}(1)$.   \textit{(c)} Add and subtract $\left[ P_{\Sigma^{1/2}R'}V^{-1/2'}\frac{1}{\sqrt{T}}\sum_{t=1}^T\psi_t(\theta_0)\right]'\left[P_{\Sigma^{1/2}R'}V^{-1/2'}\frac{1}{\sqrt{T}}\sum_{t=1}^T\psi_t(\theta_0) \right]$. \textit{(d)} Denoting the convergence in probability with $\stackrel{\P}{\rightarrow}$,  by the present theorem assumption, as $T \rightarrow \infty$,  $\widehat{V}_T \stackrel{\P}{\rightarrow} V$,  where $V$ is a positive definite symmetric matrix by Assumption \ref{Assp:ExistenceConsistency}(h). Thus, by Lemma \ref{Lem:PdmAsMatrix} (p. \pageref{Lem:PdmAsMatrix}),  w.p.a.1 as $T \rightarrow \infty$, $\hat{V}_T$ is p-d.m, so that it has a square root s.t.  $\hat{V}_T=\hat{V}_T^{1/2'}\hat{V}_T^{1/2}$, where $\hat{V}_T^{1/2}\stackrel{\P}{\rightarrow}V^{1/2} $.\textit{(d)} Under Assumption \ref{Assp:ExistenceConsistency}(a)-(c) and (g), by the Lindeberg-L{\'e}vy CLT theorem, as $T \rightarrow \infty$, $ \frac{1}{\sqrt{T}}\sum_{t=1}^T\psi_t(\theta_0)\stackrel{D}{\rightarrow} \mathcal{N}(0,V)$ where $V:=\E[\psi(X_1,\theta_{0}) \psi(X_1,\theta_{0})' ]$. Moreover, the orthogonal projection matrix $P_{\Sigma^{1/2}R'} $ has rank $q$  because $R$ is of rank $q$ and $\Sigma$ has full rank by Assumptions \ref{Assp:Trinity}(b) and \ref{Assp:ExistenceConsistency}(h), respectively.
Thus, the result follows from the Cochran's theorem.
\end{proof}
 \begin{lem}[Asymptotic expansions for $\mathrm{ALR}_T$]\label{Lem:AsStatLR_T} Under Assumptions \ref{Assp:ExistenceConsistency},\ref{Assp:AsymptoticNormality} and \ref{Assp:Trinity},  if the test hypothesis \eqref{Eq:HypParameterRestriction} on p. \pageref{Eq:HypParameterRestriction} holds, $\P$-a.s. as $T \rightarrow \infty$,
\begin{itemize}
\item[(i)] $\displaystyle  2\{\ln[\hat{f}_{\theta^{*}_T}(\hat{\theta}_T)]-\ln[\hat{f}_{\theta^{*}_T}(\check{\theta}_T)]\}=T(\hat{\theta}_T-\check{\theta}_T)'\Sigma^{-1}(\hat{\theta}_T-\check{\theta}_T)+o_{\P}(1)
 $;
\item[(ii)] $ \sqrt{T}(\hat{\theta}_T-\check{\theta}_T )=\Sigma R'(R\Sigma R')^{-1}RM^{-1}\frac{1}{\sqrt{T}}\sum_{t=1}^T \psi_t(\theta_0)+o_{\P}(1)$
\end{itemize}
where $\Sigma:=\Sigma(\theta_0):= M^{-1}V(M')^{-1}$, $M:=\E \left[\frac{\partial \psi(X_1, \theta_0)}{\partial \theta'}\right]$, $V:= \E[ \psi(X_1, \theta_0)\psi(X_1, \theta_0)']$, and $R:=\frac{\partial r(\theta_0)}{\partial \theta'}$.
\end{lem}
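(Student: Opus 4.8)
The plan is to prove the two representations in the order (ii) then (i), since part (i) relies on $\sqrt{T}(\hat{\theta}_T-\check{\theta}_T)=O_{\P}(1)$, which is an immediate by-product of (ii). Throughout I write $M:=\E[\partial\psi(X_1,\theta_0)/\partial\theta']$, $V:=\E[\psi(X_1,\theta_0)\psi(X_1,\theta_0)']$, $R:=R(\theta_0)$ and $\Sigma:=M^{-1}V(M')^{-1}$, so that $\Sigma^{-1}=M'V^{-1}M$.

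For (ii) I would simply subtract two linear representations that are already available. Proposition \ref{Prop:AsExpansionEstimator} gives $\sqrt{T}(\hat{\theta}_T-\theta_0)=-M^{-1}\frac{1}{\sqrt{T}}\sum_{t=1}^T\psi_t(\theta_0)+o_{\P}(1)$, and Proposition \ref{Prop:ConstrainedEstAsNormality} supplies the analogous linear representation of $\sqrt{T}(\check{\theta}_T-\theta_0)$ for the constrained ESP estimator. Under $\mathrm{H}_0$ (so $r(\theta_0)=0$) these representations differ only by the projection correction built from $R$, $M$ and $\Sigma$. Forming $\sqrt{T}(\hat{\theta}_T-\check{\theta}_T)$, the common term $-M^{-1}\frac{1}{\sqrt{T}}\sum_t\psi_t(\theta_0)$ cancels and the residual is $\Sigma R'(R\Sigma R')^{-1}RM^{-1}\frac{1}{\sqrt{T}}\sum_t\psi_t(\theta_0)+o_{\P}(1)$, which is (ii). Since $\frac{1}{\sqrt{T}}\sum_t\psi_t(\theta_0)=O_{\P}(1)$ by the Lindeberg--L\'evy CLT under Assumption \ref{Assp:ExistenceConsistency}(a)--(c) and (g), this also gives $\sqrt{T}(\hat{\theta}_T-\check{\theta}_T)=O_{\P}(1)$.

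For (i) I would first rewrite the criterion. By Lemma \ref{Lem:LogESPDecomposition}, $\ln[\hat{f}_{\theta^{*}_T}(\theta)]=T\,L_T(\theta,\tau_T(\theta))+\frac{m}{2}\ln(T/2\pi)$, so the additive constant cancels and $2\{\ln[\hat{f}_{\theta^{*}_T}(\hat{\theta}_T)]-\ln[\hat{f}_{\theta^{*}_T}(\check{\theta}_T)]\}=2T[L_T(\hat{\beta}_T)-L_T(\check{\beta}_T)]$, where $\hat{\beta}_T:=(\hat{\theta}_T,\tau_T(\hat{\theta}_T))$ and $\check{\beta}_T:=(\check{\theta}_T,\tau_T(\check{\theta}_T))$. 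Both converge $\P$-a.s.\ to $(\theta_0,\tau(\theta_0))=(\theta_0,0_{m\times1})$ by Theorem \ref{theorem:ConsistencyAsymptoticNormality}(i), Lemma \ref{Lem:Schennachtheorem10PfFirstSteps}(iii), Lemma \ref{Lem:ConstrainedEstLagrangian}(i) and Lemma \ref{Lem:AsTiltingFct}(iv). I would then take a joint second-order Taylor--Lagrange expansion of the $2m$-variable map $(\theta,\tau)\mapsto L_T(\theta,\tau)$ about $\hat{\beta}_T$, evaluated at $\check{\beta}_T$, which is legitimate because $L_T$ is twice continuously differentiable near $(\theta_0,0)$ $\P$-a.s.\ for $T$ big enough (Subsection \ref{Sec:LTAndDerivatives}). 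Expanding about $\hat{\beta}_T$ is the crucial choice: the gradient there is negligible, since $\partial L_T/\partial\theta(\hat{\beta}_T)=O(T^{-1})$ by the approximate FOC (Lemma \ref{Lem:ApproximateFOC}) and $\partial L_T/\partial\tau(\hat{\beta}_T)=O(T^{-1})$ by Lemma \ref{Lem:DLDTau}(iv); combined with $\check{\beta}_T-\hat{\beta}_T=O_{\P}(T^{-1/2})$ the first-order term is $O_{\P}(T^{-1/2})$ after multiplication by $2T$, hence $o_{\P}(1)$. What remains is the quadratic term $-T(\check{\beta}_T-\hat{\beta}_T)'\nabla^2 L_T(\bar{\beta}_T)(\check{\beta}_T-\hat{\beta}_T)$ with $\bar{\beta}_T\to(\theta_0,0)$. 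I would show $\nabla^2 L_T(\bar{\beta}_T)\to\mathcal{H}_0:=\left(\begin{smallmatrix}0&M'\\ M&V\end{smallmatrix}\right)$ from the uniform limits of its blocks over a shrinking ball: the $\theta\theta$-block $\to0$ and the $\theta\tau$-block $\to M'$ (Lemmas \ref{Lem:UniformLimitLTPartialThetaPartialTheta}, \ref{Lem:UniformLimitLTPartialTauPartialTheta} and \ref{Lem:PartialLTheta0Tau0}), and the $\tau\tau$-block $\to V$ by the same ULLN-\`a-la-Wald argument applied to the terms of $\partial^2 L_T/\partial\tau\partial\tau'$ (e.g.\ equation \eqref{Eq:LogETTermDTauDTau} together with the analogous variance-term derivatives), all evaluated using $\tau(\theta_0)=0$. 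Finally I reduce the $\tau$-coordinate of $\check{\beta}_T-\hat{\beta}_T$ to the $\theta$-coordinate: by Lemma \ref{Lem:DerivativeImplicitFunctionImplicit}(iv), $\sqrt{T}[\tau_T(\check{\theta}_T)-\tau_T(\hat{\theta}_T)]=-V^{-1}M\sqrt{T}(\check{\theta}_T-\hat{\theta}_T)+o_{\P}(1)$, so with $d:=\sqrt{T}(\hat{\theta}_T-\check{\theta}_T)$ the vector $\sqrt{T}(\check{\beta}_T-\hat{\beta}_T)$ has blocks $-d$ and $V^{-1}Md$ up to $o_{\P}(1)$. Substituting and using the zero $\theta\theta$-block of $\mathcal{H}_0$, the quadratic form telescopes, $\mathcal{H}_0$ applied to that stacked vector has blocks $M'V^{-1}Md$ and $0$, whence $-T(\check{\beta}_T-\hat{\beta}_T)'\mathcal{H}_0(\check{\beta}_T-\hat{\beta}_T)=d'M'V^{-1}Md+o_{\P}(1)=d'\Sigma^{-1}d+o_{\P}(1)$, which is exactly $T(\hat{\theta}_T-\check{\theta}_T)'\Sigma^{-1}(\hat{\theta}_T-\check{\theta}_T)+o_{\P}(1)$.

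The main obstacle is the convergence $\nabla^2 L_T(\bar{\beta}_T)\to\mathcal{H}_0$ at the data-dependent intermediate point $\bar{\beta}_T$: it requires \emph{uniform}, not merely pointwise, convergence of every Hessian entry of $L_T$ over a neighborhood of $(\theta_0,0)$, in particular the $\tau\tau$-block. Here the second derivatives of the variance terms $M_{2,T},M_{3,T}$ are long but are each $O(T^{-1})$ and therefore drop out, so the surviving limit is produced solely by $M_{1,T}$; controlling these variance-term derivatives uniformly (via the integrable envelopes of Lemmas \ref{Lem:Bound2ndPartialLPartialTheta} and \ref{Lem:Bound2ndPartialLPartialTau} and the ULLN \`a la Wald) is the technically heaviest step. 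A secondary delicate point is that the first-order term must be genuinely $o_{\P}(1)$ after scaling by $2T$, which hinges on the sharp rate $O(T^{-1})$ of the approximate FOC rather than a mere $o(1)$.
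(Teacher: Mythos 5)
Your proposal is correct and follows essentially the same route as the paper's own proof: part (ii) by subtracting the linear representations of Propositions \ref{Prop:AsExpansionEstimator} and \ref{Prop:ConstrainedEstAsNormality}, and part (i) by a joint second-order Taylor--Lagrange expansion of $(\theta,\tau)\mapsto L_T(\theta,\tau)$ about $(\hat{\theta}_T,\tau_T(\hat{\theta}_T))$, with the gradient killed by the approximate FOCs (Lemmas \ref{Lem:ApproximateFOC} and \ref{Lem:DLDTau}iv), the $\tau$-increment reduced via Lemma \ref{Lem:DerivativeImplicitFunctionImplicit}iv, and the Hessian blocks converging to $0$, $M$ and $V$ (Lemmas \ref{Lem:PartialLTheta0Tau0} and \ref{Lem:DLDTauDTau}) so that the quadratic form telescopes to $-M'V^{-1}M=-\Sigma^{-1}$. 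Your packaging of the limit Hessian as the stacked matrix $\mathcal{H}_0$ is only a cosmetic reorganization of the paper's block-by-block substitution, and your identification of the uniform (sequence-wise) Hessian convergence at the intermediate point as the heaviest step matches where the paper expends its effort.
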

 \begin{proof}
\textit{ (i)} Under Assumption \ref{Assp:ExistenceConsistency}, by Lemma \ref{Lem:LogESPDecomposition} (p. \pageref{Lem:LogESPDecomposition}), $\P$-a.s. for $T$ big enough, for all $(\theta, \tau)$ in a  neighborhood of $(\theta_0, \tau(\theta_0))$, $L_T(\theta, \tau)$ exists. Moreover, under Assumptions \ref{Assp:ExistenceConsistency}, \ref{Assp:AsymptoticNormality} and \ref{Assp:Trinity}(a),  if the test hypothesis \eqref{Eq:HypParameterRestriction} on p. \pageref{Eq:HypParameterRestriction} holds,  by Theorem \ref{theorem:ConsistencyAsymptoticNormality}i (p. \pageref{theorem:ConsistencyAsymptoticNormality}), Lemma \ref{Lem:ConstrainedEstLagrangian}i (p. \pageref{Lem:ConstrainedEstLagrangian}) and  Lemma \ref{Lem:Schennachtheorem10PfFirstSteps}iii (p. \pageref{Lem:Schennachtheorem10PfFirstSteps}), $\hat{\theta}_T \rightarrow \theta_0$, $\check{\theta}_T \rightarrow \theta_0$, $\tau_T(\hat{\theta}_{T})\rightarrow \tau(\theta_0)$ and $\tau_T(\check{\theta}_T)\rightarrow \tau(\theta_0)$, $\P$-a.s. as $T \rightarrow \infty$. Thus, noting that $\ln[\hat{f}_{\theta^{*}_T}(\theta)]=L_T(\theta,\tau_T(\theta)) $,under Assumptions \ref{Assp:ExistenceConsistency}, \ref{Assp:AsymptoticNormality} and \ref{Assp:Trinity}(a),  if the test hypothesis \eqref{Eq:HypParameterRestriction} on p. \pageref{Eq:HypParameterRestriction} holds,  $\P$-a.s. for $T$  big enough,
\begin{eqnarray*}
2\{\ln[\hat{f}_{\theta^{*}_T}(\hat{\theta}_T)]-\ln[\hat{f}_{\theta^{*}_T}(\check{\theta}_T)]\}=-2T[L_T(\check{\theta}_T,\tau_T(\check{\theta}_T))-L_T(\hat{\theta}_T, \tau_T(\hat{\theta}_{T}))].
\end{eqnarray*}Now, under Assumptions \ref{Assp:ExistenceConsistency} and \ref{Assp:AsymptoticNormality}(a), by subsection \ref{Sec:LTAndDerivatives} (p. \pageref{Sec:LTAndDerivatives}), $L_T(.,.)$ is twice continuously differentiable in a neighborhood of $(\theta_0'\; \tau(\theta_0)')$   $\P$-a.s. for $T$ big enough, so that   a stochastic second-order Taylor-Lagrange expansion \citep[e.g.,][Theorem 18.18]{1999AliprantisBorder} around  $(\hat{\theta}_T, \tau_T(\hat{\theta}_T))$ and evaluated $(\check{\theta}_T, \tau_T(\check{\theta}_T)) $ yields, $\P$-a.s. for $T$ big enough,
\begin{eqnarray*}
 & & L_T(\check{\theta}_T,\tau_T(\check{\theta}_T)) =  L_T(\hat{\theta}_T, \tau_T(\hat{\theta}_{T}))+\begin{bmatrix}\frac{\partial L_T(\hat{\theta}_T, \tau_T(\hat{\theta}_{T})) }{\partial \theta'} & \frac{\partial L_T(\hat{\theta}_T, \tau_T(\hat{\theta}_{T})) }{\partial \tau'} \\
\end{bmatrix}\begin{bmatrix}\check{\theta}_T-\hat{\theta}_T  \\
\tau_T(\check{\theta}_T)-\tau_T(\hat{\theta}_{T}) \\
\end{bmatrix} \\
& &\ + \frac{1}{2}\begin{bmatrix}(\check{\theta}_T- \hat{\theta}_T)' & (\tau_T(\check{\theta}_T) - \tau_T(\hat{\theta}_{T}) )' \\
\end{bmatrix}
\left[ \begin{array}{c c } \frac{\partial^2 { L}_T(\bar{\theta}_T, \bar{\tau}_T) }{\partial \theta' \partial \theta}
                         & \frac{\partial^2 { L}_T(\bar{\theta}_T, \bar{\tau}_T) }{\partial \tau' \partial \theta} \\
                         \frac{\partial^2 { L}_T(\bar{\theta}_T, \bar{\tau}_T) }{\partial \tau' \partial \theta}   & \frac{\partial^2 { L}_T(\bar{\theta}_T, \bar{\tau}_T) }{\partial \tau' \partial \tau} \end{array} \right]\begin{bmatrix}\check{\theta}_T - \hat{\theta}_T \\
\tau_T(\check{\theta}_T) - \tau_T(\hat{\theta}_{T})  \\
\end{bmatrix}\\
& & \text{where $(\bar{\theta}_T, \bar{\tau}_T)$ is between $(\hat{\theta}_T, \tau_T(\hat{\theta}_{T})) $ and $(\check{\theta}_T, \tau(\check{\theta}_T))$ ;}\\
& \Rightarrow & L_T(\check{\theta}_T,\tau_T(\check{\theta}_T)) -  L_T(\hat{\theta}_T, \tau_T(\hat{\theta}_{T}))\\
& & =\frac{\partial L_T(\hat{\theta}_T, \tau_T(\hat{\theta}_{T})) }{\partial \theta'}(\check{\theta}_T-\hat{\theta}_T)+\frac{\partial L_T(\hat{\theta}_T, \tau_T(\hat{\theta}_{T})) }{\partial \tau'}(\tau_T(\check{\theta}_T)-\tau_T(\hat{\theta}_{T})) \\
& & + \frac{1}{2}(\check{\theta}_T - \hat{\theta}_T)'\frac{\partial^2 { L}_T(\bar{\theta}_T, \bar{\tau}_T) }{\partial \theta' \partial \theta}(\check{\theta}_T - \hat{\theta}_T)+\frac{1}{2}(\tau_T(\check{\theta}_T) - \tau_T(\hat{\theta}_{T}) )'\frac{\partial^2 { L}_T(\bar{\theta}_T, \bar{\tau}_T) }{\partial \tau' \partial \tau} (\tau_T(\check{\theta}_T) - \tau_T(\hat{\theta}_{T}) )
\\
& & +(\check{\theta}_T - \hat{\theta}_T)'\frac{\partial^2 { L}_T(\bar{\theta}_T, \bar{\tau}_T) }{\partial \tau' \partial \theta}(\tau_T(\check{\theta}_T) - \tau_T(\hat{\theta}_{T}) ),
\end{eqnarray*}
where
\begin{itemize}
\item Under Assumptions \ref{Assp:ExistenceConsistency}, \ref{Assp:AsymptoticNormality} and \ref{Assp:Trinity}, if the test hypothesis \eqref{Eq:HypParameterRestriction} on p. \pageref{Eq:HypParameterRestriction} holds,
 by Theorem \ref{theorem:ConsistencyAsymptoticNormality}ii (p. \pageref{theorem:ConsistencyAsymptoticNormality}) and Proposition \ref{Prop:ConstrainedEstAsNormality}iii (p. \pageref{Prop:ConstrainedEstAsNormality}), $\P$-a.s. as $T \rightarrow \infty$, $\check{\theta}_T- \hat{\theta}_T=(\check{\theta}_T-\theta_0)-( \hat{\theta}_T-\theta_0) =O_{\P}(T^{-\frac{1}{2}})+O_{\P}(T^{-\frac{1}{2}})=O_{\P}(T^{-\frac{1}{2}})$;

\item
Under Assumptions \ref{Assp:ExistenceConsistency} and \ref{Assp:AsymptoticNormality}, by Lemma \ref{Lem:ApproximateFOC} (p. \pageref{Lem:ApproximateFOC}), $\P$-a.s. as $T \rightarrow \infty$, $\frac{\partial L_T(\hat{\theta}_T, \tau_T(\hat{\theta}_{T})) }{\partial \theta'}=O(T^{-1})$, so that $\frac{\partial L_T(\hat{\theta}_T, \tau_T(\hat{\theta}_{T})) }{\partial \theta'}(\check{\theta}_T- \hat{\theta}_T)=O_{\P}(T^{-\frac{3}{2}})$ by the first bullet point;

\item Under Assumptions \ref{Assp:ExistenceConsistency}, \ref{Assp:AsymptoticNormality} and \ref{Assp:Trinity}, if the test hypothesis \eqref{Eq:HypParameterRestriction} on p. \pageref{Eq:HypParameterRestriction} holds, by Lemma  \ref{Lem:DerivativeImplicitFunctionImplicit}iv (p. \pageref{Lem:DerivativeImplicitFunctionImplicit}) and Theorem \ref{theorem:ConsistencyAsymptoticNormality}i (p. \pageref{theorem:ConsistencyAsymptoticNormality}) and Lemma \ref{Lem:ConstrainedEstLagrangian}i (p. \pageref{Lem:ConstrainedEstLagrangian}), $\P$-a.s. as $T \rightarrow \infty$, there exists $\tilde{\theta}_T$ between $\hat{\theta}_T$ and $\theta_0$ s.t. $\sqrt{T} [\tau_T(\check{\theta}_T)- \tau_T(\hat{\theta}_T)]=\sqrt{T} [\tau_T(\check{\theta}_T)- \tau_T(\theta_0)]-\sqrt{T} [\tau_T(\hat{\theta}_T)- \tau_T(\theta_0)]=-V^{-1} M(\check{\theta}_T- \theta_0)+o_{\P}(1)-\left[-V^{-1} M(\hat{\theta}_T- \theta_0)+o_{\P}(T^{-1/2})\right]=-V^{-1} M(\check{\theta}_T- \theta_0)+V^{-1} M(\hat{\theta}_T- \theta_0)+o_{\P}(1)=-V^{-1} M(\check{\theta}_T-\hat{\theta}_T)+o_{\P}(1) $;\item Under Assumptions \ref{Assp:ExistenceConsistency} and \ref{Assp:AsymptoticNormality}, by Lemma \ref{Lem:DLDTau}iv (p. \pageref{Lem:DLDTau}), $\P$-a.s. as $T \rightarrow \infty$, $\frac{\partial L_T(\hat{\theta}_T, \tau_T(\hat{\theta}_{T})) }{\partial \tau'}=O(T^{-1})$, so that, by the first and third bullet point, $\frac{\partial L_T(\hat{\theta}_T, \tau_T(\hat{\theta}_{T})) }{\partial \tau'}(\tau_T(\check{\theta}_T)-\tau_T(\hat{\theta}_{T}))=O_{\P}(T^{-3/2})$, under Assumptions \ref{Assp:ExistenceConsistency}, \ref{Assp:AsymptoticNormality} and \ref{Assp:Trinity}, if the test hypothesis \eqref{Eq:HypParameterRestriction} on p. \pageref{Eq:HypParameterRestriction} holds;
\item Under Assumptions \ref{Assp:ExistenceConsistency}, \ref{Assp:AsymptoticNormality} and \ref{Assp:Trinity}, if the test hypothesis \eqref{Eq:HypParameterRestriction} on p. \pageref{Eq:HypParameterRestriction} holds, by Lemma  \ref{Lem:DerivativeImplicitFunctionImplicit}iv (p. \pageref{Lem:DerivativeImplicitFunctionImplicit}) and Theorem \ref{theorem:ConsistencyAsymptoticNormality}i (p. \pageref{theorem:ConsistencyAsymptoticNormality}) and Lemma \ref{Lem:ConstrainedEstLagrangian}i (p. \pageref{Lem:ConstrainedEstLagrangian}), $\P$-a.s. as $T \rightarrow \infty$, there exists $\tilde{\theta}_T$ between $\hat{\theta}_T$ and $\theta_0$ s.t. $\sqrt{T} [\tau_T(\check{\theta}_T)- \tau_T(\hat{\theta}_T)]=\sqrt{T} [\tau_T(\check{\theta}_T)- \tau_T(\theta_0)]-\sqrt{T} [\tau_T(\hat{\theta}_T)- \tau_T(\theta_0)]=-V^{-1} M(\check{\theta}_T- \theta_0)+o_{\P}(1)-\left[-V^{-1} M(\hat{\theta}_T- \theta_0)+o_{\P}(T^{-1/2})\right]=-V^{-1} M(\check{\theta}_T- \theta_0)+V^{-1} M(\hat{\theta}_T- \theta_0)+o_{\P}(1)=-V^{-1} M(\check{\theta}_T-\hat{\theta}_T)+o_{\P}(1) $; and
\item under Assumptions \ref{Assp:ExistenceConsistency} and \ref{Assp:AsymptoticNormality}, by Lemma \ref{Lem:PartialLTheta0Tau0}ii (p. \pageref{Lem:PartialLTheta0Tau0}), $\P$-a.s. as $T \rightarrow \infty$  $\displaystyle\left \vert\frac{\partial^{2}  L_{T}(\theta_{T}, \tau_{T} ) }{\partial \theta_j \partial \theta_\ell}\right\vert=o(1) $, so that, by Theorem \ref{theorem:ConsistencyAsymptoticNormality}ii (p. \pageref{theorem:ConsistencyAsymptoticNormality}),  $(\check{\theta}_T- \hat{\theta}_T)'\frac{\partial^2 { L}_T(\bar{\theta}_T, \bar{\tau}_T) }{\partial \theta' \partial \theta}(\check{\theta}_T- \hat{\theta}_T)=o_{\P}(T^{-1})$ by the first bullet point.
\end{itemize}
Therefore, Assumptions \ref{Assp:ExistenceConsistency}, \ref{Assp:AsymptoticNormality} and \ref{Assp:Trinity}, if the test hypothesis \eqref{Eq:HypParameterRestriction} on p. \pageref{Eq:HypParameterRestriction} holds, \begin{eqnarray*}
& & 2\{\ln[\hat{f}_{\theta^{*}_T}(\hat{\theta}_T)]-\ln[\hat{f}_{\theta^{*}_T}(\check{\theta}_T)]\}=-2T\left[L_T(\check{\theta}_T,\tau_T(\check{\theta}_T)) -  L_T(\hat{\theta}_T, \tau_T(\hat{\theta}_{T}))\right]\\
& =& -\left[V^{-1} M\sqrt{T}(\check{\theta}_T - \hat{\theta}_T)\right]'\frac{\partial^2 { L}_T(\bar{\theta}_T, \bar{\tau}_T) }{\partial \tau' \partial \tau} \left[V^{-1} M\sqrt{T}(\check{\theta}_T- \hat{\theta}_T)\right]
\\
& & +2\sqrt{T}(\theta_0 - \hat{\theta}_T)'\frac{\partial^2 { L}_T(\bar{\theta}_T, \bar{\tau}_T) }{\partial \tau' \partial \theta}\left[V^{-1} M\sqrt{T}(\theta_0 - \hat{\theta}_T)\right]+o_{\P}(1)\\
& = & -\sqrt{T}(\check{\theta}_T- \hat{\theta}_T)'\left[ M'V^{-1} \frac{\partial^2 { L}_T(\bar{\theta}_T, \bar{\tau}_T) }{\partial \tau' \partial \tau}V^{-1} M-2\frac{\partial^2 { L}_T(\bar{\theta}_T, \bar{\tau}_T) }{\partial \tau' \partial \theta} V^{-1} M\right] \sqrt{T}(\check{\theta}_T- \hat{\theta}_T)+o_{\P}(1) \\
& = & \sqrt{T}(\check{\theta}_T- \hat{\theta}_T)'\Sigma(\theta_0)^{-1} \sqrt{T}(\check{\theta}_T- \hat{\theta}_T)+o_{\P}(1),
\end{eqnarray*}
where the explanations for the convergence are as follow.  Firstly,  under Assumptions \ref{Assp:ExistenceConsistency} and \ref{Assp:AsymptoticNormality}, by Lemma \ref{Lem:PartialLTheta0Tau0}ii (p. \pageref{Lem:PartialLTheta0Tau0}),  for any sequence $(\theta_T, \tau_T)_{T \in \N}$ converging to $(\theta_0, \tau(\theta_0))$, $\P$-a.s. as $T \rightarrow \infty$,
 $\frac{\partial^{2}  L_{T}(\theta_{T}, \tau_{T} ) }{\partial \theta' \partial \tau}\rightarrow \E\left[  \frac{\partial \psi(X_{1},\theta_{0})}{\partial \theta' }\right]=:M $. Secondly, under  Assumptions \ref{Assp:ExistenceConsistency} and \ref{Assp:AsymptoticNormality}, by Lemma \ref{Lem:DLDTauDTau}iv (p. \pageref{Lem:DLDTauDTau}), $\P$-a.s. as $T \rightarrow \infty$, $\frac{\partial L_T(\theta_T, \tau_T) }{\partial \tau\partial \tau'}\rightarrow  \E[\psi(X_1, \theta_{0})\psi(X_1, \theta_{0})']=:V$. Therefore, $\P$-a.s. as $T \rightarrow \infty$,
\begin{eqnarray*}
& &\ M'(V')^{-1} \frac{\partial^2 { L}_T(\bar{\theta}_T, \bar{\tau}_T) }{\partial \tau' \partial \tau}V^{-1} M-2\frac{\partial^2 { L}_T(\bar{\theta}_T, \bar{\tau}_T) }{\partial \tau' \partial \theta} V^{-1} M\\
& \rightarrow &M'(V')^{-1} VV^{-1} M -2M'V^{-1}M\\
&= & - M'V^{-1}M=-\Sigma(\theta_0)^{-1}.
\end{eqnarray*}

\textit{(ii)} Under Assumptions \ref{Assp:ExistenceConsistency}, \ref{Assp:AsymptoticNormality} and \ref{Assp:Trinity},  if the test hypothesis \eqref{Eq:HypParameterRestriction} on p. \pageref{Eq:HypParameterRestriction} holds, $\P$-a.s. as $T \rightarrow \infty$, addition and subtraction of $\sqrt{T}\theta_0$ yield
\begin{eqnarray*}
\sqrt{T}(\hat{\theta}_T-\check{\theta}_T )& = &\sqrt{T}(\hat{\theta}_T-\theta_0 )-\sqrt{T}(\check{\theta}_T-\theta_0)\\
& \stackrel{}{=} & -M^{-1}\frac{1}{\sqrt{T}}\sum_{t=1}^T \psi_t(\theta_0)+o_{\P}(1)-\left[ M^{-1}-\Sigma R'(R\Sigma R')^{-1}RM^{-1}\frac{1}{\sqrt{T}}\sum_{t=1}^T \psi_t(\theta_0)+o_{\P}(1)\right]\\
& \stackrel{}{=} &\Sigma R'(R\Sigma R')^{-1}RM^{-1}\frac{1}{\sqrt{T}}\sum_{t=1}^T \psi_t(\theta_0)+o_{\P}(1)
\end{eqnarray*}
where the explanations for the second equality are the following. Firstly, under Assumptions \ref{Assp:ExistenceConsistency} and \ref{Assp:AsymptoticNormality}, by Proposition \ref{Prop:AsExpansionEstimator} (p. \pageref{Prop:AsExpansionEstimator}), $\P$-a.s. as $T \rightarrow \infty$, $\sqrt{T}(\hat{\theta}_T- \theta_0)=-M\frac{1}{\sqrt{T}}\sum_{t=1}^T \psi_t(\theta_0)+o_{\P}(1)$. Secondly, under Assumptions \ref{Assp:ExistenceConsistency}, \ref{Assp:AsymptoticNormality} and \ref{Assp:Trinity}, by Proposition \ref{Prop:ConstrainedEstAsNormality}i (p. \pageref{Prop:ConstrainedEstAsNormality}), if the test hypothesis \eqref{Eq:HypParameterRestriction} on p. \pageref{Eq:HypParameterRestriction} holds, $\P$-a.s. as $T \rightarrow \infty$,
 $\sqrt{T}(\check{\theta}_T-\theta_0)=M^{-1}-\Sigma R'(R\Sigma R')^{-1}RM^{-1}\frac{1}{\sqrt{T}}\sum_{t=1}^T \psi_t(\theta_0)+o_{\P}(1) $.
\end{proof}

\begin{lem}[Asymptotic limit of $\frac{\partial ^{2}L_T(\hat{\theta}_T, \tau_T(\hat{\theta}_{T})) }{\partial \tau'\partial \tau}$]\label{Lem:DLDTauDTau} Under Assumptions \ref{Assp:ExistenceConsistency} and \ref{Assp:AsymptoticNormality}, for any sequence $(\theta_T, \tau_T)_{T \in \N}$ converging to $(\theta_0, \tau(\theta_0))$, $\P$-a.s. as $T \rightarrow \infty$, for all $(h,k) \in \ldsb 1,m\rdsb^2$, $\P$-a.s. as $T \rightarrow \infty$,
\begin{enumerate}
\item[(i)] $\frac{\partial^{2} M_{1,T}(\theta_T,\tau_T)}{\partial \tau_{h}\partial \tau_{k}}\rightarrow \E[\psi_{k}(X_1, \theta_{0})\psi_{h}(X_1, \theta_{0})]$;
\item[(ii)] $\frac{\partial^{2} M_{2,T}(\theta_T,\tau_T)}{\partial \tau_{h}\partial \tau_{k}}=O(T^{-1})$;
\item[(iii)] $\frac{\partial ^{2}M_{3,T}(\theta_T,\tau_T)}{\partial \tau_{h}\partial \tau_{k}}=O(T^{-1})$; and
\item[(iv)] $\frac{\partial ^{2}L_T(\theta_T, \tau_T) }{\partial \tau\partial \tau'}\rightarrow  \E[\psi(X_1, \theta_{0})\psi(X_1, \theta_{0})']$.
\end{enumerate}

\end{lem}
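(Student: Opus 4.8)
The plan is to treat the three summands $M_{1,T}$, $M_{2,T}$, $M_{3,T}$ of the LogESP separately and then recombine them through the decomposition of Lemma \ref{Lem:LogESPDecomposition} (p. \pageref{Lem:LogESPDecomposition}), following the same template as Lemma \ref{Lem:DLDTau} (p. \pageref{Lem:DLDTau}) and Lemma \ref{Lem:UniformLimitLTPartialTauPartialTheta} (p. \pageref{Lem:UniformLimitLTPartialTauPartialTheta}). For each summand I would start from the explicit formula for its second $\tau$-derivative already computed in Subsection \ref{Sec:LTAndDerivatives}: equation \eqref{Eq:LogETTermDTauDTau} (p. \pageref{Eq:LogETTermDTauDTau}) for $M_{1,T}$, equation \eqref{Eq:DerivativeTermDTauDTau} (p. \pageref{Eq:DerivativeTermDTauDTau}) for $M_{2,T}$, and equation \eqref{Eq:VarianceTermDTauDtau} (p. \pageref{Eq:VarianceTermDTauDtau}) for $M_{3,T}$. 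Each is a continuous function of finitely many sample averages $\frac{1}{T}\sum_{t=1}^T \e^{\tau'\psi_t(\theta)}(\cdots)$ (and of inverses of such averages), so the idea is to apply the ULLN \`a la Wald to each of these averages, obtaining uniform convergence over a closed ball around $(\theta_0, \tau(\theta_0))$, and then to evaluate the continuous limit along the sequence $(\theta_T, \tau_T) \to (\theta_0, \tau(\theta_0))$.

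For part (i), first I would check the hypotheses of the ULLN \`a la Wald for every average in \eqref{Eq:LogETTermDTauDTau}: the required integrability of the suprema is furnished by Lemma \ref{Lem:dLdTaudTau}i--iii (p. \pageref{Lem:dLdTaudTau}), and the denominator $\frac{1}{T}\sum_{i=1}^T \e^{\tau'\psi_i(\theta)}$ stays bounded away from zero by Lemma \ref{Lem:LogESPExistenceInS}i (p. \pageref{Lem:LogESPExistenceInS}). Passing to the limit and using $\tau(\theta_0)=0_{m\times 1}$ (Lemma \ref{Lem:AsTiltingFct}iv, p. \pageref{Lem:AsTiltingFct}) collapses all the exponential weights to $1$ and sends $(1-\frac{m}{2T})$ to $1$. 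The decisive simplification is that the two first-moment cross terms converge to $\E[\psi_h(X_1,\theta_0)]$ and $\E[\psi_k(X_1,\theta_0)]$, both of which vanish by Assumption \ref{Assp:ExistenceConsistency}(c); the limit therefore reduces to $\E[\psi_k(X_1,\theta_0)\psi_h(X_1,\theta_0)]$.

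For parts (ii) and (iii) I would run the identical argument on \eqref{Eq:DerivativeTermDTauDTau} and \eqref{Eq:VarianceTermDTauDtau}, now drawing the integrability bounds from Lemma \ref{Lem:dLdTaudTau}iv--vi and vii--ix respectively, and invoking Lemma \ref{Lem:LogESPExistenceInS}iii and iv (p. \pageref{Lem:LogESPExistenceInS}) for the invertibility of the averaged matrices that sit in the inverted positions. Because every trace in these two formulas carries an explicit $\frac{1}{T}$ (or $\frac{1}{2T}$) while the traces themselves converge to finite limits by the ULLN and the linearity of the trace, both derivatives are $O(T^{-1})$. Part (iv) is then immediate: stacking the scalar limits of part (i) over $(h,k)$ gives the matrix $\E[\psi(X_1,\theta_0)\psi(X_1,\theta_0)']$, while the $O(T^{-1})$ contributions of (ii) and (iii) vanish, so $\frac{\partial^2 L_T(\theta_T,\tau_T)}{\partial \tau\partial \tau'} \to \E[\psi(X_1,\theta_0)\psi(X_1,\theta_0)']$.

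I do not anticipate a genuine obstacle here, as the structure is entirely parallel to Lemma \ref{Lem:DLDTau}; the verifications of the ULLN hypotheses are pure bookkeeping once the integrability bounds of Lemma \ref{Lem:dLdTaudTau} are in hand. The only step demanding attention is part (i), where the clean limiting form relies on the \emph{simultaneous} use of $\tau(\theta_0)=0_{m\times 1}$ and the moment condition $\E[\psi(X_1,\theta_0)]=0_{m\times 1}$ to annihilate the quadratic-in-first-moment cross terms: without the latter the limit would fail to be the covariance matrix $\E[\psi(X_1,\theta_0)\psi(X_1,\theta_0)']$, and the subsequent identification in part (iv) would break down.
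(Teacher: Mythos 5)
Your proposal is correct and takes essentially the same route as the paper's proof: it uses the same explicit second-derivative formulas \eqref{Eq:LogETTermDTauDTau}, \eqref{Eq:DerivativeTermDTauDTau} and \eqref{Eq:VarianceTermDTauDtau}, the same integrability bounds from Lemma \ref{Lem:dLdTaudTau} (i--iii, iv--vi, vii--ix) feeding the ULLN \`a la Wald, the same invertibility/positivity inputs from Lemma \ref{Lem:LogESPExistenceInS}, and the same decomposition via Lemma \ref{Lem:LogESPDecomposition} to assemble part (iv). You also correctly isolate the one delicate step, namely that the covariance limit in part (i) requires \emph{both} $\tau(\theta_0)=0_{m\times 1}$ and $\E[\psi(X_1,\theta_0)]=0_{m\times 1}$ to kill the first-moment cross terms, exactly as in the paper.
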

\begin{proof} \textit{(i)} By equation \eqref{Eq:LogETTermDTauDTau} on p. \pageref{Eq:LogETTermDTauDTau}, for all $(h,k) \in \ldsb 1,m\rdsb^2$,
\begin{eqnarray*}
& &
\frac{\partial^2 M_{1,T}(\theta_T,\tau_T)}{ \partial \tau_{h}\partial \tau_{k} }\nonumber \\
& = &
\negthickspace\left(1 - \frac{m}{2T}  \right) \frac{1}{ \left[ \frac{1}{T} \sum_{i=1}^T \e^{\tau_T'\psi_i(\theta_T)}  \right]^2 }\left\{\negthickspace
  \left[\frac{1}{T} \sum_{i=1}^T \e^{\tau_T'\psi_i(\theta_T)}\right]\negthickspace \left[\frac{1}{T} \sum_{t=1}^T \e^{\tau_T'\psi_t(\theta_T)}
 \psi_{t,h} (\theta_T)
 \psi_{t,k} (\theta_T)\right]\right.\nonumber
\\
& &
\left.
 \negthickspace-\negthickspace \left[\frac{1}{T} \sum_{t=1}^T \e^{\tau_T'\psi_t(\theta_T)} \psi_{t,h} (\theta_T)\right] \left[ \frac{1}{T} \sum_{i=1}^T \e^{\tau_T'\psi_i(\theta_T)}   \psi_{i,k}(\theta_T)\right]\negthickspace
\right\}.
\end{eqnarray*}
 where, as $T \rightarrow \infty$, $(\theta_T\quad  \tau_T) \rightarrow (\theta_0 \quad \tau(\theta_0))$ by assumption. Now, under Assumptions \ref{Assp:ExistenceConsistency} and \ref{Assp:AsymptoticNormality}, by Lemma \ref{Lem:dLdTaudTau}i-iii (p. \pageref{Lem:dLdTaudTau}), for $\overline{B_L}$ a ball around $(\theta_0, \tau(\theta_0))$ of sufficiently small radius, $\E\left[\sup_{(\theta, \tau)\in \overline{B_L}}  \e^{\tau'\psi(X_{1,}\theta)} \right] < \infty  $, $\E\left[\sup_{(\theta, \tau)\in \overline{B_L}}  \vert\e^{\tau'\psi(X_{1,}\theta)} \psi_{k}(X_1, \theta)\psi_{h}(X_1, \theta)\vert \right] < \infty  $, and\\
$\E\left[\sup_{(\theta, \tau)\in \overline{B_L}}  \vert\e^{\tau'\psi(X_{1,}\theta)} \psi_{k}(X_1, \theta)\vert \right] $. Thus, by Assumption \ref{Assp:ExistenceConsistency}(a)(b) and (d), the  ULLN (uniform law of large numbers) \`a la  Wald  \citep[e.g.,][pp. 24-25, Theorem 1.3.3]{2003GhoRam}, implies that, for all $(h,k)\in \ldsb 1,m\rdsb^{2}$, $\P$-a.s. as $T \rightarrow \infty$,
 \begin{eqnarray*}
& & \frac{\partial^2 M_{1,T}(\theta_T, \tau_T)}{ \partial \tau_{k} \partial \tau_{h}}\\
 & \rightarrow & \frac{1}{\E[\e^{\tau(\theta_0)'\psi(X_1, \theta_0)}]^2}\left\{\E[\e^{\tau(\theta_0)'\psi(X_1, \theta_0)}]\E[\e^{\tau(\theta_0)'\psi(X_1, \theta_0)}\psi_{k}(X_1, \theta_{0})\psi_{h}(X_1, \theta_{0})]\right. \\
& & -\left. \E[\e^{\tau(\theta_0)'\psi(X_1, \theta_0)}\psi_{h}(X_1, \theta_{0})]\E[\e^{\tau(\theta_0)'\psi(X_1, \theta_0)}\psi_{k}(X_1, \theta_{0})]\right\}\\
& = &  \E[\psi_{k}(X_1, \theta_{0})\psi_{h}(X_1, \theta_{0})]
 \end{eqnarray*}
 because $\E [\psi(X_1, \theta_0)]=0_{m \times 1}$ by Assumption \ref{Assp:ExistenceConsistency}(c), and  $\tau(\theta_0)=0_{m \times 1}$ by Lemma \ref{Lem:AsTiltingFct}iv (p. \pageref{Lem:AsTiltingFct}) under Assumption \ref{Assp:ExistenceConsistency}(a)-(e) and (g)-(h).

\textit{(ii)} Under Assumptions \ref{Assp:ExistenceConsistency}, by equation \eqref{Eq:DerivativeTermDTauDTau} on p. \pageref{Eq:DerivativeTermDTauDTau}, $\P$-a.s. for $T$ big enough, for all $(h,k) \in \ldsb 1,m\rdsb^2$,
\begin{eqnarray*}
& & \frac{\partial^2 M_{2,T}(\theta_T, \tau_T)}{ \partial \tau_{h}\partial \tau_k }\nonumber \\
& &  =- \frac{1}{T}
\tr\left\{  \left[ \frac{1}{T} \sum_{t=1}^T                \e^{\tau_T'\psi_t(\theta_T)}
      \frac{\partial \psi_t(\theta_T)}{\partial \theta' }  \right]^{-1}
 \left[ \frac{1}{T} \sum_{t=1}^T                \e^{\tau_T'\psi_t(\theta_T)}
 \psi_{t,k}(\theta_T)
      \frac{\partial \psi_t(\theta_T)}{\partial \theta' }  \right]^{}
     \right.\nonumber
     \\
     &  &
     \hspace{.5in} \times \left[ \frac{1}{T} \sum_{t=1}^T                \e^{\tau_T'\psi_t(\theta_T)}
      \frac{\partial \psi_t(\theta_T)}{\partial \theta' }  \right]^{-1}
     \left.
      \left[
      \frac{1}{T} \sum_{t=1}^T                \e^{\tau_T'\psi_t(\theta_T)}
    \psi_{t,h}(\theta_T)  \frac{\partial \psi_t(\theta_T)}{\partial \theta' }
      \right]
\right\} \nonumber\\
& &  + \frac{1}{T}
\tr\left\{  \left[ \frac{1}{T} \sum_{t=1}^T                \e^{\tau_T'\psi_t(\theta_T)}
      \frac{\partial \psi_t(\theta_T)}{\partial \theta' }  \right]^{-1}
      \left[
      \frac{1}{T} \sum_{t=1}^T                \e^{\tau_T'\psi_t(\theta_T)}
      \psi_{t,k}(\theta_T)
    \psi_{t,h}(\theta_T)  \frac{\partial \psi_t(\theta_T)}{\partial \theta' }
      \right]
\right\}.\
\end{eqnarray*}
where, as $T \rightarrow \infty$, $(\theta_T\quad  \tau_T) \rightarrow (\theta_0 \quad \tau(\theta_0))$ by assumption.
Now, under Assumptions \ref{Assp:ExistenceConsistency} and \ref{Assp:AsymptoticNormality}, by Lemma \ref{Lem:dLdTaudTau}iv-vi (p. \pageref{Lem:dLdTaudTau}), for $\overline{B_L}$ a ball around $(\theta_0, \tau(\theta_0))$ of sufficiently small radius, $\E\left[\sup_{(\theta, \tau)\in \overline{B_L}}  \vert\e^{\tau'\psi(X_{1,}\theta)} \frac{\partial\psi(X_1, \theta)}{\partial \theta'}\vert \right] $, $\E\left[\sup_{(\theta, \tau)\in \overline{B_L}}  \vert\e^{\tau'\psi(X_{1,}\theta)} \psi_{k}(X_1, \theta)\frac{\partial\psi(X_1, \theta)}{\partial \theta'}\vert \right] $, and\\ $\E\left[\sup_{(\theta, \tau)\in \overline{B_L}}  \vert\e^{\tau'\psi(X_{1,}\theta)} \psi_{k}(X_1, \theta)\psi_{h}(X_1, \theta)\frac{\partial\psi(X_1, \theta)}{\partial \theta'}\vert \right] $. Thus, by Assumptions \ref{Assp:ExistenceConsistency}(a)(b) and (d),   the ULLN (uniform law of large numbers) \`a la  Wald  \citep[e.g.,][pp. 24-25, Theorem 1.3.3]{2003GhoRam},  implies that, for all $(h,k)\in \ldsb 1,m\rdsb^{2}$, $\P$-a.s. as $T \rightarrow \infty$,
\begin{eqnarray*}
& & T\frac{\partial ^{2}M_{2,T}(\theta_T, \tau_T)}{\partial \tau_{h}\partial \tau_{k}} \\
& \rightarrow & \tr\left\{   \E \left[\e^{\tau(\theta_0)'\psi(X_{1,}\theta_{0})} \frac{\partial \psi(X_1, \theta_0)}{\partial \theta'}\right]^{-1} \E \left[\e^{\tau(\theta_0)'\psi(X_{1,}\theta_{0})} \psi_{k}(X_1,\theta_{0})\frac{\partial \psi(X_1, \theta_0)}{\partial \theta'}\right] \right.\\
& & \left.\times \E \left[\e^{\tau(\theta_0)'\psi(X_{1,}\theta_{0})} \frac{\partial \psi(X_1, \theta_0)}{\partial \theta'}\right]^{-1}\E \left[\e^{\tau(\theta_0)'\psi(X_{1,}\theta_{0})} \psi_{h}(X_1,\theta_{0})\frac{\partial \psi(X_1, \theta_0)}{\partial \theta'}\right]\right\}\\
& &+ \tr\left\{  \E \left[\e^{\tau(\theta_0)'\psi(X_{1,}\theta_{0})} \frac{\partial \psi(X_1, \theta_0)}{\partial \theta'}\right]^{-1} \E \left[\e^{\tau(\theta_0)'\psi(X_{1,}\theta_{0})} \psi_{k}(X_1,\theta_{0})\psi_{h}(X_1,\theta_{0})\frac{\partial \psi(X_1, \theta_0)}{\partial \theta'}\right] \right\}\\
& = & \tr\left\{   \E \left[ \frac{\partial \psi(X_1, \theta_0)}{\partial \theta'}\right]^{-1} \E \left[ \psi_{k}(X_1,\theta_{0})\frac{\partial \psi(X_1, \theta_0)}{\partial \theta'}\right] \right.\\
& & \left.\times \E \left[ \frac{\partial \psi(X_1, \theta_0)}{\partial \theta'}\right]^{-1}\E \left[\psi_{h}(X_1,\theta_{0})\frac{\partial \psi(X_1, \theta_0)}{\partial \theta'}\right]\right\}\\
& &+ \tr\left\{  \E \left[ \frac{\partial \psi(X_1, \theta_0)}{\partial \theta'}\right]^{-1} \E \left[ \psi_{k}(X_1,\theta_{0})\psi_{h}(X_1,\theta_{0})\frac{\partial \psi(X_1, \theta_0)}{\partial \theta'}\right] \right\}
\end{eqnarray*}
because  $\tau(\theta_0)=0_{m \times 1}$ by Lemma \ref{Lem:AsTiltingFct}iv (p. \pageref{Lem:AsTiltingFct}) under Assumption \ref{Assp:ExistenceConsistency}(a)-(e) and (g)-(h).
Therefore, $\P$-a.s. as $T \rightarrow \infty$, $\frac{\partial ^{2}M_{2,T}(\theta_T, \tau_T)}{\partial \tau_{h}\partial \tau_{k}}=O(T^{-1})$.

\textit{(iii)} Under Assumptions \ref{Assp:ExistenceConsistency}(a)(b)(e)(g)(h),  by equation \eqref{Eq:VarianceTermDTauDtau}  (p. \pageref{Eq:VarianceTermDTauDtau}), for all $(h,k) \in \ldsb 1,m\rdsb^2$,
\begin{eqnarray*}
& & \frac{\partial^{2}M_{3,T}(\theta_T, \tau_T)}{\partial \tau_{h}\partial \tau_{k}   } \nonumber
\\
& = &  \frac{1}{2T } \tr \left\{
\left[  \frac{1}{T} \sum_{t=1}^T                  \e^{\tau_T'\psi_t(\theta_T)}
    \psi_t(\theta_T)  \psi_t(\theta_T)' \right]^{-1}
  \left[ \frac{1}{T} \sum_{t=1}^T                  \e^{\tau_T'\psi_t(\theta_T)}
     \psi_{t, k} (\theta_T) \psi_t(\theta_T)  \psi_t(\theta_T)'     \right]\nonumber
    \right.
    \\
    & & \hspace{.5in} \left.
    \times \left[  \frac{1}{T} \sum_{t=1}^T                  \e^{\tau_T'\psi_t(\theta_T)}
    \psi_t(\theta_T)  \psi_t(\theta_T)' \right]^{-1}
     \left[ \frac{1}{T} \sum_{t=1}^T                  \e^{\tau_T'\psi_t(\theta_T)}
     \psi_{t, h} (\theta_T) \psi_t(\theta_T)  \psi_t(\theta_T)'     \right]
\right\}\nonumber
\\
&  &   -\frac{1}{2T } \tr \left\{
\left[  \frac{1}{T} \sum_{t=1}^T                  \e^{\tau_T'\psi_t(\theta_T)}
    \psi_t(\theta_T)  \psi_t(\theta_T)' \right]^{-1}
     \left[ \frac{1}{T} \sum_{t=1}^T                  \e^{\tau_T'\psi_t(\theta_T)}
    \psi_{t,k} (\theta_T)  \psi_{t, h} (\theta_T) \psi_t(\theta_T)  \psi_t(\theta_T)'     \right]
\right\}
\end{eqnarray*}
where, as $T \rightarrow \infty$, $(\theta_T\quad  \tau_T) \rightarrow (\theta_0 \quad \tau(\theta_0))$ by assumption.  Now,
under Assumptions \ref{Assp:ExistenceConsistency} and \ref{Assp:AsymptoticNormality}, by Lemma \ref{Lem:dLdTaudTau}vii-ix (p. \pageref{Lem:dLdTaudTau}), there exists a closed ball $\overline{B_L}\subset \Sbf$ centered at $(\theta_0, \tau(\theta_0))$ with strictly positive radius s.t.,   for all $k \in \ldsb 1,m\rdsb$, $\E\left[\sup_{(\theta, \tau)\in \overline{B_L}}  \vert\e^{\tau'\psi(X_{1,}\theta)} \psi(X_1, \theta)\psi(X_1, \theta)'\vert \right] < \infty$,
$\E\left[\sup_{(\theta, \tau)\in \overline{B_L}}  \vert\e^{\tau'\psi(X_{1,}\theta)} \psi_{k}(X_1, \theta)\psi(X_1, \theta)\psi(X_1, \theta)'\vert \right] < \infty$, and $\E[\sup_{(\theta, \tau)\in \overline{B_L}}  \vert\e^{\tau'\psi(X_{1,}\theta)} \psi_{h}(X_1, \theta) \\\psi_{k}(X_1, \theta)\psi(X_1, \theta)\psi(X_1, \theta)'\vert ] < \infty$.
Thus,
by Assumptions \ref{Assp:ExistenceConsistency}(a)(b) and (d),   the ULLN (uniform law of large numbers) \`a la  Wald  \citep[e.g.,][pp. 24-25, Theorem 1.3.3]{2003GhoRam},  implies that, for all $(h,k)\in \ldsb 1,m\rdsb^{2}$, $\P$-a.s. as $T \rightarrow \infty$,
\begin{eqnarray*}
& & T \frac{\partial^{2}M_{3,T}(\theta_T, \tau_T)}{\partial \tau_{h}\partial \tau_{k}   }\\
 & \rightarrow &    \frac{1}{2} {\rm tr} \left\{
\E \left[ \e^{\tau(\theta_0)'\psi(X_{1,}\theta_{0})} \psi(X_1, \theta_{0})\psi(X_1, \theta_{0})'\right]^{-1}
    \E \left[  \e^{\tau(\theta_0)'\psi(X_{1,}\theta_{0})} \psi_{k}(X_1, \theta_{0})\psi(X_1, \theta_{0})\psi(X_1, \theta_{0})'\right]\right.\\
& & \left.\times \E \left[ \e^{\tau(\theta_0)'\psi(X_{1,}\theta_{0})} \psi(X_1, \theta_{0})\psi(X_1, \theta_{0})'\right]^{-1}
    \E \left[  \e^{\tau(\theta_0)'\psi(X_{1,}\theta_{0})} \psi_{h}(X_1, \theta_{0})\psi(X_1, \theta_{0})\psi(X_1, \theta_{0})'\right]\right\} \\
&  &  -  \frac{1}{2} {\rm tr} \left\{
\E\negthickspace \left[ \e^{\tau(\theta_0)'\psi(X_{1,}\theta_{0})}\psi(X_1, \theta_{0})\psi(X_1, \theta_{0})'\right]^{-1}
\right.
\\ & & \hspace{1.8in} \left. \times
    E \left[  \e^{\tau(\theta_0)'\psi(X_{1,}\theta_{0})}\psi_{k}(X_1, \theta_{0})\psi_{h}(X_1, \theta_{0})\psi(X_1, \theta_{0})\psi(X_1, \theta_{0})'\right] 
\right\}\\
 & = &    \frac{1}{2} {\rm tr} \left\{
\E \left[  \psi(X_1, \theta_{0})\psi(X_1, \theta_{0})'\right]^{-1}
    \E \left[   \psi_{k}(X_1, \theta_{0})\psi(X_1, \theta_{0})\psi(X_1, \theta_{0})'\right]\right.\\
& & \left. \times \E \left[  \psi(X_1, \theta_{0})\psi(X_1, \theta_{0})'\right]^{-1}
    \E \left[  \psi_{h}(X_1, \theta_{0})\psi(X_1, \theta_{0})\psi(X_1, \theta_{0})'\right]\right\} \\
&  &  -  \frac{1}{2} {\rm tr} \left\{
\E \left[ \psi(X_1, \theta_{0})\psi(X_1, \theta_{0})'\right]^{-1}
    \E \left[  \psi_{k}(X_1, \theta_{0})\psi_{h}(X_1, \theta_{0})\psi(X_1, \theta_{0})\psi(X_1, \theta_{0})'\right]
\right\}
\end{eqnarray*}
because $\tau(\theta_0)=0_{m \times 1}$ by Lemma \ref{Lem:AsTiltingFct}iv (p. \pageref{Lem:AsTiltingFct}) under Assumption \ref{Assp:ExistenceConsistency}(a)-(e) and (g)-(h).
 Therefore, $\P$-a.s. as $T \rightarrow \infty$,  $\frac{\partial M_{3,T}(\hat{\theta}_T, \tau_T(\hat{\theta}_T))}{\partial \tau_{k}}=O(T^{-1})$.

\textit{(iv)} Under Assumption \ref{Assp:ExistenceConsistency}(a)-(b) and (d)-(h),  by Lemma \ref{Lem:LogESPDecomposition} (p. \pageref{Lem:LogESPDecomposition}), for all $(\theta, \tau)$ in a neighborhood of $(\theta_0, \tau(\theta_0))$, $L_T(\theta, \tau)=M_{1,T}(\theta, \tau)+M_{2,T}(\theta, \tau)+M_{3,T}(\theta, \tau)$, so that the result follows from the statement (i)-(iii) of the present lemma.
\end{proof}

\begin{prop}[Asymptotic normality of $\check{\theta}_T$, $\tau_T(\check{\theta}_T)$ and $\check{\gamma}_T$]\label{Prop:ConstrainedEstAsNormality} Under Assumptions \ref{Assp:ExistenceConsistency}, \ref{Assp:AsymptoticNormality} and \ref{Assp:Trinity}, if the test hypothesis \eqref{Eq:HypParameterRestriction} on p. \pageref{Eq:HypParameterRestriction} holds, $\P$-a.s. as $T \rightarrow \infty$,
\begin{enumerate}
\item[(i)] $\displaystyle \sqrt{T}\begin{bmatrix}\check{\theta}_T- \theta_0 \\
\tau_T(\check{\theta}_T) \\
\check{\gamma}_T \\
\end{bmatrix}
=
 \begin{bmatrix} \quad M^{-1}-\Sigma R'(R\Sigma R')^{-1}RM^{-1} \\
(M')^{-1}R'(R\Sigma R')^{-1}RM^{-1}  \\
 -(R\Sigma R')^{-1}RM^{-1}  \\
\end{bmatrix}\frac{1}{\sqrt{T}}\sum_{t=1}^T\psi_t(\theta_0)+ o_{\P}(1)$;\ and
\item[(ii)] $\displaystyle \sqrt{T}\begin{bmatrix}\check{\theta}_T- \theta_0 \\
\tau_T(\check{\theta}_T) \\
\check{\gamma}_T \\
\end{bmatrix}
=
 \begin{bmatrix} \quad
\Sigma^{1/2'} P_{\Sigma^{1/2}R'}^{\perp}\Sigma^{-1/2'}M^{-1}
 \\
(M')^{-1}\Sigma^{-1/2}P_{\Sigma^{1/2}R'}\Sigma^{-1/2'}M^{-1} &   \\
 -(R\Sigma R')^{-1}RM^{-1}  \\
\end{bmatrix}\frac{1}{\sqrt{T}}\sum_{t=1}^T\psi_t(\theta_0)+ o_{\P}(1)$
\item[(iii)] $ \displaystyle \sqrt{T}\begin{bmatrix}\check{\theta}_T- \theta_0 \\
\tau_T(\check{\theta}_T) \\
\check{\gamma}_T \\
\end{bmatrix}
\stackrel{D}{\rightarrow} \mathcal{N}\left( 0, \begin{bmatrix}(\Sigma^{1/2})'P^{\perp}_{\Sigma^{1/2}R'}\Sigma^{1/2} & 0_{m \times m} & 0_{m \times q} \\
0_{m \times m} & (V^{1/2})^{-1}P_{\Sigma^{1/2}R'}(V^{1/2'})^{-1} & -(M')^{-1}R'(R\Sigma R')^{-1} \\
0_{q \times m} & -(R\Sigma R')^{-1}RM^{-1} & (R \Sigma R')^{-1} \\
\end{bmatrix}\right)$,
\end{enumerate}
where $\Sigma:=\Sigma(\theta_0):= M^{-1}V(M')^{-1}$, $M:=\E \left[\frac{\partial \psi(X_1, \theta_0)}{\partial \theta'}\right]$, $V:= \E[ \psi(X_1, \theta_0)\psi(X_1, \theta_0)']$, and $R:=\frac{\partial r(\theta_0)}{\partial \theta'}$.
\end{prop}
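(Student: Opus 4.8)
The plan is to extend the first-order-condition expansion used for the unconstrained estimator (Proposition \ref{Prop:AsExpansionEstimator}) to the constrained problem, now expanding the \emph{stacked} system of estimating equations jointly in the three blocks of unknowns $\theta$, $\tau$ and $\gamma$. Concretely, I would work with
\begin{eqnarray*}
g_T(\theta, \tau, \gamma):=\begin{bmatrix} \dfrac{\partial L_T(\theta, \tau)}{\partial \theta}+R(\theta)'\gamma \\[4pt] S_T(\theta, \tau) \\[2pt] r(\theta) \end{bmatrix},
\end{eqnarray*}
where $S_T(\theta, \tau):=\frac{1}{T}\sum_{t=1}^T\e^{\tau'\psi_t(\theta)}\psi_t(\theta)$ as in Proposition \ref{Prop:AsExpansionEstimator}. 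The three blocks encode, respectively, the (approximate) Lagrangian stationarity condition, the ET tilting equation \eqref{Eq:ESPTiltingEquation}, and the restriction \eqref{Eq:HypParameterRestriction}.

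First I would show that at $(\check\theta_T, \tau_T(\check\theta_T), \check\gamma_T)$ each block is small: the second and third vanish exactly (by the definition of $\tau_T(.)$ in \eqref{Eq:ESPTiltingEquation} and since $r(\check\theta_T)=0_{q\times 1}$), while the first is $O(T^{-1})$. The last point is a constrained analogue of Lemma \ref{Lem:ApproximateFOC}: using the chain rule on $\theta\mapsto L_T(\theta, \tau_T(\theta))$ together with $\partial L_T/\partial\tau'=O(T^{-1})$ (Lemma \ref{Lem:DLDTau}iv) converts the exact stationarity relation $R(\check\theta_T)'\check\gamma_T=-\left.\partial L_T(\theta, \tau_T(\theta))/\partial\theta\right|_{\check\theta_T}$ (Lemma \ref{Lem:ConstrainedEstLagrangian}) into the approximate FOC for the first block. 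I would then apply a stochastic first-order Taylor--Lagrange expansion of $g_T$ around $(\theta_0, \tau(\theta_0), 0)=(\theta_0, 0_{m\times1}, 0_{q\times1})$, the consistency of the expansion point coming from Theorem \ref{theorem:ConsistencyAsymptoticNormality}i, Lemma \ref{Lem:ConstrainedEstLagrangian}i and Lemma \ref{Lem:Schennachtheorem10PfFirstSteps}iii. Evaluating $g_T$ at $(\theta_0,0,0)$ gives $O(T^{-1})$ in the first block, $\frac{1}{T}\sum_t\psi_t(\theta_0)$ in the second, and $0$ in the third, while the limiting Jacobian is the bordered matrix
\begin{eqnarray*}
J:=\begin{bmatrix} 0_{m\times m} & M' & R' \\ M & V & 0_{m\times q} \\ R & 0_{q\times m} & 0_{q\times q} \end{bmatrix},
\end{eqnarray*}
with $M:=\E[\partial\psi(X_1,\theta_0)/\partial\theta']$ and $V:=\E[\psi(X_1,\theta_0)\psi(X_1,\theta_0)']$; its blocks follow from Lemma \ref{Lem:PartialLTheta0Tau0} ($\partial^2L_T/\partial\theta'\partial\theta\to0$, $\partial^2L_T/\partial\tau'\partial\theta\to M'$), Lemma \ref{Lem:ETEquationFirstDerivatives} ($\partial S_T/\partial\theta'\to M$, $\partial S_T/\partial\tau'\to V$), Assumption \ref{Assp:Trinity}, and $\tau(\theta_0)=0_{m\times1}$ (Lemma \ref{Lem:AsTiltingFct}iv).

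Next I would invert $J$. Its invertibility follows from Assumption \ref{Assp:ExistenceConsistency}(h) (so $M$ and $V$ are invertible) and Assumption \ref{Assp:Trinity}(b) ($R$ of full row rank $q$); I would obtain the inverse either from Lemma \ref{Lem:BlockMatrixInverse} or, more transparently, by eliminating $\tau$ through the second block, substituting into the first to solve successively for $\gamma$ and then $\theta$ (using $\Sigma^{-1}=M'V^{-1}M$ and $\Sigma M'V^{-1}=M^{-1}$), which yields statement (i). For statement (ii) I would rewrite the resulting coefficient blocks via $\Sigma^{1/2}:=V^{1/2}(M')^{-1}$ and the orthogonal projector $P_{\Sigma^{1/2}R'}$ onto the range of $\Sigma^{1/2}R'$, using $\Sigma^{1/2'}\Sigma^{1/2}=\Sigma$ and $(M')^{-1}\Sigma^{-1/2}=(V^{1/2})^{-1}$ to verify, e.g., $\Sigma^{1/2'}P_{\Sigma^{1/2}R'}^{\perp}\Sigma^{-1/2'}M^{-1}=M^{-1}-\Sigma R'(R\Sigma R')^{-1}RM^{-1}$ with $P^{\perp}:=I-P_{\Sigma^{1/2}R'}$.

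Finally, statement (iii) follows from (ii), the Lindeberg--L\'evy CLT ($\frac{1}{\sqrt{T}}\sum_t\psi_t(\theta_0)\stackrel{D}{\rightarrow}\mathcal{N}(0, V)$ under Assumption \ref{Assp:ExistenceConsistency}(a)--(c)(g)) and Slutsky's theorem: setting $w:=\Sigma^{-1/2'}M^{-1}\frac{1}{\sqrt{T}}\sum_t\psi_t(\theta_0)$, which is asymptotically $\mathcal{N}(0, I_m)$ since $\Sigma^{-1/2'}M^{-1}V(M')^{-1}\Sigma^{-1/2}=I_m$, the $\theta$- and $\tau$-blocks become $\Sigma^{1/2'}P^{\perp}w$ and $(M')^{-1}\Sigma^{-1/2}P_{\Sigma^{1/2}R'}w$, and the stated covariance blocks drop out from idempotency of $P_{\Sigma^{1/2}R'}$ and $P^{\perp}$ and the orthogonality $P^{\perp}P_{\Sigma^{1/2}R'}=0_{m\times m}$, the latter producing the vanishing $\theta$--$\tau$ cross-covariance. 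The main obstacle is the first block: establishing the approximate Lagrangian FOC to order $O(T^{-1})$, which --- exactly as in the unconstrained proof --- requires controlling the derivatives of the variance term $\ln\vert\Sigma_T(\theta)\vert_{\det}$ uniformly near $(\theta_0, 0)$ (via Lemmas \ref{Lem:DLDTau} and \ref{Lem:dLdTaudTau}) so that they do not contaminate the first-order asymptotics; the rest is the bookkeeping of the $o_{\P}(1)$ remainders in the joint expansion and verifying invertibility of the bordered Jacobian $J$.
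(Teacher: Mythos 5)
Your proposal follows essentially the same route as the paper's proof: a stochastic first-order Taylor--Lagrange expansion of the stacked system (approximate Lagrangian FOC via Lemma \ref{Lem:ConstrainedEstLagrangian}, tilting equation $S_T=0$, restriction $r(\check{\theta}_T)=0$) around $(\theta_0, \tau(\theta_0), 0)$, the same limiting bordered Jacobian $J$ with blocks from Lemmas \ref{Lem:PartialLTheta0Tau0} and \ref{Lem:ETEquationFirstDerivatives}, its inversion under Assumptions \ref{Assp:ExistenceConsistency}(h) and \ref{Assp:Trinity}(b), the rewriting via $\Sigma^{1/2}$ and the projector $P_{\Sigma^{1/2}R'}$, and the CLT-plus-idempotency covariance computation. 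The one minor wrinkle is that expanding $g_T$ jointly in $\theta$ would require differentiating $R(\theta)'\gamma$ with respect to $\theta$, i.e.\ $r \in C^2$, which Assumption \ref{Assp:Trinity}(a) does not grant; the paper avoids this by exploiting linearity in $\gamma$, keeping $\frac{\partial r(\check{\theta}_T)'}{\partial \theta}\check{\gamma}_T$ exact on the right-hand side and expanding only in $(\theta, \tau)$ --- a repair your setup admits immediately since $g_T$ is linear in $\gamma$.
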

\begin{proof} \textit{(i)-(ii)}  The function  $L_T(\theta, \tau)$ is well-defined and twice continuously differentiable in a neighborhood of $(\theta_0'\; \tau(\theta_0)')$   $\P$-a.s. for $T$ big enough by subsection \ref{Sec:LTAndDerivatives} (p. \pageref{Sec:LTAndDerivatives}),  under Assumptions \ref{Assp:ExistenceConsistency} and \ref{Assp:AsymptoticNormality}(a). Similarly, the function  $S_T(\theta, \tau):= \frac{1}{T}\sum_{t=1}^T \e^{\tau'\psi_t(\theta)}\psi_t(\theta)$ and $\theta \mapsto r(\theta) $ are  continuously differentiable in a neighborhood of $(\theta_0' \; \tau(\theta_0)')$ by Assumption \ref{Assp:ExistenceConsistency}(a)(b) and \ref{Assp:Trinity}(a). Now, under Assumptions \ref{Assp:ExistenceConsistency}, \ref{Assp:AsymptoticNormality}, and \ref{Assp:Trinity}(a), by Lemma \ref{Lem:ConstrainedEstLagrangian}i (p. \pageref{Lem:ConstrainedEstLagrangian}),  Lemma \ref{Lem:Schennachtheorem10PfFirstSteps}iii (p. \pageref{Lem:Schennachtheorem10PfFirstSteps}), $\P$-a.s., $\check{\theta}_T \rightarrow \theta_0$ and $\tau_T(\check{\theta}_T )\rightarrow \tau(\theta_0)$,  so that $\P$-a.s. for $T$ big enough, $(\check{\theta}_T ' \; \tau_T(\check{\theta}_T)')$ is in any arbitrary small neighborhood of $(\theta_0'\; \tau(\theta_0)')$. Therefore,  under Assumptions \ref{Assp:ExistenceConsistency}, \ref{Assp:AsymptoticNormality} and \ref{Assp:Trinity} (a),  stochastic first-order Taylor-Lagrange expansions \cite[Lemma 3]{1969Jen} around $(\theta_0, \tau(\theta_0))$ evaluated at $(\check{\theta}_T, \tau_T(\check{\theta}_T))$ yield, $\P$-a.s. for $T$ big enough
\begin{eqnarray*}
\frac{\partial { L}_T(\check{\theta}_T, \tau_T(\check{\theta}_T)) }{\partial \theta} & =& \frac{\partial { L}_T(\theta_0, \tau(\theta_0)) }{\partial \theta} + \frac{\partial^2 { L}_T(\bar{\theta}_T, \bar{\tau}_T) }{\partial \theta' \partial \theta}\left( \check{\theta}_T  - \theta_0 \right)+\frac{\partial^2 { L}_T(\bar{\theta}_T, \bar{\tau}_T) }{\partial \tau' \partial \theta} \tau_T(\check{\theta}_T)\\
S_T(\check{\theta}_T, \tau_T(\check{\theta}_T))& = &S_T(\theta_0, \tau(\theta_0) )+ \frac{\partial  S_T(\bar{\theta}_T, \bar{\tau}_T) }{\partial \theta'}\left( \check{\theta}_T  - \theta_0 \right)+ \frac{ \partial S_T(\bar{\theta}_T, \bar{\tau}_T) }{\partial \tau'} \tau_T(\check{\theta}_T)\\
r(\check{\theta}_T) & =& r(\theta_0)+ \frac{\partial r(\bar{\theta}_T) }{\partial \theta'} \left( \check{\theta}_T  - \theta_0 \right)
\end{eqnarray*}
because $\tau(\theta_0)=0_{m \times 1}$ by Lemma \ref{Lem:AsTiltingFct}iv (p. \pageref{Lem:AsTiltingFct}), and where  $\bar{\theta}_T$ and $\bar{\tau}_T$ are between $\check{\theta}_T$ and $\theta_0$, and between  $\tau_T(\check{\theta}_T)$ and $\tau(\theta_0)$, respectively.  Now, under Assumptions \ref{Assp:ExistenceConsistency} and \ref{Assp:AsymptoticNormality},  by definition of $\check{\theta}_T$ and   by definition of $\tau_T(.)$ (equation \ref{Eq:ESPTiltingEquation} on p. \pageref{Eq:ESPTiltingEquation}), $r(\check{\theta}_T)=0_{q \times 1} $ and  $S_T(\check{\theta}_T, \tau_T(\check{\theta}_T))=0$, respectively. Moreover, under Assumptions \ref{Assp:ExistenceConsistency},  \ref{Assp:AsymptoticNormality} and \ref{Assp:Trinity}, by Lemma \ref{Lem:ConstrainedEstLagrangian}iv (p. \pageref{Lem:ConstrainedEstLagrangian}), $\P$-a.s. as $T \rightarrow \infty$, $\frac{\partial L_T(\check{\theta}_T, \tau_T(\check{\theta}_T))}{\partial \theta}=-\frac{\partial r(\check{\theta}_T)'}{\partial \theta}\check{\gamma}_T+O(T^{-1}) $. Therefore, under Assumptions \ref{Assp:ExistenceConsistency},  \ref{Assp:AsymptoticNormality} and \ref{Assp:Trinity}, $\P$-a.s. as $T \rightarrow \infty$,
\begin{eqnarray*}
\begin{cases}
 O(T^{-1})  =  \frac{\partial { L}_T(\theta_0, \tau(\theta_0)) }{\partial \theta} + \frac{\partial^2 { L}_T(\bar{\theta}_T, \bar{\tau}_T) }{\partial \theta' \partial \theta}\left( \check{\theta}_T  - \theta_0 \right)+\frac{\partial^2 { L}_T(\bar{\theta}_T, \bar{\tau}_T) }{\partial \tau' \partial \theta} \tau_T(\check{\theta}_T)+ \frac{\partial r(\check{\theta}_T)'}{\partial \theta}\check{\gamma}_T\\
0_{m \times 1}  = S_T(\theta_0, \tau(\theta_0) )+ \frac{\partial  S_T(\bar{\theta}_T, \bar{\tau}_T) }{\partial \theta'}\left( \check{\theta}_T  - \theta_0 \right)+ \frac{ \partial S_T(\bar{\theta}_T, \bar{\tau}_T) }{\partial \tau'} \tau_T(\check{\theta}_T)\\
0_{q \times 1}  = r(\theta_0)+ \frac{\partial r(\bar{\theta}_T) }{\partial \theta'} \left( \check{\theta}_T  - \theta_0 \right)
\end{cases},
\end{eqnarray*}
which in matrix form is\begin{eqnarray*}
\begin{bmatrix}O(T^{-1}) \\
0_{m \times 1} \\
0_{q \times 1} \\
\end{bmatrix}
=
\begin{bmatrix}\frac{\partial { L}_T(\theta_0, \tau(\theta_0)) }{\partial \theta} \\
S_T(\theta_0, \tau(\theta_0) ) \\
r(\theta_0) \\
\end{bmatrix}
+
\begin{bmatrix}\frac{\partial^2 { L}_T(\bar{\theta}_T, \bar{\tau}_T) }{\partial \theta' \partial \theta} & \frac{\partial^2 { L}_T(\bar{\theta}_T, \bar{\tau}_T) }{\partial \tau' \partial \theta} & \frac{\partial r(\check{\theta}_T)'}{\partial \theta'} \\
\frac{\partial  S_T(\bar{\theta}_T, \bar{\tau}_T) }{\partial \theta'} & \frac{ \partial S_T(\bar{\theta}_T, \bar{\tau}_T) }{\partial \tau'} & 0 \\
\frac{\partial r(\bar{\theta}_T) }{\partial \theta'} & 0 & 0 \\
\end{bmatrix}
\begin{bmatrix}\check{\theta}_T  - \theta_0 \\
\tau_T(\check{\theta}_T) \\
\check{\gamma}_T \\
\end{bmatrix}.
\end{eqnarray*}
Now, under Assumptions \ref{Assp:ExistenceConsistency},  \ref{Assp:AsymptoticNormality} and \ref{Assp:Trinity}, by Lemma \ref{Lem:PartialLAndSAndR}ii (p. \pageref{Lem:PartialLAndSAndR}), $\P$-a.s. for $T$ big enough, the matrix $\begin{bmatrix}\frac{\partial^2 { L}_T(\bar{\theta}_T, \bar{\tau}_T) }{\partial \theta' \partial \theta} & \frac{\partial^2 { L}_T(\bar{\theta}_T, \bar{\tau}_T) }{\partial \tau' \partial \theta} & \frac{\partial r(\check{\theta}_T)'}{\partial \theta'} \\
\frac{\partial  S_T(\bar{\theta}_T, \bar{\tau}_T) }{\partial \theta'} & \frac{ \partial S_T(\bar{\theta}_T, \bar{\tau}_T) }{\partial \tau'} & 0 \\
\frac{\partial r(\bar{\theta}_T) }{\partial \theta'} & 0 & 0 \\
\end{bmatrix} $ is invertible. Then, under Assumptions \ref{Assp:ExistenceConsistency},  \ref{Assp:AsymptoticNormality} and \ref{Assp:Trinity}, solving for the parameters  and multiplying by $\sqrt{T}$ yield, $\P$-a.s. as $T \rightarrow \infty$,
\begin{eqnarray*}
& &\sqrt{T}\begin{bmatrix}\check{\theta}_T  - \theta_0 \\
\tau_T(\check{\theta}_T) \\
\check{\gamma}_T \\
\end{bmatrix}\\
&=&
-\begin{bmatrix}\frac{\partial^2 { L}_T(\bar{\theta}_T, \bar{\tau}_T) }{\partial \theta' \partial \theta} & \frac{\partial^2 { L}_T(\bar{\theta}_T, \bar{\tau}_T) }{\partial \tau' \partial \theta} & \frac{\partial r(\check{\theta}_T)'}{\partial \theta} \\
\frac{\partial  S_T(\bar{\theta}_T, \bar{\tau}_T) }{\partial \theta'} & \frac{ \partial S_T(\bar{\theta}_T, \bar{\tau}_T) }{\partial \tau'} & 0 \\
\frac{\partial r(\bar{\theta}_T) }{\partial \theta'} & 0 & 0 \\
\end{bmatrix}^{-1}
\sqrt{T}\begin{bmatrix}\frac{\partial { L}_T(\theta_0, \tau(\theta_0)) }{\partial \theta} +O(T^{- 1})\\
S_T(\theta_0, \tau(\theta_0) ) \\
r(\theta_0) \\
\end{bmatrix}\\
& \stackrel{(a)}{=} &
- \begin{bmatrix}\frac{\partial^2 { L}_T(\bar{\theta}_T, \bar{\tau}_T) }{\partial \theta' \partial \theta} & \frac{\partial^2 { L}_T(\bar{\theta}_T, \bar{\tau}_T) }{\partial \tau' \partial \theta} & \frac{\partial r(\check{\theta}_T)'}{\partial \theta} \\
\frac{\partial  S_T(\bar{\theta}_T, \bar{\tau}_T) }{\partial \theta'} & \frac{ \partial S_T(\bar{\theta}_T, \bar{\tau}_T) }{\partial \tau'} & 0 \\
\frac{\partial r(\bar{\theta}_T) }{\partial \theta'} & 0 & 0 \\
\end{bmatrix}^{-1}
\begin{bmatrix}O(T^{- \frac{1}{2}})\\
\sqrt{T}\frac{1}{T}\sum_{t=1}^T\psi_t(\theta_0) \\
0 \\
\end{bmatrix}\\
&\underset{}{ \stackrel{(b)}{
=}} & \negthickspace-\negthickspace \begin{bmatrix}- \Sigma+\Sigma R'(R\Sigma R')^{-1}R\Sigma & \quad M^{-1}\negthickspace-\negthickspace\Sigma R'(R\Sigma R')^{-1}RM^{-1} & \Sigma R'(R\Sigma R')^{-1} \\
(M')^{-1}\negthickspace-\negthickspace(M')^{-1}R'(R\Sigma R')^{-1}R\Sigma & (M')^{-1}R'(R\Sigma R')^{-1}RM^{-1} & \; -(M')^{-1}R'(R\Sigma R')^{-1} \\
(R\Sigma R')^{-1} R\Sigma& -(R\Sigma R')^{-1}RM^{-1} & (R\Sigma R')^{-1} \\
\end{bmatrix}\negthickspace
\\
& & \hspace{4in} \times
\begin{bmatrix}O(T^{- \frac{1}{2}})\\
\frac{1}{\sqrt{T}}\sum_{t=1}^T\psi_t(\theta_0) \\
0 \\
\end{bmatrix}
\\
& & +\left\{  \begin{bmatrix}- \Sigma+\Sigma R'(R\Sigma R')^{-1}R\Sigma & \quad M^{-1}\negthickspace-\negthickspace\Sigma R'(R\Sigma R')^{-1}RM^{-1} & \Sigma R'(R\Sigma R')^{-1} \\
(M')^{-1}\negthickspace-\negthickspace(M')^{-1}R'(R\Sigma R')^{-1}R\Sigma & (M')^{-1}R'(R\Sigma R')^{-1}RM^{-1} & \; -(M')^{-1}R'(R\Sigma R')^{-1} \\
(R\Sigma R')^{-1} R\Sigma& -(R\Sigma R')^{-1}RM^{-1} & (R\Sigma R')^{-1} \\
\end{bmatrix}\right.\\
& &- \left. \begin{bmatrix}\frac{\partial^2 { L}_T(\bar{\theta}_T, \bar{\tau}_T) }{\partial \theta' \partial \theta} & \frac{\partial^2 { L}_T(\bar{\theta}_T, \bar{\tau}_T) }{\partial \tau' \partial \theta} & \frac{\partial r(\check{\theta}_T)'}{\partial \theta} \\
\frac{\partial  S_T(\bar{\theta}_T, \bar{\tau}_T) }{\partial \theta'} & \frac{ \partial S_T(\bar{\theta}_T, \bar{\tau}_T) }{\partial \tau'} & 0 \\
\frac{\partial r(\bar{\theta}_T) }{\partial \theta'} & 0 & 0 \\
\end{bmatrix}^{-1} \right\} \begin{bmatrix}O(T^{- \frac{1}{2}})\\
\frac{1}{\sqrt{T}}\sum_{t=1}^T\psi_t(\theta_0) \\
0 \\
\end{bmatrix}\\
&\underset{}{ \stackrel{(c)}{
=}} & \begin{bmatrix} \quad M^{-1}-\Sigma R'(R\Sigma R')^{-1}RM^{-1} \\
(M')^{-1}R'(R\Sigma R')^{-1}RM^{-1}  \\
 -(R\Sigma R')^{-1}RM^{-1}  \\
\end{bmatrix}\frac{1}{\sqrt{T}}\sum_{t=1}^T\psi_t(\theta_0)+ o_{\P}(1) \\
&\underset{}{ \stackrel{(d)}{
=}} & \begin{bmatrix} \quad
\Sigma^{1/2'} P_{\Sigma^{1/2}R'}^{\perp}\Sigma^{-1/2'}M^{-1}
 \\
(M')^{-1}\Sigma^{-1/2}P_{\Sigma^{1/2}R'}\Sigma^{-1/2'}M^{-1} &   \\
 -(R\Sigma R')^{-1}RM^{-1}  \\
\end{bmatrix}\frac{1}{\sqrt{T}}\sum_{t=1}^T\psi_t(\theta_0)+ o_{\P}(1)
\end{eqnarray*}
\textit{(a)} Firstly, under Assumptions \ref{Assp:ExistenceConsistency} and \ref{Assp:AsymptoticNormality}, by Lemma \ref{Lem:PartialLTheta0Tau0}i (p. \pageref{Lem:PartialLTheta0Tau0}), $\P$-a.s. as $T \rightarrow \infty$,  $\frac{\partial  L_{T}(\theta_0, \tau(\theta_0) ) }{\partial \theta_j}=O(T^{-1})$, so that $\sqrt{T}\left[\frac{\partial  L_{T}(\theta_0, \tau(\theta_0) ) }{\partial \theta_j}+O(T^{-1})\right]=O(T^{-\frac{1}{2}})$. Secondly, note that $S_T(\theta_0, \tau(\theta_0) )=\frac{1}{T}\sum_{t=1}^T\psi_t(\theta_0)$ because $\tau(\theta_0)=0_{m \times 1}$ by Lemma \ref{Lem:AsTiltingFct}iv (p. \pageref{Lem:AsTiltingFct}) under Assumption \ref{Assp:ExistenceConsistency}(a)-(e) and (g)-(h). Finally, if the test hypothesis \eqref{Eq:HypParameterRestriction} on p. \pageref{Eq:HypParameterRestriction} holds, then $r(\theta_0)=0_{q \times 1}$.    \textit{(b)}  Add and subtract the matrix $\begin{bmatrix}- \Sigma+\Sigma R'(R\Sigma R')^{-1}R\Sigma & \quad M^{-1}\negthickspace-\negthickspace\Sigma R'(R\Sigma R')^{-1}RM^{-1} & \Sigma R'(R\Sigma R')^{-1} \\
(M')^{-1}\negthickspace-\negthickspace(M')^{-1}R'(R\Sigma R')^{-1}R\Sigma & (M')^{-1}R'(R\Sigma R')^{-1}RM^{-1} & \; -(M')^{-1}R'(R\Sigma R')^{-1} \\
(R\Sigma R')^{-1} R\Sigma& -(R\Sigma R')^{-1}RM^{-1} & (R\Sigma R')^{-1} \\
\end{bmatrix} $. \textit{(c)} Firstly, the first and third column of the first square matrix cancel out because the first element and third element of the vector are zeros.
 Secondly, under Assumptions \ref{Assp:ExistenceConsistency},  \ref{Assp:AsymptoticNormality} and \ref{Assp:Trinity}, by Lemma \ref{Lem:PartialLAndSAndR}iii (p. \pageref{Lem:PartialLAndSAndR})
and Theorem \ref{theorem:ConsistencyAsymptoticNormality}i (p. \pageref{theorem:ConsistencyAsymptoticNormality}), $\P$-a.s. as $T \rightarrow \infty$, the curly bracket is $o(1)$, and, under Assumption \ref{Assp:ExistenceConsistency}(a)-(c) and (g), by the Lindeberg-L\'evy CLT,  $\frac{1}{\sqrt{T}}\sum_{t=1}^T\psi_t(\theta_0)=O_{\P}(1) $, as $T \rightarrow \infty$.  \textit{(d)} By definition $\Sigma=\Sigma^{1/2'}\Sigma^{1/2}$ and  $\Sigma^{-1/2'}=[\Sigma^{1/2'}]^{-1}$. Thus,
\begin{itemize}
\item  $M^{-1}-\Sigma R'(R\Sigma R')^{-1}RM^{-1}=\Sigma^{1/2'} [I-\Sigma^{1/2} R'(R\Sigma R')^{-1}R\Sigma^{1/2'} ]\Sigma^{-1/2'}M^{-1}$

$=\Sigma^{1/2'} P_{\Sigma^{1/2}R'}^{\perp}\Sigma^{-1/2'}M^{-1}$ where $P_{\Sigma^{1/2}R'}^{\perp} $ denotes the orthogonal projection on the orthogonal of the space spanned by the columns of $\Sigma^{1/2}R'$.
\item  $(M')^{-1}R'(R\Sigma R')^{-1}RM^{-1}=(M')^{-1}\Sigma^{-1/2}[\Sigma^{1/2}R'(R\Sigma R')^{-1}R\Sigma^{1/2'}] \Sigma^{-1/2'}M^{-1}\\=(M')^{-1}\Sigma^{-1/2}P_{\Sigma^{1/2}R'}\Sigma^{-1/2'}M^{-1}
=(M')^{-1}\Sigma^{-1/2}P_{\Sigma^{1/2}R'}\Sigma^{-1/2'}\Sigma\Sigma^{-1/2}P_{\Sigma^{1/2}R'}\Sigma^{-1/2'}M^{-1} \\=(M')^{-1}\Sigma^{-1/2}P_{\Sigma^{1/2}R'}P_{\Sigma^{1/2}R'}\Sigma^{-1/2'}M^{-1}=(M')^{-1}\Sigma^{-1/2}P_{\Sigma^{1/2}R'}\Sigma^{-1/2'}M^{-1}\\= (M')^{-1}[V^{1/2}(M')^{-1}]^{-1}P_{\Sigma^{1/2}R'}[M^{-1}V^{1/2'}]^{-1}M^{-1}=(V^{1/2})^{-1}P_{\Sigma^{1/2}R'}(V^{1/2'})^{-1} $ because $M^{-1}V(M')^{-1}=:\Sigma=\Sigma^{1/2'}\Sigma^{1/2} $, so that $\Sigma^{-1/2}:=(\Sigma^{1/2})^{-1}=[V^{1/2}(M')^{-1}]^{-1}=M'V^{-1/2}$ and $\Sigma^{-1/2'}:=(\Sigma^{1/2'})^{-1}=[M^{-1}V^{1/2'}]^{-1}=V^{-1/2'}M$.
\end{itemize}

\textit{(iii)} Under Assumptions \ref{Assp:ExistenceConsistency},  \ref{Assp:AsymptoticNormality} and \ref{Assp:Trinity}, by the statement (ii) of the present proposition, $\P$-a.s. as $T \rightarrow \infty$,

  \begin{eqnarray*}
& &\sqrt{T}\begin{bmatrix}\check{\theta}_T  - \theta_0 \\
\tau_T(\check{\theta}_T) \\
\check{\gamma}_T \\
\end{bmatrix}\\
&\underset{}{ \stackrel{}{
=}} & \begin{bmatrix} \quad \Sigma^{1/2'} P_{\Sigma^{1/2}R'}^{\perp}\Sigma^{-1/2'}M^{-1} \\
(M')^{-1}\Sigma^{-1/2}P_{\Sigma^{1/2}R'}\Sigma^{-1/2'}M^{-1} &\quad    \\
 -(R\Sigma R')^{-1}RM^{-1}  \\
\end{bmatrix}\frac{1}{\sqrt{T}}\sum_{t=1}^T\psi_t(\theta_0)+ o(1) \\
& \underset{(a)}{ \stackrel{D}{\rightarrow }} &- \begin{bmatrix}\Sigma^{1/2'} P_{\Sigma^{1/2}R'}^{\perp}\Sigma^{-1/2'}M^{-1} \\
(M')^{-1}\Sigma^{-1/2}P_{\Sigma^{1/2}R'}\Sigma^{-1/2'}M^{-1} &\quad   \\
-(R\Sigma R')^{-1}RM^{-1} \\
\end{bmatrix}
\mathcal{N}(0, V)\\
& \underset{(b)}{ \stackrel{D}{=}} & \mathcal{N}\left(0, \begin{bmatrix}\Sigma^{1/2'} P_{\Sigma^{1/2}R'}^{\perp}\Sigma^{-1/2'}M^{-1} \\
(M')^{-1}\Sigma^{-1/2}P_{\Sigma^{1/2}R'}\Sigma^{-1/2'}M^{-1} &\quad   \\
-(R\Sigma R')^{-1}RM^{-1} \\
\end{bmatrix}V\right.\\
& &  \left.\begin{array}{c}
 \\
 \\
 \\
\end{array}\times\begin{bmatrix}(M')^{-1}\Sigma^{-1/2} P_{\Sigma^{1/2}R'}^{\perp}\Sigma^{1/2} & \quad (M')^{-1}\Sigma^{-1/2}P_{\Sigma^{1/2}R'}\Sigma^{-1/2'}M^{-1} & \quad-(M')^{-1}R'(R\Sigma R')^{-1}\\
\end{bmatrix}\right)\\
& \underset{(c)}{ \stackrel{D}{=}} &  \mathcal{N}\left(0,\begin{bmatrix}(\Sigma^{1/2})'P^{\perp}_{\Sigma^{1/2}R'}\Sigma^{1/2} & 0_{m \times m} & 0_{m \times q} \\
0_{m \times m} & (V^{1/2})^{-1}P_{\Sigma^{1/2}R'}(V^{1/2'})^{-1} & -(M')^{-1}R'(R\Sigma R')^{-1} \\
0_{q \times m} & -(R\Sigma R')^{-1}RM^{-1} & (R\Sigma R')^{-1} \\
\end{bmatrix}\right)
\end{eqnarray*}
 \textit{(a)} Under Assumption \ref{Assp:ExistenceConsistency}(a)-(c) and (g), by the Lindeberg-L{\'e}vy CLT theorem, as $T \rightarrow \infty$, $ \frac{1}{\sqrt{T}}\sum_{t=1}^T\psi_t(\theta_0)\stackrel{D}{\rightarrow} \mathcal{N}(0,V)$ where $V:=\E[\psi(X_1,\theta_{0}) \psi(X_1,\theta_{0})' ]$. \textit{(b)} Firstly, the minus sign can be discarded because of the symmetry of the Gaussian distribution.
 Secondly,  if $X$ is a random vector and $F$ is a matrix, then $\V(FX)=F \V(X)F' $. \textit{(c)} Denote  the final asymptotic variance matrix  with $\Gamma$, and its $(i,j)$ block components with $\Gamma_{i,j}$. Then,
\begin{itemize}
\item $\Gamma_{1,1}= \Sigma^{1/2'} P_{\Sigma^{1/2}R'}^{\perp}\Sigma^{-1/2'}M^{-1}V(M')^{-1}\Sigma^{-1/2} P_{\Sigma^{1/2}R'}^{\perp}\Sigma^{1/2} \\= \Sigma^{1/2'} P_{\Sigma^{1/2}R'}^{\perp}\Sigma^{-1/2'}\Sigma\Sigma^{-1/2} P_{\Sigma^{1/2}R'}^{\perp}\Sigma^{1/2}=(\Sigma^{1/2})'P^{\perp}_{\Sigma^{1/2}R'}\Sigma^{1/2}$ because $M^{-1}V(M')^{-1}=:\Sigma=\Sigma^{1/2'}\Sigma^{1/2} $, $\Sigma^{-1/2}:=(\Sigma^{1/2})^{-1}$, $\Sigma^{-1/2'}:=(\Sigma^{1/2'})^{-1}$, and $P^{\perp}_{\Sigma^{1/2}R'}P^{\perp}_{\Sigma^{1/2}R'}=P^{\perp}_{\Sigma^{1/2}R'} $ by idempotence of projections on linear spaces;

\item  $\Gamma_{2,2}=(M')^{-1}\Sigma^{-1/2}P_{\Sigma^{1/2}R'}\Sigma^{-1/2'}M^{-1}V(M')^{-1}\Sigma^{-1/2}P_{\Sigma^{1/2}R'}\Sigma^{-1/2'}M^{-1}=\\(M')^{-1}\Sigma^{-1/2}P_{\Sigma^{1/2}R'}\Sigma^{-1/2'}\Sigma\Sigma^{-1/2}P_{\Sigma^{1/2}R'}\Sigma^{-1/2'}M^{-1} \\ =(M')^{-1}\Sigma^{-1/2}P_{\Sigma^{1/2}R'}P_{\Sigma^{1/2}R'}\Sigma^{-1/2'}M^{-1}=(M')^{-1}\Sigma^{-1/2}P_{\Sigma^{1/2}R'}\Sigma^{-1/2'}M^{-1} \\ = (M')^{-1}[V^{1/2}(M')^{-1}]^{-1}P_{\Sigma^{1/2}R'}[M^{-1}V^{1/2'}]^{-1}M^{-1}\\=(V^{1/2})^{-1}P_{\Sigma^{1/2}R'}(V^{1/2'})^{-1} $ because $M^{-1}V(M')^{-1}=:\Sigma=\Sigma^{1/2'}\Sigma^{1/2} $, $\Sigma^{-1/2}:=(\Sigma^{1/2})^{-1}=[V^{1/2}(M')^{-1}]^{-1}=M'V^{-1/2}$, $\Sigma^{-1/2'}:=(\Sigma^{1/2'})^{-1}=[M^{-1}V^{1/2'}]^{-1}=V^{-1/2'}M$, and $P^{}_{\Sigma^{1/2}R'}P^{}_{\Sigma^{1/2}R'}=P^{\perp}_{\Sigma^{1/2}R'} $ by idempotence;
\item $\Gamma_{3,3}=(R\Sigma R')^{-1}RM^{-1}V(M')^{-1}R'(R\Sigma R')^{-1}=(R\Sigma R')^{-1}R\Sigma R'(R\Sigma R')^{-1}=(R\Sigma R')^{-1}$ because $M^{-1}V(M')^{-1}=:\Sigma$;

\item $\Gamma_{1,2}=\Sigma^{1/2'} P_{\Sigma^{1/2}R'}^{\perp}\Sigma^{-1/2'}M^{-1}V(M')^{-1}\Sigma^{-1/2}P_{\Sigma^{1/2}R'}\Sigma^{-1/2'}M^{-1}=\\\Sigma^{1/2'} P_{\Sigma^{1/2}R'}^{\perp}\Sigma^{-1/2'}\Sigma \Sigma^{-1/2}P_{\Sigma^{1/2}R'}\Sigma^{-1/2'}M^{-1}=\Sigma^{1/2'} P_{\Sigma^{1/2}R'}^{\perp}P_{\Sigma^{1/2}R'}\Sigma^{-1/2'}M^{-1}=0_{}$ because $M^{-1}V(M')^{-1}=:\Sigma=\Sigma^{1/2'}\Sigma^{1/2} $, $\Sigma^{-1/2}:=(\Sigma^{1/2})^{-1}$, $\Sigma^{-1/2'}:=(\Sigma^{1/2'})^{-1}$, and $P^{\perp}_{\Sigma^{1/2}R'}P^{}_{\Sigma^{1/2}R'}=0_{m \times m} $;
\item $\Gamma_{1,3}=- \Sigma^{1/2'} P_{\Sigma^{1/2}R'}^{\perp}\Sigma^{-1/2'}M^{-1}V(M')^{-1}R'(R\Sigma R')^{-1} \\= - \Sigma^{1/2'} P_{\Sigma^{1/2}R'}^{\perp}\Sigma^{-1/2'}\Sigma R'(R\Sigma R')^{-1}=- \Sigma^{1/2'} P_{\Sigma^{1/2}R'}^{\perp}\Sigma^{1/2} R'(R\Sigma R')^{-1}=0$ because  $M^{-1}V(M')^{-1}=:\Sigma=\Sigma^{1/2'}\Sigma^{1/2} $,  $\Sigma^{-1/2'}:=(\Sigma^{1/2'})^{-1}$, and $P_{\Sigma^{1/2}R'}^{\perp}\Sigma^{1/2} R'=0_{m \times q}$;
\item $\Gamma_{2,3}=-(M')^{-1}\Sigma^{-1/2}P_{\Sigma^{1/2}R'}\Sigma^{-1/2'}M^{-1}V(M')^{-1}R'(R\Sigma R')^{-1}\\=-(M')^{-1}\Sigma^{-1/2}P_{\Sigma^{1/2}R'}\Sigma^{-1/2'}\Sigma R'(R\Sigma R')^{-1}\\=-(M')^{-1}\Sigma^{-1/2}[\Sigma^{1/2}R'(R\Sigma R')^{-1}R\Sigma^{1/2'}]\Sigma^{-1/2'}\Sigma R'(R\Sigma R')^{-1}\\=-(M')^{-1}[R'(R\Sigma R')^{-1}R]\Sigma R'(R\Sigma R')^{-1}=-(M')^{-1}R'(R\Sigma R')^{-1}$ 

\noindent
because $M^{-1}V(M')^{-1}=:\Sigma$ and $P_{\Sigma^{1/2}R'}=[\Sigma^{1/2}R'(R\Sigma R')^{-1}R\Sigma^{1/2'}]$.
\end{itemize}
\end{proof}

\begin{lem}\label{Lem:PartialLAndSAndR} Using the notation of Proposition \ref{Prop:ConstrainedEstAsNormality} (p. \pageref{Prop:ConstrainedEstAsNormality}), under Assumptions \ref{Assp:ExistenceConsistency},  \ref{Assp:AsymptoticNormality} and \ref{Assp:Trinity},  \begin{enumerate}
\item[(i)]for any sequence $(\theta_T, \tau_T)_{T \in \N}$ converging to $(\theta_0, \tau(\theta_0))$, $\P$-a.s. as $T \rightarrow \infty$, \\
$\begin{bmatrix}\frac{\partial^2 { L}_T(\bar{\theta}_T, \bar{\tau}_T) }{\partial \theta' \partial \theta} & \frac{\partial^2 { L}_T(\bar{\theta}_T, \bar{\tau}_T) }{\partial \tau' \partial \theta} & \frac{\partial r(\bar{\theta}_T)'}{\partial \theta} \\
\frac{\partial  S_T(\bar{\theta}_T, \bar{\tau}_T) }{\partial \theta'} & \frac{ \partial S_T(\bar{\theta}_T, \bar{\tau}_T) }{\partial \tau'} & 0 \\
\frac{\partial r(\bar{\theta}_T) }{\partial \theta'} & 0 & 0 \\
\end{bmatrix}
\rightarrow \begin{bmatrix}0_{m \times m} & \E\left[  \frac{\partial \psi(X_1,\theta_{0})}{\partial \theta' }\right]' & \frac{\partial r(\theta_0)'}{\partial \theta} \\
\E\left[  \frac{\partial \psi(X_1,\theta_{0})}{\partial \theta' }\right] & \E\left[\psi(X_1,\theta_{0}) \psi(X_1,\theta_{0})' \right] & 0 \\
\frac{\partial r(\theta_0) }{\partial \theta'} & 0 & 0 \\
\end{bmatrix}
$
;
\item[(ii)] $\begin{bmatrix}0_{m \times m} & \E\left[  \frac{\partial \psi(X_1,\theta_{0})}{\partial \theta' }\right]' & \frac{\partial r(\theta_0)'}{\partial \theta} \\
\E\left[  \frac{\partial \psi(X_1,\theta_{0})}{\partial \theta' }\right] & \E\left[\psi(X_1,\theta_{0}) \psi(X_1,\theta_{0})' \right] & 0 \\
\frac{\partial r(\theta_0) }{\partial \theta'} & 0 & 0 \\
\end{bmatrix}$ is invertible,  so that, for any sequence $(\theta_T, \tau_T)_{T \in \N}$ converging to $(\theta_0, \tau(\theta_0))$, $\P$-a.s., for $T$ big enough, the matrix 

\noindent
 $\begin{bmatrix}\frac{\partial^2 { L}_T(\bar{\theta}_T, \bar{\tau}_T) }{\partial \theta' \partial \theta} & \frac{\partial^2 { L}_T(\bar{\theta}_T, \bar{\tau}_T) }{\partial \tau' \partial \theta} & \frac{\partial r(\bar{\theta}_T)'}{\partial \theta} \\
\frac{\partial  S_T(\bar{\theta}_T, \bar{\tau}_T) }{\partial \theta'} & \frac{ \partial S_T(\bar{\theta}_T, \bar{\tau}_T) }{\partial \tau'} & 0 \\
\frac{\partial r(\bar{\theta}_T) }{\partial \theta'} & 0 & 0 \\
\end{bmatrix}$ is invertible; and
\item[(iii)] for any sequence $(\theta_T, \tau_T)_{T \in \N}$ converging to $(\theta_0, \tau(\theta_0))$, $\P$-a.s. as $T \rightarrow \infty$,\\  $\begin{bmatrix}\frac{\partial^2 { L}_T(\bar{\theta}_T, \bar{\tau}_T) }{\partial \theta' \partial \theta} & \frac{\partial^2 { L}_T(\bar{\theta}_T, \bar{\tau}_T) }{\partial \tau' \partial \theta} & \frac{\partial r(\bar{\theta}_T)'}{\partial \theta} \\
\frac{\partial  S_T(\bar{\theta}_T, \bar{\tau}_T) }{\partial \theta'} & \frac{ \partial S_T(\bar{\theta}_T, \bar{\tau}_T) }{\partial \tau'} & 0 \\
\frac{\partial r(\bar{\theta}_T) }{\partial \theta'} & 0 & 0 \\
\end{bmatrix}^{-1}\\
\rightarrow \begin{bmatrix}- \Sigma+\Sigma R'(R\Sigma R')^{-1}R\Sigma & \quad M^{-1}\negthickspace-\negthickspace\Sigma R'(R\Sigma R')^{-1}RM^{-1} & \Sigma R'(R\Sigma R')^{-1} \\
(M')^{-1}\negthickspace-\negthickspace(M')^{-1}R'(R\Sigma R')^{-1}R\Sigma & (M')^{-1}R'(R\Sigma R')^{-1}RM^{-1} & \; -(M')^{-1}R'(R\Sigma R')^{-1} \\
(R\Sigma R')^{-1} R\Sigma& -(R\Sigma R')^{-1}RM^{-1} & (R\Sigma R')^{-1} \\
\end{bmatrix}$.

\end{enumerate}
\end{lem}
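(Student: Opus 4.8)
The plan is to treat this as the constrained (bordered-matrix) analogue of Lemma~\ref{Lem:PartialLAndS}. Writing $M:=\E[\partial \psi(X_1,\theta_0)/\partial \theta']$, $V:=\E[\psi(X_1,\theta_0)\psi(X_1,\theta_0)']$, $R:=\partial r(\theta_0)/\partial \theta'$ and $\Sigma:=M^{-1}V(M')^{-1}$, the limiting matrix has the bordered form $\left[\begin{smallmatrix} W & B \\ B' & 0\end{smallmatrix}\right]$ with $W:=\left[\begin{smallmatrix} 0 & M' \\ M & V\end{smallmatrix}\right]$ and $B:=\left[\begin{smallmatrix} R' \\ 0\end{smallmatrix}\right]$. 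For statement (i), I would establish the entrywise convergence block by block: the $(1,1)$ block $\partial^2 L_T/\partial\theta'\partial\theta\to 0_{m\times m}$ and the $(1,2)$ block $\partial^2 L_T/\partial\tau'\partial\theta\to M'$ follow from Lemma~\ref{Lem:PartialLTheta0Tau0}(ii)--(iii); the $(2,1)$ and $(2,2)$ blocks $\partial S_T/\partial\theta'\to M$ and $\partial S_T/\partial\tau'\to V$ follow from Lemma~\ref{Lem:ETEquationFirstDerivatives}(i)--(ii) evaluated at the limit $\tau(\theta_0)=0_{m \times 1}$ (where the $\tau'\,\partial\psi/\partial\theta'$ term in Lemma~\ref{Lem:ETEquationFirstDerivatives}(i) vanishes); and the border blocks $\partial r/\partial\theta'\to R$ and $\partial r'/\partial\theta\to R'$ follow from continuity of $R(\cdot)$ under Assumption~\ref{Assp:Trinity}(a), using that the intermediate sequence $(\bar\theta_T,\bar\tau_T)$ converges to $(\theta_0,\tau(\theta_0))$.

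For statement (ii) the key observation is that $W$ is the matrix already shown invertible in Lemma~\ref{Lem:PartialLAndS}(ii), with known inverse $W^{-1}=\left[\begin{smallmatrix} -\Sigma & M^{-1} \\ (M')^{-1} & 0\end{smallmatrix}\right]$ from Lemma~\ref{Lem:PartialLAndS}(iii). I would then invoke the standard Schur-complement criterion: the bordered matrix is invertible iff its Schur complement $0-B'W^{-1}B$ is invertible. A short computation gives $B'W^{-1}B=\left[\begin{smallmatrix} R & 0\end{smallmatrix}\right]W^{-1}\left[\begin{smallmatrix} R' \\ 0\end{smallmatrix}\right]=-R\Sigma R'$, so the Schur complement equals $R\Sigma R'$, which is positive definite---hence invertible---because $R$ has full row rank $q$ (Assumption~\ref{Assp:Trinity}(b)) and $\Sigma$ is positive definite (Assumption~\ref{Assp:ExistenceConsistency}(h)). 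Thus the limiting matrix is invertible, and invertibility of the finite-sample matrix for $T$ big enough then follows from the convergence in (i) together with Lemma~\ref{Lem:UniFiniteSampleInvertibilityFromUniAsInvertibility}.

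For statement (iii), I would apply the inverse formula for partitioned matrices \citep[e.g.,][Chap.~1 Sec.~11]{1988MagnusNeudecker} to the bordered matrix, using $W^{-1}$ above and the Schur complement $S:=R\Sigma R'$ with $S^{-1}=(R\Sigma R')^{-1}$. The diagonal blocks are $W^{-1}+W^{-1}BS^{-1}B'W^{-1}$ and $S^{-1}$, and the off-diagonal blocks are $-W^{-1}BS^{-1}$ and $-S^{-1}B'W^{-1}$; substituting $W^{-1}B=\left[\begin{smallmatrix}-\Sigma R' \\ (M')^{-1}R'\end{smallmatrix}\right]$ and $B'W^{-1}=\left[\begin{smallmatrix}-R\Sigma & RM^{-1}\end{smallmatrix}\right]$ and multiplying out reproduces, block by block, exactly the nine entries displayed in the claim. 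The convergence of the finite-sample inverse to this limit then follows from the continuity of matrix inversion on the invertible matrices \citep[e.g.,][Theorem 9.8]{1953Rudin}, exactly as in Lemma~\ref{Lem:PartialLAndS}(iii).

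The only genuinely delicate point is the bookkeeping in the block multiplications of (iii): one must keep straight that $R$ is $q\times m$ while $R'$ is $m\times q$, and that the $(2m)\times(2m)$, $(2m)\times q$, $q\times(2m)$ and $q\times q$ sub-blocks assemble correctly. Because $W^{-1}$ is already available in closed form from Lemma~\ref{Lem:PartialLAndS}, this reduces to a finite sequence of matrix products with no analytic content, so I do not expect any conceptual obstacle beyond the algebra.
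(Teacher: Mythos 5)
Your proof is correct and follows essentially the same route as the paper: blockwise convergence via Lemmas \ref{Lem:PartialLTheta0Tau0} and \ref{Lem:ETEquationFirstDerivatives} (with $\tau(\theta_0)=0_{m\times 1}$) plus continuity of $R(\cdot)$ for (i), the Schur-complement criterion for the bordered matrix for (ii), and the partitioned-inverse formula together with continuity of matrix inversion and Lemma \ref{Lem:UniFiniteSampleInvertibilityFromUniAsInvertibility} for (iii). The paper merely packages the Schur-complement step as Corollary \ref{Cor:For2x2BlockMatrix} and the block products as Lemma \ref{Lem:MatrixInversionCalculus}, and your sign conventions with $S:=R\Sigma R'=-B'W^{-1}B$ reproduce exactly the displayed nine blocks.
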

\begin{proof}\textit{(i)} Under Assumptions \ref{Assp:ExistenceConsistency},  \ref{Assp:AsymptoticNormality} and \ref{Assp:Trinity}(a), it follows from  the continuity of $ \frac{\partial r(.)}{\partial \theta'}$, which is implied by Assumption \ref{Assp:Trinity}(a), and Lemma \ref{Lem:PartialLTheta0Tau0}ii and iii (p. \pageref{Lem:PartialLTheta0Tau0}) and Lemma \ref{Lem:ETEquationFirstDerivatives} (p. \pageref{Lem:ETEquationFirstDerivatives}), given that $\tau(\theta_0)=0_{m \times 1}$ by  Lemma \ref{Lem:AsTiltingFct}ii (p. \pageref{Lem:AsTiltingFct}) and Assumption \ref{Assp:ExistenceConsistency}(c), under  Assumption \ref{Assp:ExistenceConsistency}(a)(b)(d)(e)(g) and (h).

\textit{(ii)} It is sufficient to check the assumptions of Corollary \ref{Cor:For2x2BlockMatrix}i (p. \pageref{Cor:For2x2BlockMatrix}) with $A=\left[ \begin{array}{c c } 0_{m \times m}
                         & M' \\
                         M   &\ V \end{array} \right] $ and $B= \begin{bmatrix}R' \\
0_{m \times q} \\
\end{bmatrix} $  in order to establish the first part of the statement. Firstly, under Assumptions \ref{Assp:ExistenceConsistency} and \ref{Assp:AsymptoticNormality}, by Lemma \ref{Lem:PartialLAndS}iii (p. \pageref{Lem:PartialLAndS}), $A=\left[ \begin{array}{c c } 0_{m \times m}
                         & M' \\
                         M   &\ V \end{array} \right]$ is invertible. Secondly, by  Assumptions \ref{Assp:ExistenceConsistency}(h) and \ref{Assp:Trinity}(b), $(B'A^{-1}B)=-(R \Sigma R') $  is also invertible.   Then, the second
part of the statement follows from a trivial case of the Lemma \ref{Lem:UniFiniteSampleInvertibilityFromUniAsInvertibility} (p. \pageref{Lem:UniFiniteSampleInvertibilityFromUniAsInvertibility}).\\
\textit{(iii)} Under Assumption \ref{Assp:ExistenceConsistency}(a)(b)(c)(d)(e)(g)(h), by the statement (ii) of the present lemma, the limiting matrix is invertible. Thus, using the notation of Proposition \ref{Prop:ConstrainedEstAsNormality} (p. \pageref{Prop:ConstrainedEstAsNormality}),\begin{eqnarray*}
& &\ \begin{bmatrix}0_{m \times m} & \E\left[  \frac{\partial \psi(X_1,\theta_{0})}{\partial \theta' }\right]' & \frac{\partial r(\theta_0)'}{\partial \theta} \\
\E\left[  \frac{\partial \psi(X_1,\theta_{0})}{\partial \theta' }\right] & \E\left[\psi(X_1,\theta_{0}) \psi(X_1,\theta_{0})' \right] & 0 \\
\frac{\partial r(\theta_0) }{\partial \theta'} & 0 & 0 \\
\end{bmatrix}^{-1}\\
& = &
\begin{bmatrix}0_{m \times m} & M' & R' \\
M & V & 0_{m \times q} \\
R & 0_{q \times m} & 0_{q \times q} \\
\end{bmatrix}^{-1}\\
& \stackrel{}{=} & \begin{bmatrix}- \Sigma+\Sigma R'(R\Sigma R')^{-1}R\Sigma & \quad M^{-1}-\Sigma R'(R\Sigma R')^{-1}RM^{-1} & \Sigma R'(R\Sigma R')^{-1} \\
(M')^{-1}-(M')^{-1}R'(R\Sigma R')^{-1}R\Sigma & (M')^{-1}R'(R\Sigma R')^{-1}RM^{-1} & \quad -(M')^{-1}R'(R\Sigma R')^{-1} \\
(R\Sigma R')^{-1} R\Sigma& -(R\Sigma R')^{-1}RM^{-1} & (R\Sigma R')^{-1} \\
\end{bmatrix}\\
\end{eqnarray*}
where the explanation for the last equality is as follows. Apply Corollary \ref{Cor:For2x2BlockMatrix}ii (p. \pageref{Cor:For2x2BlockMatrix}) with $A= \begin{bmatrix}0_{m \times m} & M'  \\
M & V
\end{bmatrix}$ and $B= \begin{bmatrix}R' \\
0_{m \times q} \\
\end{bmatrix} $, and note that, by Lemma \ref{Lem:MatrixInversionCalculus}iii, iv and vi (p. \pageref{Lem:MatrixInversionCalculus}),
\begin{eqnarray*}
 A^{-1}-A^{-1}B(B'A^{-1}B)B'A^{-1} & = &
 \begin{bmatrix}- \Sigma+\Sigma R'(R\Sigma R')^{-1}R\Sigma & \quad M^{-1}-\Sigma R'(R\Sigma R')^{-1}RM^{-1} \\
(M')^{-1}-(M')^{-1}R'(R\Sigma R')^{-1}R\Sigma &\quad  (M')^{-1}R'(R\Sigma R')^{-1}RM^{-1} \\
\end{bmatrix} \\
 A^{-1} B(B'A^{-1}B)^{-1} & = & \begin{bmatrix}\Sigma R'(R\Sigma R')^{-1} \\
-(M')^{-1}R'(R\Sigma R')^{-1} \\
\end{bmatrix}\\
(B'A^{-1}B)^{-1}& =& -(R\Sigma R')^{-1}.
\end{eqnarray*}

Then, the result follows from the continuity of the inverse transformation  \citep[e.g.,][Theorem 9.8]{1953Rudin}.
\end{proof}

\begin{lem}\label{Lem:MatrixInversionCalculus} Let $A  =  \begin{bmatrix}0_{m \times m} & M'  \\
M & V
\end{bmatrix} $ and $B=\begin{bmatrix}R' \\
0_{m \times q} \\
\end{bmatrix} $ where
$\Sigma:=\Sigma(\theta_0):= M^{-1}V(M')^{-1}$, $M:=\E \left[\frac{\partial \psi(X_1, \theta_0)}{\partial \theta'}\right]$, $V:= \E[ \psi(X_1, \theta_0)\psi(X_1, \theta_0)']$, and $R=\frac{\partial r(\theta_0)}{\partial \theta'}$. Then, under Assumption \ref{Assp:ExistenceConsistency}(a)(b)(h) and \ref{Assp:Trinity}(b), the following equalities hold
\begin{enumerate}
\item[(i)] $\displaystyle A^{-1}=\begin{bmatrix}- \Sigma & M^{-1} \\
(M')^{-1} & 0_{m \times m} \\
\end{bmatrix}$;
\item[(ii)] $\displaystyle A^{-1}B = \begin{bmatrix}-\Sigma R' \\
(M')^{-1}R' \\
\end{bmatrix}$, so that $B'A^{-1}=\begin{bmatrix}-R\Sigma  & \quad RM^{-1} \\
\end{bmatrix}$;

\item[(iii)] $\displaystyle (B'A^{-1}B)^{-1}=-(R\Sigma R')^{-1}$;
\item[(iv)] $\displaystyle A^{-1} B(B'A^{-1}B)^{-1}= \begin{bmatrix}\Sigma R'(R\Sigma R')^{-1} \\
-(M')^{-1}R'(R\Sigma R')^{-1} \\
\end{bmatrix}$;

\item[(v)] $\displaystyle A^{-1}B(B'A^{-1}B)B'A^{-1}= \begin{bmatrix}-\Sigma R'(R\Sigma R')^{-1}R\Sigma & \qquad\Sigma R'(R\Sigma R')^{-1}RM^{-1} \\
(M')^{-1}R'(R\Sigma R')^{-1}R\Sigma &\quad  -(M')^{-1}R'(R\Sigma R')^{-1}RM^{-1} \\
\end{bmatrix}$; and
\item[(vi)] $\displaystyle A^{-1}-A^{-1}B(B'A^{-1}B)B'A^{-1} \\=  \begin{bmatrix}- \Sigma+\Sigma R'(R\Sigma R')^{-1}R\Sigma & \quad M^{-1}-\Sigma R'(R\Sigma R')^{-1}RM^{-1} \\
(M')^{-1}-(M')^{-1}R'(R\Sigma R')^{-1}R\Sigma &\quad  (M')^{-1}R'(R\Sigma R')^{-1}RM^{-1} \\
\end{bmatrix}$.

\end{enumerate}
\end{lem}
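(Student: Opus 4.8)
The plan is to prove (i) directly and then obtain (ii)--(vi) as successive consequences, so that the whole lemma reduces to block-matrix bookkeeping once the single substantive cancellation coming from the definition $\Sigma := M^{-1}V(M')^{-1}$ is in hand. Throughout, $M$ and $V$ are invertible by Assumption \ref{Assp:ExistenceConsistency}(h), so $\Sigma$, $M^{-1}$ and $(M')^{-1}$ are well defined, and $R$ has full row rank by Assumption \ref{Assp:Trinity}(b); these facts supply every invertibility the proof needs.

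First I would verify (i) by checking that $A$ times the claimed inverse equals the identity. Multiplying $\begin{bmatrix} 0 & M' \\ M & V \end{bmatrix}$ by $\begin{bmatrix} -\Sigma & M^{-1} \\ (M')^{-1} & 0 \end{bmatrix}$ block by block, the two diagonal blocks give $M'(M')^{-1}=I$ and $MM^{-1}=I$, the top-right block vanishes, and the bottom-left block equals $-M\Sigma + V(M')^{-1}$, which is zero precisely because $M\Sigma = V(M')^{-1}$ by the definition of $\Sigma$. This is the only real computation in the lemma. Statement (ii) then follows by multiplying $A^{-1}$ on the right by $B = \begin{bmatrix} R' \\ 0 \end{bmatrix}$; the companion identity $B'A^{-1} = \begin{bmatrix} -R\Sigma & RM^{-1} \end{bmatrix}$ follows either by transposing $A^{-1}B$ (using that $A^{-1}$ is symmetric, since $\Sigma'=\Sigma$ and $((M')^{-1})'=M^{-1}$) or by a direct block product. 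For (iii), $B'A^{-1}B = R(-\Sigma R') = -R\Sigma R'$, whose inverse is $-(R\Sigma R')^{-1}$, and $R\Sigma R'$ is invertible because $R$ is full rank and $\Sigma$ is positive definite.

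The remaining identities are pure substitution: (iv) is the product of (ii) and (iii); (v) is obtained by multiplying (iv) on the right by the expression for $B'A^{-1}$ from (ii), which reproduces the four stated blocks; and (vi) is $A^{-1}$ minus (v), carried out blockwise. I would flag one clerical point, which is also where the argument is easiest to slip: the middle factor written $(B'A^{-1}B)$ in (v)--(vi) must be read as its inverse $(B'A^{-1}B)^{-1} = -(R\Sigma R')^{-1}$ for the stated right-hand sides to hold, consistent with the Schur-complement inversion invoked in Lemma \ref{Lem:PartialLAndSAndR}. The main ``obstacle'' is therefore purely a matter of sign-tracking: propagating the leading minus of $-(R\Sigma R')^{-1}$ through the products so that, for instance, the bottom-right block of (v) carries a minus sign while the corresponding block of (vi) does not.
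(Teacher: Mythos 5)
Your proof is correct and follows essentially the same route as the paper's: after (i)---which you verify by direct multiplication using the single cancellation $M\Sigma = V(M')^{-1}$, where the paper instead cites Lemma \ref{Lem:PartialLAndS}iii and the standard partitioned-inverse formula---your (ii)--(vi) are exactly the block computations the paper performs, with the same invertibility inputs ($M$, $V$ via Assumption \ref{Assp:ExistenceConsistency}(h), $R$ full rank via Assumption \ref{Assp:Trinity}(b)). Your clerical flag is also right: the middle factor written $(B'A^{-1}B)$ in (v)--(vi) must be read as $(B'A^{-1}B)^{-1}=-(R\Sigma R')^{-1}$, which is consistent with the paper's own proof of (v), where the output of (iv) is multiplied by $B'A^{-1}$, and with Corollary \ref{Cor:For2x2BlockMatrix}.
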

\begin{proof} 
\textit{(i)} It corresponds to a part of Lemma \ref{Lem:PartialLAndS}iii (p. \pageref{Lem:PartialLAndS}) under Assumptions \ref{Assp:ExistenceConsistency} and \ref{Assp:AsymptoticNormality}.

\textit{(ii)}
\begin{eqnarray*}
\begin{bmatrix}- \Sigma & M^{-1} \\
(M')^{-1} & \qquad0_{m \times m} \\
\end{bmatrix} 
 \begin{bmatrix}R' \\
0_{m \times q} \\
\end{bmatrix}
= \begin{bmatrix}-\Sigma R' \\
(M')^{-1}R' \\
\end{bmatrix}
&=A^{-1}B \\
\end{eqnarray*}

\textit{(iii)}
\begin{eqnarray*}
\begin{bmatrix}R & 0_{q \times m} \\
\end{bmatrix}
 \begin{bmatrix}-\Sigma R' \\
(M')^{-1}R' \\
\end{bmatrix}
=
\begin{bmatrix} -R\Sigma R'
\end{bmatrix} & =B'A^{-1}B \\
\end{eqnarray*}

\textit{(iv)}
\begin{eqnarray*}
\begin{bmatrix}-\Sigma R' \\
(M')^{-1}R' \\
\end{bmatrix} 
 \begin{bmatrix} -(R\Sigma R')^{-1}
\end{bmatrix} 
=
\begin{bmatrix}\Sigma R'(R\Sigma R')^{-1} \\
-(M')^{-1}R'(R\Sigma R')^{-1} \\
\end{bmatrix}  & =A^{-1} B(B'A^{-1}B)^{-1}\\
\end{eqnarray*}

\textit{(v)}
\begin{eqnarray*}
& &\begin{bmatrix}\Sigma R'(R\Sigma R')^{-1} \\
-(M')^{-1}R'(R\Sigma R')^{-1} \\
\end{bmatrix}  
 \begin{bmatrix}-R\Sigma & \qquad RM^{-1} \\
\end{bmatrix} 
\\
& = &
 \begin{bmatrix}-\Sigma R'(R\Sigma R')^{-1}R\Sigma & \qquad\Sigma R'(R\Sigma R')^{-1}RM^{-1} \\
(M')^{-1}R'(R\Sigma R')^{-1}R\Sigma &\quad  -(M')^{-1}R'(R\Sigma R')^{-1}RM^{-1} \end{bmatrix}  =A^{-1}B(B'A^{-1}B)B'A^{-1} 
\end{eqnarray*}

\end{proof}
\begin{lem}[Constrained estimator and its Lagrangian]\label{Lem:ConstrainedEstLagrangian} Under Assumptions \ref{Assp:ExistenceConsistency}, \ref{Assp:AsymptoticNormality}
and \ref{Assp:Trinity}(a), if the test hypothesis \eqref{Eq:HypParameterRestriction} on p. \pageref{Eq:HypParameterRestriction} holds, $\P$-a.s.  for $T$ big enough, \begin{enumerate}
\item[(i)]  the constrained estimator $\check{\theta}_T $ exists, and $\check{\theta}_T \rightarrow \theta_0$, as $T \rightarrow \infty$;

\item[(ii)]  $\theta \mapsto L_T(\theta, \tau_T(\theta))$ is continuously differentiable in a neighborhood of  $\check{\theta}_T$;

\item[(iii)] under additional Assumption \ref{Assp:Trinity}(b), there exists a unique vector, $\check{\gamma}_T$, called the Lagrangian multiplier, s.t.  $\left. \frac{\partial L_T(\theta, \tau_T(\theta))}{\partial \theta} \right \vert_{\theta= \check{\theta}_T}+ \frac{\partial r(\check{\theta}_T)'}{\partial \theta}\check{\gamma}_T=0_{m \times 1}$; 
\item[(iv)] under additional Assumption \ref{Assp:Trinity}(b),   $ \frac{\partial L_T(\check{\theta}_T, \tau_T(\check{\theta}_T))}{\partial \theta} + \frac{\partial r(\check{\theta}_T)'}{\partial \theta}\check{\gamma}_T= O(T^{-1})$, as $T\rightarrow \infty$, where $\frac{\partial L_T(\check{\theta}_T, \tau_T(\check{\theta}_T))}{\partial \theta}:= \left. \frac{\partial L_T(\theta, \tau)}{\partial \theta} \right \vert_{(\theta, \tau)= (\check{\theta}_T, \tau_T(\check{\theta}_T))}$.
\end{enumerate}
\end{lem}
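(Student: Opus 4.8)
The plan is to handle the four statements in sequence, reusing the machinery developed for the unconstrained estimator. For (i), the first step is to note that, by Assumption \ref{Assp:Trinity}(a), $r$ is continuous, so the constrained parameter space $\check{\Theta}:=\{\theta\in\T:r(\theta)=0_{q\times 1}\}$ is a closed subset of the compact set $\T$ (Assumption \ref{Assp:ExistenceConsistency}(d)), hence itself compact. By Lemma \ref{Lem:ESPExistence}, $\theta\mapsto\hat{f}_{\theta^{*}_T}(\theta)$ is $\P$-a.s.\ continuous for $T$ big enough, so it attains its maximum over $\check{\Theta}$, which gives existence of $\check{\theta}_T$; a measurable selection is obtained from the Schmetterer--Jennrich lemma exactly as in Lemma \ref{Lem:ESPExistence}(iv). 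For consistency, I would replicate the Wald-type argument from the proof of Theorem \ref{theorem:ConsistencyAsymptoticNormality}(i): the standardized LogESP converges uniformly to $\theta\mapsto\ln\E[\e^{\tau(\theta)'\psi(X_1,\theta)}]$ (equation \eqref{Eq:UniformCVOfESPObjFct}), which by Lemma \ref{Lem:AsTiltingFct}(iv) is uniquely maximized at $\theta_0$. Since the test hypothesis \eqref{Eq:HypParameterRestriction} gives $r(\theta_0)=0_{q\times 1}$, i.e.\ $\theta_0\in\check{\Theta}$, the same limit function restricted to the compact set $\check{\Theta}$ remains uniquely maximized at $\theta_0$, so the standard consistency theorem applied on $\check{\Theta}$ yields $\check{\theta}_T\rightarrow\theta_0$.

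Statement (ii) then follows from the chain rule. By (i) we have $\check{\theta}_T\rightarrow\theta_0$, and by Lemma \ref{Lem:Schennachtheorem10PfFirstSteps}(iii) also $\tau_T(\check{\theta}_T)\rightarrow\tau(\theta_0)$, so $\P$-a.s.\ for $T$ big enough $\check{\theta}_T$ lies in the neighborhood of $\theta_0$ on which $L_T(\cdot,\cdot)$ is twice continuously differentiable (subsection \ref{Sec:LTAndDerivatives}). Because $\tau_T(\cdot)$ is continuously differentiable in $\T$ (Lemma \ref{Lem:DerivativeImplicitFunctionImplicit}(i)), the composition $\theta\mapsto L_T(\theta,\tau_T(\theta))$ is continuously differentiable in a neighborhood of $\check{\theta}_T$.

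For (iii), the plan is to invoke the Lagrange multiplier theorem for equality-constrained maximization. The key preliminary is interiority: since $\theta_0\in\intr(\T)$ (Assumption \ref{Assp:ExistenceConsistency}(c)) and $\check{\theta}_T\rightarrow\theta_0$, $\P$-a.s.\ for $T$ big enough $\check{\theta}_T\in\intr(\T)$, so the only active constraint is the equality $r(\theta)=0_{q\times 1}$. The required constraint qualification is that $R(\check{\theta}_T)$ has full row rank $q$: by Assumption \ref{Assp:Trinity}(b) $R(\theta_0)$ is full rank, and by continuity of $R(\cdot)$ (Assumption \ref{Assp:Trinity}(a)) together with $\check{\theta}_T\rightarrow\theta_0$, $R(\check{\theta}_T)$ is full rank for $T$ big enough. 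Full rank of $R(\check{\theta}_T)$ delivers both the existence of $\check{\gamma}_T$ satisfying the first-order condition $\left.\partial L_T(\theta,\tau_T(\theta))/\partial\theta\right|_{\theta=\check{\theta}_T}+R(\check{\theta}_T)'\check{\gamma}_T=0_{m\times 1}$ and its uniqueness (the columns of $R(\check{\theta}_T)'$ being linearly independent).

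Finally, (iv) is the constrained analogue of Lemma \ref{Lem:ApproximateFOC} and follows by relating the total derivative in (iii) to the partial derivative $\partial L_T(\check{\theta}_T,\tau_T(\check{\theta}_T))/\partial\theta$. By the chain rule, the total derivative $\left.\partial L_T(\theta,\tau_T(\theta))/\partial\theta\right|_{\theta=\check{\theta}_T}$ equals $\partial L_T(\check{\theta}_T,\tau_T(\check{\theta}_T))/\partial\theta$ plus a term built from the product $\left[\partial L_T(\check{\theta}_T,\tau_T(\check{\theta}_T))/\partial\tau'\right]\left[\partial\tau_T(\check{\theta}_T)/\partial\theta'\right]$. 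Because $(\check{\theta}_T,\tau_T(\check{\theta}_T))\rightarrow(\theta_0,\tau(\theta_0))$, Lemma \ref{Lem:DLDTau}(iv) gives $\partial L_T(\check{\theta}_T,\tau_T(\check{\theta}_T))/\partial\tau'=O(T^{-1})$ and Lemma \ref{Lem:DerivativeImplicitFunctionImplicit}(iii) gives $\partial\tau_T(\check{\theta}_T)/\partial\theta'=O(1)$, so this correction term is $O(T^{-1})$. Substituting the first-order condition from (iii) for the total derivative then yields $\partial L_T(\check{\theta}_T,\tau_T(\check{\theta}_T))/\partial\theta+R(\check{\theta}_T)'\check{\gamma}_T=O(T^{-1})$. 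The main obstacle is not a single hard estimate but ensuring that every neighborhood-of-$(\theta_0,\tau(\theta_0))$ tool --- twice continuous differentiability, the chain rule for $\tau_T(\cdot)$, and the $O(T^{-1})$ and $O(1)$ bounds --- is legitimately available at the constrained point $\check{\theta}_T$; this is secured by first pinning down consistency in (i), precisely as in the unconstrained proofs.
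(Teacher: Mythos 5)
Your proposal is correct and follows essentially the same route as the paper's proof: compactness of the constrained set plus the Schmetterer--Jennrich lemma and the Wald-type uniform-convergence argument for (i), the chain rule with Lemma \ref{Lem:DerivativeImplicitFunctionImplicit}(i) for (ii), the Lagrange theorem with interiority and the full-rank constraint qualification for (iii), and the chain-rule decomposition combined with Lemma \ref{Lem:DLDTau}(iv) and Lemma \ref{Lem:DerivativeImplicitFunctionImplicit}(iii) for (iv). You also correctly isolate the one subtlety the paper flags in (iv), namely that the Lagrangian FOC in (iii) involves the total derivative of $\theta \mapsto L_T(\theta, \tau_T(\theta))$ while (iv) concerns the partial derivative evaluated at $(\check{\theta}_T, \tau_T(\check{\theta}_T))$, so the $O(T^{-1})$ correction term must be controlled exactly as you do.
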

\begin{proof}\textit{(i)}  The constrained set $\tilde{\T}:=\{\theta \in \T:r(\theta)=0 \}$ is bounded as a subset of the compact (and thus bounded) set $\T$. The constrained  set  $\tilde{\T}$ is also closed: For all $(\theta_n)_{n \in \N} \in \tilde{\T}^\N$ s.t. $\lim_{n \rightarrow }\theta_n = \bar{\theta}$, $\bar{\theta}\in \tilde{\T} $ because (i) by compactness of $\T$, $\bar{\theta}\in \T $;  and (ii) by the continuity of $r: \T \rightarrow \R^q $ (i.e., Assumption \ref{Assp:Trinity}(a)),  $r(\bar{\theta})=\lim_{n \rightarrow \infty }r(\theta_n)=\lim_{n \rightarrow \infty }0=0$. Therefore, the constrained set $\tilde{\T} $ is itself compact. Moreover, under Assumption \ref{Assp:ExistenceConsistency}(a)(b) and (d)-(h), by Lemma \ref{Lem:ESPExistence}ii-iii (p. \pageref{Lem:ESPExistence}), $\P$-a.s. for $T$ big enough, $\theta \mapsto \hat{f}_{\theta^{*}_T}(\theta)$ is continuous and, for all $\theta \in \T$, $ \omega \mapsto \hat{f}_{\theta^{*}_T}(\theta)$ is measurable. Thus, the existence and the measurability of the constrained estimator $\check{\theta}_T$  follows from the Schmetterer-Jennrich lemma (\citealt{1966Sch} Chap. 5 Lemma 3.3;   \citealt{1969Jen} Lemma 2).

In order to establish the consistency of $ \check{\theta}_T$, it remains to check  the other assumptions of the standard consistency theorem \citep[e.g.][pp. 2121-2122 Theorem 2.1, which is also valid in an almost-sure sense]{1994NewMcF}, where the constrained set $\tilde{\T}:=\{\theta \in \T:r(\theta)=0 \}$ is the parameter space. Because $\tilde{\T}\subset \T$,  $\P$-a.s. as $T \rightarrow \infty$,
 \begin{eqnarray*}
& & \sup_{\theta \in \tilde{\T}}\left\vert\ln  \left[\frac{1}{T}\sum_{t=1}^T \e^{\tau_T(\theta)'\psi_t(\theta)}\right]- \frac{1}{2T} \ln \vert \Sigma_T(\theta)\vert_{\det}-\ln \E[ \e^{\tau(\theta)'\psi(X_1, \theta)}]\right\vert \\
&\leqslant & \sup_{\theta \in \T}\left\vert\ln  \left[\frac{1}{T}\sum_{t=1}^T \e^{\tau_T(\theta)'\psi_t(\theta)}\right]- \frac{1}{2T} \ln \vert \Sigma_T(\theta)\vert_{\det}-\ln \E[ \e^{\tau(\theta)'\psi(X_1, \theta)}]\right\vert\rightarrow 0  \nonumber
\end{eqnarray*}
where the convergence to zero follows from equation \eqref{Eq:UniformCVOfESPObjFct} on p. \pageref{Eq:UniformCVOfESPObjFct},  under Assumption \ref{Assp:ExistenceConsistency}. In addition,  under Assumption \ref{Assp:ExistenceConsistency} (a)-(e) and (g)-(h), by Lemma \ref{Lem:AsTiltingFct}iv (p. \pageref{Lem:AsTiltingFct}), $\theta \mapsto \ln \E[ \e^{\tau(\theta)'\psi(X_1, \theta)}] $ is uniquely maximized at $\theta_0$, i.e.,  for all $\theta \in \T \setminus\{ \theta_0\} $, $\ln \E[ \e^{\tau(\theta)'\psi(X_1, \theta)}]<\ln \E[ \e^{\tau(\theta_0)'\psi(X_1, \theta_{0})}]=0$, and, under Assumptions \ref{Assp:ExistenceConsistency} (a)(b)(d)(e)(g) and (h), by Lemma \ref{Lem:ExpTauPsiStrictlyPositive} (p. \pageref{Lem:ExpTauPsiStrictlyPositive}),    $\theta \mapsto \ln \E[ \e^{\tau(\theta)'\psi(X_1, \theta)}] $  is continuous in $\tilde{\T}\subset \T$.

\textit{(ii)} Under Assumptions \ref{Assp:ExistenceConsistency} and \ref{Assp:AsymptoticNormality}(a), by subsection \ref{Sec:LTAndDerivatives} (p. \pageref{Sec:LTAndDerivatives}), the function  $L_T(\theta, \tau)$ is well-defined and twice continuously differentiable in a neighborhood of $(\theta_0'\; \tau(\theta_0)')$   $\P$-a.s. for $T$ big enough. Moreover, under Assumption \ref{Assp:ExistenceConsistency}(a)(b) and (d)-(h), by Lemma \ref{Lem:DerivativeImplicitFunctionImplicit}i (p. \pageref{Lem:DerivativeImplicitFunctionImplicit}),  $\tau_T(.)$ is continuously differentiable in $\T$. Now,  under Assumption \ref{Assp:ExistenceConsistency}, by  the statement (i) of the present lemma and  Lemma \ref{Lem:Schennachtheorem10PfFirstSteps}iii (p. \pageref{Lem:Schennachtheorem10PfFirstSteps}), $\P$-a.s., $\check{\theta}_T \rightarrow \theta_0$ and $\tau_T(\check{\theta}_T )\rightarrow \tau(\theta_0)$, so that $\P$-a.s. for $T$ big enough, $(\check{\theta}_T ' \; \tau_T(\check{\theta}_T)')$ is in any arbitrary small neighborhood of $(\theta_0'\; \tau(\theta_0)')$. Therefore,  under Assumption \ref{Assp:ExistenceConsistency} and \ref{Assp:AsymptoticNormality}(a),  by the chain rule theorem \citep[e.g.,][Chap. 5 sec. 11]{1988MagnusNeudecker}, $\P$-a.s. for $T$ big enough, $\theta \mapsto L_T(\theta, \tau_T(\theta))$ is continuously differentiable  at $\check{\theta}_T$.

\textit{(iii)} It is a consequence of the  Lagrange theorem \citep[e.g.,][Chap. 7 sec. 12]{1988MagnusNeudecker}. Check its assumptions. Firstly, under Assumptions \ref{Assp:ExistenceConsistency} and \ref{Assp:AsymptoticNormality},  $\P$-a.s. by the statement (i) of the present lemma, $\P$-a.s. for $T$ big enough, the constrained estimator $\check{\theta}_T$ exists and that it is in the interior of $\T$ by consistency and Assumption \ref{Assp:ExistenceConsistency}(c). Then, we should check the other assumptions of the Lagrange theorem $\omega$ by $\omega$ on the subset of $\Omegabf$ where $\check{\theta}_T$ exists.
Firstly, by Assumption \ref{Assp:Trinity}(a), $r:\T \rightarrow \R^q$ is continuously differentiable. Secondly, under Assumptions \ref{Assp:ExistenceConsistency},\ref{Assp:AsymptoticNormality} and \ref{Assp:Trinity}(a), if the test hypothesis \eqref{Eq:HypParameterRestriction} on p. \pageref{Eq:HypParameterRestriction} holds, $\P$-a.s.  as $T \rightarrow \infty$,  $\check{\theta}_T \rightarrow \theta_0$, and,   by Assumption \ref{Assp:Trinity}(b), $\frac{\partial r(\theta_0)}{\partial \theta'}$ is full  rank, Thus, $\P$-a.s. for $T$ big enough, $\frac{\partial r(\check{\theta}_T)}{\partial \theta'}$ is full  rank
by continuity of the determinant function. Finally, by the statement (iv) of the present lemma  $\theta \mapsto L_T(\theta, \tau_T(\theta))$ is  differentiable  at $\check{\theta}_T$.

\textit{(iv)}  First of all, note that it does \textit{not} immediately follow from the statement (iii)  because   $\frac{\partial L_T(\hat{\theta}_T,\tau_T(\hat{\theta}_T))}{\partial \theta }$  denotes $  \left. \frac{\partial L_T(\theta, \tau)}{\partial \theta} \right \vert_{(\theta, \tau)= (\check{\theta}_T, \tau_T(\check{\theta}_T) )}$instead of $\left.\frac{\partial L_T(\theta, \tau_T(\theta))}{\partial \theta }\right\vert_{\theta=\hat{\theta}_T}$ (see footnote \ref{FootN:AmbiguousNotation} on p. \pageref{FootN:AmbiguousNotation}). Under Assumption \ref{Assp:ExistenceConsistency}(a)(b) and (d)-(h), by Lemma \ref{Lem:DerivativeImplicitFunctionImplicit}i (p. \pageref{Lem:DerivativeImplicitFunctionImplicit}),  $\tau_T(.)$ is continuously differentiable in $\T$. Moreover,  under Assumptions \ref{Assp:ExistenceConsistency}, \ref{Assp:AsymptoticNormality} and \ref{Assp:Trinity}(a), if  by the statement (ii) of the present lemma, $\P$-a.s. for $T$ big enough, $\theta \mapsto L_T(\theta, \tau_T(\theta))$ is continuously differentiable in a neighborhood of $\check{\theta}_T$.
Thus,
 by  an immediate and standard implication  of the chain rule \citep[e.g.,][chap. 5, sec. 12, exercise 3]{1988MagnusNeudecker}, $\P$-a.s. for $T$ big enough,  for all $j \in \ldsb 1,m\rdsb$,
\begin{eqnarray}
 \left.\frac{\partial L_T(\theta, \tau_{T}(\theta))}{\partial \theta_{j} }\right\vert_{\theta=\check{\theta}_T} & = & \left.\frac{\partial L_T(\theta, \tau)}{\partial \theta_{j} }\right\vert_{(\theta, \tau)=(\check{\theta}_T, \tau_T(\check{\theta}_T))}+ \left.\frac{\partial L_T(\theta, \tau)}{\partial \tau' }\right\vert_{(\theta, \tau)=(\check{\theta}_T, \tau_T(\check{\theta}_T))}\left.\frac{\partial \tau(\theta)}{\partial \theta_{j}}\right\vert_{\theta=\check{\theta}_T} \nonumber\\
& \stackrel{}{=} & \left.\frac{\partial L_T(\theta, \tau)}{\partial \theta_{j} }\right\vert_{(\theta, \tau)=(\check{\theta}_T, \tau_T(\check{\theta}_T))}+O(T^{-1})\label{Eq:DLDThetaTildeOhP}
\end{eqnarray}
where the explanations for the last equality are as follow.
Firstly, under Assumptions \ref{Assp:ExistenceConsistency}, \ref{Assp:AsymptoticNormality} and \ref{Assp:Trinity}(a),  by  Lemma \ref{Lem:DLDTau}iv  (p. \pageref{Lem:DLDTau}), $\P$-a.s. as $T \rightarrow \infty$, $\left.\frac{\partial L_T(\theta, \tau)'}{\partial \tau }\right\vert_{(\theta, \tau)=(\check{\theta}_T, \tau_T(\check{\theta}_T))}=O(T^{-1})$ because $\check{\theta}_T\rightarrow \theta_0$, $\P$-a.s. as $T\rightarrow \infty$, by the second part of the statement (i) of the present lemma. Secondly, under Assumptions \ref{Assp:ExistenceConsistency}, \ref{Assp:AsymptoticNormality} and \ref{Assp:Trinity}(a), by the second part  of the statement (i) of the present lemma  and Lemma  \ref{Lem:DerivativeImplicitFunctionImplicit}iii (p. \pageref{Lem:DerivativeImplicitFunctionImplicit}), $\P$-a.s. as $T \rightarrow \infty$,$\left.\frac{\partial \tau(\theta)}{\partial \theta_{j}}\right\vert_{\theta=\check{\theta}_T}=O(1)$.

Now the results follows by plugging the above equation \eqref{Eq:DLDThetaTildeOhP} into the Lagrangian FOC of the statement (iii) of the present lemma.
\end{proof}

\section{On the assumptions}

\subsection{Discussion }\label{Sec:DiscussionAsspSchennach}
 Assumptions \ref{Assp:ExistenceConsistency} and \ref{Assp:AsymptoticNormality}  are mainly adapted from the entropy literature.   Assumption \ref{Assp:ExistenceConsistency}(a)  ensures the basic requirement for inference, that  is, data contain  different pieces of information (independence) about   the same phenomenon (identically distributed). The conditions ``independence and identically distributed"  are much stronger than needed, and can be relaxed  to allow for time dependence along the  lines of \cite{1997KitStu}.
We restrain ourself to the i.i.d. case for brevity and clarity.
Assumption \ref{Assp:ExistenceConsistency}(a) also requires completeness of the probability space so that we can define functions  only a probability-one subset of $\Omegabf$ without generating potential measurability complications. The completeness of the probability space is without significant loss of generality \citep[e.g., ][p. 13]{2002Kal}, and it is often implicitly or explicitly required in the literature.

 Assumption \ref{Assp:ExistenceConsistency}(b) mainly requires  standard regularity conditions for the moment function $\psi(.,.)$. As usual in nonlinear econometrics, the existence of the estimator relies on such regularity conditions.  An alternative would be to rely on empirical process theory, but it seems here inappropriate as the  implicit nature of the definition of the ESP approximation requires smooth functions.
We require Assumption \ref{Assp:ExistenceConsistency}(b), as well as some of the  following assumptions,  to hold in  an $\epsilon$-neighborhood of the parameter space $\T$, so that we can deal with its boundary  $\partial \T$   in the same way as with its interior.  In particular, it ensures that $\Sigma(\theta)$ is invertible for $\theta\in \partial\T$ under probability measures equivalent to $\P$ (Corollary \ref{Cor:ChgOfMeasureInvertibilityPDP}ii on p. \pageref{Cor:ChgOfMeasureInvertibilityPDP}), and it  allows to apply an implicit function theorem to $\tau(\theta) $, also for $\theta \in \partial \T$ (Lemma \ref{Lem:AsTiltingFct} on p. \pageref{Lem:AsTiltingFct}). For the latter reason, the entropy literature  often appears to also (implicitly) assume that  assumptions hold in an $\epsilon$-neighborhood of the parameter space.
In applications, this is often innocuous as the boundary of the parameter space is often loosely specified. However, in some specific situations, which we rule out, this may be problematic \cite[e.g.,][and references therein]{1999Andrews}.

 Assumption \ref{Assp:ExistenceConsistency}(c)
 requires global identification, which is a necessary condition to prove the consistency of an estimator. If we were interested in the ESP approximation instead of its maximizer (i.e., the ESP estimator), global identification could be relaxed as   \cite{2012Hol} and a companion paper show. Assumption \ref{Assp:ExistenceConsistency}(c)
also requires equality between the dimension of the parameter space and the number of moment conditions, i.e., just-identified moment conditions. We impose the latter for mainly three reasons. Firstly, it appears reasonable to investigate the ESP estimator in the  just-identified case before moving to the over-identified case, which requires to generalize the ESP approximation. Secondly, the just-identified case makes clear the difference between the ESP estimator and the existing alternatives, which are all equal in this case (see section \ref{Sec:ESPvsMMandOthers}). Thirdly,  this is a standard assumption  in the saddlepoint literature. However, note that (i) this assumption is less restrictive than it seems at first sight because, in the linear case, over-identified  moment conditions correspond to just-identified moment conditions through the FOCs, and, in the nonlinear case, we can transform over-identified estimating equations into just-identified estimating equations through an extension of
the parameter space \citep[e.g.,][p. 2232]{1994NewMcF}; (ii) ongoing work show how to generalize the ESP approximation to over-identified moment conditions.
 
 Assumption \ref{Assp:ExistenceConsistency}(d)  requires the compactness of the parameter space $\T$, and the existence of a solution     $\tau(\theta)\in \R^m$    that solves the equation    $\E\left[ \e^{\tau'\psi(X_1,\theta)}\psi(X_1,\theta)\right]=0 $,  for all $\theta \in \T$. \cite{2005Sch} also makes this assumption.  Compactness of the parameter space is a convenient standard mathematical assumption that is
often relevant in practice. A computer can only handle a bounded parameter space ---finite memory of a computer. Regarding the existence of $\tau(\theta)$, it is necessary to ensure the asymptotic existence of the ESP approximation. From a theoretical point of view, the existence of $\tau(\theta)$   looks like a reasonable assumption\,: If, for some $\theta \in \T$, $0_{m \times 1}$ is outside the convex hull of the support of $\psi(X_1, \theta)$, there is not such a solution $\tau(\theta)$, which  also means that $\theta$ cannot be $\theta_0$, so that it should be excluded from the parameter space.  However, the existence of $\tau(\theta)$   might be difficult to check  in practice.  A way to get around this assumption is to (i) assume the existence of $\tau(\theta)$ only in a neighbohood of $\theta_0$; and (ii) to  set the ESP approximation  to zero for the $\theta$ values that do not have a solution to the finite-sample moment conditions \eqref{Eq:ESPTiltingEquation}. \cite{2012Hol} follows such an approach. We do not follow such an approach because it  significantly complicates the proofs and the presentation. 

Assumptions \ref{Assp:ExistenceConsistency}(e) and \ref{Assp:AsymptoticNormality}(b) rule out fat-tailed distributions. More precisely, they require the existence of exponential moments. They are necessary to apply the the  ULLN (uniform law of large numbers) \`a la  Wald  \citep[e.g.,][pp. 24-25, Theorem 1.3.3]{2003GhoRam} to components of the ESP approximation. Assumptions \ref{Assp:ExistenceConsistency}(e) and \ref{Assp:AsymptoticNormality}(b) are stronger than the
moment existence assumption in \cite{1982Han}, but they are a common type of assumptions   in the
entropy literature \citep[e.g.,][]{1984Haberman,1997KitStu, 2007Schennach},  the saddlepoint literature \citep[e.g.,][]{2000AlmFieRob} and the literature on exponential models \citep[e.g.,][]{1972Ber}. In particular, Assumptions \ref{Assp:ExistenceConsistency}(d) and \ref{Assp:AsymptoticNormality}(b) are a convenient   variant of Assumptions 3.4 and 3.5   in \cite{2007Schennach}. Both in \cite{2007Schennach} and in the present paper, the successful estimation of the Hall and Horowitz model, which does not satisfy Assumptions \ref{Assp:ExistenceConsistency}(e) and \ref{Assp:AsymptoticNormality}(b), suggests that the latter can be relaxed.  In practice, Assumptions \ref{Assp:ExistenceConsistency}(e) and \ref{Assp:AsymptoticNormality}(b) are  not as strong as it may appear because   observable quantities  have finite support (finite memory of computers), which, in turn, implies that they have all finite moments. Moreover, in the case in which unboundedness  is a concern (e.g., moment conditions derived from a likelihood), \cite{2001RonchettiTrojani}  provide   a way to bound  moment functions.

 Assumptions \ref{Assp:ExistenceConsistency}(f) and (g) play the same role as Assumptions \ref{Assp:ExistenceConsistency}(e) and \ref{Assp:AsymptoticNormality}(b), although they are less stringent. Assumption \ref{Assp:ExistenceConsistency}(h)  requires the invertibility of the asymptotic variance
of standard estimators (scaled by $\sqrt{T}$)
of any  solution to the tilted moment condition. In the present paper, this assumption has two main roles. Firstly, it ensures that  the determinant term $\left|\Sigma_T(\theta) \right|_{\det}^{-\frac{1}{2}} $ in the ESP approximation \eqref{Eq:ESPApproximationDefn} does not explode, asymptotically.
Secondly, it  ensures the positive definiteness of the symmetric matrix $\E\left[\e^{\tau' \psi(X_1,\theta)}\psi(X_1,\theta) \psi(X_1,\theta)' \right]$  for all $(\theta, \tau) \in \Sbf $, so that the $\min_{\tau \in \R^m} \E[\e^{\tau'\psi(X_1,\theta)}]$ is a strictly convex problem, which, in turn, implies the unicity of its solution $\tau(\theta)$.    In the setup of the present paper, Assumption  \ref{Assp:ExistenceConsistency}(g) is equivalent to the invertibility of $\E\left[\e^{\tau(\theta)' \psi(X_{1},\theta)}  \frac{\partial \psi(X_{1},\theta)}{\partial \theta'}\right]$  and $\E\left[\psi(X_1,\theta) \psi(X_1,\theta)' \right]$,  for all $\theta \in \T $ (Lemma \ref{Lem:ChgOfMeasureInvertibilityPDP} on p. \pageref{Lem:ChgOfMeasureInvertibilityPDP} with $\Prm =\P$ and $\frac{\d \Qrm}{\d\Prm}=\frac{1}{\e^{\tau(\theta)'\psi(X_1, \theta)}}$). In this way, it is stronger  than the Assumption 4 in   \citet[]{1997KitStu}, but it is close to  \citet[Assumption C]{2000StoWri}.  Note  that    \citet[][]{2007Schennach}  also implicitly assumes that $\E\left[\e^{\tau' \psi(X_{1},\theta)}\psi(X_{1},\theta) \psi(X_{1},\theta)' \right]$ is full rank for all $(\theta, \tau) \in \Sbf $, because    \citet[][p. 649]{2007Schennach} regards  $\tau(\theta)$ as a solution to a strictly convex problem \citep[e.g.,][chap. 4, Theorem 4.3.1]{1993Hir-UrLem}. Assumption \ref{Assp:ExistenceConsistency}(g) should often be reasonable because the set of singular matrices has zero Lebesgue measure in the space of square matrices.\footnote{The set of singular matrices corresponds to the set of zeros of the determinant, which is nonzero polynomial in several variables. Moreover, by induction over the number of variables with  the fundamental theorem of algebra for the base step, a nonzero polynomials has a finite number of zeros.}

\subsection{Implications of Assumption \ref{Assp:ExistenceConsistency}(h)}
\begin{lem}\label{Lem:ChgOfMeasureInvertibilityPDP} Let $(\Omegabf_A, \mathcal{A}) $ be a measurable space, $Z: \Omegabf \rightarrow \R^k$ be a  $k$-dimensional random vectors with $k \in \ldsb 1, \infty \ldsb$ and $\Prm$ and $\Qrm $  two probability measures on $(\Omegabf_{A}, \mathcal{A}) $. Denote the expectation and the variance under $\Prm $ with $\E_{\Prm}$ and $\V_{\Prm}$, respectively.
\begin{enumerate}
\item[(i)] For all $\tau \in \R^k$, $ \E_{\Prm}\left( \e^{\tau' Z}Z Z'\right)\geqslant 0 $, it is a positive  semi-definite symmetric matrix.
\item[(ii)] If $\Prm \sim\Qrm$ (i.e., they are equivalent), $\E_\Prm(\vert Z Z'\vert )< \infty$ and $\E_\Qrm(\vert Z Z'\vert)< \infty $, then
\begin{eqnarray*}
\text{ $\E_\Prm(Z Z')$ invertible $\Leftrightarrow$ $\E_\Qrm(Z Z')$ invertible }
 \end{eqnarray*}
\end{enumerate}
\end{lem}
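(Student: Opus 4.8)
The plan is to treat the two parts separately, both resting on the elementary quadratic-form characterization of positive semi-definiteness. For part (i), I would first observe that $ZZ'$ is symmetric, so that the scalar-weighted matrix $\e^{\tau'Z}ZZ'$ is symmetric $\Prm$-a.s.\ and hence so is its expectation. For positive semi-definiteness, I would fix an arbitrary $v \in \R^k$ and compute $v'\E_\Prm(\e^{\tau'Z}ZZ')v = \E_\Prm(\e^{\tau'Z}(v'Z)^2)$, which is the expectation of a non-negative random variable, since the exponential is strictly positive and the square is non-negative; hence it is non-negative. As $v$ is arbitrary, the matrix is positive semi-definite symmetric, i.e., $\E_\Prm(\e^{\tau'Z}ZZ')\geqslant 0$.

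For part (ii), I would first note that, by part (i) applied with $\tau = 0_{k\times 1}$, both $\E_\Prm(ZZ')$ and $\E_\Qrm(ZZ')$ are positive semi-definite symmetric matrices, and they are well-defined with finite entries by the integrability hypotheses $\E_\Prm(|ZZ'|)<\infty$ and $\E_\Qrm(|ZZ'|)<\infty$. The key structural fact is that such a matrix is invertible if and only if it is positive definite, equivalently, it fails to be invertible exactly when its kernel contains a nonzero vector. I would then show that $\E_\Prm(ZZ')$ is singular if and only if there exists $v\neq 0_{k\times 1}$ with $v'\E_\Prm(ZZ')v = \E_\Prm((v'Z)^2)=0$; because $(v'Z)^2\geqslant 0$, this holds precisely when $v'Z = 0$ $\Prm$-a.s. (the converse direction, that $v'Z=0$ $\Prm$-a.s.\ forces $v$ into the kernel, follows by writing $\E_\Prm(ZZ')v=\E_\Prm(Z(Z'v))=0$). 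The same characterization holds verbatim for $\Qrm$.

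The crux — and the only step that genuinely uses the equivalence hypothesis — is to pass between the two almost-sure conditions. Since $\Prm \sim \Qrm$, the two measures share the same null sets, so for each fixed $v$ the event $\{v'Z \neq 0\}$ is $\Prm$-null if and only if it is $\Qrm$-null; hence $v'Z = 0$ $\Prm$-a.s.\ if and only if $v'Z = 0$ $\Qrm$-a.s. Consequently the existence of a nonzero $v$ with $v'Z = 0$ $\Prm$-a.s.\ is equivalent to the existence of a nonzero $v$ with $v'Z=0$ $\Qrm$-a.s. Chaining these equivalences shows that $\E_\Prm(ZZ')$ is singular if and only if $\E_\Qrm(ZZ')$ is singular, and taking contrapositives gives the stated biconditional. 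I do not anticipate a serious obstacle; the only point requiring care is to invoke the integrability assumptions explicitly so that the two expectation matrices are finite and the quadratic-form and kernel arguments apply.
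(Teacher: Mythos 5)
Your proof is correct and follows essentially the same route as the paper's: express the quadratic form as $\E_{\Prm}\left[\e^{\tau'Z}(v'Z)^2\right]$ for part (i), and for part (ii) chain the equivalences ``singular $\Leftrightarrow$ $\E[(v'Z)^2]=0$ for some $v\neq 0$ $\Leftrightarrow$ $v'Z=0$ almost surely,'' with the equivalence of $\Prm$ and $\Qrm$ (shared null sets) transferring the almost-sure statement between measures. The only cosmetic difference is that you spell out the kernel characterization of singularity for positive semi-definite matrices (including the converse step $v'Z=0$ a.s.\ $\Rightarrow \E_{\Prm}(ZZ')v=0$), which the paper leaves implicit in its appeal to statement (i).
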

\begin{proof} \textit{(i)} Symmetry follows from the invariance under transposition of $\E_{\Prm}\left( Z Z'\e^{\tau' Z}\right)$. It remains to show positive semi-definiteness.
For all $ y\in\R^k $,
\begin{eqnarray*}
& & \forall \omega \in \Omegabf,\quad   y'\e^{\tau' Z}ZZ' y  =  \e^{\tau' Z}[y'Z]^2 \geqslant 0\\
& \Rightarrow & y'\E_{\Prm }\left[\e^{\tau' Z}ZZ' \right]y=\E_{\Prm }\left[y'\e^{\tau' Z}ZZ'y\right] \geqslant 0.
\end{eqnarray*}
where the implication follows from the monotonicity of the Lebesgue integral \citep[e.g.,][p. 47]{1997Mon}.

\textit{(ii)} By contraposition, it is equivalent to prove  that  $\E_\Prm(Z Z')$ noninvertible iff $\E_\Qrm(Z Z')$ noninvertible. By statement (i),
\begin{eqnarray*}
&  & \text{ $\E_\Prm(Z Z')$ noninvertible}\\
& \Leftrightarrow & \text{$\exists y \in \R^k\setminus\{0_{k \times 1}\}: $ $y'\E_\Prm(Z Z')y=0$ }\\
& \stackrel{(a)}{\Leftrightarrow} & \text{$\exists y \in \R^k\setminus\{0_{k \times 1}\}: $ $\E_\Prm[  (y'Z)^2]=0$ } \\
& \stackrel{(b)}{\Leftrightarrow} & \exists y \in \R^k\setminus\{0_{k \times 1}\}:   (y'Z)^2=0 \text{ $\Prm$-a.s.}\\
& \stackrel{(c)}{\Leftrightarrow} & \exists y \in \R^k\setminus\{0_{k \times 1}\}:   (y'Z)^2=0 \text{ $\Qrm$-a.s.}\\
& \stackrel{(d)}{\Leftrightarrow} & \text{$\exists y \in \R^k\setminus\{0_{k \times 1}\}: $ $\E_\Qrm[  (y'Z)^2]=0$ } \\
& \stackrel{(e)}{\Leftrightarrow} & \text{$\exists y \in \R^k\setminus\{0_{k \times 1}\}: $ $y'\E_\Qrm(Z Z')y=0$ }\\
& \Leftrightarrow & \text{ $\E_\Qrm(Z Z')$ noninvertible}
\end{eqnarray*}
\textit{(a)} $y'\E_\Prm(Z Z')y=\E_\Prm[y'Z (y'Z)']=\E_\Prm[  (y'Z)^2]$
\textit{(b)} The integral of a positive function w.r.t a measure is null iff the function is null almost-surely \citep[e.g.,][Lemma 1.24]{2002Kal}.   \textit{(c)} By assumption, $\Prm \sim \Qrm$.\textit{(d)} Same as (b). \textit{(a)} Same as (a) with $\Qrm$ instead of $\Prm$.

\end{proof}

 \begin{cor}[Implication of Assumption \ref{Assp:ExistenceConsistency}(h)]\label{Cor:ChgOfMeasureInvertibilityPDP}
 Under Assumptions \ref{Assp:ExistenceConsistency}(a)-(b), (e) and (g), Assumption \ref{Assp:ExistenceConsistency}(h) implies that, for all $(\theta, \tau)\in \Sbf $, $\E\left[\e^{\tau' \psi(X_1, \theta)}\psi(X_1, \theta) \psi(X_1, \theta)' \right] $  is a positive definite symmetric matrix.

\end{cor}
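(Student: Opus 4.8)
The plan is to reduce the positive-definiteness claim to an invertibility claim, and then to transport the invertibility that Assumption \ref{Assp:ExistenceConsistency}(h) grants at the single tilting parameter $\tau(\theta)$ to every $\tau$ with $(\theta,\tau)\in\Sbf$ by a change of probability measure, invoking Lemma \ref{Lem:ChgOfMeasureInvertibilityPDP}. First I would apply Lemma \ref{Lem:ChgOfMeasureInvertibilityPDP}i with $\Prm=\P$ and $Z=\psi(X_1,\theta)$ to record that, for every $(\theta,\tau)\in\Sbf$, the matrix $\E[\e^{\tau'\psi(X_1,\theta)}\psi(X_1,\theta)\psi(X_1,\theta)']$ is symmetric and positive semi-definite. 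Since a symmetric positive semi-definite matrix is positive definite if and only if it is invertible, it then suffices to prove invertibility.

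Fixing $(\theta,\tau)\in\Sbf$, I would introduce the two exponentially tilted probability measures $\Qrm_\tau$ and $\Qrm_{\tau(\theta)}$, defined by $\tfrac{\d\Qrm_\tau}{\d\P}=\e^{\tau'\psi(X_1,\theta)}/\E[\e^{\tau'\psi(X_1,\theta)}]$ and analogously with $\tau(\theta)$ in place of $\tau$. By Lemma \ref{Lem:ExpTauPsiStrictlyPositive} together with Assumption \ref{Assp:ExistenceConsistency}(e), the normalizing constants $\E[\e^{\tau'\psi(X_1,\theta)}]$ and $\E[\e^{\tau(\theta)'\psi(X_1,\theta)}]$ both lie in $(0,\infty)$, and the densities are strictly positive $\P$-a.s. because the exponential is strictly positive; hence $\Qrm_\tau\sim\P\sim\Qrm_{\tau(\theta)}$, so $\Qrm_\tau\sim\Qrm_{\tau(\theta)}$. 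Factoring out the nonzero normalizing constant gives $\E[\e^{\tau(\theta)'\psi(X_1,\theta)}\psi(X_1,\theta)\psi(X_1,\theta)']=\E[\e^{\tau(\theta)'\psi(X_1,\theta)}]\,\E_{\Qrm_{\tau(\theta)}}[\psi(X_1,\theta)\psi(X_1,\theta)']$, so Assumption \ref{Assp:ExistenceConsistency}(h) makes $\E_{\Qrm_{\tau(\theta)}}[\psi(X_1,\theta)\psi(X_1,\theta)']$ invertible.

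Then I would invoke Lemma \ref{Lem:ChgOfMeasureInvertibilityPDP}ii with $\Prm=\Qrm_{\tau(\theta)}$ and $\Qrm=\Qrm_\tau$. Its integrability hypotheses $\E_{\Qrm_{\tau(\theta)}}[\vert\psi(X_1,\theta)\psi(X_1,\theta)'\vert]<\infty$ and $\E_{\Qrm_\tau}[\vert\psi(X_1,\theta)\psi(X_1,\theta)'\vert]<\infty$ are, up to the positive normalizing constants, precisely $\E[\e^{\tau(\theta)'\psi(X_1,\theta)}\vert\psi(X_1,\theta)\psi(X_1,\theta)'\vert]<\infty$ and $\E[\e^{\tau'\psi(X_1,\theta)}\vert\psi(X_1,\theta)\psi(X_1,\theta)'\vert]<\infty$, both of which hold by Lemma \ref{Lem:TiltedVariance}i under Assumptions \ref{Assp:ExistenceConsistency}(a)(b)(e) and (g). Lemma \ref{Lem:ChgOfMeasureInvertibilityPDP}ii then yields that $\E_{\Qrm_\tau}[\psi(X_1,\theta)\psi(X_1,\theta)']$ is invertible, equivalently that $\E[\e^{\tau'\psi(X_1,\theta)}\psi(X_1,\theta)\psi(X_1,\theta)']$ is invertible; combined with the positive semi-definiteness from the first step, this gives positive definiteness for every $(\theta,\tau)\in\Sbf$.

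The computations here are routine, so the main obstacle is the measure-theoretic bookkeeping: ensuring that the two tilted measures are genuinely equivalent (strict positivity and $\P$-a.s. finiteness of the densities, plus positivity and finiteness of the normalizing constants) and that the second-moment integrability holds under both, so that Lemma \ref{Lem:ChgOfMeasureInvertibilityPDP}ii is legitimately applicable. The entire content of the corollary is this transfer of invertibility across tilting parameters, since Assumption \ref{Assp:ExistenceConsistency}(h) supplies invertibility only at $\tau(\theta)$; once the equivalence and integrability are verified, the conclusion is immediate.
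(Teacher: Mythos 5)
Your proof is correct and follows essentially the same route as the paper's: positive semi-definiteness and symmetry from Lemma \ref{Lem:ChgOfMeasureInvertibilityPDP}i, strict positivity and finiteness of the normalizing constants from Lemma \ref{Lem:ExpTauPsiStrictlyPositive} and Assumption \ref{Assp:ExistenceConsistency}(e), integrability of the tilted second moments from Lemma \ref{Lem:TiltedVariance}i, and transfer of the invertibility granted by Assumption \ref{Assp:ExistenceConsistency}(h) at $\tau(\theta)$ to all $\tau$ with $(\theta,\tau)\in\Sbf$ via the equivalent exponentially tilted measures and Lemma \ref{Lem:ChgOfMeasureInvertibilityPDP}ii. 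The only cosmetic difference is that you define both tilted measures directly with respect to $\P$, whereas the paper builds $\Qrm_{(\theta,\tau)}$ through a Radon--Nikodym derivative with respect to $\Prm_\theta$; the resulting measures and the argument are identical.
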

\begin{proof} By Lemma \ref{Lem:ChgOfMeasureInvertibilityPDP}i (p. \pageref{Lem:ChgOfMeasureInvertibilityPDP}) with $Z=\psi(X_{1},\theta)$, it is a positive semi-definite matrix. Thus, it remains to show that it is invertible, i.e., definite instead of only semi-definite.

Under  Assumption \ref{Assp:ExistenceConsistency} (a)(b)(d)(e)(g) and (h), by Lemma  \ref{Lem:ExpTauPsiStrictlyPositive} (p. \pageref{Lem:ExpTauPsiStrictlyPositive}) and Assumption \ref{Assp:ExistenceConsistency}(d)(e), for all $\theta \in \T $, $0<\E_{}[\e^{\tau(\theta)'\psi(X_1, \theta)}]<\infty$. Moreover, by Assumption \ref{Assp:ExistenceConsistency}(h), for all $\theta\in  \T$, $\E\left[\e^{\tau(\theta)' \psi(X_1,\theta)}\psi(X_1, \theta) \psi(X_1, \theta)' \right] $ is invertible, so that $\frac{1}{\E_{}[\e^{\tau(\theta)'\psi(X_1, \theta)}]}\E\left[\e^{\tau(\theta)' \psi(X_1,\theta)}\psi(X_1, \theta) \psi(X_1, \theta)' \right] $ is also invertible. For every $(\theta, \tau) \in \Sbf$, check the assumptions of Lemma \ref{Lem:ChgOfMeasureInvertibilityPDP}ii (p. \pageref{Lem:ChgOfMeasureInvertibilityPDP})  with $Z=\psi(X_{1},\theta)$,  $\frac{\d \Prm_{\theta}}{\d \P}=\frac{\e^{\tau(\theta)'\psi(X_1, \theta)}}{\E_{}[\e^{\tau(\theta)'\psi(X_1, \theta)}]} $ and $\frac{\d \Qrm_{(\theta, \tau)}}{\d \Prm_{\theta}}=\frac{\e^{\tau'\psi(X_1, \theta)}\E_{}[\e^{\tau(\theta)'\psi(X_1, \theta)}]}{\E_{}[\e^{\tau'\psi(X_1, \theta)}]\e^{\tau(\theta)'\psi(X_1, \theta)}}$, so that $  \frac{\d \Qrm_{(\theta, \tau)}}{\d \P}=\frac{\e^{\tau'\psi(X_1, \theta)}}{\E_{}[\e^{\tau'\psi(X_1, \theta)}]} $.  Firstly, for all $(\omega, \tau, \theta)\in \Omegabf \times \Tbf \times \T $, $0<\frac{\d \Qrm_{(\theta, \tau)}}{\d \Prm_\theta}$ and $0<\frac{\d \Prm_{\theta}}{\d \P}$, so that $\Qrm_{(\theta, \tau)} \sim \Prm_\theta \sim \P$. Secondly,  by monotonicity of integration and the Cauchy-Schwarz inequality, for all $\dot{\theta}\in \T$,  $\E [\vert \psi(X_1,\dot \theta) \psi(X_1, \dot\theta)'\vert] \leqslant \E [\sup_{\theta \in \T}\vert \psi(X_1, \theta) \psi(X_1, \theta)'\vert]<\sqrt{\E [\sup_{\theta \in \T}\vert \psi(X_1, \theta) \psi(X_1, \theta)'\vert^2]}<\infty$, where the last inequality follows from   Assumption \ref{Assp:ExistenceConsistency}(g).  Thirdly, under Assumption \ref{Assp:ExistenceConsistency} (a)(b)(d)(e)(g) awnd (h), by Lemma  \ref{Lem:ExpTauPsiStrictlyPositive} (p. \pageref{Lem:ExpTauPsiStrictlyPositive}) and Assumption \ref{Assp:ExistenceConsistency}(d)(e), for all $(\theta, \tau) \in \Sbf $, $0<\E_{}[\e^{\tau '\psi(X_1, \theta)}]<\infty$. Moreover, under Assumptions \ref{Assp:ExistenceConsistency}(a)-(b), (e) and (g),
by Lemma \ref{Lem:TiltedVariance}i (p. \pageref{Lem:TiltedVariance}), $ \E \left[ \sup_{(\theta, \tau)\in \Sbf}\vert\e^{\tau'\psi(X_1, \theta) }\psi(X_{1},\theta) \psi(X_{1},\theta)' \vert\right]< \infty$, so that, for all $(\theta, \tau)\in \Sbf$, 

\noindent
$\E \left[ \frac{\e^{\tau'\psi(X_1, \theta) }}{\E_{}[\e^{\tau'\psi(X_1, \theta)}]}\vert\psi(X_{1},\theta) \psi(X_{1},\theta)' \vert\right]<\infty$.
Thus, for each $(\theta, \tau)\in \Sbf $, apply Lemma \ref{Lem:ChgOfMeasureInvertibilityPDP}ii (p. \pageref{Lem:ChgOfMeasureInvertibilityPDP}) to show the result.
\end{proof}

\section{Remaining technical results}

\begin{lem}[Asymptotic invertibility of sequence of matrix functions]\label{Lem:UniFiniteSampleInvertibilityFromUniAsInvertibility} Let $A(\gamma)$ be a family of invertible matrices indexed by $\gamma \in \Gammabf$ s.t. $\gamma \mapsto A(\gamma)$ is continuous, and where $\Gammabf$ is a compact subset of a Euclidean space.  Let $(A_T(\gamma))_{T \in \ldsb 1, \infty \ldsb}$ be a sequence of square matrices. If, as $T \rightarrow \infty$, $\sup_{\gamma \in \Gammabf}\vert A_T(\gamma)-A(\gamma) \vert\rightarrow 0$, then there exist  a constant $\varepsilon_A>0$ and  $T_A\in \N$ s.t. for all $T \in \ldsb T_A, \infty\ldsb $, for all $\gamma \in \Gammabf$, $ \left\vert\vert A_T(\gamma)\vert_{\det}\right\vert\geqslant \varepsilon_A$. \end{lem}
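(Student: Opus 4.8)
The plan is to combine two facts: the uniform positivity of the limiting determinant over the compact index set, and the uniform continuity of the determinant map once all the matrices are confined to a single compact set. First I would establish that $\gamma \mapsto \vert A(\gamma)\vert_{\det}$ is continuous and bounded away from zero. Since the determinant is a polynomial in the matrix entries it is continuous, so $\gamma \mapsto \vert A(\gamma)\vert_{\det}$ is continuous by the assumed continuity of $\gamma \mapsto A(\gamma)$. As each $A(\gamma)$ is invertible, $\vert A(\gamma)\vert_{\det}\neq 0$ for all $\gamma \in \Gammabf$, so $\gamma \mapsto \vert \vert A(\gamma)\vert_{\det}\vert$ is a strictly positive continuous function on the compact set $\Gammabf$. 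Continuous functions over compact sets reach their infimum \citep[e.g.,][Theorem 4.16]{1953Rudin}, so $\inf_{\gamma \in \Gammabf}\vert \vert A(\gamma)\vert_{\det}\vert > 0$, and I would set $\varepsilon_A := \tfrac{1}{2}\inf_{\gamma \in \Gammabf}\vert \vert A(\gamma)\vert_{\det}\vert > 0$.

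Next I would transfer the uniform convergence of the matrices to their determinants, and the crucial preliminary step is uniform boundedness. Since $A(\cdot)$ is continuous on the compact set $\Gammabf$, $\sup_{\gamma \in \Gammabf}\vert A(\gamma)\vert < \infty$; and since $\sup_{\gamma \in \Gammabf}\vert A_T(\gamma)-A(\gamma)\vert \rightarrow 0$, there exists $T_0 \in \N$ with $\sup_{\gamma \in \Gammabf}\vert A_T(\gamma)\vert \leqslant \sup_{\gamma \in \Gammabf}\vert A(\gamma)\vert + 1 =: M$ for all $T \geqslant T_0$. Consequently every matrix in the family $\{A(\gamma),\, A_T(\gamma): \gamma \in \Gammabf,\, T \geqslant T_0\}$ lies in the closed ball of radius $M$ in the finite-dimensional space of square matrices, which is compact. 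The determinant, being continuous, is therefore uniformly continuous on this ball \citep[e.g.,][Theorem 4.19]{1953Rudin}, so the uniform convergence $\sup_{\gamma \in \Gammabf}\vert A_T(\gamma)-A(\gamma)\vert \rightarrow 0$ yields $\sup_{\gamma \in \Gammabf}\vert \vert A_T(\gamma)\vert_{\det}-\vert A(\gamma)\vert_{\det}\vert \rightarrow 0$.

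To conclude, I would pick $T_A \geqslant T_0$ large enough that $\sup_{\gamma \in \Gammabf}\vert \vert A_T(\gamma)\vert_{\det}-\vert A(\gamma)\vert_{\det}\vert < \varepsilon_A$ for all $T \geqslant T_A$; then the reverse triangle inequality gives, for every such $T$ and every $\gamma \in \Gammabf$, $\vert \vert A_T(\gamma)\vert_{\det}\vert \geqslant \vert \vert A(\gamma)\vert_{\det}\vert - \varepsilon_A \geqslant 2\varepsilon_A - \varepsilon_A = \varepsilon_A$, which is exactly the claim. The hard part is the second step: the determinant is only locally, not globally, uniformly continuous, so the uniform convergence of the determinants cannot be asserted directly and must be routed through the uniform boundedness argument that traps all the matrices inside one fixed compact set. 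Everything else is a routine application of compactness of $\Gammabf$ and the reverse triangle inequality.
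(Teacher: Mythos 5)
Your proof is correct and takes essentially the same route as the paper's: a uniform strictly positive lower bound on $\vert\vert A(\gamma)\vert_{\det}\vert$ from continuity and compactness of $\Gammabf$, transfer of the uniform convergence of the matrices to the determinants via uniform continuity of the determinant on a compact set, and the reverse triangle inequality to conclude. If anything, your middle step is more careful than the paper's, which asserts ``continuity of $A \mapsto \vert\vert A\vert_{\det}\vert$ on the compact set $\Gammabf$ implies uniform continuity'' without identifying the compact subset of matrix space on which the determinant is uniformly continuous; your uniform-boundedness argument trapping $\{A(\gamma),\, A_T(\gamma):\gamma \in \Gammabf,\, T \geqslant T_0\}$ in one closed ball supplies exactly that missing detail.
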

\begin{proof} The function $A \mapsto \vert \vert A \vert_{\det} \vert$ is a continuous function. Moreover, by assumption, for all $\gamma \in \Gammabf$, $\vert \vert A(\gamma)\vert_{\det}\vert>0$. Thus, by continuity of $\gamma \mapsto A(\gamma)$ and compactness of $\Gammabf$, there exists $\varepsilon_A$ s.t. $\min_{\gamma \in \Gammabf}\left\vert\vert A(\gamma)\vert_{\det}\right\vert> 2\varepsilon_A$. Now continuity of $A \mapsto \vert \vert A \vert_{\det} \vert$ on the compact set $\Gammabf$ implies uniform continuity \citep[e.g.,][Theorem 4.19]{1953Rudin}, so that  there exists $T_{\varepsilon_A}\in \N$ s.t., for all $T \in \ldsb T_{\varepsilon_A}, \infty\ldsb$, $\sup_{\gamma \in \Gammabf}\left\vert\vert \vert A_T(\gamma) \vert_{\det} \vert -\vert \vert A(\gamma) \vert_{\det} \vert  \right\vert\leqslant \varepsilon_A $. Then, for all $\gamma \in \Gammabf$,  the triangle inequality $\vert \vert A(\gamma) \vert_{\det} \vert \leqslant  \left\vert\vert \vert A(\gamma) \vert_{\det} \vert-\vert \vert A_T(\gamma) \vert_{\det} \vert  \right\vert +\vert \vert A_T(\gamma) \vert_{\det} \vert $ implies that $\varepsilon_A= 2\varepsilon_A- \varepsilon_A\leqslant \vert \vert A(\gamma) \vert_{\det} \vert-\left\vert\vert \vert A(\gamma) \vert_{\det} \vert-\vert \vert A_T(\gamma) \vert_{\det} \vert  \right\vert \leqslant \vert \vert A_T(\gamma) \vert_{\det} \vert  $.
\end{proof}

\begin{lem}[Asymptotic positivity and definiteness of matrices] \label{Lem:PdmAsMatrix}
Let $(A_T )_{ T\geqslant1} $ a  sequence of  square matrices  converging to $A$ as $T \rightarrow \infty $.\footnote{Note that we do not need to specify the norm as all norms are equivalent in finite-dimensional spaces.} Then, if $(A_T )_{ T\geqslant1}$ is a sequence of symmetric matrices and $A$ is a positive-definite matrix (p-d.m), then  there exists $\dot{T}\in \N$ such that  $ T \geqslant \dot{T}$ implies $A_T$ is p-d.m.

\end{lem}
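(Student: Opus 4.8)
The plan is to reduce positive-definiteness of $A_T$ to the uniform strict positivity of its associated quadratic form over the unit sphere, and then to control the perturbation $A_T - A$ by means of the convergence hypothesis. Let $k$ denote the common dimension of the matrices $A_T$ and $A$, and put $\mathcal{S} := \{ x \in \R^k : \vert x \vert = 1 \}$. Since $A$ is a positive-definite matrix, the map $x \mapsto x'Ax$ is strictly positive on $\R^k \setminus \{0_{k \times 1}\}$. This map is continuous and $\mathcal{S}$ is compact, so it attains a minimum $c := \min_{x \in \mathcal{S}} x'Ax$ on $\mathcal{S}$ \citep[e.g.,][Theorem 4.16]{1953Rudin}, and $c>0$ by the positive-definiteness of $A$.

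First I would use the convergence $A_T \rightarrow A$ to choose $\dot{T} \in \N$ such that, for all $T \geqslant \dot{T}$, $\vert A_T - A \vert < c/2$ (all norms being equivalent on finite-dimensional spaces, the choice of norm is immaterial). Then, for any $T \geqslant \dot{T}$ and any $x \in \mathcal{S}$, adding and subtracting $x'Ax$ and applying the Cauchy-Schwarz inequality together with the submultiplicativity of the matrix norm \citep[e.g.,][Theorem 9.7]{1953Rudin} would give
\begin{eqnarray*}
x'A_Tx = x'Ax + x'(A_T - A)x \geqslant c - \vert A_T - A \vert \geqslant c - \frac{c}{2} = \frac{c}{2} > 0.
\end{eqnarray*}
For an arbitrary nonzero $y \in \R^k$, writing $y = \vert y \vert x$ with $x \in \mathcal{S}$ would yield $y'A_Ty = \vert y \vert^2 x'A_Tx \geqslant \vert y \vert^2 c/2 > 0$. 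Since $A_T$ is symmetric by assumption and its quadratic form is thus strictly positive on $\R^k \setminus \{0_{k \times 1}\}$, $A_T$ is a positive-definite matrix for all $T \geqslant \dot{T}$, which is the claim.

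There is no substantial obstacle here: the argument is a routine continuity-plus-compactness perturbation bound. The only points requiring minor care are (i) ensuring that the lower bound $c/2$ is uniform over $\mathcal{S}$, which is handled by extracting the minimum $c$ over the compact set $\mathcal{S}$ \emph{before} invoking the perturbation bound, rather than bounding pointwise and losing uniformity; and (ii) invoking the symmetry of $A_T$ at the final step, since strict positivity of the quadratic form characterizes positive-definiteness only for symmetric matrices. Both requirements are supplied directly by the hypotheses of the lemma.
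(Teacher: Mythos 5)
Your proof is correct and follows essentially the same route as the paper's: both reduce positive-definiteness of $A_T$ to uniform strict positivity of the quadratic form $z \mapsto z'A_Tz$ on the unit sphere, using compactness to extract the strictly positive minimum of $z'Az$ before perturbing. The only difference is cosmetic: where the paper establishes $\sup_{\vert z \vert =1}\vert z'A_Tz - z'Az\vert \rightarrow 0$ by a contradiction argument with entrywise bounds (after invoking the eigenvalue characterization $\min \mathrm{sp}\, A_T = \min_{\vert z\vert =1} z'A_Tz$), you obtain the same uniform control directly via Cauchy--Schwarz and the operator norm, a slight streamlining rather than a genuinely different argument.
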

\begin{proof}
On one hand, $A_T $ is a p-d.m. if and only if all its eigenvalues are strictly positive \citep[e.g.,][Ch. 1 Sec. 13 Theorem 8]{1988MagnusNeudecker}.  On the other hand, $\min \sp A_T =\min_{z: \Vert z \Vert=1} z'A_Tz $,     where $\sp A_T $ denotes the set of eigenvalues of $A$ \citep[e.g.,][Ch. 11 Sec. 5]{1988MagnusNeudecker}.  Thus, it is sufficient to prove that $\lim_{T \rightarrow \infty} \min_{z: \Vert z \Vert =1} z'A_Tz = \min_{z: \Vert z \Vert=1}z'A z  $, which in turn implies that it is sufficient to prove that $\sup_{z: \Vert z \Vert=1 }|z'A_Tz-z'Az| \rightarrow 0 $ , as $T \rightarrow \infty$. Prove this last result by contradiction.

 Assume that $\sup_{z: \Vert z \Vert=1 }|z'A_Tz-z'Az|$ does not converge to $0 $ as $T \rightarrow \infty $. Then, there exists $\varepsilon>0 $ and an increasing function $\alpha_{1}:\N \mapsto \N $ defining a subsequence of vectors of norm $1$, $\left(z_{\alpha_1(T)} \right)_{T \geqslant 1}  $,  and  a subsequence of matrices,$\left(A_{\alpha_1(T)} \right)_{T \geqslant1} $, such that
\begin{eqnarray*}
\varepsilon & < & \left|z_{\alpha_1(T)}' A_{\alpha_1(T)}z_{\alpha_1(T)}- z_{\alpha_1(T)}' Az_{\alpha_1(T)}\right|\\
 & = & \left|z_{\alpha_1(T)}' \left(A_{\alpha_1(T)}-A \right)z_{\alpha_1(T)}\right| \leqslant  \sum_{(k,l)\in \ldsb 1,m\rdsb^2} \left|\left[a_{\alpha_1(T)}^{(k,l)}-a^{(k,l)}\right]z^{(k)}_{\alpha_1(T)} z^{(l)}_{\alpha_1(T)}\right|\\
& & \qquad  \leqslant m^{2} \times \max_{(k,l)\in \ldsb1,m\rdsb^2} \left|a_{\alpha_1(T)}^{(k,l)}-a^{(k,l)} \right|
\end{eqnarray*}
where $m $ is the size of the matrix $A$ and $a^{(k,l)}$ denotes the component of the matrix $A$ in the $k $th row and $l$th column. Now,  by assumption, using the max norm, $ \max_{(k,l)\in \ldsb1,m\rdsb^2} \left|a_{\alpha_1(T)}^{(k,l)}-a^{(k,l)} \right|\rightarrow 0 $ as $T\rightarrow  \infty $. Thus, there is a contradiction.
\end{proof}

\begin{lem} [Differential of a log of a squared determinant]\label{Lem:DeterminantDifferential} Let $\mathbf{G} $ be an open set of  $\R^q$ with $q\in \ldsb 1, \infty \ldsb $, and $F: \mathbf{G}\rightarrow \R^{m \times m}$ a differentiable function on $ \mathbf{G}$. Then $\vert F\vert_{\det}:  \mathbf{G}\rightarrow \R$ is also differentiable on $\mathbf{G}$. Moreover, if $\vert F(x) \vert_{\det}\neq 0$ where $x \in \Gbf$, then
\begin{enumerate}
\item[(i)]   $D \vert F(x)\vert_{\det} =\vert F (x)\vert_{\det} \tr[ F(x)^{-1}DF(x)]$;
\item[(ii)]  $D \ln[\vert F(x)\vert_{\det}^2] =2\tr[ F(x)^{-1}DF(x)]$.
 \end{enumerate}

\end{lem}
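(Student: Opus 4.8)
The plan is to settle differentiability first, then establish (i) through the standard multiplicative factorization of the determinant, and finally obtain (ii) from (i) by the chain rule for the logarithm. No step is genuinely hard here; this is routine matrix calculus, and the single computation requiring care is the first-order expansion of the determinant, which I would import from \citet[chap.~8]{1988MagnusNeudecker} rather than rederive.

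First I would note that $A \mapsto \vert A\vert_{\det}$ is $C^{\infty}$ on $\R^{m\times m}$, since by the Leibniz formula the determinant is a polynomial in the $m^2$ entries of its argument. As $F$ is differentiable on the open set $\mathbf{G}$, the composition $x \mapsto \vert F(x)\vert_{\det}$ is differentiable on $\mathbf{G}$ by the chain rule, which proves the differentiability claim.

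For (i), I would exploit that, at a point $x$ where $\vert F(x)\vert_{\det}\neq 0$ so that $F(x)$ is invertible, one may write, for a perturbation $H$, $\vert F(x)+H\vert_{\det} = \vert F(x)\vert_{\det}\,\vert I_m + F(x)^{-1}H\vert_{\det}$, using that the determinant of a product is the product of the determinants. The first-order expansion $\vert I_m + E\vert_{\det} = 1 + \tr(E) + o(\vert E\vert)$ then yields $\vert F(x)+H\vert_{\det} = \vert F(x)\vert_{\det}\bigl(1 + \tr[F(x)^{-1}H] + o(\vert H\vert)\bigr)$, so that the differential of $\vert F\vert_{\det}$ at $x$, evaluated in the direction $DF(x)$, equals $\vert F(x)\vert_{\det}\,\tr[F(x)^{-1}DF(x)]$, which is exactly (i). Equivalently, this is Jacobi's formula combined with $A^{-1} = \vert A\vert_{\det}^{-1}\,\mathrm{adj}(A)$.

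Finally, for (ii), since $\vert F(x)\vert_{\det}\neq 0$ implies $\vert F(x)\vert_{\det}^2 > 0$, the map $x \mapsto \ln[\vert F(x)\vert_{\det}^2]$ is differentiable at $x$, and the chain rule gives $D\ln[\vert F(x)\vert_{\det}^2] = \vert F(x)\vert_{\det}^{-2}\,D[\vert F(x)\vert_{\det}^2] = \vert F(x)\vert_{\det}^{-2}\,2\vert F(x)\vert_{\det}\,D\vert F(x)\vert_{\det}$. Substituting (i) makes the factor $\vert F(x)\vert_{\det}^2$ cancel and leaves $2\tr[F(x)^{-1}DF(x)]$, which is (ii). The only delicate ingredient is the linear term $\tr(E)$ in the expansion of $\vert I_m + E\vert_{\det}$, so that is where I would lean on \citet{1988MagnusNeudecker}; everything else is bookkeeping with the chain rule and the multiplicativity of the determinant.
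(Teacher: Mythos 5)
Your proposal is correct and follows essentially the same route as the paper: part (i) rests on Jacobi's formula (which your expansion $\vert I_m+E\vert_{\det}=1+\tr(E)+o(\vert E\vert)$ combined with multiplicativity simply rederives, and which the paper cites directly from \citet[chap.~8]{1988MagnusNeudecker}), and part (ii) is the identical chain-rule computation with the factor $\vert F(x)\vert_{\det}^{2}$ cancelling. Your explicit treatment of the differentiability claim via the polynomial nature of the determinant is a small addition the paper leaves implicit, but it changes nothing of substance.
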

\begin{proof}
\textit{(i)} It is a consequence of the so-called Jacobi's formula   \citep[e.g.,][chap. 8 sec. 3]{1988MagnusNeudecker}.

\textit{(ii)} First of all, note that the logarithm is well-defined as its argument is strictly positive by assumption. Then, by the statement (i) of the present lemma and the chain rule, \begin{eqnarray*}
D\ln[\vert F(x)\vert_{\det}^2]=\frac{1}{\vert F(x)\vert_{\det}^2}2\vert F(x)\vert_{\det} \vert F (x)\vert_{\det} \tr[ F(x)^{-1}DF(x)].
\end{eqnarray*}
\end{proof}

\begin{lem}[Inverse of a $2\times 2$ partitioned matrix]\label{Lem:BlockMatrixInverse} Let $F$ be a square matrix s.t.
\begin{eqnarray*}
F=\begin{bmatrix}A & B \\
C & D \\
\end{bmatrix}
\end{eqnarray*}
where $A$ and $D$ are square matrices. Then, the following statements hold.
\begin{enumerate}
\item[(i)] If $A$ is invertible, then $F$ invertible $\Leftrightarrow $ $(D-CA^{-1}B)$ invertible. Moreover,
\begin{eqnarray*}
F^{-1}=\begin{bmatrix}A^{-1}+A^{-1}B(D-CA^{-1}B)^{-1}CA^{-1} & \quad -A^{-1}B(D-CA^{-1}B)^{-1} \\
-(D-CA^{-1}B)^{-1}CA^{-1} & (D-CA^{-1}B)^{-1} \\
\end{bmatrix}.
\end{eqnarray*}

\item[(ii)] If $D$ is invertible, then $F$ invertible $\Leftrightarrow $ $(A-BD^{-1}C)^{-1}$ invertible. Moreover,
\begin{eqnarray*}
F^{-1}=\begin{bmatrix}(A-BD^{-1}C)^{-1} & -(A-BD^{-1}C)^{-1}BD^{-1} \\
-D^{-1}C(A-BD^{-1}C)^{-1} & \quad D^{-1}+D^{-1}C(A-BD^{-1}C)^{-1}BD^{-1} \\
\end{bmatrix}.
\end{eqnarray*}
\end{enumerate}
\end{lem}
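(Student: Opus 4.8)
The plan is to prove both statements by block Gaussian elimination, i.e., by exhibiting a block \emph{LDU} factorization of $F$ as a product of a block-lower-triangular, a block-diagonal, and a block-upper-triangular matrix. From such a factorization both the invertibility equivalence and the explicit inverse formula fall out simultaneously, so the whole lemma reduces to one factorization identity per statement plus routine bookkeeping.

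For (i), assuming $A$ invertible, I would first verify the identity
$$F = \begin{bmatrix} I & 0 \\ CA^{-1} & I \end{bmatrix} \begin{bmatrix} A & 0 \\ 0 & D - CA^{-1}B \end{bmatrix} \begin{bmatrix} I & A^{-1}B \\ 0 & I \end{bmatrix}$$
by direct multiplication, which is a short and purely mechanical check. The two triangular factors have unit block-diagonal, hence determinant one, and their inverses are obtained simply by negating the off-diagonal block; in particular they are always invertible. Consequently $F$ is invertible if and only if the middle block-diagonal factor is, and since $A$ is invertible this happens exactly when the Schur complement $S := D - CA^{-1}B$ is invertible, which is the claimed equivalence.

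For the explicit inverse, when $S$ is invertible I would invert the factorization using $(XYZ)^{-1} = Z^{-1}Y^{-1}X^{-1}$, the inverses of the triangular factors just described, and the fact that $\begin{bmatrix} A & 0 \\ 0 & S \end{bmatrix}^{-1} = \begin{bmatrix} A^{-1} & 0 \\ 0 & S^{-1} \end{bmatrix}$. Multiplying out the resulting product of three matrices yields exactly the stated formula for $F^{-1}$, with every occurrence of $(D-CA^{-1}B)^{-1}$ coming from the $S^{-1}$ block.

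For (ii), I would proceed symmetrically, using instead the decomposition valid when $D$ is invertible,
$$F = \begin{bmatrix} I & BD^{-1} \\ 0 & I \end{bmatrix} \begin{bmatrix} A - BD^{-1}C & 0 \\ 0 & D \end{bmatrix} \begin{bmatrix} I & 0 \\ D^{-1}C & I \end{bmatrix},$$
whose relevant Schur complement is $A - BD^{-1}C$; the same argument gives both the equivalence and the inverse. Alternatively, (ii) can be deduced from (i) by conjugating with the block-swap permutation sending $\begin{bmatrix} A & B \\ C & D \end{bmatrix}$ to $\begin{bmatrix} D & C \\ B & A \end{bmatrix}$ and relabelling the blocks, which avoids a second computation entirely. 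I do not expect any genuine obstacle beyond keeping the matrix products straight; the only point deserving a word of care is the invertibility equivalence, which rests on the elementary fact that a block-diagonal matrix is invertible iff each diagonal block is, combined with the always-invertibility of the two unit-triangular factors.
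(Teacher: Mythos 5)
Your proof is correct and complete. Note, though, that the paper does not actually prove this lemma: its ``proof'' is a one-line citation to Magnus and Neudecker (Chap.~1, Sec.~11), where the result is standard. Your block $LDU$ factorization
\[
F=\begin{bmatrix} I & 0 \\ CA^{-1} & I \end{bmatrix}
\begin{bmatrix} A & 0 \\ 0 & D-CA^{-1}B \end{bmatrix}
\begin{bmatrix} I & A^{-1}B \\ 0 & I \end{bmatrix}
\]
is precisely the classical Schur-complement argument underlying that reference, and it is the right way to make the lemma self-contained: the unit-triangular factors are always invertible, so invertibility of $F$ reduces to invertibility of the block-diagonal middle factor, which (given $A$ invertible) is equivalent to invertibility of the Schur complement, and inverting the three factors and multiplying out reproduces the stated formula exactly; the symmetric factorization handles (ii), and your alternative of deducing (ii) from (i) by conjugating with the block-swap permutation is also valid and saves a computation. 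One cosmetic point: the paper's statement of (ii) reads ``$(A-BD^{-1}C)^{-1}$ invertible'' where it plainly means ``$(A-BD^{-1}C)$ invertible''; your proof addresses the intended statement, which is the correct one.
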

\begin{proof} This is a standard result \citep[e.g.,][Chap. 1 sec. 11]{1988MagnusNeudecker}.
\end{proof}

\begin{cor}[Inverse of a $2 \times 2$ partitioned matrix in a special case]\label{Cor:For2x2BlockMatrix} Let $E$ be a square matrix s.t.
\begin{eqnarray*}
E=\begin{bmatrix}A & B \\
B' & 0 \\
\end{bmatrix}.
\end{eqnarray*}
Then,
\begin{itemize}
\item[(i)]  If $A$  and $B'A^{-1}B $ are invertible, then $E$ in invertible; and
\item[(ii)] $\displaystyle \begin{bmatrix}A & B \\
B' & 0 \\
\end{bmatrix}^{-1}
=
\left[ \begin{array}{c c} A^{-1} - A^{-1}B \left( B' A^{-1} B \right)^{-1} B' A^{-1} &   A^{-1}B \left( B' A^{-1} B \right)^{-1} \\
\left( B' A^{-1} B \right)^{-1} B' A^{-1}  & - \left( B' A^{-1} B \right)^{-1} \end{array} \right]$.
\end{itemize}
\end{cor}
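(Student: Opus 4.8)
The plan is to obtain both statements as immediate specializations of Lemma \ref{Lem:BlockMatrixInverse}(i), the general formula for the inverse of a $2\times 2$ partitioned matrix. I would apply that lemma to $E$ with the identifications $C=B'$ and $D=0$, so that the matrix $E$ here plays the role of $F$ there. The only quantity that appears in Lemma \ref{Lem:BlockMatrixInverse}(i) besides $A^{-1}$ is the Schur complement $D-CA^{-1}B$, which in the present special case becomes $0-B'A^{-1}B=-B'A^{-1}B$.

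For statement (i), Lemma \ref{Lem:BlockMatrixInverse}(i) asserts that, when $A$ is invertible, $E$ is invertible if and only if $D-CA^{-1}B=-B'A^{-1}B$ is invertible. Since a matrix is invertible if and only if its negative is invertible (multiplication by the nonzero scalar $-1$ preserves nonsingularity, e.g.\ $\vert -B'A^{-1}B\vert_{\det}=(-1)^m\vert B'A^{-1}B\vert_{\det}$), the hypothesis that $A$ and $B'A^{-1}B$ are both invertible yields the invertibility of $E$.

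For statement (ii), I would substitute $C=B'$, $D=0$, and the identity $(D-CA^{-1}B)^{-1}=-(B'A^{-1}B)^{-1}$ into the block formula of Lemma \ref{Lem:BlockMatrixInverse}(i) and simplify the four blocks. Writing $S:=B'A^{-1}B$, the top-left block becomes $A^{-1}+A^{-1}B(-S^{-1})B'A^{-1}=A^{-1}-A^{-1}BS^{-1}B'A^{-1}$; the top-right block becomes $-A^{-1}B(-S^{-1})=A^{-1}BS^{-1}$; the bottom-left block becomes $-(-S^{-1})B'A^{-1}=S^{-1}B'A^{-1}$; and the bottom-right block becomes $-S^{-1}$. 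These are exactly the four entries displayed in the statement with $S=B'A^{-1}B$.

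This is a direct corollary, so there is essentially no genuine obstacle; the only point requiring a moment's care is the bookkeeping of the single sign coming from $D-CA^{-1}B=-B'A^{-1}B$, and in particular that this sign does not disturb the invertibility equivalence in part (i). Everything else is routine substitution, and I would not grind through it beyond exhibiting the four simplified blocks.
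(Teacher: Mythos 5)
Your proposal is correct and coincides with the paper's own (one-line) proof, which likewise invokes Lemma \ref{Lem:BlockMatrixInverse}(i) with $F=E$, $C=B'$ and $D=0$; your explicit tracking of the sign from the Schur complement $D-CA^{-1}B=-B'A^{-1}B$ and the simplification of the four blocks is exactly the intended calculation, merely written out in full. Nothing to correct.
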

\begin{proof} Apply the above Lemma \ref{Lem:BlockMatrixInverse}i with $F=E$, $C=B'$ and $D=0$.
\end{proof}

\section{More on the numerical example}\label{Ap:NumericalExample}
 The simulations were performed in R.   Each model parameterization is simulated 10,000 times. The robustness of the simulation results was checked with different optimization algoritheorems, starting values and tolerance parameter values.  The estimation for a single sample is typically performed in less than a few  seconds. The calculations were done on a 24 CPU cores of a Dell server with 4 AMD Opteron 8425 HE processors running at 2.1 GHz.  We numerically checked that  the reported statistics have a converging behaviour  as we increase the number of simulated samples to 10,000.

\section{More on the empirical example}\label{Ap:EmpiricalExample}

In empirical consumption-based asset pricing,  the literature has found little common ground about the value of the   relative risk aversion (RRA)  of the representative agent: In most studies, point estimates from economically similar moment conditions  are  generally outside  of each other's confidence intervals. Section \ref{Sec:EmpExample} (p. \pageref{Sec:EmpExample}) and the present appendix  revisit the estimation of the RRA. The popularity of moment-based estimation in consumption-based asset pricing, and more generally in economics is due to the fact that moment-based estimation does not necessarily require the specification of a family of distributions for the data \citep[e.g.][sec. 3]{2013HanN}. Typically, an economic  model does not imply  such family of distributions, except for tractability reasons. Imposing a family of distributions makes it difficult to disentangle the part of the inference results due to the empirical relevance of the economic model from the part due to these additional restrictions. Under  regularity  conditions, assuming a distribution corresponds to imposing an infinite number of extra moment restrictions \citep[e.g.,][1971/1966, chap. VII, sec. 3]{1966Fel}.

In Section \ref{Sec:EmpExample} (p. \pageref{Sec:EmpExample}) and the present appendix, we rely on the moment condition \eqref{Eq:KeyEmpMomCond} on p. \pageref{Eq:KeyEmpMomCond}.
This  moment condition has several advantages. Firstly, it is  as consistent with \cite{1978Luc} as with more recent consumption-based  asset-pricing models, such as \cite{2006Bar}  or \cite{2012Gab}. In other words, despite its simplicity it also correspond to sophisticated models, and it allows us to obtain estimates  that are robust to different variations of consumption-based asset pricing theory.  Secondly, without loss of generality, it does not require to estimate the time discount rate, about which there is little debate: The time discount rate of the representative agent is consistently found to be between .9 and 1.
Note also that  it has been common to use moment conditions with  a separate parameter for 
the so-called intertemporal  elasticity of substitution, i.e., use Epstein-Zin-Weil preferences \citep[e.g.][]{1991EpsZin}. However, \cite{2017BommierKochovLeGrand} show that such a specification  makes the economic interpretation of the parameters difficult. In particular, they show that an increase of the so-called RRA (relative risk-aversion) parameter does not yield a  behaviour that would  be considered more risk averse. E.g., All other things being equal, savings can be a decreasing function of  the so-called RRA parameter for an agent  with Epstein-Zin-Weil  preferences   \citep[e.g.,][sec. 6]{2017BommierKochovLeGrand}.  This difficulty of interpretation comes from a violation of the monotonicity axiom according to which an agent does not choose an action if another available action is preferable in every state of the world.    \\

\subsection{Additional empirical evidence}

\begin{table}[ht!] \caption{\textbf{ET vs. ESP inference (1890--2009)}} \label{Tab:ETvsESP1890}
 \medskip
 \begin{tabular}{ll}
\hline\multicolumn{2}{l}{Empirical moment condition: $\frac{1}{2009-1889}\sum_{t=1890}^{2009}\left[  \left(\frac{C_{t}}{C_{t-1}}\right)^{-\theta}(R_{m,t}-R_{f,t})\right]=0 $, where} \\
\multicolumn{2}{l}{$R_{m,t}:=$ gross market  return,\, $R_{f,t}:=$risk-free asset gross return,\, $C_t:=$ consumption, }\\
\multicolumn{2}{l}{ and $\theta:=$relative risk aversion;}\\
\multicolumn{2}{l}{\text{Normalized ET:=}$\exp\negthickspace\left\{T\ln\left[ \frac{1}{T}\sum_{t=1}^T\e^{\tau_T(.) ' \psi_t(.)}\right]\right\}\negthickspace/\negthickspace\int_{\Theta} \exp\negthickspace\left\{T\ln\left[ \frac{1}{T}\sum_{t=1}^T\e^{\tau_T(\theta) ' \psi_t(\theta)}\right]\right\}\d \theta$;    }\\
\multicolumn{2}{l}{\text{Normalized ESP:=}$ \hat{f}_{\theta^*_T}(.)/\negthickspace\int_{\Theta} \hat{f}_{\theta^*_T}(\theta)\d \theta$;    }\\
\multicolumn{2}{l}{$\hat{\theta}_{\mathrm{ET},T}=\hat{\theta}_{\mathrm{MM},T}=50.3$ (bullet) and $\hat{\theta}_{\mathrm{ESP},T}=32.21$ (bullet); }\\
\multicolumn{2}{l}{  ET and ESP  support $=[-218.2,289.0]$; 95\% ET ALR conf. region=$[18.3, 289.0] $ (stripe);  }\\
\multicolumn{2}{l}{   95\% ESP ALR conf. region=$[15.0,112.7]$ (stripe). }
  \\ 
\hline

 \multicolumn{2}{c}{ \includegraphics[scale=.5886]{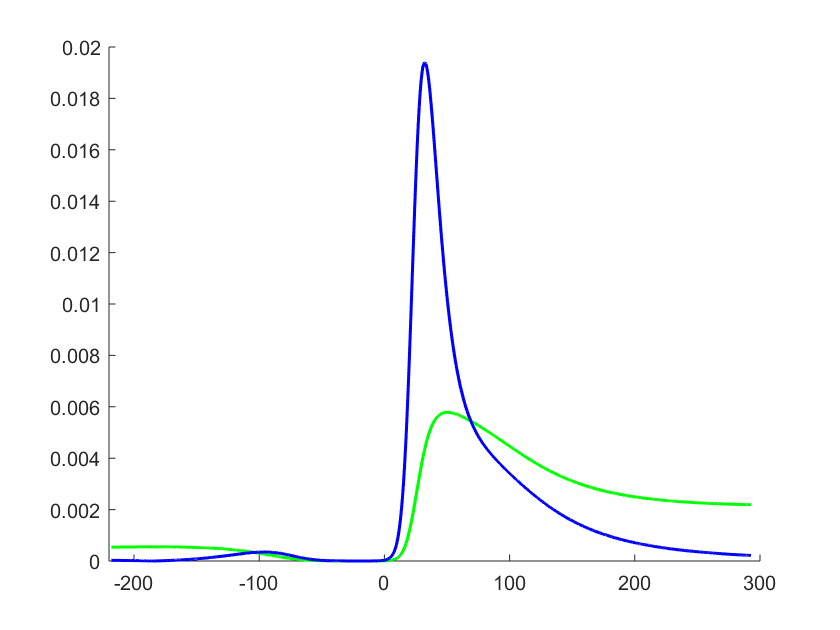} }  \\ 
  \multicolumn{2}{c}{\begin{small}(A)  Normalized ET   (light green) vs. normalized ESP (dark blue). \end{small} }\\ 
\hline
\includegraphics[scale=.351]{1890_2009_ET_Norm_ALR.png} & \includegraphics[scale=.351]{1890_2009_ESP_Norm_ALR.png} 
\\ 
  \begin{small}(A) ET est. and ALR conf. region.\end{small} &  \begin{small}(B) ESP est. and ALR conf. region.\end{small} \\ 
\hline
\hline
\end{tabular}
\end{table}

\begin{table} \caption{\textbf{ET vs. ESP inference (1930--2009)}} \label{Tab:ETvsESP1930}
 \begin{tabular}{ll}
\hline\multicolumn{2}{l}{Empirical moment condition: $\frac{1}{2009-1889}\sum_{t=1890}^{2009}\left[  \left(\frac{C_{t}}{C_{t-1}}\right)^{-\theta}(R_{m,t}-R_{f,t})\right]=0 $, where} \\
\multicolumn{2}{l}{$R_{m,t}:=$ gross market  return,\, $R_{f,t}:=$risk-free asset gross return,\, $C_t:=$ consumption, }\\
\multicolumn{2}{l}{ and $\theta:=$relative risk aversion;}\\
\multicolumn{2}{l}{\text{Normalized ET:=}$\exp\negthickspace\left\{T\ln\left[ \frac{1}{T}\sum_{t=1}^T\e^{\tau_T(.) ' \psi_t(.)}\right]\right\}\negthickspace/\negthickspace\int_{\Theta} \exp\negthickspace\left\{T\ln\left[ \frac{1}{T}\sum_{t=1}^T\e^{\tau_T(\theta) ' \psi_t(\theta)}\right]\right\}\d \theta$;    }\\
\multicolumn{2}{l}{\text{Normalized ESP:=}$ \hat{f}_{\theta^*_T}(.)/\negthickspace\int_{\Theta} \hat{f}_{\theta^*_T}(\theta)\d \theta$;    }\\
\multicolumn{2}{l}{$\hat{\theta}_{\mathrm{ET},T}=35.0$ (bullet) and $\hat{\theta}_{\mathrm{ESP},T}=32.5$ (bullet); ET and ESP  support$=[-202.8,813.3]$}\\
\multicolumn{2}{l}{95\% ET ALR conf. region=$[-202.8,-76.0]\cup[17.7,197.8] $ (stripe);  }
  \\ 
  \multicolumn{2}{l}{95\% ESP ALR conf. region=$[17.7, 58.7]$ (stripe). }
  \\
\hline
 \multicolumn{2}{c}{ \includegraphics[scale=.5886]{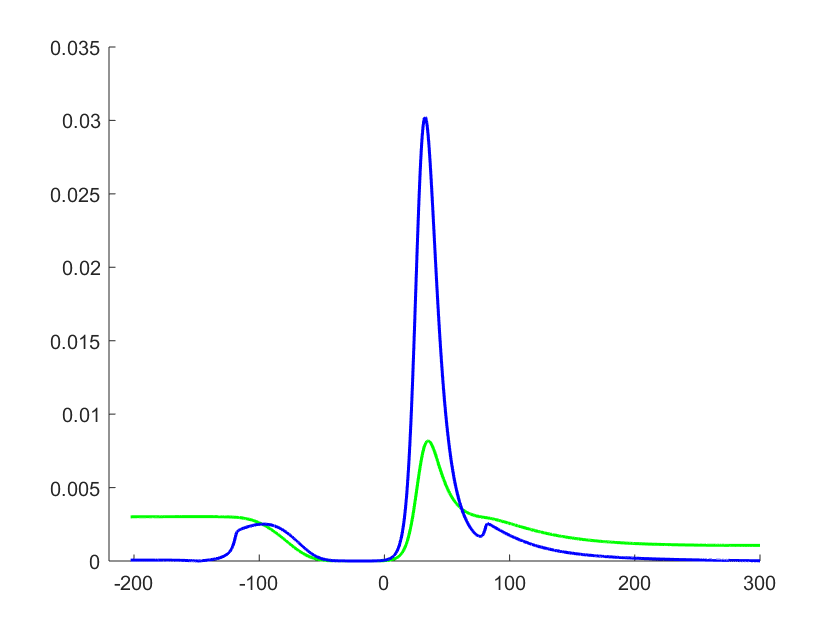}
  }  \\ 
  \multicolumn{2}{c}{\begin{small}(A)   Normalized ET   (light green) vs. normalized ESP (dark blue). \end{small} }\\ 
\hline
\includegraphics[scale=.351]{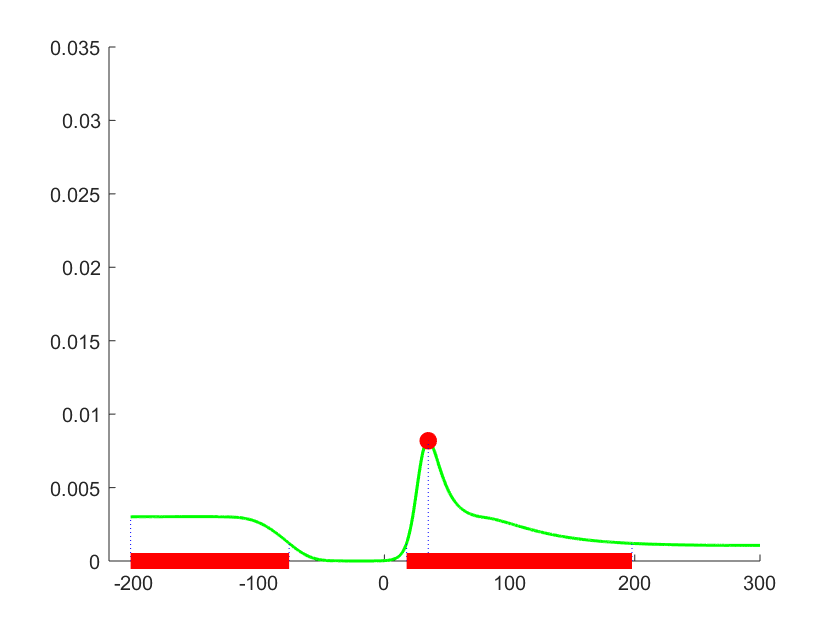}
 & \includegraphics[scale=.351]{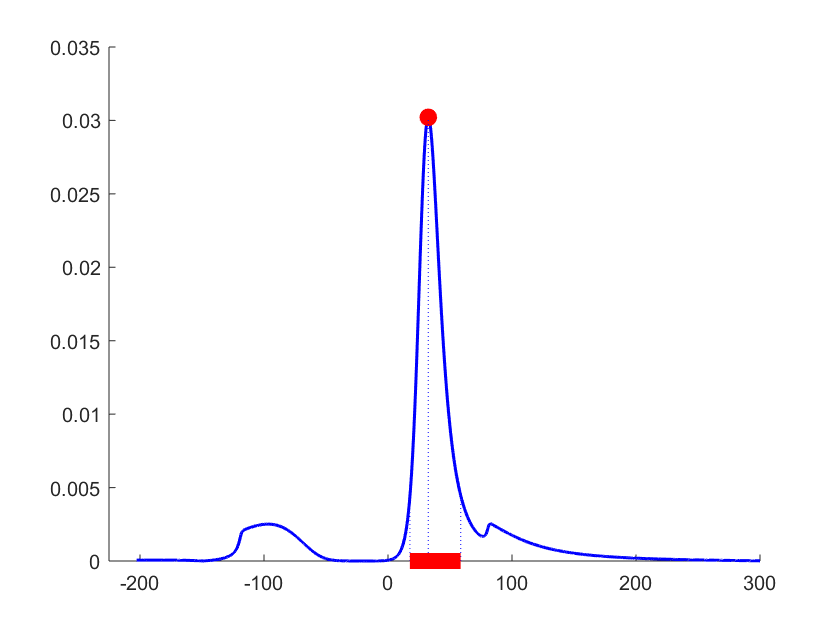} 
 \\ 
  \begin{small}(B)  ET  est. and ALR conf. region.\end{small} &  \begin{small}(C) ESP  est. and ALR conf. region.\end{small} \\ 
\hline
\hline
\end{tabular}
\end{table}
\qquad 
  
Table  \ref{Tab:ETvsESP1890} (p. \pageref{Tab:ETvsESP1890}) is the same as Table  \ref{Tab:ETvsESP1890Short} (p. \pageref{Tab:ETvsESP1890Short})
with the additional Table  \ref{Tab:ETvsESP1890} Figures (A). The  latter clearly shows   that the normalized ESP is relatively sharp around the ESP estimator.    \\ 
Table \ref{Tab:ETvsESP1930} (p. \pageref{Tab:ETvsESP1930}) is the counterpart of Table  \ref{Tab:ETvsESP1890} (p. \pageref{Tab:ETvsESP1890}) for  the 1930-2009 data set. The 95\% ET ALR confidence region is based on the inversion of the ALR ET statistic  
$2T\left[ \mathrm{LogET}(\hat{\theta}_T)-\mathrm{LogET}(\theta_0)\right]\\=2T\mathrm{LogET}(\theta_0)\rightarrow \chi^2_1 $  \cite[Theorem 4 with $K=0$ and $\mathrm{H}_0: \theta= \theta_0$]{1997KitStu}, where LogET$(\theta):=\negthickspace\ln  \left[\frac{1}{T}\sum_{t=1}^T \e^{\tau_T(\theta)'\psi_t(\theta)}\right]$ and $\mathrm{LogET}(\hat{\theta}_T)= \ln  \left[\frac{1}{T}\sum_{t=1}^T \e^{\tau_T(\hat{\theta}_T)'\psi_t(\hat{\theta}_T)}\right] =0$ because, in the just-identified case, $\frac{1}{T}\sum_{t=1}^T\psi_t(\hat{\theta}_T)=0 $ so that $\tau_T(\hat{\theta}_T)=0_{m \times 1}$. The ET and ESP support correspond to the parameter values $\theta\in \T$ for which there exists a solution $\tau_T(\theta)$ to the equation \eqref{Eq:ESPTiltingEquation} on p. \pageref{Eq:ESPTiltingEquation}.
Table \ref{Tab:ETvsESP1930} confirms the findings of  Table \ref{Tab:ETvsESP1890} (p. \pageref{Tab:ETvsESP1890}) in  Section \ref{Sec:EmpExample}\,:  The ESP is sharper than the ET around its maximum, so that the ESP confidence region is also shorter. Note also that  the ESP estimate is almost the same as for the data set 1890-2009.  These results are in line with the ESP shrinkage-like behaviour documented in the Monte-Carlo simulations of the section \ref{Sec:NumericalExp}.     

Tables \ref{Tab:MM1890} (p. \pageref{Tab:MM1890}) and \ref{Tab:MM1930} (p. \pageref{Tab:MM1930}) report the MM estimates and the  confidence regions  based on  the inversion of the MM\ ALR\ test statistic  
$T\left[ Q_{\mathrm{MM}, T}(\theta_{0})- Q_{\mathrm{MM}, T}(\hat{\theta}_{\mathrm{MM}, T}) \right]=T Q_{\mathrm{MM}, T}(\theta_{0}) \stackrel{D}{\rightarrow} \chi^2_1  $, as $T \rightarrow \infty$, \citep[e.g.,][Theorem 9.2]{1994NewMcF},
where $Q_{\mathrm{MM}, T}(\theta):=\left[ \frac{1}{T}\sum_{t=1}^T\psi_t(\theta)\right]'\\ \times\left[\frac{1}{T}\sum_{t=1}^T\psi_t(\hat{\theta}_{\mathrm{MM}, T})\psi_t(\hat{\theta}_{\mathrm{MM}, T})'\right]^{-1}\left[ \frac{1}{T}\sum_{t=1}^T\psi_t(\theta)\right]$
and $Q_{\mathrm{MM}, T}(\hat{\theta}_{\mathrm{MM}, T})=0$ because  

\noindent
 $\frac{1}{T}\sum_{t=1}^T\psi_t(\hat{\theta}_T)=0 $ in the just-identified case. The MM objective function is sharper around its minimum for the 1930-2009 data set than for the 1890-2009. However, the former sharpness appears misleading as it yields a confidence region that does not include the MM estimate of the 1890-2009 data set.   

Tables \ref{Tab:CU1890} (p. \pageref{Tab:CU1890}) and \ref{Tab:CU1930} (p. \pageref{Tab:CU1930}) report the  CU (continuously updating) MM estimates and the confidence regions based on the inversion of the  CU ALR test statistic

\noindent
$T\left[ Q_{\mathrm{CU}, T}(\theta_{0})- Q_{\mathrm{CU}, T}(\hat{\theta}_{\mathrm{MM}, T}) \right]=T Q_{\mathrm{CU}, T}(\theta_{0}) \rightarrow \chi^2_1  $, as $T \rightarrow \infty$, 
where 

\noindent
$Q_{\mathrm{CU}, T}(\theta):=\left[\frac{1}{T}\sum_{t=1}^T\psi_t(\theta)\right]'\left[\frac{1}{T}\sum_{t=1}^T\psi_t(\theta)\psi_t(\theta)'\right]^{-1}\left[\frac{1}{T}\sum_{t=1}^T\psi_t(\theta)\right]$
and $Q_{\mathrm{CU}, T}(\hat{\theta}_{\mathrm{CU}, T})=0$   because   $\frac{1}{T}\sum_{t=1}^T\psi_t(\hat{\theta}_T)=0 $ in the just-identified case.
In the just-identified case, which is the case addressed in the present paper,  such confidence regions correspond to the $S$-sets, which  were proposed by \cite{2000StoWri} ---following \citet[(c) Constrained-Minimized]{1996HanHeaYar}--- as a solution to the flatness of GMM objective functions.  As previously documented in the literature \citep[e.g.,][]{1996HanHeaYar}, CU GMM objective functions tend to be flat and low in the tails. Thus,  the CU\ ALR confidence regions (and $S$-sets  in the just-identified case) are  huge,  and hardly  informative.\\

\begin{table} \caption{\textbf{MM inference (1890--2009)}} \label{Tab:MM1890}
 \medskip
 \begin{tabular}{ll}
\hline\multicolumn{2}{l}{Empirical moment condition: $\frac{1}{2009-1889}\sum_{t=1890}^{2009}\left[  \left(\frac{C_{t}}{C_{t-1}}\right)^{-\theta}(R_{m,t}-R_{f,t})\right]=0 $, where} \\
\multicolumn{2}{l}{$R_{m,t}:=$ gross market  return,\, $R_{f,t}:=$risk-free asset gross return,\, $C_t:=$ consumption, and}\\
\multicolumn{2}{l}{ $\theta:=$relative risk aversion.}\\
\multicolumn{2}{l}{$\hat{\theta}_{\mathrm{GMM},T}=  50.3 $ (bullet);\, 95\% ALR confidence region$=[-41.7,71.5]$ (stripe).}
  \\ \hline
\includegraphics[scale=.35]{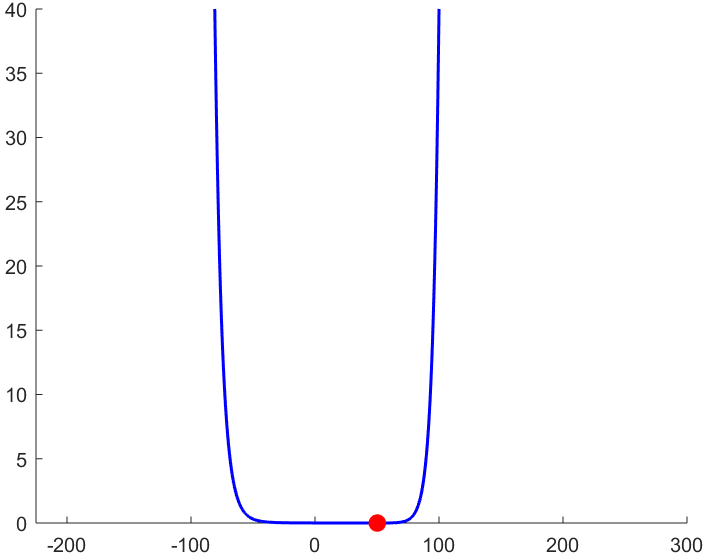} & \includegraphics[scale=.35]{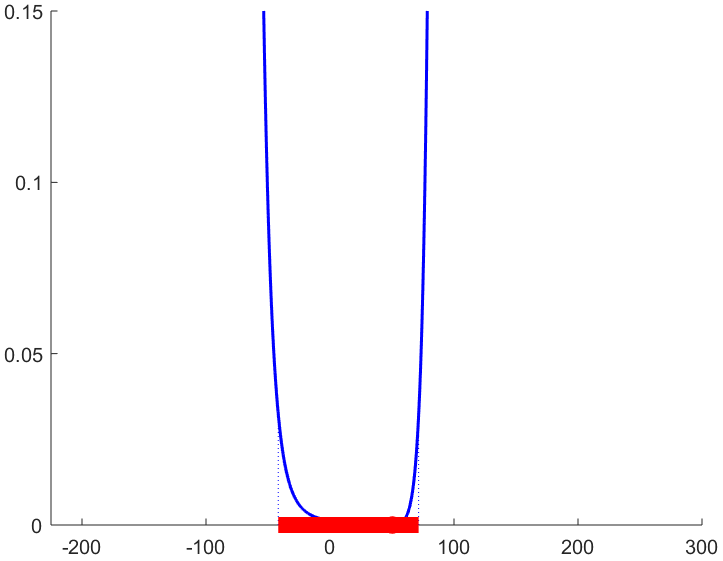} \\ 
  \begin{small}(A) MM objective function and point estimate.\end{small} &  \begin{small}(A zoom) MM obj. function ALR conf. region. \end{small} \\ 
\hline
\hline
\end{tabular}
\end{table}
   
\bigskip
\bigskip

 \begin{table}\caption{\textbf{Continuously updated (CU) GMM inference (1890--2009)}} \label{Tab:CU1890}
 \medskip
 \begin{tabular}{ll}
\hline\multicolumn{2}{l}{Empirical moment condition: $\frac{1}{2009-1890}\sum_{t=1890}^{2009}\left[  \left(\frac{C_{t}}{C_{t-1}}\right)^{-\theta}(R_{m,t}-R_{f,t})\right]=0 $, where} \\
\multicolumn{2}{l}{$R_{m,t}:=$ gross market  return,\, $R_{f,t}:=$risk-free asset gross return,\, $C_t:=$ consumption, and }\\
\multicolumn{2}{l}{ $\theta:=$relative risk aversion.}\\
\multicolumn{2}{l}{$\hat{\theta}_T^{\mathrm{CU}}\negthickspace=\negthickspace  50.3 $ (bullet); 95\% ALR confidence region (and $S$-set) $\negthickspace=]\ldots, -59.1]\negthickspace\cup\negthickspace[18.2,\ldots[$ (stripe).}\\

\multicolumn{2}{l}{Rk: We constrain the numerical search for point estimate to discard large values of $\theta$. }   \\ \hline
\includegraphics[scale=.35]{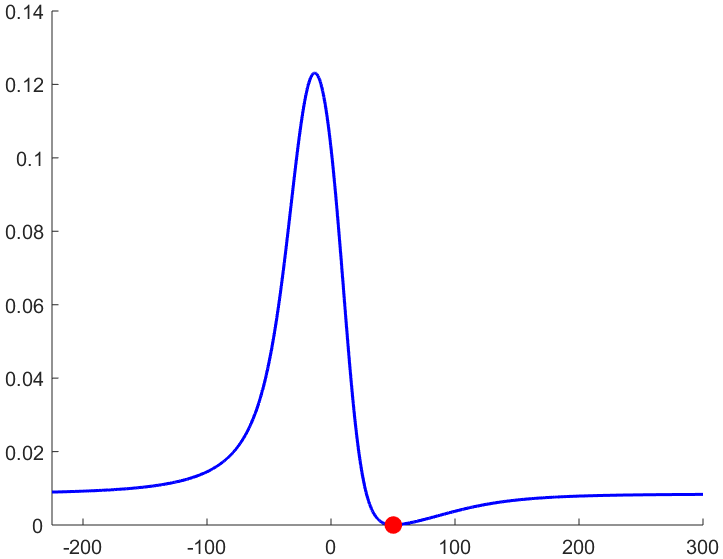} & \includegraphics[scale=.35]{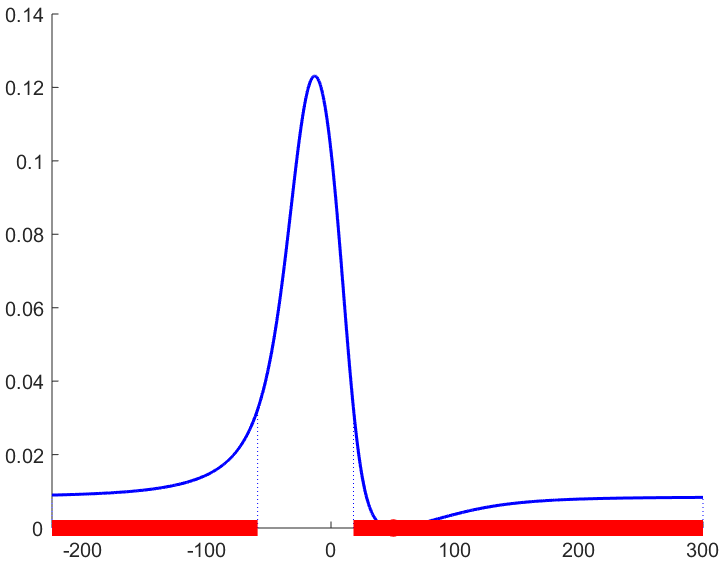} \\ 
  \begin{small}(A) Objective function and point estimate.\end{small} &  \begin{small}(B) Truncated ALR conf. region (and $S$-set).  \end{small} \\ 
\hline
\hline
\end{tabular}
\end{table}

\begin{table} \caption{\textbf{MM inference (1930-2009)}}\label{Tab:MM1930}
 \medskip
 \begin{tabular}{ll}
\hline\multicolumn{2}{l}{Empirical moment condition: $\frac{1}{2009-1930}\sum_{t=1930}^{2009}\left[  \left(\frac{C_{t}}{C_{t-1}}\right)^{-\theta}(R_{m,t}-R_{f,t})\right]=0 $, where} \\
\multicolumn{2}{l}{$R_{m,t}:=$ gross market  return,\, $R_{f,t}:=$risk-free asset gross return,\, $C_t:=$ consumption, }\\
\multicolumn{2}{l}{ $\theta:=$relative risk aversion.}\\
\multicolumn{2}{l}{$\hat{\theta}_{\mathrm{MM},T}=  35.0 $ (bullet),\, ALR confidence region$=[-10.4,46.5]$ (stripe)}
  \\ \hline
\includegraphics[scale=.35]{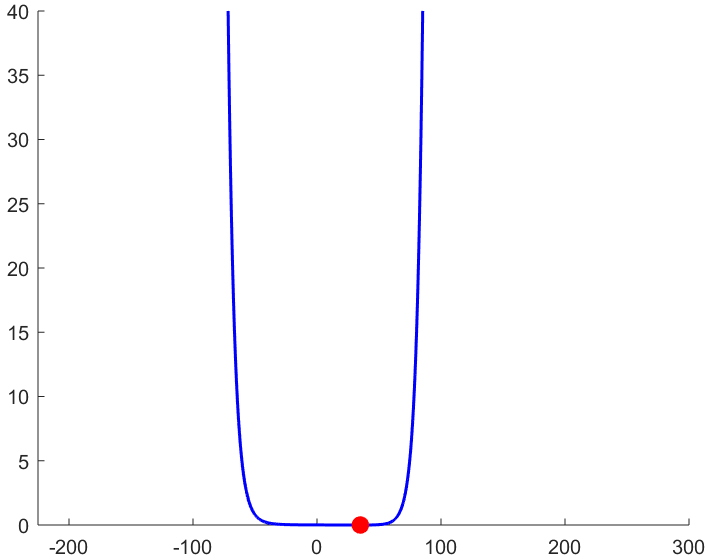} & \includegraphics[scale=.35]{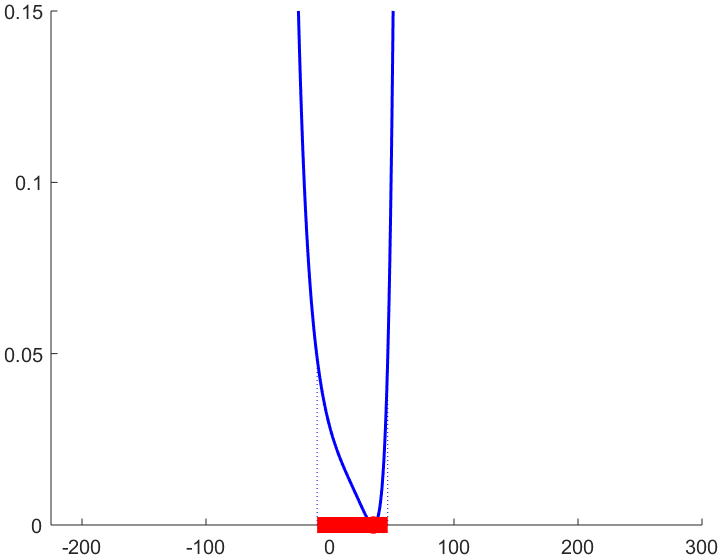} \\ 
  \begin{small}(A) MM objective function and point estimate.\end{small} &  \begin{small}(A zoom) Objective function and point estimate. \end{small} \\ 
\hline
\hline
\end{tabular}
\end{table}

\begin{table}\caption{\textbf{Continuously updated (CU) GMM inference (1930--2009)}} \label{Tab:CU1930} 
 \medskip
 \begin{tabular}{ll}
\hline\multicolumn{2}{l}{Empirical moment condition: $\frac{1}{2009-1890}\sum_{t=1890}^{2009}\left[  \left(\frac{C_{t}}{C_{t-1}}\right)^{-\theta}(R_{m,t}-R_{f,t})\right]=0 $, where} \\
\multicolumn{2}{l}{$R_{m,t}:=$ gross market  return,\, $R_{f,t}:=$risk-free asset gross return,\, $C_t:=$ consumption, }\\
\multicolumn{2}{l}{ $\theta:=$relative risk aversion.}\\
\multicolumn{2}{l}{$\hat{\theta}_T^{\mathrm{CU}}=  50.3 $ (bullet);\, ALR confidence region (and $S$-set) $=]\ldots, -35.8]\cup[17.9,\ldots[$ (stripe).}\\
\multicolumn{2}{l}{Rk: We constrain the numerical search for point estimate to discard large values of $\theta$. }   \\ \hline

\includegraphics[scale=.35]{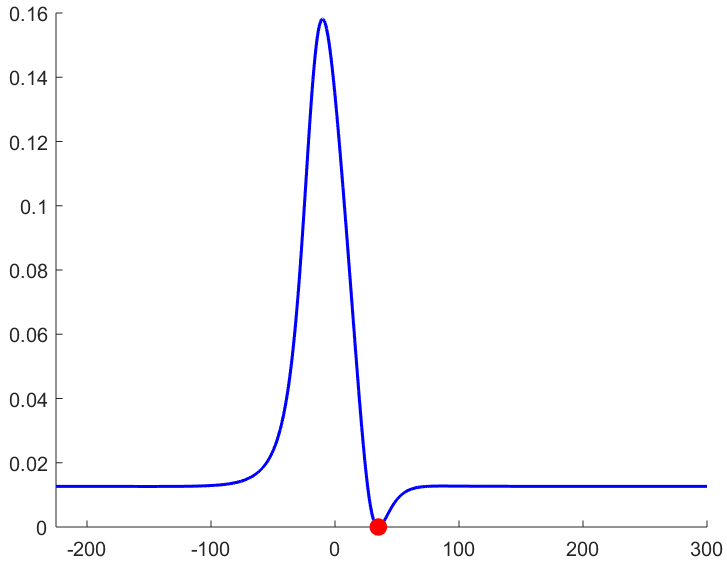} & \includegraphics[scale=.35]{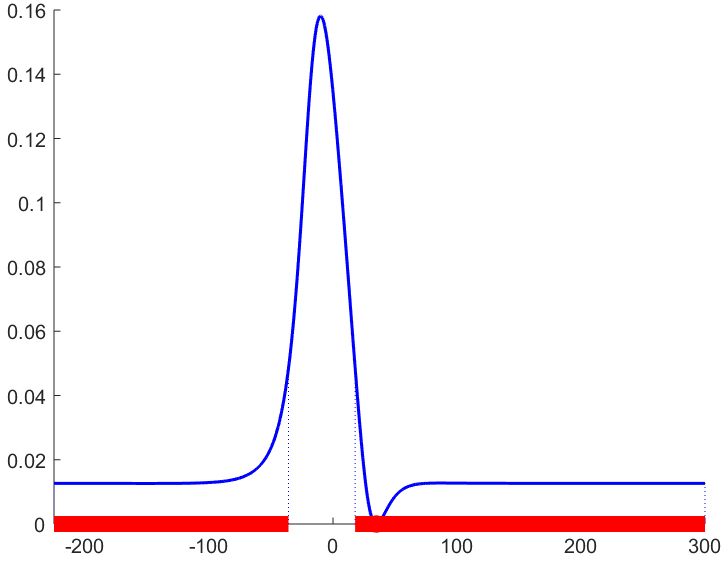} \\ 
  \begin{small}(A) Objective function and point estimate.\end{small} &  \begin{small}(B) Truncated ALR conf. region (and S-set).  \end{small} \\ 
\hline
\hline
\end{tabular}
\end{table}

\subsection{Data description} As in  \cite{2012JulGho},
our data are  standard. For the 1890-2009 data set, our source is the Robert Shiller's web site. The prime commercial paper   and the S\&P
stock price index play the role of proxies for the risk-less asset and the market return.
\begin{table}[ht]\caption{\textbf{Descriptive statistics.}}\label{Tab:DescriptiveStat}
\begin{tabular}{lll}
\hline
 & \multicolumn{2}{c}{Mean (Variance)} \\ 
\cline{2-3}
Variable & \multicolumn{1}{c}{1890-2009} & 1930-2009 \\ 
\hline
$C_{t}/C_{t-1}$ & \multicolumn{1}{c}{$1.0182$} & \multicolumn{1}{c}{$1.014$} \\ 
 & \multicolumn{1}{c}{(.0009)} & \multicolumn{1}{c}{(.0007)} \\ 
$R_{m,t}-R_{f,t} $ & \multicolumn{1}{c}{$.0630$} & \multicolumn{1}{c}{$.074$} \\ 
 & \multicolumn{1}{c}{(.0367)} & \multicolumn{1}{c}{(.0424)} \\ 
\hline
\hline
\end{tabular}
\end{table}

\begin{table}\caption{\textbf{Excess returns: $R_{m,t}-R_{f,t}$}}\label{Tab:Rm_Rf}
\begin{tabular}{cc}\hline 
1890-2009 & 1930-2009 \\\hline
\includegraphics[scale=.5]{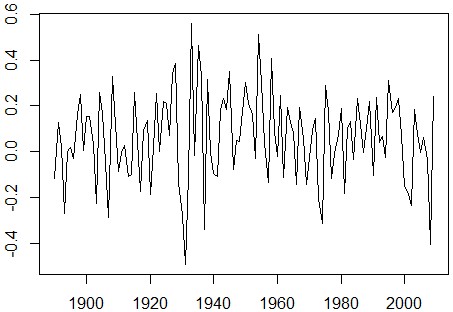} & \includegraphics[scale=.5]{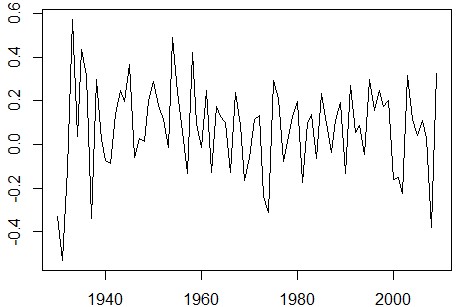} \\

\multicolumn{1}{l}{(A) Time series}  & \multicolumn{1}{l}{(B) Time series}  \\\hline
\includegraphics[scale=.5]{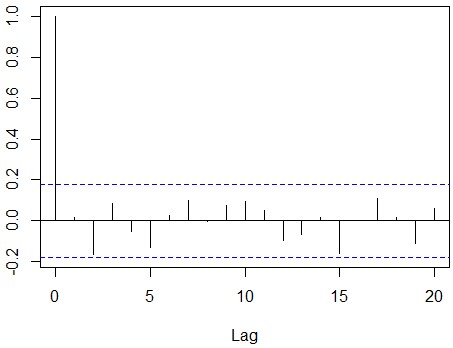}a & \includegraphics[scale=.5]{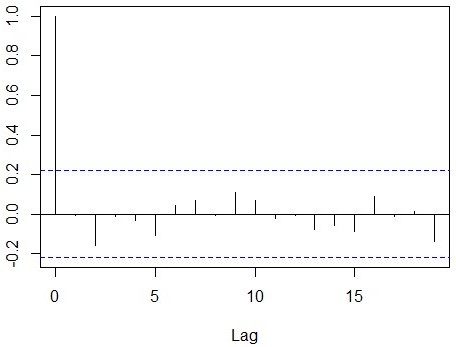} \\
\multicolumn{1}{l}{(C) Autocorr. function of $R_{m,t}-R_{f,t}$}  & \multicolumn{1}{l}{(D) Autocorr. function of $R_{m,t}-R_{f,t}$}\\\hline
\includegraphics[scale=.5]{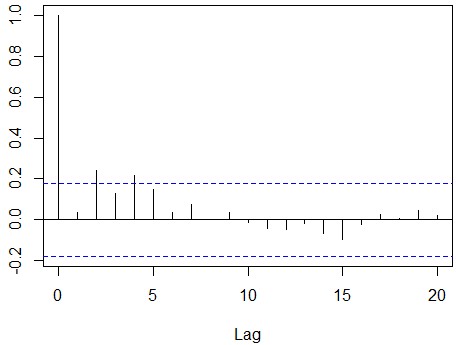} &  \includegraphics[scale=.5]{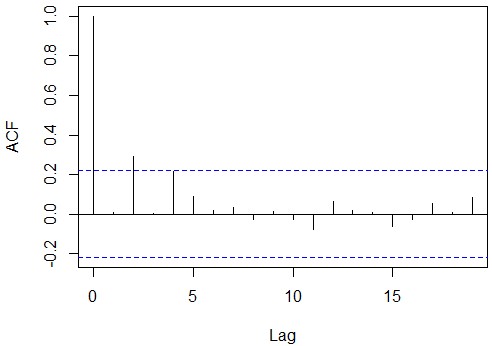}\\
\multicolumn{1}{l}{(E) Autocorr. function of $(R_{m,t}\negthickspace-\negthickspace R_{f,t})^2$}  & \multicolumn{1}{l}{(F) Autocorr. function of $(R_{m,t}\negthickspace-\negthickspace R_{f,t})^2$} \\\hline \hline
\end{tabular}
\end{table}

\begin{table}\caption{\textbf{Growth consumption: $C_t/C_{t-1}$.}}\label{Tab:gc}
\begin{tabular}{cc}\hline 
1890-2009 & 1930-2009 \\\hline
\includegraphics[scale=.5]{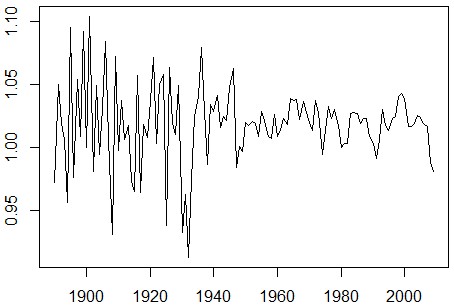} & \includegraphics[scale=.5]{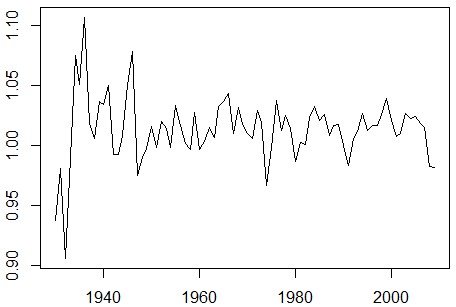} \\
\multicolumn{1}{l}{(A) Time series}  & \multicolumn{1}{l}{(B) Time series}  \\\hline
\includegraphics[scale=.5]{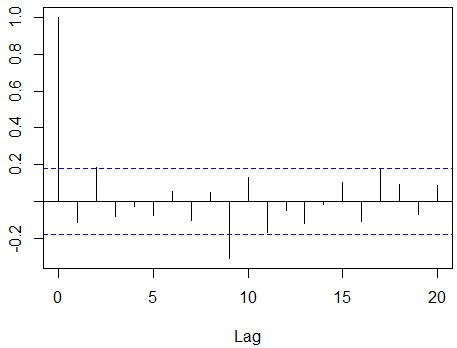} & \includegraphics[scale=.5]{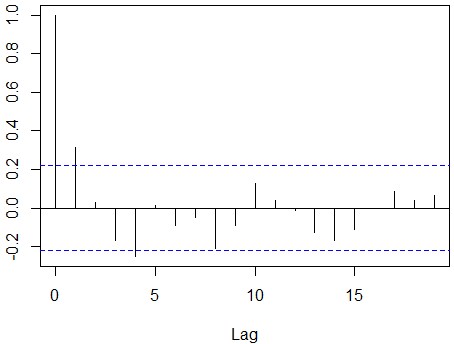} \\
\multicolumn{1}{l}{(C) Autocorr. function of $C_t/C_{t-1}$}  & \multicolumn{1}{l}{(D) Autocorr. function of $C_t/C_{t-1}$} \\\hline \hline
\end{tabular}
\end{table}
 For the 1930-2009 data set, the proxies for the risk-less asset and the market return are the one month
Treasury-bill  and the Center
for Research in Security Prices (CRSP) value-weighted index of all stocks on
the NYSE, AMEX, and NASDAQ. The computation of the growth consumption is based per capita
real personal consumption expenditures on nondurable goods from the
National Income and Product Accounts (NIPA). Quantities are deflated from the inflation.

Tables \ref{Tab:DescriptiveStat} and \ref{Tab:Rm_Rf} indicate that there is no significant autocorrelation for the excess returns, and only a mild clustering effect (Figures (E) and (F) in Table \ref{Tab:Rm_Rf} on p. \pageref{Tab:Rm_Rf}).  Thus, the i.i.d. assumption (Assumption \ref{Assp:ExistenceConsistency}(a)) appears to be a good approximation for the excess returns for both data set. For the growth consumption, the i.i.d. assumption may appear less appropriate. Table \ref{Tab:gc}  indicates  a mild autocorrelation for the growth consumption, and, more strikingly, a change of variance at the end of WWII. However, in the moment function, the growth consumption is multiplied by the   excess returns, whose variance is several orders of magnitude higher (Table \ref{Tab:DescriptiveStat} on p. \pageref{Tab:DescriptiveStat}), so that the change of variance is dampened.  

\end{small}
%
%
%

\end{document}